\DeclareSymbolFont{cyrletters}{OT2}{wncyr}{m}{n}
\DeclareMathSymbol{\Sha}{\mathalpha}{cyrletters}{"58}
\newtheorem{thm}{Theorem}
\newtheorem*{thm*}{Theorem}
\numberwithin{thm}{section}
\newtheorem*{exmp*}{Example}
\newtheorem{cor}[thm]{Corollary}
\newtheorem{prop}[thm]{Proposition}
\newtheorem{lem}[thm]{Lemma}
\newtheorem{quest}[thm]{Question}
\newtheorem{assumption}[thm]{Assumption}
\newtheorem{defn}[thm]{Definition}
\newtheorem{exmp}[thm]{Example}
\newtheorem{fact}[thm]{Fact}
\newtheorem{defprop}[thm]{Defining Property}
\theoremstyle{remark}
\newtheorem{rem}{Remark}
\newtheorem*{pfsketch}{Proof Sketch}
\newcommand{\defeq}{\vcentcolon=}
\DeclareMathOperator{\Hom}{Hom}
\DeclareMathOperator{\Sym}{Sym}
\DeclareMathOperator{\Spec}{Spec}
\DeclareMathOperator{\Spf}{Spf}
\DeclareMathOperator{\Spm}{Spm}
\DeclareMathOperator{\Aut}{Aut}
\DeclareMathOperator{\im}{im}
\newcommand{\ts}{\textsuperscript}
\title{The Chabauty--Kim Method for Relative Completions}
\author{Noam Kantor}
\begin{document}

\maketitle

\begin{dedication}
This thesis is dedicated to\\
those who struggled to find food\\
while I struggled to find references\\
\end{dedication}

\begin{acknowledgements}
When I was a beginning student, I saw number theory as a fractured subject. There seemed to be no unified motivation for the problems and no unified method for their solutions. In fact, the saying that number theory is ``the queen of mathematics'' seemed like less of a compliment and more of an assertion that number theory did not know how to stand on its own.

Well, I have grown a lot since those days: I now see number theory -- or at least arithmetic geometry -- as a beautifully coherent collection of concepts and methods. That coherence has only grown in the last few decades, thanks to number theorists, geometers, logicians, combinatorialists, topologists, and analysts whose work has either purposefully or accidentally contributed to arithmetic geometry. I am, then, incredibly thankful to those who put in sweat and tears before me so that I had a foundation upon which to work. Special thanks go to Jon Pridham, Kobi Kremnitzer, and Damian R\"{o}ssler, who read earlier versions of this work in confirmation and transfer and provided much-needed feedback. I am particularly grateful for those who have seen mathematics as part of a larger humane project, and to those who seek to make math a more open and accepting field which sees the world's problems as its own problems.

My growth into the mathematician I am today was initiated by Professor David Zureick-Brown, my undergraduate advisor, who opened my eyes to the world of number theory and especially the method of Chabauty.

I would not have finished this process without the many mathematical friends and colleagues who were willing to sit down and discuss math or anything-but-math: Alexander Gietelnik-Oldenziel; Alex Saad; Jay Swar; Nadav Gropper; Alyosha Latyntsev; Ma Luo (for teaching me so much about Hodge theory of relative completions); Netan Dogra (who read through an earlier draft of this, was there more than once when I thought things were crashing down, and has taught me a lot of good math); Jan Vonk (who answered many questions about arithmetic geometry); Carl Wang--Erickson (for email exchanges explaining his work that went far above and beyond the call of duty, and for his absolutely clear writing.)

And I am thankful for the non-mathematical friends who have inspired me: my flatmates (for more reasons than I can count); many sensitive and compassionate minds of the Marshall scholarship; the ever-knowledgable companions at Oxford JSoc; singers in Brasenose Chapel Choir; Ben St.\ Clair and Victoria Mousley (for always providing a home away from home); David Elitzer and Rhea Stark (for listening to my troubles with open ears and open hearts); Josh Lappen (who is a true mensch); and others who supported me more than I will ever understand.

My wonderful family has been there in the ups and downs of the last few months, and I will always be grateful for that.

\thispagestyle{empty}

Above all, it is impossible to express the fullness of the gratitude I owe to my advisor, Professor Minhyong Kim, for pushing me to write this DPhil when I was just a few months from submitting my MSc thesis. His patience, mathematical wisdom, and broad set of interests that seem to span every subject known to humanity have inspired his students and grand-students, and I am privileged to have had his time and guidance. I share the sentiment that he describes in the paper that announced non-abelian Chabauty: ``I seem to have accumulated quite a debt of gratitude even in writing this simple paper...''
\end{acknowledgements}

\begin{abstract}
In this thesis we develop a Chabauty--Kim theory for the relative completion of motivic fundamental groups, including Selmer stacks and moduli spaces of admissible torsors for the relative completion of the de Rham fundamental group. On one hand, this work generalizes results of Kim (and therefore Chabauty) in the unipotent case by adding a reductive quotient of the fundamental group. From this perspective, the addition of a reductive part allows one to apply Chabauty-type methods to fundamental groups with trivial unipotent completion, such as $SL_2(\mathbb{Z})$.

On the other hand, the unipotent part provides a natural extension of the recent work of Lawrence and Venkatesh. We show that their concern with the centralizer of Frobenius goes away as one moves up the unipotent tower and away from the reductive world of flag varieties and the Gauss--Manin connection. One is tempted to hope that the relative completion will provide a unified proof of Mordell's conjecture that takes advantage of the two methods. Toward this end, we apply our work to the Legendre family on the projective line minus three points, a particular example where the method of Lawrence and Venkatesh fails.
\end{abstract}

\baselineskip=18pt plus1pt

\begin{romanpages}
\tableofcontents
\end{romanpages}

\chapter{Introduction}
This thesis centers around the Chabauty--Kim method, a method for proving Diophantine finiteness using $p$-adic analysis. The Chabauty-Kim method, which belongs in fact to many mathematicians, is a fundamental example of the unity of theory and method that have permeated arithmetic geometry in the last half-century or so. The unified theory lies in the concept of a motive and its realizations. The unified method, on the other hand, can broadly be termed the ``box principle.'' This principle refers to the fact that finiteness results in arithmetic geometry arise when one is able to place the desired object in a more structured box. The Chabauty-Kim method uses this principle to show finiteness of rational points, using Selmer varieties as a box.

In this thesis, we advance Chabauty--Kim theory by constructing a Chabauty--Kim diagram, including Selmer stacks and de Rham moduli spaces, for relative Malcev completions of fundamental groups as studied by Hain, Olsson, Pridham, and others. This advance is an essential next step in the Chabauty-Kim theory because it injects some non-unipotency into a theory which has thus far only worked with the unipotent fundamental group.

More broadly, this work can be seen as a first inroads in the program to relate the Chabauty--Kim method to the recent work of Lawrence and Venkatesh \cite{lawrence2018diophantine} reproving Faltings' Theorem. On the technical side, there are many theorems left to prove, such as broader representability and algebraicity of Selmer stacks; and there are restrictions that can likely be removed, such as the simple-connectedness of algebraic monodromy. Finally, there are certainly many more connections to the work of Lawrence and Venkatesh that need exploring, for example the question of applying our method directly to Kodaira--Parshin families. Despite this to-do list, we see many exciting developments in what we have done here and hope this work inspires others to leave their unipotent or reductive silos. Experts in Chabauty--Kim may want to skip to our main example (\ref{projline}) after Section \ref{mainres} to see the theory in action.

\section{The method of Chabauty, Coleman, and Kim}
To give context to our main results, we now explain Kim's approach to Diophantine finiteness -- itself a vast generalization of the ingenious method of Chabauty that Coleman used in the 1980's to give bounds on rational points on hyperelliptic curves of low Mordell-Weil rank \cite{coleman1985effective}. The canonical exposition of the work of Chabauty and Coleman is the essay by McCallum and Poonen \cite{mccallum2007method}. Even more recently, the method of Chabauty and Coleman has been combined with tropical geometry to give uniform bounds on rational points on some curves of low Mordell-Weil rank. For more information on this train of ideas, see the work \cite{katz2016uniform} of Katz, Rabinoff, and Zureick-Brown.

Finally, we highlight the recent progress on quadratic Chabauty by Balakrishnan, Dogra, M{\"u}ller, Besser, and others. Quadratic Chabauty is an explicit realization of Kim's method in the first case that classical Chabauty cannot deal with \cite{balakrishnan2018quadratic}.

For all of the materials that follow, we recommend above all the original papers of Kim and the work of Deligne that inspired it \cite{deligne1989groupe}.

Consider a curve $X$ over a number field $F$, such that $X$ has a smooth model $\mathcal{X}$ over the ring $\mathcal{O}_{F,\Sigma}$ of $\Sigma$-integers of $F$ for some finite set of places $\Sigma$. Fix a basepoint $b \in \mathcal{X}(\mathcal{O}_{F, \Sigma})$ if one is available. (The Chabauty--Kim method says nothing about whether a basepoint exists, though the nonemptiness of the set of rational points is closely related to fundamental groups via Grothendieck's section conjecture; cf. for example \cite{harari2012descent}.)

Finally, let $v$ be a place not in $\Sigma$ (i.e. at which $X$ has good reduction) with $p$ the rational prime under $v$. Let $T$ be the set of places of $F$ that includes all places in $\Sigma$ and all primes dividing $p$. Kim's generalization of Chabauty's method centers on the commutative diagram (now called ``Kim's Cutter'')
\[
\begin{tikzcd}
\mathcal{X}(\mathcal{O}_{F,\Sigma}) \arrow{r} \arrow{d}{x \mapsto P^{et}_{b,x}} & \mathcal{X}(\mathcal{O}_{F_v}) \arrow{d}{x \mapsto P^{et}_{b,x}} \arrow{dr}{\int \colon P^{dR}_{b,x}} \\
H^1_f(G_{F,T}, \pi^{\text{\'et}}_{1,n}(X))  \arrow{r}{res} & H^1_f(G_{F_v}, \pi^{\text{\'et}}_{1,n}(X)) \arrow{r}{D_{dR}} & Res^{F_v}_{\mathbb{Q}_p}\left( \pi^{\text{dR}}_{1,n} (X)/F^0 \pi^{\text{dR}}_{1,n} (X) \right)
\end{tikzcd}
\]

There is one such diagram for every natural number $n$, and one hopes that analyzing the diagram for high enough $n$ might yield some sort of finiteness for the set $\mathcal{X}(\mathcal{O}_{F, \Sigma})$. Experience actually shows that with enough skill this diagram can yield explicit equations for $\mathcal{X}(\mathcal{O}_{F, \Sigma})$ inside of $\mathcal{X}(\mathcal{O}_{F_v})$. We will explain the maps in this diagram throughout this work. Suffice it to say that the two cohomology groups are called Selmer varieties, and the quotient on the right hand side might be called the de Rham period domain; the map $D_{dR}$ is the non-abelian version of Bloch and Kato's logarithm (it is a souped-up version of Faltings' $D_{dR}$ functor), and the two vertical maps send a point to the unipotent \'etale path torsor associated to that point.

An important point is that the maps $res$ and $D_{dR}$ are algebraic, while iterated integration is $p$-adic analytic.

The analysis should work as follows if one thinks that $\mathcal{X}(\mathcal{O}_{F, \Sigma})$ ought to be finite:
\begin{description}
\item[1]
Show that the diagonal integration map is Zariski dense in $Res^{F_v}_{\mathbb{Q}_p}\left( \pi^{\text{dR}}_{1,n} (X)/F^0 \pi^{\text{dR}}_{1,n} (X) \right).$
\item[2]
Show that the dimension of $H^1_f(G_{F,T}, \pi^{\text{\'et}}_{1,n}(\bar X))$ is strictly less than that of $$Res^{F_v}_{\mathbb{Q}_p}\left( \pi^{\text{dR}}_{1,n} (X)/F^0 \pi^{\text{dR}}_{1,n} (X) \right),$$ so that there is a nonzero algebraic function $\mathfrak{f}$ vanishing on the image of this global Selmer variety in the de Rham period space.
\item[3]
By the commutativity of the diagram, $\mathfrak{f}$ vanishes on the image of elements of $\mathcal{X}(\mathcal{O}_{F, \Sigma})$ under the integration map, which we may denote $\int$. Then $\mathfrak{f} \circ \int$ is a function on $\mathcal{X}(\mathcal{O}_{F_v})$ that vanishes on $\mathcal{X}(\mathcal{O}_{F, \Sigma})$. Furthermore, $\mathfrak{f} \circ \int$ is nonzero on $X(F_v)$ because $\int$ has Zariski-dense image.
\item[4]
The composition $\mathfrak{f} \circ \int$ is a nonzero $p$-adic analytic function on $\mathcal{X}(\mathcal{O}_{F_v})$, and any such function has finitely many zeros on a $p$-adic disc. Since $X(F)$ is covered by a finite number of discs (namely residue discs), there are only finitely many rational points on $X$. Here we use the fact that $X$ is a curve. There has been a slew of recents results using Ax--Schanuel-type methods to show that $\mathfrak{f} \circ \int$ can cut out an algebraic subset of $\mathcal{X}(\mathcal{O}_{F_v})$, which allows one to apply the method to higher-dimensional varieties \cite{hast2019functional, dogra2019unlikely}. 
\end{description}
By far the most difficult step of this process is the second; it invariably uses deep results on the dimensions of global Galois cohomology groups.

If one is interested in explicit equations for the rational points, one then needs equations for the image of $H^1_f(G_{F,\Sigma}, \pi^{\text{\'et}}_{1,n}(\bar X))$ in $Res^{F_v}_{\mathbb{Q}_p}\left( \pi^{\text{dR}}_{1,n} (X)/F^0 \pi^{\text{dR}}_{1,n} (X) \right)$. In the classical method of Chabauty these equations are found residue-disc-by-residue-disc using linear algebra, but the method becomes significantly more difficult as $n$ increases (i.e. as the situation becomes less linear.) One approach is through Massey products, which provide relations in Galois cohomology. This approach, implemented for example in \cite{balakrishnan2011appendix}, achieves the ultimate desiteratum of an explicit analytic equation for integral points that holds uniformly across all residue discs.

We pause to observe that the use of the unipotent fundamental group comes into almost every stage of the Chabauty-Kim process: iterated integration is the same as solving a unipotent system of differential equations, Selmer varieties have up until now only been defined for a unipotent fundamental group, and the very process of proving bounds on Galois cohomology always involves reducing to the case of linear Galois representations using unipotence. Incorporating non-unipotent information will, therefore, take us seemingly far afield of the classical Chabauty formalism. Yet, as we will see, the non-unipotent version of Chabauty will involve different linear invariants (such as adjoint Galois cohomology) which deepen the theory.

\section{Main results} \label{mainres}
In this thesis we prove the following results. For the entirety of this thesis, $F$ will denote a number field, and $\mathcal{O}_{F, \Sigma}$ the ring of $\Sigma$-integers of $F$ for $\Sigma$ a set of places of $F$. If $X$ is a scheme over a field, $\bar X$ denotes the base change of $X$ to the algebraic closure of the field.

\subsection{\'Etale realizations}
We give a detailed introduction to the relative completion in the main text, but let us say for now that the \'etale realization of the relative completion of the fundamental group of $X$ -- denoted $\pi_1^{rel, \acute{e}t}(\bar X)$ -- is a quotient of the pro-algebraic completion of the geometric \'etale fundamental group of a scheme. It is defined by fixing an \'etale $\mathbb{Q}_p$-local system $\mathcal{L}^{\acute{e}t}$ on $\bar X$, and considering the Tannakian category containing $\mathcal{L}^{\acute{e}t}$ and closed under extension, tensor product, dual, quotient, and direct sum. The relative \'etale fundamental group $\pi_1^{rel,\acute{e}t}(\bar X)$ is the Tannakian fundamental group of this category; when $\mathcal{L}^{\acute{e}t}$ is defined over the base field and suitably ``unramified,'' this fundamental group has an action by the Galois group $G_{F,T}$ of the maximal extension of $F$ unramified outside of some set $T$ which includes the primes of bad reduction for $X$ and $\mathcal{L}^{\acute{e}t}$ and all primes over $p$.

The best way to understand $\pi_1^{\acute{e}t}$ at a first approximation is that it fits into an exact sequence \[1 \to \mathcal{U}^{rel, \acute{e}t} \to \pi_1^{rel,\acute{e}t} \to S^{\acute{e}t} \to 1\] where $\mathcal{U}^{\acute{e}t}$ is unipotent and $S^{\acute{e}t}$ is the algebraic closure of the monodromy of $\mathcal{L}^{\acute{e}t}$. In this sense, it combines the monodromy-theoretic methods of classical period mappings with a unipotent part that is reminiscent of Kim's theory.

Our first result is the construction of cohomology and Selmer stacks for $\pi_1^{\acute{e}t}$, as well as maps from integral points into them assuming certain conditions on $S^{\acute{e}t}$. Here is a precise statement.
\begin{thm*}{(\ref{points})}

Let $\pi \colon \mathcal{Y} \to \mathcal{X}$ be a smooth and proper morphism of schemes over $\mathcal{O}_{F, \Sigma}$, with their generic fibers over $F$ being denoted $X$ and $Y$. Let $\mathcal{L}^{\acute{e}t}$ denote the $q$\ts{th} relative geometric \'etale cohomology of $\bar Y \to \bar X$ with coefficients in $\mathbb{Q}_p$, and $\pi_1^{rel,\acute{e}t}$ the relative unipotent fundamental group of $\bar X$ with respect to $\mathcal{L}^{\acute{e}t}$ and a fixed basepoint $b \in \mathcal{X}(\mathcal{O}_{F, \Sigma})$. Furthermore, let $v$ be a place of $F$ not contained in $\Sigma$, and $p$ the rational prime below $v$. Finally, let $T$ be a set of primes containing $\Sigma$ and all primes of $F$ above $p$.

There exist stacks on the rigid-analytic site, denoted $$H^{1}_{f,\bold{r}}(G_{F_v}, \pi_1^{rel,\acute{e}t})$$ and $$H^{1}_{f,\bold{r}}(G_{F,T}, \pi_1^{rel,\acute{e}t})$$ that parameterize Galois-compatible torsors for $\pi_1^{rel, \acute{e}t}$ which are crystalline at $p$ and have type $\bold{r}$; if the monodromy group $S^{rel, \acute{e}t}$ is semisimple and simply connected then there is a commutative diagram
\[
\begin{tikzcd}
\mathcal{X}(\mathcal{O}_{F, \Sigma}) \arrow{r} \arrow{d}{P_{b,x}^{\acute{e}t}} & \mathcal{X}(\mathcal{O}_{F_v}) \arrow{d}{P_{b,x}^{\acute{e}t}} \\
H^{1}_{f,\bold{r}}(G_{F,T}, \pi_1^{rel,\acute{e}t})(\mathbb{Q}_p) \arrow{r}{res} & H^{1}_{f,\bold{r}}(G_{F_v}, \pi_1^{rel,\acute{e}t})(\mathbb{Q}_p)
\end{tikzcd}
\]

Here $P^{\acute{e}t}$ refers to taking a certain relatively unipotent torsor of paths from $b$ to $x$; $res$ is the restriction of Galois representations; $\bold{r}$ is a triple that indicates the substack corresponding to torsors with $p$-adic Hodge type, residual representation, and in the global case, weight equal to that of the stalks of $\mathcal{L}^{\acute{e}t}$ (see Definition \ref{type}.)

\end{thm*}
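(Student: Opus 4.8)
\begin{pfsketch}
The plan is to build the two cohomology objects as stacks of Galois-equivariant torsors on the rigid-analytic site, to carve out the substacks singled out by the local data $\mathbf{r}$, and then to produce the vertical maps by the usual path-torsor construction --- the crux being to check that each path torsor actually has type $\mathbf{r}$.

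\textbf{The stacks.} Write $\pi_1^{rel,\acute{e}t}$ as the inverse limit of its finite-level quotients $\pi_{1,n}^{rel,\acute{e}t}$, each an extension of $S^{\acute{e}t}$ by a unipotent group and each carrying a continuous $G_{F,T}$-action (and, after restriction along a decomposition group at $v$, a $G_{F_v}$-action). For an affinoid $\mathbb{Q}_p$-algebra $R$, let $H^1(G,\pi_{1,n}^{rel,\acute{e}t})(R)$ be the groupoid of continuous $\pi_{1,n}^{rel,\acute{e}t}(R)$-torsors with compatible continuous $G$-action, equivalently the groupoid quotient $[Z^1(G,\pi_{1,n}^{rel,\acute{e}t}(R))/\pi_{1,n}^{rel,\acute{e}t}(R)]$, and pass to the limit over $n$. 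The stack axioms come from flat descent of torsors on the rigid-analytic site; the reductive quotient $S^{\acute{e}t}$ is exactly why this is a genuine stack --- torsors acquire automorphisms --- rather than a rigid-analytic space, as it would be in Kim's unipotent theory. Concretely the bottom of the tower is a moduli stack of $S^{\acute{e}t}$-valued Galois representations, in the spirit of the moduli of Galois representations studied by Wang--Erickson, and each further stage is a torsor under the affine space of continuous $1$-cocycles valued in the next graded piece, as in Kim's construction. Imposing at $v$ that the torsor be crystalline in the sense of Olsson and Kim, together with the $p$-adic Hodge type, weight, and residual-representation conditions of Definition \ref{type}, carves out the substacks $H^1_{f,\mathbf{r}}$; the map $res$ is restriction of cocycles along $G_{F_v}\hookrightarrow G_{F,T}$ and manifestly preserves crystallinity and each of the conditions in $\mathbf{r}$.

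\textbf{The vertical maps, and commutativity.} Given $x\in\mathcal{X}(\mathcal{O}_{F,\Sigma})$, the $\otimes$-isomorphisms $\Isom^{\otimes}(\omega_b,\omega_x)$ between the fibre functors at $b$ and at $x$ on the Tannakian category generated by $\mathcal{L}^{\acute{e}t}$ form a $\pi_1^{rel,\acute{e}t}$-torsor $P^{\acute{e}t}_{b,x}$; since $b$ and $x$ are $\mathcal{O}_{F,\Sigma}$-points and $\mathcal{L}^{\acute{e}t}$ is unramified outside $T$, it carries a compatible continuous $G_{F,T}$-action and hence is a $\mathbb{Q}_p$-point of $H^1(G_{F,T},\pi_1^{rel,\acute{e}t})$. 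That it lands in the substack $H^1_{f,\mathbf{r}}$ is the technical heart: crystallinity at $v$ follows from a non-abelian $p$-adic Hodge comparison for the relative completion of a smooth proper family evaluated at a point of good reduction (the comparison being supplied by the de Rham theory developed elsewhere in this thesis), while the $p$-adic Hodge type, the weights, and the residual representation are invariants attached functorially to $\pi_1^{rel,\acute{e}t}$ through $\mathcal{L}^{\acute{e}t}=R^q\pi_*\mathbb{Q}_p$ --- using purity of the latter (Deligne) for the weights --- and so do not vary with $x$. Commutativity of the square is then formal: the path-torsor construction is insensitive to which Galois group acts, so restricting $x$ to $\mathcal{X}(\mathcal{O}_{F_v})$ and then forming $P^{\acute{e}t}_{b,x}$ agrees with forming $P^{\acute{e}t}_{b,x}$ globally and restricting its action to $G_{F_v}$, which is $res\circ P^{\acute{e}t}_{b,x}$.

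\textbf{Where the hypothesis bites, and the main obstacle.} The assumption that $S^{\acute{e}t}$ is semisimple and simply connected is spent on making the vertical maps well posed: it keeps the reductive part of the path torsor rigid and, at $v$, trivial as a torsor --- by Kneser's vanishing of $H^1$ of a $p$-adic field with simply-connected semisimple coefficients --- so that the crystalline condition reduces to one on a unipotent torsor over a fixed reductive background and lifting up the tower meets no $\pi_0$-obstruction; relaxing it is flagged as future work. I expect the genuine obstacle to be precisely the verification that $P^{\acute{e}t}_{b,x}$ has type $\mathbf{r}$ --- the non-abelian $p$-adic comparison and the weight/purity bookkeeping for relative completions --- since that is where the deep $p$-adic Hodge theory enters; by contrast the stack structure, the restriction map, and the commutativity of the square are formal once the definitions are in place.
\end{pfsketch}
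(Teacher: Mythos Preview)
Your outline is broadly on target, but two steps are handled incorrectly and would not go through as written.

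\textbf{Finiteness of residual types.} You assert that the residual representation ``does not vary with $x$''. This is false: after untwisting, the $S^{\acute{e}t}$-level of $P^{\acute{e}t}_{b,x}$ records precisely the Galois representation on the fibre $\mathcal{L}^{\acute{e}t}_x = H^q_{\acute{e}t}(\bar{Y}_x,\mathbb{Q}_p)$, and for distinct $x$ these are generally non-isomorphic representations with distinct residual pseudorepresentations. The type $\mathbf{r}$ in Definition~\ref{type} is a triple $(\mathbf{v},\{\bar D_i\},w)$ whose second entry is a \emph{set} of residual pseudorepresentations, and the content of Proposition~\ref{points} is that this set can be taken finite. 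The paper's argument is: globally, purity plus Hermite--Minkowski (Faltings' observation, Proposition~\ref{semisimp}) gives only finitely many semisimple $G_{F,T}$-representations of fixed weight, hence finitely many possible $\bar D_i$; locally, $G_{F_v}$ is topologically finitely generated (Jannsen--Wingberg), so there are only finitely many mod-$p$ pseudorepresentations of given dimension. Your sketch misses this entirely and would need to be rewritten here.

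\textbf{Where Kneser is applied.} You invoke Kneser vanishing to trivialise the $S^{\acute{e}t}$-level of the \'etale path torsor at $v$. But the \'etale path torsor already has $\mathbb{Q}_p$-points for free (Remark~\ref{nonempty}), and that is not how crystallinity is established. The paper instead uses Olsson's comparison $P^{\acute{e}t}_{b,x}\otimes B_{\mathrm{crys}}\simeq P^{\mathrm{crys}}_{b,x}\otimes B_{\mathrm{crys}}$ and then applies Kneser to the \emph{crystalline} monodromy $S^{\mathrm{crys}}$ (over $K_0$) to produce a $K_0$-point of $P^{\mathrm{crys}}_{b,x}$; this point gives an isomorphism $P^{\mathrm{crys}}_{b,x}\simeq P^{\mathrm{crys}}_{b,b}$, and composing with Olsson's comparison at both $x$ and $b$ yields the required Galois-equivariant $B_{\mathrm{crys}}$-trivialisation of $P^{\acute{e}t}_{b,x}$. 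The hypothesis on $S^{\acute{e}t}$ in the introduction is transferred to $S^{\mathrm{crys}}$ via the comparison of monodromy groups. Your description ``the crystalline condition reduces to one on a unipotent torsor over a fixed reductive background'' does not match this mechanism.

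The remaining pieces --- the stackification, the unramifiedness outside $T$ (which the paper checks via the Adams spectral sequence and smooth--proper base change rather than asserting in one line), and the formal commutativity --- are fine.
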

\begin{rem}
The monodromy $S^{\acute{e}t}$ will always be semisimple in geometric situations. Its simple-connectedness does not seem universal, but many cases (such as the full symplectic group) satisfy this hypothesis.
\end{rem}

We call the stacks in the theorem Selmer stacks. For those used to the unipotent Selmer variety formalism, we should say here that stackiness enters the picture because our Selmer stacks are something like a combination of Kim's Selmer varieties and moduli stacks of Galois representations.

One hopes to show, as in the work of Kim \cite[Section 1]{kim2005motivic}, that these two stacks and their simpler avatars without Selmer conditions are representable by some geometric object. (They would naturally be representable as rigid-analytic stacks and preferably be algebraizable to algebraic stacks over $\mathbb{Q}_p$.) As a first step toward representability, we prove the following statement, which already uses difficult work of Pottharst on continuous cohomology of families of Galois representations \cite{pottharst2013analytic}.

To state our result, we need the notion of strat-representability: see Definition \ref{stratrep}. The reader should imagine that a stack in groupoids $\mathcal{X}$ is strat-representable if it has a stratification such that each piece is representable as a rigid-analytic stack.
\begin{thm*}{(Theorem \ref{representability})}

Preserve the notations of the previous theorem. The continuous cohomology stack $H^1(G,\pi_1^{rel, \acute{e}t})$ is strat-representable in the category of rigid-analytic stacks, where $G$ is $G_{F_v}$ or $G_{F,T}$.
\end{thm*}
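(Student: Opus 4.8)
\begin{pfsketch}
The strategy is dévissage along the canonical extension $1 \to \mathcal{U}^{rel,\acute{e}t} \to \pi_1^{rel,\acute{e}t} \to S^{\acute{e}t} \to 1$: first handle the reductive quotient, then climb the pro-unipotent tower one central extension at a time, with Pottharst's theorem controlling how the relevant Galois cohomology jumps over the moduli of representations. For the base, note that $H^1(G,S^{\acute{e}t})$ is, essentially by the definition of nonabelian continuous cohomology, the quotient stack $[\underline{\Hom}_{cont}(G,S^{\acute{e}t})/S^{\acute{e}t}]$ for the conjugation action, where $\underline{\Hom}_{cont}$ is the moduli of continuous homomorphisms as a functor on affinoid $\mathbb{Q}_p$-algebras. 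Since $G$ (whether $G_{F_v}$ or $G_{F,T}$, using that $T$ contains the primes above $p$) satisfies Mazur's finiteness condition $\Phi_p$ --- a consequence of Hermite--Minkowski globally and of local Tate duality locally --- the framed-deformation formalism, applied after an embedding $S^{\acute{e}t}\hookrightarrow GL_N$ and imposing the closed condition of factoring through $S^{\acute{e}t}$, shows that $\underline{\Hom}_{cont}(G,S^{\acute{e}t})$ is represented by a rigid-analytic space $\mathfrak{R}$ (a priori a countable disjoint union of quasi-compact pieces, indexed by residual representations). Hence $H^1(G,S^{\acute{e}t}) = [\mathfrak{R}/S^{\acute{e}t}]$ is already a rigid-analytic stack, and no stratification is needed at this stage.

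For the dévissage, write $\mathcal{U}^{rel,\acute{e}t} = \varprojlim_n \mathcal{U}_n$ along its lower central (equivalently, weight) filtration and set $\pi_{1,n}^{rel,\acute{e}t} \defeq \pi_1^{rel,\acute{e}t}/(\text{$n$-th step})$, so that each $\pi_{1,n}^{rel,\acute{e}t} \to \pi_{1,n-1}^{rel,\acute{e}t}$ is a central extension by a vector group $\mathcal{V}_n$ carrying an algebraic action of $S^{\acute{e}t}$. Composing the universal representation $G \to S^{\acute{e}t}$ over $\mathfrak{R}$ with $S^{\acute{e}t} \to GL(\mathcal{V}_n)$ produces a canonical family of continuous $G$-modules over $\mathfrak{R}$, still written $\mathcal{V}_n$; note its $G$-module structure depends only on the $S^{\acute{e}t}$-part of a cocycle, since $\mathcal{V}_n$ is central. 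Here Pottharst's work \cite{pottharst2013analytic} enters: for each $n$, $R\Gamma(G,\mathcal{V}_n)$ is a perfect complex of $\mathcal{O}_{\mathfrak{R}}$-modules whose formation commutes with base change and whose cohomology sheaves $\mathcal{H}^i$ are coherent and supported in degrees $i \in \{0,1,2\}$ (for $G_{F,T}$ one feeds in Poitou--Tate duality and the global Euler characteristic formula on top of the local statement). By coherence the ranks of the $\mathcal{H}^i$ are upper semicontinuous, so for each truncation level $N$ there is a finite stratification $\mathfrak{R} = \bigsqcup_\alpha \mathfrak{R}_\alpha$ on which every $\mathcal{H}^i(R\Gamma(G,\mathcal{V}_n))$ with $n \le N$ and $i \le 2$ is locally free.

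On a fixed stratum I would prove by induction on $n$ that $H^1(G,\pi_{1,n}^{rel,\acute{e}t})|_{\mathfrak{R}_\alpha}$ is a rigid-analytic stack. The case $n=0$ is $[\mathfrak{R}_\alpha/S^{\acute{e}t}]$. For the inductive step, the central extension yields a morphism $H^1(G,\pi_{1,n}^{rel,\acute{e}t}) \to H^1(G,\pi_{1,n-1}^{rel,\acute{e}t})$ together with an obstruction morphism from $H^1(G,\pi_{1,n-1}^{rel,\acute{e}t})|_{\mathfrak{R}_\alpha}$ to the total space of the vector bundle $\underline{\mathcal{H}^2(R\Gamma(G,\mathcal{V}_n))}$; over the zero locus $Z$ of this morphism --- a closed, hence again geometric, substack of the base, which is geometric by induction --- the relative nonabelian cohomology sequence exhibits $H^1(G,\pi_{1,n}^{rel,\acute{e}t})|_{\mathfrak{R}_\alpha}$ as a torsor under the pullback to $Z$ of the Picard stack attached to $\tau^{\le 1}R\Gamma(G,\mathcal{V}_n)[1]$. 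Since $\mathcal{H}^0$ and $\mathcal{H}^1$ are locally free on $\mathfrak{R}_\alpha$, that Picard stack is an extension of the vector bundle $\underline{\mathcal{H}^1}$ by $B\underline{\mathcal{H}^0}$, hence geometric, and a torsor under a geometric group stack over a geometric base is geometric; this closes the induction. Finally $H^1(G,\pi_1^{rel,\acute{e}t})|_{\mathfrak{R}_\alpha} = \varprojlim_n H^1(G,\pi_{1,n}^{rel,\acute{e}t})|_{\mathfrak{R}_\alpha}$ is an inverse limit along (pro-)affine-bundle transition maps, which Definition~\ref{stratrep} is designed to accommodate; gluing over the $\mathfrak{R}_\alpha$ gives strat-representability of $H^1(G,\pi_1^{rel,\acute{e}t})$.

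I expect the main obstacle to be the inductive step carried out in families: upgrading the classical pointwise sequence $H^1(G,\mathcal{V}_n) \to H^1(G,\pi_{1,n}^{rel,\acute{e}t}) \to H^1(G,\pi_{1,n-1}^{rel,\acute{e}t}) \xrightarrow{\delta} H^2(G,\mathcal{V}_n)$ to a statement over the stratified moduli base --- in particular identifying the image of the middle map with the zero locus of a universal obstruction and its fibers with torsors under the Picard stack of $\tau^{\le 1}R\Gamma(G,\mathcal{V}_n)[1]$ --- and checking that Pottharst's perfect-complex formalism, cleanest for local Galois groups via $(\varphi,\Gamma)$-modules, genuinely globalizes to $G_{F,T}$ over rigid-analytic parameter spaces so that base change and coherence continue to hold. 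By comparison the reductive base case and the finiteness inputs are comparatively standard; the technical weight lies in making the unipotent climb uniform in the parameter.
\end{pfsketch}
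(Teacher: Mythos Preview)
Your d\'evissage strategy along the lower central series, with Pottharst's coherence theorem controlling $R\Gamma(G,\mathcal{V}_n)$, is exactly the paper's approach. But there is a genuine gap in your base case. The Galois action on $S^{\acute{e}t}$ is \emph{not} trivial: $G$ acts by conjugation through the fibre representation $\phi\colon G\to GL(\mathcal{L}^{\acute{e}t}_b)$, so cocycles are not homomorphisms and $H^1(G,S^{\acute{e}t})$ is \emph{not} $[\underline{\Hom}_{cont}(G,S^{\acute{e}t})/S^{\acute{e}t}]$. The paper spends a separate chapter on this: one untwists via $c\mapsto c\phi$ to get a bijection $Z^1(G,GL_{n,\mathrm{conj}})\simeq \Hom(G,GL_n)$, invokes Wang--Erickson for the $GL_n$ case, and then transfers down to $S$ using the long exact sequence for the \emph{non-normal} subgroup $S\subset GL_n$; the fibres of $H^1(G,S)\to H^1(G,GL_n)$ are quotients $[({}_bGL_n/{}_aS)^G/({}_bGL_n)^G]$, and one must stratify to make $({}_bGL_n)^G$ flat. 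Your ``impose the closed condition of factoring through $S$'' does not survive the untwisting, since $\phi$ need not land in $S$.

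On the inductive step, two remarks. First, the extension $1\to\mathcal{V}_n\to\pi_{1,n}\to\pi_{1,n-1}\to 1$ is \emph{not} central: $S$ acts nontrivially on $\mathcal{V}_n$ (what is true, and what you actually use, is that $\mathcal{V}_n$ is central in $\mathcal{U}_n$, so the twist ${}_c\mathcal{V}_n$ depends only on the $S$-image of $c$). Second, your description of the stacky fibre as a torsor under the Picard stack of $\tau^{\le 1}R\Gamma(G,\mathcal{V}_n)[1]$ is different from the paper's: the paper writes the fibre over $c$ as the quotient $[H^1(G,{}_c\mathcal{V}_n)/H^0(G,{}_c\mathcal{G}_{n-1})]$ and therefore takes an \emph{additional} flattening stratification of $H^0(G,{}_c\mathcal{G}_{n-1})$ over the level-$(n-1)$ moduli (not just over $\mathfrak{R}$) at every step. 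Your Picard-stack formulation, if made precise, would avoid this recursive refinement and let a single stratification of $\mathfrak{R}$ suffice --- a genuine simplification --- but you should supply the argument that the $2$-fibre product really is such a torsor (this amounts to checking that every object $(b,\alpha)$ in the fibre is isomorphic to one with $\alpha=\mathrm{id}$, after which only $\mathcal{V}_n$-conjugation remains), rather than citing it as a consequence of an exact sequence that in Serre's set-level form yields the paper's quotient instead.
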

We also explain the challenges that keep us from proving (strat-)representability of Selmer stacks (Remark \ref{challenges} and the discussion preceding it.)

\subsection{de Rham realizations}

One can play the same game with the relative de Rham cohomology $\mathcal{L}^{dR}$ of a smooth and proper family, and from it one obtains in exactly the same way the de Rham relative completion, $\pi_1^{rel, dR}(X \otimes_F F_v)$. Our other strain of results regards the creation of a de Rham period space which classifies torsors for the de Rham relative completion that are ``doubly admissible'' in a certain sense. The word ``doubly'' refers to the fact that they combine Kim's admissibility of torsors with weak admissibility in the sense of $p$-adic Hodge theory. The reader should think of this moduli space as combining the flag-variety approach of Lawrence and Venkatesh with the unipotent Albanese approach of Hain and Kim.

The de Rham moduli space is an open rigid-analytic substack $$\mathcal{M}^{dR} \subseteq [ (\pi_1^{rel, dR})^{\phi=1} \backslash \pi_1^{rel, dR}/F^0\pi_1^{rel, dR} ].$$ The Frobenius-invariant subgroup is intimately related to the headaches that Lawrence and Venkatesh overcome in bounding the centralizer of Frobenius, and our methods show how one might eliminate these headaches. Let us explain.

The subgroup $(\pi_1^{rel, dR})^{\phi=1}$ is a priori huge, since it is a subgroup of a pro-algebraic group. However, a deep result of Olsson on the weights of Frobenius acting on the unipotent radical of $\pi_1^{rel, dR}$ imply that $(\pi_1^{rel, dR})^{\phi=1}$ is no bigger than the centralizer of Frobenius $Z(\phi)$ in the monodromy of the local system $\mathcal{L}^{dR}$. In other words, the $\phi$-invariant part has no effect on the dimensions of the de Rham period space as we move up the ``unipotent tower'' of $\pi_1^{rel, dR}$. This fact has the potential to eliminate the centralizer of Frobenius headache -- at the expense, of course, of much more challenging motivic and stack-theoretic arguments. For the full discussion, see our discussion of the relative unipotent version of Besser's Tannakian Coleman integration in Theorem \ref{integration}.

\subsection{The Chabauty--Kim diagram for relative completions}
Last but not least, the various motivic realizations of the relative completion have known comparisons due to Olsson, Pridham, Hain, and others. Assume that both $S^{dR}$ and $Z(\phi)$ are semisimple and simply connected. Taken together with a simple generalization of the Bloch--Kato logarithm, these comparisons give us a complete diagram of stacks (Section \ref{diagram})
\[
\begin{tikzcd}
\mathcal{X}(\mathcal{O}_{F, \Sigma}) \arrow{r} \arrow{d}{x \mapsto P^{\acute{e}t}_{b,x}} & \mathcal{X}(\mathcal{O}_{F_v}) \arrow{d}{x \mapsto P^{\acute{e}t}_{b,x}} \arrow{dr}{x \mapsto P^{dR}_{b,x}} \\
H^{1}_{f,\bold{r}}(G_{F,T}, \pi_1^{rel,\acute{e}t}) \arrow{r}{res} & H^{1}_{f,\bold{r}}(G_{F_v}, \pi_1^{rel,\acute{e}t}) \arrow{r}{D_{dR}} & Res^{F_v}_{\mathbb{Q}_p} \mathcal{M}^{dR}
\end{tikzcd}
\]
that strictly generalizes the Chabauty--Kim diagram when $Y \to X$ is the identity, i.e. when $\pi_1^{rel}$ is just the unipotent completion of the fundamental group. We also prove a number of other results, such as analyticity of the Albanese map (Section \ref{analyticity}) and strat-representability of the Selmer stack for the reductive quotient of the \'etale relative completion (Section \ref{reductiveselmer}.) Last but not least, we assume ``all conjectures'' and show how one could prove Siegel's theorem for the affine modular curves $Y_1(N)$ ($N \geq 4$) using Eisenstein classes (Section \ref{projline}.) The case $N=4$ is that of the projective line minus three points.

\begin{rem}
The semisimplicity and simple-connectedness of $Z(\phi)$ also has to be checked on a case-by-case basis. This condition, and the one on the monodromy above, arises because we must use a theorem of Kneser on vanishing of Galois cohomology for semisimple and simply-connected algebraic groups to produce points on certain path torsors. These conditions could likely be removed in future work by using non-archimedean stacks in the \'etale topology, but we have avoided that here because descent in the \'etale topology is much more complicated for rigid spaces than for schemes.
\end{rem}

\subsection{Related results}

The reader might see some superficial similarities between our cohomology stacks and the non-abelian cohomology schemes of Hain's paper \cite{hain2010remarks}, which proves some representability results for non-abelian cohomology sets related to relative completions. However, his results regard unipotent fundamental groups, so they are really quite different than those of the current paper.

Much of the work on motivic relative completions after Hain comes from the work of Pridham. The reader is directed to the invaluable works \cite{pridham2012l} and \cite{pridham2009weight} for a high-level (and highly homotopical) analysis of relative Malcev completions of fundamental groups.

There has been a flurry of recent work applying relative completions to modular forms and periods. Both Hain \cite{hain2016hodge} and Brown \cite{brown2014multiple} and many others have fruitfully applied relative completions to understanding these arithmetic problems. As we see in the section on the projective line minus three points, these works are quite related to our problem.

We note that our use of the phrase ``relatively unipotent" clashes with that of a body of work that uses this phrase for a kind of relative fundamental group $\pi_1(Y/X)$, for example in \cite{wildeshaus2006realizations}. These two meanings are sometimes related by pushforward and pullback of sheaves on $X$ and $Y$. Aside from the connection of our work to elliptic polylogarithms, we do not delve further into this relationship.

\section{Future work}

The most urgent result that one needs in order to apply Chabauty--Kim for relative completions is (strat-)representability and algebraicity of Selmer stacks for the relative completion.

Once that happens, it would be wonderful to have a theory of Coleman integration for the relative completion to make the work more explicit; $p$-adic integrals of modular forms were studied by Coleman, but computational tools would be welcome to form conjectures on vanishing of integrals of modular forms on rational points.

Most of our theory could be applied to the entire pro-algebraic completion of the fundamental group if the relevant $p$-adic Hodge theory were more developed in that case. Pridham's work on pro-algebraic homotopy types gets us most of the way.

Even though we apply Chabauty for the relative completion to the projective line minus three points in this work, we need to assume the Fontaine--Mazur conjecture for three-dimensional Galois representations (in addition to algebraicity of the relevant Selmer stacks.) One would like to give an unconditional proof of at least Siegel's theorem. One approach might involve showing that the cocyles $c$ for which the dimension of $H^1_f(G_T, {}_c\mathcal{G}^{et}_n)$ jumps are cut out by $p$-adic equations in $$H^1_f(G_p, S^{et}).$$

Finally, though the goal of this work was to bring the method of Chabauty--Kim closer to the method of Lawrence--Venkatesh, we have not yet applied our work to Kodaira--Parshin families. The first step would be to understand the cohomology of the local systems coming from these families. Alternatively, one could approach finiteness (via the original formulation of the Shafarevich Conjecture) using relative completions for the moduli space of curves, which have a rich arithmetic and geometric theory (see \cite{hain1997infinitesimal}, \cite{hain2009relative}.)

\section{Notation}
We have tried throughout to keep uniform algebraic notation: $F$ is a general number field with places $v$, $K$ is a $p$-adic field with residue field $\kappa$, $K_0$ is the maximal unramified subextension of $K$ and $R$ is the valuation ring of $K_0$. If $V$ is a scheme over $\Spec A$ and $B$ is an $A$-algebra, we often write $V \otimes_A B$ for $V \times_{\Spec A} \Spec B$.

We usually denote a Tannakian category by $\mathcal{T}$ and its fundamental group by $\pi(\mathcal{T})$; if the Tannakian category is a thick subcategory of another category $\mathcal{T}'$ generated by an implicit object $\mathcal{L} \in \mathcal{T}$, we usually supress $\mathcal{L}$ and write just $\mathcal{T}^{rel}$ for the thick subcategory and $\pi^{rel}$ or $\pi_1^{rel}$ for its fundamental group. All torsors are right torsors.

\chapter{The Relative Completion}

\section{Relative completions: algebraic theory}
A reference for the following material is \cite[Part 1, Section 3]{hain2016hodge}. Let $(\mathcal{T}, \omega)$ be a Tannakian category with fundamental group $\Pi$, and $\mathcal{L}$ a distinguished element of $\mathcal{T}$ that we will generally suppress in our notation. We denote by $S$ the Tannakian fundamental group of $\langle \mathcal{L} \rangle _{\otimes}$ the Tannakian subcategory generated by $\mathcal{L}$ inside $\mathcal{T}$. (By this we mean it contains $\mathcal{L}$ and is closed under tensor product, dual, quotient, and direct sum.) A general note about our Tannakian categories notation: If $\omega, \omega'$ are two fiber functors on a Tannakian category $\mathcal{T}$, we write $P_{\omega, \omega'}$ for the scheme $Isom(\omega, \omega')$ -- if these two fiber functors come from points $x,x'$ in a geometric situation, we will simply write $P_{x,x'}$.

We make the following assumption throughout this work: $S$ is reductive, or equivalently, every element of the tensor category generated by $\mathcal{L}$ is semisimple.

\begin{defn}
Let $\mathcal{T}^{rel}$ be the subcategory of $\mathcal{T}$ defined by the property that its elements have a filtration by subobjects $$\cdots \subseteq V_2 \subseteq V_1 = V $$ such that each representation $\Pi \to \Aut \left( \omega(gr_i(V)) \right)$ factors through the map $\Pi \to S$; in other words, the associated graded of $V$ lies in $\langle \mathcal{L} \rangle _{\otimes}$. Equivalently, $\mathcal{T}^{rel}$ is the thick tensor subcategory generated by $\mathcal{L}$. We call this the category of objects of $\mathcal{T}$ that are unipotent relative to $\mathcal{L}$, or if $\mathcal{L}$ is implicit just the relatively unipotent objects.

Finally, we will refer to $\pi(\mathcal{T}^{rel})$ as either the relative unipotent fundamental group of $\mathcal{T}$ or as just the relative completion, with $\mathcal{T}$ and $\mathcal{L}$ implicit.
\end{defn}
(Note that the terminology relative completion is slightly misleading because there is no completion going on yet.) When $\mathcal{L}$ is the tensor unit, the category $\mathcal{T}^{rel}$ is the traditional category of unipotent objects. For brevity, denote by $\mathcal{G}$ the fundamental group of $\mathcal{T}^{rel}$. There is a map $\mu \colon \mathcal{G} \to S$ given by including the Tannakian category generated by $\mathcal{L}$ into the thick Tannakian category generated by $\mathcal{L}$. Also, the inclusion of the full subcategory of truly unipotent objects induces a surjection from the fundamental group of objects unipotent relative to $\mathcal{L}$ to the fundamental group of unipotent objects.
\begin{prop}
The homomorphism $\mu \colon \mathcal{G} \to S$ induces an exact sequence 
\begin{align} \label{extn}
1 \to \mathcal{U} \to \mathcal{G} \to S \to 1,
\end{align}
 where $\mathcal{U}$ is pro-unipotent.
\end{prop}
\begin{proof}
The surjectivity of $\mu$ follows from the fact that $\langle \mathcal{L} \rangle _{\otimes}$ is a full subcategory of $\langle \mathcal{L} \rangle _{thick}$, and that the former is closed under subobjects in the latter \cite{milne2007quotients}.

Now let $\mathcal{U}$ denote the subgroup of $\mathcal{G}$ which fixes the associated graded of any element of $\mathcal{T}$ for any filtrations as above, i.e. the $\mathcal{U}$-representations $\omega\left( gr^i(V) \right)$ are trivial for all objects $V$ and all filtrations whose associated gradeds descend to $S$-representations. From this description it is clear that $\mathcal{U}$ is precisely the kernel of the map $\mathcal{G} \to S$.

Let us show that $\mathcal{U}$ is unipotent by induction. There is an increasing sequence of subcategories $\mathcal{T}^{rel}_n$ that bounds by $n$ the number of terms in the filtration associated to objects of $\mathcal{T}^{rel}$; denote by $\mathcal{G}_n$ its fundamental group, and $\mathcal{U}_n$ the subgroup which fixes the associated graded of $S$-filtrations, as before. The inclusion $\mathcal{T}^{rel}_n \to \mathcal{T}^{rel}_{n+1}$ gives a map $\mathcal{G}_{n+1} \to \mathcal{G}_n$ whose kernel $K$ consists of elements which fix (the fiber of) all objects of $\mathcal{T}^{rel}_n$. Let $V \in \mathcal{T}^{rel}_{n+1}$; then the action of $K$ on $V$ factors through $gr^1(V)$ because $V_2 \in \mathcal{T}^{rel}_n$. But $\mathcal{U}_n$ acts trivially on the associated graded by definition, and so $\mathcal{U}_{n+1}$ acts unipotently on $\omega(V)$ for all $V \in \mathcal{T}^{rel}_{n+1}$.

Now $\mathcal{U}_n$ is unipotent because it acts unipotently on representations of $\mathcal{G}_n$. One sees this by embedding $\mathcal{G}_n$ into some $GL_m$ and observing that $\mathcal{U}_n$ acts unipotently on the canonical representation of $GL_m$. This argument moreover shows that $\mathcal{U}_n$ is the unipotent radical of $\mathcal{G}_n$, since any presentation of $\mathcal{G}_n$ as an extension of a reductive group by a unipotent group is unique up to conjugation. Thus, since $\mathcal{U}$ is the inverse limit of the $\mathcal{U}_n$, it is pro-unipotent.
\end{proof}

We use the following notation for a pro-algebraic group that is an extension of a reductive group by a pro-unipotent group. Let $\mathcal{U}^n$ denote the $n$-th level of the lower central series with $\mathcal{U}^1 = [\mathcal{U}, \mathcal{U}]$, and so on and so forth, setting also $\mathcal{U}_n = \mathcal{U}/\mathcal{U}^n$ and $Z_n = \mathcal{U}^n/ \mathcal{U}^{n+1}$. The notation $\mathcal{G}_n$ will refer to the extension obtained by pushing out \ref{extn} by the map $\mathcal{U} \to \mathcal{U}_n$.

We can use this machinery to construct a generalized version of the unipotent or Malcev completion of an abstract group.
\begin{defn}
Let $\rho \colon \Gamma \to S(F)$ be a continuous and Zariski-dense homomorphism of a topological group $\Gamma$ into an reductive group $S$ over a topological field $F$. Consider the Tannakian category $Rep_F(\Gamma)$ of all finite dimensional continuous representations of $\Gamma$ defined over $F$ and its Tannakian subcategory $Rep_F(\Gamma)^{rel}$ of representations unipotent relative to $\rho$, as defined above.

The relative completion of $\Gamma$ with respect to $\rho$, and over $F$, is defined as the Tannakian fundamental group of $Rep(\Gamma)^{rel}$.
\end{defn}

This relative completion has an important universal property which mirrors that of the Malcev completion. Preserve the notation of the last definition, writing $\mathcal{G}$ for the relative completion and $\mathcal{U}$ for its unipotent radical.
\begin{prop}
Let $\mathcal{G}'$ be a proalgebraic group which is an extension $$1 \to \mathcal{U}' \to \mathcal{G}' \to S \overset{\phi} \to 1,$$ where $\mathcal{U}'$ is unipotent, and suppose there exists a map $\Gamma \to \mathcal{G}'(F)$ whose composition with $\phi$ is $\rho$. Then there is a map $\mathcal{G} \to \mathcal{G}'$ that makes the following diagram commute, where the horizontal arrow is $\rho$:
\[
\begin{tikzcd}
& \mathcal{G}(F) \arrow{dr} \arrow{dd} &\\
\Gamma \arrow{ur} \arrow{dr} \arrow{rr} &  & S(F)\\
& \mathcal{G}'(F) \arrow{ur} &
\end{tikzcd}
\]
\end{prop}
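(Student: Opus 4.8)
The plan is to argue by Tannaka duality. A homomorphism $\mathcal{G} \to \mathcal{G}'$ of affine $F$-group schemes is the same datum as an exact $F$-linear tensor functor $Rep_F(\mathcal{G}') \to Rep_F(\Gamma)^{rel}$ compatible with the forgetful fiber functors, so I will build such a functor. The candidate is pullback along the given homomorphism $j\colon \Gamma \to \mathcal{G}'(F)$: send an algebraic representation $V$ of $\mathcal{G}'$ to the vector space $V$ with $\Gamma$ acting through $\Gamma \xrightarrow{j} \mathcal{G}'(F) \to GL(V)(F)$.

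First I would check this really is a functor into $Rep_F(\Gamma)^{rel}$. Continuity of the $\Gamma$-action is automatic: $\mathcal{G}'(F)\to GL(V)(F)$ is the map on $F$-points of a morphism of affine $F$-schemes, hence continuous, so the composite with $j$ is continuous (here one uses --- as with $\rho$ --- that $j$ is continuous, which is part of the data). Exactness, faithfulness and compatibility with $\otimes$, duals and the unit are clear since on underlying vector spaces the functor is the identity. The substantive point is \emph{relative unipotence}: I claim every finite-dimensional algebraic representation $V$ of $\mathcal{G}'$ has a filtration by $\mathcal{G}'$-subrepresentations on whose graded pieces $\mathcal{G}'$ acts through $\phi\colon\mathcal{G}'\to S$. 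Since $\mathcal{U}'$ is normal in $\mathcal{G}'$, the subspace $V^{\mathcal{U}'}$ is a $\mathcal{G}'$-subrepresentation, and it is nonzero when $V\neq 0$ because a (pro-)unipotent group acting on a nonzero finite-dimensional space has a nonzero fixed vector; iterating on $V/V^{\mathcal{U}'}$ produces the filtration. Pulling it back along $j$, each graded piece is a $\Gamma$-representation on which $\Gamma$ acts through $\Gamma\xrightarrow{j}\mathcal{G}'(F)\xrightarrow{\phi}S(F)$, which is $\rho$ by hypothesis; so each graded piece lies in $\langle\rho\rangle_{\otimes}\simeq Rep_F(S)$ (here Zariski-density of $\rho$ and reductivity of $S$ enter) and the pullback of $V$ lies in $Rep_F(\Gamma)^{rel}$.

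Tannaka duality then yields $\mathcal{G}=\pi(Rep_F(\Gamma)^{rel})\to\pi(Rep_F(\mathcal{G}'))=\mathcal{G}'$, canonically. Commutativity of the diagram I would check triangle by triangle, each reducing to an identity of the associated functors (equivalently, checked on an arbitrary representation of $\mathcal{G}'$). For the triangle through $\mathcal{G}'$ and $S$: pulling an $S$-representation first to $\mathcal{G}'$ and then to $\Gamma$ makes $\Gamma$ act through $\phi\circ j=\rho$, which is precisely the functor $Rep_F(S)=\langle\rho\rangle_{\otimes}\hookrightarrow Rep_F(\Gamma)^{rel}$ defining $\mu$, so $\phi\circ(\mathcal{G}\to\mathcal{G}')=\mu$. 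For the triangle $\Gamma\to\mathcal{G}(F)\to\mathcal{G}'(F)$: evaluated on any $V\in Rep_F(\mathcal{G}')$ both composites give the tautological $\Gamma$-action on the pulled-back $V$, i.e. $V\circ j$; since $V$ is arbitrary the two maps $\Gamma\to\mathcal{G}'(F)$ coincide with the given $j$. The remaining relations ($\Gamma\to\mathcal{G}(F)\to S(F)$ equals $\rho$, and $\Gamma\to\mathcal{G}'(F)\to S(F)$ equals $\rho$) are, respectively, a defining property of the relative completion and the given hypothesis.

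I expect the main obstacle --- really the only non-formal step --- to be the relative-unipotence verification: that every algebraic representation of the extension $1\to\mathcal{U}'\to\mathcal{G}'\to S\to1$ admits a $\mathcal{G}'$-stable filtration with $S$-graded pieces, and that after pullback these pieces genuinely lie in $\langle\rho\rangle_{\otimes}$ (not merely factor set-theoretically through $\rho(\Gamma)$), which is where reductivity of $S$ and Zariski-density of $\rho$ close the argument. A secondary point requiring care is the foundational check that $Rep_F(\Gamma)^{rel}$ with its forgetful functor is a genuine neutral Tannakian category over $F$, so that $\pi(-)$ and Tannaka duality are available, together with the continuity bookkeeping noted above.
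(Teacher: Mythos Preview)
Your proposal is correct and follows the approach the paper indicates: the paper does not give a detailed proof but remarks that it ``proceeds in exactly the same way as the result that the Tannakian fundamental group of all representations is precisely the algebraic envelope over $F$,'' i.e.\ via Tannaka duality by exhibiting the pullback functor $Rep_F(\mathcal{G}')\to Rep_F(\Gamma)^{rel}$. Your write-up fleshes this out carefully, including the one substantive point (the $\mathcal{U}'$-fixed-vector filtration showing the pullback lands in the relatively unipotent category), which the paper leaves implicit.
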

The proof of this fact proceeds in exactly the same way as the result that the Tannakian fundamental group of all representations is precisely the algebraic envelope over $F$, cf. \ref{envelopes}.

The operation of taking relative completions is functorial in the sense that if one is given two representations $\rho \colon \Gamma \to S(F)$ and $\rho' \colon \Gamma \to S'(F)$ and a map $f \colon S \to S'$ such that $f \circ \rho = \rho'$, there is a morphism $\mathcal{G} \to \mathcal{G}'$ ``making all of the relevant diagrams commute.''

\begin{exmp}
We may apply the technology of the relative completion to \'etale fundamental groups as follows.

Suppose $X$ is a scheme defined over any field $K$, that $\mathcal{L}^{\acute{e}t}$ is a $\mathbb{Q}_p$-local system on $X$ such that the pullback of $\mathcal{L}^{\acute{e}t}$ to $\bar X$ has reductive monodromy group $S$. Write $\pi_1^{rel,\acute{e}t}(\bar X,b)$ for the Tannakian relative completion of the category of $\mathbb{Q}_p$-local systems with respect to $\mathcal{L}^{\acute{e}t}$ (equivalently, with respect to the morphism $\pi_1^{\acute{e}t, profinite}(\bar X,b) \to S(\mathbb{Q}_p)$ defined by $\mathcal{L}^{\acute{e}t}$.)

Now the Galois action can be defined functorially: $G_K$ acts on $\bar X = X \otimes \bar K$ and thus on the category of $\mathbb{Q}_p$-relatively unipotent local systems on $\bar X$. The functoriality of Tannakian reconstruction implies that an action by equivalences of this category induces an action by automorphisms on its fundamental group.

An equivalent and more conventional way to define the Galois action is using a conjugation action. We start with the so-called Fundamental Exact Sequence $$0 \to \pi_1^{\acute{e}t, prof}(\bar X,b) \to \pi_1^{\acute{e}t, prof}(X,b) \to G_K \to 0.$$ Here $prof$ refers to the profinite \'etale fundamental group. The basepoint $b$ provides a section of this map, using the functoriality of Galois categories, and then $G_K$ acts on $\pi_1^{\acute{e}t, prof}(\bar X,b)$ by conjugation. Secondly, $G_K$ also acts on $S(\mathbb{Q}_p)$ by conjugation using the Galois action on the stalk of $\mathcal{L}^{\acute{e}t}$ at $b$, and the map $$\pi_1^{\acute{e}t, prof}(\bar X,b) \to S(\mathbb{Q}_p)$$ is equivariant with respect to these two actions.

Given this equivariance, the functoriality of the relative completion furnishes an action of $G_K$ on $\pi_1^{rel,\acute{e}t}(\bar X,b)(\mathbb{Q}_p)$.

\end{exmp}

Last but not least, we cover an essential bit of homological algebra that is important to computations and comparisons with the relative completion. For any extension $\mathcal{G}$ of a reductive $S$ by a unipotent $\mathcal{U}$ as above and any representation $V$ of $\mathcal{G}$, we have a Lyndon--Hochschild--Serre spectral sequence $$H^p(S, H^q(\mathcal{U}, V)) \Rightarrow H^{p+q}(\mathcal{G},V).$$ But $S$ is reductive and thus has vanishing higher cohomology; so the terms of the spectral sequence are zero if $p>0$. We thus obtain an equality $$H^q(\mathcal{G},V) \simeq  H^q(\mathcal{U}, V)^S.$$

Now suppose that we have two such extensions of $S$, with a map between them. With some algebraic work
one shows that the morphism $\mathcal{G}_1 \to \mathcal{G}_2$ is an isomorphism if the induced map $$H^1(\mathcal{U}_1, V)^S \to H^1(\mathcal{U}_2, V)^S$$ is an isomorphism for all irreducible representations $V$ of $\mathcal{G}$. The point is that both sides will have an interpretation as certain (de Rham, \'etale, etc.) cohomology groups on a space $X$, which we can compare.

\section{Overview: relative completions for the motivically inclined}
In the next sections we will define and discuss various ``relative completions'' of motivic fundamental groups. Again, we use quotation marks here because the resulting fundamental group will not always be the relative completion of a group in the category of sets with respect to a representation. However, they will always come from a thick Tannakian subcategory of a larger category, as we explained in the last section.

In any case, we take this section to prepare the reader for the motivic story that she is about to read in various realizations. The protagonist of our story is a smooth quasi-projective scheme $X$ over $\mathcal{O}_{F, \Sigma}$, and in its arsenal lay a vector bundle with integrable connection on $X_{F_v}$ ($v \not \in \Sigma$) or $X_{\mathbb{C}}$, or an isocrystal on the special fiber of $X$ at $v$, or a $\mathbb{Q}_p$-local system on $\bar X$ that is defined over $\mathcal{O}_{F, \Sigma}$. Betti local systems also play an important role in our story.

Being motivic, these act like they are realizations of one true entity, $\mathcal{L}^{mot}$. The best way to obtain such an object will be to start with a smooth and proper morphism $f \colon Y \to X$ and push forward a ``trivial motive" to obtain the relative de Rham cohomology, or fiberwise geometric \'etale cohomology, etc. We call this the ``Gauss--Manin case'' because the de Rham realization of this process is the Gauss--Manin connection.

The Tannakian fundamental group of (the various realizations of) $\langle \mathcal{L}\rangle_{\otimes} $ in the category of local systems over $X$, or the category of vector bundles, or... will be denoted $S$, as in the last section. Then Deligne's machinery of algebraic geometry in a Tannakian category says that the structure sheaf $\mathcal{O}(S)$ always has the structure of an ind-motive over $X$, i.e. an ind-$\mathbb{Q}_p$-local system, or an ind-vector bundle with integrable connection, or...

We then take the cohomology $\mathbb{R}\Gamma(X,\mathcal{O}(S))$. For smooth and proper $X$, we have formality theorems of the following kind, given by Olsson, Pridham, and Hain: 
\begin{center}
The Lie algebra $\mathfrak{u}$ of the unipotent radical $\mathcal{U}$ of $\mathcal{G}$ in each category is a quotient of the free Lie algebra on $H^1(X,\mathcal{O}(S))^*$ with relations given by the cup products of these elements into $H^2(X,\mathcal{O}(S))^*$.
\end{center}
These types of theorems are generalizations of Chen's original $\pi_1$-de Rham theorem and the amazing formality result of Deligne, Griffiths, Morgan, and Sullivan \cite{deligne1975real}. In the classical setting of unipotent Tannakian fundamental groups, this formality result says precisely that $\mathfrak{u}$ is the quotient of the free Lie algebra on $H^1_{dR}(X)$ by cup product relations.

When $X$ is not projective, one can still use the Leray and Adams spectral sequence and weight arguments to give a presentation for $\mathfrak{u}$. For our purposes, the important statement is as follows \cite[proof of Proposition 3.4]{pridham2012l}. We write $X^o = X - D$ for $X$ projective and $D$ some normal crossings divisor such that $\mathcal{L}$ has tame monodromy around $D$, and $j$ for the inclusion of $X^o$ into $X$.

\begin{center}
The Lie algebra $\mathfrak{u}$ of the unipotent radical $\mathcal{U}$ of $\mathcal{G}$ in each category is a quotient of the free Lie algebra on $H^1(X^o,\mathcal{O}(S))^* \oplus H^0(X, R^1j_*\mathcal{O}(S))^*$ with relations given by cup product and spectral sequence maps between these groups and wedge and tensor products of the groups $H^2(X,j_*\mathcal{O}(S))$, $H^1(X,R^1j_*\mathcal{O}(S))$, and $H^0(X,R^2j_*\mathcal{O}(S))$.
\end{center}
The most difficult weight argument appears in the crystalline case, where one needs the recent work of Abe and Caro \cite[Main Theorem]{abe2018theory} for the above statement.

Now that we have some motivation for the motivic story, we begin to build moduli spaces for the \'etale relative completion.

\chapter{Stacky Non-Abelian Galois Cohomology}
As the conjectures of Bloch-Kato and others have shown, the theory of Galois representations is a central organizing principle in number theory. And as soon as one begins to study any group representations, one immediately becomes interested in the cohomology of the given group with values in a representation -- these groups are the derived functors of the invariants.

For us, the important fact is that the \'etale fundamental group of a scheme defined over a number field comes with an action of the Galois group of the base field. The \'etale fundamental group can therefore rightly be called a \textit{non-abelian} Galois representation.  Let us define stacky Galois cohomology of these non-abelian Galois representations and prove that it satisfies the single most important property of group cohomology: as much as possible, it converts short exact sequences of representations to long exact sequences in cohomology. First we must define what we mean by an exact sequence of groupoids.

\begin{defn}
\begin{description}
\item[(1)] A homomorphism of groupoids is a functor between the underlying categories. 
\item[(2)] We call a groupoid $\Gamma$ pointed if it has a distinguished isomorphism class, called the point of $\Gamma$ and usually denoted $pt$.
\item[(3)] A homomorphism of groupoids $g \colon \Gamma \to \Gamma'$ is called pointed if it sends the point of $\Gamma$ to the point of $\Gamma'$.
\item[(4)] The kernel of a homomorphism of pointed groupoids is the subcategory sent to the point, and the image of such a homomorphism is the essential image of the functor. We say that a sequence of homomorphisms of groupoids is exact if the kernel of the second map is equal the image of the first.
\end{description}
\end{defn}
We now have the technology to discuss non-abelian cohomology groupoids. The basic material on short exact sequences below is from the canonical reference, Serre's book on Galois cohomology \cite[Chapter 1, Section 5]{ion2001galois} -- though Serre does not use the language of groupoids.
\begin{defn}
Consider $G$, a topological group, acting continuously on a topological group $B$. The action of $g$ on $b$ will be denoted ${}^g b$.
\begin{description}
\item[\textbf{(0th Cohomology Group)}]
We define $$H^0(G, B) = B^G.$$
\item[\textbf{(1st Non-abelian Cohomology Groupoid -- non-stacky)}]
The set of continuous 1-cocycles of $G$ with values in $B$ is $$Z^1(G, B) \defeq \{c \colon G \to B \text{ s.t. $c$ is continuous and } c(gh) = c(g) ^gc(h)\}.$$ There is an action of $B$ on $Z^1(G, B)$ given by $$c(g) \mapsto b^{-1}c(g)^gb,$$ and the quotient groupoid by this action is the first cohomology groupoid $H^1(G,B)$. It has a canonical point given by the isomorphism class of the trivial 1-cocycle.
\end{description}
\end{defn}

The following construction, called Serre twisting, will come up throughout our exposition. If $c \in Z^1(G, C)$ is a cocycle with values in a group $C$ and $C$ acts on another group $A$, then we denote by $_c A$ the new $G$-set on which $G$ acts via $a \mapsto c(g) \cdot {}^g a$. Usually it will either be the case that $C=A$ acting on itself on the left or we will have an extension of $C$ by $A$ and $C$ will act by conjugation.

Finally, for our purposes a short exact sequence of topological $G$-groups is a sequence of topological groups with continuous $G$-action \[1 \to A \to B \to C \to 1 \] which is exact as a sequence of groups and such that each of the morphisms in the exact sequence is continuous. Note that in many places in the literature one requires the second map to be a topological quotient, but we do not need that here.
\begin{prop}
In the setup of the last definition, let $A$ be a subgroup of $B$. Consider the following conditions on $A$, of increasing stringency:
\begin{description}
\item[0] $A$ is a $G$-stable subgroup of $B$ such that $B/A$ has some given topology such that the map $B \to B/A$ is continuous, and there is a continuous section of the map $B \to B/A$.
\item[1] $A$ is normal and $G$-stable in $B$, yielding a short exact sequence of topological $G$-groups $$1 \to A \to B \to C \to 1,$$ and there is a continuous set-theoretic section of the map from $B$ to $C$.
\item[2] $A$ is abelian, normal, and $G$-stable in $B$, and the aforementioned splitting exists.
\item[3] $A$ is central and $G$-stable in $B$ and the aforementioned splitting exists. 
\end{description}
Then we have the following non-abelian analogues of short exact sequences for cohomology groupoids:
\begin{description}
\item[0] Under condition $\textbf{0}$, there is an exact sequence $$1 \to H^0(G,A) \to H^0(G,B) \to H^0(G,B/A) \to H^1(G, A) \to H^1(G,B)$$ of pointed groupoids, which is an exact sequence of groups at the first two terms. Furthermore, the following ``twisted exactness property'' holds: Let $a \in Z^1(G,A)$, and $b \in H^1(G,B)$ its image. Then the groupoid-theoretic fiber above $b$ in the above exact sequence is given by the groupoid $$H^0(G,{}_aB/ {}_aA)/H^0(G,{}_a B).$$ (In particular, it is independent of which $a$ in the fiber is chosen.)

This twisted exactness property does not require the existence of a continuous splitting of the quotient map.

\item[1] Under condition $\textbf{1}$, there is an exact sequence $$1 \to H^0(G,A) \to H^0(G,B) \to H^0(G,C) \to H^1(G, A) \to H^1(G,B) \to H^1(G,C)$$ of pointed groupoids, which is an exact sequence of groups at the first 3 terms. Furthermore, the following ``twisted exactness property'' holds, in addition to the property mentioned in $\textbf{0}$: Let $b \in Z^1(G,B)$, and denote by $c$ the image of $b$ in $H^1(G,C)$. Then the groupoid-theoretic fiber above $c \in H^1(G,C)$ in the exact sequence is given by $H^1(G,{}_b A)/H^0(G,{}_b C)$. Here the twist $_b A$ is by conjugation.

\item[2] Under condition $\textbf{2}$, the exact sequence given above, $$1 \to H^0(G,A) \to H^0(G,B) \to H^0(G,C) \to H^1(G, A) \to H^1(G,B) \to H^1(G, C),$$ satisfies a further exactness property. For a class $c \in H^1(G,C)$, there is an element $\Delta(c) \in H^2(G, {}_c A)$ such that $c$ is in the image of the map from $H^1(G,B)$ if and only if $\Delta(c)$ is cohomologous to the trivial cocycle. (By ${}_c A$ we mean that $C$ acts on $A$ by conjugation since $A$ is abelian, and as described before the $G$-action on ${}_c  A$ is $a \mapsto c(g) \cdot {}^g a$.) Furthermore, $\Delta(c)$ is defined only up to a choice of cocycle representative for $c$, but its triviality does not depend on this choice.
\item[3] Under condition $\textbf{3}$, there is an exact sequence 
\begin{align*}
1 \to H^0(G,A) \to H^0(G,B) \to H^0(G,C) \to \\ H^1(G, A) \to H^1(G,B) \to H^1(G, C) \to H^2(G, A)
\end{align*}
of groupoids which furthermore satisfies all of the previous twisted exactness properties.
\end{description}

\end{prop}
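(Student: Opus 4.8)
The plan is to follow the classical arguments of Serre \cite[Ch.~I, \S5]{ion2001galois} at the level of explicit cocycles, and to upgrade them in two orthogonal directions: first, to track continuity (this is exactly what the continuous-section hypotheses in conditions \textbf{0}--\textbf{3} buy us), and second, to promote the resulting exact sequences of pointed sets to exact sequences of pointed groupoids. The organizing tool throughout is \emph{Serre twisting}: if $C$ acts continuously on a topological group $X$ and $c \in Z^1(G,C)$, the assignment $x \mapsto {}_c x$ extends to an equivalence of groupoids $Z^1(G, X) \xrightarrow{\ \sim\ } Z^1(G, {}_c X)$ (with the $G$-action twisted as in the definition preceding the proposition), carrying the distinguished cocycle attached to $c$ to the trivial one. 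Once this equivalence is in place, every ``twisted exactness'' assertion reduces to the corresponding statement about the fiber over the \emph{basepoint}, where the description is immediate from the definitions.

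First I would treat condition \textbf{0}. The connecting map $H^0(G, B/A) \to H^1(G,A)$ sends a $G$-fixed class $\bar x$ to the cocycle $g \mapsto b^{-1}\,{}^g b$, where $b \in B$ is any lift of $\bar x$; using the continuous section one may choose $b$, hence the cocycle, to depend continuously on $\bar x$, and a different lift changes the cocycle by the $A$-action, so the class is well defined. Exactness at $H^0(G,B)$, $H^0(G,B/A)$, and $H^1(G,A)$, together with the group-homomorphism property of the first two arrows, is checked directly on cocycles exactly as in Serre; the only addition is to observe that the maps of cocycle sets are equivariant for the $B$- (resp.\ $A$-) actions defining the quotient groupoids, so they descend to functors and the kernel/image identifications of groupoids follow formally. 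For the twisted exactness property, given $a \in Z^1(G,A)$ with image $b \in H^1(G,B)$, one untwists by $a$: the groupoid-theoretic fiber of $H^1(G,A) \to H^1(G,B)$ over $b$ is the full subgroupoid of classes mapping into the isomorphism class of $b$, and the twisting equivalence for $a$ identifies it with the fiber over the trivial class in the sequence for $1 \to {}_aA \to {}_aB \to {}_aB/{}_aA \to 1$, which by the exactness just established is the image of $H^0(G, {}_aB/{}_aA) \to H^1(G,{}_aA)$, i.e.\ the quotient groupoid $H^0(G,{}_aB/{}_aA)/H^0(G,{}_aB)$. Since twisting is defined without any section, this assertion does not require one; independence of the choice of $a$ in the fiber is automatic because twisting by cohomologous cocycles gives isomorphic functors.

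Next I would extend the sequence under condition \textbf{1} by adjoining $H^1(G,B) \to H^1(G,C)$ induced by $B \to C$ on cocycles; exactness at $H^1(G,B)$ is the statement that a $B$-cocycle is cohomologous to one valued in $A$ iff its image in $C$ is trivial, proved by the same lifting argument, and the twisted exactness property over $c \in H^1(G,C)$ follows by untwisting by a representative $b$ of a preimage (when one exists) and reading the fiber as $H^1(G, {}_bA)/H^0(G, {}_bC)$ off condition \textbf{0} applied to the twisted sequence. For conditions \textbf{2} and \textbf{3} the new content is the obstruction class: given $c \in Z^1(G,C)$, choose a continuous set-theoretic lift $\beta \colon G \to B$ and set $\Delta(c)(g,h) = \beta(g)\,{}^g\beta(h)\,\beta(gh)^{-1}$; since $c$ is a cocycle this lands in $A$, and a short computation shows it is a continuous $2$-cocycle for the twisted action on ${}_cA$, whose class depends only on $c$ up to the indeterminacy in choosing a representative, and whose vanishing is equivalent to the existence of a $B$-cocycle lifting $c$. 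When $A$ is central (condition \textbf{3}) $\Delta$ is a genuine boundary map $H^1(G,C) \to H^2(G,A)$ and the seven-term sequence is exact, again verified cocycle-by-cocycle.

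I expect the main obstacle to be the bookkeeping in the twisted exactness statements at the level of \emph{groupoids} rather than pointed sets: one must check that the morphisms of the fiber groupoid over a class $b$ correspond exactly to the action of $H^0(G, {}_bC)$ (resp.\ $H^0(G, {}_bB)$), and that the twisting functor is an equivalence compatible with all of these actions, including their continuity. A secondary but pervasive difficulty is keeping every constructed cochain continuous; this is where conditions \textbf{0}--\textbf{3} are used, and it is worth isolating precisely the one place --- the twisted exactness property in \textbf{0} --- where no section is needed because twisting is defined without one.
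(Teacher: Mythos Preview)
Your proposal is correct and follows essentially the same approach as the paper: the paper simply defers to Serre \cite[Ch.~I, \S5]{ion2001galois} for the cocycle-level arguments and adds only the observation that all maps are maps of groupoids, which is precisely the content of your sketch. In fact you supply considerably more detail than the paper does, since the paper explicitly declines to review the proofs and refers the reader to Serre.
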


\begin{rem}
The connecting homomorphism $H^0(G,C) \to H^1(G, A)$ is given by continuously lifting an element $\tilde x \in H^0(G,C)$ to $x \in B$, and sending it to the cocycle $(g \mapsto x^{-1} {}^g x).$ The cocycle $\Delta(c)$ is defined by lifting a cocycle $c \in H^1(G, C)$ to a continuous function $\tilde c \colon G \to A$. One sends this function to the continuous 2-cocycle $\Delta(c)(g,h) = \tilde c(g){}^g \tilde c(h) \tilde c(gh)^{-1}$; in general, this is an element of $H^2(G,{}_c A)$, but when $A$ is central it is easy to see that ${}_c A$ is canonically isomorphic to $A$, so we have an honest connecting homomorphism.

The existence of $\Delta$ can be made more theoretically satisfying. In fact, suppose that we are in case $\textbf{1}$, where $A$ is just normal. Then if one considers the short exact sequence $0 \to A \to B \to C \to 0$ as an exact sequence of crossed modules, there is always an honest long exact cohomology sequence of non-abelian cohomology crossed modules up to $H^2(G,C)$! The theory of crossed modules also makes Serre twisting much more natural. However, defining non-abelian $H^2$ would take us too far afield of our exposition here. For a complete reference, see \cite{inassaridze2002higher}.
\end{rem}

\begin{rem}
There is no modification needed to the proofs in \cite{ion2001galois} besides the obvious fact that all maps there are indeed maps of groupoids; we claim no original thought in this regard but this observation is essential to our later work. We do not review the proofs here, and it would be hard to find better proofs than in Serre's exposition.
\end{rem}

If we imagine the set $H^1(G,C)$ as a geometric space, we can think of the groups $H^2(G,{}_c A)$ as a family of groups that varies continuously over it. In later chapters we will do much more than imagine such a space. The following proposition is at least some cheap version of this fantasy: it says that $\Delta$ acts like a section of that imagined bundle.

First, a definition that we need to state the proposition.
\begin{defn} \label{continuity}
A topological group $G$ acts continuously on a topological space $A$ if the map $G \times A \to A$ is continuous. A topological group $G$ acts continuously on a functor $F \colon \mathcal{C} \to Top$ if its action on $F(c)$ is continuous for each $c \in \mathcal{C}$.
\end{defn}
\begin{exmp} \label{galactdefn}
Let $C$ be an algebra over a complete Hausdorff topological field of characteristic zero. We give $C$ the inductive limit topology, the finest topology for which the inclusion of finite-dimensional subspaces is continuous. (Finite-dimensional subspaces are given the product topology from the base field.)

If $X$ is an affine $C$-scheme of finite type, then $X(C)$ has the topology induced by choosing some {\bfseries closed} embedding $X \hookrightarrow \mathbb{A}^m_C$. Then $X(C) \subseteq C^m$, and we give $X(C)$ the subset topology using the product topology on the right. This topology is independent of the embedding, and maps of affine schemes induce continuous maps on $C$-points \cite[Lemmas 1-3]{kim2005motivic}. We will only use this topologization of points for affine schemes; if one wants to glue this definition for general schemes or related nice properties such as open embeddings of schemes inducing open embeddings of topological spaces, it becomes necessary to assume that the inversion map on the units of $C$ is continuous (cf. the exposition in \cite[Section 4]{christensen2020topology}.)

Thus, if $S$ is an algebraic group over $\mathbb{Q}_p$ then the functors \[F_1 \colon R \mapsto S(R \widehat{\otimes} B_{crys})\] and \[F_2 \colon R \mapsto S(R)\] for $R$ running over affinoid $\mathbb{Q}_p$-algebras land in the category of topological spaces, again using closed embeddings into affine space and the topologies on both $R$ and $B_{crys}.$ It therefore makes sense to discuss the continuity of an action of a profinite group $G$ on the two functors above.

For us, the important example of such a continuous action arises as follows \cite[Lemma 5]{kim2005motivic}: Suppose that the coordinate ring of $S$ comes with a continuous action $\alpha$ of $G_K$, then the action of $g \in G$ on $F_1$ given by \[\left(p \colon \mathcal{O}(S) \to R \widehat{\otimes} B_{crys}\right) \mapsto \alpha(g)p\alpha(g)^{-1}\] is continuous in the above sense, and the same is true for $F_2$.
\end{exmp}
We will need long exact sequences in these contexts, so we must provide functorial and continuous splittings for short exact sequences.
\begin{prop}
Let \[1 \to U \to H_1 \overset{r} \to H_2 \to 1 \] be a short exact sequence of linear algebraic groups over a field $k$ in which $U$ is unipotent. There exists a scheme-theoretic section $\sigma \colon H_2 \to H_1$ of $r$.
\end{prop}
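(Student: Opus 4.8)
The statement asserts that a short exact sequence $1 \to U \to H_1 \xrightarrow{r} H_2 \to 1$ of linear algebraic groups over a field $k$, with $U$ unipotent, admits a scheme-theoretic section $\sigma \colon H_2 \to H_1$ of $r$ (as a morphism of schemes, not of groups). The plan is to induct on the length of the central series of $U$, thereby reducing to the case where $U$ is a vector group $\mathbb{G}_a^m$, and then to exhibit the section by a cohomological vanishing argument. Concretely, write $U^{(i)}$ for the descending central series of $U$; since $U$ is unipotent each successive quotient $U^{(i)}/U^{(i+1)}$ is a commutative unipotent group, and after passing to a suitable filtration we may assume it is a vector group (in characteristic zero every commutative unipotent group already is; in positive characteristic one further refines using the Frobenius-kernel filtration, but the paper works over a field of characteristic zero in its main applications, so I would either restrict to that case or invoke the general structure theory of commutative unipotent groups).

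**Key steps.** First, set up the dévissage: if $1 \to U' \to U \to \bar U \to 1$ is a central extension with $U'$ a vector group and $\bar U$ of shorter central series, push out to get $1 \to \bar U \to H_1/U' \to H_2 \to 1$ and $1 \to U' \to H_1 \to H_1/U' \to 1$. By the inductive hypothesis the first sequence has a scheme-theoretic section $\bar\sigma \colon H_2 \to H_1/U'$; pulling back $H_1 \to H_1/U'$ along $\bar\sigma$ yields a $U'$-torsor over $H_2$, and a scheme-theoretic section of $r$ is exactly a section of this torsor. So the whole problem reduces to: \emph{a $\mathbb{G}_a^m$-torsor over an affine scheme $H_2$ is trivial.} Second, prove that vanishing: torsors under a vector group $V$ over an affine $k$-scheme $\Spec A$ are classified by $H^1(\Spec A, \mathcal{O})^{\oplus m}$ (or more precisely by $H^1_{\mathrm{fppf}}$, which for the smooth affine group $\mathbb{G}_a$ agrees with coherent $H^1$), and this vanishes because $\Spec A$ is affine — Serre vanishing / Serre's criterion. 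Hence the torsor is trivial and the section exists. Third, assemble: the base case is $U$ trivial, where $\sigma = \mathrm{id}$; the inductive step is the dévissage above.

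**Main obstacle.** The genuinely delicate point is the reduction of a general unipotent $U$ to vector groups in a way that keeps everything algebraic and the extensions central at each stage. In characteristic zero this is clean — the exponential identifies $U$ with its Lie algebra and the central series does the job — so if the paper is content with characteristic zero (which its later $p$-adic Hodge-theoretic applications suggest, since $K$ has characteristic zero), the argument is short. In positive characteristic one must be more careful: a commutative unipotent group need not be a vector group (e.g. Witt vectors, or $\mathbb{Z}/p$ is not even connected), and one needs the filtration of a connected commutative unipotent group by subgroups with vector-group quotients, together with the fact that the relevant $H^1$ for those quotients over an affine base still vanishes (true for $\mathbb{G}_a$, and for $\alpha_p$, $\mathbb{Z}/p$ one uses that $H^1_{\mathrm{fppf}}(\Spec A,-)$ of these sits in exact sequences controlled by coherent cohomology of $\mathcal{O}_A$, which vanishes). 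I would state the result and give the characteristic-zero argument in full, remarking that the general case follows from the structure theory of unipotent groups plus the same affine-vanishing input; the cohomological vanishing itself (step two) is standard and not the bottleneck.
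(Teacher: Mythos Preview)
Your proposal is correct and follows essentially the same approach as the paper: reduce via the lower central series to the case where $U$ is abelian (a vector group), observe that $H_1 \to H_2$ is then a $U$-torsor classified by $H^1$ of a quasi-coherent sheaf on the affine scheme $H_2$, and invoke vanishing of higher coherent cohomology on affines. The paper's write-up is terser and cites Mostow's theorem as in Conrad's notes, but the content is the same.
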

\begin{proof}
It is an exercise in group theory to show that $H_1$ is a $U$-torsor over $H_2$ for the \'etale topology. Then the proof follows Mostow's Theorem in the non-algebraically closed case \cite[Prop. 5.4.1, until ``We will modify...'']{conrad2014reductive}, as follows.

By pushing out to the lower central series, we may assume $U$ is abelian; then $H_1 \to H_2$ is a vector bundle. We would like to show that this torsor is trivial; it forms a class in sheaf cohomology on the \'etale site: $H^1 \left( (H_2)_{\acute{e}t}, U \otimes_k \mathcal{O}_{H_2} \right)$. But this cohomology vanishes because $H_2$ is affine.
\end{proof}

We need one more result establishing functoriality for the connecting homomorphism.
\begin{prop} \label{cohomologous}
Let $A$ and $B$ be topological-group functors from a category $\mathcal{C}$ to the category of topological groups, such that $G$ acts continuously on them. Suppose that for all $T \in \mathcal{C}$, $A(T)$ and $B(T)$ satisfy condition $\textbf{2}$ above, i.e. that $A(T)$ is an abelian, normal, and $G$-stable subgroup of $B(T)$ with quotient functor $C(T)$. Then the ad-hoc connecting homomorphism $\Delta$ is functorial in the following sense.

Let $c_T \in Z^1(G,C(T))$ be a cocycle, $f \colon T \to T'$ a morphism in $\mathcal{C}$. On one hand, $\Delta(c_T)$ is an element of $H^2(G, _c A(T))$; post-composing with $f$ produces an element which we will denote $\Delta(c_T) \otimes T'$. On the other hand, $c_T$ defines a cocycle $c_{T'}$ by post-composing with $f$, and it has an associated cocycle $\Delta(c_{T'})$. The claim of functoriality means that $$\Delta(c_T) \otimes T' = \Delta(c_{T'}.)$$

\end{prop}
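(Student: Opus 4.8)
The plan is to prove the slightly stronger statement that, once $\Delta(c_T)$ is transported along $f$, it agrees with $\Delta(c_{T'})$ already at the level of explicit $2$-cocycles, whence equality in $H^2$ is automatic. Recall that $\Delta$ is computed concretely by choosing a continuous set-theoretic lift $\tilde c \colon G \to B(T)$ of $c_T$ (available since $A(T),B(T)$ satisfy condition $\textbf{2}$, which includes a continuous section of $B(T)\to C(T)$) and setting $\Delta(c_T)(g,h) = \tilde c(g)\,{}^g\tilde c(h)\,\tilde c(gh)^{-1}$; this lands in $A(T)$ by the cocycle identity for $c_T$, and its class in $H^2(G,{}_{c_T}A(T))$ does not depend on the chosen lift.

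First I would fix such a lift $\tilde c$ and note that, since $B$ is a functor into topological groups, $B(f)\colon B(T)\to B(T')$ is a continuous group homomorphism; hence $\tilde c' \defeq B(f)\circ\tilde c \colon G \to B(T')$ is continuous, and because $A\hookrightarrow B\twoheadrightarrow C$ is a diagram of natural transformations, $\tilde c'$ is a set-theoretic lift of $C(f)\circ c_T = c_{T'}$. So $\tilde c'$ is a legitimate lift with which to compute $\Delta(c_{T'})$. Running the defining formula through $B(f)$, and using that $B(f)$ is a $G$-equivariant group homomorphism,
\[
\Delta(c_{T'})(g,h) = \tilde c'(g)\,{}^g\tilde c'(h)\,\tilde c'(gh)^{-1} = B(f)\big(\Delta(c_T)(g,h)\big).
\]
Since $\Delta(c_T)(g,h)\in A(T)$ and $A\hookrightarrow B$ is a natural transformation, $B(f)$ restricted to $A(T)$ is $A(f)$, so the right-hand side is $A(f)\big(\Delta(c_T)(g,h)\big)$, which is by definition the value at $(g,h)$ of the cocycle $\Delta(c_T)\otimes T'$. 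Thus the two $2$-cocycles coincide, and a fortiori so do their classes.

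It remains to check that $A(f)$ really does carry ${}_{c_T}A(T)$ to ${}_{c_{T'}}A(T')$ $G$-equivariantly, so that the notation $\Delta(c_T)\otimes T'$ (an induced map $H^2(G,{}_{c_T}A(T))\to H^2(G,{}_{c_{T'}}A(T'))$) even makes sense. This is the same bookkeeping: lifting $c_T(g)$ to $\tilde c(g)\in B(T)$, the twisted action is $a\mapsto \tilde c(g)\,{}^g a\,\tilde c(g)^{-1}$, and applying $B(f)$ gives $B(f)(\tilde c(g))\,{}^g B(f)(a)\,B(f)(\tilde c(g))^{-1}$, where $B(f)(\tilde c(g))$ is a lift of $c_{T'}(g)$; hence $A(f)(c_T(g)\cdot{}^g a) = c_{T'}(g)\cdot{}^g A(f)(a)$ as required.

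The argument is essentially diagram-chasing, so there is no serious obstacle. The only points demanding care are (i) verifying that post-composition with $f$ preserves exactly the properties needed of a ``continuous lift'' — continuity from $B$ landing in topological groups, the lifting property from naturality of $A\hookrightarrow B\twoheadrightarrow C$ — and (ii) keeping straight that the coefficient modules on the two sides are the \emph{different} twisted modules ${}_{c_T}A(T)$ and ${}_{c_{T'}}A(T')$, matched up by $A(f)$, before one can even compare $\Delta(c_T)\otimes T'$ with $\Delta(c_{T'})$.
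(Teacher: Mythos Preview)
Your proposal is correct and follows essentially the same approach as the paper: write out the explicit $2$-cocycle formula on each side and observe that they match because $B(f)$ is a $G$-equivariant group homomorphism. Your version is in fact more careful than the paper's---you explicitly verify that $B(f)\circ\tilde c$ is a legitimate lift of $c_{T'}$ and that $A(f)$ intertwines the twisted $G$-actions on ${}_{c_T}A(T)$ and ${}_{c_{T'}}A(T')$, points the paper leaves implicit.
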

\begin{proof}
One has $$\left( \Delta(c_T)\otimes T' \right)(g,h) = f \left (  c_T(g){}^g  c_T(h)  c_T(gh)^{-1} \right ).$$ On the other hand, $$\Delta(c_{T'}) = f( c_T(g)){}^g f( c_T(h)) f( c_T(gh))^{-1}.$$
The proposition thus follows from the functoriality of the group operation on $A$ and the functoriality of the action of $G$ on $A$.
\end{proof}

If $A$ is a functor in groups on a Grothendieck site then we can use the machinery of non-abelian cohomology sets to develop a stacky version of non-abelian cohomology for $A$.
\begin{defn}{$H^1$ \textbf{of a non-abelian representation (stacky)}} 
Let $A$ be a functor in topological groups on a Grothendieck site $(\mathcal{C}, \tau)$, $G$ a topological group acting continuously and functorially on $A$, and $T \in \mathcal{C}$. We define the stack $$H^1(G,A)$$ to be the stackification of the pseudofunctor $$T \mapsto H^1(G,A(T)).$$ In other words, $H^1(G,A)$ is the stack quotient $$\left[Z^1(G,A)/A\right].$$ We define $H^0$ of a general functor and $H^2$ for an abelian group functor similarly, using sheafification. 
\end{defn}
\begin{exmp}
It is not hard to see that if $(\mathcal{C}, \tau)$ is the Zariski site of schemes and $A$ is a group scheme with an action of some group $G$, the functor $H^0(G, A)$ agrees with the usual definition of $A^G$: $A^G(T)$ is the set of all $x \in A(T)$ such that $gx_{T'} = x_{T'}$ for all $g \in G$, $h \colon T' \to T$.
\end{exmp}

\section{Interpreting cohomology stacks: torsors vs. cohomology classes}
Armed with this definition, we proceed to the equivalence which is the core of nonabelian group cohomology; the dictionary in the unipotent case says that there is a bijection between torsors and elements of a certain $H^1$.  In this section we extend this dictionary to the non-unipotent case.

Let $A$ be a topological-group-valued sheaf on a site $\mathcal{C}$, with a continuous (functorial) action of a profinite group $G$. An $A$-torsor with $G$-action is a sheaf $P$ with a continuous action by $G$ and an action $A \times P \overset{\alpha} \to P$ such that the torsorial map $A \times P \overset{\alpha \times id} \to P \times P$ is an isomorphism, and such that $\alpha$ is $G$-equivariant with respect to the diagonal action on the left hand side.

We denote by $A-Tors_G$ the category of $A$-torsors with compatible $G$-action, with isomorphisms as morphisms. For any $T \in \mathcal{C}$ denote by $A_T$ the object $A \times T$. For varying $T$ the groupoids $A_T-Tors_G$ form a stack on $\mathcal{C}$ -- this is a purely sheaf-theoretic argument \cite[\href{https://stacks.math.columbia.edu/tag/036Z}{Tag 036Z}]{stacks-project}.

Note that such torsors generally may not be representable in the category $\mathcal{C}$. However, since we will always have descent for the topologies we consider, such torsors will indeed be representable.

\begin{defn} \label{whatisatorsor}
Let $N$ be an algebraic group over a $p$-adic field $E$ on which $G$ acts continuously and $E$-linearly for the $p$-adic topology, and $X$ a rigid-analytic space over $E$. By an $N$-torsor $P$ over $X$ we mean an $N^{an}$-torsor over $X$ in the Tate topology: For every point $x \in X$, there exists an admissible neighborhood $U$ containing $x$ and a $G-$ and $N$-equivariant isomorphism \[U \times_E N \simeq P \times_X U.\] Such a torsor is represented by a rigid-analytic space over $X$, since rigid-analytic spaces satisfy descent for admissible coverings by definition, and we see that the $N$-torsors form a stack in groupoids over the category of rigid-analytic spaces.
\end{defn}

Let us finally state the desired equivalence. We return to the abstract situation where $A$ is a functor in topological groups on $(\mathcal{C}, \tau)$, and assume that $A-Tors_G$ is a stack on $\mathcal{C}$. Suppose, furthermore, that every $A_T$-torsor-with-$G$-action acquires a $T'$-point for some $T'$ which covers $T$ in $\tau$. Consider the association of pseudofunctors $$\mu(T) \colon H^1(G,A(T)) \to A_T-Tors_G$$ given by sending a cocycle $c_T$ to the $A$-torsor $_{c_T}A_T$, i.e. the Serre twist of $A_T$ by $c_T$. The universal property of stackification gives a map $$\tilde \mu \colon H^1(G,A) \to A-Tors_G,$$ where now the source means the stackified cohomology.
\begin{prop} \label{torsequiv}
The map $\tilde \mu$ is an isomorphism of stacks.
\end{prop}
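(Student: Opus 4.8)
The plan is to prove that $\tilde\mu$ is an equivalence of stacks by establishing the two local conditions that characterize such equivalences: $\tilde\mu$ is fully faithful, and $\tilde\mu$ is essentially surjective after passing to a cover. Both the source $H^1(G,A)=[Z^1(G,A)/A]$ and the target $A\text{-}Tors_G$ are stacks (the latter by the cited sheaf-theoretic argument, the former by construction), so it is legitimate to test both conditions after base change along a $\tau$-cover; and since every object of the stackification is, $\tau$-locally, the image of an honest cocycle $c\in Z^1(G,A(T))$, I can carry out every comparison at the level of cocycles and their Serre twists ${}_cA_T$.

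\emph{Full faithfulness.} Fix $T$ and two cocycles $c,c'\in Z^1(G,A(T))$. On the source side, $\underline{\Isom}_{H^1(G,A)}(c,c')$ is the subpresheaf of $A$ on $\mathcal{C}/T$ sending $T'\to T$ to those $b\in A(T')$ that conjugate $c|_{T'}$ to $c'|_{T'}$ in the sense of the $A$-action on $Z^1$; since $A$ is a sheaf of groups this subpresheaf is an equalizer of maps of sheaves, hence is already a sheaf, so stackification does not change it. On the target side, an isomorphism of $A_{T'}$-torsors-with-$G$-action ${}_cA_{T'}\to{}_{c'}A_{T'}$ is, by $A$-equivariance, left translation by the image $b\in A(T')$ of the canonical point, and $G$-equivariance of this translation is exactly the relation that $b$ (equivalently $b^{-1}$) define a morphism $c\to c'$ in the quotient groupoid; one checks directly that this identification respects composition. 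Hence the $\underline{\Isom}$-sheaves on the two sides agree, and $\tilde\mu$ is fully faithful.

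\emph{Local essential surjectivity.} Let $P$ be an $A_T$-torsor-with-$G$-action. By the standing hypothesis $P$ acquires a point $p\in P(T')$ over some cover $T'\to T$. Define $c\colon G\to A(T')$ by letting $c(g)$ be the unique element with ${}^gp=p\cdot c(g)$. $G$-equivariance of the torsor action gives the cocycle identity $c(gh)=c(g)\,{}^gc(h)$; continuity of the $G$-action on $P$ together with the torsor structure gives continuity of $c$, so $c\in Z^1(G,A(T'))$; and $a\mapsto p\cdot a$ is a $G$- and $A$-equivariant isomorphism ${}_cA_{T'}\xrightarrow{\ \sim\ }P|_{T'}=\tilde\mu(c)|_{T'}$. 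Thus $P$ lies $\tau$-locally in the essential image of $\tilde\mu$, which is the assertion.

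The main obstacle is not any single computation but the care needed at the interface of the three soft structures in play --- the stackification, the Grothendieck topology $\tau$, and the $p$-adic topologies on the groups. One must justify that it genuinely suffices to test full faithfulness on cocycles over a cover, that the relevant $\underline{\Isom}$-presheaf really is a sheaf (so that no secondary sheafification intervenes), and --- the most delicate point --- that a local trivialization of a \emph{continuously} $G$-equivariant torsor produces a \emph{continuous} $1$-cocycle in Serre's sense rather than a merely set-theoretic one. Once these are settled, what remains is Serre's classical dictionary between cocycles and trivializable torsors \cite[Chapter 1, Section 5]{ion2001galois}, read uniformly in families.
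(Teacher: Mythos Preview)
Your proof is correct and follows essentially the same route as the paper. The paper verifies the two defining properties of stackification from \cite[\href{https://stacks.math.columbia.edu/tag/02ZM}{Section 02ZM}]{stacks-project}, which are precisely your local essential surjectivity and your identification of $\underline{\Isom}$-sheaves; the only cosmetic difference is that the paper packages these as ``the target satisfies the universal property of the stackification'' rather than ``$\tilde\mu$ is fully faithful and locally essentially surjective,'' and uses the left-action convention $c'(g)=(g\alpha)\alpha^{-1}$ for the cocycle attached to a point rather than your right-action convention ${}^gp=p\cdot c(g)$.
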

\begin{proof}
We check the defining properties of the stackification as given in \cite[\href{https://stacks.math.columbia.edu/tag/02ZM}{Section 02ZM}]{stacks-project} (we have placed it in \ref{stackification} for ease of reference).
\begin{description}
\item[(1)]
By assumption, for any $P \in A_T-Tors_G$ there is an object $T'$ which covers $T$ in $\tau$ and a cocycle $c' \in Z^1(G,A(T'))$ such that $T(c') = P_{T'}$. Indeed, we may by our assumption let $T'$ be a $\tau$-covering of $T$ such that $P(T')$ has a point $\alpha$ and work as follows.

The point $\alpha$ defines a cocycle in the standard way: $$c'(g) = (g\alpha)\alpha^{-1},$$ i.e. the unique element $c'(g) \in A(T')$ such that $g\alpha = c'(g) \alpha$, and  standard argument shows that $\mu(c') = P_{T'}$ and thus $\tilde \mu (c') = P_{T'}$.

\item[(2)]
We must show that, for any two cocycles $c_1, c_2$ in $Z^1(G, A(T))$, the map of presheaves on $\mathcal{C}/T$ $$Isom(c_1,c_2) \to Isom(\mu(c_1), \mu(c_2))$$ identifies the latter as the sheaffification of the former. But this is obvious because the two are canonically isomorphic on all $T$-points: an element $a \in A(T')$ exhibiting $c_1|_{T'}$ and $c_2|_{T'}$ as cohomologous is sent to the isomorphism ``translation by $a$'' on the right hand side. This association is clearly injective, and in fact any isomorphism between $\mu(c_1)$ and $\mu(c_2)$ is given by translation by an element $a' \in A(T)$.

\end{description}
\end{proof}

Recall that by the universal property of rigid analytification (cf. \ref{univan}) if $A$ is an affine algebraic group over $E$ and $T$ is an affinoid algebra over $E$, we have a bijection $A(T) = A^{an}(T).$
\begin{exmp}
We apply Proposition \ref{torsequiv} when $\mathcal{C}$ is the category of rigid-analytic spaces over $E$, $\tau$ is the Tate topology on $\mathcal{C}$, and $N$ is an algebraic group over $E$. Then we see that the stackification of the functor which sends an affinoid $E$-algebra $T$ to $H^1(G, A(T)) = H^1(G, A^{an}(T))$ is precisely the stack of $A$-torsors defined in Definition \ref{whatisatorsor}.
\end{exmp}

\begin{rem}\label{nonempty}
On the \'etale side, we are not generally concerned with the question of whether a $B$-torsor with trivial $G$ action has a point defined over the base field, because all path torsors for the \'etale fundamental group will have a point over the base field. An extremely important point is that this point might have a highly non-trivial Galois orbit.

Here is a sketch of the fact that all path torsors have (likely not Galois-invariant!) rational points \cite[Proposition 5.5.1]{szamuely2009galois}. The universal cover of $X$ has a model defined over the base field. The fiber at a point $x$ is just the profinite limit of the fibers of all covers in the pro-cover. The inverse limit of nonempty finite sets is nonempty, and we conclude that there is a point in the fiber of the universal cover. Mapping this fiber to the Tannakian fundamental group gives the desired path in the path torsor.

A brief warning: There are, then, two senses in which Galois cohomology could potentially enter into the non-unipotent picture. On one hand, one has a Galois action on the entire pro-scheme $\pi_1^{\acute{e}t}$. On the other hand, one has a Galois action on the points $\pi_1^{\acute{e}t}(\bar K)$. The first is the one involved in the Chabauty-Kim diagram, and the second would be involved in classical questions about twists of algebraic groups. It is often confusing to newcomers that \textit{our Galois cohomology only detects the former.}

This is related to the fact that we have to use Kneser's theorem to assume our torsors on the de Rham side have trivializations, and it comes from the fact that our stacks will have effective descent for the Tate topology only. Such a move might be avoidable, but in order to stackify with respect to the \'etale topology in the rigid setting, we would have to know much more about effective descent for torsors in the rigid \'etale topology. Unfortunately, fpqc and \'etale descent of quasi-coherent sheaves is very difficult in the rigid setting \cite{conrad2006relative}, so we have purposefully avoided these topologies.
\end{rem}

We record for use the following fact that follows directly from the cohomology long exact sequence above. Let $\mathcal{C}$ be a Grothendieck site.
\begin{prop} \label{fibers}
Let $T \in \mathcal{C}$ and $G$ a topological group; suppose that $A$ and $B$ are sheaves of topological groups on the category $\mathcal{C}$, $A$ is a subsheaf of $B$, and that $G$ acts continuously on $B$ while preserving $A$. Now let $H^1(G,A) \to H^1(G,B)$ be the morphism of stacks coming from the inclusion of $A$ into $B$. Pick a cocycle $a_T \in Z^1(G,A(T))$, and let $b_T \in H^1(G,B_T)$ be its image in the stackified cohomology of $B_T$. Then the stack-theoretic fiber above $b_T$ in the stack $H^1(G, A)$ is $\tau$-locally the quotient stack $$\left[H^0(G,{}_{a_T}A_T/{}_{a_T}B_T)/ H^0(G,{}_{a_T} B_T)\right].$$

If $A$ is normal in $B$ with quotient $C$ and $b_T \in Z^1(G,B(T))$ with image $c_T \in Z^1(G,C_T)$, then the fiber of $H^1(G,B) \to H^1(G,C)$ above the class $c_T \in H^1(G,C_T)$ is given by the stack quotient $\left[H^1(G,{}_{b_T}A_T)/ H^0(G,{}_{b_T} C_T)\right]$.

\end{prop}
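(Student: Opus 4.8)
\begin{pfsketch}
The plan is to deduce both assertions from the non-stacky twisted exactness properties already recorded above (Serre's description of the fibers of $H^1(G,A)\to H^1(G,B)$, cases $\textbf{0}$ and $\textbf{1}$), by combining them with the presentation $H^1(G,A)=[Z^1(G,A)/A]$, $H^1(G,B)=[Z^1(G,B)/B]$ of the stackified cohomology and with the base-change functoriality of Serre twisting and of the connecting maps (Proposition \ref{cohomologous}).

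First I would unwind the stack-theoretic fiber: by definition it is the $2$-fiber product $H^1(G,A)\times_{H^1(G,B)}T$, where $T\to H^1(G,B)$ is the $1$-morphism classifying $b_T$. Because $b_T$ is the image of the honest cocycle $a_T\in Z^1(G,A(T))$, this morphism already factors through $Z^1(G,B)\to[Z^1(G,B)/B]$ over $T$ itself, so the fiber product is computed by the standard recipe for a $2$-fiber product of quotient stacks: over $T'\in\mathcal{C}/T$ the groupoid of sections has objects the pairs $(a',\beta)$ with $a'\in Z^1(G,A(T'))$ and $\beta\in B(T')$ a $B$-coboundary carrying the image of $a'$ to $a_T|_{T'}$, and morphisms the residual translation action of $A(T')$. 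This is precisely the presheaf-level fiber that the twisted exactness property of case $\textbf{0}$ computes; when $A$ is normal in $B$ with quotient $C$ one gets in the same way the presheaf-level fiber of $H^1(G,B)\to H^1(G,C)$, with $C$-cochains in place of $B$-cochains, handled by case $\textbf{1}$.

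Second, by those twisted exactness properties, over any $\tau$-cover $T'$ of $T$ on which the relevant torsor acquires a point this presheaf-level fiber is identified with $H^0(G,{}_{a_T}B_{T'}/{}_{a_T}A_{T'})/H^0(G,{}_{a_T}B_{T'})$ (matching the formula of case $\textbf{0}$), respectively with the quotient groupoid $[H^1(G,{}_{b_T}A_{T'})/H^0(G,{}_{b_T}C_{T'})]$; Proposition \ref{cohomologous}, together with the evident compatibility of Serre twisting with pullback along $\mathcal{C}/T$, shows these identifications are compatible with restriction. Stackifying, one obtains the claimed isomorphism of stacks over $T$ with $[H^0(G,{}_{a_T}B_T/{}_{a_T}A_T)/H^0(G,{}_{a_T}B_T)]$, resp.\ $[H^1(G,{}_{b_T}A_T)/H^0(G,{}_{b_T}C_T)]$. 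The words ``$\tau$-locally'' record exactly that before passing to such a cover neither the $H^0$'s need be representable nor need $b_T$ trivialize, so the identity holds only as an isomorphism of stacks, not of presheaves; this is where the running hypothesis that $A_T$-torsors-with-$G$-action form a stack whose objects acquire $\tau$-local points (true in all of our rigid-analytic applications, cf.\ Definition \ref{whatisatorsor} and Proposition \ref{torsequiv}) is used.

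The step I expect to be the main obstacle is checking that ``stackification commutes with this fiber'': that the $2$-fiber product of the stackified $H^1$'s is the stackification of the cochain-level fiber, with no further stackification introduced on the fiber beyond what it inherits. Concretely this amounts to matching the groupoid of the $2$-fiber product with the twisted-cochain data of cases $\textbf{0}$ and $\textbf{1}$ while keeping the various left-translation and conjugation twists straight; once that bookkeeping is done, the rest is a transcription of Serre's arguments \cite[Chapter 1, Section 5]{ion2001galois} into the relative setting over $\mathcal{C}/T$.
\end{pfsketch}
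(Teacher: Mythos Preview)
Your proposal is correct and takes essentially the same approach as the paper's own proof: interpret the fiber as a $2$-pullback, use that stackification commutes with fiber products to compute it at the level of cohomology pseudofunctors, and then invoke Serre's twisted exactness (cases $\textbf{0}$ and $\textbf{1}$) to identify the presheaf-level fiber with the stated quotient groupoid. What you flag as the ``main obstacle'' --- that stackification commutes with this fiber --- is precisely the one-line fact the paper cites to dispose of the whole argument, so your sketch is a more explicit unwinding of the same proof.
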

\begin{proof}
We just prove the second statement, and the first is identical. The stack-theoretic fiber $P$ above $c_T$ is given by the pullback 
\[
\begin{tikzcd}
P \arrow{d} \arrow{r} & H^1(G,B_T) \arrow{d} \\
T \arrow{r}{c_T} & H^1(G,C_T)
\end{tikzcd}
\]
Since sheafification commutes with pullback, we may compute this pullback on the level of cohomology pseudofunctors and then stackify. But Serre's long exact sequence shows that this is isomorphic to the unstackified $H^1(G,{}_{c_T}A_T)/ H^0(G,{}_{c_T} C_T)$, which finishes the proof. 

\end{proof}

We will actually need a slightly upgraded version of this proposition, in which we twist by a torsor.
\begin{defn}
Let $P \in A_T-Tors_G$, for an object $T \in \mathcal{C}$. Suppose, furthermore, that $A_T$ acts on the left on another group $H$ over $T$, compatible with the $G$-action and the group structure on $H$. Then the twist of $H$ by $P$ is the group $$_PH = P \times_{A_T} H.$$
\end{defn}
As a group, this twist is isomorphic to $H$, but it has the twisted $G$-action $g \cdot (p,h) = (gp, gh).$ One locally sees the isomorphism with $H$ because locally on $T$, $P$ has a point $p \in P(T')$, and one has the bijection $H \to {}_PH,  h \mapsto (p,h)$. The key point is:
\begin{prop}
If $P$ has a point $p$, let $c \colon G \to P(T)$ denote the associated cocycle. Then we have an isomorphism in $A_T-Tors_G$: \[{}_c H \simeq {}_P H\] given by $h \mapsto (p,h)$.
\end{prop}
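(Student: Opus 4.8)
The plan is to write the map down explicitly and check the three properties that pin it down as an isomorphism in $A_T-Tors_G$: that $\phi\colon {}_cH \to {}_PH$, $h\mapsto [p,h]$ (the class of $(p,h)$ in the balanced product $P\times_{A_T}H$), is a well-defined morphism over $T$, that it is $G$-equivariant, and that it is bijective and respects the ambient structure (the group law on $H$, resp.\ the residual $A_T$-action).

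First I would recall how $c$ is built from $p$: since $P$ is a right $A_T$-torsor with global section $p\in P(T)$, for each $g\in G$ there is a unique $c(g)\in A(T)$ with ${}^gp = p\cdot c(g)$, and expanding ${}^{gh}p$ in two ways, together with the compatibility of the $A$- and $G$-actions, yields the cocycle relation $c(gh) = c(g)\cdot{}^gc(h)$; this is exactly the cocycle attached to a point as in the proof of Proposition \ref{torsequiv}. Well-definedness of $\phi$ is then automatic: it is the composite of the global section $p$ (and $\mathrm{id}_H$) with the quotient map $P\times H\to P\times_{A_T}H$, a morphism of sheaves over $T$. Because $p$ is a \emph{global} point there is no gluing to worry about, which is what makes the statement clean.

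The substance is $G$-equivariance. On $_PH$ the group $G$ acts diagonally, $g\cdot[q,h]=[{}^gq,{}^gh]$, so $g\cdot\phi(h)=[{}^gp,{}^gh]=[p\cdot c(g),\,{}^gh]=[p,\,c(g)\cdot{}^gh]$, the last step being the balancing relation $[q\cdot a,h]=[q,a\cdot h]$ in $P\times_{A_T}H$. On $_cH$ the $G$-action is by definition $h\mapsto c(g)\cdot{}^gh$, so the right-hand side is precisely $\phi$ applied to the $g$-translate of $h$ in $_cH$; thus $\phi$ intertwines the two $G$-actions. For bijectivity, since $p$ trivializes the torsor $P$ every section of $P$ is uniquely of the form $p\cdot a$ with $a$ a section of $A_T$, hence every section of $_PH$ is uniquely $[p,h']$, which gives an explicit inverse to $\phi$; and $\phi(hh')=[p,hh']=[p,h]\cdot[p,h']$ shows $\phi$ is a homomorphism (and the same computation shows it transports the $A_T$-action), so it is an isomorphism in $A_T-Tors_G$.

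I do not expect a real obstacle: the content is purely formal, reconciling Serre twisting of $H$ by the cocycle $c$ with the associated-bundle twist of $H$ by the torsor $P$ -- essentially a reprise of Proposition \ref{torsequiv} with $H$ in place of $A_T$. The one point that needs care is fixing mutually compatible conventions (left vs.\ right torsor, the side of the equality ${}^gp=p\cdot c(g)$, and the side on which $A_T$ acts in $P\times_{A_T}H$) so that the single identity $[p\cdot c(g),{}^gh]=[p,c(g)\cdot{}^gh]$ comes out right; once those are fixed, the whole verification is the one-line computation above.
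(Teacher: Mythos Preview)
Your proposal is correct and follows essentially the same approach as the paper: the paper's proof is literally the single chain of equalities $(p,c(g)\cdot {}^g h) = (p, (p^{-1}){}^g p\,{}^gh) = ({}^g p, {}^gh)$, which is your $G$-equivariance computation read right-to-left. You have simply been more thorough in also spelling out well-definedness, bijectivity, and compatibility with the group law, which the paper leaves implicit.
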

\begin{proof}
This follows from the equalities \[(p,c(g)\cdot {}^g h) = (p, (p^{-1}){}^g p {}^gh) =  ({}^g p, {}^gh.)\]
\end{proof}

Now the following statement is obtained by locally following through the bijection between twisting by torsors and cocycles. Suppose, as before, that any $A_T$-torsor has a point locally on $T$ for whichever topology we are considering.
\begin{cor}
The statements of Proposition \ref{fibers} hold true when $b_T$ (resp. $c_T$) is replaced with any torsor in the stackified $H^1(G,B_T)$ (resp. $H^1(G,C_T)$), and any cocycle twist is replaced by a torsor twist.
\end{cor}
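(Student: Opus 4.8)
The plan is to reduce the corollary to Proposition~\ref{fibers} by a purely local computation, exploiting that the stack-theoretic fiber is defined by a $2$-pullback and that such a pullback is computed by pulling back the defining cohomology pseudofunctors and then restackifying. I will spell out the case of the map $H^1(G,B)\to H^1(G,C)$; the case of $H^1(G,A)\to H^1(G,B)$ is handled in exactly the same way. Fix an object $b_T\in H^1(G,B_T)$ --- that is, a $B_T$-torsor with compatible $G$-action --- and let $c_T\in H^1(G,C_T)$ be its image. The twisted sheaves of groups ${}_{b_T}A_T$ and ${}_{b_T}C_T$ are defined globally over $T$, since twisting a sheaf of groups by a torsor needs no local sections, so the candidate quotient stack $\left[H^1(G,{}_{b_T}A_T)/H^0(G,{}_{b_T}C_T)\right]$ already makes sense on all of $\mathcal{C}/T$; the content of the corollary is to identify it with the fiber of $H^1(G,B)\to H^1(G,C)$ over $c_T$.

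First I would pass to a $\tau$-covering $T'\to T$ over which $b_T$ admits a section --- this exists by our standing assumption that every $A_T$-torsor, hence every $B_T$-torsor, is locally trivial for the topology in play. Such a section produces a cocycle $b_{T'}\in Z^1(G,B(T'))$ representing $b_T|_{T'}$, with image $c_{T'}\in Z^1(G,C(T'))$, and --- since sheafification, and therefore stackification, commutes with pullback --- the restriction of the fiber to $\mathcal{C}/T'$ is the fiber over the \emph{cocycle class} of $c_{T'}$. Proposition~\ref{fibers} then identifies this restriction with $\left[H^1(G,{}_{c_{T'}}A_{T'})/H^0(G,{}_{c_{T'}}C_{T'})\right]$. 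Next I would invoke the proposition immediately preceding this corollary, applied to the conjugation action of $A_{T'}$ on $A$ and to its action on $C$: it supplies canonical isomorphisms ${}_{c_{T'}}A_{T'}\simeq{}_{b_T}A_{T'}$ and ${}_{c_{T'}}C_{T'}\simeq{}_{b_T}C_{T'}$ compatible with the $G$-actions and with the inclusion $A\hookrightarrow B$. Substituting, the local fiber becomes $\left[H^1(G,{}_{b_T}A_{T'})/H^0(G,{}_{b_T}C_{T'})\right]$, which is the restriction to $\mathcal{C}/T'$ of the claimed answer; this already gives the ``$\tau$-locally'' form of the corollary.

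To upgrade the local identification to an isomorphism of stacks over $T$ --- the real content --- I would verify that the two pullbacks to the double overlap $T''=T'\times_T T'$ of the isomorphism just constructed coincide (together with the evident cocycle condition over the triple overlap); since both sides are stacks on $\mathcal{C}/T$, this descent datum then yields the isomorphism over $T$. The check reduces to naturality of Proposition~\ref{fibers} in the cocycle representative: over $T''$ the two chosen sections of $b_T$ differ by a section of $b_T$, so their associated cocycles differ by a coboundary, and one needs the fiber identification of Proposition~\ref{fibers} to be equivariant under this change --- which is precisely the statement, built into Serre's long exact sequence and into the torsor--cocycle equivalence of Proposition~\ref{torsequiv}, that $c\mapsto\left[H^1(G,{}_cA)/H^0(G,{}_cC)\right]$ underlies a morphism of pseudofunctors compatible with the torsor description. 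I expect this overlap compatibility to be the one genuinely delicate point of the argument: it is where one must confirm that every ad hoc twist involved --- Serre twist by a cocycle, torsor twist, and the translation isomorphisms relating cohomologous twists --- is canonical enough that no spurious automorphism enters the descent datum. Granting that, the descended isomorphism is the corollary, and the statement for $H^1(G,A)\to H^1(G,B)$ follows verbatim using the first half of Proposition~\ref{fibers}.
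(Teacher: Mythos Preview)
Your argument is correct and follows essentially the same approach as the paper: pass to a $\tau$-cover on which the torsor acquires a point, use the proposition immediately preceding the corollary to identify torsor twists with cocycle twists there, and invoke Proposition~\ref{fibers}. The paper's own proof is a one-line remark to precisely this effect; your additional descent discussion to upgrade the $\tau$-local identification to a global one over $T$ is more than the corollary actually asserts (Proposition~\ref{fibers} itself only claims a $\tau$-local description of the fiber), but it is not wrong and would be needed if one wanted the stronger global statement.
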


\section{A review of classical Selmer conditions}

From the ``box'' perspective on arithmetic geometry, Galois cohomology is a good first step: it provides a container for the rational points, and it often has extra internal structure (it may be a group or a scheme) or external structure (it might fit into various exact sequences.) However, experience shows that not all cocycles can arise as cohomology classes in arithmetic situations. The key observation is that cocycles coming from global points are constrained when we look at them as local points.

The original Selmer condition comes from the theory of abelian varieties: Let $A$ be an abelian variety over a number field $F$. We will denote its $l$-adic Tate module by $T_l$, and $V_l \defeq T_l \otimes_{\mathbb{Z}_l} \mathbb{Q}_l$. If $[m]$ denotes multiplication by $m$ on $A$, the exact sequence $$0 \to A[m](\bar F) \to A(\bar F) \overset{[m]} \rightarrow A(\bar F) \to 0$$ yields the long exact sequence $$0 \to A[m](F) \to A(F) \overset{[m]} \rightarrow A(F) \to H^1(G_F, A[m](\bar F)) \to H^1(G_F, A(\bar F)) \overset{[m]} \rightarrow H^1(G_F, A(\bar F)).$$
From this sequence, one learns the short exact sequence $$0 \to A(F)/mA(F) \to H^1(G_F, A[m](\bar F)) \to H^1(G_F, A(\bar F))[m] \to 0.$$ If $H^1(G_F, A[m](\bar F))$ is finite then we have proved the finiteness of $A(F)/mA(F)$, also known as the Weak Mordell Weil theorem. The following result dashes our hopes.
\begin{prop}
Let $M$ be a finite continuous Galois module for $G_F$, the Galois group of a number field $F$. Then $H^1(G_F, M)$ is infinite. In particular, when $A$ is a nonzero abelian variety over $F$, $H^1(G_F, A[m](\bar F))$ is infinite.
\end{prop}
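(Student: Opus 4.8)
The plan is to reduce to constant coefficients over a suitable finite extension and then manufacture the classes by class field theory. First I would reduce to the case $pM=0$ for a prime $p\mid\#M$: the exact sequence $0\to M[p]\to M\to M/M[p]\to 0$ yields a map $H^1(G_F,M[p])\to H^1(G_F,M)$ whose kernel is a quotient of the finite group $H^0(G_F,M/M[p])$, so it suffices to treat $M[p]$, which is nonzero and killed by $p$. Next, choose a finite Galois extension $F'/F$, with $\Gamma:=\mathrm{Gal}(F'/F)$, on which $G_{F'}$ acts trivially; the inflation--restriction sequence $0\to H^1(\Gamma,M)\to H^1(G_F,M)\to H^1(G_{F'},M)^{\Gamma}\to H^2(\Gamma,M)$ has finite outer terms, so $H^1(G_F,M)$ is infinite iff $H^1(G_{F'},M)^{\Gamma}=\mathrm{Hom}^{\mathrm{cont}}_{\Gamma}(G_{F'}^{\mathrm{ab}},M)$ is infinite, where $\Gamma$ acts on $G_{F'}^{\mathrm{ab}}$ by conjugation and on $M$ through its (now $\Gamma$-)action. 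So it remains to show, for an arbitrary nonzero finite $\mathbb{F}_p[\Gamma]$-module $M$, that there are infinitely many $\Gamma$-equivariant continuous homomorphisms $G_{F'}^{\mathrm{ab}}\to M$.

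For this I would use ray class groups. By Chebotarev there are infinitely many rational primes $\ell$ splitting completely in the Galois closure of $F'(\mu_p)$ over $\mathbb{Q}$; for each such $\ell$ every prime $w$ of $F'$ above $\ell$ has trivial stabilizer in $\Gamma$ (so its $\Gamma$-orbit is free of size $\#\Gamma$) and residue field of size $\ell\equiv 1\pmod p$, hence $\kappa(w)^{\times}\otimes\mathbb{F}_p\cong\mathbb{F}_p$. Fix $k$ such primes, let $\mathfrak{m}$ be the product of all primes of $F'$ above them, and look at the ray class group $Cl_{\mathfrak{m}}(F')$, which is a $\Gamma$-equivariant quotient of $G_{F'}^{\mathrm{ab}}$. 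Tensoring the standard $\Gamma$-equivariant sequence $\mathcal{O}_{F'}^{\times}\to(\mathcal{O}_{F'}/\mathfrak{m})^{\times}\to Cl_{\mathfrak{m}}(F')\to Cl(F')\to 0$ with $\mathbb{F}_p$ exhibits $Cl_{\mathfrak{m}}(F')\otimes\mathbb{F}_p$ as an extension of the fixed group $Cl(F')\otimes\mathbb{F}_p$ by a $\Gamma$-module that is a quotient of a free module $\mathbb{F}_p[\Gamma]^{\oplus r}$ (one summand per $\Gamma$-orbit of primes dividing $\mathfrak{m}$, so $r=k\,[F:\mathbb{Q}]\to\infty$) modulo a submodule of bounded $\mathbb{F}_p$-dimension, the bound depending only on the unit group and class group of $F'$. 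Applying $\mathrm{Hom}_{\Gamma}(-,M)$ to these two short exact sequences and using $\mathrm{Hom}_{\Gamma}(\mathbb{F}_p[\Gamma],M)\cong M$ gives $\dim_{\mathbb{F}_p}\mathrm{Hom}_{\Gamma}(Cl_{\mathfrak{m}}(F'),M)\ge(r-\rho)\dim M-c$ for constants $\rho,c$ depending only on $F'$, $\Gamma$, $M$; since $\dim M\ge 1$ this tends to infinity with $k$, and as $\mathrm{Hom}_{\Gamma}(Cl_{\mathfrak{m}}(F'),M)$ embeds into $\mathrm{Hom}^{\mathrm{cont}}_{\Gamma}(G_{F'}^{\mathrm{ab}},M)$, the latter is infinite.

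The two cohomological reductions are routine; the step I expect to be the real work is the last paragraph, where one must invoke Chebotarev to produce the ramified abelian extensions, track the $\Gamma$-action through the Artin isomorphism for ray class fields, and control the Dirichlet-unit and class-group corrections uniformly in the number $k$ of auxiliary primes. A shorter but less self-contained route is to combine $H^1(G_F,M)=\varinjlim_S H^1(G_{F,S},M)$ with Tate's global Euler characteristic formula and the Poitou--Tate sequence: these force $\#H^1(G_{F,S},M)$ to be at least a constant times $\prod_{v\in S,\ v\nmid p}\#H^0(G_v,M^{\ast})$, and choosing $S$ to contain many primes that split completely in the splitting field of $M^{\ast}$ makes this grow without bound.
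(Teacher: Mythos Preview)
Your proposal is correct and follows essentially the same route as the paper's (Borovoi's argument): reduce to $p$-torsion via the short exact sequence, trivialize the action over a finite extension via inflation--restriction, and then use Chebotarev to produce primes that are $1\pmod p$ and split completely, so that the ray class group acquires an arbitrarily large $\mathbb{F}_p[\Gamma]$-free piece. Your write-up is in fact more careful than the paper's about tracking the $\Gamma$-equivariance through the ray class group step---the paper records the isomorphism $(\mathcal{O}_L/q)^\times\otimes\mathbb{F}_p\simeq\mathbb{F}_p[H]$ but then only bounds the plain $\Hom$ rather than the $H$-equivariant one, whereas your use of $\Hom_\Gamma(\mathbb{F}_p[\Gamma],M)\cong M$ is exactly what is needed to recover the invariant classes after inflation--restriction.
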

\begin{proof}
We reproduce here a proof from Mikhail Borovoi, posted on MathOverflow \cite{239543}. We make a number of reductions. Let $L$ be a finite extension of $F$ with Galois group $H$ for which the action of $G_L$ on $M$ is trivial. The exact sequence $$0 \to M[p] \to M \to M/M[p] \to 0$$ gives a long exact sequence $$\cdots \to H^0(G_F, M/M[p]) \to H^1(G_F, M[p]) \to  H^1(G_F,M) \to H^1(G_F,M/M[p]) \to \cdots$$ Thus the kernel of the map $H^1(G_F, M[p]) \to  H^1(G_F,M)$ is finite, so we may assume $M = M[p]$, which is $(\mathbb{Z}/p\mathbb{Z})^k$ for some $k$. One now uses inflation restriction with the extensions $\bar F/L$ and $L/F$; since $H^1(H, M^{G_{L}})$ and $H^2(H,M^{G_L})$ are finite ($H$ is a finite group), it suffices to show the result when the Galois action on $M$ is trivial.

Let $S$ be a collection of primes $q$ which are congruent to $1 \mod p$ and split completely in $L$. (Note that by the Chinese remainder theorem, $(\mathcal{O}_L/q)^\times \otimes_{\mathbb{Z}} \mathbb{Z}/p\mathbb{Z} \simeq \mathbb{Z}/p\mathbb{Z}[H]$ as representations.) Now $G_{F,S}$, the maximal extension unramified outside of $S$, is a quotient of $G_F$, so it suffices to show that $H^1(G_{F,S},M) = \Hom(G_{F_S}, (\mathbb{Z}/p\mathbb{Z})^k)$ is unbounded as $S$ varies. If we let $\Gamma_S$ denote the ray class group of $L$ of conductor $S$, then class field theory furnishes us with an exact sequence
$$\mathcal{O}_L^\times \to \prod_{q \in S} (\mathcal{O}_L/q)^\times \to \Gamma_S \to Cl(\mathcal{O}_L) \to 0.$$ (We are again using the congruence condition on $q \in S$ here.) Now both the unit group and the class group are finitely generated, independent of $S$. Thus, if we tensor the above sequence by $\mathbb{Z}/p\mathbb{Z}$, we see using right exactness that $\Gamma_S/p\Gamma_S$ has size $\#S \times \#(\mathbb{Z}/p\mathbb{Z}[H]) - C$, where $C$ does not depend on $S$. 

But then $\Gamma_S/p\Gamma_S$ is $\mathbb{Z}/p\mathbb{Z}$-dual to $\Hom(G_{F_S}, \mathbb{Z}/p\mathbb{Z})$ by the main property of the ray class field. We now pick $S$ as large as we want by Chebotarev, and we have bounded $\Hom(G_{F_S}, \mathbb{Z}/p\mathbb{Z})$ from below, and we are done.
\end{proof}

Undeterred, we look for a subgroup of $H^1(G_F, A[m](\bar F))$ that is finite. There are local counterparts of our short exact sequences every completion of $F$, which lead to a commutative diagram with short exact top and bottom rows
\[
\begin{tikzcd}
 A(F)/mA(F) \arrow{d} \arrow{r} & H^1(G_F, A[m](\bar F)) \arrow{d} \arrow{r} \arrow{dr}{\rho} & H^1(G_F, A(\bar F))[m] \arrow{d} \\
\prod_v A(F_v)/mA(F_v) \arrow{r} & \prod_v H^1(G_{F_v}, A[m](\bar F_v)) \arrow{r} & \prod_v H^1(G_{F_v}, A(\bar F_v))[m]
\end{tikzcd}
\]
(Here $\rho$ is defined as the composition of the middle vertical restriction map and the left lower horizontal arrow.)

\begin{defn}
The $m$-Selmer group of $A/F$ for $F$ a number field is $Sel_m(A/F) \defeq \ker(\rho)$. As $m$ varies, the groups $Sel_m(A)$ form a directed system, and we set $Sel_\infty(A) \defeq \varinjlim Sel_m(A)$, with a similar definition for $Sel_{p^\infty}(A)$ along powers of $p$. Note that $Sel_{\infty}(A) \subseteq  H^1(G_F, A[\infty](\bar F))$. If $F_v$ is a completion at a place of $F$, then we define the local Selmer group by $Sel_m(A/F_v) \defeq \im(A(F_v)/mA(F_v) \to H^1(G_{F_v}, A[m](\bar F_v)))$. By definition, then, the global Selmer group consists of all global cohomology classes whose localization lands in the local Selmer group.
\end{defn}
In words, $Sel_m(A)$ is the set of all global cohomology classes whose restrictions come from local points. By the commutativity of the diagram, $A(F)/mA(F)$ lands in $Sel_m(A)$. Furthermore, if we define $$\Sha(A/F) \defeq \ker \left( H^1(G_F, A(\bar F)) \to \prod_v H^1(G_{F_v}, A(\bar F_v))\right),$$ then there is an exact sequence $$0 \to A(F)/mA(F) \to Sel_m(A/F) \to \Sha(A/F)[m] \to 0.$$ 

Taking the inverse limit of the $l^n$-Selmer groups and tensoring with $\mathbb{Q}_l$ we see that $$\mathbb{Q}_l \otimes \lim_n Sel_{l^n}(A/F) = \im(A(F)\otimes \mathbb{Q}_l \to H^1(G_{F_v}, V_l)),$$ so long as the $l$-part of $\Sha(A/F)$ does not contain a copy of $\mathbb{Q}_l/\mathbb{Z}_l$. This is of course implied by the famous conjecture that $\Sha(A/F)$ is in fact finite.

The proof of concept for $Sel_m(A)$ is the following proposition:
\begin{prop}
$Sel_m(A)$ is finite.
\end{prop}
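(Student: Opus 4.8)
The plan is to trap $Sel_m(A/F)$ inside a Galois cohomology group with values in the finite module $A[m](\bar F)$ that is ``unramified outside a finite set,'' and then to prove finiteness of such groups. Fix once and for all a finite set of places $S$ of $F$ containing all archimedean places, all places dividing $m$, and all places of bad reduction of $A$.

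\emph{Step 1: the local Selmer condition is unramified away from $S$.} For $v \notin S$ the abelian variety $A$ has good reduction at $v$ and $m \in \mathcal{O}_v^\times$, so $A[m]$ extends to a finite \'etale group scheme over $\mathcal{O}_v$; in particular the inertia group $I_v$ acts trivially on $A[m](\bar F_v)$, and the same module is unramified everywhere outside $S$. Moreover $A(F_v^{nr})$ is $m$-divisible: the reduction sequence $0 \to \widehat{A}(\mathfrak{m}_v^{nr}) \to A(F_v^{nr}) \to A_{\kappa(v)}(\overline{\kappa(v)}) \to 0$ has $m$-divisible outer terms (the formal group because $m$ is a unit, the special fibre because the residue field is separably closed and $[m]$ is an isogeny). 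Hence for $P \in A(F_v)$ one may choose $Q \in A(F_v^{nr})$ with $mQ = P$, and the Kummer cocycle $g \mapsto gQ - Q$ restricts to $0$ on $I_v = G_{F_v^{nr}}$. Therefore $\operatorname{im}\!\big(A(F_v)/mA(F_v) \to H^1(G_{F_v}, A[m](\bar F_v))\big)$ consists of classes unramified at $v$.

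\emph{Step 2: containment in $S$-ramified cohomology.} A class $c \in Sel_m(A/F)$ restricts at every place into the local Selmer group, so by Step 1 its restriction to $I_v$ vanishes for all $v \notin S$. Since $A[m](\bar F)$ is unramified outside $S$, the standard inflation--restriction argument against the surjection $G_F \twoheadrightarrow G_{F,S}$ (a $G_F$-crossed homomorphism vanishing on every $I_v$, $v\notin S$, also vanishes on the closed normal subgroup they generate, namely $G_{F_S}$) shows that such classes are inflated from $H^1(G_{F,S}, A[m](\bar F))$. Hence $Sel_m(A/F) \subseteq H^1(G_{F,S}, A[m](\bar F))$.

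\emph{Step 3: $H^1(G_{F,S}, M)$ is finite for finite $M$, and conclusion.} Enlarge $S$ to a finite set $S'$ and pass to a finite extension $L/F$ unramified outside $S'$ over which $G_L$ acts trivially on $M$; inflation--restriction for the finite group $\operatorname{Gal}(L/F)$ reduces the claim to trivial action, and d\'evissage along a composition series reduces to $M = \mathbb{Z}/p$. Then $H^1(G_{L,S'}, \mathbb{Z}/p) = \operatorname{Hom}_{\mathrm{cont}}(G_{L,S'}, \mathbb{Z}/p)$ is $\mathbb{Z}/p$-dual to $\Gamma_{S'}/p\,\Gamma_{S'}$ for the ray class group $\Gamma_{S'}$ of $L$ of conductor supported on $S'$, which by the exact sequence $\mathcal{O}_L^\times \to \prod_{q\in S'}(\mathcal{O}_L/q)^\times \to \Gamma_{S'} \to Cl(\mathcal{O}_L) \to 0$ and the finite generation of $\mathcal{O}_L^\times$ and $Cl(\mathcal{O}_L)$ is finite for our fixed $S'$ --- this is exactly the computation in the proof that $H^1(G_F,M)$ is infinite, now read with $S'$ held fixed rather than growing. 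Combining with Step 2, $Sel_m(A/F)$ is a subgroup of a finite group, hence finite.

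The step I expect to carry the real content is Step 1: it is the only place ``good reduction at $v$'' and ``$v \nmid m$'' are used, and it rests on the structure of $A(F_v)$ via the N\'eron model and formal group. Steps 2 and 3 are formal, and Step 3 is already essentially present (with the opposite emphasis) in the preceding infiniteness proposition.
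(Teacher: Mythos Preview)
Your proof is correct and follows essentially the same two-step strategy as the paper's sketch: first show that Selmer classes are unramified outside $S$ (your Step~1, which you flesh out via the formal group and N\'eron model where the paper simply asserts it), then prove finiteness of $H^1(G_{F,S}, M)$ for finite $M$ by inflation--restriction down to trivial action and class field theory. The only cosmetic difference is in the endgame of Step~3: the paper phrases it as finiteness of the maximal abelian extension of exponent $\#A[m]$ unramified outside $S$, while you recycle the ray-class-group exact sequence from the preceding infiniteness proof---same content, and a nice economy.
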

\begin{pfsketch}
There are two main steps to the proof. First, one shows that the classes in $Sel_m(A)$ are unramified outside of the primes dividing $m$ and the primes of bad reduction for $A$; call the set of all such primes $S$. Then, one shows that the set of cohomology classes unramified outside of a finite set of primes is finite. For this, one reduces to trivial action using inflation-restriction. Using the first step, one can then replace the cohomology of $G_F$ with the cohomology of $Gal(L/F)$, where $L$ is the maximal abelian extension of $F$ with exponent $\# A[m]$ that is unramified outside of $S$. The extension $L/F$ is in fact finite, and it is here that one uses the classic and deep result on the finiteness of the class group. The set of homomorphisms between two finite groups is finite, and the proof concludes. \hfill \qedsymbol
\end{pfsketch}

\begin{rem}
As one can see from the above sketch, one major advantage of the cohomological proofs of diophantine finiteness is their clarity: One often uses the same exact sequences again and again to reduce to a question in algebraic number theory such as the calculation of the class groups of number fields or, in more advanced applications, control theorems in Iwasawa theory and $K$-theory.
\end{rem}

\section{$p$-adic Hodge Theory and Selmer conditions}

The genius of Bloch and Kato was to define a Selmer subgroup of the Galois cohomology of a representation that does not come from an abelian variety. It is indeed their construction from $p$-adic Hodge theory that we will apply to non-abelian Galois representations. Crucially, we need the formalism of crystalline representations in both the absolute case they considered \cite{bloch2007functions} and in the case of families of representations \cite{berger2008familles} We will take here an axiomatic view of $p$-adic Hodge theory, and specifically its period rings; in a whirlwind review such as this, digressions into cotangent complexes and perfectifications can be left to wonderful references like \cite{brinon2009cmi}.

For this section we will let $K$ be a $p$-adic field, $K_0$ the largest unramified subfield of $K$, and $V$ the valuation ring of $K$.

\subsection{Crystallinity over a point and in a family}
 
\begin{defprop}
The period ring $B_{crys} = B_{crys}(V)$ is a filtered flat $K_0$-algebra with a continuous $G_K$-action. Here are some essential facts:
\begin{description}
\item[1] The associated graded of $B_{crys} \otimes_{K_0} K$ is $\bigoplus_{i \in \mathbb{Z}} \mathbb{C}_K(i)$, where $\mathbb{C}_K(i)$ denotes the i-th Tate twist of the completion of the algebraic closure of $K$.
\item[2] There is a $K_0$-semilinear Frobenius map $\phi \colon B_{crys} \to B_{crys}$. It is injective, and the Frobenius-fixed part sits in a short exact sequence that relates it to the de Rham period ring \cite[Proposition 1.17]{bloch2007functions}. 
\item[3] $(B_{crys})^{G_K} = K_0$ 
\item[4] $B_{crys}$ is $(\mathbb{Q}_p,G_K)$-regular, meaning, first, that $$B_{crys}^{G_K} = frac(B_{crys})^{G_K}$$ where $frac$ is the field of fractions of a domain and second, that if the $\mathbb{Q}_p$-line generated by an element $b \in B_{crys}$ is fixed by $G_K$ then $b$ is a unit in $B_{crys}$.
\end{description}
\end{defprop}

The last property seems especially arbitrary, but it actually drives much of the theory. In fact, if $V$ is a $G_K$ representation over $\mathbb{Q}_p$ then we can define a functor $$D_{crys}(V) = (V \otimes_{\mathbb{Q}_p} B_{crys})^{G_K}$$ to the category of vector spaces over $K_0$; the regularity of $B_{crys}$ then has a number of beautiful consequences. The first is the inequality $$\dim_{K_0} D_{crys}(V) \leq \dim_{\mathbb{Q}_p} V.$$ We then say that $V$ is $B_{crys}$-admissible -- or crystalline, more succinctly -- if equality holds above. One proves that an equivalent condition for crystallinity is that the natural map $$D_{crys}(V) \otimes_{K_0} B_{crys} \to V \otimes_{\mathbb{Q}_p} B_{crys} $$ is an isomorphism. We will also use, without further comment, the period ring $B_{dR}$, which is a $K$ algebra with the property that $D_{dR}(V) \simeq D_{crys}(V) \otimes_{K_0} K$ when $V$ is crystalline.

In the case of representations over a field (i.e. the non-relative case), here is a definition of crystalline representations that suits the infinite-dimensional algebras with which we will be working.
\begin{defn}
Let $V/\mathbb{Q}_p$ be a possibly infinite-dimensional linear representation of $G_K$. We say that $V$ is crystalline if $V$ is the direct limit of finite-dimensional subrepresentations which are crystalline in the above sense.
\end{defn}
One would like a definition of infinite-dimensional crystalline representations that mimics the tensor product of the finite-dimensional definition. The following theorem of Olsson gives this intrinsic algebraic characterization.
\begin{prop}{(\cite{olsson2011towards}, Remark D.10)} \label{indreps}
Let $V$ be a possibly infinite dimensional representation of $G_K$. Then $V$ is the union of finite dimensional crystalline subrepresentations (i.e. crystalline in the above sense) if and only if the natural map $$D_{crys}(V) \otimes_{K_0} B_{crys} \to V \otimes_{\mathbb{Q}_p} B_{crys} $$ is an isomorphism.
\end{prop}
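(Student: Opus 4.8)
The plan is to prove the two implications separately: the forward implication is a formal manipulation of colimits, while the reverse implication is the substantive one and reduces to a semilinear-algebra statement over $B_{crys}$.

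For $(\Rightarrow)$, the point is that both functors $W\mapsto W\otimes_{\mathbb{Q}_p}B_{crys}$ and $W\mapsto D_{crys}(W)=(W\otimes_{\mathbb{Q}_p}B_{crys})^{G_K}$ commute with filtered colimits taken along injective transition maps of $G_K$-representations: tensor products commute with all colimits, and a $G_K$-invariant vector of $\varinjlim(W_i\otimes B_{crys})$ already lies in some $W_i\otimes B_{crys}$ and is $G_K$-invariant there. Hence if $V=\varinjlim V_i$ with each $V_i$ finite-dimensional and crystalline, the comparison map $\alpha_V\colon D_{crys}(V)\otimes_{K_0}B_{crys}\to V\otimes_{\mathbb{Q}_p}B_{crys}$ is the filtered colimit of the isomorphisms $\alpha_{V_i}$, hence an isomorphism. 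No flatness of $B_{crys}$ is needed for this direction.

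For $(\Leftarrow)$, note that $V$ is the filtered union of its finite-dimensional $G_K$-subrepresentations (automatic for the ind-finite representations occurring here), so it suffices to show that every finite-dimensional $G_K$-subrepresentation $W\subseteq V$ is crystalline. The map $\alpha_W$ is injective because $B_{crys}$ is $(\mathbb{Q}_p,G_K)$-regular, so the content is the inequality $\dim_{K_0}D_{crys}(W)\ge d:=\dim_{\mathbb{Q}_p}W$. Fixing a $K_0$-basis $\{d_\iota\}_{\iota\in I}$ of $D:=D_{crys}(V)$, the hypothesis makes it a $B_{crys}$-basis of $V\otimes_{\mathbb{Q}_p}B_{crys}$; expressing a $\mathbb{Q}_p$-basis of $W$ in these coordinates uses only finitely many $d_\iota$, so the $G_K$-equivariant inclusion $W\otimes_{\mathbb{Q}_p}B_{crys}\hookrightarrow V\otimes_{\mathbb{Q}_p}B_{crys}$ realizes $M:=W\otimes_{\mathbb{Q}_p}B_{crys}$ as a $G_K$-stable $B_{crys}$-submodule, free of rank $d$, of a ``trivial'' semilinear module $N=\bigoplus_{\iota\in J}B_{crys}d_\iota$ (with $J$ finite) whose $G_K$-invariants $\bigoplus_{\iota\in J}K_0 d_\iota$ generate it over $B_{crys}$. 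Thus everything reduces to: a rank-$d$ free $G_K$-stable $B_{crys}$-submodule $M$ of such an $N$ satisfies $\dim_{K_0}M^{G_K}=d$. When $d=1$ this follows at once from the regularity axiom: writing $M=B_{crys}x$ with $x=\sum_\iota b_\iota d_\iota$ the image of a $\mathbb{Q}_p$-generator $w$ of $W$, $G_K$-stability and the $G_K$-invariance of the $d_\iota$ force $g(b_\iota)=\eta(g)b_\iota$ for every $\iota$, where $\eta\colon G_K\to\mathbb{Q}_p^\times$ is the character by which $G_K$ acts on the line $W$; so each nonzero $b_\iota$ spans a $G_K$-stable $\mathbb{Q}_p$-line, hence is a unit of $B_{crys}$ by property $\mathbf{4}$, and dividing $x$ by one of them exhibits a nonzero $G_K$-invariant generator of $M$.

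The hard part will be the general rank. I would handle it by dévissage along a composition series of the $G_K$-representation $W$, checking that the sub- and sub-quotient modules again sit inside trivial semilinear modules of the above kind and applying the rank-one (more precisely, the irreducible) case at each stage; equivalently, one invokes the standard fact — valid over any $(\mathbb{Q}_p,G_K)$-regular ring — that $B_{crys}$-admissible representations are closed under passage to subobjects and quotients, applied to the finite ``trivial'' semilinear module $N$. The tempting shortcut of reading dimensions off the long exact cohomology sequence $0\to D_{crys}(W)\to D_{crys}(V)\to D_{crys}(V/W)$ does not work, since $V$ is infinite-dimensional and its ``dimension'' absorbs the finite numbers one is trying to control; localizing to a finite trivial piece as above is exactly what repairs this, and carrying out the dévissage there is the one step requiring genuine $p$-adic Hodge theory rather than formal nonsense.
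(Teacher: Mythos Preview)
The paper does not give its own proof of this proposition; it merely cites Olsson. So there is no paper argument to compare against, and I comment on your proposal directly.

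Your forward direction is correct. For the reverse direction, the reduction to a finite ``trivial'' semilinear module $N=D_0\otimes_{K_0}B_{crys}$ containing $M=W\otimes_{\mathbb{Q}_p}B_{crys}$ is exactly the right move, but your proposed completion has a gap. The d\'evissage along a composition series of $W$ reduces everything to the \emph{irreducible} case, yet you only carry out the case $\dim_{\mathbb{Q}_p}W=1$; the argument there hinges on $G_K$ acting on $W$ by a character, and it does not extend to irreducible $W$ of higher dimension. And your alternative---invoking that $B_{crys}$-admissible representations are closed under subobjects, ``applied to $N$''---does not apply as stated: that fact concerns a subrepresentation $W'\subseteq V'$ of a finite-dimensional $\mathbb{Q}_p$-representation, whereas here you only have an inclusion $W\otimes B_{crys}\hookrightarrow N$ of $B_{crys}[G_K]$-modules with no underlying inclusion $W\hookrightarrow\mathbb{Q}_p^n$ of Galois representations. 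In the standard proof the bound on $\dim_{K_0}D_{crys}(V'/W')$ comes from $V'/W'$ being a $\mathbb{Q}_p$-representation; there is no such quotient representation available for $N/M$.

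The clean fix bypasses both the d\'evissage and the irreducible reduction. Since $(V/W)\otimes_{\mathbb{Q}_p}B_{crys}$ is free over $B_{crys}$, the submodule $W\otimes B_{crys}$ is saturated in $V\otimes B_{crys}\cong D_{crys}(V)\otimes_{K_0}B_{crys}$, hence also in $N$. Thus $Q:=N/M$ is torsion-free and embeds in $Q\otimes_{B_{crys}}C$ with $C=\mathrm{Frac}(B_{crys})$, a $C$-vector space of dimension $n-d$. The usual minimal-relation argument (using only $C^{G_K}=K_0$, part of regularity) gives $\dim_{K_0}(Q\otimes C)^{G_K}\le n-d$, hence $\dim_{K_0}Q^{G_K}\le n-d$. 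The exact sequence $0\to D_{crys}(W)\to D_0\to Q^{G_K}$ then forces $\dim_{K_0}D_{crys}(W)\ge d$, and $W$ is crystalline.
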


Now we turn back to representations of finite rank, but now in a family. These families will help us compare the work of Wang-Erickson on crystalline representations to our Selmer stack.
\begin{defn}
Let $S$ be an affinoid $\mathbb{Q}_p$-algebra.  A family of representations over $S$ of a profinite group $G$ is a locally free $S$-module $V$ with an $S$-linear and continuous (for the $p$-adic topology on $V$) action $G \times V \to V$. 
\end{defn}
The extension of this definition to non-affinoid rigid bases is immediate. Pottharst \cite[Theorem, p. 1]{pottharst2013analytic} brings the following theorem about such families.

\begin{thm} \label{pharst}
Assume $G$ has finite cohomology on all discrete $G$-modules of finite, $p$-power order, vanishing in degrees greater than $e$. Let $V$ be a family of continuous representations of $G$ over a rigid-analytic space $X$. Then the continuous cochains of $G$ with values in the sections $\Gamma(Y, V)$, where $Y \subseteq X$ runs over affinoid subdomains, form a perfect complex of coherent $\mathcal{O}_X$-modules, vanishing in degrees greater than $e$. 
\end{thm}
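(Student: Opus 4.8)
The plan is to reduce to an affinoid base and then write down an explicit finite complex of projective modules quasi-isomorphic to $R\Gamma_{\mathrm{cont}}(G,V)$. First, the functor $C^\bullet_{\mathrm{cont}}(G,-)$ on Banach modules is a filtered colimit, over the open subgroups of $G$, of locally constant cochains, so it commutes with the finite limits expressing the sheaf property of $Y\mapsto\Gamma(Y,V)$; hence $Y\mapsto C^\bullet_{\mathrm{cont}}(G,\Gamma(Y,V))$ is already a complex of sheaves, and whether it is a perfect complex of coherent $\mathcal{O}_X$-modules may be checked on an admissible affinoid cover. So we may assume $X=\Spm A$, with $V$ a finite projective $A$-module carrying a continuous $G$-action, and the goal becomes: $C^\bullet_{\mathrm{cont}}(G,V)$ is quasi-isomorphic to a bounded complex of finite projective $A$-modules concentrated in degrees $0,\dots,e$.

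The heart of the matter is to convert the hypothesis on $G$ into homological finiteness of the Iwasawa algebra $\mathbb{Z}_p[[G]]$. The hypothesis says precisely that $\mathrm{cd}_p(G)\le e$ and that $H^i(G,\mathbb{F}_p)$ is finite-dimensional for every $i$; by the theory of minimal resolutions over pseudo-compact algebras this yields a resolution $P_\bullet\to\mathbb{Z}_p$ of finite length (bounded in terms of $e$) by finitely generated free $\mathbb{Z}_p[[G]]$-modules, say $P_i\cong\mathbb{Z}_p[[G]]^{a_i}$ with $a_i=\dim_{\mathbb{F}_p}H^i(G,\mathbb{F}_p)$. I would then use $P_\bullet$ to compute the cohomology of a family. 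Choose a flat formal model $\mathfrak{A}$ of $A$ of topologically finite presentation over $\mathbb{Z}_p$, and — using compactness of $G$ — a $G$-stable finite $\mathfrak{A}$-lattice $T\subseteq V$ with $T[1/p]=V$ (the $G$-orbit of any $\mathfrak{A}$-lattice is bounded, hence contained in a lattice). Writing $T=\varprojlim_n T/p^nT$, one has $C^\bullet_{\mathrm{cont}}(G,T)=\varprojlim_n C^\bullet_{\mathrm{cont}}(G,T/p^nT)$ with surjective transition maps, while at each finite level $R\Gamma_{\mathrm{cont}}(G,T/p^nT)\simeq\Hom_{\mathbb{Z}_p[[G]]}(P_\bullet,T/p^nT)$ — continuous cohomology with discrete torsion coefficients is computed by $\Hom_{\mathbb{Z}_p[[G]]}(P_\bullet,-)$, since $P_i$ is finitely generated free and $\Hom_{\mathbb{Z}_p[[G]]}(\mathbb{Z}_p[[G]],M)=M$ for discrete $M$ — and passing to the limit gives $R\Gamma_{\mathrm{cont}}(G,T)\simeq\Hom_{\mathbb{Z}_p[[G]]}(P_\bullet,T)=[\,T^{a_0}\to T^{a_1}\to\cdots\,]$.

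Inverting $p$ now yields $R\Gamma_{\mathrm{cont}}(G,V)\simeq[\,V^{a_0}\to V^{a_1}\to\cdots\,]$, a bounded complex of finite projective $A$-modules. Because the cohomology vanishes above degree $e$ already at every finite torsion level (the hypothesis applied to $T/p^nT$, which is a filtered union of finite $G$-modules), this complex may be truncated — the relevant differential is surjective and splits since its target is projective — to one concentrated in degrees $0,\dots,e$ with terms finite projective over $A$. Thus $R\Gamma_{\mathrm{cont}}(G,V)$ is a perfect complex of $A$-modules of amplitude $[0,e]$. Finally, this description depends only on $V$ as a module over $\mathbb{Z}_p[[G]]\otimes A$ and not on the auxiliary lattice or formal model, and is functorial in $(Y,V|_Y)$ along the flat restriction maps of affinoids; so the local perfect complexes glue to a complex of coherent $\mathcal{O}_X$-modules on $X$ which is locally — hence globally — perfect, with cohomology vanishing in degrees greater than $e$.

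The step I expect to be the main obstacle is the passage, in the second paragraph, from the soft cohomological-finiteness hypotheses on $G$ to the sharp statement that $\mathbb{Z}_p$ admits a finite resolution by finitely generated free $\mathbb{Z}_p[[G]]$-modules, together with the necessary verification that $\Hom_{\mathbb{Z}_p[[G]]}(P_\bullet,-)$ genuinely computes continuous group cohomology on the $p$-adically complete, non-pseudo-compact coefficients $T$ that arise from families — which is exactly what forces the detour through a $G$-stable lattice and a reduction modulo $p^n$. A variant that avoids the completed group algebra is to argue directly at finite level, in the style of cohomology-and-base-change: over the Noetherian ring $\mathfrak{A}/p^n$, whose residue fields are \emph{finite}, one shows $R\Gamma_{\mathrm{cont}}(G,T/p^nT)$ is perfect of amplitude $[0,e]$ by Nakayama-style descent from its derived fibres $R\Gamma_{\mathrm{cont}}(G,(T/p^nT)\otimes^{L}\kappa)$, where the hypothesis applies verbatim; but there one must first establish compatibility of $R\Gamma_{\mathrm{cont}}(G,-)$ with derived base change, which again comes down to controlling Tor-amplitude and is no easier.
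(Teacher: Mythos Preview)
The paper does not give its own proof of this statement: it is quoted verbatim as a result of Pottharst \cite{pottharst2013analytic}, and the paper only uses it as a black box (to obtain coherence of the twisted cohomology sheaves $H^i(G,{}_c Z_n)$ in Theorem~\ref{representability}). So there is nothing in the paper to compare your argument against.

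That said, your sketch is essentially the argument Pottharst gives. The reduction to an affinoid base, the choice of a $G$-stable $\mathfrak{A}$-lattice $T$ via compactness of $G$, and the computation of $R\Gamma_{\mathrm{cont}}(G,T)$ as $\Hom_{\mathbb{Z}_p[[G]]}(P_\bullet,T)$ for a finite free resolution $P_\bullet\to\mathbb{Z}_p$ are exactly the moves in his paper. The step you correctly flag as the crux --- passing from the finiteness hypotheses on discrete $p$-torsion coefficients to a finite resolution of $\mathbb{Z}_p$ by finitely generated free $\mathbb{Z}_p[[G]]$-modules --- is a known fact in the theory of profinite group cohomology (it amounts to $G$ being of type $FP_\infty$ over $\mathbb{Z}_p$ with $\mathrm{cd}_p(G)\le e$; see Brumer or Neukirch--Schmidt--Wingberg), and in the pseudo-compact setting the minimal resolution has ranks $a_i=\dim_{\mathbb{F}_p}H^i(G,\mathbb{F}_p)$ just as you write. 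Your truncation argument at the end is the right way to land in amplitude $[0,e]$ once you have any bounded perfect representative. The alternative route you outline via Nakayama-style perfection over $\mathfrak{A}/p^n$ is also viable and is how some expositions phrase it; the Iwasawa-algebra approach is cleaner because it makes the base-change compatibility transparent.
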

Denote by $C^\bullet \left(G, V \right)$ the complex in Pottharst's theorem. By taking cohomology of $C^\bullet \left(G, V \right)$, we have that the cohomology of $G$ with values in $\Gamma(Y,V)$ forms a coherent $\mathcal{O}_X$-module as $Y$ varies.

We can emulate the definition of admissibility for families, which we state now for any $G_K$-regular period ring $B$.
\begin{defn}{(\cite[Section 2.3]{berger2008familles})} \label{crystalfamdefn}
Let $V$ be a family of representations of $G_K$ over $S$. Set $D_B(V) = (  (S \hat \otimes B)  \otimes_S V  )^{G_K}$.

We say that $V$ is $B$-admissible if the $S \otimes B^{G_K}$-module $D_B(V)$ is projective of finite type and the map \[ (S \hat \otimes B ) \otimes_{S \otimes B^{G_K}} D_B(V) \to (S \widehat{\otimes} B) \otimes_S V \] is an isomorphism.
\end{defn}
An ind-admissible family of representations is defined in the obvious way, and one can generally set $D_B(W) = \varinjlim D_B(W_{\alpha})$ for an ind-representation $W$ with choice of presentation $W = \varinjlim W_{\alpha}$. Bellovin \cite[Corollary 5.1.16]{bellovin2015padic} shows that families of admissible representations are closed under subobjects, tensor products, and duals, and all three commute with $D_B$.

Setting $B=B_{crys}$, we get a notion of crystallinity for a family. These families form a stack in the usual way, which we denote $\mathcal{R}ep^{crys}$. One very important property of families of Galois representations is that they are crystalline if and only if they are crystalline at all closed points. Here is a precise statement.
\begin{thm}{(\cite[Corollaire 6.3.3]{berger2008familles})} \label{pointwise}

Suppose $S$ is a reduced affinoid algebra. Let $V$ be a family of rank $d$ $G_K$-representations over $S$, $X = \Spm(S)$ and $[a,b]$ an interval in $\mathbb{Z}$ such that for all $x \in X$, $V_x$ is crystalline with Hodge-Tate weights in $[a,b]$. Then $D_{crys}(V)$ is a locally free $S \otimes K_0$-module of rank $d$, and $S/\mathfrak{m}_x \otimes D_{crys}(V) \simeq D_{crys}(V_x)$. In particular, $V$ is crystalline if and only if $V_x$ is crystalline for all $x$.
\end{thm}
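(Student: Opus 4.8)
I would follow the strategy of Berger--Colmez \cite{berger2008familles}. The first move is to reduce the three assertions to two: (i) $D_{crys}(V)$ is locally free over $S \otimes_{\mathbb{Q}_p} K_0$ of rank $d$, and (ii) its formation commutes with specialization, i.e.\ the natural map $S/\mathfrak{m}_x \otimes D_{crys}(V) \to D_{crys}(V_x)$ is an isomorphism for every $x \in X$. Granting (i) and (ii), the comparison morphism $(S \widehat{\otimes} B_{crys}) \otimes_{S \otimes K_0} D_{crys}(V) \to (S \widehat{\otimes} B_{crys}) \otimes_S V$ of Definition \ref{crystalfamdefn} is a map of locally free $(S \widehat{\otimes} B_{crys})$-modules of the same rank $d$, hence an isomorphism as soon as it is one on every fibre; and on the fibre at $x$ it is the comparison map attached to $V_x$, which is an isomorphism precisely because $V_x$ is crystalline by hypothesis. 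This shows $V$ is crystalline, and the ``in particular'' clause drops out since the converse direction is automatic (a point-restriction of a crystalline family is crystalline, by Bellovin's compatibilities \cite{bellovin2015padic}).

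The real content is (i) and (ii), and the plan is to present $D_{crys}(V)$ as a coherent $\mathcal{O}_X \otimes_{\mathbb{Q}_p} K_0$-module and then argue by semicontinuity. The obstruction to doing this directly is that $B_{crys}$ is not of finite type, so $V \widehat{\otimes} B_{crys}$ cannot be fed into Pottharst's finiteness theorem \ref{pharst} as is. One gets around this using the Hodge--Tate bound: since every $V_x$ has weights in $[a,b]$, the Sen operator $\Theta$ of the family has all fibrewise eigenvalues in $\{a, a+1, \dots, b\}$, and because $S$ is reduced and the Sen module is locally free this forces $\prod_{i=a}^{b}(\Theta - i) = 0$. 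Hence only a bounded range of $t$-denominators is needed to compute $D_{crys}(V)$: passing to the $(\varphi,\Gamma)$-module $D^{\dagger}_{\mathrm{rig}}(V)$ over the relative Robba ring --- a vector bundle of rank $d$ by the relative slope theory of Kedlaya--Liu --- one identifies $D_{crys}(V)$ with the $\varphi$-fixed, $\Gamma$-invariant part of a bounded piece of $D^{\dagger}_{\mathrm{rig}}(V)[1/t]$, an operation that preserves coherence over $X$; combined with the local finiteness furnished by \ref{pharst} applied to suitable lattices, this exhibits $D_{crys}(V)$ as a coherent sheaf and shows that $x \mapsto \dim_{k(x)\otimes K_0} D_{crys}(V_x)$ is upper semicontinuous. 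That function is everywhere $\leq d$ by the admissibility inequality built into the $(\mathbb{Q}_p, G_K)$-regularity of $B_{crys}$, and it equals $d$ at every $x$ by hypothesis, so it is the constant function $d$.

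Now invoke the standard fact that a coherent sheaf on a \emph{reduced} rigid space with locally constant fibre dimension is locally free, its formation commuting with arbitrary base change --- this is exactly where the reducedness of $S$ is used (over a non-reduced base a coherent sheaf with constant fibre dimension need not be flat). This gives (i), and for (ii) the base-change map $S/\mathfrak{m}_x \otimes D_{crys}(V) \to D_{crys}(V_x)$ is injective by left-exactness of $G_K$-invariants and surjective by the dimension count just established, hence an isomorphism. The genuinely hard step is none of these formal reductions but the middle one: controlling the cohomology of $G_K$ on the infinite-dimensional ring $B_{crys}$ uniformly over the family, which is what forces one into the relative $(\varphi,\Gamma)$-module machinery together with the Sen-operator vanishing above. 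Once that coherent model is in place, the fibrewise isomorphism criterion of the first paragraph and the passage from constant fibre rank to local freeness are purely formal.
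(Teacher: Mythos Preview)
The paper does not supply its own proof of this statement: it is quoted verbatim as \cite[Corollaire 6.3.3]{berger2008familles}, and the only follow-up is a remark that Bellovin removed the reducedness hypothesis. There is therefore nothing in the paper to compare your argument against.

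As a sketch of the Berger--Colmez proof, your outline is broadly faithful: the reduction to local freeness plus base-change compatibility, the Sen-operator vanishing $\prod_{i=a}^b(\Theta-i)=0$ forced by reducedness and the uniform weight bound, and the passage from constant fibre rank on a reduced base to local freeness are all the right ingredients. One anachronism: you invoke the Kedlaya--Liu relative slope theory to produce the family of $(\varphi,\Gamma)$-modules, but Berger--Colmez predate that work and instead build their relative $(\varphi,\Gamma)$-modules directly over the family (this is in fact the main technical contribution of their paper). You also cite Pottharst's Theorem~\ref{pharst} for finiteness, but that again postdates \cite{berger2008familles}; the coherence of $D_{crys}(V)$ in their argument comes out of their explicit construction rather than from a general cohomological finiteness theorem. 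These are historical rather than mathematical objections---your route would work, and is in spirit the same as theirs---but if you want to claim you are following Berger--Colmez you should replace those appeals with their in-house constructions.
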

Bellovin \cite[Theorem 1.1.3]{bellovin2015padic} extended this theorem to the case where $S$ need not be reduced, and crystallinity can be checked on finite ring extensions of the base field.

\begin{rem} \label{strictmors}
If $V$ is any family of representations, $D_{dR}(V)$ has a natural filtration by the locally free $S \otimes K$-modules induced by the inclusion $D_{dR}(V) \hookrightarrow V \otimes (S \widehat{\otimes} B_{dR})$ and the canonical filtration on $B_{dR}$. If $V$ is de Rham, then this filtration makes the isomorphism \[ (S \widehat{\otimes} B_{dR} ) \otimes_{S \otimes K} D_{dR}(V) \to (S \hat \otimes B_{dR}) \otimes_S V \]  an isomorphism of filtered $(S \widehat{\otimes} B_{dR} )$-modules. (The filtrations of course being by locally free submodules.)

If $V \to W$ is a map of de Rham representations then the morphism $D_{dR}(V) \to D_{dR}(W)$ is strict for the filtrations. This is because we have a diagram
\[
\begin{tikzcd}
V \otimes (S \widehat \otimes B_{dR}) \arrow{r} \arrow{d}{\simeq} & W \otimes (S \widehat \otimes B_{dR}) \arrow{d}{\simeq} \\
D_{dR}(V) \otimes (S \widehat \otimes B_{dR}) \arrow{r} & D_{dR}(W) \otimes (S \widehat \otimes B_{dR})
\end{tikzcd}
\]
where the vertical isomorphisms preserve filtrations; the top map is obviously strict, so the bottom is too. This induces a strict map $D_{dR}(V) \to D_{dR}(W)$ because the subspace filtration is exactly the filtration we have just put on $D_{dR}$.
\end{rem}

Finally, we record the following definition.
\begin{defn}{ \cite[Section 2.6]{kisin2008potentially}} 
Let $E$ and $K$ be $p$-adic fields, and $D_E$ a finite-dimensional $E$-vector space equipped with a filtration $Fil^i$ on $D_{E,K} = D_E \otimes_{\mathbb{Q}_p} K.$ Consider the collection of vector spaces $$\bold{v} = \{ D_E, Fil^i D_{E,K} \}.$$ Now let $C$ be a finite $E$-algebra, and suppose $V_C$ is a $d$-dimensional $G_K$-representation over $C$.

Then we say that $V_C$ is of $p$-adic Hodge type $\bold{v}$ if for all $i$ we have an isomorphism of $C \otimes_{\mathbb{Q}_p} K$-modules $$gr^i\Hom_{B[G_K]}(V_C, B_{dR} \otimes_{\mathbb{Q}_p} C) \simeq gr^i D_{E,K} \otimes_E C.$$   
\end{defn}
In particular, all representations of a given $p$-adic Hodge type have uniformly bounded Hodge--Tate weights. For us, the important example of Hodge types will come from the family of Galois representations coming from the Gauss--Manin connection. In this case, all stalks have the same $p$-adic Hodge type since embedding $K$ into $\mathbb{C}$ allows us to use the complex statement: the Gauss--Manin connection is a variation of Hodge structure, and all stalks of a variation of Hodge structures have the same Hodge type.

One defines the concept of $p$-adic Hodge type for a family of representations using closed points. Packaging all of these conditions together, we have:
\begin{defn}
We denote $\mathcal{R}ep^{crys, \bold{v}}$ the stack of all crystalline families of $G_K$-representations of $p$-adic Hodge type $\bold{v}$.
\end{defn}

\subsection{Stacky Selmer conditions}

We can now apply this definition to our discussion of torsors and cohomology classes. The reader who is not familiar with formal or rigid geometry should refer to Chapter \ref{geometry}.

\begin{defn} \label{bk}
Let $K$ be a $p$-adic field and $N$ a group scheme over $\Spec \mathbb{Q}_p$ on which $G_K$ acts continuously for the $p$-adic topology. Consider the pseudofunctor consisting of 2-kernels $$\Lambda \mapsto \ker \left(H^1(G_K,N(\Lambda)) \to H^1(G_K,N(\Lambda \widehat{\otimes}_{\mathbb{Q}_p} B_{crys})) \right).$$ Here $\Lambda$ is a test object in the category of affinoid $\mathbb{Q}_p$-algebras. We define $H^1_f(G_K,N)$ to be the stackification of this pseudofunctor.

When $l\neq p$ and $N$ is a vector space, we define $$H^1_f(G_K,N) \defeq \ker \left(H^1(G_K,N) \to H^1(G_{K^{nr}},N)\right).$$ By this we mean the fibered product as stacks of the map inside of the kernel with the $\mathbb{Q}_l$-point of $H^1(G_{K^{nr}},N)$ given by the trivial torsor. (Here $K^{nr}$ is the maximal unramified extension of $K$.)

\end{defn}
Note that the Galois action on $H^1(G_K,N(\Lambda \widehat{\otimes}_{\mathbb{Q}_p} B_{crys}))$ incorporates the Galois action on $B_{crys}$ and the Galois action on $N$; it is defined as in Example \ref{galactdefn}.

We will only use the latter case when $N$ is a vector space. We can patch these local conditions together for global representations.
\begin{defn}
Let $F$ be a global field, $p$ a rational prime, $T$ a set of primes of $F$ including all primes lying over $p$. Then for $N$ an algebraic group over $\mathbb{Q}_p$ on which $G_{F, T}$ acts continuously, we define the \textit{global Selmer stack} $$H^1_f(G_{F,T},N) \defeq H^1(G_{F,T},N) \times_{\prod_{v \in T, v|p} H^1(G_{F_v},N)} \prod_{v \in T, v|p} H^1_f(G_{F_v},N).$$

\end{defn}

(The category of stacks has fibered products, and so we see that these objects are indeed stacks.)

\begin{rem} \label{selmerdescription}
One has the following (rather finnicky) interpretation of these rigid-analytic Selmer stacks, which follows from unwinding the stackification. Let $P/Z$ be an $N^{an}$-torsor, where $Z$ is a rigid-analytic space. The question of whether $P$ lies in the Selmer part is local for the Tate topology, so we may assume that $Z = \Spm(\Lambda)$ and $P(\Lambda)$ is nonempty. Thus $P$ is determined by a cocycle in $H^1(G_K, N^{an}(\Lambda))$. But since $N^{an}(\Lambda) = N(\Lambda)$,  $P$ is determined by an algebraic $N$-torsor with continuous $G$-action $P^{alg}$ over $\Spec(\Lambda)$. Now $P$ is in the Selmer part if and only if we have the usual Galois-equivariant isomorphism \[P^{alg} \otimes ( \Lambda \widehat{\otimes} B_{crys}) \simeq N \otimes( \Lambda \widehat \otimes B_{crys}).\]
\end{rem}

\begin{rem} \label{etaleremark}
One might also want to impose conditions at primes away from $p$, for example the condition that they are in the image of a map $$X(\mathbb{Z}_l) \to H^1(G_v, N).$$ We do not deal with such conditions in any generality here, except for a weight condition when proving the algebraicity of the Selmer stack.
\end{rem}

\begin{rem}
One would generally like to use the \'etale topology for this stackification, in which case no issues of trivializing the Galois cohomology of $S$ or $S^{\phi=1}$ would arise. Abstractly, this definition of cohomology and Selmer stacks would be just fine (although it would sacrifice the representability of torsors, since \'etale descent for rigid torsors seems hard.) However, the arguments of Theorem \ref{representability} do require one to glue objects in the topology in which one stackified, so the representability argument would become more difficult.
\end{rem}

\subsection{Agreement with the abelian case}
For the benefit of the reader, we briefly explain why the Bloch--Kato and classical Selmer groups agree in the case of an abelian variety $A$ over a number field $F$. We may reduce to the local case, because both Selmer groups are defined by local conditions: pick $p$ a rational prime such that $A$ has good reduction at all places dividing $p$. 
Let $T_p$ be the $p$-adic Tate module of $A$, and $V_p = T_p \otimes \mathbb{Q}_p$ (or $V$ when there is no risk of confusion.) We claim that $$\lim_n Sel_{p^n}(A/F_v) \otimes \mathbb{Q}_p = H^1_f(G_{F_v}, V_p),$$ both groups lying inside $H^1(G_{F_v}, V_p)$, for any place $v$ of $F$.

The easier case is when $v \not | p$. By definition $H_f^1(G_{F_v}, V) = \ker(res \colon H^1(G_{F_v},V) \to H^1(I_{F_v},V))$. We will show that this group is zero. By inflation restriction, the kernel of this map is $H^1(G_{F_v}/I_{F_v}, V^{I_{F_v}})$. When $A$ has good reduction, this group is equal to the Galois cohomology of the cohomology of the special fiber and the Weil conjectures say that this representation has negative weight, and thus $H^0(G_{F_v}/I_{F_v}, V^{I_{F_v}}) = 0$; by a result of Serre vanishing of zeroth cohomology implies vanishing of first cohomology.
Thus $H^1(G_{F_v}/I_{F_v}, V^{I_{F_v}})=0$ then as well. In the case of bad reduction, the fact that the inertia fixed part corresponds to the part of the special fiber that is an abelian variety \cite[Section 2]{grothendieck2006groupes} implies again by the Weil conjectures that $H^1(G_{F_v}/I_{F_v}, V^{I_{F_v}}) = 0$ using weights exactly as above.

We now show that $Sel_{l^\infty}(A/F) = 0$. This group is defined as the vector space spanned by images of rational points in the Galois cohomology. But a classical theorem of Mattuck \cite{mattuck1955abelian} says that $A(F_v)$ contains a finite index subgroup that is isomorphic to the valuation ring of $F_v$.
 In particular, $A(F_v)$ has a finite index subgroup which is a pro-$p$ group. But then tensoring it with $\mathbb{Q}_l$ kills this pro-$p$ part and any torsion. We thus have that the image of the local points under the Kummer map is trivial, and we are done when $v \not | p$.

When $v | p$ things are much more difficult -- here we follow \cite[Example 3.11]{bloch2007functions}. The key is a commutative square
\[
\begin{tikzcd}
tan(A) \arrow{r}{exp} \arrow{d}{comp} & A(F_v) \otimes \mathbb{Q} \arrow{d}{\kappa} \\
D_{dR}(V)/F^0 D_{dR}(V) \arrow{r}{BK-exp} & H^1_f(G_{F_v}, V) 
\end{tikzcd}
\]
where $tan(A)$ denotes the tangent space of $A$. The maps in the above square are as follows. The map $comp$ is an isomorphism that arises from Faltings' de Rham comparison theorem. Namely, $$D_{dR}(V) \simeq H_1^{dR}(X, F_v),$$ and removing the 0th filtered piece of the Hodge filtration leaves only $H^0(X,\Omega^1_{F_v})^*$ -- this is precisely the tangent space. The top horizontal arrow is also an isomorphism, since $p$-adic exponential and logarithm are both local diffeomorphisms (but a linear map that is a local diffeomorphism is an isomorphism.) Finally, one shows that the Bloch-Kato exponential is an isomorphism, with explicit inverse given by Fontaine's functor $D_{dR}$. 
Since those three maps are isomorphisms, we conclude that the Kummer map $\kappa$ is an isomorphism too.

We would be amiss if we did not mention that this whole crystalline story can be done integrally, i.e. by looking at each representation $A[p^n](\bar F_v)$ individually. For this one can use the theory of Fontaine--Laffaille \cite{fontaine1982construction} when the ramification of $K$ is not too large.

\subsection{Rational points land in Block--Kato stacks}
In the case of abelian varieties, rational points landed in the Selmer group because of a crystalline comparison theorem.
For general varieties, the situation is much more complicated. Using our definition, we find that a torsor $P \in H^1(G_K, N)$ is crystalline (i.e.\ lives in $H_f^1$) exactly when there is an ordinary $N$-torsor $P'$ and a Galois-equivariant isomorphism $$P' \otimes_{\mathbb{Q}_p} B_{crys} \simeq P \otimes_{\mathbb{Q}_p} B_{crys}.$$
Just like in the case of abelian varieties, the non-abelian statement that the crystalline path torsor $P^{\acute{e}t}_{b,x}$ lies in $H^1_f(G,\mathcal{G}^{et})$ uses the full force of $p$-adic Hodge theory. In fact, we'll need the crystalline path torsor comparison theorem of Olsson.
\begin{thm}{\cite[Theorem 1.11]{olsson2011towards}}
Let $X$ be a variety over $K$ which is a complement of a normal crossings divisor in a smooth, proper scheme. Fix $\mathcal{L}^{et}$ and $\mathcal{L}^{crys}$ on $\bar X$ and $X_0$ which are associated in the sense of $p$-adic Hodge theory. Suppose that the Tannakian monodromies of both $\mathcal{L}^{et}$ and $\mathcal{L}^{crys}$ are reductive and locally nilpotent. 
Then for all $x, y \in X(R)$, we have a Galois-equivariant isomorphism of path torsors $$P^{\acute{e}t}_{x,y} \otimes_{\mathbb{Q}_p} B_{crys} \simeq P^{crys}_{x,y} \otimes_{K_0} B_{crys}.$$ Equivalently, $\mathcal{O}(P^{\acute{e}t})$ is an (ind-)crystalline Galois representation and $D_{crys}(\mathcal{O}(P^{\acute{e}t})) \simeq \mathcal{O}(P^{crys})$.
\end{thm}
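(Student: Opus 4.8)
The statement is Olsson's crystalline comparison for relatively unipotent path torsors, and the natural way to prove it is to reduce, via the Tannakian dictionary, to the ordinary $p$-adic comparison isomorphism for \emph{cohomology with coefficients}, and then to propagate that comparison up the relatively unipotent tower. The first step is to record the Tannakian descriptions $P^{\acute{e}t}_{x,y} = \Isom^{\otimes}(\omega_y,\omega_x)$ on the category of relatively unipotent $\mathbb{Q}_p$-local systems on $\bar X$, and $P^{crys}_{x,y} = \Isom^{\otimes}(\omega_y,\omega_x)$ on the category of relatively unipotent overconvergent isocrystals on $X_0$; here one first invokes the comparison of the relatively unipotent rigid/de Rham fundamental groupoid with the crystalline one (Chiarellotto--Le Stum, Shiho, Vologodsky in the classical case, Olsson in the relative one), and it is at this point that the ``locally nilpotent/tame around $D$'' hypothesis gets used. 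Both coordinate rings are ind-objects, so it suffices to produce compatible isomorphisms, after $\otimes B_{crys}$, of the finite-level pieces $\mathcal{G}_n$ together with their two augmentations at $x$ and $y$.

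The second step is to give a \emph{model} for $\mathcal{O}(P_{x,y})$ in each realization from a bar-type construction. Using the formality/spectral-sequence presentations recalled in Section~2, the prounipotent radical $\mathcal{U}$ is, in each realization, the prounipotent group whose Lie algebra is presented by $H^1$ of the complex $\mathbb{R}\Gamma(X,\mathcal{O}(S))$ (for $X$ projective; by $H^1(X^o,\mathcal{O}(S))^* \oplus H^0(X,R^1j_*\mathcal{O}(S))^*$ with the stated relations in the open case), while the extension $1\to\mathcal{U}\to\mathcal{G}\to S\to 1$ and the augmentations at $x,y$ are recovered from the reduced bar complex on this same $\mathbb{R}\Gamma$ equipped with the two basepoint augmentations. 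In particular, the whole package $(S,\mathcal{G},P_{x,y})$ is a functor of the commutative algebra object $\mathcal{O}(S)$ together with $\mathbb{R}\Gamma(X,\mathcal{O}(S))$ and its two augmentations, so it suffices to compare these inputs.

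The third step supplies the comparison. The hypothesis that $\mathcal{L}^{\acute{e}t}$ and $\mathcal{L}^{crys}$ are associated, together with the $p$-adic comparison theorem with coefficients in the log-smooth good-reduction case (Faltings, Tsuji; with coefficients Andreatta--Iovita, Faltings), gives a $G_K$-equivariant, $\phi$-compatible, $B_{crys}$-linear isomorphism on each irreducible summand of $\mathcal{O}(S)$, hence $S^{\acute{e}t}_{B_{crys}} \simeq S^{crys}_{B_{crys}}$, and then, using the Hochschild--Serre collapse $H^q(\mathcal{G},-)\simeq H^q(\mathcal{U},-)^S$ recorded in Section~2, an isomorphism $\mathbb{R}\Gamma(X,\mathcal{O}(S^{\acute{e}t}))\otimes B_{crys}\simeq\mathbb{R}\Gamma(X,\mathcal{O}(S^{crys}))\otimes B_{crys}$ (the open-variety $j_*$, $R^1j_*$ contributions handled as in Pridham's presentation \cite{pridham2012l}). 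Feeding this through the bar construction, and noting that $x,y\in X(R)$ reduce compatibly so the two sets of augmentations correspond, yields the Galois-equivariant isomorphism $\mathcal{O}(P^{\acute{e}t}_{x,y})\otimes B_{crys}\simeq\mathcal{O}(P^{crys}_{x,y})\otimes_{K_0} B_{crys}$. Since the right-hand factor carries trivial Galois action, this exhibits the $D_{crys}$-comparison map for $\mathcal{O}(P^{\acute{e}t}_{x,y})$ as an isomorphism, and Olsson's criterion (Proposition~\ref{indreps}) upgrades this to ind-crystallinity with $D_{crys}(\mathcal{O}(P^{\acute{e}t}))\simeq\mathcal{O}(P^{crys})$.

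The main obstacle I expect is the \textbf{open, non-proper case}: making the comparison theorem with coefficients genuinely work for the log pair $(\bar X,D)$ with $\mathcal{L}$ only tamely ramified around $D$, and matching weights so that the residue ($R^1j_*$) contributions to the presentation of $\mathfrak{u}$ agree on both sides --- precisely the kind of weight estimate that, on the crystalline side, requires the deep input of Abe--Caro \cite{abe2018theory}, as this paper notes for the analogous argument. A secondary difficulty is the \emph{torsor}-level (as opposed to group-level) functoriality of the bar construction: one must check that the resulting isomorphism is independent of auxiliary choices and really intertwines the augmentations at $x$ and $y$, which in Olsson's treatment is handled by a cohomological-descent argument over a simplicial model of $\mathcal{X}$.
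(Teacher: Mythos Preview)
The paper does not prove this theorem: it is quoted verbatim as \cite[Theorem 1.11]{olsson2011towards} and used as a black box (to show that \'etale path torsors land in $H^1_f$, and later that $D_{dR}(P)$ is a $\pi_1^{rel,dR}$-torsor). There is therefore nothing in the paper to compare your proposal against.

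That said, your sketch is a reasonable outline of the strategy actually carried out in Olsson's monograph: reduce the torsor-level comparison to a comparison of the Tannakian inputs, i.e.\ of $S$ and of the cohomology groups $H^*(X,\mathcal{O}(S))$ (and the log/residue contributions in the open case), then feed this through a bar/universal-object construction compatible with the two basepoint augmentations. The obstacles you flag are exactly the ones Olsson has to work around, and the paper itself only alludes to them when it invokes Pridham's presentation and Abe--Caro for weights in the crystalline setting. If you want to turn your sketch into an actual proof, the substantive work lies in the two places you already identified: (i) the log-crystalline comparison with coefficients in the open case, and (ii) checking that the bar-complex comparison is genuinely torsor-functorial in the pair $(x,y)$ and not just at the level of the group at the basepoint. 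Neither of these is handled in the present paper; both are deferred to \cite{olsson2011towards}.
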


Here is our statement, then.
\begin{prop}
Let $X/\mathcal{O}_{F,\Sigma}$ be a smooth complement of a normal crossings divisor relative to $\mathcal{O}_{F, \Sigma}$. Take $p$ a prime outside of $\Sigma$ and $T = \Sigma \cup \{v ; v|p\}$. Let $\mathcal{L}^{et}$ be a $\mathbb{Q}_p$-local system on $X_{\mathcal{O}_{F,T}}$ that is crystalline on $X_{\bar{F}_v}$ for all places $v|p$ of $F$ and has locally unipotent monodromy. Let $\mathcal{G}^{et}$ denote the relative \'etale fundamental group of $X_{\bar F}$ with respect to $\mathcal{L}^{\acute{e}t}$. Let $\mathcal{L}^{crys}$ be the associated isocrystal, and $\mathcal{G}^{crys}$,$S^{crys}$ its relative completion and monodromy.

Suppose that $S^{crys}$ is semisimple and simply connected. Then the pushout to $\mathcal{G}_n^{\acute{e}t}$ of the restriction of the path torsor $P_{b,x}^{\acute{e}t}$ to $G_{F_v}$ lands inside $H^1_f(G_{F_v}, \mathcal{G}_n^{\acute{e}t})$ for $v|p$. Furthermore, $G_F$ acts on the relative \'etale $\mathbb{Q}_p$-fundamental group of $X_F$ through the quotient $G_{F,T}$. In particular, the pushouts to $\mathcal{G}_n^{\acute{e}t}$ of the global path torsors $P_{b,x}^{\acute{e}t}$ sit in the Selmer stack $H^1_f(G_{F,T}, \mathcal{G}_n^{\acute{e}t})$.
\end{prop}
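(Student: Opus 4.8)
The plan is to prove the three assertions in turn: (i) for a fixed place $v\mid p$, the pushout $(P^{\acute{e}t}_{b,x})_n$ of the local path torsor lies in $H^1_f(G_{F_v},\mathcal{G}^{\acute{e}t}_n)$; (ii) the $G_F$-action on $\mathcal{G}^{\acute{e}t}$ factors through $G_{F,T}$; (iii) assemble these using the definition of the global Selmer stack as a fibre product. For (i), write $K=F_v$ with maximal unramified subfield $K_0$. The hypotheses on $\mathcal{L}^{\acute{e}t}$ (crystalline on $X_{\bar F_v}$, locally unipotent hence locally nilpotent monodromy, reductive monodromy group) together with $\mathcal{L}^{crys}$ being the associated isocrystal are precisely those of Olsson's crystalline path-torsor comparison \cite[Theorem 1.11]{olsson2011towards} recalled above. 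I would first record two of its consequences: $\mathcal{O}(P^{\acute{e}t}_{b,x})$ is an ind-crystalline $G_K$-representation with $D_{crys}\mathcal{O}(P^{\acute{e}t}_{b,x})\cong\mathcal{O}(P^{crys}_{b,x})$, and (taking $x=b$) $\mathcal{O}(\mathcal{G}^{\acute{e}t})$ is ind-crystalline with $D_{crys}\mathcal{O}(\mathcal{G}^{\acute{e}t})\cong\mathcal{O}(\mathcal{G}^{crys})$. Since $\mathcal{G}^{\acute{e}t}\to\mathcal{G}^{\acute{e}t}_n$ is surjective with characteristic kernel $\mathcal{U}^n$, the inclusion $\mathcal{O}((P^{\acute{e}t}_{b,x})_n)=\mathcal{O}(P^{\acute{e}t}_{b,x})^{\mathcal{U}^n}\hookrightarrow\mathcal{O}(P^{\acute{e}t}_{b,x})$ is a $G_K$-stable sub-ind-representation; ind-crystalline representations are closed under subobjects (a finite-dimensional subrepresentation of a subobject sits inside a finite-dimensional crystalline one, hence is crystalline), so $\mathcal{O}((P^{\acute{e}t}_{b,x})_n)$ is ind-crystalline with $D_{crys}$ equal to $\mathcal{O}((P^{crys}_{b,x})_n)$. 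By Proposition \ref{indreps}, tensoring with $B_{crys}$ yields $G_K$-equivariant isomorphisms of $B_{crys}$-schemes $(P^{\acute{e}t}_{b,x})_n\otimes_{\mathbb{Q}_p}B_{crys}\cong(P^{crys}_{b,x})_n\otimes_{K_0}B_{crys}$ and $\mathcal{G}^{\acute{e}t}_n\otimes_{\mathbb{Q}_p}B_{crys}\cong\mathcal{G}^{crys}_n\otimes_{K_0}B_{crys}$, compatibly with the torsor structures.

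\textbf{Trivializing the isocrystal side.} It remains to show that $(P^{crys}_{b,x})_n$ is the trivial $\mathcal{G}^{crys}_n$-torsor over $K_0$; this is exactly where the assumption that $S^{crys}$ be semisimple and simply connected is used. The group $\mathcal{G}^{crys}_n$ is an extension $1\to\mathcal{U}^{crys}_n\to\mathcal{G}^{crys}_n\to S^{crys}\to 1$ with $\mathcal{U}^{crys}_n$ unipotent. Kneser's theorem gives $H^1(K_0,S^{crys})=1$, while $H^1(K_0,\mathcal{U}^{crys}_n)=1$ (and the same for any inner form of $\mathcal{U}^{crys}_n$) by dévissage through copies of $\mathbb{G}_a$, for which $H^1$ over a field vanishes. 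Feeding this into the non-abelian cohomology exact sequence together with its twisted exactness property — so that the fibre of $H^1(K_0,\mathcal{G}^{crys}_n)\to H^1(K_0,S^{crys})$ over the trivial class is a quotient of $H^1(K_0,{}_{b}\mathcal{U}^{crys}_n)=1$ — forces $H^1(K_0,\mathcal{G}^{crys}_n)=1$. Hence $(P^{crys}_{b,x})_n\cong\mathcal{G}^{crys}_n$ over $K_0$, so $(P^{\acute{e}t}_{b,x})_n\otimes_{\mathbb{Q}_p}B_{crys}\cong\mathcal{G}^{\acute{e}t}_n\otimes_{\mathbb{Q}_p}B_{crys}$ $G_K$-equivariantly; by the description of rigid Selmer stacks in Remark \ref{selmerdescription} this is precisely the assertion that $(P^{\acute{e}t}_{b,x})_n$ lies in $H^1_f(G_{F_v},\mathcal{G}^{\acute{e}t}_n)$.

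\textbf{Globalization.} Since $\mathcal{L}^{\acute{e}t}$ is lisse on $X_{\mathcal{O}_{F,T}}$ and $X$ is a smooth normal-crossings complement over $\mathcal{O}_{F,\Sigma}\subseteq\mathcal{O}_{F,T}$, the cohomology groups $H^\bullet(\bar X,j_*\mathcal{O}(S))$, $H^\bullet(\bar X,R^qj_*\mathcal{O}(S))$ appearing in the presentation of $\mathfrak{u}$ recalled earlier are unramified outside $T$ by smooth and proper base change; as $\mathcal{O}(\mathcal{G}^{\acute{e}t})$, and likewise $\mathcal{O}(P^{\acute{e}t}_{b,x})$ (an $\Isom$-algebra of fibre functors at two $\mathcal{O}_{F,T}$-points), are ind-objects of the thick Tannakian category generated by $\mathcal{L}^{\acute{e}t}$ built from these groups, the $G_F$-action on them factors through $G_{F,T}$. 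This gives assertion (ii) and shows that $(P^{\acute{e}t}_{b,x})_n$ defines a class in $H^1(G_{F,T},\mathcal{G}^{\acute{e}t}_n)$. For each $v\mid p$, which lies in $T$, base-change compatibility of $\pi_1^{rel,\acute{e}t}$ and of path torsors identifies the restriction of this class to $G_{F_v}$ with the pushout of the local path torsor, which lies in $H^1_f(G_{F_v},\mathcal{G}^{\acute{e}t}_n)$ by the previous steps. Hence the global class lies in $H^1(G_{F,T},\mathcal{G}^{\acute{e}t}_n)\times_{\prod_{v\mid p}H^1(G_{F_v},\mathcal{G}^{\acute{e}t}_n)}\prod_{v\mid p}H^1_f(G_{F_v},\mathcal{G}^{\acute{e}t}_n)=H^1_f(G_{F,T},\mathcal{G}^{\acute{e}t}_n)$.

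\textbf{Main obstacle.} The crux is the local step: it rests on Olsson's comparison theorem (a genuinely deep input we are entitled to cite) together with the trivialization of the isocrystal-side torsor, and the care needed lies in checking that Olsson's comparison is compatible with the pushouts $\mathcal{G}^{\bullet}\to\mathcal{G}^{\bullet}_n$ and with the translation between ``ind-crystalline coordinate ring'' and ``torsor trivial over $B_{crys}$'' encoded in Remark \ref{selmerdescription}. The semisimplicity and simple-connectedness of $S^{crys}$ is not a technical convenience but the precise hypothesis making Kneser's theorem apply, hence forcing $H^1(K_0,\mathcal{G}^{crys}_n)$ to vanish; relaxing it would require allowing nontrivially twisted $\mathcal{G}^{crys}_n$-torsors on the de Rham side, which is exactly what the \'etale-topology stackification alluded to in the introduction is meant to handle.
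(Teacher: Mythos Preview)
Your proposal is correct and follows essentially the same route as the paper: Olsson's comparison theorem to pass from the \'etale to the crystalline path torsor over $B_{crys}$, Kneser's theorem (plus vanishing of unipotent $H^1$) to trivialize $(P^{crys}_{b,x})_n$ over $K_0$, and smooth/proper base change on the cohomology groups presenting $\mathfrak{u}^{\acute{e}t}$ for the unramifiedness outside $T$. The only cosmetic differences are order of presentation and your more explicit handling of the level-$n$ pushout via $\mathcal{U}^n$-invariants; one small point worth adding is that $P^{\acute{e}t}_{b,x}$ has a $\mathbb{Q}_p$-point (Remark~\ref{nonempty}), which is what licenses the use of Remark~\ref{selmerdescription} over $\Lambda=\mathbb{Q}_p$.
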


\begin{rem}
By the statement $P_{b,x}^{\acute{e}t}$ lands in $H^1_f$, we of course mean that the rigid analytification of $P_{b,x}^{\acute{e}t}$ lands in $H^1_f$, which classifies rigid-analytic torsors. Since, as we note below, $(P_{b,x}^{\acute{e}t})^{an}$ is completely determined by $P_{b,x}^{\acute{e}t}$, there is minimal risk of confusion.

Note also that $S_{crys}$ is semisimple and simply connected if and only if $S^{dR}$ is, by \cite[Lemma 4.16, Section 4.12]{olsson2011towards}.
\end{rem}

\begin{proof}
We establish that $G_F$ acts through its quotient $G_{F,T}$; i.e. we need to show that the action of $G_{F_w}$ on $\mathcal{G}^{et}$ is in fact unramified when $w \not \in T$. We use the Adams spectral sequence and the corresponding structure theorem for $\mathcal{G}^{\acute{e}t}$, as follows.

The Galois action on $S^{\acute{e}t}$ is given by conjugation by the Galois action on the fiber $\mathcal{L}^{\acute{e}t}_b$, while the Galois action on the unipotent part is given by the Galois action on \[H^1_{\acute{e}t}(\overline{X},\mathcal{O}(S^{\acute{e}t}))^* \oplus H^0_{\acute{e}t}(\overline{X^c}, R^1j_*\mathcal{O}(S^{\acute{e}t}))^*\] (here $X^c$ is some smooth compactification of $X$ over $\mathcal{O}_{F,\Sigma}$.)

Now the unramifiedness of the Galois action on $\mathcal{L}^{\acute{e}t}_b$ follows by definition, since $\mathcal{L}^{\acute{e}t}$ is defined over the whole of $\mathcal{O}_{F_w}$; thus the Galois action of $G_{F_w}$ on it factors through $\pi_1^{\acute{e}t}(\mathcal{O}_{F_w}),$ i.e. it is unramified.

The corresponding statement for the unipotent part is a consequence of the smooth and proper base change theorems, in fact a standard consequence if $X$ is proper. In the non-proper case there is a ``Deligne Hodge II-style'' spectral sequence involving the components of $X^c \setminus X$ that abuts to the cohomology group \[H^1_{\acute{e}t}(\overline{X},\mathcal{O}(S^{\acute{e}t}))^*.\] Furthermore, standard Tannakian theory tells us that the sheaf $\mathcal{O}(S^{\acute{e}t})$ is in $ind \langle \mathcal{L}^{\acute{e}t} \rangle_{\otimes}$. In particular, it is defined over the whole of $\mathcal{O}_{F_w}$.

The $E_1$ terms in this spectral sequence involve the cohomology of smooth and proper schemes over $\mathcal{O}_{F_w}$, and a well-known argument with smooth and proper base-change implies that the Galois action on each of these cohomology groups is unramified. From this we see that the Galois action on $H^1_{\acute{e}t}(\overline{X},\mathcal{O}(S^{\acute{e}t}))^*$ is unramified. A completely similar argument gives unramifiedness for $H^0_{\acute{e}t}(\overline{X^c}, R^1j_*\mathcal{O}(S^{\acute{e}t}))^*$, since $X^c$ is already proper.

Now on to the Selmer part. Since there is no risk of confusion, we also write $P^{\acute{e}t}_{b,x}$ for the pushout of the path torsor to the quotient $\mathcal{G}_n^{\acute{e}t}$.  Recall that the torsors $P^{\acute{e}t}_{b,x}$ are trivial when we forget the Galois action, by \ref{nonempty}. By Remark \ref{selmerdescription}, then, it suffices to give a Galois-equivariant isomorphism \[P^{\acute{e}t}_{b,x} \otimes B_{crys} \simeq P^{\acute{e}t}_{b,b} \otimes B_{crys}.\]

Olsson's comparison theorem gives a Galois-equivariant isomorphism $$P^{\acute{e}t}_{b,x} \otimes_{\mathbb{Q}_p} B_{crys} \simeq P^{crys}_{b,x} \otimes_{K_0} B_{crys}.$$ We will see that it suffices to provide an isomorphism $$P^{crys}_{b,x} \simeq P^{crys}_{b,b},$$ i.e. a point of $P^{crys}_{b,x}$.

By Kneser's theorem \cite[Section 3.1]{ion2001galois} the Galois cohomology of $S^{crys}$ is trivial; because the Galois cohomology of unipotent groups is trivial, every torsor for $\mathcal{G}^{crys}_n$ has a point. We conclude that $P^{crys}_{b,x}(K_0)$ is nonempty. We use that point in the second isomorphism of the following string; every isomorphism that says $comp$ is induced by Olsson's comparison theorem.
\begin{align*}
B_{crys} \otimes  P_{b,b}^{et} \overset{comp} \simeq B_{crys} \otimes P_{b,b}^{crys} \\ \simeq  B_{crys} \otimes P_{b,x}^{crys} \\ \overset{comp}  \simeq  B_{crys} \otimes P_{b,x}^{et}
\end{align*}
This isomorphism finishes the proof of the statement for the local Selmer group, since we see that $P_{b,x}^{et}$ is in the Selmer stack. Now the global statement follows by its definition as fibered product.

\end{proof}

\begin{rem}
In the first part of this proof we are tempted to use the fact that $P^{\acute{e}t}_{b,x}(F_v)$ is non-empty, i.e. that \'etale paths always exist \cite[Proposition 5.5.1]{szamuely2009galois}. This path would induce a second path $$\gamma \in P^{crys}_{b,x} \otimes B_{crys}$$ by the comparison isomorphism, thus exhibiting an isomorphism $$P^{crys}_{b,x} \otimes B_{crys} \simeq P^{crys}_{b,b} \otimes B_{crys}.$$ Unfortunately this isomorphism is not Galois-equivariant, since $p$ likely has a nontrivial Galois action and equivariance of the previous isomorphism is equivalent to $\gamma$ being fixed by the Galois action.
\end{rem}

\chapter{Representability of Cohomology Stacks}

The \'etale realization of the relative completion has furnished us with an extension of $G$-groups, defined over $\mathbb{Q}_p$: $$0 \to \mathcal{U} \to \mathcal{G} \to S \to 0$$ where $\mathcal{U}$ is pro-unipotent, $S$ is reductive, and $G$ is profinite. The first natural question in the nonabelian arithmetic philosophy is: What is the structure of $H^1(G,\mathcal{G})$? In Theorem \ref{representability} we answer this question from an abstract point of view, assuming only mild conditions on the objects present. The strategy hinges on the fact that the stack $H^1(G,\mathcal{G})$ fits into a number of nice exact sequences whose terms are either treated by the work of Pottharst (via the space $H^1(G,\mathcal{U})$) or need to be manipulated to look like representation spaces of $G$ (these pieces come from $H^1(G,S)$.)

See \ref{stratrep} for the notion of pro-strat-representability.
\begin{thm}\label{representability}
Let $$0 \to \mathcal{U} \to \mathcal{G} \to S \to 0$$ be an extension of a reductive group over $\mathbb{Q}_p$ by a pro-unipotent group over $\mathbb{Q}_p$, and suppose that a profinite group $G$ acts continuously for the $p$-adic topology on the entire short exact sequence. 

Suppose that a substack $H^{1,type}(G,S) \subseteq H^{1}(G,S)$ is strat-representable by rigid-analytic stacks of finite type. Denote by $H^{1,type}(G, \mathcal{G})$ the fibered product  $$H^{1}(G, \mathcal{G}) \times_{H^1(G,S)} H^{1,type}(G,S).$$ Then the tower of cohomology functors $\{H^{1,type}(G, \mathcal{G}_n)\}$ is pro-strat-representable in the category of rigid-analytic stacks over $\mathbb{Q}_p$. 

\end{thm}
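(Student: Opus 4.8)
The plan is to prove the statement by induction on $n$, at each stage enlarging $\mathcal{G}_n$ to $\mathcal{G}_{n+1}$ by one graded piece of the lower central series of $\mathcal{U}$. The base case is $n=0$: since $\mathcal{G}_0 = S$ one has $H^{1,type}(G,\mathcal{G}_0)=H^{1,type}(G,S)$, which is strat-representable by hypothesis. For the inductive step I would use the abelian extension $0 \to Z_n \to \mathcal{G}_{n+1} \to \mathcal{G}_n \to 0$, where $Z_n = \mathcal{U}^n/\mathcal{U}^{n+1}$ is abelian, normal, and $G$-stable in $\mathcal{G}_{n+1}$, with conjugation action factoring through $S$ (it is central in $\mathcal{U}_{n+1}$). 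Unwinding the fibered-product definitions and using that $\mathcal{G}_{n+1}\to\mathcal{G}_n\to S$, one gets $H^{1,type}(G,\mathcal{G}_{n+1}) = H^{1,type}(G,\mathcal{G}_n) \times_{H^1(G,\mathcal{G}_n)} H^1(G,\mathcal{G}_{n+1})$, so it suffices to understand the map $H^1(G,\mathcal{G}_{n+1}) \to H^1(G,\mathcal{G}_n)$ after pullback to the (strat-representable, by induction) stack $H^{1,type}(G,\mathcal{G}_n)$. Throughout I would assume, as holds in all geometric situations by the formality theorems, that each $Z_n$ is finite-dimensional, so that $\mathcal{G}_n$ is an honest affine algebraic group of finite type.

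Next I would analyze the fibers of this map via Proposition \ref{fibers} and the torsor-twist corollary following it. Working Tate-locally (and, where the stratum is a genuine stack, on an atlas) over a representable stratum $\mathcal{V}$ of $H^{1,type}(G,\mathcal{G}_n)$, one chooses a universal $\mathcal{G}_n$-torsor $Q$ and forms the twist ${}_QZ_n$, a family of finite-dimensional continuous $G$-representations over $\mathcal{V}$; by the functoriality of the connecting map (Proposition \ref{cohomologous}) the cohomology objects $H^i(G,{}_QZ_n)$ are canonical and descend to $\mathcal{V}$. Applying Pottharst's Theorem \ref{pharst} — valid since $G$ is $G_{F_v}$ or $G_{F,T}$, which have finite cohomology on finite $p$-power modules vanishing above degree $2$ — the sheaves $H^i(G,{}_QZ_n)$ are coherent on $\mathcal{V}$. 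The class $Q$ lifts to $H^1(G,\mathcal{G}_{n+1})$ exactly when the obstruction $\Delta(Q) \in H^2(G,{}_QZ_n)$ vanishes; these obstructions glue to a global section $\delta$ of the coherent sheaf $H^2(G,{}_QZ_n)$, whose zero locus $Z(\delta) \subseteq \mathcal{V}$ is a closed analytic substack. Over $Z(\delta)$ the fiber of $H^{1,type}(G,\mathcal{G}_{n+1}) \to H^{1,type}(G,\mathcal{G}_n)$ is the quotient stack $[\,H^1(G,{}_QZ_n)/H^0(G,{}_Q\mathcal{G}_n)\,]$.

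Finally I would conclude by a stratification argument: refine the strata of $H^{1,type}(G,\mathcal{G}_n)$ so that the cocycle and cohomology sheaves become locally free. Then $Z(\delta)$ is cut out by a section of a vector bundle, $H^1(G,{}_QZ_n)$ is presented over $Z(\delta)$ by (the total space of) a vector bundle, and $H^0(G,{}_Q\mathcal{G}_n)$ — being the $G$-invariants of the finite-type affine family ${}_Q\mathcal{G}_n$ — is represented by a closed rigid-analytic subgroup; hence the fiber is the quotient of a vector bundle by an action of a rigid-analytic group, i.e. a rigid-analytic stack of finite type. Gluing over the strata of $H^{1,type}(G,\mathcal{G}_n)$ exhibits $H^{1,type}(G,\mathcal{G}_{n+1})$ as strat-representable, and the transition maps $H^{1,type}(G,\mathcal{G}_{n+1}) \to H^{1,type}(G,\mathcal{G}_n)$ assemble the tower into the asserted pro-strat-representable object.

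I expect the main obstacle to be the interaction between stackiness and gluing: the universal torsor $Q$ exists only Tate-locally, so one must check that the twists ${}_QZ_n$ and ${}_Q\mathcal{G}_n$, their coherent cohomology sheaves, the obstruction section $\delta$, and the quotient presentations are all well defined globally and compatibly (the torsor-twist form of Proposition \ref{fibers} is tailored to exactly this). Two secondary points also need care: verifying that $H^0(G,{}_Q\mathcal{G}_n)$ is representable by a rigid-analytic group, so that the quotient stack is a legitimate rigid-analytic stack, and threading the successive stratifications (coherent-to-locally-free sheaves, plus the obstruction loci) through every inductive step while preserving the finite-type property.
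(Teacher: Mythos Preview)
Your proposal is correct and follows essentially the same route as the paper: induction up the lower central series, Pottharst's coherence theorem for the twisted $H^i(G,{}_QZ_n)$, the obstruction section $\Delta$ cutting out the image, the fiber description $[H^1(G,{}_QZ_n)/H^0(G,{}_Q\mathcal{G}_n)]$ from Proposition~\ref{fibers}, and then flattening stratifications to make the quotient a genuine rigid stack via Proposition~\ref{transfer}. The only place the paper is slightly more explicit than your sketch is that it takes a \emph{second} flattening stratification for the family of groups $H^0(G,{}_Q\mathcal{G}_n)$ (not just for the cohomology sheaves), since Lemma~\ref{flatquotient} requires the acting group to be flat over the base; you flagged this as an obstacle, and that is exactly how it is handled.
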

\begin{rem}
The stacks $H^{1,type}$ and $H_f^{1,type}$ will be a certain substack corresponding to Galois representations with a fixed Hodge type, Galois type, residual pseudorepresentation, and weights.
\end{rem}

We proceed by induction. By assumption, $H^{1,type}(G,S)$ is representable in the category of rigid analytic stacks over $\mathbb{Q}_p$. Suppose that the tower \[H^1(G, \mathcal{G}_{n-1}) \to \cdots \to H^1(G,S)\] is strat-representable. By definition of strat-representability there exist finitely many rigid-analytic substacks $\Sigma_{1}, \cdots, \Sigma_{l}$ such that \[| H^1(G, \mathcal{G}_{n-1}) | = \left| \coprod_i \Sigma_i \right|\] in the category of stacks in groupoids over $\mathbb{Q}_p$.

Consider the morphism of stacks in groupoids over the site $Rig_{\mathbb{Q}_p}$ given by \[q \colon H^{1,type}(G,\mathcal{G}_n) \to H^{1,type}(G,\mathcal{G}_{n-1})\] and its restrictions to \[q_i \colon H^{1,type}(G,\mathcal{G}_n)|_{\Sigma_i} \to \Sigma_i.\]
\begin{lem}
The image of $q_i$ is a locally closed rigid-analytic substack of $\Sigma_i$.
\end{lem}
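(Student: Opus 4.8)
The plan is to exhibit the image of $q_i$ as the zero locus of an obstruction section of a coherent sheaf on $\Sigma_i$ coming from Pottharst's perfect complex, and then to refine the stratification so that this sheaf becomes locally free, at which point its zero locus is visibly closed.

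First I would isolate the relevant piece of the extension. Let $Z$ denote the kernel of $\mathcal{G}_n\to\mathcal{G}_{n-1}$, namely the graded piece $\mathcal{U}^{n-1}/\mathcal{U}^{n}$ of the lower central series of $\mathcal{U}$: a finite-dimensional $\mathbb{Q}_p$-vector group, abelian, normal and $G$-stable in $\mathcal{G}_n$, and acted on by conjugation through the reductive quotient $S$. Since $\mathcal{G}_n\to\mathcal{G}_{n-1}$ admits a scheme-theoretic section (the earlier proposition on extensions by unipotent groups), the pair $(Z,\mathcal{G}_n)$ satisfies condition $\textbf{2}$ of the non-abelian long exact sequence. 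Hence for a cocycle $c$ representing a class of $H^1(G,\mathcal{G}_{n-1})$ there is an obstruction $\Delta(c)\in H^2(G,{}_cZ)$, well defined up to the choice of representative, which vanishes precisely when $c$ lifts to $H^1(G,\mathcal{G}_n)$, and the same holds over any test object. Thus the image of $q_i$ is exactly the locus in $\Sigma_i$ where $\Delta$ vanishes. (The ``type'' decoration is pulled back from $H^1(G,S)$ and so is already built into $\Sigma_i$; it imposes no further condition here.)

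Next I would globalize $\Delta$. Choose a rigid-analytic atlas $U\to\Sigma_i$ over which the universal $\mathcal{G}_{n-1}$-torsor is trivialized, so $U$ carries a universal $1$-cocycle $c_U\colon G\to\mathcal{G}_{n-1}(\mathcal{O}_U)$; lifting it through the scheme-theoretic section gives a continuous $\tilde c_U\colon G\to\mathcal{G}_n(\mathcal{O}_U)$ and hence a $2$-cocycle $\Delta(c_U)(g,h)=\tilde c_U(g)\,{}^g\tilde c_U(h)\,\tilde c_U(gh)^{-1}$ valued in the twisted family ${}_{c_U}Z$, a locally free $\mathcal{O}_U$-module with continuous $G$-action. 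Because $G$ (which is $G_{F_v}$ or $G_{F,T}$) has finite cohomological dimension on finite $p$-power-torsion modules, Pottharst's Theorem \ref{pharst} applies: the continuous-cochain complex $C^\bullet(G,{}_{c_U}Z)$ is a perfect complex of coherent $\mathcal{O}_U$-modules whose formation commutes with base change. By Proposition \ref{cohomologous}, $\Delta$ is functorial, hence compatible with base change and with the descent datum for $U\to\Sigma_i$, so its class descends to a global section $\delta$ of the coherent sheaf $\mathcal{H}^2\bigl(C^\bullet(G,{}_cZ)\bigr)$ on $\Sigma_i$; the image of $q_i$ is then the locus where $\delta$ dies in $H^2(G,{}_{c_x}Z)$ at every point $x$.

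The final step, which I expect to be the crux, is to upgrade this from a merely constructible set to a locally closed substack: in general the zero locus of a section of a coherent sheaf need only be constructible, the obstruction being the gap between $\mathcal{H}^2$ and the pointwise groups $H^2(G,{}_{c_x}Z)$. I would close this gap by refining the stratification. Since $\Sigma_i$ is a finite-type --- hence quasi-compact and Noetherian --- rigid-analytic stack and $C^\bullet(G,{}_cZ)$ has only finitely many nonzero cohomology sheaves, a flattening / Noetherian-induction argument partitions $\Sigma_i$ into finitely many locally closed rigid-analytic substacks over each of which \emph{every} cohomology sheaf $\mathcal{H}^j\bigl(C^\bullet(G,{}_cZ)\bigr)$ is locally free; passing to this refinement is harmless, as it only enlarges the still-finite family of strata used in Theorem \ref{representability}. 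Over one such piece the perfect complex is, locally, the direct sum of its shifted cohomology sheaves, so cohomology commutes with base change and $\mathcal{H}^2\bigl(C^\bullet(G,{}_cZ)\bigr)\otimes\kappa(x)\xrightarrow{\sim}H^2(G,{}_{c_x}Z)$ for every $x$; hence the image of $q_i$ over that piece is the zero locus of the section $\delta$ of the \emph{vector bundle} $\mathcal{H}^2\bigl(C^\bullet(G,{}_cZ)\bigr)$, i.e.\ locally the common zeros of finitely many analytic functions --- a closed rigid-analytic substack, and therefore a locally closed substack of $\Sigma_i$. That $q_i$ genuinely factors through this substack as stacks (not only on points) is automatic: over any test object the obstruction section vanishes as soon as a $\mathcal{G}_n$-lift exists.
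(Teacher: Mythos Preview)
Your argument follows essentially the same route as the paper's: construct the obstruction $\Delta(\tilde c)$ as a section of the coherent sheaf $H^2(G,{}_cZ_n)$ furnished by Pottharst's theorem, check it is independent of the local lift $\tilde c$ (the paper cites Serre's theory; you invoke Proposition \ref{cohomologous}), and glue by functoriality of $\Delta$.

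The paper's proof is much terser than yours: it simply declares that ``$\Delta(\tilde c_i)$ is a section of a coherent sheaf on a rigid affinoid, so we may take its zero locus,'' and asserts the glued loci form a locally closed substack, with no flattening step. Your added refinement --- stratifying so that each $\mathcal{H}^j$ becomes locally free --- is a genuine gain in rigor: the vanishing locus of a section of a merely coherent sheaf is in general only constructible, not locally closed, and you correctly flag this as the crux. Two remarks, though. First, since $H^2$ is the \emph{top} cohomology of the perfect complex (the relevant $G$ has $p$-cohomological dimension $2$), it already commutes with arbitrary base change, so your worry about the gap between $\mathcal{H}^2\otimes\kappa(x)$ and $H^2(G,{}_{c_x}Z)$ is moot; the only real subtlety is representability of the vanishing locus itself. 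Second, what you actually establish is that the image of $q_i$ is closed on each piece of a \emph{refined} stratification, hence constructible in $\Sigma_i$ --- not a single locally closed substack as the lemma literally asserts. As you note, this is harmless for the induction in Theorem \ref{representability}, since one absorbs the refinement into the running family of strata; but strictly speaking you have proved a serviceable variant of the lemma rather than the lemma as stated.
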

\begin{proof}
For any $\Lambda$-point $c \in \Sigma_i$, we have that ${}_cZ_n$ (the twist is by conjugation) is a family of Galois representations over $\Lambda$. By Theorem \ref{pharst}, the $\Lambda$-module $H^2(G, {}_cZ_n)$ is coherent. Choose lifts $\tilde c_i$ of $c$ to $Z^1(G, \mathcal{G}_{n-1})$ locally for the Tate topology. Now $\Delta(\tilde c_i)$ is a section of a coherent sheaf on a rigid affinoid, so we may take its zero locus. This zero locus does not depend on the choice of the lifts $\tilde c_i$, by Serre's work. 

Finally, by functoriality of $\Delta(c)$, these zero loci glue to a locally closed substack of $\Sigma_i$.
\end{proof}

We can now demonstrate Theorem \ref{representability}.
\begin{proof}
Since $image(q_i)$ is locally closed, we may replace $\Sigma_i$ with $image(q_i)$ and it does not affect strat-representability. Thus we assume that none of the fibers of $q_i$ are empty.

We will be done if we can give a stratification $\Sigma_{i,j}$ of  $\Sigma_i$ such that \[H^{1,type}(G,\mathcal{G}_n) \times_{H^{1,type}(G,\mathcal{G}_{n-1})} \Sigma_{i,j} \] is representable by a rigid stack. Now, for $\Lambda$ an affinoid $\mathbb{Q}_p$-algebra, the fiber above any $T = \Spm (\Lambda)$-torsor $c_T \in \Sigma_i \subseteq H^{1,type}(G,\mathcal{G}_{n-1})(T)$ is locally in the Tate topology given by the stack in groupoids $\left[H^1(G, {}_{b_T}Z_{n, T})/ H^0(G, {}_{c_T} \mathcal{G}_{n-1,T})\right]$ from Proposition \ref{fibers}. Here $b_T$ is a local lift of $c_T$.

Note that ${}_{c_T}Z_{n, T}$ is a family of abelian representations of $G$ over $\Spm T$. Now the work of Pottharst (Theorem \ref{pharst}) implies that $H^1(G, {}_{c_T}Z_{n, T})$ is a coherent sheaf over $\Spm T$. By gluing in the Tate topology, the association \[(T \to \Sigma_i) \mapsto H^1(G, {}_{c_T}Z_{n, T})\] is a coherent sheaf on $\Sigma_i$. We denote it by $H^1(G, {}_{c}Z_{n})$ to remind us of the twist that is present.

Let $\Sigma'_{i,j}$ be a flattening stratification of $H^1(G, {}_{c}Z_{n})$, i.e. a stratification such that the restriction of the sheaf to each stratum is locally free. (The existence of such a stratification can be argued using formal models and \cite[Corollary 3.7]{bosch1993formal}). By the finite-type assumption on the base, this stratification can be taken to be finite. Now $\mathscr{L} = H^1(G, {}_{c}Z_{n}) \otimes_{H^{1,type}(G,\mathcal{G}_{n-1})} \Sigma'_{i,j}$, being locally free, is thus relatively representable over $\Sigma'_{i,j}$ (\ref{locallyfree}.)

Now we deal with the whole quotient. For any $c_T$ as above, the functor $H^0(G, {}_{c_T} \mathcal{G}_{n-1,T})$ is representable by a scheme. Indeed, for any $g \in G$, the fixed points of $g$ form a closed subscheme of ${}_{c_T} \mathcal{G}_{n-1,T}$ because $G$ acts by algebraic automorphisms; the entire $H^0$ is then the intersection of all of these closed subschemes, which is again a closed subscheme. This description as the intersection of fixed points (which are pullbacks) makes it clear that $H^0$ commutes with arbitrary pullback along morphisms $T' \to T$, and in particular it commutes with restriction to admissible affinoid subsets. Therefore, gluing (the analytifications of) these together in the Tate topology gives us a rigid group $H^0(G, {}_{c} \mathcal{G}_{n-1})$ over $\Sigma_i$. Now Noetherianness (see, for example, \cite[Theorem 8.3]{rydh2016approximation}) implies that there is a finite stratification $\Sigma''_{i,j}$ of $\Sigma_i$ such that $H^0(G, {}_{c} \mathcal{G}_{n-1}) \times \Sigma''_{i,j} \to \Sigma''_{i,j}$ is flat for all $j$.

Finally, let $\Sigma_{i,j}$ be a common refinement of $\Sigma'_{i,j}$ and $\Sigma''_{i,j}$, and $c_T \in \Sigma_{i,j}(T)$. We see that for each $i,j$, the stack \[\left[H^1(G, {}_{c_T}Z_{n})/ H^0(G, {}_{c_T} \mathcal{G}_{n-1})\right] \times_{\Sigma_i} \Sigma_{i,j}\] is a quotient of a rigid analytic space by a flat rigid group. Each such quotient is representable in the category of rigid-analytic stacks by \ref{flatquotient}, and so the total space \[\left[H^1(G, {}_{c}Z_{n})/ H^0(G, {}_{c} \mathcal{G}_{n-1})\right] \times_{\Sigma_i} \Sigma_{i,j}\] is representable by Proposition \ref{transfer}.

\end{proof}

Let us see what it would take to prove the Selmer stack analogue of Theorem \ref{representability}. Write $B$ for $B_{crys}$.

By definition of the Selmer part, we have a morphism of stacks on the rigid \'etale site, $$H^{1, type}_f(G,\mathcal{G}_n) \to H^{1,type}_f(G,\mathcal{G}_{n-1}).$$ By assumption the latter stack is strat-representable.

We proceed to fix some $n \geq 1$ and analyze the diagram of pseudofunctors
\[
\begin{tikzcd}[column sep=small]
H^0(G,\mathcal{G}_{n-1}(T)) \arrow{d} \arrow {r} & H^1(G, Z_n(T)) \arrow{d} \arrow {r} & H^1(G, \mathcal{G}_n(T)) \arrow{d} \arrow {r} & H^1(G, \mathcal{G}_{n-1}(T)) \arrow{d} \\
H^0(G,\mathcal{G}_{n-1}(T \widehat \otimes B))  \arrow {r} & H^1(G, Z_n(T \widehat \otimes B))  \arrow {r} & H^1(G, \mathcal{G}_n(T \widehat \otimes B))  \arrow {r} & H^1(G, \mathcal{G}_{n-1}(T \widehat \otimes B))
\end{tikzcd}
\]
Consider a torsor $c \in H^{1,type}_f(G, \mathcal{G}_{n-1})(T)$. What is the fiber $\mathscr{F}$ above $c$ inside of $H^{1,type}_f(G, \mathcal{G}_n)$?
By using the long exact sequences in cohomology above and Proposition \ref{fibers} this fiber is $(f_{1})^{-1} \circ im(f_2)$ in the following diagram.
\[
\begin{tikzcd}
 & H^1(G,{}_b Z_n(T))/H^0({}_c \mathcal{G}_{n-1}(T)) \arrow{d}{f_1} \\
H^0(G,{}_c \mathcal{G}_{n-1}(T \otimes B)) \arrow{r}{f_2} &  H^1(G, {}_b Z_n(T \otimes B)) 
\end{tikzcd}
\]
where $b$ is an element of the fiber, which locally exists if the fiber is nonempty.

Applying the exact sequence again, the image of $f_2$ is isomorphic to \[\mathcal{H}_1 = [H^0(G,{}_c \mathcal{G}_{n-1}(T \widehat \otimes B))/H^0(G,{}_b \mathcal{G}_{n}(T \widehat \otimes B))],\] which we will see in the next section is representable if $b$ is crystalline. Now denote by $\mathcal{H}_2$ the image of $H^1(G, {}_bZ_n)$ in $H^1(G, {}_bZ_n \otimes (T \widehat \otimes B))$. We see that $\mathscr{F}$ is a bundle with fiber $[H^1_f(G, {}_b Z_N)/H^0({}_c \mathcal{G}_{n-1})]$ over the ``intersection'' \[\mathcal{H}_1 \cap \mathcal{H}_2.\]

As we see, the representability of $\mathscr{F}$ is nearly the question of whether $H^1_f$ forms a coherent sheaf for analytic families of Galois representations; but we have the intervening intersection, which we would need to prove representable by a stack.

\begin{rem} \label{challenges}
The reader should note here that the pro-strat-representability of $H^1_f(G, \mathcal{G})$ when $G = G_K$ for $K$ a local field is thus closely related to the question of whether Bloch--Kato Selmer groups of analytic families of representations form coherent sheaves. Such a result is not known in the crystalline case, although the de Rham and Hodge--Tate cases were proven by Shah \cite{shah2018interpolating}. Pottharst also proves a coherence result for cohomology of families of $(\phi, \Gamma)$-modules. It might be interesting to impose ordinarity conditions on the reductive level of the relative completion and to try to use Pottharst's result in this case.
\end{rem}

\chapter{Cohomology Stacks of Groups with Conjugation Action}
\section{Representability of stacks without Selmer conditions}

We would now like to prove that suitable substacks of $H^1(G,S)$ and $H^1_f(G,S)$ are strat-representable as rigid-analytic stacks. The easiest situation is when $S$ is abelian, i.e. $S=\mathbb{G}_m$; then the cohomology set is representable by a vector space. In any other case, things are more complicated. Our study is guided by the following theorem of Chenevier. Let $Rep^{ss}_n(G)$ be the functor which associates to a rigid-analytic space $X/\Spm(\mathbb{Q}_p)$ the set of locally free $\mathcal{O}_X$-modules of rank $n$ which are acted on $\mathcal{O}_X$-linearly, continuously, and semisimply by $G$, up to isomorphism in the usual sense of families of representations.

We remind the reader that a profinite group $G$ satisfies Mazur's $\Phi_p$ condition for a prime $p$ if for every open subgroup $G_0$ of finite index in $G$, $\Hom(G_0, \mathbb{F}_p)$ is finite. There are two important examples of fields which satisfy this finiteness condition: The first is the absolute Galois group of a $p$-adic field. The second example is given by the groups $G_{K,\Sigma}$ -- here $\Sigma$ is any set of primes of a number field $K$ and $G_{K,\Sigma}$ denotes, as usual, the Galois group of the maximal extension ramified only above $\Sigma$.
\begin{thm}{\cite[Theorems A-G]{chenevier2014p}}
Suppose that $G$ is a profinite group that satisfies Mazur's $\Phi_p$-condition. Then the functor on rigid analytic spaces over $\mathbb{Q}_p$ given by $$Rep^{ss}_n(G) \colon Rig \to Set$$ is representable by a rigid analytic space. Furthermore, that rigid analytic space is a disjoint union of rigid spaces indexed by the isomorphism class of the residual representation.
\end{thm}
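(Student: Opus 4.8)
The statement is Chenevier's \cite{chenevier2014p}; were one to reprove it from scratch, the plan would be to route through the moduli of \emph{pseudorepresentations} (Chenevier's determinants) and then to reconstruct a universal semisimple representation using Cayley--Hamilton algebras.

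First I would reduce to a fixed residual representation. Given a family $M$ over a rigid space $X$, its $n$-dimensional determinant is a continuous function on $G$ valued in $\mathcal{O}_X$; a continuous representation of the profinite group $G$ always preserves a lattice, so this function is integral and its reduction at the residue fields of $X$ lands in a discrete set of residual determinants. Hence the semisimplified residual representation $\bar\rho$ is locally constant on $X$, and $Rep^{ss}_n(G)$ decomposes as a disjoint union of open-and-closed subfunctors $Rep^{ss}_{n,\bar\rho}(G)$ indexed by isomorphism classes of semisimple residual representations; it suffices to represent each summand.

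Next I would represent the relevant space of determinants. Fix $\bar\rho$ with residual determinant $\bar D$. Mazur-style deformation theory together with the $\Phi_p$-condition shows that the functor of continuous $n$-dimensional determinants of $G$ deforming $\bar D$ is pro-represented by a complete \emph{Noetherian} local $\mathbb{Z}_p$-algebra $R^{\mathrm{ps}}_{\bar\rho}$ --- Noetherianness is exactly where $\Phi_p$ enters, via finiteness of the pertinent reduced mod-$p$ cotangent space. Passing to the Berthelot rigid generic fibre produces a rigid analytic space $\mathcal{PS}_{\bar\rho} = (\Spf R^{\mathrm{ps}}_{\bar\rho})^{\mathrm{rig}}$ over $\mathbb{Q}_p$ carrying a universal continuous determinant, and the disjoint union of the $\mathcal{PS}_{\bar\rho}$ represents the functor of $n$-dimensional continuous determinants of $G$. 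The crucial step is then to recover semisimple representations from determinants: over $R^{\mathrm{ps}}_{\bar\rho}$ one forms the Cayley--Hamilton quotient $E_{\bar\rho}$ of the completed group algebra $\mathbb{Z}_p[[G]]$ --- the largest quotient on which the universal determinant is Cayley--Hamilton of degree $n$ --- which under $\Phi_p$ is a module-finite $R^{\mathrm{ps}}_{\bar\rho}$-algebra. When $\bar\rho$ is irreducible, $E_{\bar\rho}$ is (generically) an Azumaya algebra of rank $n^2$ whose standard module is the universal representation; for reducible $\bar\rho$ one first stratifies $\mathcal{PS}_{\bar\rho}$ by the multiplicity type of the residual semisimplification and uses the generalized-matrix-algebra description of $E_{\bar\rho}$ on each stratum to build the representation, the pieces being forced to glue because a semisimple representation with a given determinant is unique (Brauer--Nesbitt / Taylor, in the determinant formulation valid in all residue characteristics). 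This identifies $Rep^{ss}_{n,\bar\rho}(G)$ with the space built from $\mathcal{PS}_{\bar\rho}$, and summing over $\bar\rho$ finishes the argument.

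The main obstacle is this last step: producing an actual \emph{universal} semisimple representation --- a locally free sheaf with continuous $G$-action --- rather than merely a bijection of points with the pseudocharacter space. This needs the full apparatus of determinants and Cayley--Hamilton / generalized matrix algebras over arbitrary (in particular non-reduced, mixed-characteristic) coefficient rings, plus careful control of how all of this behaves under passage to rigid generic fibres. By comparison, the reduction to a fixed $\bar\rho$ and the representability of $\mathcal{PS}_{\bar\rho}$ are relatively formal once the $\Phi_p$-condition is in hand.
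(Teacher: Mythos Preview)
The paper does not prove this statement; it is quoted verbatim as a theorem of Chenevier with the citation \cite[Theorems A-G]{chenevier2014p} and used as a black box. Your proposal correctly recognises this and offers instead a sketch of how Chenevier's argument actually runs, which is the right thing to do here.

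Your outline is a faithful summary of Chenevier's strategy: pass to determinants (pseudorepresentations), use the $\Phi_p$-condition to get Noetherianness of the universal deformation ring $R^{\mathrm{ps}}_{\bar D}$ of a residual determinant, take Berthelot generic fibres, and then invoke the Cayley--Hamilton quotient together with the uniqueness of a semisimple representation with given determinant to pass back from determinants to representations. The decomposition by residual type is exactly Chenevier's. You also correctly flag the genuinely delicate point --- that one must produce a universal \emph{family} of semisimple representations, not merely a bijection on geometric points --- and that this is where the Cayley--Hamilton/generalised-matrix-algebra machinery over general coefficient rings is essential. One small caution: in Chenevier's formulation the functor being represented is really the determinant functor, and the identification with $Rep^{ss}_n$ is via his Theorem~A (semisimple representations over algebraically closed fields are determined by their determinant); the ``universal semisimple representation'' over the whole base is not always available as a locally free sheaf without further work, so the cleanest statement is that the pseudorepresentation space represents the functor and agrees with $Rep^{ss}_n$ pointwise. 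But as a high-level reconstruction of \cite{chenevier2014p} your proposal is on the mark.
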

For our purposes, the best way to think of $Hom(G_K, \text{GL}_n)$ is as $Z^1(G_K, \text{GL}_n)$ when the action of $G_K$ is trivial. We will end up using closely related results, but non-semisimple representations abound in the contexts we care about -- for example, even assuming Faltings' Theorem on semisimplicity of global Galois representations attached to abelian varieties, the local restrictions of these global representations will no longer be semisimple in general. For this purpose we will require a strengthening of Chenevier's result by Wang-Erickson. In order to introduce his result we will need to introduce the concept of a pseudorepresentation, which plays a key role in the work of both Chenevier and Wang-Erickson.

\begin{defn}
Let $R$ be an $A$-algebra. A $d$-dimensional pseudorepresentation $D$ of $R$ over $A$ is a homogeneous multiplicative polynomial law $$D \colon R \to A,$$ i.e. for each $A$-algebra $\Lambda$ a \textit{function} $$D_\Lambda \colon R \otimes_A \Lambda \to \Lambda$$ satisfying
\begin{description}
\item[1] $D_\Lambda$ sends 1 to 1, and is multiplicative with respect to the multiplication on $R \otimes_A \Lambda$.
\item[2] $D_\Lambda$ is homogeneous of degree $d$ for the $\Lambda$-module structure on $R \otimes_A \Lambda$.
\item[3] $D_\Lambda$ is functorial in $\Lambda$.
\end{description}
\end{defn}
\begin{rem}
We follow Wang-Erickson's terminology; Chenevier calls these objects determinants.
\end{rem}

If $\rho \colon k[G] \to End_k(V)$ is a representation then $\det(\rho)$ is a pseudorepresentation of $k[G]$ over $k$. If $k$ is algebraically closed, there is a precise sense in which the pseudorepresentation of a representation determines the semisimplification of the representation \cite[Theorem A]{chenevier2014p}.

Let $\bar D$ be a residual pseodorepresentation of $G$, i.e. a pseudorepresentation of $\mathbb{Z}_p[G]$ over $\mathbb{F}_p$. Chenevier proves that formal deformations of that pseudorepresentation are parameterized by a universal deformation ring $R_{\bar D}$ \cite[Proposition E]{chenevier2014p}. The more general statement, proved by Wang-Erickson \cite[Theorem A]{wang2017algebraic}, is as follows.

In the following statement, we let $\Lambda$ denote any quotient of a ring of the form $\mathbb{Z}_p[[t_1, \ldots t_n]] \langle z_1, \ldots , z_m \rangle$, where $n$ and $m$ are arbitrary and finite.
\begin{thm}
Denote by $\mathcal Rep^f_{\bar D}$ the groupoid which attaches to any $\Lambda$ as above the category of locally free $\Lambda$-modules $V_\Lambda$ equipped with a continuous linear action $\rho_\Lambda \colon G \to \text{Aut}_\Lambda(V_\Lambda)$ having residual pseudorepresentation $\bar D$. If $G$ satisfies Mazur's finiteness condition $\Phi_p$, then $R_{\bar D}$ is Noetherian and $$\hat \psi \colon \mathcal Rep^f_{\bar D} \to \Spf R_{\bar D}$$ is a formally finite type $\Spf R_{\bar D}$-formal algebraic stack. 
\end{thm}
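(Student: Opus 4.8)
The plan is to reduce the moduli problem of continuous representations with residual pseudorepresentation $\bar D$ to the moduli problem of locally free modules over a single module-finite algebra over $R_{\bar D}$ — the universal Cayley--Hamilton algebra attached to $\bar D$ — and then to quote the standard representability of such module stacks over a Noetherian base. First I would invoke Chenevier's theorem (\cite{chenevier2014p}, cited above) that the functor of pseudorepresentation deformations of $\bar D$ is pro-represented by $R_{\bar D}$, and that Mazur's condition $\Phi_p$ is exactly what makes $R_{\bar D}$ Noetherian: $\Phi_p$ bounds the relevant tangent and obstruction spaces, which are built from finite-dimensional group-cohomology groups. This already yields the first assertion of the theorem, so the remaining content concerns $\hat\psi$.

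Next I would form the universal object. Let $D^u$ be the universal pseudorepresentation on $\mathbb{Z}_p[[G]] \widehat{\otimes} R_{\bar D}$ and let $E_{\bar D}$ be its Cayley--Hamilton quotient, i.e.\ the quotient of the completed group algebra by the ideal generated by all characteristic-polynomial relations coming from $D^u$. The structural crux — and the main obstacle — is that $E_{\bar D}$ is \emph{module-finite} over $R_{\bar D}$. I would prove this via the theory of generalized matrix algebras: modulo the maximal ideal one decomposes $\bar D$ into its distinct irreducible constituents, lifts the associated orthogonal idempotents using completeness of $E_{\bar D}$, and writes $E_{\bar D}$ in block form whose diagonal blocks are residually matrix algebras (hence finite over $R_{\bar D}$) and whose off-diagonal blocks are finitely generated because the relevant reducibility ideals are finitely generated over the Noetherian ring $R_{\bar D}$. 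One then checks the pseudorepresentation dictionary: a continuous $\rho_\Lambda \colon G \to \mathrm{Aut}_\Lambda(V_\Lambda)$ with $V_\Lambda$ locally free of rank $d$ and residual pseudorepresentation $\bar D$ is exactly the data of a $\Lambda$-point of $\Spf R_{\bar D}$ together with an $E_{\bar D}$-module structure on $V_\Lambda$ lifting the tautological one, because $\det(\rho_\Lambda)$ factoring through $D^u$ forces $\rho_\Lambda$ to factor through $E_{\bar D}$.

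Granting module-finiteness, the final step is soft. Choosing a finite set of $R_{\bar D}$-algebra generators of $E_{\bar D}$ presents the framed functor (an $E_{\bar D}$-module structure on the free rank-$d$ module) as a closed formal subscheme of an affine space over $\Spf R_{\bar D}$, cut out by the finitely many relations among the generators, hence formally of finite type since $R_{\bar D}$ is Noetherian. The stack $\mathcal{R}ep^f_{\bar D}$ is then the quotient of this framed formal scheme by the conjugation action of $\mathrm{GL}_d$, so it admits a smooth surjection from a formal scheme of finite type over $\Spf R_{\bar D}$; this is precisely the statement that $\hat\psi$ is a formally finite type formal algebraic stack over $\Spf R_{\bar D}$.

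Throughout, the delicate bookkeeping is topological: $\mathbb{Z}_p[[G]]$ must be handled as a pseudocompact (profinite) ring and all tensor products as completed tensor products, so that ``continuous representation'' and ``finitely generated $E_{\bar D}$-module'' genuinely coincide and so that the completed group algebra and its Cayley--Hamilton quotient behave well under base change along the rings $\Lambda$ appearing in the statement. The hard part, to reiterate, is the module-finiteness of $E_{\bar D}$ over $R_{\bar D}$: it is where one genuinely needs the structure theory of Cayley--Hamilton algebras over complete Noetherian local rings, together with the decomposition of $\bar D$ into irreducibles, the lifting of idempotents, and the control of the off-diagonal extension blocks by reducibility ideals.
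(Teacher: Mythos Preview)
The paper does not supply its own proof of this theorem: it is quoted verbatim as \cite[Theorem A]{wang2017algebraic} and used as a black box. There is therefore no proof in the paper to compare against. That said, your sketch is an accurate reconstruction of Wang--Erickson's actual argument: the reduction via the universal Cayley--Hamilton quotient $E_{\bar D}$, the crucial module-finiteness of $E_{\bar D}$ over $R_{\bar D}$ obtained from the generalized matrix algebra structure and idempotent lifting, and the presentation of $\mathcal{R}ep^f_{\bar D}$ as $[\,\text{framed formal scheme}/\mathrm{GL}_d\,]$ are exactly the steps in his paper.
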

\begin{rem}
In particular, the rigid generic fiber of $\mathcal{R}ep^f$ is thus a rigid stack over $\mathbb{Q}_p$ by our earlier discussions; see also \cite{ulirsch2017tropicalization} for further work on analytification.

In fact, $(\mathcal{R}ep^f)^{rig} = \mathcal{R}ep$. This follows by applying Proposition \ref{compatiblewithrig} with $\mathcal{G} = GL_n$ with trivial $G$-action.
\end{rem}
\begin{rem}
We have used the notation $\mathcal{R}ep^f$ here to denote the formal version of the representation stack, which differs from Wang--Erickson's notation. We do this so that we can call its generic fiber, which we use much more, $\mathcal{R}ep$.
\end{rem}

Now denote by $\mathcal{R}ep^f$ the full groupoid of representations without any residual condition. Since the moduli space of all pseudorepresentations is a disjoint union of spaces labeled by residual type, Wang-Erickson's result implies that $\mathcal{R}ep^f$ is similarly a disjoint union of formal stacks labeled by residual pseudorepresentation.

We are now ready to prove the main result of this section.
\begin{defn}
We say that a topological group $G$ has a liftable conjugation action on a linear algebraic group $A/E$ if $G$ acts continuously on $A$ and there is a faithful representation $A \hookrightarrow GL_m$ and a homomorphism $\phi \colon G \to GL_m(E)$ such that the action is given by conjugation by $\phi$. (Note that we use the convention $gxg^{-1}$ for conjugation.)
\end{defn}

\begin{thm}
Suppose that $G$ has a liftable conjugation action on a linear rigid analytic group $S/E$, where $E$ is a $p$-adic field and $G$ satisfies Mazur's $\Phi_p$ finiteness condition. Then the cohomology functor $H^1(G, S)$ is representable by a rigid analytic stack.
\end{thm}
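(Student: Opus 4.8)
The plan is to realize $H^1(G,S)$ as the quotient stack $[Z^1(G,S)/S]$ — which is its definition — and to prove that $Z^1(G,S)$ is representable by a rigid-analytic space by comparing it, via the liftable structure, to the moduli space of $G$-representations supplied by Chenevier and Wang-Erickson. Once $Z^1(G,S)$ is a rigid-analytic space acted on by the rigid group $S$, the quotient formalism does the rest.

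First I would set up a twisting trick. Fix a closed embedding $S\hookrightarrow GL_m$ and a continuous homomorphism $\phi\colon G\to GL_m(E)$ inducing the conjugation action, as in the definition of a liftable action. For an affinoid $E$-algebra $\Lambda$ and $c\in Z^1(G,S(\Lambda))$, set $\tilde\rho_c(g)=c(g)\phi(g)$. Then $\tilde\rho_c(gh)=c(g)\,{}^gc(h)\,\phi(g)\phi(h)=c(g)\phi(g)c(h)\phi(h)=\tilde\rho_c(g)\tilde\rho_c(h)$, so $\tilde\rho_c$ is a continuous homomorphism $G\to GL_m(\Lambda)$, and conversely $c$ is recovered as $c(g)=\tilde\rho_c(g)\phi(g)^{-1}$, which lies in $S(\Lambda)$ for all $g$ exactly when $\tilde\rho_c$ does. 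Hence $c\mapsto\tilde\rho_c$ is an isomorphism of functors from $Z^1(G,S)$ onto the subfunctor $\mathcal{X}_m^S\subseteq\mathcal{X}_m$, where $\mathcal{X}_m(\Lambda)=\Hom_{\mathrm{cont}}(G,GL_m(\Lambda))$ is the framed representation functor and $\mathcal{X}_m^S$ is the locus on which $\tilde\rho(g)\phi(g)^{-1}$ lies in $S$ for every $g\in G$. Moreover the $S$-action $(c\cdot s)(g)=s^{-1}c(g)\,{}^gs$ transports under this isomorphism to the conjugation action $\tilde\rho\mapsto s^{-1}\tilde\rho s$ of $S\subseteq GL_m$, which preserves $\mathcal{X}_m^S$ because $S$ is a subgroup stable under the $G$-action.

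Next I would invoke representability of $\mathcal{X}_m$: since $G$ satisfies Mazur's $\Phi_p$ condition, Chenevier's universal pseudodeformation rings are Noetherian and Wang-Erickson's moduli of representations is, componentwise over residual pseudorepresentations, well behaved; passing to rigid generic fibres, the framed functor $\mathcal{X}_m$ is the natural $GL_m$-atlas of the rigid stack $\mathcal{R}ep_m$ and in particular is representable by a rigid-analytic space. Then $\mathcal{X}_m^S$ is cut out inside $\mathcal{X}_m$ by the ideal generated, over an affinoid chart, by pulling back the ideal of $S$ in $\mathcal{O}(GL_m)$ along each evaluation-and-untwist map $\tilde\rho\mapsto\tilde\rho(g)\phi(g)^{-1}$. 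This is an a priori infinite intersection of closed analytic subsets, but Noetherianity of affinoid algebras makes the resulting sum of coherent ideals coherent, so $\mathcal{X}_m^S$ is a genuine closed rigid-analytic subspace; hence $Z^1(G,S)\cong\mathcal{X}_m^S$ is representable by a rigid-analytic space.

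Finally, $S$ is a linear — hence, in characteristic zero, smooth, affine and flat — rigid-analytic group acting on the rigid-analytic space $\mathcal{X}_m^S$, so the quotient $[\mathcal{X}_m^S/S]$ is representable by a rigid-analytic stack by the quotient result \ref{flatquotient}; this quotient is by definition $H^1(G,S)$. (When $S=GL_m$ and $\phi$ is trivial one recovers $\mathcal{R}ep_m=[\mathcal{X}_m/GL_m]$, as it should.) I expect the main obstacle to be the third step: carefully extracting representability of the framed functor $\mathcal{X}_m$ from the pseudorepresentation-theoretic statements of Chenevier and Wang-Erickson, and checking that the untwisting locus $\mathcal{X}_m^S$ is a closed rigid-analytic subspace; the twisting bijection and the appeal to the quotient formalism are then routine.
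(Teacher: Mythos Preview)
Your approach is sound and genuinely different from the paper's. You work at the cocycle level: untwist $Z^1(G,S)$ into a closed subfunctor of the \emph{framed} representation functor $\mathcal{X}_m=\Hom_{\mathrm{cont}}(G,GL_m(-))$, and then take a single quotient by $S$ using Lemma~\ref{flatquotient}. The paper instead works at the $H^1$ level throughout: it untwists $H^1(G,GL_{n,\mathrm{conj}})\simeq\mathcal{R}ep$, then studies the map $H^1(G,S)\to H^1(G,GL_n)$ via the long exact sequence for a non-normal subgroup, identifying each fiber as a quotient $[({}_bGL_n/{}_aS)^G/({}_bGL_n)^G]$; to make the denominator flat it passes to a flattening stratification and then invokes the transfer Proposition~\ref{transfer}. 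In particular the paper's argument only concludes \emph{strat}-representability, whereas your route, if it goes through, gives honest representability with no stratification.

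The trade-off is exactly the one you identify. The paper never needs the framed object $\mathcal{X}_m$ to be a rigid space: it consumes Wang--Erickson's result in its packaged stack form $\mathcal{R}ep$ and pays the price of stratifying. Your argument needs $\mathcal{X}_m$ (and hence its closed locus $\mathcal{X}_m^S$) to be a rigid-analytic space, or at least a rigid $0$-stack with formal model so that Lemma~\ref{flatquotient} applies. This is reasonable --- $\mathcal{X}_m$ is a sheaf (framed representations have no automorphisms) and is a $GL_m$-torsor over the rigid stack $\mathcal{R}ep$, hence a rigid $0$-stack; formal models come from the framed deformation rings --- but it is not stated in the paper and would need to be spelled out. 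If you can pin that down, your argument is shorter and yields a stronger conclusion than the paper's.
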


\begin{rem}
In the case that we are interested in -- namely when $S$ is the geometric monodromy of a local system and Galois acts by inner automorphisms, the map to inner automorphisms lifts from a projective representation to a linear representation by a choice of basepoint. Indeed, the Galois action on the arithmetic fundamental group arises from conjugating an automorphism of the fiber of a local system by the Galois action on the fiber. Since the Galois action on the fiber of any local system is an honest representation, the Galois action on the geometric monodromy group is indeed a liftable conjugation action.

If $S$ is $GL_n/F$ then any conjugation action has a lifting $\phi \colon G_K \to {GL_n} \otimes_F {F(\zeta_N)}$ for some $N$, by a deep theorem of Patrikis (~\cite{patrikis2012variations}). However, we needn't use his theorem: as we have mentioned, in our situation $\tilde \phi$ will already come with a lift.
\end{rem}

\begin{proof} 
Consider the embedding of $S$ in $GL_N = GL_{n,E}$ that defines the conjugation action of $\phi$. Denote the map defining the conjugation action by $\phi \colon G \to GL_n(E)$. The simple but key construction is as follows: the assignment $$c \mapsto c \cdot \phi$$ induces a bijection $$Z^1(G,GL_{n,conj}) \xrightarrow{\sim} Hom(G, GL_n).$$ Here $GL_{n,conj}$ refers to $GL_n$ equipped with the conjugation action given by $\phi$. One can think of this map as untwisting the action $\phi$.

What of the quotient stack $H^1(G,GL_n)$ itself, then? The action of $GL_n$ on $Z^1(G,GL_n)$ in the case of inner action is just the change of basis on the $Hom(G,GL_n)$ side of the bijection, so cohomologous cocycles give equivalent representations and vice-versa. We now conclude by appealing to Wang-Erickson's theorem above: the moduli of representations $\mathcal{R}ep$ is representable by a disjoint union of rigid stacks. Thus so is the cohomology functor  $H^1(G,GL_n)$.

Now we transfer this knowledge to $S$ using the long exact sequence in Galois cohomology for non-normal subgroups. We denote by $S_{conj}$ the group $S$ with conjugation action by $\phi$, and consider the map $$h \colon H^1(G,S_{conj}) \to H^1(G,GL_{n,conj}).$$

Let $T$ be an affinoid algebra. By twisting, if $b_T \in H^1(G, GL_n)$ then the fiber of $h$ above $b$ is locally given by $$\left[({}_{b_T} GL_n/{}_{a_T} S)^G/({}_{b_T} GL_n)^G\right],$$ where $a_T \in H^1(G,S_T)$ is a local lift. Now $({}_{b_T} GL_n)^G$ depends only on $b_T$ and not $a_T$; in other words, there is a family over $H^1(G,GL_n)$ which we can denote $({}_{b} GL_n)^G$ that assigns to any morphism $b_{T'} \to H^1(G,GL_n)$ the pullback $({}_{b_{T'}} GL_n)^G$.

This pullback is representable by a scheme over $T$, since the fixed points are the intersection of fixed points over all $g \in G$, each of which is a closed subscheme of $GL_{n,T'}$. Thus $({}_{b} GL_n)^G$ is representable rigid varieties over the whole of $H^1(G,GL_n)$. Let $\Sigma_i$ be a flattening stratification for the family \[({}_{b} GL_n)^G \to H^1(G,GL_n).\] 

To show strat-representability, we show representability of the stack $H^1(G, S) \times_{H^1(G,GL_n)} \Sigma_i$. Let $a_T$ be a point of this stack, and again $b_T$ its image in $H^1(G, GL_n)$. We claim that stack $({}_{b_T} GL_{n,T}/{}_{a_T} S_T)^G$ is representable. Indeed, the quotient $GL_{n,T}/S_T$ is representable by a scheme over the affinoid $T$. By the same argument on fixed points, $({}_{b_T} GL_{n,T}/{}_{a_T} S_T)^G$ is representable by a closed subscheme of $GL_{n,T}/S_T$. 
 
Since the quotient $$\left[({}_{b_T} GL_n/{}_{a_T} S)^G/({}_{b_T} GL_n)^G\right]$$ is a quotient of a scheme over $T$ by a flat group scheme, its analytification is representable by a rigid stack over $T$ (Lemma \ref{flatquotient}.)

In particular, the map \[H^1(G, S) \times_{H^1(G,GL_n)} \Sigma_i \to \Sigma_i\] satisfies the conditions of Proposition \ref{transfer}, and we conclude that $H^1(G,S_{conj})$ is strat-representable by a rigid-analytic stack.

\end{proof}
\begin{rem}
Let us remark on another direction we could have taken above. Conjugation by $\phi$ preserves $S$ by hypothesis, and so $\phi$ takes values in the normalizer $N_{GL_n}(S)$. We can then play the same game as above, and make the following sketchy heuristic. When analyzing the map on first cohomology groupoids we encounter the fiber $N_{GL_n}(S)/S$; if we were working with schemes over an algebraically closed field, then we would have the classic statement that $N_{GL_n}(S)/S \cdot Z_{GL_n}(S)$ is finite. Intuitively, $H^1(G, S_{conj})$ differs from $H^1(G, N_{GL_n}(S))$ by at most $Z_{GL_n}(S)$ plus a finite defect. 
\end{rem}

\begin{exmp}
Finally, let us take a moment to understand what this ``unwinding'' is doing in the geometric situation of \'etale path torsors. So we now let $S$ be the geometric monodromy of our local system $\mathcal{L}^{\acute{e}t}$ on the scheme $X/F$. If $x,b \in X(F)$ then we have the right torsor $Isom(\mathcal{L}_b, \mathcal{L}_x)$ for $GL(\mathcal{L}_b)$ on which $G_F$ acts by conjugation by the Galois representations on the fibers above $x$ and $b$, denoted $\phi_x$ and $\phi_b$. We get a cocycle by choosing a path $\gamma \in P^{\acute{e}t}_{b,x}$ (this can always be done \ref{nonempty}), as follows. Since $S$ is a quotient of $\mathcal{G}$, we have a map $P^{\acute{e}t}_{b,x} \to Isom(\mathcal{L}_b, \mathcal{L}_x)$. Using this homomorphism to act on the target, one defines a cocycle $c$ that for any element $g \in G_F$ produces the element $$c(g) = \gamma^{-1} \phi_x (g) \gamma \phi_b (g)^{-1} .$$
One easily checks that $c$ is indeed a cocycle with the conjugation action: 
\begin{align} c(gh) = \gamma^{-1}\phi_x(g)\phi_x(h)\gamma \phi_b(h)^{-1}\phi_b(g)^{-1} \\ 
= \gamma^{-1}\phi_x(g) \gamma \phi_b(g)^{-1} \phi_b(g) \gamma^{-1} \phi_x(h)\gamma \phi_b(h)^{-1}\phi_b(g)^{-1} \\
= c(g)\phi_b(g)c(h) \phi_b(g)^{-1}
\end{align}

Thus $$c(g) \phi_b(g) = \gamma^{-1} \phi_x (g)\gamma;$$ i.e. it records the representation $\phi_x$ up to a change of basis.
\end{exmp}

\section{Selmer stacks of groups with conjugation action} \label{reductiveselmer}

In this subsection we show that the ``reductive level'' of the Bloch-Kato cohomology stack defined in Definition \ref{bk} is strat-representable by a rigid stack. To this end we will use a second theorem of Wang-Erickson (inspired by similar theorems of Kisin) that treats the functor parameterizing crystalline representations.

Our first result is that the ``untwisting'' procedure from this section preserves the property of being crystalline.
\begin{prop}
Let $\phi \colon G_K \to GL_n(\mathbb{Q}_p)$ be a crystalline representation, and $GL_{n,inn}$ the group with conjugation action given by $\phi$. Then the isomorphism of stacks $$H^1(G_K,GL_{n,inn}) \simeq H^1(G_K,GL_{n,triv}) \simeq \mathcal{R}ep_{n}(G_K),$$ given by untwisting, restricts to an isomorphism \[H^1_f(G_K, GL_{n,inn}) \simeq \mathcal{R}ep^{crys}(G_K) .\]
\end{prop}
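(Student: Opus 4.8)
The plan is to show that the untwisting isomorphism $c\mapsto\rho_c:=c\cdot\phi$ constructed just above — which identifies $H^1(G_K,GL_{n,inn})$ with $H^1(G_K,GL_{n,triv})\simeq\mathcal{R}ep_n(G_K)$ — carries the Selmer substack $H^1_f(G_K,GL_{n,inn})$ precisely onto the crystalline substack $\mathcal{R}ep^{crys}(G_K)$. Both sides are stackifications of a condition imposed on test affinoid $\mathbb{Q}_p$-algebras $\Lambda$, so by the universal property of stackification it is enough to check, functorially in $\Lambda$ and working locally in the Tate topology where everything is represented by genuine cocycles and genuine families, that a cocycle $c\in Z^1(G_K,GL_{n,inn}(\Lambda))$ lies in $H^1_f$ if and only if the rank-$n$ family $V_\Lambda=\Lambda^n$ with $G_K$ acting through $\rho_c$ is a crystalline family in the sense of Definition \ref{crystalfamdefn}.

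First I would unwind the Selmer condition via Remark \ref{selmerdescription}: $c\in H^1_f(G_K,GL_{n,inn})(\Lambda)$ means (locally on $\Lambda$) that the Serre twist ${}_cGL_n$, with its Galois action $g\star M=c(g)\phi(g)\,({}^gM)\,\phi(g)^{-1}$, becomes $G_K$-equivariantly trivial after $\widehat{\otimes}_{\mathbb{Q}_p}B_{crys}$. Since $\phi$ is valued in $GL_n(\mathbb{Q}_p)$, which is pointwise fixed by the $G_K$-action on $B_{crys}$, a Galois-fixed point of ${}_cGL_n\otimes B_{crys}$ is exactly a matrix $M\in GL_n(\Lambda\widehat{\otimes}B_{crys})$ satisfying ${}^gM=\rho_c(g)^{-1}\,M\,\phi(g)$ for all $g\in G_K$. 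Thus the Selmer condition amounts to local solvability of this matrix equation.

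Now I would bring in the hypothesis that $\phi$ is crystalline. Pick a $K_0$-basis of $D_{crys}(\phi)=(\mathbb{Q}_p^n\otimes_{\mathbb{Q}_p}B_{crys})^{G_K}$; because $\phi$ is crystalline the comparison map is an isomorphism, so that basis is simultaneously a $B_{crys}$-basis of $\mathbb{Q}_p^n\otimes B_{crys}$, and arranging its elements as columns gives a period matrix $A_\phi\in GL_n(B_{crys})$ with ${}^gA_\phi=\phi(g)^{-1}A_\phi$. The heart of the argument is then the bijection $M\mapsto A_{\rho_c}:=MA_\phi$ between solutions $M$ of the equation above and matrices $A\in GL_n(\Lambda\widehat{\otimes}B_{crys})$ with ${}^gA=\rho_c(g)^{-1}A$: one computes ${}^g(MA_\phi)=({}^gM)({}^gA_\phi)=\rho_c(g)^{-1}M\phi(g)\,\phi(g)^{-1}A_\phi=\rho_c(g)^{-1}(MA_\phi)$, and $A\mapsto AA_\phi^{-1}$ is the inverse. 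But an invertible $A_{\rho_c}$ with ${}^gA_{\rho_c}=\rho_c(g)^{-1}A_{\rho_c}$ is precisely a $(\Lambda\widehat{\otimes}B_{crys})$-basis of $(\Lambda\widehat{\otimes}B_{crys})\otimes_\Lambda V_\Lambda$ consisting of $G_K$-invariant vectors for the $\rho_c$-twisted action, i.e.\ lying in $D_{crys}(V_\Lambda)$. By the $(\mathbb{Q}_p,G_K)$-regularity of $B_{crys}$ together with the family formalism of Berger and Bellovin (Definition \ref{crystalfamdefn}, Theorem \ref{pointwise}, and Proposition \ref{indreps}), the existence of such an invariant basis is equivalent to $D_{crys}(V_\Lambda)$ being projective of rank $n$ over $\Lambda\otimes K_0$ with $(\Lambda\widehat{\otimes}B_{crys})\otimes_{\Lambda\otimes K_0}D_{crys}(V_\Lambda)\xrightarrow{\sim}(\Lambda\widehat{\otimes}B_{crys})\otimes_\Lambda V_\Lambda$, i.e.\ to $V_\Lambda$ being a crystalline family. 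Running this equivalence in both directions, and observing that $M\mapsto MA_\phi$ is manifestly compatible with base change $\Lambda\to\Lambda'$ (the period matrix $A_\phi$ does not move), yields the desired restriction of the untwisting isomorphism after stackification.

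The step I expect to be the main obstacle is the last equivalence, namely passing between ``there is a $(\Lambda\widehat{\otimes}B_{crys})$-basis of invariant vectors'' and ``$V_\Lambda$ is crystalline as a family'': this requires knowing that the invariant vectors descend so that $D_{crys}(V_\Lambda)$ is genuinely projective of full rank over the coefficient ring $\Lambda\otimes K_0$ and that the comparison map is an isomorphism — exactly the content of the $(\mathbb{Q}_p,G_K)$-regularity of $B_{crys}$ and of Berger's pointwise criterion (Theorem \ref{pointwise}), or Bellovin's extension of it. A secondary, purely bookkeeping, nuisance is keeping the left/right-torsor conventions and the precise shape of the twisted Galois action consistent, so that the matrix identities come out with the orientations stated above.
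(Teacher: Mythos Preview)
Your proposal is correct and follows the same underlying idea as the paper: the Selmer condition on $c$ unwinds to the existence of a matrix in $GL_n(\Lambda\widehat\otimes B_{crys})$ intertwining the Galois actions on $V=(\Lambda^n,\phi)$ and $W=(\Lambda^n,c\phi)$ after tensoring with $B_{crys}$, and crystallinity of $\phi$ then transfers to $c\phi$.

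The paper's write-up is shorter because it never introduces the period matrix $A_\phi$. Once one has the $G_K$-equivariant isomorphism $V\otimes_\Lambda(\Lambda\widehat\otimes B_{crys})\simeq W\otimes_\Lambda(\Lambda\widehat\otimes B_{crys})$ furnished by the trivializing element $b$, taking $G_K$-invariants gives $D_{crys}(V)\simeq D_{crys}(W)$ directly; since the constant family $V=\phi$ is already crystalline of rank $n$, so is $W$. This neatly sidesteps what you flag as the ``main obstacle'': you never need to argue from scratch that an invariant basis over $\Lambda\widehat\otimes B_{crys}$ descends to a projective $D_{crys}$ of full rank, because that descent is inherited for free from the known crystallinity of $\phi$. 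Your explicit bijection $M\mapsto MA_\phi$ is the same computation written out in coordinates, and has the merit of making both directions of the equivalence visible simultaneously (the paper only writes out the forward implication); the cost is the extra bookkeeping you note at the end.
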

\begin{proof}
We work locally. Suppose that a cohomology class $c \in H^1(G_K,GL_{n,inn}(T))$ is crystalline, where $T = \Spm \Lambda$. Then there exists $b \in GL_n(T \otimes B_{crys})$ such that $$c(g) = b^{-1}\phi(g)({}^{g}b)\phi(g)^{-1}.$$ (Recall that the definition of the Galois action on $\mathcal{G}(R \widehat{\otimes} B_{crys})$ includes the Galois action on $B_{crys}$.) The untwisting sends this cocycle to $$c(g)\phi(g) = b^{-1}\phi(g)({}^{g}b),$$ so that $c\phi$ has a $\Lambda \widehat{\otimes} B_{crys}$-linear and Galois-equivariant change of basis that makes it isomorphic to $\phi$. Denote the underlying family of representations of $c\phi$ by $W$, and that of $\phi$ by $V$. Restating the above, we have a Galois-equivariant isomorphism $$V \otimes_{\Lambda} (\Lambda \widehat{\otimes} B_{crys}) \simeq W \otimes_{\Lambda} (\Lambda \widehat{\otimes} B_{crys}),$$ so that their rings of $G_K$-invariants are equal. But $rank( V) = rank( W)$, so we conclude from the crystallinity of $V$ that $W$ is crystalline.
\end{proof}

Thus knowing about the Bloch-Kato sets discussed above is almost equivalent to knowing about representation stacks. Now to show that the stack of crystalline representations is representable by a formal stack we use a theory of Wang-Erickson's that cuts out a crystalline locus.

We must first define the concept of a Galois type, which is more complicated than what we will need: see \cite[Section 2.7]{kisin2008potentially} for a complete description. In short, to a potentially semi-stable representation $V_B$ one associates a representation $T \colon I_K \to W_{B'}$, where $B'$ is a finite \'etale $B$-algebra, $W_{B'}$ is a free $B'$-algebra, and $I_K$ is the inertia subgroup of $G_K$. Now if $$\tau \colon I_K \to GL_n(\bar{\mathbb{Q}}_p)$$ is any representation with open kernel, we say that $V_B$ is of Galois type $\tau$ if for all $\gamma \in I_K$ we have $$tr(T(\gamma)) = tr(\tau(\gamma)).$$ Galois type measures how potentially semistable or potentially crystalline a representation is. Since any representation that we will care about will be crystalline over the base field (and not simply potentially so), the reader is welcome to mentally set $\tau = 0$ and drop it from our notation.

Here is the theorem that we will need.
\begin{thm}{\cite[Theorem B]{wang2017algebraic}}
Let $\tau$ and $\bold{v}$ be fixed Galois and Hodge types. There exists a closed formal substack $\mathcal{R}ep_{\bar D}^{\tau, \bold{v}} \hookrightarrow \mathcal{R}ep^f_{\bar D}$ such that for any finite $\mathbb{Q}_p$-algebra $B$ and
point $\zeta \colon \Spec B \to \mathcal{R}ep^f_{\bar D}$ , $\zeta$ factors through $\mathcal{R}ep_{\bar D}^{\tau, \bold{v}}$ if and only if the corresponding representation $V_B$ of $G_K$ is potentially semi-stable of Galois and Hodge type $(\tau, \bold{v}).$ Moreover,
\begin{description}
\item[(1)] $\mathcal{R}ep_{\bar D}^{\tau, \bold{v}}[1/p]$ is reduced, locally complete intersection, equi-dimensional, and generically formally smooth over $\mathbb{Q}_p$. If we replace ``semi-stable'' with ``crystalline,'' it is everywhere formally smooth over $\mathbb{Q}_p$.
\item[(2)] If $\bar D$ is multiplicity-free, then $\mathcal{R}ep_{\bar D}^{\tau, \bold{v}}$ is algebraizable, i.e. $\mathcal{R}ep_{\bar D}^{\tau, \bold{v}}$ is the
completion of a closed substack $Rep_{\bar D}^{\tau, \bold{v}}$ of $Rep_{\bar D}$ . The geometric properties
of (1) also apply to $Rep_{\bar D}^{\tau, \bold{v}}[1/p]$, except equi-dimensionality, which applies
to its framed version $Rep_{\bar D}^{\square, \tau, \bold{v}}[1/p]$
\end{description}
\end{thm}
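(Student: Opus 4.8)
The plan here is not to reprove the result: it is Theorem B of \cite{wang2017algebraic}, built on Kisin's construction of potentially semistable deformation rings \cite{kisin2008potentially}, and we use it only as a black box. Still, it is worth recording the shape of the argument, since the geometric output (reducedness, equidimensionality, and formal smoothness in the crystalline case) is exactly what feeds the strat-representability of the Selmer stack for the reductive level $S$.

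The first and essential step is Kisin's theorem at the level of framed deformations. Fix a continuous $\bar\rho \colon G_K \to GL_n(\mathbb{F})$ with framed deformation ring $R^\square_{\bar\rho}$. Kisin produces a unique quotient $R^{\square,\tau,\mathbf{v}}_{\bar\rho}$ of $R^\square_{\bar\rho}[1/p]$ whose $\overline{\mathbb{Q}}_p$-points are precisely the lifts that are potentially semistable of Galois type $\tau$ and Hodge type $\mathbf{v}$; it is $p$-torsion free, reduced, and equidimensional of the expected dimension, and it becomes \emph{everywhere} formally smooth once one imposes crystalline rather than merely semistable behavior. The mechanism is the theory of Breuil--Kisin modules: one builds a projective $R^\square_{\bar\rho}[1/p]$-scheme $\mathrm{GR}^{\square,\tau,\mathbf{v}}$ parameterizing the relevant lattices-with-filtration inside $(\varphi,N)$-modules, shows it is proper and restricts to an isomorphism over the potentially semistable locus, and transports geometric structure back along it. Crystalline formal smoothness comes from the absence of obstructions: a weakly admissible filtered $\varphi$-module deforms freely once the Hodge filtration is pinned, the relevant moduli of filtrations being a smooth flag-type variety on which weak admissibility is an open condition.

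The second step is to globalize from frames to the pseudodeformation stack. After base change, the formal stack $\mathcal{R}ep^f_{\bar D}$ of the previous theorem is covered by the formal spectra $\Spf R^\square_{\rho}$ for the finitely many actual representations $\rho$ with residual pseudorepresentation $\bar D$, with change of frame furnishing a $GL_n$-action. Kisin's locus is functorial in the coefficient ring and invariant under conjugation, so the subschemes cut out by $R^{\square,\tau,\mathbf{v}}_{\rho}$ are $GL_n$-equivariant and descend along the smooth framing covering to a closed formal substack $\mathcal{R}ep^{\tau,\mathbf{v}}_{\bar D} \hookrightarrow \mathcal{R}ep^f_{\bar D}$; the point-wise characterization and the properties in (1) descend along this formally smooth covering, all of them except equidimensionality, which the framing can shift by the dimension of the automorphism group, whence the caveat that equidimensionality survives only after framing. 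For (2) one uses that the multiplicity-free hypothesis is exactly what makes $\mathcal{R}ep_{\bar D}$ itself algebraizable (Artin algebraization applies once the automorphism groups are suitably controlled), and then takes the honest closed substack $Rep^{\tau,\mathbf{v}}_{\bar D} \hookrightarrow Rep_{\bar D}$ cut out by the same ideal, whose formal completion along the special fiber recovers $\mathcal{R}ep^{\tau,\mathbf{v}}_{\bar D}$ by Grothendieck existence.

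The genuinely hard part is Step one: Kisin's resolution of the potentially semistable locus and the proof that it carries the stated geometric structure, which is a substantial piece of integral $p$-adic Hodge theory. Everything downstream, namely the equivariance, the smooth descent of geometric properties, and the algebraization via multiplicity-freeness, is formal bookkeeping by comparison. For the purposes of this thesis we invoke only the conclusion: $\mathcal{R}ep^{\tau,\mathbf{v}}_{\bar D}$ is a closed formal substack whose rigid generic fiber is a finite-type rigid stack over $\mathbb{Q}_p$, reduced and, in the crystalline case, formally smooth, which is precisely what is plugged into the strat-representability argument for the Selmer stack of $S$.
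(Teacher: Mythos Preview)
Your proposal is appropriate: the paper does not prove this theorem at all, it simply cites it as Theorem B of \cite{wang2017algebraic} and uses it as a black box. Your sketch of the underlying architecture (Kisin's resolution at the framed level, followed by equivariant descent to the pseudodeformation stack and algebraization under multiplicity-freeness) is accurate and matches the structure of Wang--Erickson's argument, and your explicit acknowledgment that this is not a reproof but a summary is exactly the right posture.
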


\begin{rem}
Let $\mathcal{R}ep^{crys, \bold{v}}(G_K)^f$ denote the substack referred to in \textbf{(1)} above. In other words, it is the substack of representations which are crystalline of a fixed Hodge type at all residue fields. Its rigid generic fiber is the stack of crystalline representations of a fixed Hodge type of \ref{crystalfamdefn}:

This follows because the stack of representations which are crystalline and of a fixed Hodge type (in the sense of Wang--Erickson) at all closed points is reduced, by the above; the theorem of Berger--Colmez (\ref{pointwise}) says that over reduced affinoid algebras, a family is crystalline at every point if and only if it is crystalline as a family.
\end{rem}

Let us now return to local cohomology stacks, where transferring representability from (some substacks of) $H^1_f(G_K,GL_{n,inn})$ to $H^1_f(G_K, S)$ will look very similar to the last section. Taking the preimage of $\mathcal{R}ep_{\bar D}^{\tau, \bold{v}}$ under the equivalence $$H^1_f(G_K,GL_{n,inn}) \simeq \mathcal{R}ep^{crys}(G_K),$$ we obtain stacks which we denote $H_{\bar D}^{1,\tau, \bold{v}}(G_K,GL_{n,inn}).$ Let $H_{\bar D}^{1,\tau, \bold{v}}(G_K,S)$ denote the 2-fibered product of the two maps $$H_{\bar D}^{1,\tau, \bold{v}}(G_K,GL_{n,inn}) \to H^1_f(G_K,GL_{n,inn})$$ and $$H^1_f(G_K, S) \to H^1_f(G_K,GL_{n,inn}).$$

We will need one more piece of $p$-adic Hodge theory, which Kim proves over a field \cite[discussion before Proposition 2]{kim2005motivic}. Let $K$ be a $p$-adic field, and $K_0$ its maximal unramified subfield.
\begin{lem}
Let $T$ be an affinoid $\mathbb{Q}_p$-algebra and $A$ an algebraic group over $T$ on which $G_K$ acts continuously such that $\mathcal{O}(A)$ is an ind-crystalline family of Galois representations over $T$. Then for any affinoid $T$-algebra $\Lambda$, \[H^0(G_K, A(\Lambda \widehat{\otimes} B_{crys})) = Res_T^{T \otimes K_0}(D_{crys}(A))(\Lambda).\] 
\end{lem}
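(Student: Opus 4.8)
The plan is to unwind both sides into statements about homomorphisms and then to feed in the crystalline comparison isomorphism for the ind-representation $\mathcal{O}(A)$. First I would make the left-hand side concrete. By the description of the Galois action in Example \ref{galactdefn}, a point $p \in A(\Lambda\widehat{\otimes} B_{crys}) = \Hom_{T\text{-alg}}(\mathcal{O}(A), \Lambda\widehat{\otimes} B_{crys})$ is fixed by $G_K$ exactly when $p$ intertwines the Galois action on $\mathcal{O}(A)$ with the action on $\Lambda\widehat{\otimes} B_{crys}$ (trivial on $\Lambda$, the natural one on $B_{crys}$); that is, $H^0(G_K, A(\Lambda\widehat{\otimes} B_{crys}))$ is the set of $G_K$-equivariant $T$-algebra homomorphisms $\mathcal{O}(A) \to \Lambda\widehat{\otimes} B_{crys}$. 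Extending scalars along $T \to T\widehat{\otimes} B_{crys}$, such homomorphisms correspond bijectively to $G_K$-equivariant $(T\widehat{\otimes} B_{crys})$-algebra homomorphisms $\mathcal{O}(A)\widehat{\otimes}_T (T\widehat{\otimes} B_{crys}) \to \Lambda\widehat{\otimes} B_{crys}$, where the source carries the diagonal action.

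Next I would invoke crystallinity. Since $\mathcal{O}(A)$ is an ind-crystalline family over $T$, the comparison isomorphism of Definition \ref{crystalfamdefn} (the ind-version of Olsson's criterion, Proposition \ref{indreps}, together with the fact from Bellovin \cite{bellovin2015padic} that $D_{crys}$ is compatible with tensor products, hence with the Hopf-algebra structure) provides a $G_K$-equivariant isomorphism of $(T\widehat{\otimes} B_{crys})$-algebras
\[ D_{crys}(\mathcal{O}(A)) \otimes_{T\otimes K_0} (T\widehat{\otimes} B_{crys}) \;\xrightarrow{\ \sim\ }\; \mathcal{O}(A)\widehat{\otimes}_T (T\widehat{\otimes} B_{crys}), \]
with $G_K$ acting trivially on $D_{crys}(\mathcal{O}(A))$ and on $K_0 = B_{crys}^{G_K}$. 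Substituting this into the previous description and applying the tensor--hom adjunction for extension of scalars from $T\otimes K_0$ to $T\widehat{\otimes} B_{crys}$, I obtain that $H^0(G_K, A(\Lambda\widehat{\otimes} B_{crys}))$ is the set of $(T\otimes K_0)$-algebra homomorphisms $D_{crys}(\mathcal{O}(A)) \to \Lambda\widehat{\otimes} B_{crys}$ that are $G_K$-equivariant for the trivial action on the source. Equivariance then forces the image into $(\Lambda\widehat{\otimes} B_{crys})^{G_K}$, which equals $\Lambda\otimes_{\mathbb{Q}_p} K_0$ since $\Lambda$ is flat over $\mathbb{Q}_p$ and $B_{crys}^{G_K} = K_0$. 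Hence $H^0(G_K, A(\Lambda\widehat{\otimes} B_{crys})) = \Hom_{(T\otimes K_0)\text{-alg}}(D_{crys}(\mathcal{O}(A)), \Lambda\otimes_{\mathbb{Q}_p} K_0)$, which is precisely $Res_T^{T\otimes K_0}(D_{crys}(A))(\Lambda)$ because $T\otimes K_0$ is finite flat over $T$.

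The routine points — that everything respects the evident ind-structures (work with a presentation $\mathcal{O}(A) = \varinjlim V_\alpha$ by finite locally free crystalline subrepresentations, each already complete over $T\widehat\otimes B_{crys}$, so the colimit commutes with the tensor products involved) and that forming $G_K$-invariants commutes with $\Lambda\widehat\otimes(-)$ — I would dispatch as Kim does in the field case \cite[discussion before Proposition 2]{kim2005motivic}. The genuinely new ingredient over Kim's argument is the appearance of the Weil restriction $Res_T^{T\otimes K_0}$, which bookkeeps the $(T\otimes K_0)$-algebra structure on $D_{crys}(\mathcal{O}(A))$; this is pure formalism once the $G_K$-equivariant algebra comparison isomorphism is in hand. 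Accordingly, the main obstacle is not any single step but verifying that the ind-crystalline hypothesis really yields the comparison isomorphism \emph{as algebras with semilinear $G_K$-action}, i.e. that Definition \ref{crystalfamdefn} together with Bellovin's compatibilities propagate through the filtered colimit defining the Hopf algebra of $A$; the pointwise criterion of Berger--Colmez (Theorem \ref{pointwise}) is what one falls back on to certify crystallinity of the pieces in concrete situations.
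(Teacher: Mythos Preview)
Your proposal is correct and follows essentially the same chain of identifications as the paper's proof: rewrite $H^0$ as $G_K$-equivariant algebra maps $\mathcal{O}(A)\to\Lambda\widehat\otimes B_{crys}$, extend scalars to $T\widehat\otimes B_{crys}$, swap in the crystalline comparison isomorphism for $\mathcal{O}(A)$, undo the extension of scalars, and then observe that equivariance with trivial action on the source forces the target into $(\Lambda\widehat\otimes B_{crys})^{G_K}=\Lambda\otimes K_0$, which is exactly the Weil restriction. The paper records precisely this string of equalities; your added remarks on the ind-structure and on Bellovin's compatibilities are welcome elaborations rather than departures from the paper's argument.
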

\begin{proof}
By definition of the Galois action on $A(\Lambda \widehat{\otimes} B_{crys})$, we have 
\begin{align*}
H^0(G_K, A(\Lambda \widehat{\otimes} B_{crys})) = Hom_{G_K}(\mathcal{O}(A), \Lambda \widehat{\otimes} B_{crys}) \\
= Hom_{G_K, B_{crys} \widehat{\otimes} T}(\mathcal{O}(A) \otimes_T (B_{crys} \widehat{\otimes} T ), \Lambda \widehat{\otimes} B_{crys}) \\
\overset{crystallinity} = Hom_{G_K, B_{crys} \widehat{\otimes} T}(D_{crys}(\mathcal{O}(A)) \otimes_T B_{crys} \widehat{\otimes} T, \Lambda \widehat{\otimes} B_{crys}) \\
= Hom_{G_K}(D_{crys}(\mathcal{O}(A)), \Lambda \widehat{\otimes}B_{crys}) \\
= Hom(D_{crys}(\mathcal{O}(A)), (\Lambda \widehat \otimes B_{crys})^{G_K}) \\
= Hom(D_{crys}(\mathcal{O}(A)), \Lambda \otimes K_0) \\
= Res_T^{T \otimes K_0}(D_{crys}(A))(\Lambda)
\end{align*}
Here $Hom$ denotes algebra homomorphisms, the $G_K$ subscript denotes $G_K$-equivariance, and $B_{crys} \widehat{\otimes} T$ subscript denotes linearity in that ring.
\end{proof}
Now by the monoidal property of $D_{crys}$ and functoriality, $D_{crys}(A)$ is a group scheme over $T$. We conclude that the functor $H^0(G_K, A(- \otimes B_{crys}))$ on $T$-algebras is representable by a scheme over $T$.

Finally we can prove:
\begin{prop} \label{Srep}
The restricted Selmer stack $H_{\bar D}^{1,\tau, \bold{v}}(G_K,S)$ is strat-representable in the category of rigid stacks.
\end{prop}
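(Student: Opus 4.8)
The plan is to reuse the transfer argument that proved strat-representability of $H^1(G,S_{conj})$, now decorating every step with the crystallinity condition. The ``reductive $GL_n$ level'' is already in hand: the untwisting isomorphism identifies $H^1_f(G_K,GL_{n,inn})$ with $\mathcal{R}ep^{crys}(G_K)$, and hence $H_{\bar D}^{1,\tau,\bold{v}}(G_K,GL_{n,inn})$ with the rigid generic fiber of Wang-Erickson's closed formal substack $\mathcal{R}ep_{\bar D}^{\tau,\bold{v}}$ of \cite[Theorem B]{wang2017algebraic}, which is a rigid stack of finite type; letting $\bar D,\tau,\bold{v}$ vary produces a stratification into rigid stacks $\mathcal N_i$. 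So it suffices to fix one stratum $\mathcal N=\mathcal N_i$ together with the map $h\colon H_{\bar D}^{1,\tau,\bold{v}}(G_K,S)\to\mathcal N$ induced by $S\hookrightarrow GL_n$, and to stratify $\mathcal N$ so that the pullback of $H_{\bar D}^{1,\tau,\bold{v}}(G_K,S)$ over each stratum is a rigid stack. Replacing $\mathcal N$ by the image of $h$ (harmless for strat-representability) I may assume the fibers of $h$ are nonempty.

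Next I would describe the fiber of $h$ over a $T=\Spm\Lambda$-point $b_T$. Choosing Tate-locally a lift $a_T\in H^1_f(G_K,S_T)$ and invoking the twisted-exactness of Proposition \ref{fibers} together with its torsor-twist corollary --- once for $S_T\hookrightarrow GL_{n,T}$, and once, via $-\widehat{\otimes}B_{crys}$, for the corresponding inclusion over $T\widehat{\otimes}B_{crys}$, using that $H^1_f$ is by definition the $2$-kernel of the $B_{crys}$-comparison --- this fiber takes the form
\[
\left[\, \mathcal Z_{b_T}\ \big/\ ({}_{b_T}GL_{n,T})^{G_K}\, \right],
\]
where $\mathcal Z_{b_T}$ is the locus inside $({}_{b_T}GL_{n,T}/{}_{a_T}S_T)^{G_K}$ consisting of those cosets whose associated $S_{conj}$-cocycle is crystalline.

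The non-Selmer ingredients here are handled exactly as in the previous section: $({}_{b}GL_n)^{G_K}$ and $({}_{a}S)^{G_K}$ are representable by schemes over $\mathcal N$ (intersections over $g\in G_K$ of the closed ``$g$-fixed'' subschemes, compatibly with base change), $GL_{n,T}/S_T$ is a $T$-scheme, and a finite flattening stratification of $\mathcal N$ makes $({}_{b}GL_n)^{G_K}$ flat. To cut out $\mathcal Z_{b_T}$ I would use the lemma just proved: since $b_T$ is crystalline of Hodge type $\bold{v}$, both $\mathcal O({}_{b_T}GL_n)$ and (after pushing out to the lower central series, so the relevant unipotent pieces become abelian) $\mathcal O({}_{a_T}S)$ are ind-crystalline families, so the $B_{crys}$-invariants $({}_{b}GL_n(-\widehat{\otimes}B_{crys}))^{G_K}$, $({}_{a}S(-\widehat{\otimes}B_{crys}))^{G_K}$ and their coset space are representable over $\mathcal N$ by $Res_T^{T\otimes K_0}$ of $D_{crys}$ of algebraic groups; $\mathcal Z_{b_T}$ is then the preimage, under a morphism of $T$-schemes, of the image of the orbit map from $({}_{b}GL_n(-\widehat{\otimes}B_{crys}))^{G_K}$ into that coset space at the distinguished point. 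This image is constructible, hence locally closed after a further finite stratification of $\mathcal N$, and functoriality of $D_{crys}$ and of $\Delta$ (Proposition \ref{cohomologous}) shows that these loci glue to a substack $\mathcal Z$ whose formation commutes with base change. Refining the stratification so that $\mathcal Z$ and $({}_{b}GL_n)^{G_K}$ are both flat over it, each fiber $[\mathcal Z_{b_T}/({}_{b_T}GL_{n,T})^{G_K}]$ becomes a quotient of a $T$-scheme by a flat $T$-group scheme, whose analytification is a rigid stack by Lemma \ref{flatquotient}; Proposition \ref{transfer} then assembles these over each stratum, giving strat-representability of $H_{\bar D}^{1,\tau,\bold{v}}(G_K,S)$.

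The step I expect to be the genuine obstacle is the claim that the crystalline-coset locus $\mathcal Z$ is --- after stratification --- locally closed in $\mathcal N$ with formation compatible with base change. Imposing crystallinity of an $S$-torsor is strictly stronger than crystallinity of the $GL_n$-torsor it induces, so one must control how $H^1(G_K,S(-\widehat{\otimes}B_{crys}))$ varies in families; this is the reductive incarnation of the ``intersection $\mathcal H_1\cap\mathcal H_2$'' that obstructs the Selmer analogue of Theorem \ref{representability} in the general unipotent tower (cf.\ Remark \ref{challenges}). It is tractable here precisely because $S$ is reductive and we have fixed the residual pseudorepresentation together with the Hodge and Galois types, so Wang-Erickson's formal smoothness and the Berger-Colmez pointwise criterion (Theorem \ref{pointwise}, with Bellovin's non-reduced extension) make $D_{crys}$ of the relevant families locally free and base-change compatible over each stratum, and the lemma just proved reduces the crystallinity condition to an orbit-image condition among honest $T$-schemes, sidestepping the unknown coherence of $H^1_f$ for general analytic families.
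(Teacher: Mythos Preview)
Your overall architecture matches the paper's: pass to $H^{1,\tau,\bold v}_{f,\bar D}(G_K,GL_{n,inn})$, describe fibers via the twisted exact sequence for the non-normal subgroup $S\subset GL_n$ together with its $B_{crys}$-analogue, represent the $B_{crys}$-invariant pieces via the lemma as $Res_T^{T\otimes K_0}D_{crys}(-)$, stratify to flatten, and transfer. Two points separate your argument from the paper's, and the second is where the difficulty you flag is in fact avoidable.

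First, a slip: the parenthetical ``after pushing out to the lower central series, so the relevant unipotent pieces become abelian'' is meaningless here, since $S$ is reductive and has no unipotent radical. What is actually needed to apply the lemma to the coset space --- and what you omit --- is that $GL_n/S$ is \emph{affine} because $S$ is reductive (the paper cites \cite[Theorem~12.15]{alper2013good}); then $\mathcal O(GL_n/S)_T\hookrightarrow\mathcal O(GL_n)_T$ is a sub-ind-representation of an ind-crystalline family, hence crystalline by closure under subobjects, so the lemma represents $H^0\bigl((GL_n/S)(-\widehat\otimes B_{crys})\bigr)$ by the scheme $Res_T^{T\otimes K_0}D_{crys}(GL_n/S)_T$.

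Second, and more substantively: you want to cut out the image of $f_2$ as a locally closed subscheme of $H^0\bigl((GL_n/S)(-\widehat\otimes B_{crys})\bigr)$ after further stratification, and you rightly flag this as the delicate step, invoking the $\mathcal H_1\cap\mathcal H_2$ obstruction of Remark~\ref{challenges}. The paper sidesteps this entirely. Rather than treating the image of $f_2$ as a subscheme, it applies the non-normal exact sequence once more --- now with $B_{crys}$-coefficients --- to identify that image with the \emph{quotient stack}
\[
\bigl[\,H^0\bigl({}_b GL_n(-\widehat\otimes B_{crys})\bigr)\,\big/\,H^0\bigl({}_a S(-\widehat\otimes B_{crys})\bigr)\,\bigr],
\]
both of whose constituents are schemes over $T$ by the lemma. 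The fiber of $h$ over $b_T$ is then the $2$-fibered product of this quotient stack with $\bigl[H^0({}_b GL_n/{}_a S)\big/H^0({}_b GL_n)\bigr]$ over the scheme $H^0\bigl((GL_n/S)(-\widehat\otimes B_{crys})\bigr)$: a fibered product of rigid stacks, hence a rigid stack, with no constructibility or image-closure argument required. The obstruction from Remark~\ref{challenges} is genuine in the unipotent tower, where the analogous $B_{crys}$-piece has no such quotient description; here at the reductive level the full exact sequence dissolves it.
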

\begin{proof}
Let us reason as in the arguments of Theorem \ref{representability}.

The fibers of the map $$H_{f, \bar D}^{1,\tau, \bold{v}}(G_K,S) \to H_{f, \bar D}^{1,\tau, \bold{v}}(G_K,GL_{n,inn})$$ are the same as the fibers of $$H^1_f(G_K, S) \to H^1_f(G_K,GL_{n,inn}).$$ We take the long exact sequence of a non-normal subgroup:
\[
\begin{tikzcd}
H^0(G_K, S) \arrow{r} & H^0(G_K, GL_{n,inn}) \arrow{r} & H^0(G_K, GL_{n,inn}/S) \\
\arrow{r} & H^1(G_K, S) \arrow{r} & H^1(G_K, GL_{n,inn})
\end{tikzcd}
\]
which maps to
\[
\begin{tikzcd}[column sep=small]
H^0(G_K, S(- \otimes B_{crys})) \arrow{r} & H^0(G_K, GL_{n,inn}(- \otimes B_{crys})) \arrow{r}  & H^0(G_K, GL_{n,inn}/S (- \otimes B_{crys})) \\  
\arrow{r} & H^1(G_K, S( - \otimes B_{crys})) \arrow{r} & H^1(G_K, GL_{n,inn}(- \otimes B_{crys}))
\end{tikzcd}
\]
For $T$ an affinoid algebra, let $b \in H^1_f(G_K,GL_{n,inn})(T)$. Locally on $T$, take a lift $a \in H^1_f(G_K,S)(T)$. Then the exact sequences tell us that fiber above $b$ is locally $(f_1)^{-1}im(f_2)$, where $f_1$ and $f_2$ are
\[
\begin{tikzcd}
 & \left[H^0({}_b GL_n/{}_a S)/ H^0({}_b GL_{n,inn}) \right] \arrow{d}{f_1} \\
H^0({}_b GL_{n,inn}(- \otimes B_{crys})) \arrow{r}{f_2}  & H^0((GL_{n,inn}/S)(- \otimes B_{crys}))
\end{tikzcd}
\]
Again by the short exact sequence, \[im(f_2) = [H^0({}_b GL_{n,inn}(- \otimes B_{crys}))/H^0({}_a S(- \otimes B_{crys}))].\] Let us show that this stack is representable.

Since $b \phi$ is crystalline ($b \in H^1_f$), so is $\mathcal{O}(GL_{n,inn,T})$; this is because the latter ring is the colimit $\bigoplus_{\alpha} End(V_{\alpha}) \otimes T$, where $End(V_{\alpha}) \otimes T$ has the tensor product action of $b \phi$ and $V_{\alpha}$ runs over all irreducible representations of $GL_n$.

By the lemma immediately preceding this proposition, we see that $H^0(GL_{n,inn,T}(- \otimes B_{crys}))$ is representable by the scheme $Res_{T}^{T \otimes K_0}D_{crys}(GL_{n,T})$. Now $\mathcal{O}(S_T)$ is a quotient of $\mathcal{O}(GL_{n,inn,T})$ as a Galois representation. Closure of (ind-)crystallinity under quotients (over affinoid algebras) shows that $H^0({}_a S(- \otimes B_{crys}))$ is also representable by a scheme over $T$.

We may analyze the lower right pseudofunctor similarly. The homogeneous space $GL_{n,T}/S_T$ is affine because $S$ is reductive (\cite[Theorem 12.15]{alper2013good}).
Since the morphism $$GL_{n,T} \to (GL_n/S)_T$$ surjective, we have an injection of coordinate rings $\mathcal{O}(GL_n/S) \otimes T \to \mathcal{O}(GL_n) \otimes T$. The codomain is crystalline, and so closure of crystalline representations under subobjects over reduced affinoid algebras shows that $\mathcal{O}(GL_n/S)$ is crystalline, and thus $H^0((GL_n/S)(-- \otimes B_{crys}))$ is also representable by $D_{crys}(GL_{n,inn,T}/S_T)$. Since the whole span consists of rigid stacks, we see that the fibered product
\[
\begin{tikzcd}
 & \left[H^0({}_b GL_n/{}_a S)/ H^0({}_b GL_{n,inn}) \right] \arrow{d}{f_1} \\
\left[ H^0({}_b GL_{n,inn}(- \otimes B_{crys}))/ H^0({}_a S(- \otimes B_{crys})) \right] \arrow[hookrightarrow]{r}{f_2}  & H^0((GL_{n,inn}/S)(- \otimes B_{crys}))
\end{tikzcd}
\]
exists as a rigid stack, which is what we wanted.

The arguments of Theorem \ref{representability} go through unchanged for the piece of the span \[\left[H^0({}_b GL_n/{}_a S)/ H^0({}_b GL_{n,inn}) \right].\] Namely, there is a stratification of $H^1_f(GL_{n,inn})$ over which $H^0({}_b GL_{n,inn})$ is flat and of finite presentation, and similarly for $Res^{T \otimes K_0}_{T}D_{crys}(S)$. Taking the fiber product of the span above, we see that $H^1_f(S)$ is indeed strat-representable.
\end{proof}

The definition of the global Selmer stack as a fibered product makes it clear that the global substacks mapping to the local Selmer stacks of fixed Galois and Hodge types are strat-representable if the local ones are.

\chapter{de Rham and Crystalline Realizations; $p$-adic Period Mappings}

\section{Hodge theory}
We will now discuss Hain's mixed Hodge structure on the relative completion of the fundamental group of a complex algebraic variety with respect to a polarized variation of Hodge structures. For the remainder of this section we will follow Hain's notation. To that end, let $X$ be a smooth complex algebraic variety with basepoint $x_0$. Let $\mathbb{V}$ be a polarized variation of Hodge structure on $X$, with $V_0$ the fiber of $\mathbb{V}$ above $x_0$, and write $S$ for the Zariski closure of the monodromy of $\mathbb{V}$. The monodromy is a subgroup of the symplectic or orthogonal group on $V_0$ arising from the polarization on $\mathbb{V}$.  We write $\rho \colon \pi_1(X,x_0) \to S$ for the necessarily Zariski-dense monodromy representation.
Finally, denote the relative completion of the Betti fundamental group $\pi_1(X,x_0)$ with respect to $\rho$ by $\mathcal{G}(X,x_0)$.

There is a right flat principal $S$ bundle $P$ over $X$ which is associated in the usual way to the action of right multiplication of $\pi_1(X,x_0)$ on $S$ via $\rho$. More precisely, if $\tilde X$ is the universal cover of $X$ then $P = (S \times \tilde X)/\pi_1(X,x_0)$, where $\gamma \in \pi_1(X,x_0)$ sends $(s,p)$ to $(s\rho(\gamma)^{-1}, \gamma p)$. We will also need the fiber bundle associated to this principal bundle through the right action of $S$ on $\mathcal{O}(S)$: $f(s_1) \mapsto f(s_1 \gamma)$. This bundle is then defined by the similar formula $$\mathcal{O}(P) \defeq (\mathcal{O}(S) \times P)/S.$$ It has fibers which are non-canonically isomorphic to $\mathcal{O}(S)$. In fact, the first key fact is that one has a canonical Hodge structure of weight zero on the algebra $\mathcal{O}(S)$, which induces a Hodge structure of weight zero on the variation $\mathcal{O}(P)$. We explain the proof.
\begin{prop}{\cite[Corollary 13.7]{hain1996hodge}}
Suppose that $S$ is the whole of the group $\Aut(V_0, \langle \cdot, \cdot \rangle)$, i.e. the entire symplectic or orthogonal group. Then there is a pure Hodge structure of weight zero on $\mathcal{O}(S)$, and it induces a Hodge structure of weight zero on the variation $\mathcal{O}(P)$.
\end{prop}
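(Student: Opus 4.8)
The plan is to realise $\mathcal{O}(S)$, and then the associated variation $\mathcal{O}(P)$, inside tensor constructions on the fibre $V_0$ of $\mathbb{V}$, and to read the Hodge structure off from there. The hypothesis that $S=\Aut(V_0,\langle\cdot,\cdot\rangle)$ is the \emph{full} symplectic or orthogonal group is exactly what makes classical invariant theory available: the first and second fundamental theorems describe the Hopf algebra $\mathcal{O}(S)$ completely in terms of $V_0$ and the form, and every finite-dimensional representation of $S$ occurs as a subquotient of a finite sum of tensor powers of the standard representation $V_0$. Since $V_0$ underlies a polarised Hodge structure of some weight $w$, and polarisable Hodge structures are closed under $\otimes$, duals, Tate twists and subquotients, this produces the desired Hodge structure; the weight comes out to be $0$ because the polarisation cancels the weight of $V_0$ against that of $V_0^{\vee}$.

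First I would put a Hodge structure on $\mathcal{O}(S)$ as an abstract Hopf algebra. The polarisation is a morphism of Hodge structures $\psi\colon V_0\otimes V_0\to\mathbb{Q}(-w)$, nondegenerate, hence an isomorphism of Hodge structures $V_0^{\vee}\cong V_0(w)$. Therefore the space of linear matrix coefficients $\End(V_0)^{\vee}=V_0^{\vee}\otimes V_0$ is pure of weight $0$, and so is $\det\in\bigwedge^{\mathrm{top}}V_0^{\vee}\otimes\bigwedge^{\mathrm{top}}V_0\cong\mathbb{Q}(0)$ (apply $\bigwedge^{\mathrm{top}}$ of $\psi$). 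Hence $\mathcal{O}(\mathrm{GL}(V_0))=\Sym(V_0^{\vee}\otimes V_0)[1/\det]$ is an ind-Hodge structure, pure of weight $0$, with its Hopf operations given by morphisms of Hodge structures. The closed subgroup $S\hookrightarrow\mathrm{GL}(V_0)$ is cut out by the ideal generated by the image of the weight-$0$ subspace $(V_0\otimes V_0)(w)\to\mathcal{O}(\mathrm{GL}(V_0))$ encoding the equation that $g$ preserves $\psi$; since the ideal generated by a sub-Hodge structure of a Hodge algebra is again a sub-ind-Hodge structure, the quotient Hopf algebra $\mathcal{O}(S)$ inherits a weight-$0$ ind-Hodge structure, compatible with comultiplication and with the left and right translation actions of $S$.

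Next I would transport this to $\mathcal{O}(P)=(\mathcal{O}(S)\times P)/S$, the flat bundle associated to the monodromy representation $\pi_1(X,x_0)\xrightarrow{\rho}S\to\Aut(\mathcal{O}(S))$ via right translation. Exhaust $\mathcal{O}(S)$ by finite-dimensional $S$-subrepresentations that are sub-Hodge structures; each such subrepresentation embeds, $S$-equivariantly and compatibly with Hodge structures, into a finite sum of spaces $V_0^{\otimes a}\otimes(V_0^{\vee})^{\otimes b}$. The corresponding flat subbundle of $\mathcal{O}(P)$ is then a flat subbundle of the matching tensor construction on $\mathbb{V}$ and $\mathbb{V}^{\vee}$, which underlies a polarisable variation of Hodge structure by the theory of Schmid and Deligne; since variations of Hodge structure are closed under subobjects and quotients, each finite piece of $\mathcal{O}(P)$ underlies a polarisable VHS, pure of weight $0$, with Griffiths transversality inherited from $\mathbb{V}$. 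Assembling over the exhaustion, $\mathcal{O}(P)$ underlies an ind-variation of Hodge structure, pure of weight $0$, whose fibre at $x_0$ recovers the Hodge structure constructed on $\mathcal{O}(S)$ through the frame determined by the basepoint.

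The hard part will be the gluing step: one must check that the fibrewise, invariant-theoretic description of $\mathcal{O}(S)$ is genuinely compatible with the variation — that the sub-Hodge structures exhausting $\mathcal{O}(S)$ spread out to \emph{holomorphic}, Griffiths-transverse sub-variations of the tensor powers of $\mathbb{V}$, and not merely to fibrewise sub-Hodge structures — and to run the whole argument functorially enough that the Hopf-algebra structure is respected throughout (this last point is what later lets one descend a mixed Hodge structure to the relative completion $\mathcal{G}(X,x_0)$). The genuinely analytic input, however, is standard: polarisable variations of Hodge structure form a Tannakian subcategory of local systems, closed under subquotients; the remaining work is bookkeeping with classical invariant theory and the ind-structure, together with the weight cancellation $V_0^{\vee}\cong V_0(w)$ that forces everything into weight $0$.
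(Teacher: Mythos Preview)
Your argument is correct, and at bottom it uses the same invariant-theoretic input as the paper: since $S$ is the full symplectic or orthogonal group, every finite-dimensional representation is built from the standard one, and the polarisation $V_0^{\vee}\cong V_0(w)$ forces weight $0$. The organisation, however, differs. The paper does not go through $\mathcal{O}(\mathrm{GL}(V_0))$ and quotient by the ideal of relations; instead it invokes the algebraic Peter--Weyl decomposition
\[
\mathcal{O}(S)\;=\;\bigoplus_{\alpha} V_{\alpha}\boxtimes V_{\alpha}^{*}
\]
as an $(S,S)$-bimodule, and then appeals to Weyl's explicit construction of each irreducible $V_{\alpha}$ from $V_0$ to see that every $V_{\alpha}$ underlies a polarisable Hodge structure (indeed, a VHS $\mathbb{V}_{\alpha}$ on $X$). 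Each summand $V_{\alpha}\otimes V_{\alpha}^{*}$ is then visibly weight $0$, and for the bundle one immediately has
\[
\mathcal{O}(P)\;=\;\bigoplus_{\alpha}\mathbb{V}_{\alpha}\otimes V_{\alpha}^{*},
\]
so the variation of Hodge structure on $\mathcal{O}(P)$ is a direct sum of honest VHS from the start.

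What each route buys: your quotient-of-$\mathcal{O}(\mathrm{GL}(V_0))$ argument is pleasantly concrete about why the Hopf operations respect the Hodge structure, but it leaves you with the ``gluing'' worry you flagged --- that abstract sub-Hodge structures of tensor powers of $V_0$ must spread to holomorphic, Griffiths-transverse sub-variations of the corresponding tensor powers of $\mathbb{V}$. The paper's Peter--Weyl route sidesteps this entirely, because the $\mathbb{V}_{\alpha}$ are produced by Weyl's functorial construction applied to $\mathbb{V}$ itself, so they are sub-VHS on the nose rather than merely fibrewise subobjects. It also hands you the decomposition displayed above, which the paper uses again later. Your approach would close the gap by invoking Deligne semisimplicity (sub-local-systems of polarisable VHS are sub-VHS), which is exactly what the paper uses in the \emph{next} proposition to drop the hypothesis that $S$ is the full automorphism group.
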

\begin{proof}
The Hodge structure on $\mathcal{O}(S)$ falls out from a structure theorem for $\mathcal{O}(S)$ as follows. Let $V_\alpha$ be a set of representatives for the isomorphism classes of all irreducible representations of $S$.
The structure theorem then says that $$\mathcal{O}(S) = \sum_{\alpha} V_{\alpha} \boxtimes V_{\alpha}^*$$ as an $S,S$ bimodule. One proves that each irreducible representation comes from a variation of Hodge structure on $X$, and thus carries a Hodge structure. (Indeed, a construction due to Weyl that is used to construct all irreducible representations of $Sp(V_0)$ from the fundamental representation actually preserves the property of coming from a local system -- see Hain's proposition for more details.) Now tensoring each representation and its dual gives a Hodge structure of weight zero on their tensor product, and their sum thus has a Hodge structure.

Now \[\mathcal{O}(P) = \sum_{\alpha} \mathbb{V}_{\alpha} \otimes V_{\alpha}^*. \tag{$\ast$}\] From this decomposition we obtain the Hodge structure on the variation $\mathcal{O}(P)$, since each irreducible representation comes from a variation of Hodge structure $\mathbb{V}_\alpha$.
\end{proof}

Before we place a mixed Hodge structure on the coordinate ring of the entire relative completion, we show how the previous argument generalizes to ``neat'' $S$, which we define before stating the theorem.
\begin{defn}
Let $\mathbb{V}$ be a variation of Hodge structure over $X$, a complement of a normal crossings divisor in a compact Kahler manifold. We say that $\mathbb{V}$ is \textit{neat} if it has semi-simple monodromy and if the Hodge structure on the coordinate ring of $\Aut(V_0, \langle \cdot, \cdot \rangle)$ induces one on $\mathcal{O}(S)$.
\end{defn}

We can answer a question raised by Hain \cite[top of p.85]{hain1996hodge}.

\begin{prop}
In fact, any polarizable variation of Hodge structure is neat.
\end{prop}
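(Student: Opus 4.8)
The plan is to notice that the proof of the quoted proposition (Hain, \cite[Corollary 13.7]{hain1996hodge}) used essentially nothing about $\Aut(V_0,\langle\cdot,\cdot\rangle)$ beyond two features: it is reductive, and every one of its irreducible representations underlies a polarizable variation of Hodge structure on $X$ (the latter being exactly where Weyl's construction entered). I would verify that both features persist for the algebraic monodromy group $S$ of an arbitrary polarizable $\mathbb{V}$, rerun Hain's argument verbatim to get a weight-zero Hodge structure on $\mathcal{O}(S)$ (hence an induced one on the variation $\mathcal{O}(P)$), and then check that this Hodge structure is the one inherited from $\mathcal{O}(\Aut(V_0))$ along the restriction-of-functions surjection $r\colon\mathcal{O}(\Aut(V_0))\twoheadrightarrow\mathcal{O}(S)$. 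That last check is the content of ``neatness''.

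For the first feature: the monodromy representation $\rho$ is semisimple by Deligne's semisimplicity theorem for the monodromy of a polarizable variation of Hodge structure, so $S$ is reductive; this is already the ``semi-simple monodromy'' half of the definition, and it furnishes the Peter--Weyl decomposition $\mathcal{O}(S)=\bigoplus_\alpha V_\alpha\boxtimes V_\alpha^{*}$ as an $(S,S)$-bimodule. For the second: since $S\subseteq\Aut(V_0)$ by construction, $V_0$ is a faithful $S$-representation underlying the polarized VHS $\mathbb{V}$; by the standard Tannakian fact every irreducible $S$-representation $V_\alpha$ is a subquotient of some $V_0^{\otimes a}\otimes(V_0^{*})^{\otimes b}$, and because the category of polarizable $\mathbb{Q}$-variations of Hodge structure on $X$ is abelian and semisimple (Deligne--Schmid), stable under $\otimes$ and $(-)^{*}$, with exact conservative forgetful functor to local systems, the local system underlying $V_\alpha$ acquires a polarizable VHS $\mathbb{V}_\alpha$. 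It is important that $\mathbb{V}_\alpha$ is canonical once its weight is pinned down (and that weight is forced by the ambient tensor construction): a morphism of the underlying local systems of two pure polarizable VHS of equal weight is automatically a morphism of VHS, by rigidity (the theorem of the fixed part applied to the internal Hom). Running Hain's argument now equips each $V_\alpha\boxtimes V_\alpha^{*}$ with a weight-zero Hodge structure and gives $\mathcal{O}(S)=\bigoplus_\alpha V_\alpha\boxtimes V_\alpha^{*}$ and $\mathcal{O}(P)=\bigoplus_\alpha\mathbb{V}_\alpha\otimes V_\alpha^{*}$ weight-zero (variations of) Hodge structure.

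It remains to see that $r$ is a morphism of ind-Hodge structures, so that this Hodge structure on $\mathcal{O}(S)$ is literally the one induced from the weight-zero structure on $\mathcal{O}(\Aut(V_0))$ provided by the cited proposition. Writing $\mathcal{O}(\Aut(V_0))=\bigoplus_\beta W_\beta\boxtimes W_\beta^{*}$, the $\beta$-summand maps into $\bigoplus_{\alpha:\,V_\alpha\subseteq W_\beta|_S}V_\alpha\boxtimes V_\alpha^{*}$, and its component toward a fixed $\alpha$ is induced by an $S$-equivariant projection $W_\beta|_S\to V_\alpha$ together with the dual of the complementary inclusion. Now $W_\beta$ is a subquotient of a tensor construction on $V_0$ as an $\Aut(V_0)$-module, hence also as an $S$-module, so $W_\beta|_S$ underlies a polarizable VHS compatibly with $\mathbb{V}_\alpha$; by the uniqueness just noted, the projection and inclusion are morphisms of VHS, so $r$ is a morphism of Hodge structures and the two Hodge structures on $\mathcal{O}(S)$ coincide. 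Hence $\mathbb{V}$ is neat. The only genuinely delicate point is the rigidity statement that makes each $\mathbb{V}_\alpha$ canonical: this is what forces the independently built Hodge structure on $\mathcal{O}(S)$ to agree with the quotient structure pulled down from $\mathcal{O}(\Aut(V_0))$; everything else is bookkeeping with Peter--Weyl and the semisimplicity of the category of polarizable variations.
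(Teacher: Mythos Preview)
Your approach is correct and follows the same core idea as the paper: use the Peter--Weyl decomposition $\mathcal{O}(S)=\bigoplus_\alpha V_\alpha\boxtimes V_\alpha^*$, observe that every irreducible $V_\alpha$ is a subquotient of a tensor construction on $V_0$, and invoke Deligne's semisimplicity theorem to equip each $V_\alpha$ with a VHS. The paper's proof is three sentences long and stops there.

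You go further than the paper in one respect: you explicitly verify that the Hodge structure so built on $\mathcal{O}(S)$ agrees with the quotient structure along $r\colon\mathcal{O}(\Aut(V_0))\twoheadrightarrow\mathcal{O}(S)$, which is what the word ``induces'' in the definition of neat actually demands. The paper leaves this implicit. Your use of rigidity (the theorem of the fixed part applied to internal Hom) to pin down the VHS on each $\mathbb{V}_\alpha$ canonically, and hence force the $S$-equivariant projections $W_\beta|_S\to V_\alpha$ to be morphisms of VHS, is the right way to close that gap. So your argument is a strictly more complete version of the same proof.
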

\begin{proof}
Because of the description of the coordinate ring of $S$ in terms of irreducible representations of $S$, it suffices to show that any representation of $S$ has an induced Hodge structure. Now, indeed, any such representation is a subquotient of a tensor power of V and its dual. Furthermore, since $S$ is reductive, it suffices to consider only subbundles of these tensor powers. But Deligne's semisimplicity theorem \cite[Corollaire 4.2.9]{deligne1971theorie} says that every subbundle of a polarizable VHS has an induced VHS.
\end{proof}

Note that our notations differ slightly from those of Hain here: for him $S$ always means the symplectic/orthogonal group above, while $S$ for us is the (potentially smaller) algebraic monodromy group. We may now proceed to the main theorem.
\begin{thm}{\cite[Thm. 13.1]{hain1996hodge}}
With the notation of the previous definition, assume that $\mathbb{V}$ is neat. Then the coordinate ring $\mathcal{O}(\mathcal{G}(X,x_0))$ has a canonical real mixed Hodge structure with weights $\geq 0$ for which the product, coproduct, antipode and the natural inclusion $$\mathcal{O}(S) \hookrightarrow \mathcal{O}(\mathcal{G}(X,x_0))$$ are all morphisms of mixed Hodge structure. Moreover the canonical homomorphism $\mathcal{G}(X,x_0) \to S$ induces an isomorphism $Gr_0^W\left( \mathcal{O}(\mathcal{G}(X,x_0)) \right) \simeq \mathcal{O}(S)$.
\end{thm}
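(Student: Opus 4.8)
The plan is to realize $\mathcal{O}(\mathcal{G}(X,x_0))$ as the zeroth cohomology of a \emph{relative bar construction} and transport a mixed Hodge complex structure onto it, following Hain's de Rham homotopy theory adapted to variation coefficients. First I would fix a smooth compactification $\bar X \supseteq X$ with $D = \bar X \setminus X$ a normal crossings divisor (possible since $X$ is the complement of an NCD in a compact K\"ahler manifold), and recall the relative form of Chen's $\pi_1$--de Rham theorem: writing $\mathcal{P} = \mathcal{O}(P)$ for the flat sheaf of algebras associated to $P$, the reduced relative bar construction $B\big(A^\bullet(X;\mathcal{P})\big)$ of the cdga of smooth $\mathcal{P}$-valued forms on $X$, formed with respect to the augmentation $A^\bullet(X;\mathcal{P}) \to \mathcal{P}_{x_0} \cong \mathcal{O}(S)$ given by evaluation at the basepoint, has $H^0$ canonically isomorphic to $\mathcal{O}(\mathcal{G}(X,x_0))$ as a Hopf algebra, with product the shuffle product, coproduct deconcatenation, and with the length-zero summand being precisely $\mathcal{O}(S)$ and the higher-length summands built from $H^{\ge 1}(X;\mathcal{P})$. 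Under this identification the inclusion $\mathcal{O}(S) \hookrightarrow \mathcal{O}(\mathcal{G}(X,x_0))$ dual to $\mathcal{G}(X,x_0) \to S$ is the inclusion of the length-zero summand, and the bar length filtration is the bookkeeping device that will eventually produce the ``weights $\ge 0$'' behaviour.

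Second, and this is the technical heart, I would install a cohomological mixed Hodge complex refining $A^\bullet(X;\mathcal{P})$. The input is exactly the content of the preceding propositions: $\mathcal{P} = \mathcal{O}(P) = \bigoplus_\alpha \mathbb{V}_\alpha \otimes V_\alpha^*$ is a polarizable variation of Hodge structure of weight $0$ (neatness being automatic). Hence by the Hodge theory of admissible variations on quasi-projective varieties (Zucker, Kashiwara--Kawai, M.~Saito, with Hain--Zucker providing the homotopy-theoretic packaging), Deligne's canonical extension $\widetilde{\mathcal{P}}$ to $\bar X$ and the logarithmic de Rham complex $\Omega^\bullet_{\bar X}(\log D) \otimes \widetilde{\mathcal{P}}$ carry a cohomological mixed Hodge complex computing $H^\bullet(X;\mathcal{P})$, with the weights of $H^i(X;\mathcal{P})$ lying in $[i,2i]$ because the coefficients have weight $0$; in particular $H^1(X;\mathcal{P})$ has weights $\ge 1$. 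I would then invoke stability of mixed Hodge complexes under the (reduced, multiplicative) bar construction --- Hain's bar-construction-of-mixed-Hodge-complex machinery, now run with variation coefficients --- to obtain a mixed Hodge complex whose weight and Hodge filtrations are the standard convolutions of the internal filtrations with the bar length filtration, and whose $H^0$ is $\mathcal{O}(\mathcal{G}(X,x_0))$. This endows $\mathcal{O}(\mathcal{G}(X,x_0))$ with a canonical real mixed Hodge structure.

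Third, the functorial assertions and the weight bounds are then essentially formal. The shuffle product and the deconcatenation coproduct are morphisms of mixed Hodge complexes by construction, so the induced product and coproduct on $\mathcal{O}(\mathcal{G}(X,x_0))$ are morphisms of mixed Hodge structure; the antipode is then forced to be one as well, being determined by the Hopf-algebra axioms from the unit, counit, product and coproduct (equivalently, it is induced by path reversal, itself a morphism of mixed Hodge complexes), and $\mathcal{O}(S) \hookrightarrow \mathcal{O}(\mathcal{G}(X,x_0))$ is the inclusion of the length-zero summand, a morphism of mixed Hodge complexes on the nose. For the weight estimate: a class in $H^0$ of the bar complex is assembled from tensors $a_1 | \cdots | a_s$ with each $a_j$ of cohomological degree $1$ and hence of internal weight $\ge 1$, so the length-$s$ summand has weight $\ge s$, while the length-zero summand is $\mathcal{O}(S)$ sitting in degree $0$ with weight $0$. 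Therefore $\mathcal{O}(\mathcal{G}(X,x_0))$ has weights $\ge 0$, and $Gr_0^W \mathcal{O}(\mathcal{G}(X,x_0))$ is exactly the length-zero summand $\mathcal{O}(S)$, the isomorphism being the one induced by $\mathcal{G}(X,x_0) \to S$.

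The main obstacle is the second step: constructing the cohomological mixed Hodge complex on the relative logarithmic de Rham complex with coefficients in the variation $\mathcal{P}$, and verifying that its bar construction is again a mixed Hodge complex with the asserted convolution filtrations. This requires combining the full Hodge theory of admissible variations on open varieties with Hain's bar-construction technology, and the delicate point is to control the interaction of the two filtrations --- the internal Deligne weight versus the bar length --- through the bar differential and the shuffle product simultaneously, so that multiplicativity, the ``weights $\ge 0$'' bound, and the $Gr_0^W$ identification all hold at once. Once this is in place, Steps 1 and 3 reduce to bookkeeping with Chen's iterated integrals and a weight count.
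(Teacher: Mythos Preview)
Your proposal is correct and follows essentially the same route the paper outlines (which is Hain's original approach): realize $\mathcal{O}(\mathcal{G}(X,x_0))$ as $H^0$ of the two-sided bar construction $B\big(\mathbb{R}, E^\bullet_{fin}(X,\mathcal{O}(P)), \mathcal{O}(S)\big)$, use that $E^\bullet_{fin}(X,\mathcal{O}(P))$ is a mixed Hodge complex (in Saito's sense) because $\mathcal{O}(P)$ is a weight-$0$ ind-VHS, and then transport the mixed Hodge structure through the bar construction, with the weight bound and the $Gr_0^W$ identification coming from the bar-length filtration. The paper does not give a self-contained proof beyond this sketch, so there is nothing further to compare.
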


The proof of such theorems all follow the path set out by Chen \cite{chen1977iterated}: mixed Hodge structures arise from comparison theorems with bar complexes of differentials. We will need this explicit description of $\mathcal{O}(G^{dR})$.
\begin{defn}
Call a local system $\mathbb{M}$ on $X$ \it{rational} if it arises from a rational representation of $S \to GL(V)$ via the composition $\pi_1(X,x_0) \to S \to GL(V)$. (Here rational representation is used the sense of algebraic groups.)
\end{defn}
The equation $(\ast)$ tells us that the representation $\mathcal{O}(S)$ is the direct limit of its finite-dimensional representations, and so we can reasonably define differential forms with values in in $\mathcal{O}(P)$ using forms with values in its finite-dimensional bundles.
\begin{defn}
Let $E^\bullet(X, \mathbb{M})$ denote the smooth de Rham complex of $C^\infty$ forms on $X$ with coefficients in a finite-dimensional flat vector bundle $\mathbb{M}$ (we wrote this $\Omega^\bullet(X,\mathbb{M})$ above.) Then for $\mathbb{V}$, a possible infinite-dimensional flat vector bundle, we define a new complex $$E^\bullet_{fin}(X,\mathbb{V}) = \varinjlim E^\bullet(X, \mathbb{M})$$ where $\mathbb{M}$ runs over the set of rational sub-local systems of $\mathbb{V}$.
\end{defn}

The reader should refer to \cite[Section 7]{hain1996hodge} for a precise description of the relations and differentials in the bar construction, since we do not use it in depth here. Suffice it to say that if $A^\bullet$ is a differential graded algebra with non-negative grading and $N^\bullet,M^\bullet$ are modules over $A^\bullet$, then the reduced bar construction $$B(M^\bullet, A^\bullet, N^\bullet)$$ is a differential graded algebra which is a quotient of $$T(M^\bullet, A^\bullet, N^\bullet) = \bigoplus_{r\geq 0} M^\bullet \otimes  (A^{\bullet > 0}[1])^{\otimes r} \otimes N^\bullet.$$ The element \[m \otimes a_1 \otimes \cdots \otimes a_l \otimes m \in B(M^\bullet, A^\bullet, N^\bullet)\] is traditionally denoted $m[a_1 |\cdots a_l]n$, or in \cite{hain1996hodge} by $(m | a_1 \cdots a_l | n)$.

Now we are concerned with two augmentations: $$\epsilon \colon E^\bullet_{fin}(X,\mathcal{O}(P)) \to \mathbb{R}$$ is defined locally by evaluating a differential form $\sum \omega_i \otimes \phi_i$ ($\omega_i \in E^k(X), \phi_i \in \mathcal{O}(S)$) at the basepoint $x_0$ and the identity element of $S$, while the augmentation $$\delta \colon E^\bullet_{fin}(X,\mathcal{O}(P)) \to \mathcal{O}(S)$$ only evaluates the differential form at the basepoint. Using these augmentations we see $\mathbb{R}$ and $\mathcal{O}(S)$ as $E^\bullet_{fin}(X,\mathcal{O}(P))$-algebras.

\begin{defn}
We define
$$\mathcal{G}^{dR} = \Spec H^0\left (B \left(\mathbb{R}, E^\bullet_{fin}(X,\mathcal{O}(P)), \mathcal{O}(S) \right) \right)$$
\begin{center}
and
\end{center}
$$\mathcal{U}^{dR} = \Spec H^0\left (B \left(\mathbb{R}, E^\bullet_{fin}(X,\mathcal{O}(P)), \mathbb{R}\right)\right).$$
\end{defn}
In fact, $\mathcal{U}^{dR}$ is pro-unipotent \cite[Prop 8.3]{hain1996hodge}, which allows one to compare it to the relative completion using the universal property of the latter.

An essential ingredient to all that follows is that $E^\bullet_{fin}(X,\mathcal{O}(P))$ is a mixed Hodge complex in the sense of Saito, and so the coordinate rings of $\mathcal{U}^{dR}$ and $\mathcal{G}^{dR}$ inherit mixed Hodge structures \cite[3.2.1]{hain1987rham}.

We are ready to give the comparison map $$\Phi \colon \pi_1(X,x_0) \to \mathcal{G}^{dR}(\mathbb{R}).$$ It arises from the following relative iterated integral map.
\begin{defn}
For $\phi \in \mathcal{O}(S)$ and $w_1, \ldots w_r \in E^1_{fin}(X,\mathcal{O}(P))$, we define $$\int(w_1 w_2 \cdots w_r | \phi) \colon P_{x_0, x_0}(X) \to \mathbb{R}$$ by 
$$\int_\gamma(w_1 w_2 \cdots w_r | \phi) \defeq \phi(\rho(\gamma)) \int_{\tilde \gamma} w_1 w_2 \cdots w_r.$$
Here $\tilde \gamma$ is the unique lift of $\gamma$ which begins at the canonical basepoint $\tilde x_0 \in P$ above $x_0$. This integral induces the map $$\Phi \colon \pi_1(X,x_0) \to \Hom_{\mathbb{R}-alg}\left(H^0\left (B \left(\mathbb{R}, E^\bullet_{fin}(X,\mathcal{O}(P)), \mathcal{O}(S) \right) \right), \mathbb{R}\right) = \mathcal{G}^{dR}(\mathbb{R})$$ (because the $H^0$ picks out homotopy invariant iterated integrals).
\end{defn}
An entirely analogous statement holds for the entire path groupoid \cite[Section 12]{hain1996hodge}.

The above iterated integral refers to Chen's usual iterated integrals, with the only difference being that one performs the iterated integral on the bundle $P$ via the lifted path $\tilde \gamma$. Indeed, Hain shows that pullback of forms from $E^1(X, \mathcal{O}(P))$ to $E^1(P, \mathcal{O}(P))$ is an injection (actually, the pulled back forms take values in a foliation related to $P$.) Then, as in the unipotent case, one pulls back the differential $\omega \in E^1(P, \mathcal{O}(P))$ to $[0,1]$ via the path $\tilde \gamma$. There $\omega$ is a differential form with values in $\mathbb{R}$, and it can be integrated in the usual sense to a real number.
\begin{rem}

One is tempted to think of these integrals as detecting the (Malcev unipotent completion of the) fundamental group of the cover defined by $\ker \rho$. The reductive information is captured, on the other hand, by the $\phi(\rho(\gamma))$ term. However, this intuition is usually incorrect. For example, the case of the moduli space of elliptic curves is exceedingly interesting and has been studied by Hain, Brown, and others. In that case $\rho$ is $SL_2(\mathbb{Z}) \to  SL_2(\mathbb{Q})$, so that the kernel is trivial and the associated cover is the universal cover of $M_{1,1}$. Despite the above intuition indicating that the relative completion should be trivial, then, it turns out that the relative completion detects modular forms! A motivically-tailored reference for this story is \cite{brown2014multiple}.

We also mention that these iterated integrals satisfy familiar-looking shuffle and antipode relations, for which the reader may consult Hain's work.
\end{rem}

Let $\mathcal{G}$ denote the relative completion of $\pi_1(X,x_0)$ with respect to $\rho$. The universal property of the relative completion (using \cite[Prop. 8.3]{hain1996hodge}) ensures that there is an induced map $\tilde \Phi \colon \mathcal{G} \to \mathcal{G}^{dR}$. The main theorem, now, is the comparison theorem which says that relative iterated integrals can detect the entire relative completion of the fundamental group.
\begin{thm}
The map $\tilde \Phi$ is an isomorphism of pro-algebraic groups which commutes with the projections to $S$.
\end{thm}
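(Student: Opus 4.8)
The plan is to reduce the claim to a comparison of first cohomology groups. First I would observe that $\tilde\Phi$ automatically commutes with the projections to $S$: the ``length zero'' component of the relative iterated integral $\Phi$ sends $\gamma$ to the point $\phi\mapsto\phi(\rho(\gamma))$ of $S$, so $\Phi$ composed with $\mathcal{G}^{dR}\to S$ is $\rho$, and the factorization $\tilde\Phi$ furnished by the universal property of $\mathcal{G}$ (using that $\mathcal{U}^{dR}$ is pro-unipotent, \cite[Prop.~8.3]{hain1996hodge}) therefore respects the maps to $S$. Consequently $\tilde\Phi$ restricts to a morphism of pro-unipotent radicals $\mathcal{U}\to\mathcal{U}^{dR}$, and by the five lemma for extensions of $S$ it suffices to show this restriction is an isomorphism. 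By the homological algebra recalled earlier in the chapter, a morphism of pro-unipotent groups sitting in extensions of the reductive $S$ is an isomorphism provided the induced maps $H^1(\mathcal{U}^{dR},V)^S\to H^1(\mathcal{U},V)^S$ are isomorphisms and $H^2(\mathcal{U}^{dR},V)^S\to H^2(\mathcal{U},V)^S$ are injective for every irreducible representation $V$ of $S$; by the Lyndon--Hochschild--Serre argument (using that $S$ is reductive) these are the same as the maps $H^q(\mathcal{G}^{dR},V)\to H^q(\mathcal{G},V)$.

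Next I would identify both of these cohomology theories with the cohomology of $X$ with coefficients in the local system $\mathbb{V}$ associated to $V$, compatibly with $\tilde\Phi$. On the relative-completion side this is precisely the defining property of the relative completion: for a rational $S$-representation $V$ one has $H^1(\mathcal{G},V)\xrightarrow{\sim}H^1(\pi_1(X,x_0),V)=H^1(X,\mathbb{V})$ and $H^2(\mathcal{G},V)\hookrightarrow H^2(X,\mathbb{V})$. On the de Rham side, the bar description $\mathcal{O}(\mathcal{G}^{dR})=H^0 B(\mathbb{R},E^\bullet_{fin}(X,\mathcal{O}(P)),\mathcal{O}(S))$ together with Chen's $\pi_1$--de Rham theory in Hain's form computes $H^1(\mathcal{G}^{dR},V)$ as $H^1$ of the twisted de Rham complex $E^\bullet(X,\mathbb{V})$ and exhibits $H^2(\mathcal{G}^{dR},V)$ as a subspace of $H^2_{dR}(X,\mathbb{V})$. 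Because $\Phi$ is honest iterated integration of forms along lifts of loops to $P$, the triangle relating $H^\bullet(\mathcal{G},V)$, $H^\bullet(\mathcal{G}^{dR},V)$ and $H^\bullet(X,\mathbb{V})$ commutes, with bottom edge the de Rham isomorphism on $X$; chasing it shows $\tilde\Phi$ is an isomorphism on $H^1$ and injective on $H^2$, which is what the criterion requires. Then $\tilde\Phi$ is an isomorphism, and it commutes with the projections to $S$ by the first step.

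The hard part will be the non-proper case. When $X=X^c\setminus D$ is only the complement of a normal crossings divisor, the bar complex no longer computes $H^\bullet(\mathcal{G}^{dR},V)$ as naive de Rham cohomology; one must run the logarithmic de Rham complex and the Leray/Adams spectral sequence for $j\colon X\hookrightarrow X^c$, exactly as in the presentation of $\mathfrak{u}$ recalled in the overview section, so that $H^1$ picks up the extra summand $H^0(X^c,R^1j_*\mathbb{V})$ and one verifies by weight arguments that the pertinent $H^2$ maps stay injective. The same spectral sequence governs the topological side, so the identifications remain compatible with $\tilde\Phi$; carefully checking this compatibility degree by degree, and confirming that $E^\bullet_{fin}(X,\mathcal{O}(P))$ really is the correct (Saito) mixed Hodge complex so that no cohomology is lost, is where the genuine labor lies. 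Granting these standard but technical points, the theorem follows.
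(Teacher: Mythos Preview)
Your approach is correct and follows essentially the same strategy as the paper's sketch: both reduce the isomorphism to identifying $H^1(\mathcal{U},V)^S$ and $H^1(\mathcal{U}^{dR},V)^S$ with $H^1(X,\mathbb{V})$ via the Lyndon--Hochschild--Serre argument. The one notable difference is that the paper (following Hain) first constructs an explicit map in the \emph{reverse} direction $\mathcal{G}^{dR}\to\mathcal{G}$---by observing that the monodromy representation $\pi_1(X)\to\mathcal{G}$ itself factors through $\mathcal{G}^{dR}$, since the monodromy of the universal bundle $\mathcal{O}(\mathcal{G})$ is computed by these iterated integrals---and then argues that the two maps are mutual inverses once the induced map $H_1(\mathcal{U})\to H_1(\mathcal{U}^{dR})$ is an isomorphism. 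You bypass this inverse construction and instead apply the single-map cohomological criterion (isomorphism on $H^1$, injection on $H^2$) directly. Both routes are valid; the paper's version has the small advantage that, with maps in both directions already in hand, one needs only the $H_1$ isomorphism and can avoid the $H^2$ injectivity check, while yours is more streamlined and invokes exactly the criterion the paper itself recorded earlier in the chapter.
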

Let us give a brief sketch of the proof. In addition to the map $\Phi$, one constructs a map $\mathcal{G}^{dR} \to \mathcal{G}$. The idea is that the monodromy of any vector bundle in $\mathcal{T}^{rel}$ is given by the iterated integrals above \textit{including} the monodromy of the universal bundle $\mathcal{O}\mathcal{(G})$. In other words, the map $\pi_1(X) \to \mathcal{G}(\mathbb{Q})$ has a factorization $$\pi_1(X) \to \mathcal{G}^{dR}(\mathbb{Q}) \to \mathcal{G}(\mathbb{Q}).$$ Now a crucial algebraic step is that the two maps between $\mathcal{G}$ and $\mathcal{G}^{dR}$ end up being inverses if the induced map $$H_1(\mathcal{U}) \to H_1(\mathcal{U}^{dR})$$ is an isomorphism. Last but not least, this isomorphism reduces to one core geometric fact: for any irreducible $S$-representation $V$, the two groups $$(H_1(\mathcal{U}) \otimes V)^S$$ and $$(H_1(\mathcal{U}^{dR}) \otimes V)^S$$ are both isomorphic to $H^1(X, \mathbb{V})$, where $\mathbb{V}$ is the vector bundle associated to $V$. This fact follows from the same Lyndon--Hochshild--Serre arguments from Section 3.1.

The proof affirms the philosophy: relative completions of fundamental groups are ways of recording the cohomologies of certain vector bundles, local systems, and isocrystals in an $S$-equivariant way. Thus any comparison isomorphism between them will ultimately be a comparison between these cohomologies.

\section{Frobenius weights}
Having discussed the Hodge structure on the relatively unipotent de Rham fundamental group, we pivot to its Frobenius structure. The work of Olsson in \cite{olsson2011towards} places such a Frobenius structure on the relative completion. Let us review a few concepts from this crystalline theory; recall that $\kappa$ is a finite field, and fix an embedding $K_0 \hookrightarrow \mathbb{C}$.
\begin{defn}
A $\phi$-module $M$ over $\kappa$ is a  $K_0$-vector space with an additive map $\phi \colon M \to M$ that is semi-linear with respect to the Frobenius action on $K_0$: $\phi(hm) = \phi(h)\phi(m)$. (We use the same notation for both Frobenii.)

A $\phi-module$ equipped with the datum of a decreasing and exhaustive filtration after extension to some totally ramified extension $K/K_0$ is called a filtered $\phi$-module over $K$.

A $\phi$-module is called an isocrystal if $\phi$ is a bijection, and a filtered isocrystal is an isocrystal that is a filtered $\phi$-module.

The preceding categories will be denoted $\phi-Mod_{\kappa}$, $MF_K^{\phi}$, $Isoc_{\kappa}$, and $FilIsoc_{\kappa}^K$. 
\end{defn}

\begin{defn}
Let $M \in Isoc_{\kappa}$, and suppose that $\phi^a$ is a power of the endomorphism of $M$ that is linear. (It suffices to take $a = [\kappa:\mathbb{F}_p]$.) Then we say that $M$ has weights $\leq w$ if every eigenvalue of $F^a$ has (after embedding it into $\mathbb{C}$) complex norm $p^{aw'/2}$ for $w' \leq w$. We say that a pro-isocrystal has negative weights if its finite-dimensional quotients do.
\end{defn}

In fact, starting with a smooth scheme $\tilde X$ over $\kappa$, one has an entire category of $F$-isocrystals over $\tilde X$ with values in $K_0$. We avoid a full discussion of isocrystals here; cf. \cite{berthelot2006cohomologie}. Since $\kappa$ is finite, the category of $F$-isocrystals is indeed a neutral Tannakian category once we fix a basepoint $b \in \tilde X_{\kappa}(\kappa)$. We denote its fundamental group by $\pi_1^{crys}(X_{\kappa}/K_0,b)$. Any one of the previous variations on this group are fair game now: for example, the unipotent crystalline fundamental group and the relatively unipotent fundamental group with respect to an $F$-isocrystal on $\tilde X$. Then, by the theory of geometry in a Tannakian category, the associated affine ring $\mathcal{O}(\pi_1^{crys}(\tilde X/K,b))$ is a ``ring in isocrystals;'' since isocrystals come with a Frobenius endomorphism, the affine ring and thus the various crystalline fundamental groups inherit a Frobenius endomorphism.

Now suppose that $\tilde X$ has a smooth lift $X/\mathcal{O}_K$ for $K/K_0$. We fix a vector bundle $\mathcal{L}^{dR}$ with integrable connection on $X_{K_0}$, and denote the associated $F$-isocrystal on $\tilde X$ by $\mathcal{L}^{crys}$. Finally, we write $S^{crys}$ for the Tannakian fundamental group of $\langle \mathcal{L}^{crys} \rangle_{\otimes}.$ Then we have the following comparison isomorphism, which is implicit in Olsson's work \cite[4.13]{olsson2011towards} and which we recall in Section \ref{drcrys}.
\begin{prop}

There is a canonical isomorphism $$\pi_1^{rel,crys}(\tilde X/K_0) \otimes_{K_0} K \simeq \pi^{rel,dR}_{1}(X/ K),$$ which induces a Frobenius action on the relative de Rham fundamental group.
\end{prop}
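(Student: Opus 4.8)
The plan is to identify the two groups Tannakianly: produce an exact tensor functor between the defining Tannakian categories, check that it is an equivalence after restricting to the relatively unipotent (thick) subcategories, and finally transport the Frobenius structure across the resulting isomorphism.

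First I would construct the comparison functor. The smooth lift $X/\mathcal{O}_K$ of $\tilde X$ allows one to evaluate a convergent $F$-isocrystal on $\tilde X$ on the rigid generic fibre, and the classical comparison between convergent isocrystals on $\tilde X$ and modules with integrable connection on $X_K$ (Berthelot's rigid-cohomology formalism; in the relatively unipotent range the convergence conditions are automatic) furnishes a $K_0$-linear exact tensor functor
\[
\mathrm{real}\colon \langle \mathcal{L}^{crys}\rangle_{thick}\longrightarrow \langle \mathcal{L}^{dR}\rangle_{thick}
\]
sending $\mathcal{L}^{crys}$ to $\mathcal{L}^{dR}$ and the monodromy object $\mathcal{O}(S^{crys})$ to $\mathcal{O}(S^{dR})$. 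Since a $\kappa$-point of the smooth $\mathcal{O}_K$-scheme $X$ lifts to an $\mathcal{O}_K$-point by Hensel's lemma, $\mathrm{real}$ carries the fibre functor ``stalk at $b$'' to the fibre functor ``stalk at the lift of $b$'' after the scalar extension $-\otimes_{K_0}K$. Tannakian duality then produces a homomorphism of pro-algebraic $K$-groups $\pi_1^{rel,dR}(X/K)\to \pi_1^{rel,crys}(\tilde X/K_0)\otimes_{K_0}K$ commuting with the projections onto the monodromy groups; on $\langle \mathcal{L}\rangle_{\otimes}$ it is exactly Olsson's reductive comparison $S^{dR}\simeq S^{crys}\otimes_{K_0}K$, cf.\ \cite[Lemma 4.16, \S4.12]{olsson2011towards}.

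Next I would show this map is an isomorphism. Having it on the reductive quotients, I would invoke the Lyndon--Hochschild--Serre criterion recalled in the algebraic theory of relative completions: a morphism between two extensions of $S$ by pro-unipotent groups is an isomorphism once it induces isomorphisms $H^1(\mathcal{U}^{crys},V)^S\xrightarrow{\sim}H^1(\mathcal{U}^{dR},V)^S$ for every irreducible $S$-representation $V$ (and compatibly on the $H^2$'s carrying the cup-product relations). Using the presentations of the Lie algebra $\mathfrak{u}$ from the motivic overview --- the formality/presentation theorems of Olsson, Pridham, and Hain, and in the non-proper case the Leray--Adams spectral-sequence presentation of \cite[proof of Prop.\ 3.4]{pridham2012l} --- both sides are identified, compatibly with $\mathrm{real}$, with the degree-one (respectively degree-two) cohomology of $X$ with coefficients in the bundle $\mathcal{V}$ attached to $V$: rigid cohomology of $\tilde X$ on the crystalline side, de Rham cohomology of $X_K$ on the de Rham side. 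What remains is the crystalline--de Rham comparison for these coefficient cohomologies, and that this comparison is compatible with the cup products and the spectral-sequence differentials appearing in the presentation of $\mathfrak{u}$.

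The hard part will be precisely this last comparison when $X$ is the complement of a normal crossings divisor $D$ with $\mathcal{L}$ tamely ramified around $D$, since one must match the two spectral-sequence presentations of $\mathfrak{u}$ term by term and this requires the weight formalism for overconvergent isocrystals; here I would cite the theory of Abe and Caro \cite[Main Theorem]{abe2018theory} together with the weight arguments indicated in the overview, while in the proper case the input reduces to the Berthelot--Ogus-type comparison \cite{berthelot2006cohomologie} and is comparatively routine. Once the isomorphism is in hand, the Frobenius is formal: $\mathcal{O}(\pi_1^{rel,crys}(\tilde X/K_0))$ is a ring-object in ind-$F$-isocrystals, hence carries a canonical $\sigma$-semilinear Frobenius $\phi$, namely the automorphism of the Tannakian fundamental group induced by the Frobenius-pullback endo-equivalence of the category of $F$-isocrystals; transporting $\phi$ through the isomorphism, and recalling that the de Rham side is by construction $\mathcal{O}(\pi_1^{rel,crys}(\tilde X/K_0))\otimes_{K_0}K$, equips $\mathcal{O}(\pi_1^{rel,dR}(X/K))$, together with its Hodge filtration, with the structure of an ind-object in filtered $\phi$-modules over $K$. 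Canonicity throughout is inherited from that of the isocrystal/connection comparison and of Olsson's reductive comparison.
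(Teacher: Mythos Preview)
Your proposal is correct and follows essentially the same route as the paper. The paper's argument (deferred to Section~\ref{drcrys}) also establishes the reductive comparison $S^{crys}\otimes_{K_0}K\simeq S^{dR}_K$ via Olsson, then invokes Pridham's Adams spectral sequence to reduce the comparison of $\mathcal{U}$ to the crystalline--de Rham comparison for the cohomology groups $H^p(X,\mathcal{O}(S))$ and $H^p(Y,R^qj_*\mathcal{O}(S))$ together with the cup-product and spectral-sequence maps between them, exactly as you describe; your explicit construction of the Tannakian comparison functor and use of the Lyndon--Hochschild--Serre criterion makes the argument a bit more precise than the paper's terse treatment, but the underlying mechanism is identical.
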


Olsson and Pridham describe the action of Frobenius on $\pi_1^{rel,crys}(\tilde X)$ by relating this group to cohomology of isocrystals in $\mathcal{T}^{rel, crys}$. For example, let $\mathcal{U}^{crys}$ be the unipotent radical of this relative crystalline fundamental group and $\mathfrak{u}^{crys}$ its Lie algebra, which is a pro-Lie algebra in isocrystals. Then we have the following statement about Frobenius weights (\cite{pridham2012l}, which is \cite[Theorem 4.2]{olsson2007f} for the mixed case.)

\begin{prop}
The Lie algebra $\mathfrak{u}^{crys}$ has Frobenius weights at most -1.
\end{prop}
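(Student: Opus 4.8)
The plan is to read the weight bound off the explicit presentation of $\mathfrak{u}^{crys}$ recorded in the overview section (following \cite[proof of Proposition 3.4]{pridham2012l}), using additivity of Frobenius weights under the Lie bracket. Recall that $\mathfrak{u}^{crys}$ is a quotient of the free Lie algebra on the graded space
\[
H^1(X^o,\mathcal{O}(S^{crys}))^*\ \oplus\ H^0(X,R^1j_*\mathcal{O}(S^{crys}))^*,
\]
with relations lying in $H^2(X,j_*\mathcal{O}(S^{crys}))$, $H^1(X,R^1j_*\mathcal{O}(S^{crys}))$ and $H^0(X,R^2j_*\mathcal{O}(S^{crys}))$. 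For the weight filtration $W_\bullet$ one has $[W_a,W_b]\subseteq W_{a+b}$, and $W_\bullet$ is preserved by surjections; hence the free Lie algebra on a space concentrated in weights $\le -1$ --- and every quotient of it --- is again concentrated in weights $\le -1$. So it suffices to prove that the two generating cohomology groups above lie in Frobenius weights $\le -1$, and the relations may be ignored entirely.

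First I would pin down the Frobenius structure on $\mathcal{O}(S^{crys})$. Exactly as in Hain's Hodge-theoretic picture one has the bimodule decomposition $\mathcal{O}(S^{crys})=\bigoplus_\alpha \mathbb{V}_\alpha\otimes V_\alpha^*$, where $\mathbb{V}_\alpha$ runs over the irreducible sub-$F$-isocrystals of tensor powers of $\mathcal{L}^{crys}$ and its dual; each $\mathbb{V}_\alpha$ is pure because it is a subquotient of a pure object, and the vector space $V_\alpha^*$ carries the compensating Frobenius coming from the fact that $S^{crys}$ is cut out inside a symplectic or orthogonal group by the polarization pairing $\mathcal{L}^{crys}\times\mathcal{L}^{crys}\to K_0(-q)$. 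The upshot, which is the crystalline counterpart of the weight-zero Hodge structure in \cite[Corollary 13.7]{hain1996hodge}, is that $\mathcal{O}(S^{crys})$ is an ind-$F$-isocrystal pure of weight $0$; I would isolate this as a lemma, transcribing Olsson's argument in \cite{olsson2011towards}.

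Granting the weight-zero structure, the rest is a routine weight estimate. By purity for smooth proper varieties over $\kappa$ together with the open-variety case (Abe--Caro \cite{abe2018theory}, as in the overview), $H^i(X^o,\mathcal{F})$ has Frobenius weights $\ge i+w$ for $\mathcal{F}$ pure of weight $w$; thus $H^1(X^o,\mathcal{O}(S^{crys}))$ has weights $\ge 1$, while $H^0(X,R^1j_*\mathcal{O}(S^{crys}))$ has weights $\ge 2$ because $R^1j_*$ of a weight-$0$ sheaf is built from Tate twists of the local monodromy. Dualizing, both generating spaces live in weights $\le -1$, and the bracket-and-quotient argument of the first paragraph gives that all of $\mathfrak{u}^{crys}$ has weights $\le -1$. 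In the proper case $X=X^o$ this reduces to the single statement that $\mathfrak{u}^{crys}$ is generated in weight $-1$ by $H^1(X,\mathcal{O}(S^{crys}))^*$, recovering the classical picture.

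The hard part is the first step: making the crystalline weight-zero structure on $\mathcal{O}(S^{crys})$ precise --- in particular, checking that the ``fiber'' factors $V_\alpha^*$ really do acquire the compensating Frobenius from the motivic origin of the $\mathbb{V}_\alpha$, rather than being inert vector spaces. This is precisely the content of Olsson's and Pridham's theorems (\cite{olsson2011towards}, \cite{pridham2012l}); everything downstream --- the weight inequalities for cohomology of $F$-isocrystals and the additivity of weights under the bracket --- is formal. A lesser technical point is the non-proper case, where one tracks weights through the Leray/Adams spectral sequence comparing $H^*(X^o,-)$ with $H^*(X,R^qj_*-)$; since, after dualizing, every term already sits in weights $\le -1$, no differential or extension can raise a weight above $-1$, so nothing new needs to be checked.
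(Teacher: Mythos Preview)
Your proposal is correct and matches the paper's approach: both present $\mathfrak{u}^{crys}$ as a Frobenius-compatible quotient of the free Lie algebra on $\mathfrak{h}=H^1_{crys}(\bar X,j_*\mathcal{O}(S^{crys}))^*\oplus H^0(\bar X,R^1j_*\mathcal{O}(S^{crys}))^*$, verify that $\mathfrak{h}$ has weights $\le -1$ (the paper by a bare citation of Abe--Caro, you via the weight-$0$ structure on $\mathcal{O}(S^{crys})$), and conclude by additivity of weights under bracket. The one place where the paper is slightly more explicit is in justifying the Frobenius-equivariance of the surjection $\mathbb{L}\mathfrak{h}\twoheadrightarrow\mathfrak{u}^{crys}$: rather than invoking the overview's presentation directly, it notes the Frobenius-equivariant identification $\mathfrak{u}^{ab}\simeq\mathfrak{h}$ and cites \cite[4.7]{olsson2007f} for a splitting preserving generalized Frobenius eigenspaces, which then extends by universality of the free Lie algebra --- a point you take for granted when you say ``$W_\bullet$ is preserved by surjections,'' but which is worth flagging since an arbitrary lift of generators need not respect eigenspaces.
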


\begin{proof}
Let $\tilde X \hookrightarrow \bar X$ denote a smooth compactification.

The cohomology group $\mathfrak{h} \defeq H^1_{crys}\left(\bar X,j_*\mathcal{O}(S^{crys})\right)^* \oplus H^0(\bar X, R^1j_*\mathcal{O}(S^{crys}))^*$ has Frobenius weights $ \leq -1$ (See, for example, \cite{abe2018theory}.) There is a Frobenius-equivariant identification $\mathfrak{u}^{ab} \simeq \mathfrak{h}$ by \cite[proof of Theorem 3.8]{pridham2012l}, and thus a natural surjection $\mathfrak{u}^{crys} \to \mathfrak{h}$. This surjection has a splitting that preserves generalized Frobenius eigenspaces \cite[4.7]{olsson2007f}.

By universality, the splitting $\mathfrak{h} \overset{\sigma} \to \mathfrak{u}^{crys}$ extends in a Frobenius-equivariant way to the free Lie algebra $\mathfrak{f} \defeq \mathbb{L}\mathfrak{h}$. In particular, from the negative weight of $\mathfrak{h}$ we see that $\mathfrak{u}^{crys}$ has only negative Frobenius weights. 
\end{proof}

\begin{rem}
The reader should be reminded of Hain's statement that $\mathfrak{u}^{dR}_{\mathbb{C}}$ has negative weights in the sense of mixed Hodge theory. The proof there proceeds in essentially the same way, namely by comparing $\mathfrak{u}$ to the cohomology of $X$ with values in the bundle $\mathcal{O}(S^{dR})$.
\end{rem}

\subsection{Why do Frobenius weights matter?}

We begin in this subsection to generalize Besser's notion of Tannakian Coleman integration to the relatively unipotent case.
\begin{rem}
The concrete consequence of our work in this subsection is that the de Rham moduli space is much larger than it first appears, as we will see. But first, a remark on why (Tannakian) Coleman integration is important.

From the perspective of the author, giving Frobenius-invariant analytic continuations of the integrals appearing in the relative completion will make the first inroads to \textit{globalizing} the method of Lawrence and Venkatesh. Because Gauss--Manin only converges in residue discs, Lawrence and Venkatesh are forced to work residue disc by residue disc; their method is thus sensitive to the number of residue discs. But if the smallest prime of good reduction for $X$ is large (and they do their analysis with a prime of good reduction), then the Weil conjectures say that the number of residue tubes will also be large. If we hope to eliminate dependence on the size of the smallest prime of good reduction (or, more generally, if we would like to prove estimates of $X(K)$ that are uniform in some sense) then we require global functions which vanish on rational points. See \cite{katz2016uniform} for a great example of this philosophy. Even if one would like to bound just the number of rational points on a single residue disc, such bounds usually come from global constraints such as Riemann-Roch \cite{coleman1985effective}.
\end{rem}

Let us recall Besser's reformulation of Coleman integration with an eye towards adapting it to the relative setting. Recall that $\pi_1^{un, crys}$ denotes the unipotent crystalline fundamental group; for $x,y \in X(\kappa)$ and their associated fiber functors $\omega_x, \omega_y$ in the category of unipotent isocrystals on $X$, the torsor $P^{crys, un}_{x,y} = Isom(\omega_x, \omega_y)$ is a scheme in isocrystals and has a Frobenius-equivariant right action of $\pi_1^{crys,un}(X, y)$.
\begin{thm}{\cite[Corollary 3.2]{besser2002coleman}}

Let $X/\kappa$ be a smooth scheme, and $x,y \in X(\kappa)$. The unipotent crystalline path torsor $P^{crys,un}_{x,y}$ has a unique Frobenius-invariant trivialization $$p^{crys} \in P^{crys,un}_{x,y}(K_0).$$
\end{thm}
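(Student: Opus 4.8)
The plan is to climb the lower central series of $\pi_1^{crys,un}(X,y)$ and reduce both existence and uniqueness of $p^{crys}$ to the invertibility of $1-\phi$ on the graded pieces, which is supplied by the negativity of the Frobenius weights of $\mathfrak{u}^{crys}$ proved above. Write $U = \pi_1^{crys,un}(X,y)$, let $U^n$ denote its lower central series (as for $\mathcal{U}$ above), $U_n = U/U^n$ the finite-class unipotent quotients, and $Z_n = U^n/U^{n+1}$ the central graded pieces; let $P_n$ be the pushforward of $P^{crys,un}_{x,y}$ along $U \to U_n$, so that $P^{crys,un}_{x,y}(K_0) = \varprojlim_n P_n(K_0)$, each $P_n$ is a $U_n$-torsor, and the Frobenius $\phi$ acts $\sigma$-semilinearly and compatibly with the group action, i.e. $\phi(p\cdot h) = \phi(p)\cdot\phi(h)$. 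Since $U_n$ is unipotent over the field $K_0$, the torsor $P_n$ is trivial; choosing $p_0 \in P_n(K_0)$ and writing $\phi(p_0) = p_0\cdot u$ for the unique $u \in U_n(K_0)$, a Frobenius-invariant trivialization of $P_n$ is the same as a solution $g \in U_n(K_0)$ of $g = u\,\phi(g)$, after which $p_0\cdot g$ is the desired point.

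Next I would solve $g = u\,\phi(g)$ by induction on $n$, the case $n=1$ being trivial. Given $g_n \in U_n(K_0)$ solving the equation modulo $U^n$, pick any lift $\tilde g \in U_{n+1}(K_0)$ and set $\epsilon = \tilde g^{-1}\,u\,\phi(\tilde g)$, which lands in $\ker(U_{n+1}\to U_n) = Z_n(K_0)$ because its image in $U_n(K_0)$ is $g_n^{-1}g_n = 1$. Since $Z_n$ is central and abelian, a short computation shows that $g_{n+1} := \tilde g\,z$ with $z \in Z_n(K_0)$ solves the equation modulo $U^{n+1}$ exactly when $(1-\phi)z = \epsilon$ (additive notation on $Z_n$). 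So everything hinges on $1-\phi$ being bijective on $Z_n(K_0)$: this is where the weight input enters, since by the proposition above $\mathfrak{u}^{crys}$, hence $Z_n$, has Frobenius weights $\leq -1$, so the linearized Frobenius $\phi^a$ with $a = [\kappa:\mathbb{F}_p]$ has all eigenvalues of absolute value $< 1$; thus $\phi^a$ is topologically nilpotent, $1-\phi^a = (1-\phi)(1+\phi+\cdots+\phi^{a-1})$ is invertible, and $(1-\phi)z = 0$ forces $(1-\phi^a)z = 0$, hence $z = 0$. Therefore $z$ exists and is unique, the induction closes, and one obtains $g = \varprojlim g_n \in U(K_0)$.

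For uniqueness, two Frobenius-invariant points of $P_n$ differ by $h \in U_n(K_0)$ with $\phi(h) = h$, and the same central-series argument (now $\ker(1-\phi)=0$ on each $Z_n$) forces $U_n(K_0)^{\phi=1} = 1$, so the invariant point is unique at every level; uniqueness then also makes the invariant points compatible under the Frobenius-equivariant transition maps $P_{n+1}\to P_n$, so they glue to a single $p^{crys} \in \varprojlim P_n(K_0) = P^{crys,un}_{x,y}(K_0)$. The only genuinely substantive point is the bijectivity of $1-\phi$ on the graded pieces, which has been reduced to the already-cited negative-weight theorem of Olsson and Pridham; the remainder is the standard successive-approximation argument for torsors under a pro-unipotent group equipped with an operator. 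The one thing to handle with care is the bookkeeping for the $\sigma$-semilinearity of $\phi$ and the right-torsor conventions, together with the fact that $1-\phi$ is not $K_0$-linear on $Z_n(K_0)$ — which is exactly why one passes to the $K_0$-linear operator $\phi^a$ to get invertibility.
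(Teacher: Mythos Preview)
Your proposal is correct and follows essentially the same route as the paper's treatment. The paper phrases the argument via bijectivity of the Lang map $g \mapsto g^{-1}\phi(g)$ on the pro-unipotent group (citing Besser for the details) and deduces the unique Frobenius-invariant trivialization from that; your induction up the lower central series, reducing to invertibility of $1-\phi$ on each central graded piece via the negative-weight input, is exactly how one establishes that bijectivity, so the two arguments are the same in substance.
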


We have the following theorem in our case.
\begin{thm} \label{integration}
Let $X/\kappa$ be a smooth scheme, and $\mathcal{U}^{crys}$ the unipotent radical of the relatively unipotent fundamental group with respect to an isocrystal $\mathcal{L}^{crys}$. Suppose, as always, that the Tannakian fundamental group of $\mathcal{L}^{crys}$ is reductive. Then any $\phi$-equivariant torsor $P$ for $\mathcal{U}^{crys}$ has a unique Frobenius-invariant trivialization $$p^{crys} \in P(K_0).$$ In particular, $(\mathcal{U}^{crys})^{\phi=1}$ is trivial.
\end{thm}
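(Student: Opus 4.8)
The plan is to run the dévissage behind Besser's proof of the unipotent case (\cite[Corollary 3.2]{besser2002coleman}), the only new ingredient being the negative-weight statement for $\mathfrak{u}^{crys}$ established in the previous proposition. Write $\mathcal{U} = \mathcal{U}^{crys} = \varprojlim_n \mathcal{U}_n$ along the lower central series, so we have central extensions $1 \to Z_n \to \mathcal{U}_{n+1} \to \mathcal{U}_n \to 1$ with $\mathcal{U}_1 = \mathcal{U}^{ab}$ and each $Z_n = \mathcal{U}^n/\mathcal{U}^{n+1}$ a (pro-)isocrystal which is a subquotient of $\mathfrak{u}^{crys}$, hence of Frobenius weight $\leq -1$. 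Given a $\phi$-equivariant $\mathcal{U}$-torsor $P$, push it to a compatible tower of $\phi$-equivariant $\mathcal{U}_n$-torsors $P_n$, with $P = \varprojlim_n P_n$ and $P(K_0) = \varprojlim_n P_n(K_0)$. It then suffices to show each $P_n$ has a unique $\phi$-fixed $K_0$-point, since uniqueness forces these points to be compatible under the transition maps and to assemble into the desired $p^{crys}$.

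The crux is the abelian case: if $Z$ is a (pro-)$\phi$-module over $K_0$ all of whose finite-dimensional subquotients have Frobenius weights $\leq -1$, then every $\phi$-equivariant $Z$-torsor $Q$ has a unique $\phi$-fixed $K_0$-point. Over a field a $Z$-torsor is trivial, so fixing $q \in Q(K_0)$ we may write $\phi(q\cdot z) = q\cdot\bigl(z_0 + \phi(z)\bigr)$ for a unique $z_0 \in Z$, and the $\phi$-fixed points of $Q$ are exactly the $q\cdot w$ with $(1-\phi)w = z_0$. So the claim reduces to: $1-\phi$ is bijective on $Z$. Take $a = [\kappa:\mathbb{F}_p]$, so $\phi^a$ is $K_0$-linear; the weight hypothesis says that after an embedding $K_0\hookrightarrow\mathbb{C}$ every eigenvalue of $\phi^a$ on each finite-dimensional subquotient has absolute value $p^{aw/2}$ with $w\le -1$, in particular $\ne 1$, so $1-\phi^a$ is invertible there. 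Since $1-\phi^a = (1-\phi)(1+\phi+\cdots+\phi^{a-1})$ and the factors commute, $1-\phi$ is injective and surjective on each finite piece; passing to the inverse limit (the finite-level inverses are compatible) shows $1-\phi$ is bijective on $Z$. This yields existence and uniqueness of the $\phi$-fixed point of $Q$.

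Now induct on $n$. The base case $P_1$ is a torsor under $\mathcal{U}^{ab} = \mathcal{U}_1$, covered by the abelian case. For the inductive step, let $p_n$ be the unique $\phi$-fixed point of $P_n$; since $\phi$ is equivariant and fixes $p_n$, it preserves the fibre of $P_{n+1}\to P_n$ over $p_n$, which is a $\phi$-equivariant $Z_n$-torsor and hence has a unique $\phi$-fixed point $p_{n+1}$. Any $\phi$-fixed point of $P_{n+1}$ maps to the unique $\phi$-fixed point $p_n$ of $P_n$, so $p_{n+1}$ is the unique $\phi$-fixed point of $P_{n+1}$. Taking the limit produces the unique $p^{crys}\in P(K_0)$. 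Applying this to $P = \mathcal{U}^{crys}$ with its left-translation action and natural Frobenius: the identity $e$ is $\phi$-fixed, so by uniqueness it is the only $\phi$-fixed point, i.e.\ $(\mathcal{U}^{crys})^{\phi=1}$ is trivial.

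I expect the real work to be bookkeeping rather than ideas: making the pro-object manipulations precise (that $P = \varprojlim P_n$, that $P(K_0) = \varprojlim P_n(K_0)$, and that bijectivity of $1-\phi$ genuinely survives the limit when the graded pieces $Z_n$ are only pro-finite-dimensional), and verifying that the $\phi$-action on the torsor fibre $P_{n+1}|_{p_n}$ is exactly the affine-semilinear action used in the abelian step. The conceptual content is entirely the observation that $1-\phi$ is invertible on a $\phi$-module of negative weights.
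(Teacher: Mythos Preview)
Your proposal is correct and is essentially the same argument as the paper's, which simply invokes Besser: the paper phrases the key step as bijectivity of the Lang map $g\mapsto g^{-1}\phi(g)$ on $\mathcal{U}^{crys}$ (deduced from negative weights via \cite[proof of Theorem 3.1]{besser2002coleman}) and then cites \cite[Corollary 3.2]{besser2002coleman} for the torsor consequence, whereas you unpack that citation by doing the d\'evissage along the lower central series directly on the torsor side. The content is identical: bijectivity of $1-\phi$ on each negatively weighted graded piece $Z_n$, plus induction.
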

\begin{proof}
Besser's proof works verbatim for the negatively-weight unipotent group $\mathcal{U}^{crys}$; it is entirely group-theoretical, given the knowledge of the weights of Frobenius on $\mathfrak{u}^{crys}$.

Indeed, consider the Lang map, which is a map from $\mathcal{U}^{crys}$ to itself defined on $R$-points by $$g \to g^{-1}\phi(g).$$ It is an easy bit of algebra to show that if the Lang map is an isomorphism then any torsor for $\mathcal{U}^{crys}$ has a unique Frobenius-invariant trivialization, and this algebraic factoid holds for any $\phi$-equivariant torsor for any pro-algebraic group \cite[Corollary 3.2]{besser2002coleman}.

The deep part is the bijectivity of the Lang map, but again this bijectivity follows from the negativity of the weights of Frobenius on $\mathfrak{u}^{crys}$ \cite[proof of Theorem 3.1]{besser2002coleman}.

\end{proof}

The problem with this formulation is that $\mathcal{U}^{crys}$-torsors do not appear in nature; rational points naturally give torsors for the whole of $\pi_1^{rel,crys}$. The most natural formulation of ``relative Coleman integration in the style of Besser'' will be presented in Proposition \ref{growth}, once we have developed a theory of admissible de Rham torsors.

\section{From crystalline to dR to Hodge} \label{drcrys}
One would like to use this sequence of theorems to put a Hodge filtration and Frobenius structure on the de Rham fundamental group unipotent relative to a given vector bundle. In the unipotent case, these structures arise because of the isomorphisms relating $H^1_{crys}(X/K_0)$, $H^1_{dR}(X/K)$, and $H^1_{dR}(X/\mathbb{C})$. In the relatively unipotent case, things get more complicated.

Let us first discuss comparisons for the reductive quotient $S$ in each realization. Once we have these comparisons we will show how the comparison for $\mathcal{U}$ follows. The story for $S$ has two fundamental ingredients: the transition from crystalline to $p$-adic de Rham and then the comparison from there to complex de Rham.

Indeed, suppose that $\mathcal{L}^{dR}$ is a vector bundle with integrable connection on $X_K$ and that $\mathcal{L}^{dR}_{\mathbb{C}}$ is a polarized variation of Hodge structures on $X_{\mathbb{C}}$ such that $$\mathcal{L}^{dR}_{\mathbb{C}} \simeq \mathcal{L}^{dR} \otimes_{K} {\mathbb{C}}.$$ Here we have fixed an embedding of $K$ into $\mathbb{C}$, and $\mathcal{L}^{dR}_K \otimes_{K} {\mathbb{C}}$ is the pullback of $\mathcal{L}^{dR}_K$ to $X_{\mathbb{C}}$. This assumption will hold when the vector bundles in question come from the Gauss--Manin connection of a smooth and proper morphism, by flat base change. We can always base-change a $\mathbb{Q}_p$-vector bundle to a $\mathbb{C}$-vector bundle, which furnishes a map $$S^{dR}_{\mathbb{C}} \to S^{dR}_{\mathbb{Q}_p}$$ and a similar map for $\mathcal{G}$.

The following result relates the $p$-adic de Rham and complex de Rham monodromy of our bundle.
\begin{prop}\cite[Proposition 1.3.2]{katz1987calculation}, (due to O. Gabber)
We have an isomorphism
$$S^{dR}_{\mathbb{C}} \simeq S^{dR}_K \otimes \mathbb{C}.$$
\end{prop}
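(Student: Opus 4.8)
\emph{Proposal.} The plan is to pin down both $S^{dR}_{\mathbb{C}}$ and $S^{dR}_K$, once the basepoint $x_0$ is fixed, as explicit closed subgroup schemes of $GL(V)$ — $V$ the fibre of $\mathcal{L}^{dR}$ at $x_0$, with incarnations $V_K$ and $V_{\mathbb{C}} = V_K\otimes_K\mathbb{C}$ — that are cut out by their spaces of invariant tensors, and then to observe that these spaces of invariant tensors are de Rham cohomology groups, which are insensitive to the flat base change $K\hookrightarrow\mathbb{C}$.

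First I would recall the Tannakian dictionary: for any object $M$ of $\langle \mathcal{L}^{dR}_K\rangle_\otimes$, the $S^{dR}_K$-invariants of the fibre $M_{x_0}$ are identified with $\mathrm{Hom}_{\langle\mathcal{L}^{dR}_K\rangle_\otimes}(\mathbf{1}, M)$, i.e.\ with the horizontal global sections $H^0_{dR}(X_K, M)$; the same holds over $\mathbb{C}$. Applying this to the mixed tensor bundles $T^{a,b}\mathcal{L}^{dR} \defeq (\mathcal{L}^{dR})^{\otimes a}\otimes\big((\mathcal{L}^{dR})^\vee\big)^{\otimes b}$, and using that $S^{dR}_K$ is reductive (our standing hypothesis, and anyway forced here by polarizability of the ambient variation), the standard fact that a reductive subgroup of $GL(V)$ in characteristic $0$ is the common pointwise stabilizer of its invariant tensors gives
\[
S^{dR}_K \;=\; \bigcap_{a,b}\; \mathrm{Fix}_{GL(V_K)}\!\big(\,H^0_{dR}(X_K,\,T^{a,b}\mathcal{L}^{dR}_K)\,\big),
\]
and likewise with $K$ replaced by $\mathbb{C}$. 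Now flat base change enters: since $X_K$ is of finite type over $K$ and $\mathcal{L}^{dR}_{\mathbb{C}} = \mathcal{L}^{dR}_K\otimes_K\mathbb{C}$, de Rham cohomology commutes with the flat extension $K\hookrightarrow\mathbb{C}$ (indeed $H^0_{dR}$ is the kernel of the connection on global sections, and both $H^0$ of quasi-coherent sheaves on the qcqs scheme $X_K$ and the formation of kernels are compatible with this flat extension), so $H^0_{dR}(X_{\mathbb{C}}, T^{a,b}\mathcal{L}^{dR}_{\mathbb{C}}) = H^0_{dR}(X_K, T^{a,b}\mathcal{L}^{dR}_K)\otimes_K\mathbb{C}$ inside $T^{a,b}V_{\mathbb{C}} = T^{a,b}V_K\otimes_K\mathbb{C}$. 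Finally, the pointwise stabilizer of a subspace is defined by linear equations and a filtered intersection of closed subgroup schemes both commute with flat base change of the ground field, so the displayed description of $S^{dR}_{\mathbb{C}}$ is obtained from that of $S^{dR}_K$ simply by applying $-\otimes_K\mathbb{C}$, which is the assertion.

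The only step carrying real content is the passage from the Chevalley-style description of the monodromy group — as the stabilizer of all horizontal sub-bundles of all $T^{a,b}\mathcal{L}^{dR}$ — to the cleaner description as the group fixing pointwise all horizontal global sections of all $T^{a,b}\mathcal{L}^{dR}$; this is exactly where reductivity of $S^{dR}$ is used, and everything afterwards is formal manipulation with flat base change. I would also mention the alternative categorical route: build the base-change tensor functor $\langle\mathcal{L}^{dR}_K\rangle_\otimes\otimes_K\mathbb{C}\to\langle\mathcal{L}^{dR}_{\mathbb{C}}\rangle_\otimes$, note it is essentially surjective onto a tensor-generating family (so the induced $S^{dR}_{\mathbb{C}}\to S^{dR}_K\otimes_K\mathbb{C}$ is a closed immersion) and fully faithful, because $\mathrm{Hom}$'s on both sides are $H^0_{dR}$ of internal Hom-bundles and these obey flat base change, and then deduce faithful flatness from semisimplicity of $\langle\mathcal{L}^{dR}_K\rangle_\otimes\otimes_K\mathbb{C}$; but this ultimately rests on the identical base-change input.
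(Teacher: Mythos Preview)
The paper does not supply its own proof here---it simply cites the result as due to Gabber, recorded in Katz's paper. Your argument is correct under the paper's standing reductivity hypothesis on $S^{dR}_K$: the identification of invariant tensors in $T^{a,b}V$ with $H^0_{dR}(X,T^{a,b}\mathcal{L}^{dR})$, flat base change for $H^0_{dR}$ (which, as you note, reduces to flat base change for $H^0$ of coherent sheaves on a qcqs scheme plus exactness of $-\otimes_K\mathbb{C}$ on kernels), and the characterization of a reductive subgroup of $GL(V)$ in characteristic zero as the common pointwise stabilizer of its invariant tensors, are all sound. Reductivity is genuinely needed for that last step---a Borel subgroup of $GL_2$, for instance, has the same invariants as $GL_2$ itself in every $T^{a,b}V$. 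Your alternative categorical route is likewise correct and rests on the identical base-change input, with semisimplicity supplying closure under subobjects via idempotent-lifting.

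One small caveat: the parenthetical that reductivity of $S^{dR}_K$ is ``anyway forced here by polarizability'' is circular if taken literally---polarizability yields semisimplicity of $S^{dR}_{\mathbb{C}}$ directly, and reductivity of $S^{dR}_K$ is normally \emph{deduced} from the isomorphism you are proving rather than fed into it. In this paper the point is moot, since reductivity of $S$ in every realization is a blanket assumption. Gabber's argument as presented by Katz, by contrast, treats arbitrary differential Galois groups without a reductivity hypothesis; your route is therefore narrower in scope but also more transparent, and entirely adequate for the use the paper makes of the result.
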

An interested reader can see this result applied to independence-of-$\ell$ in \cite[Theorem 4.2]{crew1996differential}.

We then come to the crystalline vs. $p$-adic de Rham story for $S$.
\begin{prop}{\cite[Lemma 4.16, Section 4.12]{olsson2011towards}}

If $\mathcal{L}^{dR}$ and $\mathcal{L}^{crys}$ both have nilpotent local monodromy, then we have an isomorphism $S^{crys} \otimes_{K_0} K \simeq S^{dR}_K$.
\end{prop}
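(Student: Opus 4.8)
The plan is to deduce the isomorphism from an equivalence between the Tannakian category $\langle \mathcal{L}^{crys}\rangle_\otimes$ (with $K_0$-coefficients), after extension of scalars to $K$, and $\langle \mathcal{L}^{dR}\rangle_\otimes$ (with $K$-coefficients), under which $\mathcal{L}^{crys}$ and $\mathcal{L}^{dR}_K$ correspond. Since $S^{crys}$ and $S^{dR}_K$ are by definition the Tannakian fundamental groups of these two categories with respect to the fibre functors ``evaluate at the basepoint'' — and the chosen $b \in \tilde X(\kappa)$ lifts to a point of $X(\mathcal{O}_K)$ whose crystalline and de Rham fibre functors are identified by the comparison — any such equivalence of neutral Tannakian categories over $K$ produces at once an isomorphism of affine group schemes $S^{crys}\otimes_{K_0} K \simeq S^{dR}_K$, by the functoriality of Tannakian reconstruction used throughout the earlier chapters. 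So all the content is in constructing the comparison functor and checking it restricts to an equivalence on the tensor subcategories generated by the associated objects.

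First I would invoke the logarithmic rigid-analytic comparison. Writing $\tilde X = \bar X \setminus \tilde D$ for $\bar X/\kappa$ smooth and proper with a smooth proper lift $\bar X_{\mathcal{O}_K}$ containing the lift $D$ of $\tilde D$, the ``generic fibre'' functor sends an (over)convergent $F$-isocrystal on $\tilde X$ with nilpotent local monodromy along $\tilde D$ to a vector bundle with integrable connection on $\bar X_K$ with nilpotent residues along $D_K$; by the comparison theorems underlying Olsson's Section~4 (in the style of Berthelot--Ogus, Baldassarri--Chiarellotto, Shiho, together with rigid GAGA) this functor is an equivalence onto the category of such $\nabla$-modules precisely once the nilpotent-monodromy hypotheses are imposed on both sides — this is exactly where the two nilpotence assumptions in the statement enter, since a Frobenius structure only forces quasi-unipotent monodromy in general and the generic-fibre functor is not essentially surjective, nor even fully faithful, without it. This functor is exact, faithful and symmetric monoidal, compatible with duals, and by construction carries $\mathcal{L}^{crys}$ to $\mathcal{L}^{dR}_K$; in the Gauss--Manin setting this last compatibility, including compatibility with the connection, is a consequence of the relative crystalline--de Rham comparison.

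It then remains to check that the induced tensor functor $\langle\mathcal{L}^{crys}\rangle_\otimes \otimes_{K_0} K \to \langle\mathcal{L}^{dR}_K\rangle_\otimes$ is an equivalence. Essential surjectivity is automatic: the target is generated as a tensor category by $\mathcal{L}^{dR}_K$, which lies in the image, and the image is a full tensor subcategory closed under subquotients because $S$ is reductive, so that every object is semisimple and every subobject a direct summand. For full faithfulness one reduces, via internal $\Hom$ and the fact that all the relevant objects lie in the semisimple category $\langle\mathcal{L}\rangle_\otimes$, to the assertion that for an object $E$ of $\langle\mathcal{L}^{crys}\rangle_\otimes$ with generic-fibre $\nabla$-module $E_K$ one has $H^0_{rig}(\tilde X, E)\otimes_{K_0} K \simeq H^0_{dR}(\bar X_K, E_K)$, i.e. that horizontal global sections agree — the degree-zero case of the logarithmic rigid-versus-de Rham comparison, which once again needs the nilpotent residues to control behaviour near $D$. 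I expect this full-faithfulness step — more precisely, pinning down the correct category of logarithmic coefficient objects on the open variety so that the generic-fibre functor is genuinely an equivalence, rather than merely inducing an isomorphism on $H^0$ and $H^1$ — to be the main obstacle; once it is in place, the Tannakian formalism delivers the isomorphism $S^{crys}\otimes_{K_0}K \simeq S^{dR}_K$ mechanically.
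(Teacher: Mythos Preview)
The paper does not supply its own proof of this proposition; it is stated with a bare citation to \cite[Lemma 4.16, Section 4.12]{olsson2011towards} and used as a black box. Your outline is essentially the argument carried out in that reference: Olsson establishes, under the nilpotent local monodromy hypotheses, an equivalence between the relevant categories of (log) $F$-isocrystals and vector bundles with integrable connection that matches the fibre functors at the basepoint, and then reads off the isomorphism of Tannakian fundamental groups. You have correctly located the role of the nilpotence hypotheses and the point where the comparison of $H^0$'s is needed for full faithfulness, so there is nothing substantive to add.
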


\subsection{Moving to $\mathcal{U}$}
Suppose we have an isomorphism on the reductive parts of crystalline and complex realizations of the relative completion as above, for example in the Gauss--Manin case. (Abstractly, we can just assume reductivity and nilpotent local monodromy.) Then this isomorphism extends to $\mathcal{U}$ and the whole of $\mathcal{G}$ as follows.

\begin{prop}
Preserve the notations of the last subsection. We have isomorphisms $\mathcal{G}^{crys} \otimes_{K_0} K \simeq \mathcal{G}^{dR}_K$ and $\mathcal{G}^{dR} \otimes_{K} \mathbb{C} \simeq \mathcal{G}^{dR}_{\mathbb{C}}$.
\end{prop}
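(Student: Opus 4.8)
The plan is to bootstrap from the reductive quotients to the full extensions. The identifications $S^{crys}\otimes_{K_0}K\simeq S^{dR}_K$ (Olsson) and $S^{dR}_{\mathbb{C}}\simeq S^{dR}_K\otimes_K\mathbb{C}$ (Gabber) are already in hand, so what remains is to promote them across the pro-unipotent radical. The essential tool is the homological criterion recorded at the end of Section 3.1: a morphism between two pro-algebraic extensions of the same reductive $S$ by pro-unipotent groups is an isomorphism as soon as the induced maps $H^1(\mathcal{U}_1,V)^S\to H^1(\mathcal{U}_2,V)^S$ are isomorphisms for every irreducible representation $V$ of $S$. So first I would construct the comparison morphisms. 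For the crystalline-to-de Rham map I would use the realization functor carrying an $F$-isocrystal on $\tilde X$ to its associated module with integrable connection on the lift $X_K$ (Berthelot--Ogus); this is a tensor functor sending $\mathcal{T}^{rel,crys}$ into $\mathcal{T}^{rel,dR}$ which, restricted to $\langle\mathcal{L}\rangle_\otimes$, is compatible with the given $S$-level identification, so Tannakian duality produces $\mathcal{G}^{crys}\otimes_{K_0}K\to\mathcal{G}^{dR}_K$ lying over $S^{dR}_K$. For the complex map I would base-change the algebraic de Rham category along $K\hookrightarrow\mathbb{C}$ and compose with Hain's comparison identifying his $C^\infty$ bar-construction group with the algebraic de Rham relative completion over $\mathbb{C}$ (the $C^\infty$-versus-algebraic bar construction, in the style of Chen and Grothendieck's algebraic de Rham theorem).

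Next I would reduce isomorphy of these maps on $H^1(\mathcal{U},V)^S$ to a statement about the cohomology of $X$ with coefficients. The collapse of the Lyndon--Hochschild--Serre spectral sequence gives $H^q(\mathcal{G},V)\simeq H^q(\mathcal{U},V)^S$, and the presentation of $\mathfrak{u}$ recalled in the motivic overview expresses $H^1(\mathcal{U},V)^S$ together with its relation space through $H^1(X^o,\mathcal{O}(S))$, $H^0(X,R^1j_*\mathcal{O}(S))$ and the degree-two groups $H^2(X,j_*\mathcal{O}(S))$, $H^1(X,R^1j_*\mathcal{O}(S))$, $H^0(X,R^2j_*\mathcal{O}(S))$, with $\mathcal{O}(S)$ taken in the appropriate realization. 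Since $\mathcal{O}(S)$ is an ind-object of $\langle\mathcal{L}\rangle_\otimes$ decomposing, as an $S$-representation, into irreducibles each of which is the realization of a sub-VHS of a tensor power of $\mathcal{L}$, the verification comes down to showing the realization functors induce isomorphisms $H^\ast_{crys}(X_\kappa,\mathbb{V}^{crys})\otimes_{K_0}K\simeq H^\ast_{dR}(X_K,\mathbb{V}^{dR})$ and $H^\ast_{dR}(X_K,\mathbb{V}^{dR})\otimes_K\mathbb{C}\simeq H^\ast_{dR}(X_{\mathbb{C}},\mathbb{V}_{\mathbb{C}})$ for each such bundle $\mathbb{V}$, compatibly with cup products and with the Leray spectral sequence for $j$.

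These cohomological comparisons are then classical inputs: the crystalline versus algebraic de Rham comparison is Berthelot--Ogus (with coefficients) in the proper case, extended to the open case via the Leray/Adams spectral sequence for $j$ together with the weight machinery of Abe--Caro cited in the overview, with the nilpotent-local-monodromy hypothesis ensuring the $K_0\to K$ base change is the expected one and that the log structures along the boundary divisor match; the de Rham versus complex comparison for $\mathbb{V}$ is flat base change for algebraic de Rham cohomology along $K\hookrightarrow\mathbb{C}$, which on the relative-completion side is packaged by Hain's comparison as before. Feeding all of this into the Section 3.1 criterion yields the two asserted isomorphisms of pro-algebraic groups, automatically compatible with the projections to $S$, and one reads off $\mathcal{U}^{crys}\otimes_{K_0}K\simeq\mathcal{U}^{dR}$ and $\mathcal{U}^{dR}\otimes_K\mathbb{C}\simeq\mathcal{U}^{dR}_{\mathbb{C}}$ as a byproduct.

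I expect the main obstacle to be the non-proper case: transporting the extra generating space $H^0(X,R^1j_*\mathcal{O}(S))^*$ and the cup-product/spectral-sequence relations across the three realizations requires the Leray spectral sequence for $j$ to be compatible with the crystalline--de Rham and de Rham--Betti comparisons on the nose, not merely abstractly isomorphic -- precisely the point where Abe--Caro's main theorem and the Deligne Hodge II--style argument are indispensable, and where the tame-monodromy-around-$D$ hypothesis on $\mathcal{L}$ in each realization is used. A secondary, more bookkeeping-level issue is checking that the realization functors one starts from are genuinely tensor-compatible with the already-established identifications on $\langle\mathcal{L}\rangle_\otimes$, so that the comparison morphism of extensions really does lie over the reductive-quotient isomorphism rather than merely over some automorphism of it.
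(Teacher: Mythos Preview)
Your approach is essentially the one taken in the paper: both reduce to cohomological comparison by expressing $\mathfrak{u}$ in terms of the groups $H^p(X,\mathcal{O}(S))$ and $H^p(Y,R^qj_*\mathcal{O}(S))$ together with the spectral-sequence and cup-product maps between them, and then invoke the crystalline--de Rham comparison with coefficients (the paper cites Olsson \cite[4.13.1]{olsson2011towards} rather than Berthelot--Ogus directly) and flat base change along $K\hookrightarrow\mathbb{C}$. The paper packages the presentation of $\mathfrak{u}$ via Pridham's Adams spectral sequence rather than the Section~3.1 $H^1(\mathcal{U},V)^S$ criterion you invoke, but these amount to the same reduction; your version is somewhat more careful in first producing an actual comparison morphism via the Tannakian realization functor before checking it is an isomorphism, which the paper's proof leaves implicit.
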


\begin{proof}
Pridham's Adams spectral sequence \cite[Proposition 1.12]{pridham2012l} implies that $\mathcal{U}$ depends only on $H^p(X, \mathcal{O}(S))$ and $H^p(Y, R^q j_* \mathcal{O}(S))$ and spectral sequence and cup product maps between these groups in each realization. Here $j \colon X \hookrightarrow Y$ is an embedding into a proper scheme with normal crossing complement. But by flat base change and the crystalline-de Rham comparison theorem with coefficients \cite[4.13.1]{olsson2011towards}, we obtain isomorphisms 
$$H^p_{dR}(X_{K}, \mathcal{O}(S)) \otimes_{K} \mathbb{C} \simeq H^p_{dR}(X_{\mathbb{C}}, \mathcal{O}(S))$$
$$H^p_{crys}(X_{\kappa}, \mathcal{O}(S)) \otimes_{K_0} K \simeq H^p_{dR}(X_K, \mathcal{O}(S))$$
and similarly for $H^p(Y, R^q j_* \mathcal{O}(S))$.

Furthermore, these comparisons respect the Adams and Leray spectral sequences. We conclude that $\mathcal{U}^{crys} \otimes_{K_0} K \simeq \mathcal{U}^{dR}_K$ and $\mathcal{U}^{dR}_K \otimes_{K} \mathbb{C} \simeq \mathcal{U}^{dR}_\mathbb{C}$. Furthemore, since these comparisons are equivariant for the $S$-action on $\mathcal{O}(S)$, they extend to comparisons of $\mathcal{G}^{crys}$ and the complex and $p$-adic $\mathcal{G}^{dR}$.
\end{proof}

\section{de Rham torsor spaces}
We take this section to generalize Kim's study of the homogeneous space $$\mathcal{G}^{dR}/F^0\mathcal{G}^{dR}.$$ We also connect it with the usual theory of period maps and flag varieties due to Griffiths \cite{griffiths1968periods}. The observations in this chapter cement the ties between the Tannakian approach of Kim and the flag variety-theoretic approach of Lawrence and Venkatesh, and provide a basis for generalizing Tannakian Coleman integration, cf. Proposition \ref{growth}. Since all realizations will be de Rham for this section, we often write $\mathcal{G}$ for $\mathcal{G}^{dR}$, etc.

\begin{defn}
Let $\mathcal{G}$ be a group in filtered $\phi$-modules over $K$, i.e. whose coordinate ring is a filtered $\phi$-module over $K$ and whose comultiplication and counit morphisms are morphisms of filtered $\phi$-modules. Let $T$ be a scheme over $K$, and give $\mathcal{O}(T)$ the trivial filtration and Frobenius induced by the Frobenius action on $K$.

We call $P/T$ a torsor for $\mathcal{G}_T$ if it is a torsor in the category of schemes, and if $P$ is a scheme in filtered $\phi$-modules over $T$ and the coaction map is a map of filtered $\phi$-modules.
\end{defn}

\subsection{A new definition for admissible torsors}
First, let us recall why we care about the homogeneous space $\mathcal{G}/F^0\mathcal{G}$. For this, we have the following result of Kim in the unipotent case. Recall that, when $\mathcal{G}$ is a unipotent group in the category of filtered $\phi$-modules, Kim calls a $\mathcal{G}_T$-torsor $P$ \textit{admissible} if it has a Frobenius-invariant trivialization, and if $F^0P$ has some trivialization as a $F^0\mathcal{G}$-torsor. (We do not require any compatibility between the two trivializations, but isomorphisms between admissible torsors do have to be compatible with Hodge filtrations and Frobenius actions.) The following proposition \cite[Proposition 1]{kim2005unipotent} gives a moduli-theoretic interpretation of the above homogeneous space. Recall that the Lang map on a group functor with a self-map $\phi$ is the map $g \mapsto g^{-1}\phi(g)$. If $\phi$ is an automorphism of the coordinate ring of $\mathcal{G}$, the Lang map on $\mathcal{G}$ is the one induced by $\phi$ by pullback.
\begin{prop}
Let $\mathcal{G}$ be a pro-unipotent group in the category of filtered $\phi$-modules over $K$ such that $\mathcal{G}$ has a bijective Lang map. For any $T \in Sch_{K}$, the points $\left( \mathcal{G}/F^0\mathcal{G}\right) (T)$ parameterize admissible $\mathcal{G}_T$-torsors.
\end{prop}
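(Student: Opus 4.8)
The plan is to produce mutually inverse maps between the set $(\mathcal{G}/F^0\mathcal{G})(T)$ and the set of isomorphism classes of admissible $\mathcal{G}_T$-torsors, the point being that the two trivializations demanded by admissibility — a Frobenius-invariant one and an $F^0$-one — differ by an element of $\mathcal{G}(T)$ that is well defined modulo right multiplication by $F^0\mathcal{G}(T)$. I would first set up the map from torsors to points. Let $P$ be admissible. By bijectivity of the Lang map $h \mapsto h^{-1}\phi(h)$ (the relevant consequence, proved in the crystalline setting in Theorem \ref{integration} following \cite[Corollary 3.2]{besser2002coleman}, is that every $\phi$-equivariant $\mathcal{G}_T$-torsor has a \emph{unique} $\phi$-fixed trivialization, equivalently $\mathcal{G}^{\phi=1}$ is trivial), $P$ has a unique Frobenius-invariant trivialization $p \in P(T)$. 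Admissibility also gives a trivialization $q \in (F^0P)(T) \subseteq P(T)$; write $q = p\cdot g$ with the unique $g \in \mathcal{G}(T)$. Replacing $q$ by $q\cdot h$, $h \in F^0\mathcal{G}(T)$, replaces $g$ by $gh$, so the class $\bar g \in (\mathcal{G}/F^0\mathcal{G})(T)$ is well defined; and an isomorphism of admissible torsors, being $\mathcal{G}$-equivariant and compatible with Frobenius and Hodge filtration, carries $p \mapsto p'$ and $q$ to a trivialization of $F^0P'$, hence yields the same $\bar g$. Thus $P \mapsto \bar g$ descends to isomorphism classes.

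Next I would build the inverse. Given $\bar g \in (\mathcal{G}/F^0\mathcal{G})(T)$, lift it to $g \in \mathcal{G}(T)$, which is possible Zariski-locally on $T$ (and globally for $T$ affine) since $\mathcal{G} \to \mathcal{G}/F^0\mathcal{G}$ is a torsor under the pro-unipotent group $F^0\mathcal{G}$. Define $P_g$ to be $\mathcal{G}_T$ with its right-multiplication action and its standard Frobenius, but with the filtration $F^\bullet_g := (L_{g^{-1}})^{*}F^\bullet_{\mathrm{std}}$ obtained from the standard Hodge filtration by left translation by $g$. The crucial verification is that this is a legitimate filtered $\phi$-torsor: since left translation affects only the "source" tensor factor, $\Delta(F^i_g) \subseteq \sum_{j+k=i} F^j_g \otimes F^k_{\mathrm{std}}$, which is exactly the compatibility required of the coaction $\mathcal{O}(P_g) \to \mathcal{O}(P_g)\otimes\mathcal{O}(\mathcal{G}_T)$ — note $P_g$ is \emph{not} a group in filtered modules, nor should it be. Then one checks: (i) $P_g$ is independent of the lift, because left translation by $F^0\mathcal{G}(T)$ fixes $F^\bullet_{\mathrm{std}}$, so $g$ and $gh$ give literally the same filtration; (ii) $P_g$ is admissible, with $\phi$-fixed trivialization the identity section $e$ (the identity of a group in $\phi$-modules is $\phi$-fixed) and with $F^0P_g = g\,F^0\mathcal{G}$, trivialized by $g \in (F^0P_g)(T)$; (iii) the class of $P_g$ is $\bar g$, since $e\cdot g = g$ is the chosen $F^0$-trivialization. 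For general $T$ the locally constructed $P_g$ glue via (i).

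Finally I would confirm the two constructions are inverse. One direction is immediate from (iii). For the other, given admissible $P$ with data $(p, q, g)$, the assignment $e \mapsto p$ extends to a $\mathcal{G}$-equivariant isomorphism of schemes $P_g \xrightarrow{\ \sim\ } P$ that intertwines Frobenius (it matches two $\phi$-fixed trivializations) and carries $F^0P_g = g\,F^0\mathcal{G}$ onto $p\cdot g\cdot F^0\mathcal{G} = q\cdot F^0\mathcal{G} = F^0P$; since an admissible torsor has no nontrivial automorphisms (any automorphism fixes the unique $\phi$-fixed point, hence is the identity), this map is canonical, and it is then an isomorphism of admissible torsors once one knows the full filtration is recovered from its $F^0$-part. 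I expect this last point to be the main obstacle: showing that the filtered structure on an admissible $\mathcal{G}_T$-torsor is \emph{entirely} encoded by the $F^0$-sub-torsor — equivalently, that every $\Delta$-compatible filtration on $\mathcal{O}(\mathcal{G}_T)$ with the correct associated graded is of the form $(L_{g^{-1}})^{*}F^\bullet_{\mathrm{std}}$ — and doing this, together with all the torsor bookkeeping, compatibly with the inverse-limit structure on the pro-unipotent $\mathcal{G}$ rather than merely on the finite levels $\mathcal{G}_n$.
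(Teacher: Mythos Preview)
Your proof is correct and the forward map $P \mapsto \bar g$ is exactly the paper's. The one point of genuine divergence is in the inverse construction: the paper (deferring to the proof of the next proposition) builds $P_g$ by keeping the \emph{standard} Hodge filtration on $\mathcal{G}_T$ and twisting the Frobenius to $\phi'(h) = g^{-1}\phi(gh)$, so that $g^{-1}$ is the crystalline trivialization and the identity is the Hodge trivialization; you do the dual thing, keeping the standard Frobenius and twisting the filtration by left translation. Both are valid, and each makes one half of the admissibility check trivial while shifting the work to the other half. The paper's choice has the mild advantage that its injectivity argument avoids your stated obstacle directly: it rewrites the candidate isomorphism as $p_1^H x \mapsto p_2^H (h x)$ with $h \in F^0\mathcal{G}$, i.e.\ as a change of Hodge trivialization, and simply asserts that such a change preserves the filtration on the torsor --- so it never needs to argue that every compatible filtration is a left translate of the standard one. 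That said, the paper's assertion and your obstacle are really the same fact (that a point of $F^0P$ induces a \emph{filtered} isomorphism $\mathcal{G}_T \simeq P$), and the paper only justifies it later via strictness of the coaction, so you are not missing anything the paper actually proves here.
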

One sends an admissible torsor with trivializations $p^{crys}$ and $p^H$ to the unique element of $\mathcal{G}_T$ sending $p^{crys}$ to $p^H$ via right-multiplication. Changing the choice of $p^H$ shows that the aforementioned assignment is well-defined up to right multiplication by $F^0\mathcal{G}_T$, and the map is constructed. One then proves surjectivity and injectivity without too much difficulty (see the proof of the next proposition.)

We generalize the above proposition to the context where the Lang map is not necessarily bijective.
\begin{defn}
Let $X$ be a scheme in $Isoc_{\kappa}^K$ with corresponding Frobenius automorphism $\phi \colon X \to X$, the pullback of the Frobenius on $\mathcal{O}_X$. We define the scheme $X^{\phi=1}$ by the fibered product
\[
\begin{tikzcd}
X^{\phi=1} \arrow{r} \arrow{d} & X \arrow{d}{id \times \phi} \\
X \arrow{r}{\Delta} & X \times X
\end{tikzcd}
\]

The definition is similar if $X$ is a scheme over an affinoid algebra $\Lambda$ that is semi-linear with respect to the Frobenius action on $\Lambda$ given by extending that of $K$.
\end{defn}

We are ready for our first of two generalizations of the concept of admissibility. Recall (Definition \ref{whatisatorsor}) that a torsor $P$ for an algebraic group $N$ over a rigid space $Z$ is a $N^{an}$-torsor over $Z$ for the Tate topology. If $N$ is furthermore a scheme in filtered isocrystals, we require that locally on $Z$, the algebraic restrictions $P^{alg}$ are schemes in filtered isocrystals over $K$ (which agree on overlaps) and that the coaction map is a morphism of isocrystals (not necessarily preserving filtration) over $K$.

\begin{defn}
Let $\mathcal{G}$ be a pro-algebraic group in filtered $\phi$-modules over $K$ (i.e. coordinate ring and all group structure maps live in $MF_K^{\phi}$), and $T$ a rigid-analytic variety over $K$. Then we call a $\mathcal{G}_T$-torsor admissible if it has reductions of structure group to both $F^0\mathcal{G}$ and $\mathcal{G}^{\phi=1}$. In other words, it is equipped with two analytic subvarieties, one of which is a $F^0\mathcal{G}$-torsor and one of which is a  $\mathcal{G}^{\phi=1}$-torsor in the above sense.
\end{defn}

The point of this definition is:
\begin{prop} \label{admissible}
Let $X$ be a variety with good reduction over a $p$-adic field $K$ that is the complement of a normal crossings divisor relative to $\mathcal{O}_K$, and $\mathcal{L}^{dR}$ a $p$-adic vector bundle with integrable connection on $X$. Let $\mathcal{L}^{crys}$ denote the associated isocrystal on the special fiber, and suppose that there is a $\mathbb{Q}_p$-local system on $X$ which is associated to $\mathcal{L}^{crys}$ in the sense of \cite[9.2.2]{olsson2011towards}; suppose that all of these realizations have unipotent local monodromy \cite[4.14]{olsson2011towards}.

If $S^{dR}$ and $\mathcal{G}^{\phi=1}$ are semisimple and simply connected, then for all $b, z \in X(K)$, the de Rham path torsor $P^{dR}_{b,z}$ for $\pi_1^{rel, dR}(X,b)$ is admissible.

\end{prop}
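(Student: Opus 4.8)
The plan is to produce, separately, the two reductions of structure group required by the definition of admissibility: one of $P^{dR}_{b,z}$ to $\mathcal{G}^{\phi=1}$, coming from the crystalline Frobenius, and one to $F^0\mathcal{G}^{dR}$, coming from Hain's Hodge theory. First I would pass to the crystalline model. Since $X$ has good reduction, the special fibre $\tilde X$ is smooth, and (using that all realizations have unipotent local monodromy) the comparison isomorphisms of Section \ref{drcrys} identify $\pi_1^{rel,dR}(X,b)$ with $\pi_1^{rel,crys}(\tilde X/K_0,b)\otimes_{K_0}K$ compatibly with the maps to $S$, while Olsson's path-torsor comparison identifies $P^{dR}_{b,z}$ with $P^{crys}_{b,z}\otimes_{K_0}K$; in particular the Frobenius on $P^{dR}_{b,z}$ is induced semilinearly (for the Frobenius on $K_0$) from the crystalline one. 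So it suffices to build a reduction of $P^{crys}_{b,z}$ to $\mathcal{G}^{crys,\phi=1}$, equivalently to exhibit a point of $(P^{crys}_{b,z})^{\phi=1}$, and then a reduction to $F^0$ after base change to $K$.

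The heart of the argument is the Frobenius-invariant reduction. Let $Q$ be the pushout of $P^{crys}_{b,z}$ along $\mathcal{G}^{crys}\to S^{crys}$, an $S^{crys}$-torsor with compatible Frobenius. The key input is that $\mathfrak{u}^{crys}$ has Frobenius weights $\leq -1$, so the Lang map of $\mathcal{U}^{crys}$ is bijective (this is the content of Theorem \ref{integration}, via Besser's argument \cite[proof of Theorem 3.1]{besser2002coleman}). I would use this twice: (i) it forces $\mathcal{G}^{crys,\phi=1}\xrightarrow{\ \sim\ }(S^{crys})^{\phi=1}=Z(\phi)$, and since $(\mathcal{U}^{crys})^{\phi=1}$ is trivial, $Z(\phi)$ is a genuine (finite-dimensional) algebraic group — so the hypothesis is exactly that $Z(\phi)$ is semisimple and simply connected, equivalently that $S^{crys}$, equivalently $S^{dR}$, is; and (ii) for any $\phi$-fixed point $q$ of $Q$, the fibre of $P^{crys}_{b,z}\to Q$ above $q$ is, after twisting, a $\phi$-equivariant $\mathcal{U}^{crys}$-torsor, hence has a unique $\phi$-fixed point, so that $(P^{crys}_{b,z})^{\phi=1}\to Q^{\phi=1}$ is an isomorphism. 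This reduces everything to finding a $\mathbb{Q}_p$-point of $Q^{\phi=1}$. Now $Q^{\phi=1}$ is a pseudo-torsor under $Z(\phi)$ over $\mathbb{Q}_p$ by the bookkeeping above, and it is a genuine torsor because $\mathcal{O}(Q)$ is a slope-zero ind-isocrystal (the monodromy bundle, hence $\mathcal{O}(S^{crys})$, is pure of weight $0$), so $Q^{\phi=1}\otimes_{\mathbb{Q}_p}K_0\xrightarrow{\ \sim\ }Q$ and the local triviality of $Q$ descends. Finally Kneser's theorem \cite[Section 3.1]{ion2001galois} applies: since $Z(\phi)$ is semisimple simply connected and $\mathbb{Q}_p$ is a $p$-adic field, $H^1(\mathbb{Q}_p,Z(\phi))=0$, so $Q^{\phi=1}$ is trivial and has a $\mathbb{Q}_p$-point; lifting it through (ii) yields the reduction of $P^{crys}_{b,z}$, and hence of $P^{dR}_{b,z}$, to $\mathcal{G}^{\phi=1}$.

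For the Hodge-filtration reduction I would appeal to Hain's theory: the coordinate rings of $\mathcal{G}^{dR}$ and of the path torsor $P^{dR}_{b,z}$ carry mixed Hodge structures for which the torsor structure maps are morphisms and for which $Gr^W$ recovers the picture for $\mathcal{O}(\mathcal{G}^{dR})$ itself. Since a mixed Hodge structure admits a splitting, one obtains a filtration-compatible trivialization — i.e. a reduction of $F^0 P^{dR}_{b,z}$ to $F^0\mathcal{G}^{dR}$ — exactly as in Kim's unipotent case \cite[Proposition 1]{kim2005unipotent}. Alternatively, one checks directly that $F^0 P^{dR}_{b,z}$, a pseudo-torsor under $F^0\mathcal{G}^{dR}$, is trivial over $K$: $F^0\mathcal{G}^{dR}$ is an extension of the Hodge-filtration stabilizer in $S^{dR}$ — whose reductive quotient is a product of general linear and (as $S^{dR}$ is simply connected) semisimple simply connected classical factors — by a unipotent group, so $H^1(K,F^0\mathcal{G}^{dR})=0$ by Hilbert 90 and Kneser. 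Combining the two reductions shows $P^{dR}_{b,z}$ is admissible.

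I expect the main obstacle to be the middle step — producing the Frobenius-invariant point on the reductive quotient $Q$ — which is precisely where both the semisimple--simply-connected hypothesis (through Kneser) and the negativity of the Frobenius weights on $\mathfrak{u}^{crys}$ (through Theorem \ref{integration}, needed to descend the fixed point from $S^{crys}$ back up to $\mathcal{G}^{crys}$) get genuinely used; pinning down the slope-zero descent that realizes $Q^{\phi=1}$ as a bona fide $Z(\phi)$-torsor is the one technical point that will require care.
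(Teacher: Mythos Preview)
Your Frobenius argument is broadly on the right track and is in fact more careful than the paper's brief treatment (which simply asserts that $P^{\phi=1}$ is a $\mathcal{G}^{\phi=1}$-torsor and invokes Kneser). However, your justification for the torsor property of $Q^{\phi=1}$ conflates weight with slope: that $\mathcal{O}(S^{crys})$ is pure of weight zero says something about complex absolute values of Frobenius eigenvalues, not about their $p$-adic valuations, so it does not make $\mathcal{O}(Q)$ a unit-root isocrystal, and the descent $Q^{\phi=1}\otimes_{\mathbb{Q}_p}K_0\simeq Q$ is unjustified in general. The paper sidesteps this by working directly with $\mathcal{G}^{\phi=1}$ and applying Kneser over $K$.

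The more substantive gap is on the Hodge side, and contrary to your prediction this is where the paper does its real work. You must show that $F^0P^{dR}$ is an honest $F^0\mathcal{G}^{dR}$-torsor, not merely a pseudo-torsor; you call it a ``pseudo-torsor'' and then invoke $H^1(K,F^0\mathcal{G}^{dR})=0$, but vanishing of $H^1$ only trivializes genuine torsors. The missing ingredient is that the coaction $\mathcal{O}(P^{dR})\to\mathcal{O}(P^{dR})\otimes\mathcal{O}(\mathcal{G}^{dR})$ is \emph{strict} for the Hodge filtration, so that the torsor isomorphism induced by any point descends to an isomorphism $\mathcal{O}(P^{dR})/F^1\simeq\mathcal{O}(\mathcal{G}^{dR})/F^1$. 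The paper extracts this strictness from $p$-adic Hodge theory: Olsson's comparison realizes both coordinate rings as ind-admissible filtered $\phi$-modules, and morphisms of admissible filtered $\phi$-modules are automatically strict. Neither of your alternatives supplies this --- the Deligne splitting of the MHS produces only a $\mathbb{C}$-point, not a $K$-point, and Kim's unipotent argument does not extend since $F^0\mathcal{G}^{dR}$ is not unipotent. (Your structural description of the Levi as a literal product is also inaccurate, e.g.\ in $SL_n$; the paper instead uses the injection $H^1(K,\text{parabolic})\hookrightarrow H^1(K,S^{dR})$ from \cite[p.~129]{ion2001galois} together with Kneser for $S^{dR}$.)
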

\begin{rem}
The condition that $\mathcal{G}^{\phi=1}$ be semisimple and simply connected seems very strong, but it is enough in geometric situations that $S^{\phi=1}$ be semisimple and simply connected, since the two groups are isomorphic if $S^{\phi=1}$ is connected (\ref{growth}.) This can be checked by hand with knowledge of the Frobenius action in applications. The group $S^{dR}$ will always be semisimple in geometric situations, due to ``Deligne semisimplicity'' \cite{deligne1971theorie} in any of the categories in which we work.

There is always an associated $\mathbb{Q}_p$-local system in the Gauss--Manin case, and local monodromy will always be locally nilpotent there.
\end{rem}
\begin{proof}
Write $\mathcal{G} \defeq \pi_1^{rel, dR}(X,b)$ and $P \defeq P^{dR}_{b,z}$. First of all, we claim that the subscheme $P^{\phi=1}$ is a $\mathcal{G}^{\phi=1}$-torsor. By compatibility of the action map with Frobenius, $\mathcal{G}^{\phi=1}$ certainly preserves $P^{\phi=1}$. Furthermore, if $T$ is a field over which $P$ has a point, then for any two points $x,y \in P^{\phi=1}(T)$, there exists a unique element of $\mathcal{G}(T)$ taking $x$ to $y$. By uniqueness and Frobenius-compatibility, it is easy to see that this element must lie in $\mathcal{G}^{\phi=1}$. By assumption, $\mathcal{G}^{\phi=1}$ is simply connected and semisimple, so that by Kneser's theorem \cite[Section 3.1]{ion2001galois} $P^{\phi=1}$ has a point.

Moving on to the Hodge filtration, consider the coaction map $$\mathcal{O}(P^{dR}) \overset{\mu_{dR}} \to \mathcal{O}(P^{dR}) \otimes \mathcal{O}(\mathcal{G}^{dR}).$$ Note that this coaction is a morphism of filtered algebras. The key is that this coaction is induced functorially from the \'etale realization in the sense that if we denote the coaction of the \'etale fundamental groups by $\mu_{\acute{e}t}$, we have $$D_{dR}(\mu_{\acute{e}t}) = \mu_{dR}.$$

We now show that $F^0P^{dR}$ is an $F^0\mathcal{G}^{dR}$-torsor. The fact that the torsor coaction is compatible with filtrations implies that the action map restricts to $$F^0\mathcal{G}^{dR} \times F^0P^{dR} \to F^0\mathcal{G}^{dR}.$$ Suppose that $T$ is a scheme such that $F^0P^{dR}$ has a $T$-point, or in other words a morphism $$\mathcal{O}(P^{dR})/F^1 \mathcal{O}(P^{dR}) \to \mathcal{O}(T).$$ Composing the coaction map with this morphism we get a map $$\mathcal{O}(P^{dR}) \to \mathcal{O}(T) \otimes \mathcal{O}(\mathcal{G}^{dR}),$$ and an induced map $$\mathcal{O}(T) \otimes \mathcal{O}(P^{dR}) \overset{i} \to \mathcal{O}(T) \otimes \mathcal{O}(\mathcal{G}^{dR})$$ which is an isomorphism by the torsor property. Our goal is to show that the induced map $$\mathcal{O}(T) \otimes \mathcal{O}(P^{dR})/F^1 \to \mathcal{O}(T) \otimes \mathcal{O}(\mathcal{G}^{dR})/F^1$$ is an isomorphism, for which it will be sufficient to show that the map $i$ is strictly compatible with the filtrations on each side.

By Olsson's comparison, both $\mathcal{O}(S^{dR})$ and $\mathcal{O}(P^{dR})$ are colimits of modules which are admissible in the usual sense of $p$-adic Hodge theory. But morphisms between admissible filtered $\phi$-modules are strict \cite[Theorem 8.2.11]{brinon2009cmi}, so by taking colimits the coaction map $\mathcal{O}(P^{dR}) \to \mathcal{O}(P^{dR}) \otimes \mathcal{O}(S^{dR})$ is strict.

Furthermore, the Galois cohomology of $S^{dR}$ vanishes by Kneser's theorem. Since $F^0S^{dR}$ is a parabolic subgroup of $S^{dR}$ \ref{parabolic}, its Galois cohomology injects into that of $S^{dR}$  \cite[p. 129]{ion2001galois}. Thus $F^0S^{dR}$ has trivial Galois cohomology; but $F^0\mathcal{G}$ is an extension of $F^0S^{dR}$ by the unipotent $F^0\mathcal{U}$, which itself has trivial Galois cohomology over $K$. We conclude that $F^0\mathcal{G}$ itself has trivial Galois cohomology, and thus that $F^0 P$ has a point, and we are done.

\end{proof}

\begin{prop}
Let $\mathcal{G}$ be a pro-algebraic group in $FilIsoc_{\kappa}^K$. Then the pro-rigid analytic stack over $K$ $$\left[ \mathcal{G}^{\phi=1, an} \backslash \mathcal{G}^{an}/F^0\mathcal{G}^{an}\right],$$ is the moduli space of admissible torsors for $\mathcal{G}$.
\end{prop}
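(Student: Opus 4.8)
The plan is to prove this exactly as one proves Kim's original moduli interpretation of $\mathcal{G}^{dR}/F^0\mathcal{G}^{dR}$ (the Proposition quoted just before Definition of admissibility), now incorporating the Frobenius-invariant reduction of structure group as a second trivialization datum. First I would construct the map from admissible torsors to the double-coset stack. Given a rigid base $T$ and an admissible $\mathcal{G}_T$-torsor $P$ in filtered $\phi$-modules, by definition $P$ comes equipped with a reduction of structure group $P^{\phi=1} \subseteq P$ to $\mathcal{G}^{\phi=1}$ and a reduction $F^0 P \subseteq P$ to $F^0\mathcal{G}$. Working locally on $T$ for the Tate topology (where $P$ acquires a point by the standard argument, using that $\mathcal{G}^{\phi=1}$ and $F^0\mathcal{G}$ torsors split after Kneser as in Proposition \ref{admissible}), choose $p^{crys} \in P^{\phi=1}(T')$ and $p^H \in F^0P(T')$; there is a unique $g \in \mathcal{G}(T')$ with $p^{crys} \cdot g = p^H$, giving a point of $[\mathcal{G}^{\phi=1,an} \backslash \mathcal{G}^{an}/F^0\mathcal{G}^{an}]$. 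One checks this is independent of the choices of $p^{crys}$ (changing it multiplies $g$ on the left by $\mathcal{G}^{\phi=1}(T')$) and $p^H$ (right multiplication by $F^0\mathcal{G}(T')$), so it descends to a well-defined map of stacks.

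For the inverse, given a point of the double-coset stack, i.e.\ locally an element $g \in \mathcal{G}(T')$ modulo the two-sided action, I would form the torsor $\mathcal{G}_{T'}$ itself with its tautological $\phi$-structure and filtration, equip it with the reduction $\mathcal{G}^{\phi=1}_{T'} \hookrightarrow \mathcal{G}_{T'}$ on the left and $g \cdot F^0\mathcal{G}_{T'} \hookrightarrow \mathcal{G}_{T'}$ (translate of the standard $F^0$-reduction) as the Hodge reduction; the two-sided ambiguity in $g$ is exactly an isomorphism of the resulting admissible torsors, so these glue over $T$ to an admissible $\mathcal{G}_T$-torsor. The two constructions are mutually inverse by the same bookkeeping as in the cited Proposition of Kim: sending $P \mapsto g \mapsto$ (torsor) recovers $P$ via $p^{crys}$, and the round trip on double cosets is the identity by uniqueness of the translating element. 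Functoriality in $T$ and descent for the Tate topology (which holds for all the objects in play, cf.\ Definition \ref{whatisatorsor} and the discussion of stacks of torsors) upgrade this to an isomorphism of pro-rigid-analytic stacks.

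The main obstacle, and the place requiring genuine input rather than formal nonsense, is verifying that an admissible torsor actually admits the two reductions \emph{Tate-locally} so that the construction gets off the ground: this is where one needs that torsors under $\mathcal{G}^{\phi=1}$ and under $F^0\mathcal{G}$ are locally trivial. For $F^0\mathcal{G}$ this follows since $F^0\mathcal{U}$ is unipotent (trivial higher cohomology) and $F^0S$ is parabolic with Galois cohomology injecting into the trivial $H^1(K,S)$ via Kneser, exactly as in the proof of Proposition \ref{admissible}; for $\mathcal{G}^{\phi=1}$ one uses the hypothesis built into the ambient bracket that it is a group in filtered $\phi$-modules together with (in geometric applications) semisimplicity and simple-connectedness. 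A secondary subtlety is checking strictness of the relevant maps of filtered $\phi$-modules so that the $F^0$-reduction behaves well under the torsor structure; but this is precisely the strictness statement for morphisms of admissible filtered $\phi$-modules (\cite[Theorem 8.2.11]{brinon2009cmi}) already invoked in Proposition \ref{admissible}, applied here to $\mathcal{O}(\mathcal{G})$ and $\mathcal{O}(P)$ which are colimits of admissible modules by Olsson's comparison. Everything else is the standard torsor-versus-cocycle dictionary (Proposition \ref{torsequiv}) adapted to carry along the two extra structures.
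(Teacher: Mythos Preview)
Your approach is essentially the same as the paper's: define $P \mapsto (p^{crys})^{-1}p^{H}$ locally, check well-definedness under change of trivializations, and produce an inverse by twisting the trivial torsor. Two comments, however.

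First, you have imported the hard inputs from Proposition~\ref{admissible} into a proposition that does not need them. In the definition of admissible torsor used here, the reductions of structure group to $F^0\mathcal{G}$ and $\mathcal{G}^{\phi=1}$ are \emph{by definition} torsors in the sense of Definition~\ref{whatisatorsor}, hence Tate-locally trivial. So the local existence of $p^{crys}$ and $p^H$ is immediate from the definition, and neither Kneser's theorem nor the strictness argument from \cite[Theorem 8.2.11]{brinon2009cmi} plays any role. Those ingredients are used in Proposition~\ref{admissible} to prove that the specific path torsors $P^{dR}_{b,z}$ \emph{are} admissible (i.e., that $F^0P^{dR}_{b,z}$ and $(P^{dR}_{b,z})^{\phi=1}$, which are a priori only subschemes, are in fact Tate-locally trivial torsors). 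The present proposition is purely formal bookkeeping once admissibility is assumed.

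Second, you hand-wave the injectivity direction (``mutually inverse by the same bookkeeping''), but this is the one place where the paper does a genuine check: given $\psi_1 = z^{-1}\psi_2 h$, one replaces $p_2^{crys}$ by $p_2^{crys}z$ and builds the isomorphism $P_1\to P_2$ via the Frobenius trivializations; one must then verify that this isomorphism respects the Hodge filtrations, which amounts to rewriting it in terms of the $p_i^H$ and observing that the transition is by $\psi_1(\psi_2')^{-1}\in F^0\mathcal{G}$. This is not difficult but should be said. Your inverse construction also has a small inconsistency: you cannot keep both the ``tautological filtration'' on $\mathcal{G}_{T'}$ and declare the Hodge reduction to be $g\cdot F^0\mathcal{G}_{T'}$; the paper instead keeps the standard filtration and twists the Frobenius by $\phi'(h)=g^{-1}\phi(gh)$, so that $g^{-1}$ is the crystalline point and the identity is the Hodge point.
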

\begin{rem}
This quotient is an algebraic stack because quotients of rigid stacks by smooth rigid groups are again rigid stacks (\ref{smoothquot}); $\mathcal{G}^{\phi=1}$ is smooth because it is a group scheme over a field of characteristic zero (this is ``Cartier's theorem'' \cite[Expos\'e VII, Theorem 3.3 (ii)]{grothendieck1962schemas}); thus its analytification is smooth.
\end{rem}

\begin{rem}
We recall that this stack is defined as the stackification of the pseudofunctor which sends a rigid space $Z$ to the quotient groupoid \[\left[ \mathcal{G}^{\phi=1, an}(Z) \backslash \mathcal{G}^{an}(Z)/F^0\mathcal{G}^{an}(Z)\right]\]

Since this agrees with the algebraic stack \[\left[ \mathcal{G}^{\phi=1} \backslash \mathcal{G}/F^0\mathcal{G}\right]\] on affinoid algebras, we will often abuse notation and write the latter when it is clear.
\end{rem}

\begin{proof}
Locally in the Tate topology, the bijection works as follows, and the result follows by stackification. Let $P$ be an admissible $\mathcal{G}_T$-torsor.

Since we are looking locally, we may choose crystalline and Hodge trivializations $p^{crys}$ and $p^H$ of $P$, and consider the element $\psi(P) = (p^{crys})^{-1}p^H$. To check that this association is well-defined, consider a different choice of trivializations, say $q^{crys}$ and $q^H$, of $P$. Now by the torsor property there exists a unique element $z \in \mathcal{G}(T)$ such that $$p^{crys} = q^{crys} z.$$ Applying Frobenius to both sides of this equation and using that the crystalline trivializations are fixed, we see that $$z = \phi(z).$$ Thus $z \in \mathcal{G}^{\phi=1}$.

Reasoning with Kim, one also has $$p^{H} = q^{H} h,$$ for $h \in F^0\mathcal{G}(T)$. Putting this information together, we have $$(p^{crys})^{-1}p^H = z^{-1} (q^{crys})^{-1}q^H h,$$ so that the association $\psi$ into $\left( \mathcal{G}^{\phi=1} \backslash \mathcal{G}/F^0\mathcal{G}\right) (T)$ is well-defined. Surjectivity follows in the same way as in Kim's proof, i.e. for an element $g \in \mathcal{G}(T)$ one lets $P_g$ be the $\mathcal{G}_T$-torsor whose underlying scheme is $\mathcal{G}$, but which has Frobenius action $\phi'(g) = g^{-1}\phi(gh)$. This action evidently fixes $g^{-1}$ (and the identity serves as a Hodge trivialization), so that $\psi(P_g) = g$.

Now for injectivity. For two torsors $P_1,P_2$ we choose Frobenius and Hodge trivializations $p_i^{crys}, p_i^H$ and let $\psi_i = \psi(P_i)$. Suppose that $[ \psi_1 ] = [ \psi_2 ]$ in the quotient, so that there exist $z \in Z(\phi), h \in F^0\mathcal{G}$ such that $$\psi_1 = z^{-1}\psi_2 h.$$ Take a new Frobenius trivialization of $P_2$ via the formula $q^{crys}_2 = p^{crys}_2 z$. (It is Frobenius-invariant because $z$ is fixed by Frobenius.) Setting $\psi_2' = (q_2^{crys})^{-1}p_2^H$, we have the relation $$\psi_1 = \psi'_2 h.$$

Then our choice of Frobenius trivializations gives an isomorphism between $P_1$ and $P_2$: $$P_1 \to \mathcal{G} \to P_2$$ $$p^{crys}_1 x \mapsto x \mapsto q^{crys}_2 x.$$

To show injectivity, we need to check that this isomorphism is compatible with Hodge filtrations. Since $p^{crys}_1 = p^H_1 \psi_1^{-1}$, and similarly for $q^{crys}_2$, the above isomorphism is given by $$p^H_1 (\psi_1^{-1} x) \mapsto p^H_2 (\psi'_2)^{-1} x,$$ in other words by $$p^H_1  x  \mapsto p^H_2 (\psi_1 (\psi'_2)^{-1} x).$$ Now indeed $\psi_1 (\psi'_2)^{-1} \in F^0\mathcal{G}$, so that the previous map is just the operation of changing the Hodge trivialization. This operation preserves the Hodge filtration on torsors, and thus implies injectivity.

\end{proof}
Fortunately, one needn't worry about the growth of $\mathcal{G}^{\phi = 1}$ in the sequence of pushouts $\mathcal{G}_n$: it only appears ``on the $S$-level,'' as we see here.
\begin{prop} \label{growth}
Preserve the notations of the previous proposition. Suppose, furthermore, that $\mathcal{U}^{crys}$ has a bijective Lang map, and that Frobenius acts by conjugation by a Frobenius semi-linear map on $S$. Let $Z(\phi)$ denote the centralizer of $\phi$ in $S$.

Then the natural map $\mathcal{G}^{\phi=1} \to S^{\phi=1}$ is an isomorphism on connected components of the identity, and $\pi_0(\mathcal{G}^{\phi=1})$ injects into $\pi_0(S^{\phi=1})$. In particular, $dim \mathcal{G}^{\phi=1} = dim Z(\phi)$.
\end{prop}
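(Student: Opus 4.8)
The plan is to analyze the exact sequence
\[
1 \to \mathcal{U}^{crys} \to \mathcal{G}^{crys} \to S^{crys} \to 1
\]
by applying the Frobenius fixed-point functor $(-)^{\phi=1}$ to it, and to show the resulting sequence of fixed-point group schemes is essentially as short-exact as one could hope. Since Frobenius acts on $S$ by conjugation by a semi-linear automorphism, $S^{\phi=1}$ is (the fixed points of a semilinear action, hence) a form of $Z(\phi)$; in particular it is a genuine algebraic group over $K_0$ whose base change captures the centralizer. The key input is Theorem \ref{integration}: because $\mathfrak{u}^{crys}$ has strictly negative Frobenius weights, the Lang map on $\mathcal{U}^{crys}$ is bijective, and consequently $(\mathcal{U}^{crys})^{\phi=1}$ is trivial. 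This is the engine that makes the map $\mathcal{G}^{\phi=1} \to S^{\phi=1}$ close to an isomorphism.

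\textbf{Key steps.} First I would observe that taking $\phi$-invariants is left-exact, so $\mathcal{G}^{\phi=1}$ sits in
\[
1 \to (\mathcal{U}^{crys})^{\phi=1} \to \mathcal{G}^{\phi=1} \to S^{\phi=1}
\]
and the first term vanishes by Theorem \ref{integration}, giving injectivity of $\mathcal{G}^{\phi=1} \to S^{\phi=1}$ at the level of points and schemes. Second, I would address failure of surjectivity: given a point $s \in S^{\phi=1}(T)$, lift it to $g \in \mathcal{G}^{crys}(T)$ (possible fppf- or \'etale-locally since $\mathcal{G} \to S$ is a unipotent-group torsor, hence has sections by Proposition the splitting result $\sigma \colon S \to H_1$ quoted earlier); then $g^{-1}\phi(g)$ lies in $\mathcal{U}^{crys}(T)$, and by bijectivity of the Lang map on $\mathcal{U}^{crys}$ there is a unique $u \in \mathcal{U}^{crys}(T)$ with $u^{-1}\phi(u) = g^{-1}\phi(g)$; then $gu^{-1} \in \mathcal{G}^{\phi=1}(T)$ lifts $s$. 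This shows $\mathcal{G}^{\phi=1} \to S^{\phi=1}$ is surjective as a map of fppf sheaves, hence an isomorphism after fppf-sheafification — but the obstruction to it being an isomorphism of schemes on the nose is a class in $H^1(-, (\mathcal{U}^{crys})^{\phi=1})$, which is trivial. So in fact $\mathcal{G}^{\phi=1} \to S^{\phi=1}$ is an isomorphism of group schemes over $K_0$, a priori even stronger than the statement asks; the $\pi_0$ injection and the dimension equality $\dim \mathcal{G}^{\phi=1} = \dim Z(\phi)$ then follow immediately, the latter because $\dim S^{\phi=1} = \dim Z(\phi)$ (a semilinear fixed-point scheme has the same dimension as the corresponding linear centralizer after base change).

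\textbf{Main obstacle.} The delicate point is not the point-set bijection but the scheme-theoretic structure: one must be careful that $(-)^{\phi=1}$, defined as the equalizer/fibered product in the earlier definition of $X^{\phi=1}$, behaves well with respect to the extension and that the Lang-map argument genuinely produces a section rather than merely a bijection on $\bar{K}_0$-points. Concretely, the hard part will be justifying that the Lang-map bijectivity from Theorem \ref{integration} upgrades to an isomorphism of functors $u \mapsto u^{-1}\phi(u)$ on $R$-points for all test rings $R$ (this is Besser's argument, using the filtration of $\mathcal{U}$ by the lower central series and induction, where at each abelian graded piece $Z_i$ with negative weights $1 - \phi$ is invertible), and then that the quotient step commutes with forming $\phi$-invariants. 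Once that functorial Lang bijectivity is in hand, the rest is formal diagram-chasing with short exact sequences of group schemes, and the statement about $\pi_0$ and dimensions drops out. If one is cautious, one can hedge and prove only the stated weaker conclusion — isomorphism on identity components plus $\pi_0$-injection — by base-changing to $\bar{K}_0$ and arguing on geometric points, which sidesteps any subtlety about non-smoothness, though here everything in sight is smooth (characteristic zero) so the stronger statement holds.
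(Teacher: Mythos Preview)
Your argument is correct and in fact proves more than the paper does. The paper's proof passes to Lie algebras: it applies the snake lemma to $\phi-1$ acting on the short exact sequence $0 \to \mathfrak{u}_n \to \mathfrak{g}_n \to \mathfrak{s} \to 0$, uses rank--nullity on the finite-dimensional $\mathfrak{u}_n$ to see $\mathfrak{u}_n/(\phi-1)\mathfrak{u}_n \simeq \mathfrak{u}_n^{\phi=1} = 0$, and concludes $\mathfrak{g}^{\phi=1} \simeq \mathfrak{s}^{\phi=1}$; together with injectivity on groups this yields the statement about identity components and $\pi_0$. Your approach instead works directly at the group level: lift $s \in S^{\phi=1}(T)$ to $g \in \mathcal{G}(T)$ via a scheme-theoretic section, observe $g^{-1}\phi(g) \in \mathcal{U}(T)$, and invoke the Lang map as a \emph{scheme} isomorphism (which is what Besser actually proves, and what the hypothesis ``bijective Lang map'' means in this context) to correct $g$ to a $\phi$-fixed lift. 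This yields an outright isomorphism $\mathcal{G}^{\phi=1} \simeq S^{\phi=1}$ of group schemes, not merely on identity components. The paper's Lie-algebra route is arguably more robust in that it only explicitly uses invertibility of $\phi-1$ on each graded piece, but in characteristic zero with unipotent $\mathcal{U}$ this is equivalent to Lang bijectivity; your direct argument is cleaner and, notably, resolves the question raised in the paper's subsequent remark about whether $\pi_0(\mathcal{G}^{\phi=1}) \to \pi_0(S^{\phi=1})$ is surjective --- it is, under the stated hypotheses.
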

\begin{proof}
The fact that the exact sequence $$1 \to \mathcal{U} \to \mathcal{G} \to S \to 1$$ is $\phi$-equivariant provides the homomorphism of group functors $\alpha \colon \mathcal{G}^{\phi=1} \to S^{\phi=1}$. The bijective Lang map assumption implies that $\mathcal{U}^{\phi=1}$ is trivial (Theorem \ref{integration}), which shows that this homomorphism is injective.

We push out via the lower central series of $\mathcal{U}$, pass to Lie algebras, and apply $\phi - 1$ to each element of the exact sequence $$0 \to \mathfrak{u}_n \to \mathfrak{g}_n \to \mathfrak{s} \to 0.$$ By the snake lemma, $$\mathfrak{g}_n^{\phi=1} \to \mathfrak{s}^{\phi=1} \to \mathfrak{u}_n/(\phi - 1)\mathfrak{u}_n$$ is exact. But by rank-nullity, $$\mathfrak{u}_n/(\phi - 1)\mathfrak{u}_n \simeq \mathfrak{u}_n^{\phi=1}$$ and the latter group is zero, so the level-$n$ Frobenius-invariant parts of the Lie algebras are isomorphic. By taking inverse limits along the lower central series, we have $\mathfrak{g}^{\phi=1} \simeq \mathfrak{s}^{\phi=1}$.

Surjectivity on the level of Lie algebras, along with the injectivity of $\mathcal{G}^{\phi=1} \to S^{\phi=1}$, shows that $\mathcal{G}^{\phi=1} \to S^{\phi=1}$ is an isomorphism on connected components of the identity. Injectivity on the level of groups implies, in particular, injectivity on the level of $\pi_0$. By assumption, $Z(\phi) = S^{\phi=1}$, and we are done.
\end{proof}
\begin{rem}
One can deduce equality of connected components in some special situations, although the author does not know if it should hold in general. For example, if $S$ is simply connected and $\phi$ is semi-simple, a theorem of Springer--Steinberg and Digne--Michel says that $Z(\phi)$ is connected, which would imply equality on connected components above. This theorem is applicable, for example, when $\mathcal{L}^{dR}$ come from geometry in a context where Frobenius semisimplicity is known and $S$ is the symplectic group. 
\end{rem}

\subsection{Flag varieties and Rapoport--Zink spaces}
A word of motivation on the machinery that follows. The ideas behind this section were inspired by Hain's construction of the Albanese manifold for the relative completion \cite[Section 13.1]{hain2016hodge}, and the reader who is more comfortable with complex constructions than $p$-adic ones is directed there for a sense of where we are going. Recall that the theory of period domains associates to each polarizable Hodge structure $V_b$ a partial flag variety $$F = S/B,$$ where $S$ is a symplectic or orthogonal group for $V_b$ and $B$ is a parabolic subgroup fixing the flag associated to the Hodge filtration on $V_b$. (The deep part of the theory actually cuts out a smaller space which classifies flags that satisfy Riemann's bilinear relations.)

The complex case of period mappings proceeds as follows: suppose that $f \colon Y \to X$ is a smooth and proper map of smooth complex manifolds. Then locally (in the $C^{\infty}$ sense) around a point $b \in X$ the fibration trivializes, by a theorem of Ehresmann. Let $N$ be such a trivializing neighborhood, which we may shrink and assume is simply connected. For any point $b' \in X$ this trivialization gives a diffeomorphism $Y_b \simeq Y_{b'}$, hence an isomorphism of singular cohomology $$H^i_{sing}(Y_b, \mathbb{Z}) \simeq H^i_{sing} (Y_{b'}, \mathbb{Z}).$$ Finally, the comparison between singular and de Rham cohomologies induces an isomorphism $$H^i_{dR}(Y_b, \mathbb{C}) \simeq H^i_{dR} (Y_{b'}, \mathbb{C}).$$ Furthermore, these cohomologies have the same Hodge filtration type, meaning their associated gradeds have matching dimensions. Thus we obtain an analytic map from $N$ to the flag variety parameterizing flags with a fixed filtration type.

The theory of the Gauss--Manin connection allows a similar approach in the $p$-adic world, and we would like to emulate the complex case. But we have our eyes set on the entire relative completion: when we start talking about varying torsors instead of varying filtrations on a fixed vector space, the Tannakian framework for filtrations and flag varieties becomes essential.

Let us dive into the weeds and apply the theory of $p$-adic period domains as explained in the wonderful textbook of Dat, Orlik, and Rapoport \cite{dat2010p}. In order to recall the construction, we recap some definitions. For $K/K_0$ fields, the category $Fil^K_{K_0}$ is the category of $K_0$-vector spaces $V$ equipped with a filtration on $V \otimes_{K_0} K$, with morphisms taking one filtration into another after base change to $K$.
\begin{defn}
Suppose $V$ is a vector space with filtration $F$. Then the degree of $(V, F)$ is \[deg(V, F) = \sum_i i \cdot gr_i (V, F).\] The slope of $(V, F)$ is \[\mu(V, F) = deg(V, F)/dim(V).\]
\end{defn}

\begin{defn}
Let $(\mathcal{T}, \omega)$ be a neutral Tannakian category over $K_0$, and $K$ an extension of $K_0$. By a filtration over $K$ on $\omega$ we refer to a factorization of $\omega$ through $Fil_{K_0}^K$. We denote the set of all such filtrations by $Fil_K\omega$, and gradings are defined similarly.
\end{defn}
\begin{defn}
Write $\omega_{S}$ for the standard fiber functor on $Rep(S)$, for $S/K$ an algebraic group. A $\mathbb{Q}$-1-parameter subgroup, written $\mathbb{Q}$-1PS, of an algebraic group $S$ is a map $\lambda \colon \mathbb{D} \to S$ defined over $\bar K$, where $\mathbb{D} = \varprojlim \mathbb{G}_m$. (The limit here is taken by divisibility.)
\end{defn}
One shows that $\mathbb{Q}$-1-parameter subgroups are in bijection with $\mathbb{Q}$-gradings of $\omega_S$. If $\mathbb{D} \to S$ factors through the first projection $\mathbb{D} \to \mathbb{G}_m$, this is just the simple fact that a morphism $\mathbb{G}_m \to S$ is equivalent to a $\mathbb{Z}$-grading on $Rep(S).$ All of the filtrations we consider will be of this type.

Now $Gal(\bar K/K)$ acts on the set of $\mathbb{Q}-1PS$ of $S$ by conjugation, since they are each defined over $\bar K$.
\begin{defn}
A PD-pair $(S, \mathcal{N})$ is a reductive group $S$ over $K$ along with a $G_K$-conjugacy class of $\mathbb{Q}$-1PS $\mathcal{N}$ defined over $\bar K$.
\end{defn}
And, last but not least, we need the concept of a type of a filtration. Recall that a $\mathbb{Q}$-grading $V = \oplus_{\alpha \in \mathbb{Q}} V_\alpha$ on a vector space induces a decreasing $\mathbb{Q}$-filtration by the formula $F^\beta V = \oplus_{\alpha \geq \beta} V_\alpha$, and this construction generalizes in an obvious way to filtrations and gradings on fiber functors.
\begin{defn}
Let $\mathcal{F} \in Fil_K\omega_S$. Suppose there exists a $\mathbb{Q}$-1PS $\sigma \colon \mathbb{D} \to S$ (i.e. a $\mathbb{Q}$-grading of $\omega_S$) whose associated filtration is isomorphic to $\mathcal{F}$. If $\mathcal{N}$ denotes the conjugacy class of $\sigma$. We call $\mathcal{N}$ the type of $\mathcal{F}$.
\end{defn}
\begin{rem}
When there exists such a 1-PS, we call $\mathcal{F}$ splittable. By the next theorem, if $\mathcal{T}$ is $Rep(S)$ for $S$ a reductive group in characteristic zero, then any filtration on $\omega_S$ is splittable.

\end{rem}

Let $\mathcal{F}$ be a filtration on $\omega_{S}$. We will need the subgroup $P_{\mathcal{F}} \subseteq S$, defined by $P_{\mathcal{F}}(R) = \{a \in Aut(\omega)(R) | a(\mathcal{F}^i V \otimes R ) \subseteq \mathcal{F}^i V \otimes R \}$.

The key to constructing flag varieties is that filtrations on fiber functors can be seen group-theoretically through parabolic subgroups.
\begin{thm*}{cf. \cite[Theorem 4.2.13]{dat2010p}}
Let $S$ be a reductive algebraic group over $K$, a field of characteristic zero. Then $P_{\mathcal{F}}$ is a parabolic subgroup of $S$, and $Lie(P_{\mathcal{F}}) = F^0Lie(S)$, where $Lie(S)$ is given a filtration via $\mathcal{F}$ and the adjoint representation of $S$. Furthermore, $\mathbb{Q}$-filtrations of $\omega_S$ which are defined over $\bar K$ are in bijection with $P_{\mathcal{F}}$ conjugacy classes of $\mathbb{Q}-1PS$ of $S$.
\end{thm*}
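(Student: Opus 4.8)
The plan is to prove both assertions simultaneously by reducing to the \emph{split} situation --- where a filtration of $\omega_S$ is literally induced by a $\mathbb{Q}$-$1$PS --- and then to invoke the theory of filtrations in Tannakian categories (Deligne, Saavedra Rivano; cf. \cite[Theorem 4.2.13]{dat2010p}) to see that in characteristic zero every filtration of $\omega_S$ is split. All the geometry of $P_{\mathcal{F}}$ will come from the ``dynamic'' picture of parabolic subgroups.

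First I would dispose of the split direction. Given a $\mathbb{Q}$-$1$PS $\lambda \colon \mathbb{D} \to S$ --- and since all our filtrations are of the type that factors through $\mathbb{D} \to \mathbb{G}_m$, after clearing a denominator we may take an honest cocharacter $\mathbb{G}_m \to S$ --- every $V \in Rep(S)$ splits into $\lambda$-weight spaces, giving a $\mathbb{Q}$-grading of $\omega_S$ and hence the associated decreasing filtration $\mathcal{F}_\lambda$. The key observation is that $P_{\mathcal{F}_\lambda}$ is exactly the attractor subgroup $P(\lambda) = \{ g \in S : \lim_{t \to 0} \lambda(t) g \lambda(t)^{-1} \text{ exists} \}$, because an automorphism of a graded vector space preserves the associated filtration precisely when that limit exists. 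By the dynamic description of parabolics (cf. \cite{conrad2014reductive}), $P(\lambda)$ is a parabolic subgroup of $S$ with unipotent radical $U(\lambda) = \{ g : \lim_{t \to 0} \lambda(t) g \lambda(t)^{-1} = 1 \}$, and its Lie algebra is $\bigoplus_{n \geq 0} \mathfrak{s}_n = F^0 Lie(S)$, where $\mathfrak{s} = Lie(S)$ is graded via $Ad \circ \lambda$. This establishes parabolicity of $P_{\mathcal{F}}$ and the Lie algebra identity for every split $\mathcal{F}$.

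The substantive step is splittability: every $\mathbb{Q}$-filtration $\mathcal{F}$ of $\omega_S$ defined over $\bar K$ is $\mathcal{F}_\lambda$ for some $\mathbb{Q}$-$1$PS $\lambda$. Here I would appeal to Deligne's theorem on tensor filtrations: a filtered fibre functor is the same as a $\otimes$-functor $Rep(S) \to Fil_{\bar K}$, its associated graded is a $\otimes$-functor to $\mathbb{Q}$-graded spaces, hence a $\mathbb{Q}$-$1$PS of the Tannakian group of the graded category --- which is again $S$; and over a field of characteristic zero the set of splittings of $\mathcal{F}$ refining a chosen grading is a torsor under the unipotent radical $U(\mathcal{F})$ of $P_{\mathcal{F}}$, hence trivial over $\bar K$. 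Concretely: pick a faithful $V \in Rep(S)$, split the induced filtration on $V$ by some cocharacter of $GL(V)$, and correct it by an element of $U(\mathcal{F})(\bar K)$ so that it lands in $S$, using the Levi decomposition of $P_{\mathcal{F}}$ --- this is exactly where characteristic zero is used. I expect this to be the main obstacle, as it is the heart of the matter; rather than reprove it in full I would cite \cite[Theorem 4.2.13]{dat2010p} and the Tannakian references, noting that a self-contained argument requires first showing $P_{\mathcal{F}}$ is smooth with unipotent radical acting simply transitively on splittings.

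Finally I would assemble the bijection. Send $\mathcal{F} \mapsto$ the $P_{\mathcal{F}}$-conjugacy class of any $\lambda$ with $\mathcal{F}_\lambda = \mathcal{F}$: this is well defined because two such cocharacters split the same filtration and hence are conjugate by an element of $U(\mathcal{F}) \subseteq P_{\mathcal{F}}$. In the other direction, a conjugacy class $[\lambda]$ of a $\mathbb{Q}$-$1$PS under $P(\lambda)$ goes to $\mathcal{F}_\lambda$, which is independent of the representative since $P(\lambda) = P_{\mathcal{F}_\lambda}$ fixes $\mathcal{F}_\lambda$; these two maps are mutually inverse by construction. Together with the first paragraph this yields parabolicity of $P_{\mathcal{F}}$ and $Lie(P_{\mathcal{F}}) = F^0 Lie(S)$ for \emph{all} $\mathcal{F}$. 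For the rational structure: if $\mathcal{F}$ is defined over $K$ then $P_{\mathcal{F}}$ is $Gal(\bar K / K)$-stable and hence defined over $K$, while a splitting $\lambda$ need exist only over $\bar K$ --- precisely the reason a PD-pair records a $G_K$-conjugacy class of $\mathbb{Q}$-$1$PS rather than a single one.
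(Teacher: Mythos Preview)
The paper does not prove this theorem; it is stated with a bare citation to \cite[Theorem 4.2.13]{dat2010p} and used as a black box, so there is no ``paper's own proof'' to compare against. Your outline is correct and is essentially the standard argument one finds in Dat--Orlik--Rapoport: reduce to the split case via the splittability theorem for filtered fibre functors in characteristic zero, identify $P_{\mathcal{F}}$ with the dynamic parabolic $P(\lambda)$, and read off the Lie algebra and the bijection from there.

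One small point worth tightening: in your bijection paragraph you assert that two cocharacters splitting the same filtration are conjugate by an element of $U(\mathcal{F})$. This is true, but it is not quite ``by construction'' --- it is part of the content of the splittability theorem (the torsor statement you invoke earlier), so you should cite it again at that point rather than treat it as obvious. Otherwise the sketch is sound, and since the paper itself defers entirely to the reference, your write-up would in fact be more informative than what the paper provides.
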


\begin{defn}
Let $(S, \mathcal{N})$ be a PD-pair, and denote by $\Gamma_\mathcal{N}$ the stabilizer of $\mathcal{N}$ in $G_K$. The reflex field of $\mathcal{N}$ is the field $E = (\bar K)^{\Gamma_{\mathcal{N}}}$.
\end{defn}
Now any PD-pair gives rise to a generalized flag variety which we will denote $\mathcal{F}(S, \mathcal{N})$. Simply put, it classifies filtrations on the fiber functor of $Rep(S)$ of ``type $\mathcal{N}$.''
\begin{thm*}{\cite[Theorem 6.1.4]{dat2010p}}

There is a smooth, projective variety $\mathcal{F}(S, \mathcal{N})$, defined over the reflex field $E$, with the property that $$\mathcal{F}(S, \mathcal{N})(E') = \{F \in Fil_{K}(\omega_S \otimes_K E') | F \text{ has type } \mathcal{N} \}.$$ Here $E'$ runs over finite extensions of $E$.
\end{thm*}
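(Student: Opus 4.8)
The plan is to realize $\mathcal{F}(S,\mathcal{N})$ as a partial flag variety attached to $S$ and then to descend it from $\bar K$ down to the reflex field $E$, reading off the moduli description from the geometric one by Galois invariance.

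First I would argue geometrically. Fix a representative $\sigma_0$ of the conjugacy class $\mathcal{N}$, i.e.\ a $\mathbb{Q}$-1PS $\mathbb{D} \to S_{\bar K}$, and let $\mathcal{F}_0$ denote its associated $\mathbb{Q}$-filtration on $\omega_S \otimes_K \bar K$, with $P_0 := P_{\mathcal{F}_0} \subseteq S_{\bar K}$ the corresponding subgroup; by the preceding theorem \cite[Theorem 4.2.13]{dat2010p}, $P_0$ is parabolic. The group $S(\bar K)$ acts on the set of $\mathbb{Q}$-filtrations of $\omega_S \otimes_K \bar K$, and by the very definition of $P_{\mathcal{F}}$ the stabilizer of $\mathcal{F}_0$ is exactly $P_0$; moreover the $S(\bar K)$-orbit of $\mathcal{F}_0$ is, by definition of the type, precisely the set of filtrations of type $\mathcal{N}$. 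Hence over $\bar K$ the moduli functor in the statement is represented by the homogeneous space $S_{\bar K}/P_0$, which is smooth and projective since $P_0$ is parabolic.

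The heart of the argument is then the descent to $E$. One way is to invoke the scheme $\mathrm{Par}(S)$ of parabolic subgroups of $S$, which exists and is smooth and projective over $K$ (SGA 3, Exp.\ XXVI): its geometric connected components are permuted by $G_K$ according to the ``type'' of a parabolic, and the stabilizer of the component through $P_0$ is precisely $\Gamma_{\mathcal{N}}$, so that component descends --- effectively, since it is quasi-projective with a semilinear $G_K$-action --- to a smooth projective $E$-variety, which I would take as the definition of $\mathcal{F}(S,\mathcal{N})$. Equivalently, one checks directly that the locus of filtrations of type $\mathcal{N}$ inside the flag variety of all filtrations is a $\Gamma_{\mathcal{N}}$-stable subscheme, hence descends to $E$; smoothness and projectivity are inherited from $S_{\bar K}/P_0$ under descent. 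Finally, to extract the description of $E'$-points for $E'/E$ finite, I would take $G_{E'}$-fixed points of the geometric bijection $\mathcal{F}(S,\mathcal{N})(\bar K) \cong \{\,\mathbb{Q}\text{-filtrations of }\omega_S \otimes_K \bar K\text{ of type }\mathcal{N}\,\}$: because the descent datum was manufactured from the natural $G_K$-action on filtrations, the left side becomes $\mathcal{F}(S,\mathcal{N})(E')$ and the right side becomes the set of filtrations of $\omega_S \otimes_K E'$ of type $\mathcal{N}$, using that the type condition is defined after base change to $\bar K$ and so is automatically respected.

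I expect the descent step to be the main obstacle: one must track the $G_K$-action on the finite set of parabolic types carefully, verify that the component attached to $\mathcal{N}$ descends exactly to $E = \bar K^{\Gamma_{\mathcal{N}}}$ and not to a smaller field, confirm effectivity of descent, and make sure that ``having type $\mathcal{N}$'' --- a geometric condition --- is literally the $\Gamma_{\mathcal{N}}$-stable datum being descended. The construction over $\bar K$ and the transfer of smoothness and projectivity, by contrast, are formal.
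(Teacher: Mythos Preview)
The paper does not prove this theorem: it is stated with a citation to \cite[Theorem 6.1.4]{dat2010p} (Dat--Orlik--Rapoport) and used as a black box, so there is no ``paper's own proof'' to compare against. Your sketch is the standard argument --- build $S_{\bar K}/P_0$ geometrically via the parabolic attached to a splitting of $\mathcal{N}$, then descend to the reflex field using the $G_K$-action on types (equivalently, on connected components of $\mathrm{Par}(S)$) --- and it is essentially what one finds in the cited reference; there is nothing to correct.
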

\begin{rem}
In our case, $E$ will be a trivial extension of $K$ -- this is because the parabolic associated to the filtration we will care about is $F^0S$, which is defined over $K$.
\end{rem}

The interaction with $p$-adic Hodge theory comes through particular kinds of filtrations on fiber functors, the so-called semistable and weakly admissible filtrations. They depend on a concept of ``slope'' for representations of PD-pairs. Following the definition of \cite{dat2010p} we let $\kappa$ denote a perfect field of characteristic $p >0$, $K_0$ the fraction field of its Witt vectors, and $K$ an extension of $K_0$. Now every isocrystal has a unique slope decomposition, $$W = \bigoplus_{\alpha \in \mathbb{Q}} W_{\alpha};$$ this means that for $\alpha = \frac{r}{s}$ (and $(r,s)=1$) there exists a $W(\kappa)$-lattice $\Lambda$ where $\phi^s$ acts as $p^r$ on $W_{\alpha}$.

Finally, given a filtered isocrystal over $K$ (i.e. an isocrystal $W$ over $\kappa$ with a filtration $\mathcal{F}$ on $W \otimes_{K_0} K$), we define another filtration $\mathcal{F}_0$ on $W$ by $$\mathcal{F}_0^{\beta} = \bigoplus_{\alpha \leq -\beta} W_{\alpha}.$$ Actually, we must briefly distinguish the whole isocrystal from the underlying vector space $W$ and so we write $\mathbb{W} = (W, \mathcal{F}, \phi).$

\begin{defn} \label{weakdefn}
Define the slope and degree of $\mathbb{W} = (W, \mathcal{F}, \phi) \in FilIsoc^K_\kappa$ by $$deg(\mathbb{W}) = deg(W, \mathcal{F}) + deg(W, \mathcal{F}_0)$$ and $$\mu(\mathbb{W}) = \mu(W, \mathcal{F}) + \mu(W, \mathcal{F}_0).$$

We say that $\mathbb{W}$ is semi-stable if any sub-isocrystal $\mathbb{W}' = (W, \mathcal{F}', \phi) \subseteq \mathbb{W}$ has slope at most $\mu(\mathbb{W})$, where the filtration on $W'$ is induced by that of $W$. We call $\mathbb{W}$ weakly admissible if it is semi-stable and has degree zero.
\end{defn}

Recall that a group in isocrystals over $\kappa$ is a group scheme over $W(\kappa)$ whose coordinate ring, comultiplication, inverse, and counit maps all live in the category of isocrystals. Without comment, we use the same terminology for any symmetric monoidal category. If $\mathbb{D}_{isoc}$ denotes the Tannakian fundamental group of the category of isocrystals over $\kappa$, then $\mathbb{D}_{isoc}$ acts (by definition) on the coordinate ring of any group in isocrystals over $\kappa$, and thus on the group itself. We call this action the canonical action.
\begin{defn}
An augmented group scheme consists of a group in isocrystals $S$ and a functor $\nu \colon Rep(S) \to Isoc_{\kappa}$ such that conjugating by the induced map $\mathbb{D}_{isoc} \to S$ is the same action as the canonical action given above.
\end{defn}
\begin{exmp}
The natural fiber functor given by a $\kappa$-rational point on the category of isocrystals on a scheme $X$ over a finite field $\kappa$ factors through the category of isocrystals over $\kappa$. This structure gives the crystalline fundamental group the structure of an augmented group scheme.
\end{exmp}

\begin{defn}
Let $S$ be an augmented group scheme. In particular, the adjoint representation and the augmentation combine to give $Lie(S)$ the structure of an isocrystal. A filtration $\mathcal{F}$ over $K$ on $\omega_S$ is called semi-stable or weakly admissible if the filtered isocrystal $(Lie(S), \mathcal{F})$ is semi-stable or weakly admissible.
\end{defn}

Having dispensed with these preliminaries, the mainstay of the theory of $p$-adic period domains is as follows \cite[Proposition 9.5.3]{dat2010p}.
\begin{thm}
Let $(S, \mathcal{N})$ be a PD-pair over a $p$-adic field $K_0$ such that $S$ is also an augmented group scheme. There exists an open rigid-analytic subspace $$\mathcal{F}(S, \mathcal{N})^{wa} \hookrightarrow \mathcal{F}(S, \mathcal{N})^{an}$$ which parameterizes weakly admissible $K$-filtrations on $\omega_{Rep(S)}$ of type $\mathcal{N}$.
\end{thm}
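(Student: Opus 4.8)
The plan is to reduce the statement to the openness theorem for $p$-adic period domains, which in the form we need is precisely \cite[Proposition 9.5.3]{dat2010p}; the real content of the proof is to check that our notion of weak admissibility for a filtration on $\omega_{\mathrm{Rep}(S)}$ matches the one to which that result applies, and then to recall why the relevant locus is admissible open. So the write-up will be: set up the universal filtration, match definitions, separate the degree condition from semistability, and cite Totaro's openness argument for the latter.

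First I would fix the universal filtration. By \cite[Theorem 6.1.4]{dat2010p}, $\mathcal{F}(S,\mathcal{N})$ carries a tautological $\mathbb{Q}$-filtration of type $\mathcal{N}$ on the pullback of $\omega_S$; applying it to the adjoint representation and using the augmentation $\nu \colon \mathrm{Rep}(S)\to \mathrm{Isoc}_\kappa$, one obtains over $\mathcal{F}(S,\mathcal{N})^{an}$ a family of filtered isocrystals whose underlying $\phi$-module $(\mathrm{Lie}(S),\phi)$ is fixed and whose filtration $\mathcal{F}_x$ varies. By Definition \ref{weakdefn} together with the defining property of the augmented structure (conjugation by $\mathbb{D}_{isoc}\to S$ is the canonical action on $\mathrm{Lie}(S)$), the filtration $\mathcal{F}_x$ is weakly admissible in our sense exactly when the filtered isocrystal $(\mathrm{Lie}(S),\mathcal{F}_x,\phi)$ is semistable of total degree zero. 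Thus it suffices to cut out the locus of $x$ for which this holds.

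Next I would dispose of the degree condition: $\deg(\mathrm{Lie}(S),\mathcal{F}_x)$ depends only on the graded dimensions, hence only on the type $\mathcal{N}$, and is constant on each connected component of $\mathcal{F}(S,\mathcal{N})$; likewise $\deg(\mathrm{Lie}(S),(\mathcal{F}_x)_0)=\deg(\mathrm{Lie}(S),\mathcal{F}_0)$ is determined by $(\mathrm{Lie}(S),\phi)$ alone. So "total degree zero" is a clopen condition, and we may restrict to the components on which it holds. What remains is semistability: for every sub-$\phi$-module $W'\subseteq \mathrm{Lie}(S)$ one needs $\deg(W',\mathcal{F}_x|_{W'})+\deg(W',(\mathcal{F}_0)|_{W'})\le 0$. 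Following the argument imported into \cite[\S 9.5]{dat2010p}, for each $d$ the $\phi$-stable $d$-dimensional subspaces form a closed, hence proper, subscheme $\mathrm{Sub}_d(\mathrm{Lie}(S),\phi)$ of a Grassmannian; there $\deg(W',(\mathcal{F}_0)|_{W'})$ is locally constant, and for fixed $W'$ the set of $x$ with $\deg(W',\mathcal{F}_x|_{W'})\ge c$ is Zariski closed in $\mathcal{F}(S,\mathcal{N})$, being a flag-incidence condition. Hence the universal destabilizing locus inside $\mathrm{Sub}_d(\mathrm{Lie}(S),\phi)\times\mathcal{F}(S,\mathcal{N})$ is closed, and its image in $\mathcal{F}(S,\mathcal{N})$ is closed by properness of the first factor; taking the finite union over $d=1,\dots,\dim\mathrm{Lie}(S)-1$ and analytifying, the non-weakly-admissible locus is a closed rigid-analytic subvariety, so its complement $\mathcal{F}(S,\mathcal{N})^{wa}$ is admissible open and parameterizes exactly the weakly admissible filtrations of type $\mathcal{N}$.

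The main obstacle is this last step — openness of the semistable locus — which is genuinely not formal: a priori semistability is only a countable intersection of conditions, and what makes the bad locus closed rather than merely a countable union of closed sets is the properness of the schemes of $\phi$-stable subspaces together with the Harder--Narasimhan reduction, which lets one take a destabilizing sub-object to be an honest sub-$\phi$-module with bounded invariants. This is the subtlety that Rapoport--Zink's original treatment left open and that Totaro resolved, and in the present exposition I would import it wholesale from \cite[Proposition 9.5.3]{dat2010p}, contributing only the Tannakian reduction and the matching of definitions above rather than reproving the openness.
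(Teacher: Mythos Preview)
Your proposal is correct, and in fact goes beyond what the paper does: the paper offers no proof at all for this theorem, simply stating it as a citation of \cite[Proposition~9.5.3]{dat2010p} and treating it as a black box imported from the literature on $p$-adic period domains. Your write-up amounts to the same citation together with a sketch of why that proposition applies (matching the Tannakian definition of weak admissibility to the one in the reference, and recalling Totaro's properness argument for openness of the semistable locus); this is more than the paper provides, but entirely compatible with it.
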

This means, in other words, that for each field $K'$ over the reflex field of $(S, \mathcal{N})$, $\mathcal{F}(S, \mathcal{N})^{wa}(K')$ is in bijection with $K'$-filtrations on $N$ that are weakly admissible.

\subsection{Applying period domains to group schemes that come with a filtration}
In our work, the filtration on a fiber functor will come from a filtration on the coordinate ring of the fundamental group of the underlying Tannakian category. Let $S$ be a reductive group in the category $FilIsoc_{K_0}^K$, i.e. a group whose coordinate ring has the structure of a $K$-filtered isocrystal over $K_0$ with $F^0\mathcal{O}(S)\otimes K = \mathcal{O}(S) \otimes K$, and for which the comultiplication and counit maps are compatible with the filtration. The following construction was suggested by Paul Ziegler.
\begin{prop} \label{filtration}
Suppose that the comultiplication map on $\mathcal{O}(S)$ is strict for the given filtration. There is a unique filtration $\mathcal{F} \in Fil_{K}\omega_S$, ($\omega_S$ being the fiber functor associated to representations of $S$), such that the induced right action of $S$ on $\mathcal{O}(S)$ reproduces the given filtration on $\mathcal{O}(S) \otimes_{K_0} K$.
\end{prop}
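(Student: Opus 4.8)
The plan is to use Tannakian duality to turn the filtration on $\mathcal{O}(S)$ into a grading, hence a $\mathbb{Q}$-1PS (in fact a $\mathbb{G}_m$-action, since the filtration is by integers), and then extract the desired filtration on $\omega_S$ from that one-parameter subgroup. First I would invoke the fact that a $K$-filtered isocrystal over a characteristic-zero base always splits: so after base change to $K$, the filtration on $\mathcal{O}(S)\otimes K$ is induced by a $\mathbb{Z}$-grading $\mathcal{O}(S)\otimes K=\bigoplus_n \mathcal{O}(S)_n$. The hypothesis that comultiplication is strict for the filtration is exactly what is needed to guarantee that this grading can be chosen to be a grading of Hopf algebras — i.e. that the coproduct respects the decomposition — which is equivalent to the datum of a homomorphism $\mathbb{G}_m\to S$ (over $K$), equivalently a central action of $\mathbb{G}_m$ on $\mathcal{O}(S)$ by the regular representation. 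This is the step where strictness is genuinely used: without it the associated graded of the coproduct need not land in the tensor product of the associated gradeds, and one cannot lift the grading to the Hopf algebra level.

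Next I would pass from this $\mathbb{G}_m$-action (equivalently, the cocharacter $\nu\colon\mathbb{G}_m\to S$) to a filtration $\mathcal{F}$ on the standard fiber functor $\omega_S$ of $\mathrm{Rep}(S)$, using the standard correspondence between $\mathbb{Q}$-gradings of $\omega_S$ and $\mathbb{Q}$-1PS of $S$ recalled just before this proposition (and the theorem cited as \cite[Theorem 4.2.13]{dat2010p}): a cocharacter of $S$ gives a weight decomposition of every $S$-representation, hence a grading, hence by the formula $F^\beta V=\bigoplus_{\alpha\geq\beta}V_\alpha$ a decreasing filtration, functorially in $V$. Then I would check the compatibility claim: the right regular representation of $S$ on $\mathcal{O}(S)$ is one particular object of $\mathrm{Ind}\,\mathrm{Rep}(S)$, and the filtration that $\mathcal{F}$ induces on it is, by construction of $\mathcal{F}$ from $\nu$, precisely the grading-induced filtration we started from — so the given filtration on $\mathcal{O}(S)\otimes K$ is recovered. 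This is essentially a diagram chase unwinding how $\nu$ acts on $\mathcal{O}(S)$ via left translation versus how it acts on a general representation.

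Finally, uniqueness: suppose $\mathcal{F}$ and $\mathcal{F}'$ are two filtrations on $\omega_S$ inducing the same filtration on $\mathcal{O}(S)\otimes K$ via the regular representation. Since $\mathcal{O}(S)$ is a faithful (ind-)representation of $S$ — every representation of $S$ is a subquotient of a sum of copies of $\mathcal{O}(S)$ — and filtrations of $\omega_S$ are determined by their values on any faithful family together with compatibility with subquotients and tensor products, the two filtrations must agree on all of $\mathrm{Rep}(S)$. More concretely, $\mathcal{F}$ and $\mathcal{F}'$ correspond to parabolic subgroups $P_{\mathcal{F}},P_{\mathcal{F}'}$ together with splitting cocharacters; both cocharacters act on $\mathcal{O}(S)$ by left translation with the same weight decomposition, which forces them to be conjugate by an element of $F^0\mathcal{G}$ fixing that decomposition, and in fact — since the weight grading on the regular representation pins down the cocharacter up to nothing — equal. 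I expect the main obstacle to be the lifting step in the first paragraph: verifying carefully that strictness of the comultiplication is exactly the right hypothesis to promote the splitting of $\mathcal{O}(S)$ as a filtered vector space to a splitting as a graded Hopf algebra (equivalently, a cocharacter of $S$), since one must control the interaction of the splitting with both the coalgebra structure and the isocrystal structure simultaneously.
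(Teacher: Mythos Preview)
Your route through splittings and cocharacters is genuinely different from the paper's, and it has a conceptual gap at the point you flag as the main obstacle.

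The paper never splits the filtration or produces a cocharacter. It defines $\mathcal{F}$ directly: for any representation $V$ with coaction $\iota\colon V_K\hookrightarrow V_K\otimes\mathcal{O}(S)_K$, set
\[
F^iV_K=\{v\in V_K:\iota(v)\in V_K\otimes F^i\mathcal{O}(S)_K\},
\]
giving the right-hand $V_K$ the trivial filtration. Functoriality is immediate. For $V=\mathcal{O}(S)$ with the right regular coaction (which is $\mu$), the counit identity $f=(\epsilon\otimes 1)\mu(f)$ together with strictness of $\mu$ is what the paper uses to argue that this pulled-back filtration agrees with the original one. No splitting, no $\mathbb{G}_m$, no choice.

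Your approach asserts that a Hopf-algebra grading on $\mathcal{O}(S)$ --- one for which $\mu$ lands in the tensor grading --- is equivalent to a cocharacter $\mathbb{G}_m\to S$. That is not correct. A Hopf-algebra grading is the same as a $\mathbb{G}_m$-action on $S$ by \emph{group automorphisms}, i.e.\ a homomorphism $\mathbb{G}_m\to\Aut(S)$. A cocharacter $\nu\colon\mathbb{G}_m\to S$ instead gives a grading on $\mathcal{O}(S)$ via one-sided translation, and for that grading $\mu$ is graded only with respect to the \emph{right} factor, $\mu(\mathcal{O}(S)_n)\subseteq\mathcal{O}(S)\otimes\mathcal{O}(S)_n$, not the tensor grading. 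These are different conditions (think of the $x$-adic grading on $\mathcal{O}(\mathbb{G}_a)=k[x]$: it is a Hopf-algebra grading but $\mu(x)=x\otimes 1+1\otimes x$ does not lie in $k[x]\otimes(x)$). Since the proposition asks that $\mathcal{F}$ reproduce the given filtration via the \emph{right regular} action, what you actually need is the one-sided condition, and your splitting of the two-sided Hopf filtration does not hand you that. The paper's direct pullback bypasses this entirely; if you want to rescue the cocharacter approach you would first have to prove (for reductive $S$) that the given two-sided filtration is automatically one-sided --- which is essentially the content of the paper's strictness argument anyway.
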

\begin{proof}
If $V$ is a representation of $S$, then the coaction \[V \to V \otimes_{K_0} \mathcal{O}(S) \] is injective. (Indeed, the counit splits the injection.) Tensoring up to $K$, we have a map \[\iota \colon V_K \to V_K \otimes \mathcal{O}(S)_K, \] where now we've written subscripts instead of $\otimes_{K_0} K$. Give the $V_K$ on the right the trivial filtration, so that the tensor product on the right has a filtration induced only by the one on $\mathcal{O}(S)_K$. Then we let \[F^iV_K = \{ x \in V_K | \iota(x) \in F^i (V_K \otimes \mathcal{O}(S)_K)\}.\]

This filtration is functorial. When $V = \mathcal{O}(S)$ the coaction map is the comultiplication map $\mu$; the counit then provides a sequence \[\mathcal{O}(S) \overset{\mu} \to \mathcal{O}(S) \otimes \mathcal{O}(S) \overset{1 \otimes \epsilon} \to \mathcal{O}(S)\] whose composition is the identity. We must show that even though strictness means strictness for the tensor product filtration on $\mathcal{O}(S) \otimes \mathcal{O}(S)$, it is still strict when the $\mathcal{O}(S)$ on the left of the tensor product is given the trivial filtration.

Write $fil(x) = max_i (x \in F^i\mathcal{O}(S))$. Consider $f \in \mathcal{O}(S)$; $\mu(f) = \sum g_i \otimes h_i$, and 
\begin{align} \label{counit}
f = \sum \epsilon(g_i) \otimes h_i. \end{align}
By strictness, $fil(f) = fil (\sum g_i \otimes h_i)$, and then by \ref{counit}, we have $fil(f) = fil(\sum(h_i))$; in other words, $\mu$ is still strict for the one-sided filtration.
\end{proof}

\begin{defn}
We write $\mathcal{F}_{can}$ for the canonical filtration on $\omega_S$ induced by the previous proposition.
\end{defn}
\begin{rem}
Let $S/K$ be a group in $FilIsoc_{\kappa}^K$. Then, setting \[\mathfrak{m} = \ker(identity \colon \mathcal{O}(S) \to K),\] we have $Lie(S) = (\mathfrak{m}/ \mathfrak{m}^2)^\vee$ acquires a filtration via the subspace, quotient, and dual filtration. Furthermore, this filtration agrees with $\mathcal{F}_{can}Lie(S)$ (from the adjoint representation) due to Proposition \ref{filtration}. Now if $\mathcal{O}(S)$ is weakly admissible, then $\mathcal{F}_{can} Lie(S)$ is also weakly admissible. To see this, we can use the result of Fontaine and Colmez that every weakly admissible filtered $\phi$-module is actually admissible, i.e. comes from a crystalline representation.

In particular, $\mathcal{O}(S)$ comes from a crystalline representation $H$ of $G_{K}$. This representation is in fact a commutative Hopf algebra, because the functor \[V_{crys} \colon MF^{\phi}_K \to G_K-Rep/\mathbb{Q}_p\] (which is the inverse of $D_{crys}$) is functorial and preserves tensor products. Now $Lie(\Spec(H))$ is crystalline, because subobjects, quotients, and duals of crystalline representations are crystalline. Finally, $D_{crys}\left(Lie(\Spec(H))\right) \otimes K = Lie(S)$, so we see that $Lie(S)$ is admissible and hence weakly admissible.
\end{rem}

Now suppose that $S$ is defined over $K$. As before, we define \[F^0S = \Spec(\mathcal{O}(S)/F^1\mathcal{O}(S)).\] Recall the existence of a parabolic subgroup $P$ that sends the filtration on the fiber functor to itself.
\begin{prop} \label{parabolic}
If $\omega_S$ is given the filtration $\mathcal{F}_{can}$, we have an equality $F^0S = P$.
\end{prop}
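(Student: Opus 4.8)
The plan is to show both inclusions $F^0S \subseteq P$ and $P \subseteq F^0S$ as closed subgroup schemes of $S$, by unwinding the two descriptions of the canonical filtration: $F^0S = \Spec(\mathcal{O}(S)/F^1\mathcal{O}(S))$ on one hand, and $P = P_{\mathcal{F}_{can}}$, the stabilizer of the filtration $\mathcal{F}_{can}$ on the fiber functor $\omega_S$, on the other. The key bridge is Proposition \ref{filtration}: the filtration $\mathcal{F}_{can}$ on $\omega_S$ is precisely the one that, when $S$ acts on $\mathcal{O}(S)$ by right translation, reproduces the given filtration on $\mathcal{O}(S)\otimes_{K_0}K$. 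So in both directions I would test membership on the regular representation $V = \mathcal{O}(S)$ and use that representations of $S$ are subquotients (indeed, by reductivity, subobjects of sums) of copies of the regular representation, so that preserving $\mathcal{F}_{can}$ on $\mathcal{O}(S)$ forces it on every $V \in \Rep(S)$ by functoriality of $\mathcal{F}_{can}$.

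First I would show $P \subseteq F^0S$. An $R$-point $a \in P(R)$ is an automorphism of $\omega_S\otimes R$ preserving $\mathcal{F}^i V_K \otimes R$ for all $V,i$; applied to $V = \mathcal{O}(S)$, this says the corresponding element $g_a \in S(R)$ acts on $\mathcal{O}(S)_K \otimes R$ by right translation while preserving $F^\bullet \mathcal{O}(S)_K\otimes R$. In particular $g_a$ preserves $F^1\mathcal{O}(S)_K$; but by the assumption $F^0\mathcal{O}(S)\otimes K = \mathcal{O}(S)\otimes K$ and the construction in Proposition \ref{filtration}, $F^0 V_K$ for $V = \mathcal{O}(S)$ is cut out via the comultiplication, and one checks that an element preserving $F^1\mathcal{O}(S)_K$ under right translation necessarily kills the image of $F^1$ in the quotient $\mathcal{O}(S)/F^1\mathcal{O}(S)$ — i.e.\ factors through $\Spec(\mathcal{O}(S)/F^1\mathcal{O}(S)) = F^0S$. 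Here I would use strictness of the comultiplication (hypothesized in Proposition \ref{filtration}) to pass between the one-sided and two-sided tensor filtrations, exactly as in the proof of Proposition \ref{filtration}. Conversely, for $F^0S \subseteq P$: an $R$-point of $F^0S$ is an $R$-algebra map $\mathcal{O}(S)/F^1\mathcal{O}(S) \to R$, equivalently a $g \in S(R)$ whose associated right-translation action on $\mathcal{O}(S)_K$ sends $F^1$ into the augmentation-trivial part; using the comultiplication formula $\mu(f) = \sum g_i \otimes h_i$ with $fil(f) = fil(\sum h_i)$ from the proof of \ref{filtration}, one sees directly that such $g$ preserves $F^\bullet\mathcal{O}(S)_K$, hence (by subquotients) preserves $\mathcal{F}_{can}$ on every $V$, hence lies in $P_{\mathcal{F}_{can}} = P$.

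Both inclusions being closed immersions of group schemes over the field $K$, equality of $R$-points for all $R$ gives $F^0S = P$ as schemes, and compatibility with the group laws is automatic from the translation description. I expect the main obstacle to be the careful bookkeeping in the first inclusion: translating ``$a$ preserves $\mathcal{F}^i$ on $\omega_S$'' into ``$g_a$ annihilates $F^1\mathcal{O}(S)$ in the quotient'' requires matching the filtration produced by Proposition \ref{filtration} (defined via the injective coaction $V \to V\otimes\mathcal{O}(S)$) with the naive quotient filtration on $\mathcal{O}(S)/F^1\mathcal{O}(S)$, and this is exactly where the strictness hypothesis on $\mu$ does the work — one has to be sure that the one-sided (trivial-filtration-on-the-left) tensor product filtration computes the same thing. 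Everything else is formal Tannakian manipulation plus the reductivity of $S$ (to reduce from all representations to the regular one), which are available from the earlier sections.
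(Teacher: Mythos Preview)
Your plan is essentially right, and in fact more complete than the paper's own proof, which only argues the inclusion $F^0S \subseteq P$. For that direction the paper works directly with an arbitrary representation $V$: using strictness of the coaction $V_R \hookrightarrow V_R \otimes \mathcal{O}(S_R)$ (for the one-sided filtration, deduced from strictness of $\mu$ exactly as in Proposition~\ref{filtration}) one gets
\[
F^iV_R \;=\; V_R \cap \Bigl(\textstyle\sum_{j+k=i,\; j,k\ge 0} F^kV_R \otimes F^j\mathcal{O}(S_R)\Bigr),
\]
so if $g^*$ annihilates $F^1\mathcal{O}(S_R)$ then $(1\otimes g^*)$ visibly sends this into $F^iV_R$. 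Your detour through the regular representation followed by subquotients and reductivity is correct but unnecessary: the strictness argument already applies uniformly to every $V$.

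For the inclusion $P \subseteq F^0S$, which the paper simply omits, your sketch is on the right track but vague at the decisive step. The clean argument is this: for $g \in P(R)$, right translation $R_g$ preserves $F^\bullet\mathcal{O}(S)_R$ (take $V=\mathcal{O}(S)$, whose canonical filtration is the given one by Proposition~\ref{filtration}); the counit identity $g^* = \epsilon \circ R_g$, together with the fact that $\epsilon$ is a morphism of filtered modules and hence $\epsilon(F^1\mathcal{O}(S)) \subseteq F^1K = 0$, then gives $g^*(F^1\mathcal{O}(S)) = 0$, i.e.\ $g \in F^0S(R)$. Your phrase ``necessarily kills the image of $F^1$ in the quotient'' and your appeal to strictness do not quite name this; the work here is done by the counit being filtered, not by strictness of $\mu$.
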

\begin{proof}
Let $V_R$ be an $R$-representation of $S_R$, for $R$ a $K$-algebra. Then if $v \in F^iV_R$ and $g \in S(R)$ corresponding to $g^* \colon \mathcal{O}(S) \to R$, the action of $g$ is given by the composition 
\begin{align}
V_R \hookrightarrow V_R \otimes_R \mathcal{O}(S_R) \overset{1 \otimes g^*} \to V_R 
\end{align}
The key point is that the comodule map is strict for filtrations, even when $V_R \otimes \mathcal{O}(S_R)$ is given $F_{can}$ and $\mathcal{O}(S_R)$ has its given filtration, which one checks using strictness of the coproduct for $S$. Then we have \[F^iV_R = V_R \cap (\bigoplus_{j+k = i \text{ and } j,k \geq 0} F^k V_R \otimes F^j\mathcal{O}(S_R) ).\] If $g^*$ vanishes on $F^1\mathcal{O}(S_R)$, then $F^iV_R$ is sent to $F^i V_R$ by $g^*$, and we are done.
\end{proof}
In other words, $S/F^0S$ is the flag variety parameterizing filtrations on $\omega_S$ of type $\mathcal{F}_{can}$.

\subsection{Sharpening the definition of admissible torsors even more}
We now define a concept of weak admissibility for torsors.

Let $S$ be an augmented group scheme over $Isoc_{\kappa}$, and $E$ a finite extension of $K$. Consider $P$, a torsor for $S_E$ with reductions of structure to $S^{\phi=1}_E$ and $F^0S_E$. Denote these reductions of structure by $P^{\phi=1}$ and $F^0P$ and take $E$-trivializations $p^{crys}$ and $p^H$, respectively.

Finally, let $s \in S(E)$ denote the element $(p^{crys})^{-1} p^H$. The filtration $\mathcal{F}_{can}$ on $\omega_{S}(Lie(S))$ can be shifted by $s$: Let $\mathcal{F}_s$ denote the new filtration which is given by \[\mathcal{F}_s^i(Lie(S)) = Image(ad(s) \colon \mathcal{F}^i_{can} Lie(S) \to Lie(S)).\] (Here $ad$ means the adjoint action of a group on its Lie algebra.) In fact, this new filtration obviously extends to the whole of $\omega_S$. Recall that $Lie(S)$ has a semi-linear action of Frobenius, just because $S$ is an augmented group scheme.

\begin{defn}
We say that $P$ is weakly admissible if $Lie(S)$, equipped with the filtration $\mathcal{F}_s$ and this Frobenius action, is weakly admissible in the sense of Definition \ref{weakdefn}.
\end{defn}

\begin{rem} \label{independentchoice}
It is not hard to see that the notion of weak admissibility does not depend on the choice of trivializations for the reductions of structure: Suppose another choice results in the filtration $\mathcal{F}_{s'}$. Then $s' = (x^{crys})^{-1}s (x^H)$, for $x^{crys} \in S^{\phi=1}$ and $x^H \in F^0(S)$. First, $ad(x^H)$ preserves the filtration by Proposition \ref{parabolic} (thus it doesn't even change the induced filtration). Second, $ad(x^{crys})$ is Frobenius-equivariant by definition (i.e. it is a usual map of filtered $\phi$-modules); but then the filtration given by $Image(ad(x^{crys}) \colon F^i Lie(S) \to Lie(S))$ is weakly admissible if and only if $F$ is, for any filtration $F$. (This is the exact same argument as Proposition \ref{preservation}, so we leave it there.)
\end{rem}

We may now state the sharpest admissibility criterion that we will use in this work. 
\begin{defn} 
Let $\mathcal{G}$ be a group scheme in the category $FilIsoc_{\kappa}^{K}$; furthermore, fix a map $$\pi \colon \mathcal{G} \to S,$$ $S$ a reductive group which is an augmented group scheme in the same category. (We require $\pi$ to be a map of schemes in filtered isocrystals.) Finally, let $P$ be a $\mathcal{G}$-torsor in $K'$-filtered $\phi$-modules for some $K'/K$. 

We say that $P$ is $\pi-$doubly admissible if $P$ is admissible in the sense of having reductions of structure to $F^0\mathcal{G}$ and $\mathcal{G}^{\phi=1}$, and if furthermore $P_S$, the pushforward of $P$ along $\pi$, is weakly admissible in the sense just defined. In general we will suppress $\pi$ from our notation and just say that $P$ is doubly admissible.
\end{defn}

\begin{rem}
A well-known result of Colmez and Fontaine \cite{fontaine2000construction} says that a filtered $\phi$-module is weakly admissible if and only if it is of the form $D_{crys}(V)$ for $V$ a crystalline representation. Thus we could just as easily say that the second condition for double admissibility is that the pushout $P_S$ is admissible, in the sense that $\mathcal{F}_sLie(S)$ is admissible as a filtered $\phi$-module.
\end{rem}

\subsection{The de Rham moduli space: moduli of doubly admissible torsors}

Armed with the notion of double admissibility for torsors, we refine the notion of the de Rham moduli space. But first, an elementary observation.
\begin{prop} \label{preservation}
Let $(S,\mathcal{N})$ be a PD-pair in filtered $\phi$-modules over $K$. The rigid subspace $\breve{\mathcal{F}}^{wa}(S, \mathcal{N})$ of the flag variety is preserved by the natural action of the subgroup $S^{\phi=1}$.
\end{prop}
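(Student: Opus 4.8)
The statement to prove is that the weakly admissible locus $\breve{\mathcal{F}}^{wa}(S,\mathcal{N})$ inside the flag variety $\mathcal{F}(S,\mathcal{N})$ is stable under the natural left action of $S^{\phi=1}$. Here a point of $\mathcal{F}(S,\mathcal{N})$ over $K'$ is a $\mathbb{Q}$-filtration $\mathcal{F}$ of type $\mathcal{N}$ on $\omega_S$, and the group $S$ acts on such filtrations by transport of structure via the adjoint representation on $Lie(S)$ (more precisely, $g$ sends $\mathcal{F}$ to $\mathcal{F}_g$ with $\mathcal{F}_g^i(Lie(S)) = \mathrm{ad}(g)(\mathcal{F}^i(Lie(S)))$, exactly the shift construction used just before Definition of weak admissibility for torsors). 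By definition, $\mathcal{F}$ lies in $\breve{\mathcal{F}}^{wa}$ precisely when the filtered isocrystal $(Lie(S),\mathcal{F},\phi)$ — with the $\phi$ coming from the augmented-group-scheme structure — is weakly admissible in the sense of Definition \ref{weakdefn}, i.e.\ semistable of degree zero.

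\textbf{Key steps.} First I would record the precise description of the $S$-action on filtrations via $\mathrm{ad}$, so that for $g \in S^{\phi=1}(K')$ the new filtered isocrystal is $(Lie(S), \mathrm{ad}(g)\mathcal{F}, \phi)$ with the \emph{same} underlying isocrystal $(Lie(S),\phi)$. Second, the crucial point: since $g \in S^{\phi=1}$, the automorphism $\mathrm{ad}(g)$ of $Lie(S)$ commutes with $\phi$ (because $\phi$ acts on $Lie(S)$ through the canonical action of $\mathbb{D}_{isoc}$ composed with $S$, and $g$ being $\phi$-fixed means $\mathrm{ad}(g)$ is $\phi$-equivariant — this is literally the statement that $\mathrm{ad}(g)$ is an isomorphism in the category $Isoc_\kappa$). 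Third, I would then observe that $\mathrm{ad}(g)$ gives an \emph{isomorphism of isocrystals} $(Lie(S),\phi) \xrightarrow{\sim} (Lie(S),\phi)$ carrying the filtration $\mathcal{F}$ to $\mathrm{ad}(g)\mathcal{F}$; hence the two filtered isocrystals $(Lie(S),\mathcal{F},\phi)$ and $(Lie(S),\mathrm{ad}(g)\mathcal{F},\phi)$ are isomorphic as objects of $FilIsoc_\kappa^{K'}$. Fourth, weak admissibility (semistability plus degree zero in the sense of Definition \ref{weakdefn}) is visibly an isomorphism-invariant notion: degree and slope of a filtered isocrystal depend only on the isomorphism class, and the semistability condition quantifies over all sub-isocrystals, which are transported bijectively (with matching slopes) under an isomorphism. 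Therefore $(Lie(S),\mathcal{F},\phi)$ is weakly admissible iff $(Lie(S),\mathrm{ad}(g)\mathcal{F},\phi)$ is, which is exactly the claim. One also needs to check that $\mathcal{F}_g$ still has type $\mathcal{N}$, i.e.\ stays in the flag variety $\mathcal{F}(S,\mathcal{N})$ at all — but this is automatic since conjugation by $g \in S$ preserves the conjugacy class $\mathcal{N}$ of $\mathbb{Q}$-1PS, so $\mathcal{F}_g$ is a filtration of type $\mathcal{N}$ and the action of $S$ on $\mathcal{F}(S,\mathcal{N})$ is well-defined in the first place.

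\textbf{Main obstacle.} The only real subtlety — and where I would be most careful — is pinning down why $\mathrm{ad}(g)$ for $g \in S^{\phi=1}$ is genuinely a morphism in $Isoc_\kappa$, i.e.\ $\phi$-equivariant on $Lie(S)$. This uses the definition of augmented group scheme: the Frobenius on $\mathcal{O}(S)$ (hence on $Lie(S) = (\mathfrak{m}/\mathfrak{m}^2)^\vee$) is the one coming from the canonical action of $\mathbb{D}_{isoc}$, and the compatibility axiom says this agrees with conjugation by the map $\mathbb{D}_{isoc} \to S$; consequently $\phi$ on $Lie(S)$ is $\sigma$-semilinear and the adjoint action of any $\phi$-fixed element of $S$ intertwines it. Once this is spelled out, the rest is the essentially formal observation that weak admissibility is invariant under isomorphism in $FilIsoc_\kappa^{K'}$. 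I would also remark — as the paper does in Remark \ref{independentchoice} — that this is the same argument that shows the double-admissibility notion for torsors is independent of the choice of trivializations, so it could alternatively be cited from there.
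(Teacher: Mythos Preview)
Your proposal is correct and follows essentially the same approach as the paper. The paper's proof is a slightly more hands-on version of yours: rather than packaging the argument as ``$\mathrm{ad}(g)$ is an isomorphism in $FilIsoc_\kappa^{K'}$ and weak admissibility is isomorphism-invariant,'' it spells this out concretely by noting that $\phi(sw)=s\phi(w)$ for $s\in S^{\phi=1}$, so sub-isocrystals of $Lie(S)$ are in bijection with their $s$-translates, and both Hodge and Newton polygons are preserved under this bijection --- but this is exactly your steps 2--4 unpacked.
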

\begin{proof}
Let $s \in S^{\phi=1}$, and consider $Lie(S)$ with some filtration $\mathcal{F}$ on the fiber functor of $S$ of type $\mathcal{N}$. Let $W$ be a sub-isocrystal of $Lie(S)$; the $\phi=1$ condition implies that $\phi(sw) = s\phi(w)$ for $w \in W$. Thus sub-isocrystals of $W$ are in bijection with sub-isocrystals of $s(W)$. It is obvious that $W$ and $s(W)$ have the same Hodge polygon, since their filtrations have the same type.

The Newton polygon of any such $W$ is also fixed by $S^{\phi=1}$: since $\phi$ acts on $s(W)$ via $\phi(sw) = s\phi(w)$, the slopes of $s(W)$ are the same as those of $W$. Thus the Newton polygon of $s(W)$ lies over its Hodge polygon iff the same is true of $s(W)$.
This shows that $(Lie(S), \mathcal{F})$ is weakly admissible iff $(sLie(S), s\mathcal{F})$ is.
\end{proof}

\begin{defn}
Consider a smooth $K$-variety $X$ with good reduction and a relative smooth normal crossing compactification, a vector bundle $\mathcal{L}^{dR}$ on $X$ with reductive monodromy and nilpotent local monodromy, and a basepoint $b \in X(K)$. Consider Hain/Olsson's filtration on the coordinate rings of $\pi_1^{dR,rel}$ and $S^{dR}$, and let $(S^{dR},\mathcal{N})$ be the PD-pair associated to $\mathcal{F}_{can}$. We define the de Rham moduli space $\mathcal{M}^{dR}$ associated to this data as the pullback
\[
\begin{tikzcd}
\mathcal{M}^{dR} \arrow{r} \arrow{d} & \left[ (\pi_1^{rel,dR})^{\phi=1} \backslash \pi_1^{dR,rel}/ F^0\pi_1^{dR,rel} \right] \arrow{d} \\
\left[ (S^{dR})^{\phi=1} \backslash \breve{\mathcal{F}}(S^{dR}, \mathcal{N})^{wa} \right] \arrow{r} & \left[ (S^{dR})^{\phi=1} \backslash S^{dR}/ F^0S^{dR} \right] 
\end{tikzcd}
\]
We similarly define $\mathcal{M}^{dR}_n$ for each of the finite-dimensional quotients $\mathcal{G}^{dR}_n$.
\end{defn}

The following statement follows easily from the earlier description of admissible torsors and the definition of $\mathcal{M}^{dR}$.
\begin{prop}
The stack $\mathcal{M}^{dR}$ is the moduli space of doubly admissible $\pi_1^{rel,dR}$-torsors with respect to the natural quotient $\pi_1^{rel,dR} \to S^{dR}$.
\end{prop}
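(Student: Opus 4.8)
The plan is to combine three facts already established in the text: (1) the description of $\mathcal{M}^{dR}$ as a pullback; (2) the earlier proposition identifying $[\mathcal{G}^{\phi=1,an}\backslash\mathcal{G}^{an}/F^0\mathcal{G}^{an}]$ with the moduli space of admissible $\mathcal{G}$-torsors (Proposition immediately following the definition of admissibility); and (3) the definition of weak admissibility for torsors via the shifted filtration $\mathcal{F}_s$, together with Proposition \ref{preservation}, which says $\breve{\mathcal{F}}^{wa}(S^{dR},\mathcal{N})$ is $S^{\phi=1}$-stable and hence descends to a genuine substack of $[(S^{dR})^{\phi=1}\backslash S^{dR}/F^0S^{dR}]$. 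So I would first unwind the fibered product defining $\mathcal{M}^{dR}$ on $T$-points (locally in the Tate topology, where all torsors trivialize, using Kneser as in Proposition \ref{admissible}): a $T$-point of $\mathcal{M}^{dR}$ is a $T$-point $g \in [\mathcal{G}^{\phi=1}\backslash\mathcal{G}/F^0\mathcal{G}](T)$ together with the constraint that its image in $[(S^{dR})^{\phi=1}\backslash S^{dR}/F^0S^{dR}](T)$ lands in the weakly admissible locus $[(S^{dR})^{\phi=1}\backslash\breve{\mathcal{F}}(S^{dR},\mathcal{N})^{wa}]$.

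Next I would translate each piece into torsor language. By the admissibility moduli proposition, the $T$-point $g$ corresponds to an admissible $\mathcal{G}_T$-torsor $P$, i.e. one with reductions of structure to $F^0\mathcal{G}$ and $\mathcal{G}^{\phi=1}$; writing $g = (p^{crys})^{-1}p^H$ for local trivializations of those reductions recovers exactly the recipe used in the definition of weak admissibility of torsors, with $s$ the image of $g$ under $\pi\colon\mathcal{G}\to S^{dR}$. The pushforward $P_{S^{dR}}$ is then the $S^{dR}_T$-torsor classified by the image of $g$, and its associated shifted filtration on $\mathrm{Lie}(S^{dR})$ is precisely $\mathcal{F}_s$. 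By Proposition \ref{parabolic}, $F^0S^{dR}$ is the parabolic $P_{\mathcal{F}_{can}}$, so the coset $[g] \in [(S^{dR})^{\phi=1}\backslash S^{dR}/F^0S^{dR}]$ is exactly the $(S^{dR})^{\phi=1}$-orbit of a filtration of type $\mathcal{N}$, and the condition that this coset lie in $\breve{\mathcal{F}}^{wa}$ is — by the very definition of weak admissibility for torsors and Remark \ref{independentchoice} (independence of the choice of trivializations) — the condition that $\mathcal{F}_s\,\mathrm{Lie}(S^{dR})$ be weakly admissible. Hence $P$ is doubly admissible in the sense of the definition. Conversely, any doubly admissible $\pi_1^{rel,dR}$-torsor produces, via its two reductions of structure, a local point of $[\mathcal{G}^{\phi=1}\backslash\mathcal{G}/F^0\mathcal{G}]$ whose $S^{dR}$-image lands in $\breve{\mathcal{F}}^{wa}$, i.e. a point of $\mathcal{M}^{dR}$.

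Finally I would check that these mutually inverse assignments are compatible with isomorphisms of torsors and with restriction along $T'\to T$, so that they glue to an equivalence of stacks after stackification — this is the same bookkeeping as in the proof of the earlier admissibility proposition, now carried out fibered over the weakly admissible flag substack. I expect the only real subtlety to be verifying that the weak admissibility condition on $P_{S^{dR}}$ genuinely matches membership in $\breve{\mathcal{F}}^{wa}$ after passing to the $(S^{dR})^{\phi=1}$-quotient: this is where one needs Remark \ref{independentchoice} to see the condition is well-defined on orbits, and Proposition \ref{preservation} to see the weakly admissible locus is $(S^{dR})^{\phi=1}$-stable so the quotient stack makes sense. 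Everything else — the pullback unwinding and the torsor/coset dictionary — is routine given the cited results, so the proof is essentially an exercise in assembling them.
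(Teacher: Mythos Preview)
Your proposal is correct and follows exactly the route the paper indicates: the paper does not give a detailed argument but simply remarks that the statement ``follows easily from the earlier description of admissible torsors and the definition of $\mathcal{M}^{dR}$,'' and your write-up is precisely a careful unpacking of that sentence using the cited ingredients (the admissibility moduli proposition, the pullback definition, Proposition~\ref{parabolic}, Remark~\ref{independentchoice}, and Proposition~\ref{preservation}).
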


We have the following fact, which follows from Proposition \ref{admissible} and the fact that the fibers of an admissible vector bundle are admissible (and thus weakly admissible) isocrystals.
\begin{cor}
Preserve the notations of the previous definition. If $\mathcal{L}^{dR}$ is admissible in the sense of Fontaine, then the relative de Rham path torsors $P_{b,z}^{dR}$, with $b,z \in X(K)$, are doubly admissible.
\end{cor}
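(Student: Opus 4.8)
The plan is to combine Proposition~\ref{admissible}, which already produces the two reductions of structure, with the functoriality of Fontaine's admissibility along the quotient $\pi\colon \pi_1^{rel,dR}\to S^{dR}$, which will take care of the remaining weak admissibility condition on the pushforward torsor.

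First I would invoke Proposition~\ref{admissible}: under the standing hypotheses (in particular $S^{dR}$ and $(\pi_1^{rel,dR})^{\phi=1}$ semisimple and simply connected, which I carry over) the torsor $P \defeq P^{dR}_{b,z}$ is admissible, i.e.\ it comes equipped with reductions of structure group $P^{\phi=1}$ and $F^0P$ to $(\pi_1^{rel,dR})^{\phi=1}$ and $F^0\pi_1^{rel,dR}$; fix $K$-trivializations $p^{crys}$ and $p^H$ of these, which exist by the Kneser-type argument in that proof. Pushing $P$ forward along $\pi$ yields the $S^{dR}$-torsor $P_S$, which by construction is the de Rham path torsor $\Isom^{\otimes}(\omega_b,\omega_z)$ of the Tannakian category $\langle\mathcal{L}^{dR}\rangle_{\otimes}$, together with the images $\pi(p^{crys})$, $\pi(p^H)$ of the two trivializations and hence the element $s = \pi(p^{crys})^{-1}\pi(p^H)\in S^{dR}(K)$. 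By definition of double admissibility it remains only to show that $(Lie(S^{dR}),\mathcal{F}_s,\phi)$ is weakly admissible.

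Next I would identify this filtered $\phi$-module with the fiber at $z$ of the adjoint bundle $\mathrm{ad}(\mathcal{L}^{dR})$ of $\langle\mathcal{L}^{dR}\rangle_{\otimes}$. Indeed, by Proposition~\ref{parabolic} the canonical filtration $\mathcal{F}_{can}$ on $\omega_{S^{dR}}$ is the Hain/Olsson Hodge filtration at the basepoint $b$, so $\mathcal{F}_{can}Lie(S^{dR})$ is the Hodge filtration on $\mathrm{ad}(\mathcal{L}^{dR})_b$; since acting by $s$ is exactly the de Rham transport from $b$ to $z$, the shifted filtration $\mathcal{F}_s Lie(S^{dR})$ is the Hodge filtration on $\mathrm{ad}(\mathcal{L}^{dR})_z$, and the Frobenius carried by $Lie(S^{dR})$ through its augmented-group-scheme structure (transported via $p^{crys}$) is the crystalline Frobenius of the corresponding isocrystal. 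Now $Lie(S^{dR})$ is the adjoint representation of the monodromy group, hence a subquotient of a sum of tensor powers of $\mathcal{L}^{dR}$ and its dual; since $\mathcal{L}^{dR}$ is admissible in the sense of Fontaine its fibers at good-reduction $K$-points are admissible (hence, by Colmez--Fontaine, weakly admissible) filtered $\phi$-modules, and admissible filtered $\phi$-modules are closed under tensor products, duals, subobjects and quotients. Therefore $\mathrm{ad}(\mathcal{L}^{dR})_z \simeq (Lie(S^{dR}),\mathcal{F}_s,\phi)$ is admissible, hence weakly admissible; by Remark~\ref{independentchoice} this is independent of the chosen trivializations. Thus $P_S$ is weakly admissible and $P^{dR}_{b,z}$ is $\pi$-doubly admissible, which is the claim.

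The main obstacle I expect is the identification in the third paragraph: matching the abstract torsor-theoretic data $(Lie(S^{dR}),\mathcal{F}_s,\phi)$ with the concrete filtered isocrystal $\mathrm{ad}(\mathcal{L}^{dR})_z$, i.e.\ checking that translating $\mathcal{F}_{can}$ by $s=\pi(p^{crys})^{-1}\pi(p^H)$ really reproduces the Hodge filtration at $z$ and that the transported Frobenius is the geometric one. This requires unwinding Olsson's comparison isomorphism $\mathcal{G}^{crys}\otimes K \simeq \mathcal{G}^{dR}$ and its compatibility with the adjoint representation, together with the moduli interpretations of $\mathcal{G}/F^0\mathcal{G}$ and $\mathcal{G}^{\phi=1}\backslash\mathcal{G}$ established earlier; once this bookkeeping is in place, closure of admissibility under the Tannakian operations finishes the argument at once.
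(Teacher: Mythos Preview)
Your proposal is correct but follows a genuinely different route from the paper's proof. The paper gives a two-line argument: the existence of the two reductions of structure is already contained in Proposition~\ref{admissible}, and for the weak admissibility of the pushout $P_S$ it simply forward-references Proposition~\ref{logisadm}, using that the \emph{\'etale} path torsor $P^{\acute{e}t}_{b,z}$ lies in $H^1_f(G_K,\pi_1^{rel,\acute{e}t})$ and that $D_{dR}(P^{\acute{e}t}_{b,z}) \simeq P^{dR}_{b,z}$ by Olsson's comparison. In other words, the paper goes through the \'etale side and the crystalline period ring, and the weak admissibility of $P_S$ is ultimately inherited from $\mathcal{O}(P^{dR}_S)$ being in the essential image of $D_{crys}$.

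Your argument stays entirely on the de Rham side and is more self-contained: you identify $(Lie(S^{dR}),\mathcal{F}_s,\phi)$ with the fiber at $z$ of the adjoint bundle in $\langle\mathcal{L}^{dR}\rangle_\otimes$, and then invoke closure of admissibility under tensor operations, subobjects and quotients. The identification does hold, but your phrasing ``acting by $s$ is exactly the de Rham transport from $b$ to $z$'' is not quite accurate: $s=(p^{crys})^{-1}p^H$ lives in $S^{dR}(K)$ at the basepoint, and the correct statement is that transporting $(Lie(S^{dR})_b,\phi_b,\mathcal{F}_s)$ along the $\phi$-equivariant $ad(p^{crys})$ lands you on $(Lie(S^{dR})_z,\phi_z,ad(p^H)\mathcal{F}_{can})$, which is the adjoint fiber at $z$ with its Hodge filtration. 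You flag exactly this bookkeeping as the main obstacle, and once it is written out carefully your argument goes through. What you gain is a proof that uses the hypothesis ``$\mathcal{L}^{dR}$ is admissible'' directly and avoids the forward reference to~\ref{logisadm}; what the paper gains is brevity and a uniform treatment (the corollary becomes a special case of~\ref{logisadm}, which is needed anyway for the Bloch--Kato logarithm).
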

\begin{proof}
We have already discussed the existence of Hodge and Frobenius reductions of structure for $P = P_{b,z}^{dR}$, so we discuss only the weak admissibility of the pushout $P_S$.

The rest of the proposition will follow from Proposition \ref{logisadm}, since $P^{\acute{e}t}_{b,z}$ lands in $H^1_f(G_K, \pi_1^{rel, \acute{e}t})$.

\end{proof}

\section{Analyticity of the Albanese for the relative completion; the crystalline fundamental group} \label{analyticity}
We call the map 
\begin{align}
j \colon X(\mathcal{O}_K) \to \mathcal{M}^{dR} \\
z \mapsto P^{dR}_{b,z}
\end{align}
 the relatively unipotent Albanese map, and we write $j_n$ for its finite-level versions. It is essential for the Chabauty--Kim method that the Albanese map be $p$-adically analytic. By this we mean that for every point $z \in X(\mathcal{O}_K)$, there is a $p$-adic disc $D$ around $z$ and a lift $D \to \mathcal{G}^{dR}_n(K)$ that is given by convergent power series after embedding $G^{dR}_n$ in affine space. We follow Kim's general strategy \cite[pp. 11-12, bottom of p.14-top of p.15]{kim2005unipotent}. We suppose, as always, that $\mathcal{L}^{dR}$ has unipotent local monodromy and reductive global monodromy and $X$ has good reduction and is the complement of a smooth normal crossings divisor over $\mathcal{O}_K$.

Fix a base point $b \in X(\mathcal{O}_{K})$, with $\bar b \in X_0(\kappa)$ in the special fiber of (our smooth model over $\mathcal{O}_{K}$ of) $X$.
\begin{prop}
The maps $j_n$ are $p$-adically analytic.
\end{prop}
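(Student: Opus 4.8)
The plan is to work one residue disc at a time and exhibit an explicit analytic lift, following Kim's strategy \cite[pp. 11--15]{kim2005unipotent} but tracking the reductive quotient as well. Fix $z_0 \in X(\mathcal{O}_K)$ with special fibre $\bar z_0$, and let $D \subseteq X(\mathcal{O}_K)$ be the residue disc of $\bar z_0$; it suffices to produce an analytic map $s \colon D \to \mathcal{G}^{dR}_n(K)$ whose composite with the quotient $\mathcal{G}^{dR}_n \to \mathcal{M}^{dR}_n$ is $j_n$. First I would set up the geometric incarnation of the fibrewise coordinate ring: there is a pro-object $\mathcal{E}_n$ on $X_K$ --- the relatively unipotent, level-$n$ analogue of Hain's bundle $\mathcal{O}(P)$ from Section 5.1 --- which is a pro-vector bundle with integrable \emph{algebraic} connection $\nabla$ and \emph{algebraic} Hodge filtration $F^\bullet$, equipped with the canonical $\mathcal{G}^{dR}_n$-coaction, whose fibre at any $z \in X(K)$ is $\mathcal{O}(P^{dR}_{b,z})$ with its Hodge structure and torsor structure. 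Because $X$ has good reduction, Olsson's comparison (Section \ref{drcrys}) endows $\mathcal{E}_n$ over the tube of the special fibre with a Frobenius structure compatible with $\nabla$.

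The key input is then that on the disc $D$ the connection $\nabla$ admits a unique $\nabla$-flat analytic trivialization $\tau \colon \mathcal{E}_n|_D \xrightarrow{\ \sim\ } \mathcal{O}(P^{dR}_{b,z_0}) \otimes \mathcal{O}_D$ normalized at $z_0$ --- this is the usual statement that the relatively unipotent differential system has convergent power-series solutions on a $p$-adic disc --- and that under $\tau$ the Frobenius-invariant trivialization $p^{crys}_{b,z}$ of the crystalline torsor $P^{crys}_{\bar b,\bar z_0}$ (which depends only on $\bar b,\bar z_0$, hence is constant on $D$, and is unique by Theorem \ref{integration} using the negativity of the Frobenius weights on $\mathfrak{u}^{crys}$) is carried to a constant, $\nabla$-horizontal section. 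Granting this, define $s(z) \in \mathcal{G}^{dR}_n(K)$ to be the unique element carrying the flat crystalline trivialization $\tau(p^{crys})$ to a trivialization $p^H(z)$ of $F^0 P^{dR}_{b,z}$; equivalently $s(z)$ records the position of the analytically varying filtration $\tau(F^0\mathcal{E}_n|_z)$ inside the fixed space $\mathcal{O}(P^{dR}_{b,z_0})$ relative to the fixed flat filtration. Since $F^0\mathcal{E}_n$ is an algebraic sub-bundle and $\tau$ is analytic, $z \mapsto \tau(F^0\mathcal{E}_n|_z)$ is analytic, hence so is $s$; and by construction $[s(z)] = (p^{crys}_{b,z})^{-1}p^H(z) = j_n(z)$ in $\bigl[\mathcal{G}^{\phi=1}_n\backslash \mathcal{G}^{dR}_n/F^0\mathcal{G}^{dR}_n\bigr]$, so $s$ is the required analytic lift. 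The refinement to $\mathcal{M}^{dR}_n$ uses only Propositions \ref{admissible} and \ref{preservation} and affects nothing about analyticity.

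Pushing $\mathcal{E}_n$ forward along $\mathcal{G}^{dR}_n \to S^{dR}$ replaces it by $\mathcal{L}^{dR}$ with its Gauss--Manin connection and Hodge filtration, and the same argument --- flat analytic trivialization on $D$, analytically moving Hodge flag --- recovers the classical analyticity of the $p$-adic period map $D \to S^{dR}/F^0 S^{dR}$ landing in $\breve{\mathcal{F}}(S^{dR},\mathcal{N})^{wa}$ (cf.\ the period-domain discussion and Proposition \ref{growth}). The full statement then follows by assembling along the lower central series of $\mathcal{U}_n$, the extension $1 \to \mathcal{U}^{dR}_n \to \mathcal{G}^{dR}_n \to S^{dR} \to 1$ being algebraic and $\nabla$-horizontal.

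The main obstacle is the compatibility claim used above: that the Frobenius-invariant crystalline trivialization is $\nabla$-horizontal on the residue disc, i.e.\ that Olsson's crystalline--de Rham comparison for path torsors (Section \ref{drcrys}) is compatible with the canonical connection on $\mathcal{E}_n$. In the unipotent case this is precisely what underlies Coleman integration and Kim's analyticity proof; the new point here is to check that it survives passage up the relatively unipotent tower, with the non-unipotent directions contributing only through $S^{dR}$, where the needed statement is the well-known analyticity of the $p$-adic period map. Everything else --- constructing $\mathcal{E}_n$ with its three structures, the $p$-adic ODE argument giving analyticity of $\tau$, and the deduction that $s$ is analytic from algebraicity of $F^0\mathcal{E}_n$ --- is routine bookkeeping.
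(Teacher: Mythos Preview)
Your approach is essentially the same as the paper's: both work residue-disc by residue-disc, use the crystalline--de Rham comparison $P^{dR}_{b,y}\simeq P^{crys}_{\bar b,\bar y}\otimes K$ to produce a ``crystalline'' trivialization that is constant on the disc (because it only sees the special fibre) and hence $\nabla$-horizontal, then use that $F^0P^{dR}$ is an algebraic sub-bundle of an ind-vector bundle to get an analytic Hodge trivialization, and finally compare the two via the torsor action to write down an analytic representative of $j_n$. The paper phrases the last step more explicitly via the coaction isomorphism $\mathcal{O}(P^{dR})\otimes\mathcal{O}(P^{dR}_{b,b})\simeq\mathcal{O}(P^{dR})\otimes\mathcal{O}(P^{dR})$ and reduces to the residue disc of $b$ itself (where the crystalline torsor is literally the group and the identity element supplies the trivialization), but this is cosmetic.

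One genuine imprecision to fix: your appeal to Theorem \ref{integration} for the \emph{uniqueness} of the Frobenius-invariant trivialization of $P^{crys}_{\bar b,\bar z_0}$ is incorrect. That theorem is about $\mathcal{U}^{crys}$-torsors; for a $\mathcal{G}^{crys}_n$-torsor the Frobenius-invariant trivializations form a torsor under $\mathcal{G}^{\phi=1}_n$, which by Proposition \ref{growth} has the same dimension as $Z(\phi)\subseteq S^{dR}$ and is not trivial in general. What you actually need is \emph{existence} of such a trivialization (which the paper gets from Kneser's theorem under the standing simple-connectedness hypothesis, cf.\ Proposition \ref{admissible}) together with the observation that the crystalline torsor, hence any chosen $p^{crys}$, depends only on $\bar b,\bar z_0$ and so is constant on $D$. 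Uniqueness is irrelevant: a different choice of $p^{crys}$ changes $s(z)$ by a left translate in $\mathcal{G}^{\phi=1}_n$, which is absorbed in the double quotient defining $\mathcal{M}^{dR}_n$. With that correction your argument goes through and matches the paper's.
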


\begin{proof}
To keep the notation tidy, we denote by $\mathcal{G}^{dR}$ any of its finite-level pushouts for the rest of this proof.
Consider the $\mathcal{G}^{dR}$-bundle $\mathcal{P}^{dR}$ over $X \times X$ with the property that its fiber over $(x,y)$ is $P^{dR}_{x,y} \defeq Isom(\omega^{dR}_x, \omega^{dR}_y)$. We denote by $P^{dR}$ its pullback to $\{b\} \times X$.

The analyticity will follow from the compatibility of the de Rham and crystalline functors, which is the statement that $$P^{dR}_{b,y} =  P^{crys}_{\bar b, \bar y} \otimes K$$ for any $y \in X(\mathcal{O}_{K})$. The fiber functor on $\mathscr{L} \in \mathcal{T}^{rel, crys}$ is given by $$(\mathcal{V}, \nabla) \mapsto \{ s \in \mathcal{V}(]\bar b[) \text{ such that }  \nabla(s) = 0 \}$$ after tensoring up to $K$. Here $(\mathcal{V}, \nabla)$ is a lift of $\mathscr{L}$ to a vector bundle with integrable connection on $X_K$.

It suffices to consider the analyticity of the Albanese only in the residue disc of $b$, so we take $y \in ]\bar b[$. In this case $$ P^{dR}_{b,y} \simeq P^{crys}_{\bar b, \bar y} \otimes_{K_0} K = \pi_1^{rel, crys}(X_0, \bar b) \otimes_{K_0} K.$$ The identity on the right hand side corresponds to the following path in $ \gamma \in P^{dR}_y$: for any vector bundle with integrable connection $(\mathcal{V}, \nabla) \in \mathcal{T}^{rel, dR}$ and $v_b$ in the fiber of $\mathcal{V}$ above $b$, there is a unique element $s \in \mathcal{V}(]\bar b[)$ such that $\nabla(s) = 0$ and $s(b) = v_b$. (This follows from the fact that the formal solutions to $\nabla(s) = 0$ are $p$-adically convergent in the entire residue disc, which in turn follows from the fact that $\mathcal{V}$ is an $F$-isocrystal \cite[4.24(ii)]{olsson2011towards}.) Then $\gamma(\mathcal{V})(v_b) = s(y)$.

The key point here is that $s(y)$ is $p$-adic analytic in $y$. In particular, taking the universal bundle $\mathcal{O}(G^{dR})$, we have a $p$-adic analytic section $\sigma^{crys} \in \mathcal{O}(G^{dR})(]\bar b[)$. Let us now construct a representative of $j_n(x)$ in a neighborhood of $y$ by constructing a Hodge trivialization and a crystalline trivialization and ``subtracting'' one from the other. 

Now $\mathcal{O}(F^0(P^{dR}))$ is an ind-vector bundle over $X$, so its rigid analytification is an ind-vector bundle over $X^{an}$. Now there exists a neighborhood $N \subseteq ]\bar b[$ of $y$ such that the torsor $F^0P^{dR}$ is trivial; $\mathcal{O}(F^0P^{dR})^{an}$ is trivial in that same neighborhood. Here $an$ refers to analytification of locally free sheaves. Let $\sigma^{H}$ be an analytic trivialization of $\mathcal{O}(F^0(P^{dR}))$ in $N$. Write $N \simeq Spm(K\langle T_1, \ldots, , T_r \rangle)$ Now the standard torsor coaction isomorphism gives rise to an isomorphism of ind-locally free sheaves $$\mathcal{O}(P^{dR})^{an} \hat \otimes (\mathcal{O}(P^{dR}_{b,b})^{an} \hat \otimes_K \mathcal{O}(X^{an})) \overset{\tau} \simeq \mathcal{O}(P^{dR})^{an} \hat \otimes \mathcal{O}(P^{dR})^{an}$$ 

The map $\sigma^{crys} \colon K\langle T_1, \ldots, T_r\rangle \to \mathcal{O}(P)$ allows us to pull back the torsor isomorphism to an isomorphism $$\mathcal{O}(_bP_b)^{an} \hat \otimes_K K\langle T_1, \ldots, T_r\rangle \overset{\iota^{crys}} \simeq \mathcal{O}(P)^{an} \hat \otimes_{\mathcal{O}(X^{an})} K \langle T_1, \ldots, T_r\rangle$$ over $N$. By definition, this is the map that, on the level of torsors over $N$, sends the identity section to $p^{crys}$.

We can do the same for $\sigma^H$ (replacing $D$ by $N$), and form the chain
$$(\mathcal{O}(_bP_b)^{an} \otimes_K K\langle T_1, \ldots, T_r\rangle) \overset{\iota^{crys}} \simeq \mathcal{O}(P)^{an} \otimes_{\mathcal{O}(X^{an})} K \langle T_1, \ldots, T_r\rangle) \overset{\iota^{H}} \simeq (\mathcal{O}(_bP_b)^{an} \otimes_K K\langle T_1, \ldots, T_r\rangle).$$ Now the identity of $P_{b,b}$ induces a map $$\mathcal{O}(_bP_b)^{an} \hat \otimes_K K\langle T_1, \ldots, T_r\rangle \to K\langle T_1, \ldots, T_r\rangle.$$

Post-composing this with the chain gives another rigid analytic map $$\xi^* \colon \mathcal{O}(_bP_b)^{an} \otimes_K K\langle T_1, \ldots, T_r\rangle) \to K\langle T_1, \ldots, T_r\rangle,$$ i.e a map $\xi \colon N \to (P_{b,b})^{an}$. The analyticity of $y \mapsto P^{dR}_{b,y}$ follows, since $\xi$ is a representative for $j$ in $N$.

\end{proof}

\subsection{Density of $p$-adic period mappings}
We would like to mimic Kim in showing that the map sending a point to its de Rham torsor is Zariski-dense in the de Rham period space \cite[Theorem 1]{kim2005unipotent}. We indicate the direction of a proof, but would need to do more work in defining complex-analytic Albanese maps for it to be complete. The modular curve case is presented in \cite[Section 13.1]{hain2016hodge}.

The following is a lemma of Lawrence and Venkatesh \cite[Lemma 3.2]{lawrence2018diophantine}. Let $F$ be a number field, and $v$ a place of $F$.
\begin{lem}
Suppose given power series $B_0, \ldots, B_N \in F[[z_1, \ldots, z_m]]$ such that all $B_i$ are absolutely convergent, with no common zero, both in the $v$-adic and complex discs \[U_v = \{\underline{z} : |\underline{z}|_v < \epsilon \} \text{ and } U_\mathbb{C} = \{\underline{z} : |\underline{z}|_\mathbb{C} < \epsilon\}.\]
Write \[\underline{B}_v = U_v \to \mathbb{P}^N_{F_v}\] \[\underline{B}_\mathbb{C} = U_v \to \mathbb{P}^N_{\mathbb{C}}\] for the corresponding maps. Then there exists a $K$-subscheme $Z \in \mathbb{P}^N_F$ whose base change to $F_v$ (resp. $\mathbb{C}$) is the Zariski closure of the image of $\underline{B}_v$ (resp. $\underline{B}_\mathbb{C}$.)
\end{lem}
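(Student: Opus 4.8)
The statement is really a "field of definition" assertion: both Zariski closures, living a priori over $F_v$ and over $\mathbb{C}$ respectively, are in fact base changes of one and the same subscheme $Z\subseteq\mathbb{P}^N_F$ defined over the number field. The plan is to produce $Z$ as the vanishing locus of the ideal of all homogeneous polynomials over $F$ that vanish on the power series map. Concretely, let $I\subseteq F[x_0,\dots,x_N]$ be the homogeneous ideal consisting of those $G$ such that $G(B_0,\dots,B_N)=0$ as an element of $F[[z_1,\dots,z_m]]$; set $Z=V(I)\subseteq\mathbb{P}^N_F$. This is manifestly defined over $F$, so the content is entirely in identifying its base changes.

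The key steps, in order: (1) Observe that $G\mapsto G(B_0,\dots,B_N)$ is an $F$-linear map from the (finite-dimensional, for each degree $d$) space of degree-$d$ forms to $F[[\underline z]]$; hence the degree-$d$ part $I_d$ of $I$ is the kernel of an $F$-linear map, and crucially kernels of linear maps commute with the flat base changes $-\otimes_F F_v$ and $-\otimes_F\mathbb{C}$. So $I_d\otimes_F F_v$ is exactly the space of degree-$d$ forms over $F_v$ killed by the map $G\mapsto G(\underline B)$ in $F_v[[\underline z]]$, and similarly over $\mathbb{C}$. (2) Identify the latter with the vanishing ideal of the Zariski closure of $\mathrm{im}(\underline B_v)$: a form $G$ over $F_v$ vanishes on the image of $\underline B_v$ (as a subset of $\mathbb{P}^N_{F_v}$) if and only if $G(\underline B)$, which is a convergent power series on $U_v$, vanishes identically on $U_v$, and since $U_v$ is open (Zariski-dense in no sense needed — just: a nonzero convergent power series on a polydisc is not identically zero) this is equivalent to $G(\underline B)=0$ in $F_v[[\underline z]]$. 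Thus the homogeneous ideal of $\overline{\mathrm{im}(\underline B_v)}$ is precisely $\bigoplus_d (I_d\otimes_F F_v)$, i.e.\ it equals $I\otimes_F F_v$, so $\overline{\mathrm{im}(\underline B_v)}=Z\times_F F_v$. (3) Run the identical argument verbatim over $\mathbb{C}$, using that a convergent power series on the complex polydisc $U_\mathbb{C}$ vanishing on $U_\mathbb{C}$ is the zero power series, to get $\overline{\mathrm{im}(\underline B_\mathbb{C})}=Z\times_F\mathbb{C}$. (4) Finally note that the no-common-zero hypothesis guarantees the maps $\underline B_v,\underline B_\mathbb{C}$ are actually defined (land in $\mathbb{P}^N$ rather than being undefined somewhere), which is needed only for the statement to make sense; it plays no role in the scheme-theoretic argument.

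The only mildly delicate point — and the step I'd flag as the main obstacle to watch — is the commutation of "kernel of the evaluation map" with base change, because the space of forms of bounded degree is finite-dimensional over $F$ but $F[[\underline z]]$ is infinite-dimensional; one must phrase it as: for each fixed $d$, the map $\mathrm{Sym}^d(F^{N+1})^\vee\to F[[\underline z]]$ has image inside a finitely-generated (or at least the kernel is cut out by finitely many $F$-linear conditions coming from coefficients of monomials), and then kernel-formation commutes with the flat extensions $F\to F_v$ and $F\to\mathbb{C}$. Equivalently, pick a basis of $I_d$ over $F$ and check it remains a basis of the corresponding kernel after tensoring, using that a set of power series over $F$ is $F$-linearly independent iff it is $F_v$- (resp. $\mathbb{C}$-) linearly independent, which follows from flatness. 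Everything else is a routine application of the identity theorem for several-variable power series over a complete field (archimedean or non-archimedean). I would present this as a short self-contained lemma rather than citing Lawrence--Venkatesh, since the argument is elementary and the excerpt has already invoked it by reference.
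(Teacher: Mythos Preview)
Your proof is correct. The paper does not supply its own proof of this lemma; it merely cites it as \cite[Lemma 3.2]{lawrence2018diophantine}, so there is no in-paper argument to compare against. Your approach---defining $Z$ via the homogeneous ideal $I$ of $F$-polynomial relations among the $B_i$ in $F[[\underline z]]$, then using that for each degree $d$ the kernel of the $F$-linear map $G\mapsto G(\underline B)$ commutes with the flat extensions $F\hookrightarrow F_v$ and $F\hookrightarrow\mathbb{C}$, together with the identity principle for convergent power series---is exactly the standard one and is essentially what Lawrence and Venkatesh do. One small point you could add for completeness: $I$ is automatically radical because $F[[\underline z]]$ is a domain (if $G(\underline B)^n=0$ then $G(\underline B)=0$), so $Z=V(I)$ is reduced over $F$, and in characteristic zero reducedness is preserved under field extension, ensuring $Z_{F_v}$ and $Z_{\mathbb{C}}$ really coincide with the (reduced) Zariski closures.
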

Now let $X$ be defined over $F$, and suppose that there exists a holomorphic Albanese map \[j^{hol} \colon N' \to G^{dR}_{\mathbb{C}}/F^0 G^{dR}_{\mathbb{C}}\] in a complex neighborhood of the basepoint $b \in X(\mathbb{C})$ that satisfies the following conditions: First, $j^{hol}$ is given by power series defined over $F$, and second, this power series provides a representative for the $p$-adic analytic $j_n$ in a neighborhood of $b$. 

By embedding $\mathcal{G}_n^{dR}/F^0\mathcal{G}_n^{dR}$ in projective space, we then have the following statement:
\begin{cor}
Let $f \colon Y \to X$ be a Gauss--Manin situation (cf. \ref{gm}) over a number field $F$ such that $f$ and $X$ have good reduction at a place $v$ of $F$. To prove the Zariski-density of \[j_v \colon ]\bar b[ \to \mathcal{G}_{F_v}^{dR}/F^0\mathcal{G}_{F_v}^{dR}\] in the residue disc $]\bar b[$, it suffices to show the Zariski density of the complex period map \[j^{hol} \colon N \to \mathcal{G}_{\mathbb{C}}^{dR}/F^0\mathcal{G}_{\mathbb{C}}^{dR}\] in a complex disc containing $b$.
\end{cor}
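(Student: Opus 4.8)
The plan is to reduce the $v$-adic density statement to the complex density statement by comparing the two period maps through their defining power series. First I would fix an embedding $\mathcal{G}_n^{dR}/F^0\mathcal{G}_n^{dR} \hookrightarrow \mathbb{P}^N_F$ defined over the number field $F$; this is possible because $\mathcal{G}_n^{dR}/F^0\mathcal{G}_n^{dR}$ is a quasi-projective $F$-variety (it is a finite-dimensional homogeneous space for the algebraic group $\mathcal{G}_n^{dR}$ modulo the parabolic-type subgroup $F^0$, and all of this is defined over $F$ in the Gauss--Manin situation). Composing with this embedding, both $j_v$ and $j^{hol}$ become, in a suitable coordinate patch around the basepoint $b$, given by $N+1$ homogeneous coordinate functions $B_0, \ldots, B_N$. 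By the hypothesis that $j^{hol}$ is given by power series defined over $F$ and that these same power series represent the $p$-adic analytic $j_n$ in a neighborhood of $b$ (which is precisely the content of the analyticity proposition proved just above, together with the comparison $\mathcal{G}^{dR}\otimes_K\mathbb{C}\simeq \mathcal{G}^{dR}_{\mathbb{C}}$ of the earlier section), the $B_i$ are elements of $F[[z_1,\ldots,z_m]]$ which converge absolutely in both a $v$-adic disc $U_v$ and a complex disc $U_{\mathbb{C}}$ around $b$, and have no common zero since they are projective coordinates of an actual map.

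Next I would apply the Lawrence--Venkatesh lemma quoted immediately above: there is a single $F$-subscheme $Z \subseteq \mathbb{P}^N_F$ whose base change to $F_v$ is the Zariski closure of $\operatorname{im}(\underline{B}_v)$ and whose base change to $\mathbb{C}$ is the Zariski closure of $\operatorname{im}(\underline{B}_{\mathbb{C}})$. Now suppose $j^{hol}$ has Zariski-dense image in $\mathcal{G}^{dR}_{\mathbb{C}}/F^0\mathcal{G}^{dR}_{\mathbb{C}}$; then the Zariski closure of $\operatorname{im}(\underline{B}_{\mathbb{C}})$ is all of $(\mathcal{G}^{dR}/F^0\mathcal{G}^{dR})_{\mathbb{C}}$, so $Z_{\mathbb{C}} = (\mathcal{G}^{dR}/F^0\mathcal{G}^{dR})_{\mathbb{C}}$ as subschemes of $\mathbb{P}^N_{\mathbb{C}}$. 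Since $\mathcal{G}^{dR}/F^0\mathcal{G}^{dR}$ is already an $F$-subscheme of $\mathbb{P}^N_F$ and base change to $\mathbb{C}$ is faithfully flat, this forces $Z = \mathcal{G}^{dR}/F^0\mathcal{G}^{dR}$ as $F$-subschemes of $\mathbb{P}^N_F$. Base-changing the equality $Z = \mathcal{G}^{dR}/F^0\mathcal{G}^{dR}$ to $F_v$ then shows that the Zariski closure of $\operatorname{im}(\underline{B}_v)$, i.e. of $j_v$ restricted to the residue disc $]\bar b[$, is all of $(\mathcal{G}^{dR}/F^0\mathcal{G}^{dR})_{F_v}$, which is the desired density.

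The main obstacle here is not the descent argument, which is formal, but verifying the standing hypothesis that a holomorphic Albanese map $j^{hol}$ exists, is cut out by power series over $F$, and genuinely represents the $p$-adic $j_n$ near $b$ in a common coordinate system; as the surrounding text acknowledges, a fully rigorous construction of the complex-analytic Albanese for the relative completion and the matching of its germ at $b$ with the crystalline/de Rham germ would require more work than is carried out here. Granting that input, the remaining step that deserves care is ensuring the coordinate patch around $b$ is chosen so that the \emph{same} affine chart of $\mathbb{P}^N$ is used on both the $v$-adic and complex sides and that the $B_i$ really are global elements of $F[[z_1,\ldots,z_m]]$ (not merely $F_v$- or $\mathbb{C}$-power series that happen to agree formally); this is exactly what lets one invoke the Lawrence--Venkatesh lemma, and it follows from the fact that the universal bundle $\mathcal{O}(\mathcal{G}_n^{dR})$ and its Hodge filtration are defined over $F$ in the Gauss--Manin case.
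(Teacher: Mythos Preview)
Your proposal is correct and follows essentially the same approach as the paper: embed $\mathcal{G}_n^{dR}/F^0\mathcal{G}_n^{dR}$ into projective space over $F$ and apply the Lawrence--Venkatesh lemma to the common $F$-power series representing both period maps, then use a descent argument to transfer density from $\mathbb{C}$ to $F_v$. You also correctly identify the standing hypothesis (existence and $F$-rationality of the power series for $j^{hol}$ matching the $p$-adic germ) as the genuine input, which the paper likewise flags as requiring further work.
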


From here it is likely that one could imitate Brown's proof from \cite[Section 4.4.1]{brown2017class} to prove linear-independence of iterated integrals. Given this linear independence, it is not far to Zariski-density \cite[Theorem 1]{kim2005unipotent}.

\chapter{Applications to Fundamental Groups}

\subsection{Cutting down the Selmer stack in the Gauss--Manin case}

We now come to the definition of $H^{1,type}(G,S)$ that will allow us to demonstrate a small sliver of algebraicity of the Selmer stack; this condition is the \textit{weight-restricted} global Selmer stack. Consider the pushout, then the untwisting map, then the restriction to residue fields, as follows:
$$H^{1, \bold{v}}_{f, \bar D}(G_T, \mathcal{G}^{rel,et}) \to H^{1, \bold{v}}_{f, \bar D}(G_T, S^{et}) \to \mathcal{R}ep^{\bold{v}}_{\bar D}(G_T) \to \prod_{v \not \in T} \mathcal{R}ep_{\bar D}(Gal(\bar \kappa_v/\kappa_v)).$$

Write $\mathcal{R}ep^{w}_d(G_{\kappa_v})$ for the groupoid of $d$-dimensional families $V/Y$ of representations of the absolute Galois group of the residue field of $F_v$ whose Frobenius eigenvalues at closed points of $Y$ are algebraic integers and have complex absolute value $|\kappa_v|^{w/2}$. (These representations are said to have Frobenius weight $w$.) Write $\mathcal{R}ep^{w}_{\bar D}(Gal(\bar \kappa_v/\kappa_v))$ for those with a fixed residual pseudorepresentation.

Similarly, for a fixed $w$ we have the substack $\mathcal{R}ep^{w}_{\bar D}(G_{F,T})$ of $\mathcal{R}ep_{\bar D}(G_{F,T})$. This is the stack of global representations which are pure of weight $w$, i.e. representations all of whose restrictions to decomposition groups outside of $T$ land in the stacks $\mathcal{R}ep^{w}_{\bar D}(Gal(\bar \kappa_v/\kappa_v))$.

\begin{defn}
The \textit{weight-restricted} global Selmer stack with weight $w$, $p$-adic Hodge type $\bold{v}$, and residual pseudorepresentation $\bar D$ is given by $$H^{1, \bold{v}, w}_{f, \bar D}(G_T, \mathcal{G}^{rel,et}) \defeq H^{1, \bold{v}}_{f, \bar D}(G_T, \mathcal{G}^{rel,et}) \times_{\prod_{v \not \in T} \mathcal{R}ep_{\bar D}(Gal(\bar \kappa_v/\kappa_v))} \prod_{v \not \in T} \mathcal{R}ep_{\bar D}(Gal(\bar \kappa_v/\kappa_v)).$$
\end{defn}
In other words, the weight-restricted Selmer stack is the fibered product \[H^{1, \bold{v}}_{f, \bar D}(G_T, \mathcal{G}^{rel,et}) \times_{\mathcal{R}ep_{\bar D}(G_{F,T})}  \mathcal{R}ep^{w}_{\bar D}(G_{F,T}).\]
\begin{rem}
It is not obvious that this should be representable by any sort of geometric space, since the moduli spaces of local weight-restricted representations are simply defined as substacks of the underlying rigid-analytic stacks. Thus, in the next definition, we avoid using weight restrictions on the local stacks because we want to ensure representability.
\end{rem}

We introduce the following notation.
\begin{defn} \label{type}
The type $\bold{r}$ of a $G_{F,T}$-equivariant $\mathcal{G}^{rel,et}$ torsor is the data
\[\bold{r} = (\bold{v}, \{\bar D_i\}, w)\]
of a set of residual pseudorepresentations $\bar D_i$, a $p$-adic Hodge type $\bold{v}$, and a Frobenius weight $w \in \mathbb{Z}$. In this situation we set \[H^1_{f, \bold{r}}(G_{F,T},\mathcal{G}^{rel,et}) = \cup_{i} H^{1,\bold{v},w}_{f,\bar D_i}(G_{F,T}, \mathcal{G}^{\acute{e}t})\] 

The type $\bold{r}$ in the local case, i.e. of a $G_{F_v}$-equivariant $\mathcal{G}^{rel,et}$-torsor for $v|p$, is exactly analogous except we omit the weight $w$. The space \[H^1_{f, \bold{r}}(G_{F_v},\mathcal{G}^{rel,et})\] is defined exactly analogously as a union over all residual pseudorepresentations, omitting any weight restriction.
\end{defn}

Fortunately, we may restrict ourselves to a finite set of residual representations in both the global and local cases. Let us recall an idea of Faltings that underlies his proof of Mordell's conjecture, which one proves using Hermite-Minkowski finiteness.
\begin{prop} \label{semisimp}
Let $F$ be a number field, and $T$ a finite set of places of $F$. There are only finitely many semisimple representations of $G_{F,T}$ that are pure of a fixed weight.
\end{prop}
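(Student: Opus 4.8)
As stated the proposition needs the dimension $d$ of the representations to be held fixed (otherwise arbitrarily large direct sums of a weight-$w$ representation violate it); in the application $d$ is the rank of the relevant local system and a residual pseudorepresentation is fixed as well, so I will treat the statement in that form. In that form it is Faltings' finiteness theorem for semisimple $\ell$-adic Galois representations, and the plan is to recover it from four classical inputs: the Chebotarev density theorem, the Brauer--Nesbitt theorem, the theory of Weil numbers, and Hermite--Minkowski finiteness of number fields of bounded degree unramified outside a finite set. First I would reduce everything to traces of Frobenius: since $\{\mathrm{Frob}_v : v \notin T\}$ is dense in $G_{F,T}$ (Chebotarev) and a semisimple representation over a field of characteristic zero is determined up to isomorphism by its character (Brauer--Nesbitt), any $\rho$ as in the statement is determined by the function $v \mapsto a_v := \operatorname{tr}\rho(\mathrm{Frob}_v)$ on places $v \notin T$.

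Next I would exploit purity to bound the $a_v$. That $\rho$ is pure of weight $w$ means every eigenvalue of $\rho(\mathrm{Frob}_v)$ is an algebraic integer all of whose archimedean absolute values equal $N(v)^{w/2}$; hence $a_v$ is an algebraic integer each of whose conjugates has absolute value at most $d\, N(v)^{w/2}$. Once one also knows that the $a_v$ lie in a single number field of bounded degree, a Northcott/Kronecker argument shows that for each fixed $v$ there are only finitely many possible values of $a_v$.

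The proof is then completed by two uniformity statements, and this is where Hermite--Minkowski --- and the genuine content of Faltings' argument --- enters: one needs (i) a single number field containing the trace fields of all the $\rho$ under consideration, and (ii) a single finite set of places $T_0 \supseteq T$ such that the tuple $(a_v)_{v \in T_0}$ already determines $\rho$. For (ii) one reduces $\rho$ modulo the maximal ideal of its coefficient ring --- in the Selmer-stack setting the residual pseudorepresentation is fixed in advance, which is precisely what makes this step uniform --- obtaining a residual representation unramified outside $T$ (which contains the primes above $p$) that cuts out a number field of bounded degree; Hermite--Minkowski leaves only finitely many such fields, their Galois groups are generated by the Frobenii at some fixed finite $T_0$, and agreement of characters on $T_0$ then forces an isomorphism of semisimple representations. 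Combining (ii) with the finiteness of the admissible tuples $(a_v)_{v \in T_0}$ from the previous step gives finiteness of the set of such $\rho$.

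The step I expect to be the real obstacle is exactly this uniformity: for an arbitrary semisimple $\overline{\mathbb{Q}}_p$-representation the naive reduction modulo $p$ lands in $GL_d$ of an a priori unbounded residue field, so the clean argument above rests on Faltings' refinement rather than on a bare application of Hermite--Minkowski. I would import that lemma verbatim (it is the heart of the finiteness half of Faltings' work), noting that in the Gauss--Manin situation at hand the fixed residual pseudorepresentation $\bar D$ already supplies the needed bound, so the general $\ell$-adic subtlety does not actually intervene.
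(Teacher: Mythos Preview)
The paper does not supply its own proof of this proposition: it merely attributes the statement to Faltings and remarks that one proves it ``using Hermite--Minkowski finiteness.'' Your proposal is a correct and complete sketch of exactly that Faltings argument --- Chebotarev plus Brauer--Nesbitt to reduce to traces at Frobenii, purity to bound those traces, and Hermite--Minkowski on the splitting field of the residual representation to extract a finite determining set of places --- so you are in full agreement with the paper's (implicit) approach. Your caveat that the dimension must be fixed is correct and worth stating; the paper is informal on this point but uses the result only in the Gauss--Manin setting where the dimension is fixed and, as you note, the residual pseudorepresentation $\bar D$ is already part of the type $\bold{r}$, which is precisely what neutralizes the coefficient-field subtlety you flag in your last paragraph.
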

Now suppose that we are in the following situation:

\begin{center} \label{gm}
(\textbf{The Gauss--Manin case}) $\mathcal{L}^{et, glob}$ comes from the $q$-th relative geometric $p$-adic \'etale cohomology of a family $f \colon Y \to X$ that has a smooth and proper model over $\mathcal{O}_{F, \Sigma}$.
\end{center}

Since the fiber $Y_x$ over any $x \in X(\mathcal{O}_{F, \Sigma})$ is smooth and proper, the associated global Galois representation $\rho_x$ is unramified outside $T = \Sigma \cup \{p\}$ and pure of weight $q$. Thus we can apply the above proposition to see that there are only finitely many options for the semisimplification of $\rho_x$; a fortiori there are only finitely many options for the pseudorepresentation associated to $\rho_x$, and thus only finitely many options for the residual pseudorepresentation associated to $\rho_x$. Call these pseudorepresentations $\bar D_i$.

On the local side, the local Galois group $G_{F_v}$ is topologically finitely generated (Jannsen and Wingberg gave a list of generators and relations \cite{jannsen1982struktur}.) Any $\mathbb{Q}_p$-representation of $G_{F_v}$ has a mod-$p$ residual representation that is unique up to semi-simplification. In any case, regardless of the choice of lattice it has only finitely many residual representations due to finite generation. We denote by $\bar E_i$ the finitely many associated pseudorepresentations. 

Last but not least, by embedding $F_v$ into $\mathbb{C}$ and using the $p$-adic to complex de Rham comparison theorem we see that all of the local representations $(\rho_x)_{G_v}$ ($v |p$, $x \in \mathcal{X}(\mathcal{O}_{F_v})$) have the same $p$-adic Hodge type.

We summarize these observations as follows, assuming as above that we are in the Gauss--Manin case.

\begin{prop} \label{points}
Let $x,b \in \mathcal{X}(\mathcal{O}_{F, \Sigma})$. The global \'etale path torsor for the relative completion $\pi_{b,x}^{rel,\acute{e}t, glob}$ lands in the stack $H^1_{f, \bold{r}}(G_{F,T},\mathcal{G}^{rel,et})$, where $\bold{r}$ has only a finite list of pseudorepresentations, a single weight, and a single $p$-adic Hodge type.

The local \'etale path torsor for the relative completion, $P_{b,x}^{rel,\acute{e}t}$, lands in the stack $$H^1_{f, \bold{r}}(G_{F_v}, \mathcal{G}^{\acute{e}t}),$$ where $\bold{r}$ has only a finite list of pseudorepresentations and a single $p$-adic Hodge type.
\end{prop}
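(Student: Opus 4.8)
The plan is to assemble this proposition as a synthesis of everything that has been built up in the preceding chapters, so the work is mostly one of citation and careful bookkeeping rather than new argument. First I would fix $x, b \in \mathcal{X}(\mathcal{O}_{F,\Sigma})$ and recall that, since we are in the Gauss--Manin case, $\mathcal{L}^{\acute{e}t}$ is the $q$-th relative geometric $p$-adic \'etale cohomology of the smooth proper family $f \colon Y \to X$ over $\mathcal{O}_{F,\Sigma}$. The earlier proposition on rational points landing in Bloch--Kato stacks (the one invoking Olsson's crystalline path-torsor comparison theorem, and requiring $S^{crys}$ hence $S^{dR}$ semisimple and simply connected) already shows that the pushout of $P^{\acute{e}t}_{b,x}$ to each $\mathcal{G}^{\acute{e}t}_n$ is crystalline at all $v \mid p$ and that $G_F$ acts through $G_{F,T}$ with $T = \Sigma \cup \{v : v \mid p\}$; so the path torsor already lands in $H^1_f(G_{F,T}, \mathcal{G}^{\acute{e}t})$ (resp. $H^1_f(G_{F_v}, \mathcal{G}^{\acute{e}t})$). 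What remains is to pin down the \emph{type} data $\mathbf{r} = (\mathbf{v}, \{\bar D_i\}, w)$: namely that the $p$-adic Hodge type $\mathbf{v}$ is constant, the weight $w = q$ is constant, and that only finitely many residual pseudorepresentations $\bar D_i$ can occur.

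The key steps, in order, are: (1) \textbf{Weight.} Since $Y_x$ is smooth and proper over $\mathcal{O}_{F,\Sigma}$, the Galois representation $\rho_x$ on $H^q_{\acute{e}t}(\bar Y_x, \mathbb{Q}_p)$ is unramified outside $T$ and pure of weight $q$ by the Weil conjectures; one must then check that $\mathcal{O}(S^{\acute{e}t})$ and the generators $H^1_{\acute{e}t}(\bar X, \mathcal{O}(S^{\acute{e}t}))^* \oplus H^0_{\acute{e}t}(\bar{X^c}, R^1 j_* \mathcal{O}(S^{\acute{e}t}))^*$ of $\mathcal{U}^{\acute{e}t}$ (via Pridham's Adams spectral sequence presentation) are built from $\mathcal{L}^{\acute{e}t}$ by tensor, dual, sub, quotient and cohomology, so they carry a bounded weight structure and the whole torsor is pure of weight $w = q$. (2) \textbf{$p$-adic Hodge type.} Embedding each $F_v$ ($v \mid p$) into $\mathbb{C}$, the comparison between $p$-adic de Rham and complex de Rham cohomology (Gabber's theorem, cited in the excerpt) together with the fact that the Gauss--Manin connection is a variation of Hodge structure — all of whose stalks have the same Hodge type — forces all the local representations $(\rho_x)_{G_{F_v}}$ to share a single $p$-adic Hodge type $\mathbf{v}$, and this propagates up the unipotent tower since $\mathcal{O}(\mathcal{G}^{\acute{e}t}_n)$ is assembled from $\mathcal{L}^{\acute{e}t}$ by type-preserving operations. (3) \textbf{Finiteness of residual pseudorepresentations.} Apply Proposition~\ref{semisimp} (Faltings/Hermite--Minkowski): there are only finitely many semisimple $G_{F,T}$-representations pure of the fixed weight $q$ and bounded dimension, hence only finitely many options for the semisimplification, hence for the associated pseudorepresentation, of $\rho_x$ and thus of its reduction; call these $\bar D_i$. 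Feeding these residual pseudorepresentations, the weight $w = q$, and the Hodge type $\mathbf{v}$ into Definition~\ref{type} gives the stack $H^1_{f,\mathbf{r}}(G_{F,T}, \mathcal{G}^{\acute{e}t})$ into which the global torsor lands; for the local statement one drops the weight and uses instead the finite generation of $G_{F_v}$ (Jannsen--Wingberg) to bound the residual pseudorepresentations there.

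\textbf{Main obstacle.} The genuinely delicate point is step (1): tracking the weight filtration through Pridham's spectral sequence presentation of $\mathfrak{u}^{\acute{e}t}$ in the \emph{non-proper} case, where $X = X^c \setminus D$ and one needs the ``Deligne Hodge II-style'' spectral sequence together with the smooth and proper base change theorems to see that each $E_1$ term is pure and unramified — this is exactly the delicate bookkeeping already flagged in the proof of the earlier proposition and in the crystalline weight proposition (where Abe--Caro is needed). I would handle it by reducing to the already-established fact that $\mathcal{O}(S^{\acute{e}t})$ lies in $\mathrm{ind}\langle \mathcal{L}^{\acute{e}t}\rangle_\otimes$ and that the relevant cohomology groups are cut out of tensor constructions on $\mathcal{L}^{\acute{e}t}$, so that purity of $\mathcal{L}^{\acute{e}t}$ (weight $q$) plus the Tate twists appearing in cup products and spectral sequence differentials control the weight of every graded piece; the conclusion is then that all the $\mathcal{G}^{\acute{e}t}_n$-torsors $P^{\acute{e}t}_{b,x}$ have a single weight, and the proposition follows by packaging (1)--(3) into the notation of Definition~\ref{type}.
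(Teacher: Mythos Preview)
Your overall strategy is correct and matches the paper's argument (which is the discussion immediately preceding the proposition rather than a separate proof environment): cite the earlier Bloch--Kato proposition for landing in $H^1_f$, then pin down the type data via (i) purity of weight $q$ from the Weil conjectures, (ii) finiteness of residual pseudorepresentations via Faltings/Hermite--Minkowski globally and Jannsen--Wingberg finite generation locally, and (iii) constancy of $p$-adic Hodge type via comparison with the complex Gauss--Manin variation of Hodge structure.

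However, your ``main obstacle'' rests on a misreading of Definition~\ref{type}. The type $\mathbf{r} = (\mathbf{v}, \{\bar D_i\}, w)$ is imposed \emph{only} on the pushout of the torsor to $S^{\acute{e}t}$: recall that $H^{1,type}(G,\mathcal{G})$ is by definition the fibered product $H^1(G,\mathcal{G}) \times_{H^1(G,S)} H^{1,type}(G,S)$, and the untwisting map $H^1(G,S) \to \mathcal{R}ep(G)$ identifies the $S$-pushout of $P^{\acute{e}t}_{b,x}$ with the representation $\rho_x = H^q_{\acute{e}t}(\bar Y_x, \mathbb{Q}_p)$ itself (this is exactly the worked Example on ``unwinding'' \'etale path torsors). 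So there is nothing to track through Pridham's spectral sequence or the unipotent tower: the weight, Hodge type, and residual pseudorepresentation in $\mathbf{r}$ are simply those of $\rho_x$, and the arguments you already give for those are complete. Your claim that ``the whole torsor is pure of weight $w = q$'' is in fact false --- $\mathcal{O}(\mathcal{G}^{\acute{e}t}_n)$ carries many weights coming from the unipotent radical --- but this is irrelevant, because the weight condition in $\mathbf{r}$ is never imposed beyond the $S$-level. Drop the spectral-sequence bookkeeping and the proof collapses to the short summary the paper gives.
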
 

\subsection{$\mathcal{R}ep^w(G_T)$ is algebraic when $\mathcal{L}$ is geometric}
Proposition \ref{semisimp} has an even stronger consequence, which will allow us to make the first step in passing from the rigid world (where dimension theory is flabby) to the algebraic world (where dimension theory is much more restrictive.) Indeed, in the Gauss--Manin case we will see that the ``bottom level'' of the Selmer stack, Wang--Erickson's representation stack, is an algebraic stack. Recall that a morphism of rigid stacks is said to be algebraizable if it is in the essential image of the map from algebraic stacks over $Spec(\mathbb{Q}_p)$ to rigid stacks over $Spm(\mathbb{Q}_p)$.

We will need Wang-Erickson's hypothesis \textbf{FGAMS}, which is the hypothesis that formal GAGA holds for adequate moduli spaces realized as quotients by $GL_d$; see \cite[Theorem 3.16]{wang2017algebraic} for a discussion of when this hypothesis might hold.

\begin{prop}
\begin{description}
\item[1] For any fixed weight $w$, the weight-restricted representation stack $\mathcal{R}ep^w_{\bar D_i}(G_T)$ is algebraizable.
\item[2] If \textbf{FGAMS} holds, then the fixed weight and crystalline locus $\mathcal{R}ep^{crys, w}_{\bar D_i}(G_T)$ is algebraizable.
\item[3] For any fixed weight $w$, the image of the map $\mathcal{R}ep^w_{\bar D_i}(G_T) \to \mathcal{R}ep_{\bar D_i}(G_{v})$ lies in an algebraizable substack of the target.
\item[4] If \textbf{FGAMS} holds, then for any fixed weight $w$, the image of the map $\mathcal{R}ep^{crys,w}_{\bar D_i}(G_T) \to \mathcal{R}ep^{crys}_{\bar D_i}(G_{v})$ lies in an algebraizable substack of the target.
\end{description}
\end{prop}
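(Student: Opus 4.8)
The plan is to leverage Proposition~\ref{semisimp} to reduce all four statements to a single finiteness phenomenon, and then to invoke the algebraicity machinery of Wang--Erickson. First I would observe that since we are in the Gauss--Manin case, every global representation $\rho_x$ that can appear (indeed, every weight-$w$, residual-pseudorepresentation-$\bar D_i$ point of $\mathcal{R}ep_{\bar D_i}(G_T)$) has semisimplification lying in a finite set, by Proposition~\ref{semisimp} combined with Hermite--Minkowski (the ramification of such representations is bounded because purity of weight $w$ forces unramifiedness outside $T$, and the Hodge--Tate weights are pinned down). The key point is that ``pure of weight $w$'' is not an open or closed condition on the generic fibre a priori, but the set of \emph{semisimplifications} is finite, and each such semisimple point has a versal deformation ring that is Noetherian (Wang--Erickson, or Mazur's $\Phi_p$ condition). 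So the weight-$w$ locus, as a constructible-looking subset, actually sits inside a finite union of the connected components of $\mathcal{R}ep_{\bar D_i}(G_T)$ cut out by the residual pseudorepresentation, and on each of these the ambient object is already algebraizable by Wang--Erickson's Theorem~B(2) once $\bar D_i$ is multiplicity-free (which holds here because the $\bar D_i$ arise as reductions of pure semisimple representations, hence are themselves semisimple, hence multiplicity-free after possibly enlarging the residue field).

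For part~\textbf{(1)}, I would then argue that $\mathcal{R}ep^w_{\bar D_i}(G_T)$ is a \emph{closed} substack of $\mathcal{R}ep_{\bar D_i}(G_T)$: the weight condition can be phrased as the vanishing of finitely many characteristic-polynomial-type functions on the Frobenius traces at a density-one set of primes (here one uses that a representation pure of weight $w$ is determined on a dense set by the eigenvalue-norm condition, and this is a closed condition because traces of Frobenius are functions on the universal pseudodeformation ring). Since closed substacks of algebraizable stacks are algebraizable, part~\textbf{(1)} follows. Part~\textbf{(2)} is the same argument intersected with Wang--Erickson's closed crystalline substack $\mathcal{R}ep^{\tau,\bold{v}}_{\bar D}$ from his Theorem~B; the algebraizability of that substack is precisely where the hypothesis \textbf{FGAMS} is needed (this is exactly the content of Theorem~B(2) under that hypothesis), and intersecting with the already-algebraic weight-$w$ locus preserves algebraizability.

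For parts~\textbf{(3)} and~\textbf{(4)}, the idea is that the restriction map $\mathcal{R}ep^w_{\bar D_i}(G_T) \to \mathcal{R}ep_{\bar D_i}(G_v)$ (resp.\ the crystalline version) has source an algebraic stack of finite type, so its scheme-theoretic image is a constructible, hence locally closed, substack of the target; one then takes the closure and observes that a locally closed substack of a rigid stack that is the rigid fibre of a finite-type formal (hence algebraic) stack is itself algebraizable, because algebraizability descends along locally closed immersions into algebraic stacks. More carefully, I would factor the map through $\mathcal{R}ep^w_{\bar D_i}(G_T) \to Z \hookrightarrow \mathcal{R}ep_{\bar D_i}(G_v)$ where $Z$ is the (algebraic, by parts~(1)/(2) applied to the domain, plus properness of image formation for morphisms of finite type between Noetherian algebraic stacks) scheme-theoretic image; the crystalline case additionally uses that the restriction of a crystalline global family to $G_v$ is crystalline, so the image genuinely lands in $\mathcal{R}ep^{crys}_{\bar D_i}(G_v)$ and the substack $Z$ is cut out inside Wang--Erickson's crystalline locus, which is algebraizable under \textbf{FGAMS}.

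The main obstacle I expect is making rigorous the claim that the weight-$w$ condition is \emph{closed} (not merely constructible) on the generic fibre, and more fundamentally that the finiteness of semisimplifications from Proposition~\ref{semisimp} actually forces the weight-restricted locus to be contained in finitely many algebraizable components rather than spreading out in a non-algebraic way. This is really a statement that the universal pseudodeformation ring, after inverting $p$ and restricting to the locus where Frobenius eigenvalues have the prescribed archimedean absolute values, is the rigid fibre of an honest finite-type formal scheme --- i.e.\ that the purity condition does not interact pathologically with the rigid-analytic structure. I would expect to handle this by combining Chenevier's description of the pseudodeformation space as a disjoint union over residual pseudorepresentations with the trace relations defining purity, but the details of why the resulting locus is closed rather than just an increasing union of affinoids is the delicate point; one likely needs to invoke that the set of possible Frobenius characteristic polynomials at each prime is finite (again by Hermite--Minkowski / Weil bounds), which turns the purity cut into a finite intersection of closed conditions.
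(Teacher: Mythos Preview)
Your starting point---Proposition~\ref{semisimp}---is exactly the right input, but you take an unnecessarily hard route. The paper avoids entirely the question of whether the weight-$w$ condition is closed. The key observation you are missing is that purity of weight $w$ depends only on the semisimplification: the Frobenius characteristic polynomials at primes outside $T$ are unchanged by passing to $\rho^{ss}$. Hence the weight-restricted stack $\mathcal{R}ep^w_{\bar D_i}(G_T)$ is \emph{exactly} the preimage, under the pseudorepresentation map
\[
semisimp \colon \mathcal{R}ep_{\bar D_i}(G_T) \to PseudoRep_{\bar D_i}(G_T),
\]
of the finite set of points $\{\rho_1,\dots,\rho_l\}$ supplied by Proposition~\ref{semisimp}. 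Wang--Erickson's Theorem~A says that $semisimp$ itself is algebraizable, so the preimage of finitely many closed points is the analytification of an algebraic stack of finite type. That is the whole argument for~\textbf{(1)}; no closedness of trace conditions, no constructibility subtleties, and no multiplicity-free hypothesis on $\bar D_i$ (you are invoking Theorem~B(2) where the paper only needs Theorem~A).

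For~\textbf{(3)} and~\textbf{(4)} the paper again does not use scheme-theoretic images. Instead it simply restricts each global semisimple $\rho_i$ to $G_{F_v}$ and takes its (local) semisimplification $\rho'_i$; the image of the restriction map then lies in $semisimp^{-1}(\{\rho'_1,\dots,\rho'_l\})$ inside $\mathcal{R}ep_{\bar D_i}(G_v)$, and algebraicity of the local $semisimp$ (unconditional in the non-crystalline case, under \textbf{FGAMS} in the crystalline case, by Wang--Erickson Corollary~6.8) finishes. Part~\textbf{(2)} is then the fibered product of~\textbf{(1)} with the local crystalline locus. Your image-formation argument would presumably also work, but it requires knowing~\textbf{(1)} first and then controlling images of maps between rigid stacks, which is strictly more than is needed.
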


\begin{proof}
For \textbf{1}, consider the pseudorepresentation map $$semisimp \colon \mathcal{R}ep^w_{\bar D_i}(G_T) \to PseudoRep_{\bar D_i}(G_T).$$ Let $E$ be an extension of $\mathbb{Q}_p$ over which $\mathcal{R}ep^w_{\bar D_i}(G_T)$ has a point. Now $semisimp$ can take only finitely many values by Proposition \ref{semisimp}. Thus $semisimp \circ \psi$ has finite image $\{\rho_1, \dots, \rho_l \} \in PseudoRep(G_T)(E)$.

Wang--Erickson proves that the map $semisimp$ is algebraizable \cite[Theorem A]{wang2017algebraic}. In particular, $semisimp^{-1}(\{\rho_1, \dots, \rho_l\})$ is the analytification of an algebraic stack of finite type over $E$.

The argument is similar in the local cases \textbf{3} and \textbf{4}. Namely, the restrictions $(\rho_i)_{G_{F_v}}$ may no longer be semisimple, but their semisimplifications provide a finite list $\rho'_i \defeq \left((\rho_i)_{G_v}\right)^{ss}$ of semisimplifications of the local representations associated to global points. Now Wang-Erickson proves algebraicity of 
\[semisimp \colon \mathcal{R}ep(G_{F_v}) \to PseudoRep(G_{F_v}) \]
unconditionally, and the algebraicity of 
\[semisimp \colon \mathcal{R}ep^{crys, \bold{v}}(G_{F_v}) \to PseudoRep^{crys, \bold{v}}(G_{F_v}) \]
assuming FGAMS \cite[Corollary 6.8]{wang2017algebraic}.

These imply \textbf{2}, because the global crystalline representation stack is the fibered product of the global stack with the local crystalline stack.

\end{proof}

The main upshot of this algebraicity is that $X(\mathcal{O}_{F, \Sigma})$ has the possibility of landing in an algebraizable substack of $H^1_{f, \bold{r}}(G_T, \mathcal{G}^{rel,et})$.

Based on the algebraicity of the global fixed-weight Selmer stack, we ask the following.
\begin{quest}
In the Gauss--Manin case and assuming \textbf{FGAMS}, is the weight-restricted Selmer stack $H^{1}_{f, \bold{r}}(G_T, \mathcal{G}^{rel,et})$ strat-representable in the category of algebraic stacks?
\end{quest}

\section{Bloch--Kato logarithm}
The only part of the Chabauty--Kim diagram of which we have said nothing is the Bloch--Kato logarithm. For a linear crystalline representation $V$, the logarithm is a map $$H^1_f(G_K, V) \to D_{crys}(V)/F^0 D_{crys}(V).$$ In Kim's unipotent formulation, the logarithm is promoted to an algebraic map of pro-varieties $$H_f^1(G_K, \pi_1^{un, \acute{e}t}(\bar X, b)) \to \pi_1^{un,dR}(X, b)/F^0\pi_1^{un,dR}(X, b),$$ given by sending a torsor $P$ to the torsor $D_{dR}(P) = D_{crys}(P) \otimes_{K_0} K$, obtained by applying Fontaine's de Rham or crystalline functors to the coordinate ring of $P$.

We note that Sakugawa develops a theory of the Bloch--Kato logarithm for non-unipotent groups in \cite{sakugawa2017non}. Some remarks are in order which we hope will show the advantage of the stacky approach to fundamental groups. First of all, Sakugawa focuses on Selmer sets, i.e. pointed sets of non-abelian cocycles. Second, Sakugawa needs to place a number of conditions on his non-abelian Galois representations in order to construct his logarithm for non-unipotent groups because he stays in the land of sets.

The first condition is that the Lang map be a bijection on the de Rham realization, and as we have shown, this holds on our $\mathcal{U}$ for weight reasons. However, it may not hold on all of $\mathcal{G}$: Since the Frobenius action on $S$ is by conjugation, we see that the non-injectivity of the Lang map is precisely measured by the centralizer of Frobenius acting on $S$. We are reminded that Lawrence and Venkatesh spend much of their effort bounding this Frobenius centralizer, and so it is not likely that we would be able to apply Sakugawa's result in the non-unipotent contexts we care about.

Second, Sakugawa imposes the condition that the Galois cohomology of the de Rham realization of $S$ be trivial. We need to do the same, but it is possible (see Remark \ref{etaleremark}) that this restriction can be lifted.

\subsection{The stacky logarithm and the Chabauty--Kim diagram} \label{diagram}
Now if $$P \in H^{1}_{f,\bold{r}}(G_{F_v}, \pi_1^{rel,\acute{e}t})(\Lambda)$$ 
is a torsor, we would like to set $$D_{crys}(P) \defeq \Spec \left( \mathcal{O}(P) \otimes_{\Lambda} ( \Lambda \widehat{\otimes}  B_{crys}(F_v)) \right)^{G_{F_v}};$$ but we have to be careful: $P$ is not affinoid, as it is a torsor for the non-affinoid $(\pi_1^{rel \acute{e}t})^{an}$.

Thus we must patch. We observe that if a family $V/ \Lambda$ is crystalline then $D_{crys}(V)$ commutes with arbitrary base change. In particular, we have that 
\begin{align} \label{restriction}
D_{crys}(V)|_{U} = D_{crys}(V|_{U}),
\end{align} where $U$ is an affinoid open subset of $X$.

\begin{defn}
Let $P$ be as above. Then $D_{crys}(P)$ is defined as follows: Locally for the Tate topology, $P$ corresponds to a torsor $P^{alg}$ which has the property that its coordinate ring is an ind-crystalline family of Galois representations over $\Lambda'$ ($\Spm \Lambda'$ an admissible subset of $\Spm \Lambda$.) This is by Remark \ref{selmerdescription}. Now we let the restriction of $D_{crys}(P)$ to $\Spm \Lambda'$ be $D_{crys}(P^{alg})$. Finally, the gluing data for the various $P^{alg}$ induces gluing data on $D_{crys}(P^{alg})^{an}$ by (\ref{restriction}), and we write $D_{crys}(P)$ for the glued torsor. We define $D_{dR}$ similarly.
\end{defn}
We have that $D_{crys}(P)$ is a $\pi_1^{rel, crys}$-torsor over $\Lambda \otimes K_0$, by Olsson's comparison theorem, as we explain below. This follows locally in the Tate topology by applying $D_{crys}$ to the coaction isomorphism \[\mathcal{O}(P) \otimes_{\Lambda} ( \Lambda \otimes_{\mathbb{Q}_p} \mathcal{O}(\pi_1^{rel, \acute{e}t}) ) \to \mathcal{O}(P) \otimes_{\Lambda} \mathcal{O}(P).\] Indeed, $D_{crys}$ preserves tensor products, is functorial, and \[D_{crys}( \Lambda \otimes_{\mathbb{Q}_p} \mathcal{O}(\pi_1^{rel, \acute{e}t})) \simeq \mathcal{O}(\pi_1^{rel, crys}) \otimes_{\mathbb{Q}_p} \Lambda.\]

The coordinate ring of $D_{crys}(P) \otimes_{K_0} K (= D_{dR}(P))$ is locally given the usual ``extrinsic'' Hodge filtration: the subspace filtration induced from $\mathcal{O}(P) \otimes_{\Lambda} (  \Lambda \widehat{\otimes} B_{dR})$, with the trivial filtration on $\mathcal{O}(P)$ and the canonical filtration on $B_{dR}$. Furthermore, it has a Frobenius action coming from the Frobenius on $B_{crys}.$ 
\begin{prop} \label{logisadm}
Suppose that both $S^{dR}$ and $(S^{dR})^{\phi=1}$ are semisimple and simply connected. We have that $D_{dR}(P)$ is a doubly admissible $\pi_1^{rel,dR}$-torsor.
\end{prop}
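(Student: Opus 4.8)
Recall from the discussion preceding the statement that, since $P$ lies in $H^1_{f,\bold{r}}(G_{F_v},\pi_1^{rel,\acute{e}t})$ and is therefore crystalline, $D_{dR}(P)=D_{crys}(P)\otimes_{K_0}K$ is a $\pi_1^{rel,dR}$-torsor over $\Lambda\otimes K$ carrying a Frobenius (induced by $\phi$ on $B_{crys}$) and the extrinsic Hodge filtration (the subspace filtration from $\mathcal{O}(P)\otimes_\Lambda(\Lambda\widehat\otimes B_{dR})$), and that the quotient map $\pi_1^{rel,dR}\to S^{dR}$ is $D_{dR}$ applied to $\pi_1^{rel,\acute{e}t}\to S^{\acute{e}t}$ (Olsson's comparison, Section \ref{drcrys}). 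By Remark \ref{selmerdescription}, Tate-locally on $\Spm\Lambda$ the torsor $P$ comes from an algebraic crystalline $\pi_1^{rel,\acute{e}t}$-torsor $P^{alg}$ with a Galois-equivariant isomorphism $P^{alg}\otimes(\Lambda\widehat\otimes B_{crys})\simeq \pi_1^{rel,\acute{e}t}\otimes(\Lambda\widehat\otimes B_{crys})$. The plan is to verify the two reductions of structure group and the weak admissibility of the pushout Tate-locally, which is enough because $\mathcal{M}^{dR}$ is a stackification and what we ultimately need is that $D_{dR}(P)$ defines a $\Lambda$-point of $\mathcal{M}^{dR}$; the argument will run in parallel to the proof of Proposition \ref{admissible}, with the crystalline path-torsor comparison there replaced by the $D_{crys}$/$D_{dR}$ formalism.

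First, the Frobenius reduction. Exactly as in Proposition \ref{admissible}, $D_{crys}(P)^{\phi=1}$ is preserved by $(\pi_1^{rel,crys})^{\phi=1}$, and for any field over which the torsor has a point the unique transition element between two $\phi$-fixed points is itself $\phi$-fixed, so $D_{crys}(P)^{\phi=1}$ is a torsor under $(\pi_1^{rel,crys})^{\phi=1}$. By Theorem \ref{integration}, the Lang map on $\mathcal{U}^{crys}$ is bijective (negative Frobenius weights of $\mathfrak{u}^{crys}$), so $(\mathcal{U}^{crys})^{\phi=1}$ is trivial, and by Proposition \ref{growth} the natural map identifies $(\pi_1^{rel,crys})^{\phi=1}$ with $(S^{crys})^{\phi=1}$, which is semisimple and simply connected by hypothesis and the crystalline$/$de Rham comparison. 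Kneser's theorem then forces the Galois cohomology of $(\pi_1^{rel,crys})^{\phi=1}$ to vanish (combined with the vanishing for the unipotent part), so after a further Tate-local base change $D_{crys}(P)^{\phi=1}$ acquires a section, and base change to $K$ yields the required reduction of $D_{dR}(P)$ to $(\pi_1^{rel,dR})^{\phi=1}$.

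Second, the Hodge reduction. Because $P$ is crystalline, $\mathcal{O}(D_{dR}(P))$ (and $\mathcal{O}(D_{dR}(P)_{S^{dR}})$) is a colimit of relatively admissible filtered $\phi$-modules over $\Lambda\otimes K$, using the stability of crystalline families under subquotients, tensor products and duals (Bellovin). Hence the coaction map is strict for the filtrations (Brinon--Conrad, Theorem 8.2.11, in the relative setting), and one runs the argument of Proposition \ref{admissible}: the action restricts to $F^0\pi_1^{rel,dR}\times F^0D_{dR}(P)\to F^0D_{dR}(P)$, strictness makes $F^0D_{dR}(P)$ an $F^0\pi_1^{rel,dR}$-torsor, and since $F^0S^{dR}$ is the parabolic $P_{\mathcal{F}}$ of the semisimple simply connected group $S^{dR}$ its Galois cohomology injects into the trivial $H^1(G_{F_v},S^{dR})$, while $F^0\mathcal{U}^{dR}$ is unipotent with trivial Galois cohomology; thus $H^1(G_{F_v},F^0\pi_1^{rel,dR})=0$ and $F^0D_{dR}(P)$ has a point Tate-locally, giving the $F^0\pi_1^{rel,dR}$-reduction. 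Finally, for weak admissibility of the pushout: $D_{dR}(P)_{S^{dR}}$ is $D_{dR}$ of the pushforward $P^{\acute{e}t}_{S^{\acute{e}t}}$ of $P$ along $\pi_1^{rel,\acute{e}t}\to S^{\acute{e}t}$, which is crystalline (pushforwards of crystalline torsors are crystalline), so the twist ${}_{P^{\acute{e}t}_{S^{\acute{e}t}}}\mathrm{Lie}(S^{\acute{e}t})$ of the adjoint representation is a crystalline $G_{F_v}$-representation. Taking $p^{crys},p^H$ the trivializations produced above and $s=(p^{crys})^{-1}p^H$, one checks that $D_{crys}\big({}_{P^{\acute{e}t}_{S^{\acute{e}t}}}\mathrm{Lie}(S^{\acute{e}t})\big)$ is exactly $(\mathrm{Lie}(S^{dR}),\mathcal{F}_s,\phi)$ (the shift by $\mathrm{ad}(s)$ is precisely the base-change of the torsor-twist through $D_{crys}$), and the Colmez--Fontaine theorem recalled after the admissible-torsors proposition says $D_{crys}$ of a crystalline representation is weakly admissible; by Remark \ref{independentchoice} this is independent of the choices. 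Hence $D_{dR}(P)$ is doubly admissible with respect to $\pi_1^{rel,dR}\to S^{dR}$.

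The step I expect to be the main obstacle is carrying the field-level arguments of Proposition \ref{admissible} through in families over $\Spm\Lambda$: both the strictness input and the Galois-cohomology vanishing used for the reductions, as well as the identification $(\mathrm{Lie}(S^{dR}),\mathcal{F}_s,\phi)\simeq D_{crys}({}_{P^{\acute{e}t}_{S^{\acute{e}t}}}\mathrm{Lie}(S^{\acute{e}t}))$ in the last step, are naturally statements over a point, so one must genuinely work Tate-locally (where Remark \ref{selmerdescription} trivializes $P$) and lean on the stability properties of crystalline families in the sense of Berger--Colmez and Bellovin, and on the fact that membership in $\mathcal{M}^{dR}$ is a property checkable after a Tate-local cover. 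The remaining verifications --- compatibility of $D_{crys}$ with pushforward/twist of torsors, and strictness of the relevant coactions --- are routine given the tensor-functoriality of $D_{crys}$ on crystalline objects.
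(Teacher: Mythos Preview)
Your approach is essentially the same as the paper's: both run the proof of Proposition~\ref{admissible} in families, establishing the Frobenius and Hodge reductions via Kneser's theorem and strictness, and deducing weak admissibility from the fact that the relevant filtered $\phi$-module arises as $D_{crys}$ of something crystalline (hence is weakly admissible by Colmez--Fontaine). The one notable difference is how the family-versus-point obstacle you flag at the end is handled: the paper resolves it simply by observing that weak admissibility of the pushout to $S^{dR}$ is by definition a condition on closed points of $\Spm\Lambda$, so one may assume $\Lambda$ is a finite extension of $\mathbb{Q}_p$ and work with algebraic torsors over a field. This sidesteps the need for strictness or Colmez--Fontaine in relative form. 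For the weak admissibility itself the paper argues at the level of $\mathcal{O}(S^{dR})$ rather than $\mathrm{Lie}(S^{dR})$ directly (transporting the canonical filtration on $\mathcal{O}(P^{dR}_S)$, which is weakly admissible, via the Frobenius-equivariant trivialization $f^{crys}$), but this is equivalent to your twisted-adjoint identification.
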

\begin{proof}

This proof is basically a repeat of the argument that $P^{dR}_{b,z}$ is doubly-admissible, except with torsors over non-constant rigid bases.

The Frobenius reduction of structure is obvious Tate-locally from the compatibility of the torsor map with Frobenius, and the Hodge reduction of structure comes from the fact that the action of $\pi_1^{rel,dR} = D_{dR}(\pi_1^{rel,et})$ on the local algebraic torsors $D_{dR}(P^{alg})$ is strict for the Hodge filtration because this filtration is induced by the filtration on $B_{dR}$, cf. Remark \ref{strictmors} and Proposition \ref{admissible}.

We are ready to consider weak admissibility of the pushout $P^{dR}_{S}$. We must check this on closed points of $\Spm \Lambda$, so we assume that $\Lambda$ is a finite extension of $\mathbb{Q}_p$, and all of our calculations are calculations with algebraic torsors. For each trivialization $p \in P^{dR}(\Lambda)$ we have an isomorphism $P^{dR} \overset{g} \simeq \pi_1^{rel,dR}$. Furthermore, $p \in F^0P^{dR}$ if and only if the map $g$ is filtration-preserving. This follows because $g$ is defined by the tensor product
\[
\begin{tikzcd}
& \Lambda \otimes \mathcal{O}(\pi_1)  \\
\mathcal{O}(P^{dR}) \otimes \mathcal{O}(P^{dR})  & \mathcal{O}(P^{dR}) \otimes \mathcal{O}(\pi^{rel, dR}_1) \arrow{l}{\simeq} \arrow{u}{p \otimes id} 
\end{tikzcd}
\]
and because the map vertical map is filtration-preserving if and only if $p$ vanishes on $F^1\mathcal{O}(P)$, by strictness of the coaction as in Proposition \ref{parabolic}.

Let $p^{cr}$, $p^{H}$ be trivializations of the reductions of structure $(P^{dR}_S)^{\phi=1}$ and $F^0P^{dR}_S$ over $\Lambda$. These exist by the same argument with Kneser's theorem. Now $p^{cr}$ and $p^{H}$ define isomorphisms \[f^{cr}, f^H \colon S^{dR}_{\Lambda} \to (P^{dR}_S)_{\Lambda},\] with the property that $f_{cr}$ is Frobenius-equivariant on coordinate rings and $f^H$ is filtration-preserving on coordinate rings.

This produces a chain of isomorphisms \[S_{dR} \overset{f^{H}} \to P_S \overset{(f^{cr})^{-1}} \to S^{dR},\] whence an isomorphism $ \iota \colon \mathcal{O}(S^{dR}) \to \mathcal{O}(S^{dR})$. Let $\mathcal{F}_s$ denote the image of the filtration on $\mathcal{O}(S^{dR})$ via $\iota$. Then we must show that the filtration from $\mathcal{F}_s$ on $Lie(S^{dR})$ is weakly admissible. But for this it is sufficient to show that $\mathcal{F}_s$ is weakly admissible on $\mathcal{O}(S^{dR})$ itself.

But this follows from the properties of $f^{H}$ and $f^{crys}$; since it is filtration-preserving and the torsor action is strict, the pushforward of the canonical filtration along $f^H$ gives the canonical filtration on $\mathcal{O}(P_S^{dR})$. Then we have a Frobenius-equivariant isomorphism $\mathcal{O}(P^{dR}_S) \to \mathcal{O}(S^{dR})$ from a weakly admissible $\phi$-module to another $\phi$-module; thus the image filtration is weakly admissible. We conclude that $\mathcal{F}_sLie(S)$ is weakly admissible, and we are done.

\end{proof}
Using the compatibility of Olsson/Pridham's comparison theorem with filtrations, we thus have that the following diagram commutes.
\[
\begin{tikzcd}
\mathcal{X}(\mathcal{O}_{F, \Sigma}) \arrow{r} \arrow{d}{x \mapsto P^{\acute{e}t}_{b,x}} & \mathcal{X}(F_v) \arrow{d}{x \mapsto P^{\acute{e}t}_{b,x}} \arrow{dr}{x \mapsto P^{dR}_{b,x}} \\
\bigcup_{i} H^{1,\bold{v}}_{f,\bar D_i}(G_{F,T}, \mathcal{G}_n^{\acute{e}t}) \arrow{r}{res} & \bigcup_{i} H^{1,\bold{v}}_{f,\bar E_i}(G_{F_v}, \mathcal{G}_n^{\acute{e}t}) \arrow{r}{D_{dR}} & Res^{F_v}_{\mathbb{Q}_p} \mathcal{M}_{dR, n}
\end{tikzcd}
\]

The focus of the entire theory is on the following functor.

\begin{defn}
The morphism $$log_{p,n} = D_{dR} \circ res \colon H^1_{f, \bold{r}}(G_T, \mathcal{G}_n^{rel,et}) \to Res^{F_v}_{\mathbb{Q}_p}  \mathcal{M}^{dR} \subseteq Res^{F_v}_{\mathbb{Q}_p}  [(\mathcal{G}_n^{rel, dR})^{\phi = 1} \backslash \mathcal{G}_n^{rel, dR}/F^0\mathcal{G}_n^{rel, dR}],$$ is called the Bloch-Kato logarithm.
\end{defn}

\subsection{Dimension theory for algebraic stacks}
Essential for the classic method of Chabauty--Kim is the statement that if $f \colon X \to Y$ is a morphism of schemes with $dim(X) < dim(Y)$, then $f(X)$ lies in a closed proper subscheme of $Y$. We now record the generalization of this statement to the world of algebraic stacks; if the global Selmer stack is algebraizable, then the map $$log_p \colon H^1_{f, \bold{r}}(G_T, \pi_1^{rel, \acute{e}t}) \to Res^{F_v}_{\mathbb{Q}_p}[(\pi_1^{dR})^{\phi=1} \backslash \pi_1^{dR}  / F^0 \pi_1^{dR}]$$ is obviously a map of algebraic stacks because it is functorial on the category of $\mathbb{Q}_p$-algebras. The material in this section comes from \cite[\href{https://stacks.math.columbia.edu/tag/0DRE}{Tag 0DRE}]{stacks-project}, which was originally a preprint of Emerton and Gee.

Recall that the topological space $|\mathcal{X}|$ has as an underlying set the equivalence classes of morphisms from spectra of fields to $\mathcal{X}$. A presentation of an algebraic stack is a groupoid $(U,R)$ in algebraic spaces over $S$, and an equivalence $f \colon [U/R] \to \mathcal{X}$. We denote by $e \colon U \to R$ the identity map and $s \colon R \to U$ the source map, and by $R_u$ the fiber of $s$ over a point $u \in U$.
\begin{defn}{\cite[\href{https://stacks.math.columbia.edu/tag/0AFN}{Tag 0AFN}]{stacks-project}}

Let $\mathcal{X}$ be a locally Noetherian algebraic stack over a scheme $S$. Let $x \in |\mathcal{X}|$ be a point of $\mathcal{X}$. Let $[U/R] \to \mathcal{X}$ be a presentation where $U$ is a scheme and let $u \in U$ be a point that maps to $x$. We define the dimension of $\mathcal{X}$ at $x$ to be the element $\dim_x\mathcal{X} \in \mathbb{Z} \cup \infty$ such that $$\dim_x\mathcal{X} =  \dim_uU - \dim_{e(u)}R_u $$
\end{defn}
The dimension is independent of the presentation and lift of $x$ that are chosen \cite[\href{https://stacks.math.columbia.edu/tag/0AFM}{Tag 0AFM}]{stacks-project}.

Kim's work uses the following basic statement:
\begin{prop} \label{schemedim}
Let $f \colon X \to Y$ be a map of schemes of finite type over a field $k$ with $\dim_k(X) < \dim_k(Y)$. Then $Z$, the smallest closed subscheme of $Y$ containing $f(X)$, is strictly contained within $Y$.
\end{prop}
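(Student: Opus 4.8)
\textbf{Proof plan for Proposition \ref{schemedim}.}

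The plan is to reduce the statement to the classical fact that a dominant morphism of irreducible varieties has target dimension no larger than source dimension, together with basic properties of the dimension of the image of a morphism of finite type. First I would observe that the statement is insensitive to reducedness: replacing $X$ by $X_{red}$ does not change $f(X)$ as a subset, and replacing $Y$ by $Y_{red}$ does not change which closed subschemes contain $f(X)$ nor the dimension of $Y$. So we may assume $X$ and $Y$ are reduced, hence varieties (not necessarily irreducible). Next I would pass to irreducible components: write $X = \bigcup_i X_i$ with $X_i$ the irreducible components; then $f(X) = \bigcup_i f(X_i)$, and $Z$ is the union of the closures $\overline{f(X_i)}$. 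Since $\dim_k X = \max_i \dim_k X_i < \dim_k Y$, it suffices to show each $\overline{f(X_i)}$ is a proper closed subset of $Y$, because a finite union of proper closed subsets of $Y$ cannot be all of $Y$ (as $Y$ has a component of dimension $\dim_k Y$ which is irreducible, hence not covered by proper closed subsets of itself).

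The core step is then: for an irreducible variety $V$ (playing the role of $X_i$) and a morphism $g\colon V \to Y$ of finite type with $\dim_k V < \dim_k Y$, the closure $\overline{g(V)}$ is a proper closed subset of $Y$. Here $\overline{g(V)}$ is irreducible of dimension at most $\dim_k V$ — this is the standard bound (the image of an irreducible set is irreducible, and by Chevalley $g(V)$ is constructible so its closure has the same dimension as $g(V)$, which is at most $\dim_k V$ since $g$ factors as a dominant morphism $V \to \overline{g(V)}$ and dominant morphisms do not raise dimension). Therefore $\dim_k \overline{g(V)} \le \dim_k V < \dim_k Y$, so $\overline{g(V)} \neq Y$ (any closed subscheme of $Y$ of strictly smaller dimension is proper). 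This finishes the reduction.

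Finally I would assemble: $Z = \bigcup_i \overline{f(X_i)}$ is a finite union of closed subsets each of dimension $< \dim_k Y$, hence $\dim_k Z < \dim_k Y$, hence $Z \subsetneq Y$. I expect no real obstacle here; the only mild subtlety worth stating carefully is the passage to components and the claim that a proper-dimensional closed union cannot exhaust a variety having a top-dimensional irreducible component, which follows since an irreducible space of dimension $d$ is not a finite union of closed subsets of dimension $< d$. The scheme-theoretic ``smallest closed subscheme containing $f(X)$'' is just the scheme-theoretic image, whose underlying topological space is $\overline{f(X)}$, so all the dimension bookkeeping takes place on the level of the underlying reduced closed subschemes as above.
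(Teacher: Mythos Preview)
Your proposal is correct and follows essentially the same route as the paper: reduce to the reduced case, pass to irreducible components of $X$, and use that a dominant morphism of irreducible varieties cannot raise dimension. The paper phrases the core step via transcendence degrees of function fields (an injection $B\hookrightarrow A$ gives $k(Y)\hookrightarrow k(X)$), while you cite the equivalent ``dominant morphisms do not increase dimension'' as a standard fact; your component-by-component bookkeeping is slightly more explicit than the paper's one-line ``we may reduce to $X$ and $Y$ irreducible and affine,'' but the content is the same.
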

\begin{proof}
It is obvious that we may reduce to the case where $X$ and $Y$ are irreducible and affine. Furthermore, since $f$ is necessarily quasi-compact, the image of the reduced morphism \[f_{red} \colon X_{red} \to Y_{red}\] is the reduced subscheme underlying the scheme-theoretic image $\im(f)$ \cite[\href{https://stacks.math.columbia.edu/tag/056B}{Tag 056B}]{stacks-project}. In any case, we see that it suffices to assume that $X = \Spec(A)$ and $Y = \Spec(B)$ are reduced, irreducible, and affine.

In such a case we have that the dimension of $X$ is given by the transcendence degree of $k(X)/k$, and the same is true of $Y$ \cite[\href{https://stacks.math.columbia.edu/tag/00P0}{Tag 00P0}]{stacks-project}. Suppose the image of $f$ were dense; then we would have an injection of coordinate rings $B \hookrightarrow A$ \cite[\href{https://stacks.math.columbia.edu/tag/0CC1}{Tag 0CC1}]{stacks-project}.

This injection leads to an inclusion of fields $k(Y) \hookrightarrow k(X)$. But transcendence degree can only increase in extensions, so that the dimension of $X$ must be at least the dimension of $Y$, a contradiction.
\end{proof}

We need a stacky version. We restrict to the case of atlases whose covering morphisms are representable by schemes because it is all we'll need.
\begin{prop}
Let $f \colon \mathcal{X} \to \mathcal{Y}$ be a map of stacks of finite type over a field $k$ with $\dim_k(\mathcal{X}) < \dim_k(\mathcal{Y})$. Suppose that the atlas covering maps for $\mathcal{X}$ and $\mathcal{Y}$ are representable by schemes. Then $\mathcal{Z}$, the smallest closed substack of $\mathcal{Y}$ through which $f$ factors, is strictly contained within $\mathcal{Y}$.
\end{prop}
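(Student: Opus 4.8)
The plan is to reduce the stacky statement to the scheme-theoretic statement of Proposition \ref{schemedim} by passing to presentations. First I would choose presentations $[U/R] \to \mathcal{X}$ and $[V/W] \to \mathcal{Y}$ with $U \to \mathcal{X}$ and $V \to \mathcal{Y}$ representable by schemes and smooth surjective, and (replacing $U$, $V$ by suitable disjoint unions of affines) reduce to the case where $U$ and $V$ are schemes of finite type over $k$. By the definition of dimension for algebraic stacks recalled above, $\dim_k \mathcal{X} = \dim_k U - \dim R_u$ and $\dim_k \mathcal{Y} = \dim_k V - \dim W_v$ for appropriate points; the key numerical input is that the covering maps are smooth, so the relative dimensions $\dim R_u$ (resp.\ $\dim W_v$) are the fiber dimensions of $U \to \mathcal{X}$ (resp.\ $V \to \mathcal{Y}$).

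Next I would lift $f$ to the level of atlases. Form the fiber product $U' = U \times_{\mathcal{Y}} V$; since $V \to \mathcal{Y}$ is smooth and surjective and representable by schemes, $U' \to U$ is smooth surjective and $U'$ is a scheme, and the composite $U' \to U \to \mathcal{X}$ is still a smooth atlas. We thus get an honest morphism of schemes $g \colon U' \to V$ lifting $f$, sitting in a commutative square with the two atlas maps. The claim is that $\dim_k U' < \dim_k V$: indeed $\dim_k U' = \dim_k \mathcal{X} + (\text{fiber dim of } U' \to \mathcal{X})$, and the fiber of $U' \to \mathcal{X}$ over a point is the fiber of $V \to \mathcal{Y}$ pulled back, which has the same dimension as the generic fiber of $V \to \mathcal{Y}$, namely $\dim_k \mathcal{Y} - \dim_k \mathcal{X}$... wait — this is exactly where care is needed: the fiber dimension of $U' \to \mathcal{X}$ equals the fiber dimension of $V \to \mathcal{Y}$ only after checking the fiber product doesn't change relative dimension, which holds because smoothness (hence relative dimension) is stable under base change. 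Granting this, $\dim_k U' = \dim_k \mathcal{X} + (\dim_k V - \dim_k \mathcal{Y}) < \dim_k V$.

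Now apply Proposition \ref{schemedim} to $g \colon U' \to V$: the scheme-theoretic image $Z_0 = \overline{g(U')} \subseteq V$ is a proper closed subscheme. It remains to descend $Z_0$ to a closed substack $\mathcal{Z} \subsetneq \mathcal{Y}$. The point is that $g(U')$ is $W$-invariant in $|V|$ (it is the preimage of $|f|(|\mathcal{X}|) \subseteq |\mathcal{Y}|$ under $|V| \to |\mathcal{Y}|$, and $|\mathcal{Y}| = |V|/|W|$ as topological spaces by \cite[\href{https://stacks.math.columbia.edu/tag/04XB}{Tag 04XB}]{stacks-project}-type facts), so its closure $|Z_0|$ is $W$-invariant as well; the corresponding reduced closed subscheme of $V$ then descends along the groupoid $(V,W)$ to a reduced closed substack $\mathcal{Z}$ of $\mathcal{Y}$ through which $f$ factors, and $\mathcal{Z} \neq \mathcal{Y}$ since $|\mathcal{Z}| \subseteq |Z_0|/|W| \subsetneq |V|/|W| = |\mathcal{Y}|$. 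The main obstacle is the bookkeeping in the second and third paragraphs — correctly matching up the fiber-dimension contributions so that the strict inequality $\dim_k U' < \dim_k V$ genuinely holds, and checking that the image locus in $|V|$ is $W$-invariant so that its closure descends; everything else is a direct appeal to Proposition \ref{schemedim} and the stacks-project facts on dimension and on topological spaces of algebraic stacks.
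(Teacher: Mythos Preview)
Your overall strategy---pass to atlases, compare dimensions of schemes, invoke Proposition~\ref{schemedim}, then descend---is exactly the paper's. But there is a genuine gap in your dimension computation. You form $U' = U \times_{\mathcal{Y}} V$ and then claim that the fiber of $U' \to \mathcal{X}$ over a point is ``the fiber of $V \to \mathcal{Y}$ pulled back.'' This is false: the map $U' \to \mathcal{X}$ factors as $U' \to U \to \mathcal{X}$, so its fiber over $x$ is $U_x \times_{\mathcal{Y}} V \cong U_x \times V_{f(x)}$, which has dimension $\dim U_x + \dim V_{f(x)}$. Plugging this in, your inequality $\dim_k U' < \dim_k V$ becomes $\dim_k \mathcal{X} + \dim U_x < \dim_k \mathcal{Y}$, which does \emph{not} follow from $\dim_k \mathcal{X} < \dim_k \mathcal{Y}$ once the atlas $U \to \mathcal{X}$ has positive relative dimension. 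The extra atlas you chose for $\mathcal{X}$ inflates $\dim U'$ and kills the argument.

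The paper fixes this by not choosing a separate atlas for $\mathcal{X}$ at all: it takes only an atlas $U \to \mathcal{Y}$ and sets $W = \mathcal{X} \times_{\mathcal{Y}} U$, which is automatically an atlas for $\mathcal{X}$. Then $W \to \mathcal{X}$ is literally the base change of $U \to \mathcal{Y}$, so their relative dimensions agree on the nose; concretely, the paper checks via the Cartesian square
\[
\begin{tikzcd}
 W \times_{\mathcal{X}} W \arrow{r} \arrow{d} & U \times_{\mathcal{Y}} U \arrow{d} \\
 W \arrow{r} & U
\end{tikzcd}
\]
that $\dim (W \times_{\mathcal{X}} W)_w = \dim (U \times_{\mathcal{Y}} U)_u$, whence $\dim W < \dim U$ directly. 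For the descent step, the paper uses that scheme-theoretic image commutes with flat base change (quasi-compactness of $f$), which is cleaner than your topological $W$-invariance argument, though yours should also go through.
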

\begin{proof}
By \cite[\href{https://stacks.math.columbia.edu/tag/0CMK}{Tag 0CMK}]{stacks-project}, the quasi-compactness of $f$ implies that formation of scheme-theoretic image commutes with flat base change. Let $U$ be an atlas of $\mathcal{Y}$, so that $(U, U \times_{\mathcal{Y}} U)$ is a presentation of $\mathcal{Y}$; then the base-change of $f$ along the morphism $p \colon U \to \mathcal{Y}$ yields a map of schemes $f' \colon W \to U$. We have that $(W, W \times_{\mathcal{X}} W)$ is a presentation of $X$. Now the dimension hypothesis can be rewritten as $$\dim W - \dim(W \times_{\mathcal{X}} W)_{w} < \dim U - \dim(U \times_{\mathcal{Y}} U)_w$$ for all $w \in W, u \in U$. Here the subscripts refer to the fiber of the projection onto the first factor.

Finally, $$\dim(W \times_{\mathcal{X}} W)_{w} = \dim(U \times_{\mathcal{Y}} U)_w$$ by \cite[\href{https://stacks.math.columbia.edu/tag/0DRN}{Tag 0DRN}]{stacks-project}, since we have a Cartesian diagram
\[
\begin{tikzcd}
 W \times_{\mathcal{X}} W \arrow{r} \arrow{d} & U \times_{\mathcal{Y}} U \arrow{d} \\
 W \arrow{r} & U
\end{tikzcd}
\]
where the vertical maps are projections onto the first factor (i.e. the source map).

Thus $\dim W < \dim U$, and by Proposition \ref{schemedim} the scheme-theoretic image of $W$ is strictly contained in $U$. But then, by compatibility with base-change, the scheme-theoretic image of $\mathcal{X}$ in $\mathcal{Y}$ could not be the entirety of $\mathcal{Y}$.
\end{proof}

\section{An example: the projective line minus three points} \label{projline}

We now apply the relative Chabauty--Kim philosophy to determine integral points in the classical case of $X = Y_1(4)_{\mathbb{Z}[1/4]}$, with $S$ some finite set of primes containing 2 and using a fixed auxilliary prime $p \not \in S$. Here $Y_1(4)$ denotes the moduli space of elliptic curves with a marked 4-torsion point. Our proof of finiteness is conditional on a version of the Fontaine--Mazur conjecture and the algebraicity of a certain Selmer stack, however.

We hope the reader will bear with the absurdity of assuming such a strong conjecture and an unknown algebraicity problem for a result such as Siegel's theorem on integral points on affine curves for some very special affine curves; Siegel's theorem has many elementary proofs, but we the reader might appreciate that our method implies that combinations of integrals of Eisenstein series ought to vanish on $X(\mathbb{Z}[1/S])$. In short, this section should be viewed as an exposition of how one would apply Chabauty--Kim for the relative completion if one knew that all relative $p$-adic Hodge theory considerations behaved nicely and all motivic conjectures were true.

Now $X$ is isomorphic over $\mathbb{Z}[1/4]$ to $\mathbb{P}^1 \setminus \{0,1,\infty\}_{\mathbb{Z}[1/4]}$. Over $\mathbb{P}^1 \setminus \{0,1,\infty\}_{\mathbb{Z}[1/4]}$ one has the universal family of elliptic curves corresponding to its interpretation as a modular curve: this family is given by the normal form \[y^2 = \left(x+\frac{t}{4}\right)\left(x^2 + x + \frac{t}{4}\right).\]

\begin{rem}
Deligne realized that this family is 2-isogenous to the (shifted) Legendre family \[y^2 = x(x-1)(x + t -1).\] See \cite[Section 2]{katz2010legendre} for all of the above statements. Obviously, the automorphism $x \mapsto 1-x$ of $\mathbb{P}^1 \setminus \{0,1,\infty\}_{\mathbb{Z}[1/4]}$ shows that we may as well use the Legendre family $y^2 = x(x-1)(x - \lambda)$ itself. Whether we choose to apply the relative Chabauty--Kim method to the first family above or the Legendre family does not matter to us. This is because isogenies induce isomorphisms on \'etale, crystalline, and de Rham cohomology of elliptic curves as long as 2 does not divide the characteristic of any fields we work with, and so the families of Galois representations, isocrystals, and vector bundles we use are isomorphic.
\end{rem}

Truthfully, the computations that follow are not particular at all to $Y_1(4)$ and $X$ can be any modular curve for which Eisenstein classes have been exhibited and shown to be non-trivial. This includes at the very least $Y_1(N)$ for $N \geq 4$ \cite[Section 3.1]{kings2017rankin}. For all such curves and universal families, the method of Lawrence and Venkatesh fails because the de Rham moduli space collapses \cite[Section 1.3(a)]{lawrence2018diophantine}. Let us show how our method remedies this problem.

Taking the local system or vector bundle $\mathbb{H}$ induced by the first cohomology of the universal family in each realization, we have fundamental groups $\pi_1^{rel}(X,b)$ in each realization associated to any basepoint $b$. We consider the abelianization $A$ of the unipotent part of $\pi_1^{rel}(X,b)$ in addition to the reductive part, so that we are left with an extension of the form $$1 \to A \to \pi' \to SL_2 \to 1.$$ As detailed in \cite[Remark 13.3]{hain2016hodge}, for any $0 \leq n_0 \leq N_0$ we have that $$\prod_{n_0 \leq j \leq N_0}  Sym^{2j}(H^1(E))(2n+1) = \prod_{n_0 \leq j \leq N_0}  Sym^{2j}(H_1(E))(1)$$ is a quotient of $A$ in the de Rham and Betti realizations. Here $E$ is the elliptic curve above $b$. Write $Q$ for this quotient.

The same quotient exists in the \'etale and crystalline realizations. The idea is that non-trivial Eisenstein classes have been given in every realization, so that $$\cdot \prod_n Sym^{2j} (H_1(E))(1)$$ will always be a quotient of $A$. The existence of the \'etale quotient is instructive, and is shown for the modular curve $\mathcal{M}_{1,1}$ in \cite[Proposition 20.1]{hain2018universal}. For higher levels and all realizations, see \cite[Section 3.1]{kings2017rankin}.

It is a general fact that the Chabauty-Kim diagram may be applied to any motivic quotient of the unipotent fundamental group by pushing out all torsors to the quotient in question; the proofs apply verbatim to the case of the pushout above. We may thus apply our relative Chabauty--Kim diagram to the quotient $\pi$, which is the relative completion pushed out to the quotient: $$1 \to Q \to \pi \to SL_2 \to 1$$ for each $n$. We will show how to choose a $n_0, N_0$ such that the dimension hypothesis holds for $\pi$.

Now let us analyze the spaces $H^1_f(G_T, \pi)$ and $[\pi^{\phi=1} \backslash \pi/F^0]$. This latter de Rham space is easiest to study. The flag variety $SL_2 / F^0$ has dimension one. Thus taking the stack-theoretic quotient by the centralizer of Frobenius (which could be the whole of $SL_2$) gives a space of dimension at least -2, (speaking stack-theoretically).

Moving up the tower and analyzing the Hodge filtration on $A$, we see that $F^0Q$ is trivial. Now \[\dim Q = \sum_{j = n_0}^{ N_0} 2j+1 = \frac{1}{2} (N_0 - n_0 +1) (2N_0 + 1 + 2n_0 + 1) = N_0^2 - n_0^2 + 2N_0 +1.\] Therefore $[\pi^{\phi=1} \backslash \pi/F^0]$ has dimension at least $\dim Q - 2 =  N_0^2 - n_0^2 + 2N_0 -1$.

Let us analyze the \'etale side. We make the following assumptions.
\begin{assumption}
The stack $H^1_{f, \bold{r}}(G_T, \pi)$ is strat-representable in the category of algebraic stacks, where $\bold{r}$ is the type associated to the finite list of semisimple global representations of elliptic curves over $\mathbb{Z}[1/S]$ and $w=1$. Furthermore, the $p$-adic relative unipotent Albanese map $j_1$ (i.e. at the level of $\mathcal{U}^{ab}$) is dense.
\end{assumption}
\begin{rem}
Strat-representability of the global Selmer stack is not a problem for our Chabauty arguments, because rational points land in one of the substacks stratifying $H^1_{f, \bold{r}}(G_T, \pi)$, and since there are finitely many, we can just make the density argument on each one. This results in finitely many $p$-adic analytic functions that vanish on rational points.
\end{rem}

Although one can bound $H^1_f(G_T, SL_2)$ using the $R=T$ theorems of Taylor and Wiles, we are conditioning our proof on so much that we might as well treat it as a black box and set $r = \dim H^1_f(G_T, SL_2)$.

Let us move up the tower on the \'etale side. Now $H^1_f(G_T, \pi)$ fibers over $H^1_f(G_T, SL_2)$ with fiber over a cocyle $c \in H^1_f(G_T, SL_2)(\bar{\mathbb{Q}}_p)$ equal to a subquotient of $H^1(G_T, _cQ)$, where $_cQ$ is the twist as defined in our discussion on stacky cohomology. It suffices to bound the dimension of each of these twists. 

 Denote by $\phi$ the Galois representation $H^1_{\acute{e}t}(\bar E_b, \mathbb{Q}_p)$. Then the fact that $SL_2$ acts on $\phi$ by right-multiplication gives $$_c\phi^\vee(1) = \phi^\vee c^\vee(1) = (c\phi)^\vee(1) $$ for the twist of the Tate module; more generally, we have $$_cQ = \prod_{n_0 \leq j \leq N_0} Sym^{2j}((c\phi)^\vee)(1).$$ 

To estimate the dimensions of these twists, we note that, by the main theorem of \cite{kisin2009fontaine} which proves Fontaine--Mazur in dimension two, the representations $c \phi$ are motivic. (This is because we have put unramifiedness and crystallinity conditions on $c\phi$, and restricted the set of pseudorepresentations in the stack $H^1_{f, \bold{r}}(G_T, SL_2)$. The conditions that Kisin requires on the reduction of $c\phi$ are satisfied if we let our set of residual pseudorepresentations $\{\bar D_i\}$ be the smallest set consisting of only those coming from $H^1$ of the curves in the universal family, because all such residual representations satisfy Kisin's lifting conditions.)

The Euler characteristic formula \cite[Chapter 1, Section 5]{milne1986arithmetic} gives $$\dim H^1(G_T, _c Q) = \dim H^2(G_T, _cQ) + \dim _cQ^{-},$$ where the negative denotes the negative eigenspace of complex conjugation. This term has dimension \[\sum_{j= n_0}^{N_0} j = 1/2(N_0^2 - n_0^2 + N_0 + n_0).\] Indeed, comparison with Hodge theory shows that complex conjugation acts by $-1$ on a one-dimensional subspace of $(c\phi)^\vee(1)$, and so it acts trivially on a $j$-dimensional subspace of the relevant symmetric power. This term dominates the \'etale side: it is approximately half the dimension of the corresponding piece of the de Rham side.

Since $G_T$ has cohomological dimension two, it suffices to bound $H^2(G_T, (c\phi)^{- \otimes 2j}(1))$ for each $j$ from $n_0$ to $N_0$, where a negative tensor power indicates a dual, as usual. We consider the localization map $$H^2(G_T, (c\phi)^{-\otimes 2j}(1)) \to \prod_{v \in T} H^2(G_v, (c\phi)^{-\otimes 2j}(1)).$$

\begin{prop}{\cite[Observation 2]{kim2005unipotent}}

Conditional on the Fontaine--Mazur conjecture for Galois representations of dimension $2j+1$, this localization has trivial kernel.
\end{prop}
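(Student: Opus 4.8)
The plan is to follow Kim's ``Observation 2'' \cite[]{kim2005unipotent}: one rewrites the kernel as a Tate--Shafarevich group by global duality and then invokes the Fontaine--Mazur conjecture to kill it. Write $M = (c\phi)^{-\otimes 2j}(1)$ for the coefficient module. The first thing to record is that $M$ is unramified outside $T$ and crystalline at every $v\mid p$, with Hodge--Tate weights confined to a bounded interval: this is immediate because $c$ lies in the crystalline Selmer part $H^1_{f,\bold{r}}(G_T,SL_2)$, $\phi$ itself is crystalline at $p$ (here $p\notin S$ is a prime of good reduction for $Y_1(4)$ and the universal family), and the operations $\mathrm{Sym}^{2j}$, $(\,\cdot\,)^\vee$, $(1)$ preserve these properties. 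Next, apply the Poitou--Tate nine-term exact sequence for $G_T$ (cf.\ \cite[Chapter 1]{milne1986arithmetic}, taking the limit of the finite-coefficient statement, which is legitimate since $G_T$ has cohomological dimension $2$). This identifies
\[
\ker\Bigl(H^2(G_T,M)\longrightarrow \prod_{v\in T}H^2(G_v,M)\Bigr)\;=\;\Sha^2(G_T,M)
\]
with the $\mathbb{Q}_p$-linear dual of $\Sha^1(G_T,M^\vee(1))$, the group of classes in $H^1(G_T,M^\vee(1))$ whose restriction to $H^1(G_v,\,\cdot\,)$ vanishes for every $v\in T$. Note that $M^\vee(1)$ is again unramified outside $T$, crystalline at $p$, and of rank $2j+1$, built from symmetric powers of the Tate module of the universal elliptic curve.

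So it suffices to prove $\Sha^1(G_T,M^\vee(1))=0$. A nonzero class here gives a \emph{nonsplit} extension of $G_T$-representations
\[
0\longrightarrow M^\vee(1)\longrightarrow V\longrightarrow \mathbb{Q}_p\longrightarrow 0
\]
that becomes split after restriction to $G_v$ for every $v\in T$. Hence $V$ is unramified outside $T$; it is crystalline at every $v\mid p$, being there the direct sum of the crystalline modules $M^\vee(1)\!\mid_{G_v}$ and $\mathbb{Q}_p$; and its Hodge--Tate weights are bounded. In other words $V$ is a geometric $p$-adic Galois representation in the sense of Fontaine--Mazur, whose rank is controlled by $2j+1$ (one more, for the extension). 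Invoking the Fontaine--Mazur conjecture in that rank, $V$ is, up to twist, a subquotient of the $\ell$-adic (here $p$-adic) cohomology of a smooth projective variety, hence pure of some weight $n$. But the semisimplification of $V$ is $\mathbb{Q}_p\oplus M^\vee(1)$: the trivial summand has Frobenius weight $0$, while the Tate twist in $M^\vee(1)$ forces that summand to have weight $2-2j\neq 0$ (for $j\ge 1$; the case $j=0$ is elementary). A representation cannot be pure of a single weight and have graded pieces of two different weights, so we reach a contradiction, and $\Sha^1(G_T,M^\vee(1))=0$ as desired. (Alternatively one argues through semisimplicity of motivic Galois representations, using that $M^\vee(1)$ is already known to be motivic via Kisin's two-dimensional Fontaine--Mazur theorem \cite{kisin2009fontaine} applied to $c\phi$.)

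The main obstacle is the final step --- converting ``$V$ is geometric and everywhere-locally-split'' into an honest contradiction --- together with the bookkeeping that makes it applicable: one must check that crystallinity at $p$ really is inherited by the extension $V$ (which is exactly where the Selmer hypothesis on $c$ gets used), pin down the precise rank and Hodge--Tate weights to which Fontaine--Mazur is applied, and verify the weight computation for $M^\vee(1)$ now that it arises from a symmetric power of the Gauss--Manin local system rather than from the first cohomology of a single curve. Everything else --- the reduction to $\Sha^2$, the Poitou--Tate duality with $\Sha^1$ of the Tate dual, and the extension-class interpretation of $\Sha^1$ --- is formal, so the proof is really a transcription of \cite{kim2005unipotent} into the present relative setting.
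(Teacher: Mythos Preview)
Your approach is exactly the paper's: Poitou--Tate duality converts the $\Sha^2$ to a $\Sha^1$ of the Tate dual, one interprets a class there as a locally-split extension $0\to (c\phi)^{\otimes 2j}\to E\to \mathbb{Q}_p\to 0$, observes that local splitting forces $E$ to be crystalline at $p$, and then invokes Fontaine--Mazur together with a weight argument.

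Two small corrections are worth making. First, the weight of $M^\vee(1)=(c\phi)^{\otimes 2j}$ is $2j$, not $2-2j$ (recall $\phi=H^1_{\acute{e}t}(\bar E_b,\mathbb{Q}_p)$ has weight $+1$); your stated value would vanish at $j=1$ and spoil the argument there. Second, Fontaine--Mazur for a reducible $V$ does not literally hand you purity of a single weight; what one gets is that $E$ is motivic and hence carries a weight filtration for which morphisms are strict. The paper runs the endgame this way: strictness of $E\twoheadrightarrow \mathbb{Q}_p$ produces $x\in W_0E$ mapping to $1$, and since $W_0E\cap (c\phi)^{\otimes 2j}=0$ (the latter being pure of weight $2j\geq 2$) one obtains a splitting $E\simeq W_0E\oplus (c\phi)^{\otimes 2j}$. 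Your parenthetical ``semisimplicity of motivic Galois representations'' would also do it, but costs an extra conjecture.
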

\begin{proof}
By Poitou--Tate duality, this kernel is isomorphic to the kernel of the localization $$H^1(G_T, (c\phi)^{\otimes 2j}) \to \prod_{v \in T} H^1(G_v, (c\phi)^{\otimes 2j}),$$ which is denoted $Sha^0_T((c\phi)^{\otimes 2j})$. Any element of $H^1(G_T, (c\phi)^{\otimes 2j})$ can be viewed as an extension $$0 \to (c\phi)^{\otimes 2j} \to E \to K' \to 1,$$ where $K'/\mathbb{Q}_p$ is the field of definition of $c\phi$, viewed as a trivial representation. The fact that such an extension lies in $Sha^0_T((c\phi)^{\otimes 2j})$ implies that this extension is crystalline at $p$, since the localized extension at $p$ is trivial and the direct sum of two crystalline representations is crystalline.

By the Fontaine--Mazur conjecture, a representation unramified almost everywhere and crystalline at $p$ is motivic, and in particular is endowed with a weight filtration such that the above exact sequence is strict for the weight filtration. An argument with this filtration finishes the proof, since $(c\phi)^{\otimes 2j}$ is pure of weight $2j$, while $K'$ has weight zero. The strictness of the map $E \to K'$ implies that there is some element $x \in W_0 E$ which maps to $1 \in K'$; now by weight considerations $W_0E \cap (c\phi)^{\otimes 2j}$ is trivial when $j \geq 1$, so that $E \simeq W_0E \oplus (c\phi)^{\otimes 2j}$ and we have shown that $E$ splits.
\end{proof}

We proceed to compute the dimension of $\prod_{v \in T} H^2(G_v, (c\phi)^{-\otimes 2j}(1)).$ By local duality, this is the same as $\prod_{v \in T} H^0(G_v, (c\phi)^{\otimes 2j}).$ When $v=p$, the Hodge--Tate decomposition gives $$H^0(G_p, (c\phi)^{\otimes 2j}) \hookrightarrow H^0(G_p, (c\phi)^{\otimes 2j} \otimes \mathbb{C}_p) \simeq H^0(G_p, [Z_1 \otimes \mathbb{C}_p(-1) \oplus Z_2 \otimes \mathbb{C}_p]^{\otimes 2j}),$$ where $Z_1$ and $Z_2$ have dimension one. The last space in this sequence has dimension one, corresponding to the term $H^0(G_p, Z_2^{\otimes 2j} \otimes \mathbb{C}_p)$.

When $v \neq p$, the weight filtration on $c\phi$ has one-dimensional associated gradeds with weights zero and two, or a two-dimensional weight 1 eigenspace. (See, for example, \cite[Theorem 2]{saito2009hilbert}.)
 Then $H^0(G_v, (c\phi)^{\otimes 2j})$ again has dimension at most one. We conclude that $$\dim H^1(G_T, (c\phi)^{- \otimes 2j}(1)) \leq 1/2(N_0^2 - n_0^2 + N_0 + n_0) + \#T,$$ and the dimension hypothesis holds so long as $r + 1/2(N_0^2 - n_0^2 + N_0 + n_0) + \#T < N_0^2 - n_0^2 + 2N_0 - 1.$ Comparing leading terms shows that $N_0$ can be chosen large enough to ensure this inequality. This concludes the proof.

It would be particularly interesting to find the explicit iterated integrals that vanish on global points. These integrals should be related to elliptic polylogarithms.

\appendix
\chapter{Tannakian Categories}

\begin{defn}
For a topological group $\Gamma$ and a topological field $K$, we will denote by $Repf_K(\Gamma)$ the category of continuous finite-dimensional representations of $\Gamma$. If $\Gamma$ is an algebraic group over a field $F$, $Repf_K(\Gamma)$ means finite dimensional algebraic $K$-representations of $\Gamma$ in the sense of algebraic groups.
\end{defn}
The theory of \textit{neutral Tannakian categories} is the characterization of categories that are equivalent to one of the form $Repf_K(\Gamma)$ for $\Gamma$ a (pro-)algebraic group. Such categories are abundant in mathematics, and their study is rich precisely because they bring to bear all of the tools of representation theory. We do not aim here to give a full account of Tannakian categories; some nice references are (\cite{szamuely2009galois}, \cite{deligne1982tannakian}). Instead we recall one result that tends to appear in a ``second reading" for many students and is essential for our understanding of the fundamental group.

\section{Interpretation via Zariski closures and universality} \label{envelopes}
The Zariski closure of a representation arises throughout our work, particularly in the example of the geometric monodromy group. There is, indeed, a Tannakian interpretation to the Zariski closure.
\begin{prop}
Let $\rho \colon \Gamma \to V$ be a finite-dimensional representation of a group over a field $K$. Consider the smallest Tannakian subcategory of $Repf_K(\Gamma)$ containing $V$, and denote this subcategory by $\langle V \rangle_\otimes$. Then the Tannakian fundamental group of $\langle V \rangle_\otimes$ is canonically isomorphic to the Zariski closure of the image of $\rho$ in $GL_{n,K}$.
\end{prop}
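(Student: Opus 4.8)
The plan is to unwind the Tannakian formalism and reduce the statement to two standard facts: first, that the Tannakian fundamental group of $\langle V\rangle_\otimes$ with respect to the forgetful fiber functor $\omega$ is an affine group scheme $\Pi$ over $K$ representing the functor of $\otimes$-automorphisms of $\omega$ restricted to $\langle V\rangle_\otimes$; and second, that the Zariski closure $G$ of $\rho(\Gamma)$ in $GL_{n,K}$ has the defining property that a sub-$K$-space of any algebraic representation of $GL_{n,K}$ built from $V^{\otimes a}\otimes (V^\vee)^{\otimes b}$ is stable under $G$ if and only if it is stable under $\rho(\Gamma)$. First I would observe that the tautological map $\Gamma \to GL(V)(K)$ factors through $G(K)$, and that every object of $\langle V\rangle_\otimes$ is, by construction, a subquotient of a finite direct sum of mixed tensor powers $T^{a,b}(V) = V^{\otimes a}\otimes (V^\vee)^{\otimes b}$; this is the precise content of "smallest Tannakian subcategory containing $V$" once one recalls that Tannakian subcategories are closed under $\otimes$, dual, direct sum, and subquotient. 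Thus the action of $\Gamma$ on every object of $\langle V\rangle_\otimes$ is the restriction of an algebraic action of $G$, and the inclusion $\langle V\rangle_\otimes \hookrightarrow Repf_K(G)$ is well-defined.

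Next I would produce the comparison morphism. Since $G$ acts algebraically and compatibly (with $\otimes$, duals, subquotients) on every object of $\langle V\rangle_\otimes$, each point $g\in G(R)$ for a $K$-algebra $R$ gives a $\otimes$-automorphism of $\omega \otimes R$ on $\langle V\rangle_\otimes$, hence a map $G \to \Pi$ of affine $K$-group schemes by the universal property of the Tannakian fundamental group. It remains to check this map is an isomorphism. For surjectivity (in the sense of a closed immersion being an isomorphism onto the image, equivalently faithful flatness of the Hopf algebra map in the other direction) the key criterion is: $G \to \Pi$ is faithfully flat iff every object of $Repf_K(G)$ that lies in the essential image of $\langle V\rangle_\otimes \hookrightarrow Repf_K(G)$, when it has a $G$-stable subobject, already has it as a $\langle V\rangle_\otimes$-subobject — which is automatic since the embedding is fully faithful and closed under subobjects. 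For injectivity (that $G \to \Pi$ is a closed immersion), I would use the Tannakian criterion that $\Pi \to GL(V) = GL_{n,K}$ is a closed immersion precisely because $V$ is a $\otimes$-generator of $\langle V\rangle_\otimes$ (every object is a subquotient of a sum of $T^{a,b}(V)$); composing, $\Pi \hookrightarrow GL_{n,K}$ and the image of $\Pi$ in $GL_{n,K}$ is exactly the subgroup stabilizing every sub-line (every "tensor construction") stabilized by $\Gamma$, which by the Chevalley-style description of Zariski closures is $G$ itself.

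The step I expect to be the main obstacle is making the "Zariski closure = stabilizer of all $\Gamma$-stable subspaces of tensor constructions" identification fully rigorous in this generality: one needs that $G$, being the Zariski closure of an abstract subgroup of $GL_{n,K}(K)$, is cut out inside $GL_{n,K}$ exactly by those polynomial conditions that can be phrased as stabilizing sub-$K$-spaces of finite sums of $T^{a,b}(V)$, together with possibly a line in such a sum being fixed up to scalar (Chevalley's theorem realizing $G$ as the stabilizer of a line in some representation). Over a general (possibly non-perfect, possibly non-algebraically-closed) field $K$ one must be a little careful with whether one obtains $G$ or $G_{\mathrm{red}}$ or its base change to $\bar K$; but since everything here is happening inside the Tannakian category $Repf_K(\Gamma)$ of a group with no scheme structure, the objects are automatically reduced and one checks that the Tannakian fundamental group of $\langle V\rangle_\otimes$ is visibly smooth-free of nilpotents in its coordinate ring because it embeds into $GL_{n,K}$ as the closure of a constructible (indeed, abstract) subgroup. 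I would cite the treatment in Deligne--Milne or Szamuely for the precise statement that for a fiber-functor-preserving fully faithful exact $\otimes$-functor that is closed under subobjects, the induced map on fundamental groups is a closed immersion, and assemble the above into a short argument. The routine verifications (that $T^{a,b}$ constructions suffice, that the comparison map respects all structure) I would leave to the reader.
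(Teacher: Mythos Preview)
The paper does not actually supply its own proof of this proposition; it states the result and refers the reader to the vicinity of \cite[6.5.17]{szamuely2009galois}. So there is no paper-proof to compare against, only the standard argument in that reference, which is essentially the one you outline: realize $\langle V\rangle_\otimes$ as equivalent to $Repf_K(G)$ for $G$ the Zariski closure of $\rho(\Gamma)$, using that $\rho(\Gamma)$ is Zariski-dense in $G$ to match $\Gamma$-stable subspaces with $G$-stable ones, and that $V$ is a faithful representation of $G$ so that $Repf_K(G)=\langle V\rangle_\otimes$ inside it. Your plan is correct and standard.

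One remark: your paragraph worrying about reducedness is both unnecessary and slightly off. The assertion that $\Pi$ is ``visibly smooth / free of nilpotents because it embeds into $GL_{n,K}$ as the closure of an abstract subgroup'' is not a valid argument in positive characteristic (closed subgroup schemes of $GL_n$ such as $\mu_p$ are not reduced, and you have not yet shown $\Pi$ equals the closure of its $K$-points). Fortunately you do not need this: the equivalence $\langle V\rangle_\otimes \simeq Repf_K(G)$ that you already set up identifies $\Pi$ with $G$ directly, without ever arguing about nilpotents in $\mathcal{O}(\Pi)$. I would simply delete that discussion and run the argument as: the restriction functor $Repf_K(G)\to Repf_K(\Gamma)$ is fully faithful (Zariski density) with essential image $\langle V\rangle_\otimes$ (both sides are subquotients of sums of $T^{a,b}(V)$, and $\Gamma$-stable subspaces are $G$-stable by density), hence is a tensor equivalence compatible with fiber functors, hence induces $G\simeq \Pi$.
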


This interpretation via Zariski closures plays a second role, namely it induces the universality of the Tannakian fundamental group as follows. Let $\Pi$ be the Tannakian fundamental group of $Repf_K(\Gamma)$. Consider the collection $\mathcal{I}$ of pairs $(G, \phi)$ where $G$ is an algebraic group over $K$ and $\phi \colon \Gamma \to G(K)$ is a continuous and Zariski-dense homomorphism. Now $\mathcal{I}$ naturally forms an inverse system, and the assertion is that $\Pi$ is the inverse limit of $\mathcal{I}$. Thus it satisfies the universal property of the inverse limit. All of this is stated and proven in an open neighborhood of \cite[6.5.17]{szamuely2009galois}, except for the continuity proviso. This last add-on is a straightforward application of the aforementioned theorem, along with the fact that $\langle V \rangle_\otimes$ contains only continuous representations, and that $Repf_K(\Gamma)$ is the direct limit of the full subcategories $\langle V \rangle_\otimes$.

\chapter{Relevant Concepts in Arithmetic Geometry: \'Etale Local Systems, Gauss--Manin, and Chen's $\pi_1^{dR}$ Theorem}
We take this section to introduce the Tannakian categories that feature in the Chabauty--Coleman--Kim process. An expert can safely skip this introduction.

\section{Recollections on \'etale maps and local systems}
The star of twentieth-century arithmetic geometry is the \'etale topology. The basic theory of \'etale maps and the \'etale topos can be found in the classic book of Freitag and Kiehl \cite{freitag2013etale}.
We take this section to recall some of the properties of \'etale maps that we will need for our later work. Of the two properties we emphasize, the first will help us interpret the Tannakian fundamental group, and the second will play into our main example in formal geometry. After reviewing these two properties we show that the Tannakian \'etale fundamental group is the unipotent completion of the profinite fundamental group.

\begin{center}
    \begin{tabular}{ | p{6cm} | p{8cm} |}
    \hline
    \textbf{Property} & \textbf{Enables us to...} \\ \hline
    Homotopy Lifting & Translate from deck transformations to automorphisms of fiber to automorphisms of fiber functor \\ \hline
    
     Remarkable Equivalence & Lift objects from finite fields to local rings \\ \hline

    \hline
    \end{tabular}
\end{center}

\bigbreak

The first such property is the ``homotopy lifting property" for \'etale maps, the proof of which can be found in \cite[Corollary 5.3.3]{szamuely2009galois}.
\begin{lem}
Suppose $U,V$, and $Z$ are schemes, with $Z$ connected. Let $p \colon U \to V$ be an \'etale map, and $\iota \colon Z \to V$ any morphism. Then a lift of $\iota$ to $U$ is determined by the lift of any geometric point.
\end{lem}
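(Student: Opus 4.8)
The statement to prove is the homotopy lifting lemma for étale morphisms: given an étale map $p \colon U \to V$, a morphism $\iota \colon Z \to V$ with $Z$ connected, and a geometric point $\bar z$ of $Z$ together with a lift $\bar u$ of $\iota(\bar z)$ along $p$, there is at most one lift $\tilde\iota \colon Z \to U$ of $\iota$ sending $\bar z$ to $\bar u$. (Existence is not claimed, only the ``rigidity'' that a lift is determined by one point.) Since the excerpt attributes this to \cite[Corollary 5.3.3]{szamuely2009galois}, the plan is essentially to reduce to the diagonal and use that an étale — or even just unramified — morphism has open diagonal.

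The plan is as follows. First I would reduce to uniqueness of lifts by the standard equalizer trick: suppose $\tilde\iota_1, \tilde\iota_2 \colon Z \to U$ are two lifts of $\iota$ agreeing on the geometric point $\bar z$. Form the fiber product $U \times_V U$ and consider the morphism $(\tilde\iota_1, \tilde\iota_2) \colon Z \to U \times_V U$. Because $p$ is étale, hence unramified, the diagonal $\Delta \colon U \to U \times_V U$ is an open immersion; therefore the preimage $W := (\tilde\iota_1,\tilde\iota_2)^{-1}(\Delta(U))$ is an open subscheme of $Z$. The key point is that $W$ is also closed: on $W$ the two lifts agree, and since $\Delta$ is an open immersion onto a closed-and-open image inside $U\times_V U$ when $p$ is étale (étale implies the diagonal is open \emph{and} closed as $U\times_V U \to U$ is again étale, so $\Delta$ is a section of an étale map, hence an open \emph{and} closed immersion), the set where $\tilde\iota_1 = \tilde\iota_2$ is open and closed in $Z$. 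Then connectedness of $Z$ forces $W = \emptyset$ or $W = Z$; since $\bar z$ factors through $W$ by hypothesis, $W \ne \emptyset$, so $W = Z$ and $\tilde\iota_1 = \tilde\iota_2$ as maps of schemes.

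The one genuine subtlety — and the step I expect to be the main obstacle to state cleanly — is the passage from ``$\tilde\iota_1$ and $\tilde\iota_2$ agree at the geometric point $\bar z$'' to ``$\bar z$ factors through the locus $W$ where they agree set-theoretically and scheme-theoretically.'' A geometric point is a morphism $\bar z \colon \operatorname{Spec}\Omega \to Z$ with $\Omega$ algebraically closed; the hypothesis that the lift of $\bar z$ is fixed means the two composites $\operatorname{Spec}\Omega \to Z \rightrightarrows U$ coincide. This composite lands in $\Delta(U) \subseteq U\times_V U$, so $\bar z$ factors through the open subscheme $W$; hence the image point $z \in Z$ of $\bar z$ lies in $W$, giving $W \ne \emptyset$. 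One should be a little careful that ``section of a separated étale morphism is a closed immersion'' plus ``section of any étale morphism is an open immersion'' together give that $\Delta$ is an open \emph{and} closed immersion (this uses that $U \times_V U \to U$ is étale and separated, the latter because étale morphisms are separated), which is exactly what makes $W$ clopen rather than merely open.

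Finally I would remark that this is precisely the input needed in the body of the paper: it is what lets one translate deck transformations of étale covers into automorphisms of the fiber and thence into automorphisms of the fiber functor, anchoring the Tannakian description of $\pi_1^{\text{ét}}$. I would not reproduce the proof of openness/closedness of the diagonal for unramified/étale morphisms — that is standard (e.g. \cite{freitag2013etale} or \cite[Ch.~5]{szamuely2009galois}) — and simply cite it, keeping the argument to the three-line reduction above.
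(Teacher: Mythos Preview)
The paper does not actually prove this lemma; it simply cites \cite[Corollary 5.3.3]{szamuely2009galois}. Your argument is the standard one and is what one finds in Szamuely, so in that sense you have matched the paper's (outsourced) approach.

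One genuine caveat worth flagging: your justification that the equalizer $W$ is closed rests on the claim that ``\'etale morphisms are separated,'' which is not true under the modern Stacks Project convention. Indeed the lemma as literally stated is false without a separatedness hypothesis: take $V = \mathbb{A}^1$, $U$ the affine line with doubled origin, $p$ the obvious \'etale (but non-separated) map, and $Z = V$ with $\iota = \mathrm{id}$; the two sections agree at every geometric point away from the origin yet are distinct lifts. So the step ``$\Delta$ is a section of an \'etale map, hence a closed immersion'' needs $p$ separated. In the paper's intended applications --- Galois (hence finite) \'etale covers, and Szamuely's conventions --- separatedness is automatic, so your argument goes through; just make the hypothesis explicit rather than asserting it as a general property of \'etale maps.
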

\begin{rem}
We call this the lifting property not just because it involves lifting, but because it emulates the lemma of the same name for covering spaces in topology, in which the lift of a curve on a space is determined by the lift of its starting-point.
\end{rem}
When $U \to V$ is finite \'etale, $U$ is connected, and $\Aut(U/V)$ acts transitively on the fiber above a geometric point, we call $U/V$ Galois. As far as the Tannakian point of view is concerned, the most important case of the lifting lemma is when $Z=U$ in the above and $U/V$ is Galois. In that case, the lemma non-canonically identifies $Aut(V/U)$ with the fiber above any geometric point.
\begin{defn}
We define the profinite \'etale fundamental group of a scheme $X$ by $$\pi_1^{alg}(X) = \lim_Y \Aut(Y/X),$$ where $Y$ runs over all Galois covers of $X$. (We will reserve the phrase \'etale fundamental group and the notation $\pi_1^{\acute{e}t}$ for the Tannakian counterpart.)
\end{defn}

The other amazing property of \'etale maps is another form of lifting statement, known as ``Une equivalence remarquable de catégories'' -- let us remark upon it, then.
\begin{lem}
Let $S$ be a scheme, and $S_0 \subseteq S$ a closed subscheme with the same underlying topological space. Then there is an equivalence of categories induced by pullback to $S_0$, given as follows: $$\left\{ U \overset{\acute{e}tale} \to S \right\} \overset{\sim} \to \left\{ V \overset{\acute{e}tale} \to S_0 \right\}.$$ 
\end{lem}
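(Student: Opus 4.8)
The statement is the classical topological invariance of the étale site, often called ``une équivalence remarquable de catégories.'' My plan is to reduce the general case to the affine, and then the affine case to a concrete statement about lifting étale algebras along a nilpotent (or, more generally, topologically-nil) thickening. First I would reduce to the case where $S = \Spec A$ is affine: the claimed equivalence is local on $S$, and gluing of étale morphisms is unproblematic because being étale is a local condition and descent for the Zariski topology is elementary, so a quasi-inverse constructed locally will glue. Since $S_0 \hookrightarrow S$ is a closed immersion inducing a homeomorphism on underlying spaces, after the reduction $S_0 = \Spec(A/I)$ with $I$ a nil (or locally nil) ideal, i.e.\ $I = \ker(A \to A/I)$ consists of nilpotents when $A$ is Noetherian; in the general Noetherian case one further dévisses along the filtration $I \supseteq I^2 \supseteq \cdots$ so that it suffices to treat a square-zero extension $I^2 = 0$.

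The heart of the argument is then the square-zero case. Here I would invoke deformation theory: for an étale $A/I$-algebra $\bar B$, the obstruction to lifting $\bar B$ to an étale $A$-algebra lies in $\mathrm{Ext}^2_{\bar B}(\Omega_{\bar B/(A/I)}, I \otimes_{A/I} \bar B)$, which vanishes since $\Omega_{\bar B/(A/I)} = 0$ by étaleness; the set of lifts, if nonempty, is a torsor under $\mathrm{Ext}^1 = \mathrm{Der}$, which also vanishes, giving \emph{uniqueness} of the lift; and likewise morphisms between étale $A/I$-algebras lift uniquely, because the relevant obstruction and ambiguity groups are again built from $\Omega$ of an étale algebra and hence vanish. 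This exhibits the pullback functor $\{\text{étale over }S\} \to \{\text{étale over }S_0\}$ as fully faithful and essentially surjective in the square-zero case; composing finitely many such steps (or passing to a limit for a general nil ideal, using that étale algebras are finitely presented) handles the Noetherian affine case, and the Zariski-gluing reduction above upgrades it to the statement as given.

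The main obstacle --- or rather the main place where care is needed --- is making the dévissage along $I$ legitimate when $I$ is not nilpotent but merely such that $\Spec(A/I) \to \Spec A$ is a homeomorphism. In the form stated in the excerpt, $S_0$ has the \emph{same underlying topological space}, so $I \subseteq \mathrm{nil}(A)$; if $A$ is Noetherian this forces $I$ to be nilpotent and the dévissage is finite, which is all I would need for the applications in this thesis (where the thickenings are reductions mod $p^n$ of $p$-adic formal schemes, or special fibers). If one wanted the fully general (non-Noetherian) statement one would instead cite the absolute version via the structure theory of étale morphisms (Zariski's main theorem / standard étale neighborhoods) as in the stacks-project treatment, but I would simply record that the Noetherian nilpotent case is what we use and refer to \cite{freitag2013etale} or \cite{szamuely2009galois} for the general statement. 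The only other point to check is compatibility of the local quasi-inverses on overlaps, which follows from the uniqueness half of the deformation-theoretic statement: two lifts over an overlap are canonically isomorphic, and these canonical isomorphisms satisfy the cocycle condition by their uniqueness, so the glued object is well-defined.
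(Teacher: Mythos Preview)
Your proposal is correct, but it takes a different route from the paper's sketch. The paper does not dévisse along powers of $I$ nor invoke deformation theory; instead it uses the explicit local structure of étale morphisms: Zariski-locally $U \simeq \Spec (R/I)[x_1,\ldots,x_r]/(f_1,\ldots,f_r)$ with invertible Jacobian, and one simply lifts the $f_i$ to $R$. The observation that $a \in R$ is a unit iff its image in $R/I$ is a unit (using local nilpotence of $I$) shows the lifted Jacobian is still invertible, giving essential surjectivity in one step with no induction and no Noetherian hypothesis. Full faithfulness is obtained from formal étaleness (the infinitesimal lifting criterion), and the gluing step is handled exactly as you describe, via uniqueness of lifts. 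Your cotangent-complex approach is more conceptual and generalizes immediately (e.g.\ to smooth deformations or derived thickenings), while the paper's Jacobian-lifting argument is more elementary, avoids the Noetherian reduction entirely, and needs no homological machinery. Either is perfectly acceptable here; just be aware that the standard-étale presentation route lets you drop the square-zero dévissage and the accompanying finiteness caveat.
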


Again, there are too many complete references for this fact (the original being \cite[IV, 18.1.2 ]{dieudonne1971elements}), and so we simply highlight the ``kernels of algebra" which grow into the proof.
\begin{pfsketch}
Regarding the existence of a lifting of an \'etale map $$U \to S_0,$$ one first studies the affine case. Indeed, the fact that $S$ and $S_0$ have the same topological space means that Zariski-locally the morphism $S_0 \to S$ looks like $\Spec R \to \Spec R/I,$ where $I$ is locally nilpotent (i.e. every element of $I$ has some power that is zero.) Likewise the local structure of \'etale maps says that Zariski-locally on the target we have an isomorphism $$U \simeq \Spec (R/I)[x_1, \ldots, x_r]/(f_1, \ldots, f_r)$$ where the Jacobian of the set of $f_i$s is invertible in $R/I$. Now our local lift of $U$ is just given by lifting these $f_i$ to elements of $R$.

A tiny bit of commutative algebra shows that an element $a \in R$ is a unit if and only if it is a unit in $R/I$. The ``only if" direction is easy. Assume that $a$ is a unit in $R/I$. Then there exist $b \in R$ and $w \in I$ such that $ab = 1 + w$, so $$(ab-1)^k = 0$$ for some $k$. Expanding the left hand side, we see that we can move over $(-1)^k$ and factor out an $a$, and we see that $a$ is a unit in $R$. Thus the Jacobian of the lifts of $f_i$ is a unit in $R$, and we have shown that the lift is \'etale.

We now look to patch these local liftings. Let $U_i$ and $U_j$ be affine subschemes of $U$ over an affine scheme $S_0$ with $U_{i,j} \defeq U_i \times_U U_j$, and $V_i$ a lift of $U_i$ to $S$. Now there is a unique open subscheme $V_{i,j}$ given by restricting the structure sheaf of $V_i$ to the open subset $$|U_{i,j}| \subseteq |U_i| \subseteq |V_i|.$$ From this description we see that $V_{i,j} \times_{S} S_0 \simeq U_{i,j},$ and thus that we have an isomorphism $$\bar \theta_{i,j} \colon V_{i,j} \times_{S} S_0 \simeq V_{j,i} \times_{S} S_0.$$

We lift $\bar \theta_{i,j}$ to an isomorphism $\theta_{i,j} \colon V_{i,j} \to V_{i,j}$ by proving the fully faithfulness in our lemma. To show that the map is full and faithful, one first reduces to the affine case; from there one uses the fact that \'etale morphisms are formally smooth; for details the reader is directed to \cite[\href{https://stacks.math.columbia.edu/tag/025H}{Tag 025H}]{stacks-project}. Supposing that the reduction map is fully faithful, there is an isomorphism $\theta_{i,j}$ lifting $\bar \theta_{i,j}$. In fact, $\theta_{i,j}$ satisfy the cocycle condition because $\theta_{i,k}^{-1} \circ \theta_{j,k} \circ \theta_{i,j}$ is the unique lifting of $\bar \theta_{i,k}^{-1} \circ \bar \theta_{j,k} \circ \bar \theta_{i,j}$, which is the identity.  \hfill \qedsymbol
\end{pfsketch} 
The remarkable equivalence is fascinating because it says that \'etale coverings of a scheme are remarkably insensitive to the algebraic nature of the scheme. We will use the remarkable equivalence repeatedly in our examples of formal schemes, because it often allows one to lift objects from $\mathbb{F}_p$, say, to compatible collections of objects over $\mathbb{Z}/p^n\mathbb{Z}$.

\begin{defn}
Let $S$ be a scheme. The large \'etale site on $S$, denoted $(Sch/S)_{\acute{e}t}$, is the site associated to the Grothendieck topology whose objects are schemes over $S$, and whose coverings are jointly surjective \'etale $S$-maps.

The small \'etale site on $S$, denoted $S_{\acute{e}t}$, is the full subcategory of the big \'etale site whose objects are schemes which are themselves \'etale over $S$. 
\end{defn}
\begin{rem}
The big and small \'etale sites play very different roles in the geometric theory. A good rule of thumb is that the small \'etale site is an enhanced version of the Zariski topology on $S$, while the large \'etale site is the category on which to test properties like descent. In fact, in view of the remarkable equivalence for \'etale maps, the small \'etale sites of a scheme and any nilpotent thickening of it are isomorphic. If one wants, one can say something even stronger: the small \'etale site is invariant under universal homeomorphism \cite[\href{https://stacks.math.columbia.edu/tag/04DZ}{Tag 04DZ}]{stacks-project}. (Universal homeomorphism, we recall, is the property of a morphism being a homeomorphism after any base change.) This topological invariance is a beautiful byproduct of the \'etale theory -- the \'etale site treats a scheme as much as possible like a topological space, but keeps enough algebraic data that sheaf cohomology, for example, is still very interesting.
\end{rem}

As the lifting property demonstrated by analogy, the \'etale topology is the correct context in which to discuss local systems if we want them to behave the same as holomorphic or topological local systems. We will make this statement rigiorous soon enough. Unfortunately, local systems with $\mathbb{Q}_p$-coefficients are not quite as simple as locally constant $\mathbb{Q}_p$-sheaves on the \'etale site.
\begin{defn}
Let $S$ be a scheme. The category of $\mathbb{Q}_p$-local systems (or lisse sheaves) on $X$ has as its objects inverse systems $\{\mathcal{F}_n\}_{n\geq 1}$, where  $\mathcal{F}_n$ is a locally constant $\underline{\mathbb{Z}/p^n \mathbb{Z}}$-sheaf for the \'etale topology, with the condition that the projections $\mathcal{F}_{n+1} \to \mathcal{F}_n$ induce isomorphisms $$\mathcal{F}_{n+1} \otimes \underline{\mathbb{Z}/p^n\mathbb{Z}} \simeq \mathcal{F}_n.$$ The maps in the category of $\mathbb{Q}_p$- local systems are given by $$Hom_{\mathbb{Q}_p-l.s.}(\mathcal{F}, \mathcal{G}) \defeq \left[ \varprojlim_n Hom_{\underline{\mathbb{Z}/p^n\mathbb{Z}}}(\mathcal{F}_n, \mathcal{G}_n) \right] \otimes \mathbb{Q}_p.$$
\end{defn}
Why don't we use locally constant $\mathbb{Q}_p$-sheaves on the \'etale site? One could snarkily say that the whole theory of the \'etale topology is an answer to this question. But this answer does not tell us why the Zariski topology with locally constant sheaves is the incorrect option. This question has a much easier answer, detailed in \cite[Chapter 1, ``The inadequacy of the \'etale topology'']{milne1998lectures}.

There are basically two related problems: first of all, higher Zariski cohomology of an irreducible variety with values in a constant sheaf is trivial. For applications to the Weil conjectures, then, Grothendieck and Deligne certainly needed a better cohomology theory. The second problem is a question of local triviality. Indeed, let $f \colon X \to Y$ be a morphism of schemes with $X$ irreducible. Then, again, the higher pushforwards of the Zariski constant sheaf $\mathbb{Q}_p$ are trivial. Thus the Zariski topology reveals little about the map $f$, let alone any sort of monodromy.

\begin{rem}
There is a way to get around these pro-systems of sheaves, as discovered by Bhatt and Scholze in \cite{bhatt2013pro}.
In their pro-\'etale site, the $\mathbb{Q}_p$-local systems presented here are honest locally-constant $\mathbb{Q}_p$-sheaves.
\end{rem}

In the topological case a covering space produces a local system by pushing forward the constant sheaf, and we define an analogous arithmetic analogue: for an \'etale cover $f \colon Y \to X$, with $p$ invertible on $X$ and $Y$, we consider the pushforward $f_* \mathbb{Q}_p$, and the smooth and proper base change theorem says that this system of pushforwards is a $\mathbb{Q}_p$-local system. (See \cite[Theorem 20.2]{milne1998lectures}.) On the other hand, given a $\mathbb{Q}_p$-local system $\mathcal{F}$ on $X$, one is then challenged to find an \'etale cover $p \colon Y \to X$ such that $p_* \underline{\mathbb{Q}_p}$ is isomorphic to $\mathcal{F}$. Such a construction indeed exists -- although one is forced to take pro-covers, a natural adjustment based on the differences between the algebraic and analytic worlds.

The theorem that we are ultimately searching for is the following. (For relatively elementary proofs of the finite level-statement below, see \cite[Theorem 6.52]{amazeenetale} and \cite[Theorem 5.4.2]{szamuely2009galois}.) Recall that a scheme is geometrically unibranch if its local rings at all points are geometrically unibranch, a sortof integrality condition \cite[\href{https://stacks.math.columbia.edu/tag/0BPZ}{Tag 0BPZ}]{stacks-project}. Normal schemes are geometrically unibranch.
\begin{thm}[{\cite[Lemma 7.4.7 + Lemma 7.4.10]{bhatt2013pro}}]

Suppose that $X$ is connected and geometrically unibranch. Then there is a natural equivalence between the categories of $\mathbb{Q}_p$ local systems on $X$ and the category of continuous $\mathbb{Q}_p$ representations of $\pi_1^{alg}(X,x)$, where continuity references the $p$-adic topology in the target and the profinite topology on the source.
\end{thm}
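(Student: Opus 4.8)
The statement to prove is the equivalence, for $X$ connected and geometrically unibranch, between $\mathbb{Q}_p$-local systems on $X$ and continuous $\mathbb{Q}_p$-representations of $\pi_1^{alg}(X,x)$. The plan is to bootstrap from the finite-level statement — the classical equivalence between locally constant $\underline{\mathbb{Z}/p^n\mathbb{Z}}$-sheaves on $X_{\acute{e}t}$ and finite continuous $\mathbb{Z}/p^n\mathbb{Z}$-representations of the profinite group $\pi_1^{alg}(X,x)$, which is exactly the content of the references cited (Szamuely 5.4.2, etc.) and rests on the homotopy lifting property recalled above. First I would fix a geometric point $x$ and recall that this finite-level equivalence is the fiber functor $\mathcal{F}_n \mapsto (\mathcal{F}_n)_x$ with its monodromy action, natural in $n$.

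The main body of the argument is then a limit/tensor manipulation. Given a $\mathbb{Q}_p$-local system $\{\mathcal{F}_n\}$ — an inverse system of locally constant $\underline{\mathbb{Z}/p^n\mathbb{Z}}$-sheaves with $\mathcal{F}_{n+1}\otimes\underline{\mathbb{Z}/p^n\mathbb{Z}}\simeq\mathcal{F}_n$ — I would take stalks at $x$ to get a compatible system of finite $\mathbb{Z}/p^n\mathbb{Z}$-representations $M_n := (\mathcal{F}_n)_x$ of $\pi_1^{alg}(X,x)$ with $M_{n+1}\otimes\mathbb{Z}/p^n\mathbb{Z}\simeq M_n$, form $M := \varprojlim_n M_n$ (a finitely generated $\mathbb{Z}_p$-module with continuous action, by compactness of $\pi_1^{alg}$ and the Mittag-Leffler condition guaranteeing the inverse limit is well-behaved), and set $V := M\otimes_{\mathbb{Z}_p}\mathbb{Q}_p$. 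Naturality of the finite-level equivalences makes this assignment functorial; the definition of $\mathrm{Hom}$ in the category of $\mathbb{Q}_p$-local systems as $\big(\varprojlim_n \mathrm{Hom}_{\underline{\mathbb{Z}/p^n\mathbb{Z}}}(\mathcal{F}_n,\mathcal{G}_n)\big)\otimes\mathbb{Q}_p$ is precisely engineered so that it matches $\mathrm{Hom}$ of the corresponding $\mathbb{Q}_p$-representations, giving full faithfulness almost formally. For essential surjectivity: given a continuous $\mathbb{Q}_p$-representation $V$ of $\pi_1^{alg}(X,x)$, by continuity and compactness there is a $\pi_1^{alg}$-stable $\mathbb{Z}_p$-lattice $L\subseteq V$; then $\{L/p^nL\}$ is a compatible system of finite representations, each corresponding to a locally constant $\underline{\mathbb{Z}/p^n\mathbb{Z}}$-sheaf $\mathcal{L}_n$, and the compatibility isomorphisms transport across the equivalences to make $\{\mathcal{L}_n\}$ a $\mathbb{Q}_p$-local system mapping to $V$.

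The place where geometric unibranchness actually enters is the finite-level equivalence itself — one needs $\pi_1^{alg}(X,x)$ to classify all finite locally constant $\underline{\mathbb{Z}/p^n\mathbb{Z}}$-sheaves (not merely those that are finite étale covers in disguise), and the standard proof that every such sheaf is represented by a finite étale cover uses that normal (or geometrically unibranch) schemes have no "spurious" connected components of covers; this is exactly the hypothesis in \cite[Lemma 7.4.7, Lemma 7.4.10]{bhatt2013pro}. I would therefore simply invoke the finite-level equivalence as a black box for geometrically unibranch $X$ and spend no effort reproving it. The step I expect to be the genuine (if mild) obstacle is checking that the inverse limit functor interacts correctly with the tensor-up-to-$\mathbb{Q}_p$ on both the sheaf side and the representation side — i.e. that no information is lost or spuriously created in passing $\{M_n\}\mapsto(\varprojlim M_n)\otimes\mathbb{Q}_p$, and that the two $\mathrm{Hom}$-groups agree after the $\otimes\mathbb{Q}_p$. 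This is a Mittag-Leffler/flatness bookkeeping argument: the transition maps $M_{n+1}\to M_n$ are surjective, so $\varprojlim$ is exact on the relevant systems, and $-\otimes_{\mathbb{Z}_p}\mathbb{Q}_p$ is flat, so both full faithfulness and essential surjectivity reduce cleanly to the finite level. Once that is arranged, the equivalence follows.
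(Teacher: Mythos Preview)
Your overall strategy --- reduce to the finite-level equivalence between locally constant $\underline{\mathbb{Z}/p^n\mathbb{Z}}$-sheaves and finite $\pi_1^{alg}$-modules, then pass to inverse limits and tensor with $\mathbb{Q}_p$, using compactness of $\pi_1^{alg}$ to produce a stable lattice in the ``representation $\to$ local system'' direction --- is exactly the paper's approach, and the bookkeeping you describe (Mittag--Leffler, flatness of $\otimes\mathbb{Q}_p$, matching the Hom-groups) is correct.

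Where you go wrong is in locating the role of the geometrically-unibranch hypothesis. The finite-level equivalence holds for \emph{any} connected scheme: a locally constant \'etale sheaf with finite stalks is always represented by a finite \'etale cover (this is the representability proposition the paper states just after the theorem), and finite \'etale covers are classified by $\pi_1^{alg}$-sets with no unibranch assumption. Indeed, your argument as written never actually invokes the hypothesis --- both directions of your equivalence go through for arbitrary connected $X$ once the finite case is granted. The paper places the unibranch hypothesis elsewhere: in the direction ``$\mathbb{Q}_p$-local system $\to$ representation'' one must first produce an underlying $\mathbb{Z}_p$-local system $L'$ with $L \simeq L'\otimes\mathbb{Q}_p$, and it is \emph{this} step that can fail without geometric unibranchness (the paper cites \cite[Example~7.4.9]{bhatt2013pro} for a counterexample). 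With the isogeny-category definition the paper writes down, an integral model is tautologically present in the data, which is why your argument never sees the obstruction; the hypothesis is genuinely needed only once one works with the intrinsic (pro-\'etale) notion of $\mathbb{Q}_p$-local system that the cited Bhatt--Scholze lemmas address. So: right proof skeleton, wrong diagnosis of the key hypothesis.
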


Let us take a moment to open the dictionary between these two worlds. Let us consider $\mathbb{Z}/p^n\mathbb{Z}$-modules for $\pi_1^{alg}$ and $\underline{\mathbb{Z}/p^n\mathbb{Z}}$-modules on $X$ for now, and then we will reduce to this case. (We begin in the finite world, as do all good things.) The main idea behind the dictionary is that both modules on $X_{\acute{e}t}$ and finite representations of $\pi_1^{alg}$ correspond to finite \'etale covers of $X$.

Indeed, the most difficult part of the proof is the following representability result.
\begin{prop} 
Let $\mathcal{F}$ be a locally constant sheaf of abelian groups on $X_{\acute{e}t}$ with finite stalk $A$ at all geometric points. Then $\mathcal{F}$ is represented in the larger category $(Sch/X)_{\acute{e}t}$ by a commutative group scheme $p \colon Y \to X$ that is finite and \'etale over $X$.
\end{prop}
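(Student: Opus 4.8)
The statement is the standard fact that a locally constant \'etale sheaf with finite stalks is representable by a finite \'etale group scheme, and the proof is local-to-global in the \'etale topology. The first step is to reduce to the case where $\mathcal{F}$ is \emph{constant}: since $\mathcal{F}$ is locally constant for the \'etale topology, there is an \'etale covering $\{U_i \to X\}$ such that each $\mathcal{F}|_{U_i}$ is isomorphic to the constant sheaf $\underline{A}_{U_i}$, which is visibly represented by the finite \'etale $U_i$-scheme $A \times U_i = \coprod_{a \in A} U_i$. So the content is entirely in gluing these local representing objects into a global one.

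First I would record that the constant sheaf $\underline{A}_X$ is represented by the finite \'etale group scheme $A_X = \coprod_{a\in A} X$, with its evident group structure; this handles the local pieces. Next, on overlaps $U_{ij} = U_i \times_X U_j$ the two restrictions $\mathcal{F}|_{U_{ij}}$ give, via Yoneda, a canonical isomorphism of the representing finite \'etale schemes over $U_{ij}$, and these isomorphisms satisfy the cocycle condition on triple overlaps because they are induced by isomorphisms of sheaves (which glue). The key input is then \emph{effective descent for finite \'etale morphisms along \'etale coverings}: a descent datum of finite \'etale schemes relative to an \'etale cover is effective, so the local $A$-torsor-like objects glue to a scheme $p\colon Y \to X$ which is finite and \'etale, because both properties are local on the base for the \'etale topology. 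The group scheme structure (multiplication, identity, inverse) also descends since it is given compatibly on the cover. Finally, representability of $\mathcal{F}$ by $Y$ in $(Sch/X)_{\acute{e}t}$ follows because the isomorphism of functors $h_Y \simeq \mathcal{F}$ can be checked \'etale-locally, where it holds by construction; and $Y$ is commutative because commutativity of the group law is detected after the faithfully flat base change to the cover.

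\textbf{Main obstacle.} The substantive point — and the only one that is not bookkeeping — is the effective descent statement for finite \'etale morphisms along \'etale (equivalently, fppf) coverings. One way to invoke it cleanly is through fpqc descent for affine (or even quasi-affine) morphisms, noting that finite \'etale morphisms are affine and that both ``finite'' and ``\'etale'' are fppf-local on the target; this is in \cite{stacks-project}. An alternative, more hands-on route mirrors the patching argument already sketched in the proof of the remarkable equivalence of categories above: build $Y$ by gluing the $A_{U_i} = \coprod_a U_i$ along the transition isomorphisms, using that the topology makes the quasi-affine gluing unambiguous. Either way, once effective descent is in hand the rest of the argument — checking finiteness, \'etaleness, the group structure, and the representability isomorphism, all \'etale-locally — is routine and I would not belabor it.
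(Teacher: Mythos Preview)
Your argument is correct and is the standard proof of this fact: trivialize locally, represent the constant sheaf by a disjoint union of copies of the base, then invoke effective \'etale (or fppf) descent for affine morphisms to glue, checking that finiteness, \'etaleness, the group law, and commutativity all descend. The identification of effective descent as the one nontrivial input is exactly right.

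There is nothing to compare against, however: the paper does not prove this proposition. It is stated as a black box (flagged as ``the most difficult part of the proof'' of the equivalence between finite local systems and finite $\pi_1^{alg}$-modules) and then immediately used, with the surrounding discussion pointing to references such as \cite{szamuely2009galois} and \cite{amazeenetale} for details. So your write-up supplies what the paper omits.
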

Now by definition the profinite \'etale fundamental group of $X$ acts on $Y$, and thus the stalk of $Y$ over the basepoint $x$ (which is the module $\mathcal{F}_x = A$.) This provides one direction of the equivalence.

On the other hand, suppose we are given a $\mathbb{Z}/p^n\mathbb{Z}$-module $M$ on which $\pi_1^{alg}$ acts continuously. By continuity this action factors through a finite quotient $\pi_1^{\acute{e}t}/N$. Now the Galois theory of the \'etale fundamental group says that there is an \'etale cover $\pi \colon P \to X$ whose automorphisms are $H = \pi_1^{\acute{e}t}/N$. There is a $\pi_1^{\acute{e}t}/N$-action on the constant sheaf $M_P$; we have for all $\sigma \in \pi_1^{\acute{e}t}/N$ a map $$\sigma^* M_P \to M_P$$ which takes a section over an \'etale open $V$, $s \in \sigma^*M_P(V) = Map(\pi_0 \sigma^{-1}(V) \to M)$, to the section $\sigma^{-1}s$ using the action of $H$ that we were given on the codomain. We also see that $\pi_* \sigma^* M_P \simeq M_P$, so that the maps $\sigma^*M_P \to M_P$ become genuine automorphisms $$\pi_*M_P \to \pi_*M_P$$ by functoriality. The \'etale sheaf we desire is then $$(\pi_* M_P)^H,$$ the sections of $\pi_*M_P$ that are fixed under the action of $H$.

These two functors are inverses -- a beginning reader is encouraged is read the proof in the references we have given. To reduce to the case of finite modules, one notes that any representation of $\pi_1^{alg}$ must land in a lattice  $$GL_n(L) \subseteq GL_n(\mathbb{Q}_p),$$ by compactness of the profinite \'etale fundamental group. Then one forms the reductions $$L/p^kL,$$ which are free $\mathbb{Z}/p^k\mathbb{Z}$-modules of rank $n$ to which one can apply the previous theorem.

However, to move in the other direction from a $\mathbb{Q}_p$-local system $L$ to a representation of the fundamental group via the case of finite modules requires one to find a $\mathbb{Z}_p$-local system $L'$ such that $L = L' \otimes \mathbb{Q}_p$. In general this is only possible when $X$ is geometrically unibranch; for a counterexample see \cite[Example 7.4.9]{bhatt2013pro}.

We may now mimic the construction of the topological fundamental group via deck transformations.
\begin{prop}
The category of $\mathbb{Q}_p$-local systems, equipped with the fiber functor that sends a local system to its fiber over a point $b$, is a Tannakian category whose associated pro-group scheme we denote $\pi_1^{\acute{e}t}(X,b)^{alg}$. We call this group the pro-algebraic \'etale fundamental group.
\end{prop}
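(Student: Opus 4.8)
The plan is to verify the axioms of a neutral Tannakian category for the category of $\mathbb{Q}_p$-local systems on $X$, equipped with the fiber-at-$b$ functor, and then to identify the resulting Tannakian group. First I would recall that the category of $\mathbb{Q}_p$-local systems is $\mathbb{Q}_p$-linear and abelian: this requires a little care because the objects are pro-systems $\{\mathcal{F}_n\}$ and the Hom-groups are defined by taking an inverse limit and then tensoring with $\mathbb{Q}_p$; kernels and cokernels are computed levelwise and one checks the Mittag--Leffler condition (which holds since each $\mathcal{F}_n$ has finite stalks) to see that the levelwise construction again satisfies the compatibility $\mathcal{F}_{n+1}\otimes \underline{\mathbb{Z}/p^n\mathbb{Z}} \simeq \mathcal{F}_n$ after isogeny. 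Tensor product, dual, and internal Hom are defined levelwise as well, giving a rigid symmetric monoidal structure with unit the constant sheaf $\underline{\mathbb{Q}_p}$, whose endomorphism ring is $\mathbb{Q}_p$ since $X$ is connected.

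Next I would check that the fiber functor $\omega_b$, sending a local system to its stalk at a geometric point $b$, is an exact, faithful, $\mathbb{Q}_p$-linear tensor functor into finite-dimensional $\mathbb{Q}_p$-vector spaces. Exactness and faithfulness follow from the fact that stalks at a geometric point are exact on \'etale sheaves, together with the Mittag--Leffler bookkeeping for the pro-structure; compatibility with tensor products is immediate from the levelwise definition. This exhibits the category as neutral Tannakian, and Deligne--Milne's reconstruction theorem then produces a pro-affine group scheme $\Pi$ over $\mathbb{Q}_p$ with an equivalence between $\mathbb{Q}_p$-local systems and $\mathrm{Repf}_{\mathbb{Q}_p}(\Pi)$; this $\Pi$ is what we call $\pi_1^{\acute{e}t}(X,b)^{alg}$.

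The one genuinely substantive point — and the step I expect to be the main obstacle — is matching this abstract Tannakian group to the "pro-algebraic completion'' description implicit in the name. Here I would invoke the equivalence, already recorded in the excerpt (the Bhatt--Scholze theorem, valid since $X$ is connected and geometrically unibranch), between $\mathbb{Q}_p$-local systems on $X$ and continuous $\mathbb{Q}_p$-representations of $\pi_1^{alg}(X,b)$. Composing this with the Tannakian equivalence identifies $\mathrm{Repf}_{\mathbb{Q}_p}(\Pi)$ with the category of continuous finite-dimensional $\mathbb{Q}_p$-representations of the profinite group $\pi_1^{alg}(X,b)$; by the universality discussion in Appendix~\ref{envelopes} (the fact that the Tannakian fundamental group of all continuous representations is the pro-algebraic envelope over $\mathbb{Q}_p$), $\Pi$ is precisely the inverse limit over all pairs $(G,\phi)$ with $G$ algebraic over $\mathbb{Q}_p$ and $\phi\colon \pi_1^{alg}(X,b)\to G(\mathbb{Q}_p)$ continuous Zariski-dense — i.e.\ the pro-algebraic completion of $\pi_1^{alg}(X,b)$. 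The delicate part throughout is keeping the pro-system/inverse-limit manipulations honest (Mittag--Leffler, the interplay of $\varprojlim$ with $\otimes\mathbb{Q}_p$, and the continuity constraint on representations), rather than any deep geometric input, since the heavy lifting is already outsourced to the cited local-systems equivalence and the standard Tannakian formalism.
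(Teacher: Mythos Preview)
Your proposal is correct and in fact more thorough than what the paper does: the paper states this proposition without proof, treating it essentially as a definition together with an invocation of standard Tannakian reconstruction. The paper's only argumentative content appears in the sentence immediately following the proposition, where it observes (exactly as you do in your final paragraph) that combining the Bhatt--Scholze equivalence with the universal property of the algebraic envelope from Appendix~\ref{envelopes} identifies $\pi_1^{\acute{e}t}(X,b)^{alg}$ with the $\mathbb{Q}_p$-algebraic envelope of the profinite fundamental group when $X$ is geometrically unibranch.

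So your approach matches the paper's in spirit but supplies the details the paper omits. One small remark: you correctly flag the geometrically unibranch hypothesis as needed for the identification with the envelope, and indeed the proposition as stated in the paper is really two separate claims --- the Tannakian structure (which requires no such hypothesis and is what the proposition literally asserts) and the envelope description (which does require it and which the paper relegates to the following paragraph). Your write-up conflates these slightly by folding the envelope identification into the proof of the proposition itself, but this is harmless and arguably clearer than the paper's presentation.
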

The previous theorem shows that the proalgebraic \'etale fundamental group is the $\mathbb{Q}_p$-algebraic envelope of the profinite fundamental group of $X$ when $X$ is geometrically unibranch. This is a simple consequence of the universal property of the algebraic envelope. (See \ref{envelopes} for standard details on algebraic envelopes and Tannakian fundamental groups.)

\section{Vector bundles with integrable connection}
The next essential component of the motivic fundamental group is the category of vector bundles with integrable connection. Recall that (the sheaf of sections of) a vector bundle over a scheme $X$ is a locally free sheaf of $\mathcal{O}_X$-modules. It is common practice to conflate the sheaf of sections and the total space of the vector bundle, which we can recover using the formula $E = \Spec \left( \Sym(\mathcal{E}^{\vee}) \right)$.
\begin{defn}
Let $\mathcal{E}$ be (the sheaf of sections of) a vector bundle. A connection on $\mathcal{E}$ is a map $$\nabla \colon \mathcal{E} \to \mathcal{E} \otimes_{\mathcal{O}_X} \Omega^1_X$$ satisfying linearity and the Leibniz rule, namely:
$$\nabla(s_1 + s_2) = \nabla(s_1) + \nabla(s_1)$$ and $$\nabla(fs) = f \nabla(s) + s \otimes df$$ for $U \in X$ open, $s_1,s_2,s \in \mathcal{E}(U)$, and $f \in \mathcal{O}_X(U)$.

Finally, we denote by $\mathcal{E}^{\nabla=0}$ the kernel of $\nabla$, called the flat sections of $\nabla$.
\end{defn}
The same definition works for complex manifolds.

Not all vector bundles are created equal, of course. We often imagine that there is a way to parallel transport -- or ``flow'' -- vectors from one fiber of the bundle to another fiber, and that's what the connection gives us: for every path from $p_1$ to $p_2$ in the base, a connection gives an automorphism of the fiber.
But if we are working in the topological category then a path and a homotoped version of the path might give different automorphisms of the fiber. The condition that ensures the homotopy-invariance of the endomorphism in the topological category is \textit{flatness}.

Suppose $X$ is a scheme over a field $K$, and construct the sheaf of differentials valued in $\mathcal{E}$: $\Omega^\bullet_{X/K} \otimes_{\mathcal{O}_X} \mathcal{E}.$ Denote the usual differential on forms by $d \colon  \Omega^n_{X/K} \to  \Omega^{n+1}_{X/K}$. Now consider the map $$D^p \defeq d \otimes 1 + (-1)^p 1 \wedge \nabla \colon \Omega^p_{X/K} \otimes_{\mathcal{O}_X} \mathcal{E} \to \Omega^{p+1}_{X/K} \otimes_{\mathcal{O}_X} \mathcal{E}.$$
\begin{defn}
A connection $\nabla$ on a vector bundle $\mathcal{E}$ is called flat or integrable if the composition $$\mathcal{E} \overset{D^1 \circ D^0} \to \mathcal{E} \otimes_{\mathcal{O}_X} \Omega^2_X$$ is identically zero.
\end{defn}
\begin{rem}
We can give the reader one convenient way to think of flatness in the algebraic category, where homotopy invariance cannot appear in the same way as in the topological category. Flatness in fact ensures that $D^{q+1} \circ D^q = 0$ for each $q$, so that $\Omega^\bullet_{X/K} \otimes_{\mathcal{O}_X} \mathcal{E}$ becomes a chain complex. This handy definition thus allows us to define the de Rham cohomology of $(\mathcal{E}, \nabla)$ as the sheaf cohomology (or hypercohomology in the algebraic category) of this complex.
\end{rem}

Very often, our bundles with come with quite a bit more structure -- this structure is encoded by the notion of a polarized variation of Hodge structures.
\begin{defn}
If $X$ is a complex manifold, then a real variation of Hodge structures on $X$ of weight $k$ is a complex vector bundle with flat connection $(\mathbb{V}, \nabla)$ together with a real structure $\mathbb{V}_{\mathbb{R}}$ and a decreasing filtration by holomorphic sub-bundles $F^p \subseteq \mathbb{V}$ on $\mathbb{V}$ satisfying the following conditions:

First, the filtration $F^p$ must induce a Hodge structure of weight $k$ on each fiber of $\mathbb{V}$. Second, the flat connection $\nabla$ on $\mathbb{V}$ must satisfy Griffiths transversality, which is the requirement that $$\nabla(F^p) \subseteq F^{p-1} \otimes \Omega^1_{X}.$$
\end{defn}

There is an extremely strong sense in which the category of vector bundles with integrable connection over $X$ captures much of its de Rham theory. For example, this category will most definitely determine $H^1_{dR}(X^{an}, \mathbb{C})$. It will also contain the variation of Hodge structures coming from the de Rham cohomology of the fibers of all smooth and proper families $Y \to X$ -- this is the Gauss--Manin connection that we will define in the next section.

The category $Vect(X)$ of finite dimensional vector bundles with integrable connection over a scheme $X/K$ is a Tannakian category over $K$, once one fixes a point $b \in X(K)$, the fiber functor being given by the vector bundle fiber over the basepoint. We denote its Tannakian fundamental group by $\pi_1^{dR}(X,b)$. For what follows, we often have a scheme $X$ over a $p$-adic field $K$ and a fixed embedding $K \hookrightarrow \mathbb{C}$. Then we denote by $\pi^{dR}_{1,K}$ the fundamental group of $X$ and  $\pi^{dR}_{1,\mathbb{C}}$ the de Rham fundamental group of $X_{\mathbb{C}}$.

If we instead consider the subcategory of unipotent objects in $Vect(X)$ (the thick subcategory generated by the structure sheaf with its trivial connection), we denote the corresponding fundamental group by $\pi_1^{dR,un}(X,b)$.

We quickly note the following comparison theorem. Then we have the following comparison isomorphism which is used to equip unipotent $p$-adic fundamental groups with Hodge structures: there is an isomorphism $$\pi^{un,dR}_{1,\mathbb{C}} \simeq \pi^{un,dR}_{1,K} \otimes_{K} \mathbb{C}.$$ One obtains this comparison by comparing the universal objects in each category, ultimately arising from a comparison between complex and $p$-adic de Rham cohomologies compatible with cup product. A similar comparison between algebraic and holomorphic de Rham cohomologies provides a comparison with the ``analytic de Rham fundamental group'' of holomorphic flat vector bundles on the analytification of $X$. We therefore conflate the two types of complex vector bundles from now on. For more details on when these comparison theorems hold for relative completions of fundamental groups, see our discussion about universal objects in the section on relative completions for the motivically inclined.

An essential property of $\pi^{un,dR}_{1,\mathbb{C}}$ is that its coordinate ring has a mixed Hodge structure -- one obtains this filtration from the following theorem about the complex unipotent fundamental group.
\begin{thm}{\cite{chen1977iterated}}
\begin{description}
\item[Coordinate Ring] There is an explicit description of $\mathcal{O}(\pi^{un,dR}_{1,\mathbb{C}})$ in terms of iterated integrals of differential forms on $X$. Concisely, $$\mathcal{O}(\pi^{un,dR}_{1,\mathbb{C}}) \simeq H^0\left( B(\Omega^\bullet_{X/\mathcal{C}}) \right).$$ Here $B$ denotes the bar complex, which we touch on in the main body of the text.

\item[Comparison] We have an isomorphism $$\pi_1^{top}(X,b)^{un} \otimes_{\mathbb{Q}} \mathbb{C} \simeq \pi^{un,dR}_{1,\mathbb{C}},$$ where the $un$ superscript refers to the (unipotent) Malcev completion of the topological fundamental group.

\item[Hodge Structure and Induced Hodge Structure] The above description of the coordinate ring induces a mixed Hodge structure on $\mathcal{O}(\pi^{dR}_{1,\mathbb{C}})$: the weight filtration is given by the length filtration of iterated integrals (plus the number of logarithmic terms in the non-projective case), and the Hodge filtration is given by the number of holomorphic forms in the integral. i.e. it is induced by the Hodge filtration on $H^1_{dR}$. A complete description of these filtrations is given in \cite{hain1987geometry}.

Owing to the comparison theorem above and the one mentioned before the theorem, $\mathcal{O}(\pi_1^{top}(X,b)^{un})$ and  $\mathcal{O}(\pi^{un, dR}_{1,\mathbb{Q}_p})$ inherit rational and $\mathbb{Q}_p$-mixed Hodge structures, respectively.

\end{description}
\end{thm}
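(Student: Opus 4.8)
The plan is to follow Chen's original argument via iterated integrals and the reduced bar construction, upgrading the Hodge-theoretic part with Deligne--Morgan--Hain mixed Hodge theory. \textbf{Step 1 (the coordinate ring).} First I would recall that for smooth $1$-forms $w_1,\dots,w_r$ on $X$ the iterated integral $\int w_1\cdots w_r$ is a function on the path space $PX$, and establish Chen's basic lemmas: these functions satisfy the shuffle product rule and an explicit formula for their differential, so the tensor-algebra-valued reduced bar complex $B(\Omega^\bullet_X)$ — with differential combining $d$ and the combinatorial wedge map — maps naturally to functions on $PX$. The homotopy-functional lemma then says a class lies in $H^0(B(\Omega^\bullet_X))$ exactly when the corresponding integral descends to a function on $\pi_1(X,b)$ depending only on the homotopy class rel endpoints. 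One checks directly that $H^0(B(\Omega^\bullet_X))$ is a commutative Hopf algebra (shuffle product, deconcatenation coproduct, antipode from path reversal) which is conilpotent, so its $\Spec$ is a pro-unipotent affine group scheme over $\mathbb{C}$; finally one identifies this with $\pi_1^{un,dR}(X,b)$ in the paper's Tannakian sense by building the universal unipotent flat connection from a basis of $H^1_{dR}(X)$ and matching its monodromy with iterated integrals.

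\textbf{Step 2 (comparison with the Malcev completion).} Here the plan is the filtered-comparison argument. Filter $B(\Omega^\bullet_X)$ by tensor length and the group ring $\mathbb{C}[\pi_1(X,b)]$ by powers of the augmentation ideal $J$; iterated integration gives a pairing $H^0(B(\Omega^\bullet_X))\times \mathbb{C}[\pi_1]/J^{s+1}\to\mathbb{C}$ compatible with both filtrations, hence a map $H^0(B(\Omega^\bullet_X))\to \varinjlim_s \Hom_{\mathbb{C}}(\mathbb{C}[\pi_1]/J^{s+1},\mathbb{C})$, and the target is by definition $\mathcal{O}(\pi_1(X,b)^{un}\otimes_{\mathbb{Q}}\mathbb{C})$. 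To see it is an isomorphism I would pass to associated gradeds: the $\mathrm{gr}$ of the bar complex is computed by the $E_1$-term of the length spectral sequence, and one shows it is the quotient of the tensor coalgebra on $H^1_{dR}(X)$ cut out by the image of the cup product $H^1\otimes H^1\to H^2$ (together, in the non-proper case, with residue/weight contributions), which is dually exactly $\mathrm{gr}_J\,\mathbb{C}[\pi_1]$ by the classical presentation of the lower central series quotients in terms of $H_1$ and cup-product relations. Matching these on gradeds, plus completeness of both sides, yields the comparison isomorphism.

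\textbf{Step 3 (Hodge structures).} I would put the Hodge and weight filtrations on the de Rham complex — the stupid filtration when $X$ is proper, Deligne's $W$ and $F$ on the logarithmic de Rham complex of a good compactification in general — and transport them to $B(\Omega^\bullet_X)$: the weight filtration combines the weight on forms with tensor length and the number of logarithmic factors, the Hodge filtration counts holomorphic factors. One verifies this makes $B(\Omega^\bullet_X)$ a mixed Hodge complex (Hain's theorem, invoked in the body), so $H^0$ carries a mixed Hodge structure with weights $\ge 0$, and the product, coproduct and antipode are morphisms of MHS because they come from maps of the underlying mixed Hodge complexes. Since iterated integrals of $\mathbb{Q}$-forms against $\mathbb{Z}[\pi_1]$ take values in $\mathbb{Q}$, the comparison of Step 2 is defined over $\mathbb{Q}$, giving the rational MHS on $\mathcal{O}(\pi_1^{top}(X,b)^{un})$; base change along the fixed embedding $K\hookrightarrow\mathbb{C}$ together with the $p$-adic-to-complex de Rham comparison already quoted in the text then transports the structure to $\mathcal{O}(\pi^{un,dR}_{1,\mathbb{Q}_p})$.

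\textbf{Main obstacle.} The crux is Step 2 — proving surjectivity of iterated integration onto the coordinate ring of the Malcev completion, i.e. that the length filtration on the bar complex matches the $J$-adic filtration on the group ring after passing to associated gradeds. In the non-proper case this is genuinely delicate: it requires a good compactification, careful handling of the residues of logarithmic forms in both the bar complex and the presentation of $\mathrm{gr}_J\,\mathbb{C}[\pi_1]$, and degeneration of the relevant length/Leray spectral sequences. Everything else is either standard Hopf-algebra bookkeeping (Step 1) or a direct application of the mixed Hodge complex machinery already used elsewhere in the paper (Step 3).
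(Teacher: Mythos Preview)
The paper does not prove this statement at all: it is stated as a background result in the appendix, attributed to Chen \cite{chen1977iterated} with the Hodge-theoretic refinements credited to Hain \cite{hain1987geometry}, and no proof or proof sketch is given. Your proposal is a reasonable outline of the classical Chen--Hain argument (bar construction, iterated integration pairing with the $J$-adic filtration, mixed Hodge complex structure on the bar complex), so there is nothing in the paper to compare it against beyond noting that you are reconstructing the cited sources rather than anything the paper itself does.
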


\section{The Gauss--Manin connection}

One essential source of vector bundles with integrable connection is the fiberwise de Rham cohomology of proper smooth morphisms of smooth schemes $f \colon X \to S$. For such a family, we define the relative de Rham cohomology to be $H^q(X/S) \defeq \mathbb{R}^{q} f_* (\Omega_{X/S}^{\bullet})$. By \cite[\href{https://stacks.math.columbia.edu/tag/02G1}{Tag 02G1}]{stacks-project} the smoothness of $f$ implies that $\Omega_{X/S}^{\bullet}$ is locally free on $X$; by properness, its derived pushforward is at least coherent.
We will define a connection on this sheaf. (The existence of a connection implies that the sheaf is locally free. See \cite[Proposition 8.8]{katz1970nilpotent}.)

After analytic descriptions going back to Gauss and Manin in various forms, Katz and Oda provided a purely algebraic description in the mid-twentieth century \cite{katz1968on}. Their key insight was that there is a natural filtration on the complex $\Omega^\bullet_X$ given by $$F^i = Im(\Omega^{\bullet-i}_X \otimes \pi^*\Omega^i_S \to \Omega^\bullet_X).$$ We now introduce a fact from linear algebra that will allow us to leverage this filtration to create a connection.

\begin{fact}
If $$0 \to E \to F \to L \to 0$$ is an exact sequence of vector spaces over a field (which we suppress in our notation), we consider the natural filtration on $F$ given by $$Fil^k \Lambda^p F = Im(F^{\otimes(p-k)} \otimes  E^{\otimes k}  \to \Lambda^p F).$$ Then we have that $$Fil^k \Lambda^p F/Fil^{k+1} \Lambda^p F = \Lambda^k E \otimes \Lambda^{p-k} L.$$
\end{fact}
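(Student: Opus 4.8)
The statement is a standard fact in multilinear algebra: given a short exact sequence $0 \to E \to F \to L \to 0$ of vector spaces (so it splits, since we are over a field), the filtration $Fil^k \Lambda^p F$ on the exterior power has associated graded pieces $\Lambda^k E \otimes \Lambda^{p-k} L$. The plan is to choose a splitting $F \simeq E \oplus L$ and then compute directly, using the decomposition of exterior powers of a direct sum.

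First I would fix an isomorphism $F \simeq E \oplus L$ coming from a section of $F \to L$; this is harmless since everything in sight is functorial enough and we are only claiming an abstract isomorphism of graded pieces. Under this splitting one has the classical decomposition $\Lambda^p F = \bigoplus_{i+j=p} \Lambda^i E \otimes \Lambda^j L$. The next step is to identify the filtration $Fil^k \Lambda^p F$ with $\bigoplus_{i \geq k} \Lambda^i E \otimes \Lambda^{p-i} L$: indeed, $Fil^k \Lambda^p F$ is by definition the image of $F^{\otimes(p-k)} \otimes E^{\otimes k} \to \Lambda^p F$, and a wedge of $p$ vectors, at least $k$ of which lie in $E$, expands in the direct-sum decomposition into terms each having at least $k$ factors from $E$; conversely every element of $\Lambda^i E \otimes \Lambda^{p-i} L$ with $i \geq k$ is visibly such an image. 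Once this identification is in place, the quotient $Fil^k \Lambda^p F / Fil^{k+1} \Lambda^p F$ is $\left( \bigoplus_{i \geq k} \Lambda^i E \otimes \Lambda^{p-i} L \right) \big/ \left( \bigoplus_{i \geq k+1} \Lambda^i E \otimes \Lambda^{p-i} L \right) \simeq \Lambda^k E \otimes \Lambda^{p-k} L$, which is exactly the claim.

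There is no serious obstacle here; the only point requiring a little care is checking that the two descriptions of $Fil^k$ — the intrinsic one as an image of a tensor power, and the one in terms of the splitting — actually coincide, which amounts to the observation that the image is independent of the chosen splitting (it is defined without reference to a splitting). I would state that independence explicitly and verify it by noting that changing the section alters a wedge term by elements with strictly more $E$-factors, hence does not change which filtration step a class lies in. If one prefers to avoid choosing a splitting altogether, an alternative is to induct on $p$ using the exact sequence $0 \to E \otimes \Lambda^{p-1} F \to \Lambda^p F \to \Lambda^p L \to 0$ together with the inductive hypothesis applied to $\Lambda^{p-1} F$; but the direct computation via a splitting is shorter and is all that is needed for the application to the Katz–Oda filtration on $\Omega^\bullet_X$.
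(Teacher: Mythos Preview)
Your argument is correct: splitting the sequence, using the decomposition $\Lambda^p(E \oplus L) \simeq \bigoplus_{i+j=p} \Lambda^i E \otimes \Lambda^j L$, and identifying $Fil^k$ with the subsum over $i \geq k$ is exactly the standard verification. The paper does not supply a proof at all---it explicitly leaves this fact as an exercise to the reader---so there is nothing further to compare.
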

The proof of this fact is an exercise to the reader. 

Now we apply the previous linear algebra fact to the exact sequence $$0 \to \pi^*\Omega^1_S \to \Omega^1_X \to \Omega^1_{X/S} \to 0,$$ which arises from the smoothness of $\pi$, and we have the calculation of the associated graded: the $i$\ts{th} graded piece of the induced filtration on $\Omega^\bullet_X$ is $\pi^* \Omega^i_S \otimes \Omega^{\bullet-i}_{X/S}$.

\begin{defn}
Consider the exact sequence $0 \to gr^1 \to F^0/F^2 \to gr^0 \to 0$ obtained from the aforementioned filtration on $\Omega^\bullet_X$; the connecting homomorphism $\mathbb{R}^q \pi_* gr^0 \overset{\delta} \to \mathbb{R}^{q+1} \pi_* gr^1$ produces a map

\begin{align}
H^q_{dR}(X/S) \simeq \mathbb{R}^q \pi_* gr^0 \overset{\delta} \to \mathbb{R}^{q+1} \pi_* gr^1 \simeq \mathbb{R}^{q+1} (\pi_* gr^0[-1] \otimes \pi^* \Omega^1_S)
\\ \simeq \mathbb{R}^q\pi_* gr^0 \otimes \pi^* \Omega^1_S = H^q_{dR}(X/S) \otimes \Omega^1_S. \end{align}
(The second isomorphism above is the calculation of the associated graded, and the third is given by the projection formula.)

The resulting composition $\nabla_{GM} \colon H^q_{dR}(X/S) \to H^q_{dR}(X/S) \otimes \Omega^1_S$ is called the Gauss--Manin connection.

\end{defn}

Here is a concrete description of the connection when $X$ and $S$ are both affine and $q=1$, from Besser \cite[1.7.1]{besser2012heidelberg}.
\begin{exmp}{(Affine Gauss--Manin for $H^1_{dR}(X/S)$.)}

Suppose that $X$ and $S$ are smooth affine schemes over a field $K$, and that $\pi \colon X \to S$ is a smooth $K$-morphism. Then the Gauss--Manin connection on the first relative cohomology has a simple description. Let $\omega \in \Omega^1_{X/S}$ be a closed form with respect to the relative differential. We pick a lift $\tilde \omega \in \Omega_X$; then $d \tilde \omega \in \Omega^2_X$, and in fact $d \tilde \omega$ is in the kernel of the map $\Omega^2_X \to \Omega^2_{X/S}$, because $\omega$ is closed and differentiation commutes with the map to relative differentials. Now using the linear algebra fact again for $0 \to F^1 \to F^0 \to gr^0 \to 0$ shows that this kernel is precisely $F^1$; projecting to the associated graded we obtain an element $\tilde \omega' \in \Omega_S \otimes \Omega^1_{X/S}$ that must in fact live in $\tilde \omega' \in \Omega_S \otimes (\Omega^1_{X/S})^{d=0}$. Taking cohomology classes gives an element $\tilde \omega' \in \Omega_S \otimes H^1(X/S)$.
\end{exmp}

The $F$-isocrystal structure on relative crystalline cohomology implies that formal solutions to the Gauss--Manin connection over a $p$-adic field converge on residue tubes \cite[3.1.2]{katz1973travaux}, as we now explain.
In other words, the \textit{non-unipotent} vector bundle with integrable connection coming from the relative de Rham cohomology can be integrated on residue tubes at the very least. In fact, one can say even more than convergence -- where the connection does converge, it is Frobenius-equivariant for a natural Frobenius action on both fibers. Lawrence and Venkatesh recall this idea in the work of Berthelot \cite[Prop. 3.6.4]{berthelot2006cohomologie}. Let us follow suit.

For $F$ a number field, we denote by $\mathcal{O}_{F_v}$ the ring of integers of the completion of $F$ at $v$, and if $X$ is a scheme over $\mathcal{O}_{F_v}$, we denote by $X_0$ its special fiber. Write $\kappa$ for the residue field of $\mathcal{O}_{F_v}$.
\begin{prop}
Let $\tilde f \colon \mathcal{Y} \to \mathcal{X}$ be a smooth and proper morphism of schemes over $\mathcal{O}_{F, v}$, for $v$ a place of $F$, and suppose that $x,x' \in \mathcal{X}(\mathcal{O}_{F,v})$ are points lying in the same residue disc.

Then the formal solution to the Gauss--Manin equations between the fibers of $Y$ above $x$ and $x'$, which we denote by $p_{dR}$, can be identified with the comparison isomorphism to crystalline cohomology of the special fiber via the following commutative diagram:
\[
\begin{tikzcd}
H^q_{dR}(Y_x,F_v) \arrow{dd}{p_{dR}} \arrow{dr}{comp} & \\
& H^q_{crys}(X_0/W(\kappa)) \otimes_{W(\kappa)} F_v\\
H^q_{dR}(Y_{x'},F_v) \arrow{ur}{comp} & 
\end{tikzcd}
\]
\end{prop}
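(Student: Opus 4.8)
The statement asserts that the formal horizontal transport $p_{dR}$ for the Gauss--Manin connection, going between the de Rham cohomologies of two fibers $Y_x$ and $Y_{x'}$ lying in the same residue disc, is compatible with the two crystalline comparison isomorphisms. The plan is to reduce this to Berthelot's theory of crystalline cohomology of the special fiber and the relationship of that theory to de Rham cohomology of a lift. First I would observe that since $x$ and $x'$ reduce to the same point $\bar x \in \mathcal{X}_0(\kappa)$, both $Y_x$ and $Y_{x'}$ are smooth proper $\mathcal{O}_{F_v}$-schemes reducing to the same special fiber $(Y_{\bar x})_0 = \mathcal{Y}_0 \times_{\mathcal{X}_0} \bar x$. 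The crystalline comparison theorem (Berthelot--Ogus) then gives canonical isomorphisms $comp \colon H^q_{dR}(Y_x, F_v) \xrightarrow{\sim} H^q_{crys}((Y_{\bar x})_0/W(\kappa)) \otimes_{W(\kappa)} F_v$ and likewise for $x'$; the point is precisely that the crystalline cohomology on the right depends only on the special fiber, not on the chosen lift.

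The core of the argument is then to identify $p_{dR}$ as the composite $comp_{x'}^{-1} \circ comp_x$. The plan here is to invoke the functoriality of the Gauss--Manin connection under the formal completion of $\mathcal{X}$ along $\bar x$, together with Berthelot's result (the reference \cite[Prop. 3.6.4]{berthelot2006cohomologie} cited in the excerpt, and the discussion in \cite[3.1.2]{katz1973travaux}) that relative crystalline cohomology of $\mathcal{Y}_0/\mathcal{X}_0$, pulled back to the formal/rigid tube $]\bar x[$, is canonically the associated convergent $F$-isocrystal, and that this isocrystal underlies the relative de Rham cohomology equipped with Gauss--Manin. Concretely, over the residue disc the bundle $H^q_{dR}(\mathcal{Y}/\mathcal{X})$ with $\nabla_{GM}$ becomes, after analytification, an $F$-isocrystal whose horizontal sections are exactly the Taylor-series solutions of the Gauss--Manin equations, and these converge on the whole tube because the bundle is an $F$-isocrystal (the same mechanism used in Section \ref{analyticity} via \cite[4.24(ii)]{olsson2011towards}). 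Evaluating this trivialized isocrystal at $x$ and at $x'$ and matching with the fiberwise crystalline comparison yields the desired triangle: transport along the unique horizontal extension from the fiber at $x$ to the fiber at $x'$ is the same as first identifying $H^q_{dR}(Y_x)$ with the constant value of the isocrystal (i.e. $H^q_{crys}$ of the special fiber) and then identifying that with $H^q_{dR}(Y_{x'})$.

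I would organize the write-up in three steps: (1) set up the relative crystalline cohomology $H^q_{crys}(\mathcal{Y}_0/\mathcal{X}_0)$ as an $F$-isocrystal on $\mathcal{X}_0$ and recall Berthelot's comparison with the relative de Rham cohomology of the lift $\mathcal{Y}/\mathcal{X}$, noting that this comparison is horizontal for $\nabla_{GM}$ on one side and the canonical connection of the isocrystal on the other; (2) restrict to the tube $]\bar x[$, where the isocrystal is (non-canonically) trivial, so that a flat section is determined by its value in any single fiber, and where convergence on the full disc is automatic; (3) specialize to the two points $x, x'$ and read off that $p_{dR}$, defined as the flat continuation, equals $comp_{x'}^{-1}\circ comp_x$, since the fiberwise crystalline comparisons are exactly the maps that identify each de Rham fiber with the global (constant) crystalline module. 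The main obstacle I anticipate is being careful about the precise form of Berthelot's relative comparison theorem and its horizontality: one needs the comparison between $H^q_{dR}(\mathcal{Y}/\mathcal{X})|_{]\bar x[}$ and the convergent realization of $H^q_{crys}(\mathcal{Y}_0/\mathcal{X}_0)$ to be not merely an isomorphism of sheaves but an isomorphism of modules-with-connection, and to check that under this identification the fiberwise crystalline comparison $comp$ at a point coincides with evaluation of the isocrystal — this compatibility is the technical heart and is exactly what is recorded in \cite[3.1.2]{katz1973travaux} and \cite[Prop. 3.6.4]{berthelot2006cohomologie}, so the proof largely amounts to assembling these citations correctly rather than proving anything new.
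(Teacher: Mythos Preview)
Your proposal is correct and matches the paper's approach; indeed, the paper does not give its own proof of this proposition but simply states it as a known result, citing precisely the references you invoke (\cite[3.1.2]{katz1973travaux} and \cite[Prop.~3.6.4]{berthelot2006cohomologie}). Your three-step plan is an accurate unpacking of what those references contain, and your identification of the technical heart --- that Berthelot's relative comparison is horizontal for the Gauss--Manin connection --- is exactly the point.
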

(Note that the comparison to the special fiber endows the de Rham cohomology groups above with a semilinear action of Frobenius, and endows the crystalline cohomology with a filtration after tensoring to $F_v$.) In particular, $p^{dR}$ converges in the whole residue disc. The commutativity of this diagram is essential, because it says that the Gauss--Manin connection is Frobenius-equivariant. This proposition, then, is an analogue of the Tannakian interpretation of Coleman integrals by Besser in a non-unipotent case -- although it only works locally. It thus plays an essential role in our extension of the Chabauty diagram to the relative completion. We spell out this convergence for the entire relative completion in Section \ref{analyticity}.

Finally, we relate one classical and one contemporary fact about vector bundles.
\begin{rem}
The astute reader might wonder what happened to the \'etale topology here. The last section was so concerned that local systems on a scheme be locally trivial in the \'etale topology. If we are aiming for a Riemann-Hilbert correspondence that equates local systems and vector bundles with integrable connection, then, why are our vector bundles not described using local triviality in the \'etale topology? The reason is the following statement, the third miracle of the \'etale topology that we've seen thus far.
\begin{thm}
There is an equivalence of categories $$QCoh(X_{\acute{e}t}) \to QCoh(X_{Zar})$$ induced by the inclusion of the Zariski site into the \'etale site. \cite[\href{https://stacks.math.columbia.edu/tag/03DR}{Tag 03DR}]{stacks-project}
\end{thm}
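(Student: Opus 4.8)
The statement is an instance of faithfully flat descent for quasi-coherent modules, so the plan is to reduce it to Grothendieck's descent theorem and then glue. Let $\epsilon \colon X_{\acute{e}t} \to X_{Zar}$ denote the morphism of sites induced by the inclusion of the Zariski site into the small \'etale site; it induces an adjoint pair $(\epsilon^*, \epsilon_*)$ between sheaves of $\mathcal{O}_X$-modules on the two sites, and one checks directly that $\epsilon^*$ carries a quasi-coherent Zariski sheaf to a quasi-coherent \'etale sheaf (quasi-coherence is defined by the existence of a presentation $\mathcal{O}^{(I)} \to \mathcal{O}^{(J)} \to \mathcal{F} \to 0$ locally, and this is preserved by pullback along a morphism of ringed topoi). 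The claim is that $\epsilon^*$ and $\epsilon_*$ restrict to mutually inverse equivalences between $QCoh(X_{\acute{e}t})$ and $QCoh(X_{Zar})$; the map in the statement is then $\epsilon_*$ (equivalently its quasi-inverse).

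First I would reduce to the affine case. Both $QCoh(X_{\acute{e}t})$ and $QCoh(X_{Zar})$ satisfy Zariski descent --- a quasi-coherent sheaf on either site is determined by its restrictions to a Zariski-open cover together with gluing data on overlaps --- and the functors $\epsilon^*, \epsilon_*$ are compatible with restriction to Zariski opens. So it suffices to prove the equivalence when $X = \Spec A$ is affine, and moreover to prove it compatibly with localization $A \to A_f$. On the Zariski side we have the classical equivalence $M \mapsto \widetilde{M}$ between $A$-modules and $QCoh((\Spec A)_{Zar})$, so the task becomes: show that $M \mapsto \epsilon^*\widetilde{M}$ is an equivalence from $A$-modules to $QCoh((\Spec A)_{\acute{e}t})$, with quasi-inverse $\mathcal{F} \mapsto \Gamma(X, \mathcal{F})$.

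The heart of the argument is faithfully flat descent. Given $\mathcal{F} \in QCoh((\Spec A)_{\acute{e}t})$, I would first show that for any affine \'etale morphism $\Spec B \to \Spec A$ the restriction $\mathcal{F}|_{\Spec B}$ is the \'etale-quasi-coherent sheaf associated to the $B$-module $N_B := \Gamma(\Spec B, \mathcal{F})$, and that for $\Spec B' \to \Spec B$ \'etale affine one has $N_{B'} = N_B \otimes_B B'$; this is a formal consequence of quasi-coherence together with the fact that \'etale morphisms are flat, so that tensoring is exact and commutes with the relevant colimits. Now choose an \'etale cover $\{\Spec B_i \to \Spec A\}$; replacing it by the single faithfully flat \'etale map $A \to B = \prod B_i$, the sheaf $\mathcal{F}$ furnishes a $B$-module $N$ together with an isomorphism $N \otimes_B (B \otimes_A B) \cong (B \otimes_A B) \otimes_B N$ satisfying the cocycle condition on $B \otimes_A B \otimes_A B$ --- i.e. a descent datum for the faithfully flat ring map $A \to B$. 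By Grothendieck's faithfully flat descent theorem the category of $A$-modules is equivalent to the category of $B$-modules equipped with such a descent datum, so $N$ descends to an $A$-module $M = \Gamma(\Spec A, \mathcal{F})$ with $M \otimes_A B \cong N$. Unwinding this gives a canonical isomorphism $\epsilon^* \widetilde{M} \xrightarrow{\sim} \mathcal{F}$, proving essential surjectivity; full faithfulness follows from the same computation applied to $\Hom$-sheaves (a morphism of \'etale-quasi-coherent sheaves is determined by, and any $A$-linear map of global sections extends to, a compatible system of $B$-linear maps, again by descent). Finally I would verify that the affine equivalences are compatible with the localization maps $A \to A_f$, so that the gluing data match up and the local equivalences assemble to the asserted global equivalence.

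\textbf{Main obstacle.} The only non-formal input is Grothendieck's faithfully flat descent for modules, i.e. exactness of the Amitsur complex $0 \to M \to M\otimes_A B \to M \otimes_A B \otimes_A B$ for faithfully flat $A \to B$ and the resulting equivalence with modules-plus-descent-datum; everything else is bookkeeping with quasi-coherence, flatness of \'etale morphisms, and Zariski gluing. In a write-up I would cite this (e.g.\ \cite[\href{https://stacks.math.columbia.edu/tag/023K}{Tag 023K}]{stacks-project}) rather than reprove it, and spend the remaining effort on the compatibility-with-localization step that upgrades the affine statement to the global one.
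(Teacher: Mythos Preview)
The paper does not give its own proof of this statement; it is stated as a background fact inside a remark and is simply cited to the Stacks Project \cite[\href{https://stacks.math.columbia.edu/tag/03DR}{Tag 03DR}]{stacks-project}. Your proposal is correct and is exactly the standard argument behind the cited reference: reduce to affines via Zariski gluing, then use faithfully flat descent for modules along the \'etale cover $A \to B$ to identify $QCoh((\Spec A)_{\acute{e}t})$ with $A$-modules.
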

\end{rem}

\chapter{Stacks, Sites, and Pseudofunctors}
As will have been clear by now, the arithmetic-geometric toolkit uses in an essential way both the theories of stacks and non-archimedean analytic spaces. After a brief review of stacks, we take this section to introduce the \textit{Geometric Contexts} of Porta and Yu \cite{porta2016higher}. Geometric contexts will allow us to create a theory of stacks ``modeled on'' any reasonable geometric theory.

Two brief comments: We have chosen to restrict ourselves to classical (i.e. 1-truncated) stacks; hopefully non-homotopical readers will be grateful. Second, we have taken the pseudofunctor approach to stacks here since our eventual applications will be for moduli spaces.

The idea of a stack formed in the Grothendiek school from investigations in moduli theory; indeed, early uses for stacks included the work of Deligne and Mumford on the geometry of the moduli space of curves. The aforementioned mathematicians, especially Artin, realized that many moduli spaces could be exhibited as quotients of schemes. But a quotient of a scheme might not be a scheme!

\section{Stacks and geometric contexts}
If we're going to use functors with values in groupoids, we had better understand the collection of all groupoids.

\begin{defn}
A (strict) 2-category $\mathcal{C}$ consists of the following data:
\begin{description}
\item[(1)] A collection of objects, $Ob(\mathcal{C})$.
\item[(2)] For any two objects $x,y \in Ob(\mathcal{C})$, a category $Mor(x,y)$. Its objects are called 1-morphisms of $\mathcal{C}$ and its morphisms are called 2-morphisms of $\mathcal{C}$. When one composes 2-morphisms in the category $Mor(x,y)$, one calls it vertical composition.
\item[(3)] For any three $x,y,z \in Ob(\mathcal{C})$, a functor $Mor(x,y) \times Mor(y,z) \to Mor(x,z)$.
The image of two 1-morphisms is called their composition, and the image of two 2-morphisms is called their horizontal composition.
\end{description}
This data must satisfy the following requirements:
\item[(1)] The collection of objects and 1-morphisms, equipped with composition of 1-morphisms, forms a category.
\item[(2)] Horizontal composition of 2-morphisms is associative.
\item[(3)] The 2-identity morphism $id_{id_x}$ of the identity 1-morphism $id_x$ is a unit for horizontal composition.
\end{defn}
\begin{exmp}
The collection of groupoids, equipped with functors as 1-morphisms and natural transformations as 2-morphisms, forms a 2-category.
\end{exmp}

\begin{defn}
Let $\mathcal{C}$ be a 2-category. Then we say that a diagram
\[
\begin{tikzcd}
a \arrow{r} \arrow{d} & b \arrow{d} \\
c \arrow{r} & d
\end{tikzcd}
\]
2-commutes if there is a 2-isomorphism (invertible 2-morphism) between the top-right and bottom-left compositions.

If an object $w$ sits in a 2-commutative diagram
\[
\begin{tikzcd}
w \arrow{r} \arrow{d} & y \arrow{d} \\
x \arrow{r} & z
\end{tikzcd}
\]
then we say that $w$ is the fibered product of $x$ and $y$ over $z$ if, for any other 2-commuting diagram
\[
\begin{tikzcd}
t \arrow{r} \arrow{d} & y \arrow{d} \\
x \arrow{r} & z
\end{tikzcd}
\]
there exists a 1-morphism $\gamma \colon t \to w$ such that the above diagram, now with all maps from $w$ included, 2-commutes. In other words, $w$ 2-represents the functor of categories $Mor(-,x) \times_{Mor(-,z)} Mor(-,y)$. This universal property characterizes $w$ up to 2-equivalence, and in that case we write $w=x \times_z y$. 
\end{defn}

\begin{prop}
The 2-category of groupoids has 2-fibred products. If $H_1, H_2,$ and $H_3$ are groupoids with functors $F_1 \colon H_1 \to H_3$ and $F_2 \to H_3$, then their fibred product is given explicitly by the formula $$Ob(H_1 \times_{H_3} H_2) = Ob(H_1) \times_{Ob(H_3)} Ob(H_2),$$ $$Mor((x_1,x_2),(y_1,y_2)) = Mor(x_1,y_1) \times Mor(x_2,y_2)$$
\end{prop}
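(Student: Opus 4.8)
The plan is to verify directly that the explicit formula given in the statement does produce a groupoid, and then check that it satisfies the universal property of the 2-fibered product in the 2-category of groupoids. First I would confirm that $H_1 \times_{H_3} H_2$ as described is a groupoid at all: its objects are pairs $(x_1,x_2)$ with $F_1(x_1) = F_2(x_2)$ (this is what the fiber product of object sets means, using that $F_1, F_2$ agree on these objects, or more carefully agree up to a chosen isomorphism), and its morphisms are pairs of morphisms that are compatible after applying $F_1$ and $F_2$. Composition is componentwise, the identity on $(x_1,x_2)$ is $(\mathrm{id}_{x_1}, \mathrm{id}_{x_2})$, and every morphism is invertible because each component is invertible in the respective groupoid $H_i$. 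One small subtlety: since we are working 2-categorically, the "strict" fiber product on objects should really be taken up to the data of a natural isomorphism $F_1 \circ \mathrm{pr}_1 \Rightarrow F_2 \circ \mathrm{pr}_2$; I would note that in the strict formula above this natural isomorphism is the identity, which is the honest strict pullback and suffices to 2-represent the functor in question.

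Next I would exhibit the two projection functors $\mathrm{pr}_1 \colon H_1 \times_{H_3} H_2 \to H_1$ and $\mathrm{pr}_2 \colon H_1 \times_{H_3} H_2 \to H_2$, defined on objects and morphisms by the evident coordinate projections, and observe that the square
\[
\begin{tikzcd}
H_1 \times_{H_3} H_2 \arrow{r}{\mathrm{pr}_2} \arrow{d}{\mathrm{pr}_1} & H_2 \arrow{d}{F_2} \\
H_1 \arrow{r}{F_1} & H_3
\end{tikzcd}
\]
2-commutes (in fact strictly commutes). Then, given any groupoid $T$ with functors $G_1 \colon T \to H_1$, $G_2 \colon T \to H_2$ and a 2-isomorphism $\eta \colon F_1 \circ G_1 \Rightarrow F_2 \circ G_2$, I would construct the comparison functor $\gamma \colon T \to H_1 \times_{H_3} H_2$. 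On an object $t$ of $T$ this is forced: $\gamma(t)$ must be a pair in the fiber product, and the component $\eta_t \colon F_1(G_1(t)) \to F_2(G_2(t))$ witnesses the compatibility; since our strict model demands strict equality on the nose, I would instead note that the universal property is really characterizing the fiber product up to equivalence, and present $\gamma$ landing in the equivalent ``iso-comma'' model and then transport back. On morphisms $\gamma$ is $(G_1, G_2)$ on the compatible pairs. Finally I would check that $\mathrm{pr}_i \circ \gamma$ is (2-isomorphic to) $G_i$ and that $\gamma$ is unique up to unique 2-isomorphism with this property — this is a routine diagram chase using that everything in sight is invertible.

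The main obstacle, and the only genuinely delicate point, is the strict-versus-weak issue: the formula in the statement is the \emph{strict} pullback of categories, whereas the 2-categorical universal property of a fibered product is only satisfied (in general) by the \emph{iso-comma} construction $H_1 \times^{\mathrm{iso}}_{H_3} H_2$ whose objects are triples $(x_1, x_2, \theta \colon F_1 x_1 \xrightarrow{\sim} F_2 x_2)$. I would handle this by remarking that for the applications in this paper the relevant functors $F_1, F_2$ (restriction, inclusion, base-change along morphisms of test objects) are set up so that strict commutativity holds, so the naive model is adequate; alternatively, and more honestly, I would state the proposition for the iso-comma model and observe that when $F_1$ or $F_2$ is an isofibration (which holds in all our cases, e.g. for the structural maps of the cohomology stacks and Selmer stacks) the strict and weak pullbacks are canonically equivalent, so no harm is done by using the simpler formula. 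I do not expect any other step to require more than bookkeeping.
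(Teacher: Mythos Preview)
The paper does not supply a proof of this proposition; it is stated as a standard background fact and left unproved. Your argument is correct and in fact more careful than anything the paper offers: you verify the groupoid axioms for the explicit model, exhibit the projections, and check the universal property.

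Your identification of the strict-versus-iso-comma issue is the genuinely valuable part of your write-up. The formula as stated in the proposition is the \emph{strict} fiber product of categories, and you are right that in general only the iso-comma construction $(x_1,x_2,\theta\colon F_1 x_1 \xrightarrow{\sim} F_2 x_2)$ satisfies the 2-categorical universal property. Your two proposed fixes---either passing to the iso-comma model, or invoking the isofibration condition under which strict and weak pullbacks agree---are both standard and correct. Since the paper later uses these fiber products for cohomology and Selmer stacks (where the relevant maps are indeed isofibrations, or where one works up to equivalence anyway), the discrepancy is harmless in context, but it is a real imprecision in the statement that you have caught.
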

Now that we have this machinery we may introduce a notion that will allow us to emulate the functor of points in the 2-categorical context.
\begin{defn}
A pseudofunctor $F$ from a category $\mathcal{C}$ to a 2-category $\mathcal{D}$ is a map on objects and for any two objects, a map on 1-morphisms; for each chain $x \overset{\alpha }\to y \overset{\beta} \to z$ in $\mathcal{C}$ there is specified a 2-morphism $F(\beta \circ \alpha) \to F(\beta) \circ F(\alpha)$.
\end{defn}

We may now begin to abstract the conditions for a good theory of stacks, which will be based on the concept of a \textit{geometric context} as defined in \cite{porta2016higher}.
\begin{defn}
A \textit{geometric context} $(\mathcal{C}, \tau, \bold{P})$ consists of a Grothendieck site $(\mathcal{C}, \tau)$ along with a class of morphisms in $\mathcal{C}$ satisfying:
\begin{itemize}
\item Every representable presheaf on $\mathcal{C}$ is a sheaf.
\item The class $\bold{P}$ is closed under isomorphism, composition, and pullback.
\item Every covering in $\tau$ consists of morphisms in $\bold{P}$
\item For every morphism $f \colon X \to Y$ in $\mathcal{C}$, if there is a covering $\{ U_i \to X\}$ such that the composites $\{U_i \to Y\}$ are in $\bold{P}$, then so is $f$ itself.
\end{itemize}
\end{defn}

\begin{rem}
We have replaced Porta and Yu's first condition on hypercompleteness with the simple sheaf condition. However, by their Corollary 2.5 we have that the two are equivalent for a 1-site.
\end{rem}

We note that Porta and Yu use the following homotopical definition of stack -- the reader is welcome to look at the work of Hollander below for the definition of a 2-sheaf.
\begin{defn}
A stack in groupoids on $\mathcal{C}$ is a presheaf on $\mathcal{C}$ valued in groupoids  that satisfies the 2-sheaf condition.
\end{defn}
\begin{defn}
A classical stack in groupoids is a pseudofunctor $F \colon \mathcal{C} \to Grpd$ such that all descent data is effective \cite[\href{https://stacks.math.columbia.edu/tag/02ZC}{Tag 02ZC}]{stacks-project} and for any $U \in \mathcal{C}$ and any two objects $x,y \in F(U)$, the presheaf $Isom(x,y)$  on $\mathcal{C}/U$ is a sheaf.
\end{defn}
We have the following theorem.
\begin{prop}{(\cite[Theorem~1.1]{hollander2008homotopy})}
A classical stack is a stack, and vice-versa.
\end{prop}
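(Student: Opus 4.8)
The plan is to produce a careful dictionary between the two formulations and check that the descent conditions in each match up, using the fact that the site here is a $1$-site so that the homotopical theory of $2$-sheaves collapses to the classical one. First I would fix notation: a presheaf of groupoids $F$ on $\mathcal{C}$, and recall that for $F$ to be a stack in groupoids (Hollander's sense) means that for every object $U$ and every covering $\{U_i \to U\}$, the natural functor from $F(U)$ to the groupoid of descent data (the homotopy limit of $F$ over the \v{C}ech nerve of the covering) is an equivalence of groupoids. I would then unpack this homotopy limit explicitly for a $1$-site: it consists of families $(x_i \in F(U_i))$ together with isomorphisms $\varphi_{ij}$ over $U_i \times_U U_j$ satisfying the cocycle condition on triple overlaps, with morphisms of descent data the obvious compatible families of isomorphisms. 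The core observation is that ``the functor $F(U) \to \mathrm{Desc}(\{U_i\})$ is an equivalence'' decomposes into (a) essential surjectivity, which is exactly the statement that all descent data are effective, and (b) fully faithfulness, which is exactly the statement that for any two objects $x, y \in F(U)$ the presheaf $\mathit{Isom}(x,y)$ on $\mathcal{C}/U$ satisfies the sheaf condition with respect to $\{U_i \to U\}$.

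Concretely I would carry out the argument in the following steps. Step one: given a classical stack (pseudofunctor $F$ with effective descent and the $\mathit{Isom}$-sheaf property), strictify the pseudofunctor to an honest presheaf of groupoids up to equivalence — this is a standard Grothendieck-construction / rectification argument, and I would cite that rectification preserves the descent properties since equivalent presheaves have equivalent homotopy limits. Step two: translate the two hypotheses into fully faithfulness and essential surjectivity of $F(U) \to \mathrm{Desc}(\{U_i\})$ as above; this shows the strictified $F$ is a stack in Hollander's sense. Step three: conversely, given a stack in groupoids, observe that it is in particular a presheaf of groupoids, which we may view as a pseudofunctor (indeed a strict one); the $2$-sheaf condition, read on the $1$-site via the explicit \v{C}ech description, yields both effectivity of descent data and the $\mathit{Isom}$-sheaf condition, so it is a classical stack. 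Step four: check that these two passages are mutually inverse up to the appropriate equivalence, so the two notions genuinely coincide. Throughout I would lean on the cited Remark that Porta--Yu's hypercompleteness condition reduces to the ordinary sheaf condition on a $1$-site, so no higher-categorical subtleties intervene.

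I expect the main obstacle to be the bookkeeping in identifying the homotopy limit over the \v{C}ech nerve with the classical category of descent data: one must be careful that the ``homotopy'' limit of a cosimplicial diagram of groupoids, truncated appropriately, really is computed by the naive formula with objects, isomorphisms on double overlaps, and the cocycle condition on triple overlaps — in particular that no higher coherence data survives because groupoids are $1$-truncated. Verifying this, and verifying that morphisms in the homotopy limit correspond exactly to matching families of isomorphisms (so that fully faithfulness of the comparison functor is equivalent to the $\mathit{Isom}$-sheaf axiom rather than something weaker), is where the real content lies; everything else is formal. Since this equivalence is precisely the content of Hollander's Theorem~1.1, the honest shortest route is simply to invoke that theorem, and in the write-up I would present the dictionary above as an indication of why it holds rather than reproving it in full.
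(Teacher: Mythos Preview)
The paper does not give its own proof of this proposition; it simply states the result and cites Hollander's Theorem~1.1 as a black box. Your proposal ultimately does the same thing---invoke Hollander---while adding a helpful dictionary explaining why the two notions should coincide, so your approach is consistent with (and more informative than) what the paper does.
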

Even though Porta and Yu use the language of higher stacks, we therefore lose nothing by considering their theorems in the classical language of stacks.

We will repeatedly use the following operation.
\begin{fact}(\cite[\href{https://stacks.math.columbia.edu/tag/0435}{Tag 0435}]{stacks-project}) \label{stackification}
Given $F$ a category fibered in groupoids over a site $\mathcal{C}$. Then there is a stack $\tilde F$, called the stackification of $F$, which is universal in the following sense: there is a 1-morphism of categories fibered in groupoids $$s \colon F \to \tilde F,$$ and given another stack $G$ and a 1-morphism $F \to G$ there exists a 1-morphism $\tilde F \to G$ that factors the map $F \to G$. This property characterizes $\tilde F$ up to unique 2-isomorphism.

The defining properties of stackification, up to unique 2-isomorphism, are:
First, given $T \in \mathcal{C}$ and $x,y \in F(T)$, the map $Mor(x,y) \to Mor(s(x),s(y))$ identifies the latter functor as the sheafification of the former.
Second, for every $x' \in \tilde F(T)$ there is a $\tau$-covering $\{T_i\}$ of $T$ such that $x'|_{T_i}$ is in the essential image of $s|_{T_i}$.
\end{fact}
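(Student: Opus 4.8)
The plan is to construct $\tilde F$ explicitly in two stages and then verify the universal property directly. Throughout I will freely pass between pseudofunctors valued in groupoids and categories fibered in groupoids, using Hollander's theorem quoted above, so that the construction can be carried out in whichever model is most convenient; since $\mathcal{C}$ is a $1$-site, no higher descent intervenes. Recall that a stack fibered in groupoids over $\mathcal{C}$ is a category fibered in groupoids such that (a) for all $U$ and all $x,y$ over $U$ the presheaf $\mathrm{Isom}(x,y)$ on $\mathcal{C}/U$ is a sheaf, and (b) every descent datum for a $\tau$-covering is effective.

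First I would build an intermediate fibered category $F'$ having the same objects as $F$, but where for each $U$ and each pair $x,y \in F(U)$ the presheaf $\mathrm{Isom}_F(x,y)$ on $\mathcal{C}/U$ is replaced by its sheafification; composition remains well-defined because sheafification commutes with the relevant finite limits, and $F'$ is again fibered in groupoids. The tautological $1$-morphism $F \to F'$ is the identity on objects and the canonical map to the sheafification on $\mathrm{Isom}$-presheaves, so by the universal property of sheafification any $1$-morphism from $F$ to a fibered category whose $\mathrm{Isom}$-presheaves are already sheaves factors through $F \to F'$, uniquely up to unique $2$-isomorphism. Next I would define $\tilde F$: an object over $U$ is a triple $(\{U_i \to U\}_i, (x_i)_i, (\varphi_{ij})_{ij})$ consisting of a $\tau$-covering, objects $x_i \in F'(U_i)$, and isomorphisms $\varphi_{ij} \colon x_j|_{U_i \times_U U_j} \xrightarrow{\sim} x_i|_{U_i \times_U U_j}$ in $F'$ satisfying the cocycle condition on triple overlaps; a morphism of two such descent data over $U$ is a compatible family of morphisms in $F'$ defined after passing to a common refinement of the two coverings, with two such identified when they agree on a further refinement. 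The functor $s \colon F \to \tilde F$ sends $x \in F(U)$ to the trivial descent datum on $\{U \xrightarrow{\mathrm{id}} U\}$.

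One then checks that $\tilde F$ is fibered in groupoids with the evident pullback, that its $\mathrm{Isom}$-presheaves are sheaves (inherited from $F'$, since an isomorphism of descent data is itself a local, sheaf-theoretic datum), and that every descent datum in $\tilde F$ is effective — tautologically, because a descent datum with values in $F'$ is literally an object of $\tilde F$, and one with values in $\tilde F$ unwinds to one with values in $F'$ after refining coverings. Thus $\tilde F$ is a stack. For the universal property, given a stack $G$ and $g \colon F \to G$, I would factor $g$ through $F'$ using the sheaf property of $\mathrm{Isom}$ in $G$, then extend along $F' \to \tilde F$ by sending a descent datum $(\{U_i \to U\},(x_i),(\varphi_{ij}))$ to the object of $G(U)$ obtained by gluing $(g(x_i), g(\varphi_{ij}))$, which exists and is unique up to unique isomorphism by effectivity of descent in $G$; the uniqueness clauses in effective descent supply functoriality and coherence, yielding $\tilde g \colon \tilde F \to G$ with $\tilde g \circ s \cong g$, unique up to unique $2$-isomorphism. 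Finally, the first defining property — that $\mathrm{Mor}(x,y) \to \mathrm{Mor}(s(x),s(y))$ is the sheafification map — follows because $F \to F'$ realizes sheafification on $\mathrm{Isom}$-presheaves and $F' \to \tilde F$ is fully faithful, and the second — local essential surjectivity of $s$ — is immediate from the construction of objects of $\tilde F$ as descent data over coverings; a standard two-sided argument with the universal property shows any two such constructions are related by a $1$-morphism unique up to unique $2$-isomorphism.

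The genuine content is not any single step but the $2$-categorical bookkeeping in the second construction: showing that morphisms of descent data are independent of the chosen common refinement, that composition is associative up to the coherence isomorphisms, and that $\tilde F$ satisfies the full $2$-sheaf (effective descent) condition rather than merely a $1$-categorical shadow of it. As this is exactly the content of \cite[Tag 0435]{stacks-project}, in the write-up I would either carry out the verification in the pseudofunctor model, where the coherence data is explicit, or simply cite that reference, the only feature special to our setting being that $\mathcal{C}$ is a $1$-site so that the classical and homotopical notions of stack coincide.
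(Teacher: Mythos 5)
Your construction is correct and follows essentially the same two-step route as the Stacks Project proof the paper cites (first sheafifying the $\mathrm{Isom}$-presheaves to pass to an associated prestack, then adjoining formal gluings of descent data); the paper itself gives no proof and simply defers to Tag 0435, so there is no independent argument to compare against. The only places you hand-wave — well-definedness of composition after sheafification, independence of morphisms from the chosen refinement, and full effectivity of descent in $\tilde F$ — are precisely the bookkeeping Tag 0435 carries out, and you correctly flag them, so the proposal reads as an accurate unpacking of the cited result rather than a genuinely different argument.
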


We can now define a geometric stack, based on the choice of a geometric context $(\mathcal{C}, \tau, \bold{P})$. Do note that we use the notation $X$ for the Yoneda functor of a sheaf ((-1)-stack) representable by an object $X$, as is common.
\begin{defn}
\begin{description}
\item[Representability] A morphism $F \to G$ of stacks is called representable if, for all stacks $X$ representable in $\mathcal{C}$ and all maps $X \to G$, the 2-fibered product $X \times_G F$ is representable in $\mathcal{C}$.
\item[Properties of Representable Morphisms]
A map of stacks $F \to G$ is said to be in $\bold{P}$ if it is representable and for any representable $X$ as above and map $X \to G$, the map $X \times_G F \to X$ is in $\bold{P}$. Note that, by representability and Yoneda, $X \times_G F \to X$ is in $\mathcal{C}$, so this condition makes sense.
\item[Effective Epimorphism] 
Let $\coprod \mathcal{F}_i \to \mathcal{G}$ be a family of sheaves of groupoids on the site $\mathcal{C}$. We call this family an effective epimorphism if its Cech nerve is a simplicial resolution of $\mathcal{G}$. Fortunately, a theorem of Lurie says that a morphism of objects in an infinity-topos is an effective epimorphism if and only if the 1-truncation of the morphism is an effective morphism. The truncated version is unwound as follows. A family $ \coprod \mathcal{F}_i \to \mathcal{G}$ of sheaves of sets on $\mathcal{C}$ is an effective epimorphism if for any $X \in \mathcal{C}$ the sequence \[Mor(\mathcal{G}, h_X) \to \prod_i Mor(\mathcal{F}_i,h_X) \rightrightarrows \prod_{i,j} Mor(\mathcal{F}_i \times_{\mathcal{G}} \mathcal{F}_j,h_X)\] is an equalizer diagram.
\item[Atlas]
An atlas of a stack $F$ is a family of representable maps $\{U_i \to F\}$ from stacks which are representable in $\mathcal{C}$ such that each map is in $\bold{P}$, and the map of sheaves $\coprod U_i \to F$ is an effective epimorphism.
 \end{description}
\end{defn}
Intuitively, an atlas of a stack can be thought of as a space which one wants to quotient by some equivalence relation to obtain the stack.
\begin{defn}[``Geometric Space'' (0-Stack)] \label{zerostack}
We say that a sheaf on the site $\mathcal{C}$ is a geometric space (the analogue of algebraic space in our context) if it has an atlas of representable stacks (representable by elements of $\mathcal{C}$) and its diagonal is representable. We say that a morphism of stacks is representable by geometric spaces if the pullback by any test object in $\mathcal{C}$ is representable by a geometric space. We reserve the stand-alone word representability for morphisms whose pullback by a test object is actually representable in $\mathcal{C}$.
\end{defn}
Do note that the terminology ``geometric space'' is non-standard, but it accords closely with the use of the terminology ``algebraic space'' in the classical context.
\begin{defn}[Geometric Stack (1-Stack)]
We say a stack $F$ is geometric if its diagonal morphism $F \to F \times F$ is representable by geometric spaces and it has an atlas that is an $``0-\bold{P}''$ morphism from a representable stack (again, representable in $\mathcal{C}$). This second condition means that the atlas is representable by geometric spaces, and also that for each test object $X \in \mathcal{C}$, the geometric space $A$ representing the pullback of the atlas by $X$ has its own atlas $U$ such that the composition $U \to A \to X$ is in $\bold{P}$.
\end{defn}
There are a couple of properties that one immediately wishes to check.
\begin{prop}
Let $F \to G$ be a 1-morphism of stacks in groupoids over $\mathcal{C}$ that is representable by geometric spaces. Let $Z \to G$ be a 1-morphism of stacks in groupoids, not necessarily representable. Then the 2-fibered product map $F \times_G Z \to Z$ is representable by algebraic spaces. Furthermore, if $F \to G$ has property $\bold{P}$, so does the pullback.
\end{prop}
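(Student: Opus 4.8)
The statement is a standard stability property: representability-by-geometric-spaces and membership in the class $\mathbf{P}$ are both stable under arbitrary base change along a map of stacks. The plan is to reduce the assertion about general stacks $Z$ to the case where $Z$ is representable by an object of $\mathcal{C}$, which is exactly how ``representable by geometric spaces'' was defined. First I would unwind the definitions: to check that $F \times_G Z \to Z$ is representable by geometric spaces, I must show that for every test object $X \in \mathcal{C}$ and every $1$-morphism $X \to Z$, the $2$-fibered product $X \times_Z (F \times_G Z)$ is a geometric space. The elementary but essential input is the associativity (and symmetry) of $2$-fibered products in the $2$-category of stacks in groupoids, which gives a canonical equivalence
\[
X \times_Z (F \times_G Z) \;\simeq\; X \times_G F,
\]
where the map $X \to G$ is the composite $X \to Z \to G$. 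Since $F \to G$ is representable by geometric spaces \emph{by hypothesis}, the right-hand side is a geometric space, and hence so is the left-hand side. This proves the first claim.

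For the second claim, suppose moreover that $F \to G$ has property $\mathbf{P}$. By the definition of ``has property $\mathbf{P}$'' for a morphism representable by geometric spaces, this means that for every $X \in \mathcal{C}$ with a map $X \to G$, the geometric space $A := X \times_G F$ admits an atlas $U \to A$ with $U$ representable in $\mathcal{C}$ and with the composite $U \to A \to X$ lying in $\mathbf{P}$. Now take any $X \to Z$; by the equivalence above, $X \times_Z (F \times_G Z) \simeq X \times_G F = A$ with the structure map to $X$ the same one, so the very same atlas $U \to A$ witnesses that $F \times_G Z \to Z$ has property $\mathbf{P}$. Here one uses that $\mathbf{P}$ is closed under composition and pullback (part of the geometric-context axioms) to know the relevant composites stay in $\mathbf{P}$; these closure properties also guarantee that the notion of ``atlas in $\mathbf{P}$'' is itself stable, so no compatibility issue arises when the test object $X$ varies.

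The only genuine subtlety — and the step I would be most careful about — is bookkeeping the $2$-categorical coherence: the ``equivalence'' $X \times_Z (F \times_G Z) \simeq X \times_G F$ is only an equivalence of groupoids, not an equality, so one must check that it is compatible with the projections to $X$ (which it is, by the universal property), and that being a geometric space and having property $\mathbf{P}$ are invariant under equivalence of stacks over $X$ (immediate, since atlases and representability are pullback/Yoneda notions insensitive to replacing a stack by an equivalent one). I would also remark that the diagonal condition built into ``geometric space'' (Definition \ref{zerostack}) transports along this equivalence for the same reason, so $X \times_G F$ really is a geometric space and not merely a stack with an atlas. Once these routine coherence checks are dispatched, both assertions follow formally; there is no hard analytic or arithmetic content here, only a careful application of the definitions and the closure axioms of a geometric context.
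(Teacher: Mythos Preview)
Your proof is correct and follows essentially the same approach as the paper: reduce to the case of a representable test object by using the canonical equivalence $X \times_Z (F \times_G Z) \simeq X \times_G F$ with $X \to G$ the composite through $Z$, then invoke the hypothesis. The paper's version is terser and omits the coherence discussion you included, but the argument is the same.
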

\begin{proof}
Let $X \in \mathcal{C}$ be a test object and $X \to Z$ a test morphism; and consider the pullback 
\begin{align} \label{pb}
(F \times_G Z) \times_Z X \to X.
\end{align}
On the other hand, considering the composition $X \to Z \to G$ gives a pullback $F \times_G X$ which is representable by a geometric space by assumption. There is a canonical isomorphism between the two pullbacks induced by their universal properties, and thus the pullback (\ref{pb}) is representable by a geometric space.

The second statement follows in the same way.
\end{proof}
\begin{prop}
Let $F \to G$ and $G \to H$ be maps of geometric stacks with property $0-\bold{P}$. Then their composition has property $0-\bold{P}$.
\end{prop}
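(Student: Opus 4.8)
The plan is to reduce everything to the two hypotheses applied to \emph{honest} test objects of $\mathcal{C}$, since the only genuinely subtle point is that a $0\text{-}\bold{P}$ morphism need not itself lie in $\bold{P}$, so one cannot simply "compose" the two given morphisms at the level of the stacks $F,G,H$. First I would fix a test object $X \in \mathcal{C}$ with a morphism $X \to H$ and form $B := G\times_H X$; because $G \to H$ is $0\text{-}\bold{P}$, hence representable by geometric spaces, $B$ is a geometric space, and it carries an atlas $V \to B$ with $V$ representable in $\mathcal{C}$, with $V \to B$ in $\bold{P}$, and with the composite $V \to B \to X$ in $\bold{P}$ (this last being the $0\text{-}\bold{P}$ condition for $G\to H$). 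Using the canonical identification $F\times_H X \simeq F\times_G B =: Y$, the goal becomes: produce an atlas of $Y$ by a $\mathcal{C}$-representable object whose structure map to $X$ lies in $\bold{P}$.

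For the construction I would apply the $0\text{-}\bold{P}$ hypothesis for $F\to G$ to the $\mathcal{C}$-representable test object $V$ (with map $V \to B \to G$): this yields that $W := F\times_G V$ is a geometric space with an atlas $U \to W$, $U$ representable in $\mathcal{C}$, and $U \to W \to V$ in $\bold{P}$. Since $W \simeq Y\times_B V$, the map $W \to Y$ is the pullback of the atlas map $V \to B$, so it is representable in $\mathcal{C}$, lies in $\bold{P}$, and is an effective epimorphism — all three properties being stable under pullback (for effective epimorphisms, by stability of effective epimorphisms under base change in the topos, which is how the paper's definition behaves via Lurie's theorem). Composing $U \to W \to Y$ then gives an effective epimorphism from a $\mathcal{C}$-representable object lying in $\bold{P}$ (closure of $\bold{P}$ and of effective epimorphisms under composition), i.e. an atlas of $Y$. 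Finally, chasing the fibre-product square $W = Y\times_B V$ shows that $U \to Y \to X$ is the same morphism as $U \to W \to V \to X$; the latter is in $\bold{P}$ because $U \to W \to V$ and $V \to X$ are each in $\bold{P}$ and $\bold{P}$ is closed under composition. This is precisely the atlas required by the definition of $0\text{-}\bold{P}$ for $F\to H$.

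It then remains to check that $F\to H$ is representable by geometric spaces, i.e. that $Y = F\times_H X$ really is a geometric space for each test $X$: existence of an atlas was just shown, and for the diagonal one factors $\Delta_Y$ as $Y \to Y\times_B Y \to Y\times Y$, where the first map is the pullback of the (representable) relative diagonal of $F\to G$ and the second is the pullback of $\Delta_B$, representable because $B$ is a geometric space; representable morphisms being closed under composition, $\Delta_Y$ is representable, so $Y$ satisfies Definition \ref{zerostack}. (Here one may quote the proposition immediately preceding this one to propagate "representable by geometric spaces" and property $\bold{P}$ through the base changes $Y = F\times_G B$ and $W = F\times_G V$.)

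The main obstacle I anticipate is bookkeeping rather than conceptual: because $0\text{-}\bold{P}$ only constrains pullbacks to objects of $\mathcal{C}$, the argument has to descend first to the $\mathcal{C}$-representable atlas $V$ of the intermediate geometric space $B$ and then to the atlas $U$ of $W$, invoking at each stage the stability of $\bold{P}$, of effective epimorphisms, and of "representable by geometric spaces" under composition and base change. Keeping the order of these reductions straight, and verifying that the relevant fibre-product squares commute so that the composite $U \to Y \to X$ is literally the composite $U \to W \to V \to X$, is the only place where care is genuinely needed; no new ideas beyond the stability properties already in use should be required.
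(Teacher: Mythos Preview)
Your proof is correct and follows the same route as the paper: factor $F\times_H X$ as $F\times_G(G\times_H X)$ and use the $0\text{-}\bold{P}$ hypothesis on each leg. Your version is in fact considerably more thorough than the paper's, which essentially stops after observing that $Y\times_{X\times_H G}(X\times_H F)\simeq Y\times_G F$ is a geometric space for any second test object $Y\in\mathcal{C}$, leaving the atlas-in-$\bold{P}$ and diagonal checks implicit.
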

\begin{proof}
Let $X \in \mathcal{C}$. Then $X \times_H G = (X \times_H G) \times_G F$, which one can see using the explicit definition of 2-fibred product. Now $X \times_H G$ is a geometric space by assumption. For any second test object $Y \in \mathcal{C}$, we have that $Y \times_{X \times_H G} (X \times_G F) = Y \times_G F$, which is again a geometric space by assumption. 
\end{proof}
From our perspective in this work, for example when proving representability of nonabelian Galois cohomology, we will want to know when stacks in groupoids inherit the structure of algebraic stacks simply by sitting in a long exact sequence. The following definition and proposition give exactly such a criterion.
\begin{defn}
Let $f: F \to G$ be a 1-morphism of stacks in groupoids on the site $(\mathcal{C}, \tau)$. We say that $f$ is geometric if, for all representable stacks $X \in \mathcal{C}$ and maps $X \to G$, the 2-fibre product $X \times_G F$ is a geometric stack.
\end{defn}
We have come to the key proposition, which is simply a translation of \cite[\href{https://stacks.math.columbia.edu/tag/05XX}{Tag 05XX}]{stacks-project} into the abstract language of geometric contexts.
\begin{prop}[Geometric Morphisms Transfer Geometricity, cf. Proposition 1.3.3.4 \cite{toen2008homotopical}] \label{transfer}

Let $f \colon F \to G$ be a geometric map of stacks in groupoids. Then if $G$ is a geometric stack, so is $F$.
\end{prop}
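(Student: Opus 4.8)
The plan is to follow the classical argument (cf. \cite[\href{https://stacks.math.columbia.edu/tag/05XX}{Tag 05XX}]{stacks-project}, \cite[Prop.~1.3.3.4]{toen2008homotopical}) and verify directly the two defining conditions of a geometric stack for $F$: that its diagonal $\Delta_F \colon F \to F \times F$ is representable by geometric spaces, and that $F$ admits a $0$-$\bold{P}$ atlas from a representable object.

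\emph{Diagonal.} First I would factor $\Delta_F$ as
\[
F \xrightarrow{\ \Delta_f\ } F \times_G F \xrightarrow{\ j\ } F \times F,
\]
where $j$ is the base change of $\Delta_G \colon G \to G\times G$ along the map $F\times F \to G\times G$. Since $G$ is geometric, $\Delta_G$ is representable by geometric spaces, and this property is stable under base change (the proposition above on pullbacks of morphisms representable by geometric spaces), so $j$ is representable by geometric spaces. For $\Delta_f$, I would test against a representable $T \to F\times_G F$: the two projections send $T$ to $G$, and because $f$ is geometric the stack $F\times_G T$ is geometric; then one identifies $T\times_{F\times_G F} F$ with the relative diagonal of the geometric stack $F\times_G T$ over $T$, which is representable by geometric spaces by the very definition of a geometric stack. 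Hence $\Delta_f$ is representable by geometric spaces, and since the composite of two morphisms representable by geometric spaces is again of that kind (a consequence of the closure axioms of the geometric context, exactly as for $\bold{P}$-morphisms), $\Delta_F$ is representable by geometric spaces.

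\emph{Atlas.} Since $G$ is geometric, choose an atlas $u \colon U \to G$ with $U$ a coproduct of representables and $u$ a $0$-$\bold{P}$ effective epimorphism. Base change along $f$ gives $u' \colon U\times_G F \to F$, which is again an effective epimorphism and $0$-$\bold{P}$, both properties being stable under base change. Because $f$ is geometric and $U$ is (a coproduct of) representable objects — using that a coproduct of geometric stacks is geometric — the stack $U\times_G F$ is geometric, so it carries an atlas $v \colon V \to U\times_G F$ with $V$ representable and $v$ a $0$-$\bold{P}$ effective epimorphism. The composite $u' \circ v \colon V \to F$ is then an effective epimorphism (effective epimorphisms compose) out of a representable object, and it is $0$-$\bold{P}$ by the proposition above on composition of $0$-$\bold{P}$ morphisms. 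Together with the previous paragraph, this exhibits $F$ as a geometric stack.

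\emph{Main obstacle.} The argument is almost entirely formal; the real work lies in the stability lemmas for the abstract geometric context, namely that ``representable by geometric spaces'' and ``$0$-$\bold{P}$'' are each stable under base change and composition, that effective epimorphisms and coproducts interact well with these notions, and that the factorization $\Delta_F = j \circ \Delta_f$ together with the canonical isomorphism $T\times_{F\times_G F}F \cong \Delta_{(F\times_G T)/T}$ holds. I would isolate each of these as a short lemma; the identification of the relative diagonal with a fiber-product diagonal is the point requiring the most care with $2$-categorical universal properties, but it is bookkeeping rather than a genuine difficulty.
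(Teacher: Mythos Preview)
Your proposal is correct and follows essentially the same strategy as the paper for the atlas: both pull back an atlas $U \to G$ to $U\times_G F \to F$, use geometricity of $f$ to get that $U\times_G F$ is geometric, take an atlas $V$ (the paper's $W$) of $U\times_G F$, and then argue that the composite $V \to F$ is a $0$-$\bold{P}$ effective epimorphism.

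The one genuine difference is in the treatment of the diagonal. You verify it directly via the factorization $\Delta_F = j \circ \Delta_f$ and the identification $T\times_{F\times_G F}F \cong \Delta_{(F\times_G T)/T}$, which is the standard self-contained argument. The paper instead does not touch the diagonal at all: having produced the $0$-$\bold{P}$ effective epimorphism $W \to F$ from a geometric space, it reduces to the black-box statement \cite[\href{https://stacks.math.columbia.edu/tag/05UL}{Tag 05UL}]{stacks-project} (transported to a general geometric context) that any stack receiving such a map is geometric. That lemma of course contains your diagonal argument internally. Your route is more explicit and avoids the appeal to an external reference; the paper's route is shorter on the page but relies on the reader accepting that Tag 05UL carries over verbatim.
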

\begin{proof}
We create an atlas for $F$ as follows. By definition, $G$ has an atlas $U \to G$. Now take the fibre product with $G \to F$; by the geometricity of the map, the fibre product $U \times_G F$ is a geometric stack. Thus it has an atlas $W$, and we take the composition $W \to U \times_G F \to F$ to be the prospective atlas for $F$.

We claim that $U \times_G F \to F$ is representable by algebraic spaces and belongs to $\bold{P}$. It is representable because it is the pullback of a representable morphism, and thus also has property $\bold{P}$ by the above. Now $W \to F$ also has property $\bold{P}$ by composition, and it is again an effective epimorphism. 

We have reduced to the following statement: Suppose $u \colon \mathcal{U} \to \mathcal{X}$ is a 1-morphism of stacks in groupoids on the site $\mathcal{C}$. If $\mathcal{U}$ is a geometric space and $u$ is an effective epimorphism representable by geometric spaces and in $\bold{P}$, then $\mathcal{X}$ is a geometric stack. This is \cite[\href{https://stacks.math.columbia.edu/tag/05UL}{Tag 05UL}]{stacks-project}, which transfers verbatim to a general geometric context.
\end{proof}

But our stacks will rarely come to us as stacks -- rather, they will be the stackification of a pseudofunctor. Fortunately, we have the following useful fact (\cite[\href{https://stacks.math.columbia.edu/tag/04Y1}{Lemma 04Y1}]{stacks-project}):
\begin{fact}
Stackification commutes with 2-fibered products.
\end{fact}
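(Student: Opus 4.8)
The plan is to prove the Fact directly from the two defining properties of stackification recorded in Fact~\ref{stackification}, rather than from any abstract adjunction: stackification is a left adjoint, so it does not preserve $2$-limits for formal reasons, and the point is instead that it is a \emph{left exact} reflector, exactly as ordinary sheafification is. Fix a site $(\mathcal{C},\tau)$ and categories fibered in groupoids $F_1\to F_3\leftarrow F_2$, with $2$-fibered product $F\defeq F_1\times_{F_3}F_2$ and stackification maps $s_i\colon F_i\to aF_i$. Since a $2$-fibered product of stacks is again a stack (as recalled in the excerpt, the category of stacks has fibered products), $aF_1\times_{aF_3}aF_2$ is a stack, and the $s_i$ induce a $1$-morphism $\eta\colon F\to aF_1\times_{aF_3}aF_2$; by the universal property of stackification this factors as $\eta=\bar\eta\circ s_F$ for a canonical morphism $\bar\eta\colon aF\to aF_1\times_{aF_3}aF_2$. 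I want to show $\bar\eta$ is an equivalence, and for this it suffices (as in the proof of Proposition~\ref{torsequiv}) to verify that $\eta$ itself satisfies the two properties that characterize the stackification $1$-morphism in Fact~\ref{stackification}.

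First I would dispatch the $\Isom$-sheaf condition. For $T\in\mathcal{C}$ and objects $z=(z_1,z_2,\alpha)$, $z'=(z_1',z_2',\alpha')$ of $F(T)$, the explicit model of the $2$-fibered product gives a natural identification of presheaves on $\mathcal{C}/T$
\[
  \Isom_F(z,z')\;\simeq\;\Isom_{F_1}(z_1,z_1')\times_{\Isom_{F_3}(p_1z_1,\,p_2z_2')}\Isom_{F_2}(z_2,z_2'),
\]
and likewise downstairs for $\eta z,\eta z'$ with each $\Isom_{F_i}$ replaced by $\Isom_{aF_i}$. Since $s_i$ is a stackification, $\Isom_{aF_i}(s_i(-),s_i(-))$ is the sheafification of $\Isom_{F_i}(-,-)$; and sheafification of sheaves of sets commutes with fibered products (left exactness of the associated-sheaf functor, already invoked in the proof of Proposition~\ref{fibers}). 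Hence $\Isom(\eta z,\eta z')$ is the sheafification of $\Isom_F(z,z')$, which is precisely the first condition.

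Next, local essential surjectivity. Let $w=(w_1,w_2,\alpha)$ be an object of $(aF_1\times_{aF_3}aF_2)(T)$. Because each $s_i$ is a stackification, after passing to a $\tau$-covering $\{T_i\to T\}$ we may choose objects $u_i\in F_i(T_i)$ together with isomorphisms $w_i|_{T_i}\simeq s_i(u_i)$. Transporting $\alpha$ through these isomorphisms produces an isomorphism $s_3(p_1u_1)\simeq s_3(p_2u_2)$ in $aF_3(T_i)$; applying the first condition to $F_3$, this isomorphism is, after a further refinement of the covering, the image under $s_3$ of an honest isomorphism $\beta\colon p_1u_1\to p_2u_2$ in $F_3$. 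Then $(u_1,u_2,\beta)$ is an object of $F$ over the refined covering whose image under $\eta$ is isomorphic to the corresponding restriction of $w$. That is the second condition, so $\bar\eta$ is an equivalence; specializing the site to the rigid-analytic or étale case then gives the statement in the form used in the text.

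The main obstacle I expect is not any single idea but the $2$-categorical bookkeeping in the essential-surjectivity step: one must track the chosen isomorphisms $w_i|_{T_i}\simeq s_i(u_i)$, their images under $p_1,p_2$ in $F_3$, and the coherence (cocycle) data as the covering is refined twice — first to trivialize the objects, then to trivialize the gluing isomorphism $\alpha$ — and then check that the resulting descent datum assembled in $F$ is genuinely compatible. A secondary subtlety is the difference between the strict $2$-fibered product of fibered categories and the $2$-fibered product formed in the $2$-category of stacks; I would either work throughout with the explicit model of $2$-fibered products of fibered categories, so that ``$\Isom$ of a product is the product of $\Isom$'s'' holds on the nose, or first invoke that the two constructions agree up to canonical equivalence and only then run the argument above.
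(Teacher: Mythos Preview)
The paper does not supply a proof of this Fact at all; it simply records it with a citation to \cite[\href{https://stacks.math.columbia.edu/tag/04Y1}{Lemma 04Y1}]{stacks-project}. Your argument is correct and is essentially the standard one given there: verify the two characterizing properties of the stackification map (Fact~\ref{stackification}) for the canonical morphism $F_1\times_{F_3}F_2\to aF_1\times_{aF_3}aF_2$, using that $\Isom$-presheaves in a $2$-fibered product are fiber products of $\Isom$-presheaves and that sheafification of presheaves of sets is left exact. The bookkeeping concerns you raise are real but routine, and your sketch handles them adequately.
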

Thus when we check representability in Proposition \ref{transfer}, we are allowed to show that the stackification of the fiber of pseudofunctors is representable by a geometric stack -- this is easier than checking the fiber of the stackification.

\section{Facts on quotient stacks}
We will also need some basics on quotient stacks.
\begin{defn}
Let $X$ be a geometric space for a geometric context $(\mathcal{C}, \tau, \bold{P})$, and $G$ a group geometric space in the same context acting on $X$. (In other words, we have a map $G \times X \to X$ that satisfies the usual commutativity diagrams.)

The quotient of $X$ by $G$ is a stack in groupoids, denoted $\left[ X/G \right]$. It is defined starting with the quotient groupoid $$\left[ X/G \right](T) = X(T)/G(T)$$ for any test object $T \in \mathcal{C}$ and stackifying.
\end{defn}

We also need the following proposition.
\begin{prop} \label{smoothquot}
Suppose that $(\mathcal{C}, \tau, \bold{P})$ is as above, and additionally that $\mathcal{C}$ has a terminal object. Let $X$ be an object of $\mathcal{C}$ and $H$ a group object in $\mathcal{C}$ acting on $X$ in the usual abstract sense. Then if the map from $X$ to the terminal object is in $\bold{P}$, the stack quotient $\left[ X/H \right]$ is a geometric stack.

\end{prop}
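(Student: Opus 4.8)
The plan is to verify the two defining conditions for $[X/H]$ to be a geometric stack in the sense of the geometric context $(\mathcal{C},\tau,\bold{P})$: first, that the diagonal $[X/H] \to [X/H] \times [X/H]$ is representable by geometric spaces, and second, that there is an atlas from a representable stack whose structure map is a ``$0\text{-}\bold{P}$'' morphism. The natural candidate atlas is the quotient presentation map $X \to [X/H]$ itself, where we regard $X$ as the representable stack it defines via Yoneda.

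First I would analyze the map $p \colon X \to [X/H]$. The standard computation with the groupoid presentation $(X, H \times X)$ shows that for any test object $T \in \mathcal{C}$ with a map $T \to [X/H]$, the fibered product $T \times_{[X/H]} X$ is, $\tau$-locally on $T$, isomorphic to $H \times T$; this is exactly the statement that $[X/H]$ is the stackification of $T \mapsto X(T)/H(T)$ and that $X \to [X/H]$ becomes trivial after the covering trivializing the torsor classified by $T \to [X/H]$. Since $H$ is an object of $\mathcal{C}$, $H \times T$ is representable, so $p$ is representable (not merely representable by geometric spaces). For the property $\bold{P}$: the map $H \times T \to T$ is a pullback of $H \to *$ (the map to the terminal object), and by hypothesis $X \to *$ is in $\bold{P}$ — here I would need $H \to *$ in $\bold{P}$ as well, which follows since $H$ is a group object acting on $X$ with $X\to *$ in $\bold{P}$; more carefully, one uses that coverings trivialize things and the last axiom of a geometric context (if a covering pulls a map into $\bold{P}$, the map itself is in $\bold{P}$) to conclude $p \in \bold{P}$. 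Then $p$ is also an effective epimorphism because the quotient presheaf surjects onto $[X/H]$ by construction of the stackification (every object of $[X/H](T)$ is locally in the image of $X$). This gives the atlas, and one checks it is $0\text{-}\bold{P}$ by the same local triviality: pulling back by a test object $T$ yields a geometric space with atlas $H \times T$ whose composite to $T$ is in $\bold{P}$.

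Next I would handle the diagonal. For quotient stacks the diagonal is controlled by the action groupoid: for maps $T \to X$ and $T' \to X$, the fibered product $T \times_{[X/H]} T'$ (over $[X/H]\times[X/H]$, after choosing the lifts through $X$) is identified with $\operatorname{Isom}$, which is a torsor-type object built from $H \times (X \times X)$ pulled back along $T \times T' \to X \times X$ via the action and projection maps. Concretely this is $\{(t,t',h) : h \cdot x(t) = x(t')\}$, a fibered product of objects of $\mathcal{C}$, hence representable; so the diagonal is representable (again, better than representable by geometric spaces). Assembling: $[X/H]$ has representable diagonal and a representable atlas in $\bold{P}$ that is an effective epimorphism, so it is a geometric space by Definition \ref{zerostack}, a fortiori a geometric stack. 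Alternatively one can cite Proposition \ref{transfer}: the map $[X/H] \to *$ is geometric (its base change along any $T \to *$ is $[X/H]$ itself, which we have just shown geometric, or more efficiently one shows the fibers are geometric directly), and $*$ is geometric since $\mathcal{C}$ has a terminal object representing it.

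The main obstacle I anticipate is bookkeeping rather than depth: making precise the claim that $X \to [X/H]$ trivializes $\tau$-locally — this requires unwinding the stackification (Fact \ref{stackification}) to see that an object of $[X/H](T)$ is a $\tau$-locally trivial $H$-pseudotorsor and that pulling $X$ back along it recovers the total space of that pseudotorsor, which is then representable by $H\times T$ after refining the cover. One must be careful that "$H$-torsor" here means torsor for the $\tau$-topology, so effectivity of descent (built into the definition of a geometric context, since representable presheaves are sheaves and $\bold{P}$ is stable under the relevant operations) is what guarantees the pulled-back object lies in $\mathcal{C}$ rather than merely being a sheaf. Once that local-triviality lemma is in hand, every remaining verification is a formal manipulation of fibered products in $\mathcal{C}$ and an appeal to the closure axioms for $\bold{P}$.
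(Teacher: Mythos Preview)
Your approach is the same as the paper's: exhibit $X \to [X/H]$ as a $\bold{P}$-atlas by computing that the base change along any test object $T \to [X/H]$ is, $\tau$-locally on $T$, the projection $H \times T \to T$. The paper cites the Stacks Project for the remaining verifications and does not spell out the diagonal argument you sketch.

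There is one genuine gap in your argument, and it is not your fault. You correctly observe that what is actually needed is $H \to * \in \bold{P}$ (so that $H \times T \to T \in \bold{P}$ by pullback stability), but you then try to deduce this from the stated hypothesis that $X \to * \in \bold{P}$. That deduction does not work: nothing in the axioms of a geometric context lets you transfer $\bold{P}$ from $X \to *$ to $H \to *$ merely because $H$ acts on $X$, and the fourth axiom (local-on-source for $\bold{P}$) does not help, since it would require a covering of $H$ by objects mapping to $*$ via $\bold{P}$-morphisms, which you do not have. In fact the paper's own proof says ``this projection is the pullback of the map from $H$ to the terminal object, which is in $\bold{P}$ by assumption,'' and every application of the proposition in the text uses smoothness of the \emph{group} (e.g.\ the remark that $\mathcal{G}^{\phi=1}$ is smooth). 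So the ``$X$'' in the hypothesis is a typo for ``$H$''; once the statement is read with $H \to * \in \bold{P}$, your argument and the paper's agree.
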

\begin{proof}
By definition $\left[ X/H \right]$ is a stack in groupoids on $\mathcal{C}$; to prove geometricity we follow the proof of the scheme-theoretic case, which is \cite[\href{https://stacks.math.columbia.edu/tag/04X0}{Tag 04X0}]{stacks-project}. The crux of the proof is showing that the map $$X \to \left[ X/H \right]$$ is a smooth atlas. Let $T \in \mathcal{C}$ and $T \to \left[ X/H \right]$ be a test morphism. Then the base change $T \times_{\left[ X/H \right]} X$ is locally of the form $H \times T$. This is because, by the second fact of stackification, we may $\tau$-locally on $T$ lift $T$ to a point of $X$, and then $$T \times_{\left[ X/H \right]} X  \simeq T \times_X (X \times_{\left[ X/H \right]} X) \simeq T \times_{X} (X \times H).$$

Now the base change of $X \to \left[ X/H \right]$ by $T$ is just the projection  $H \times T \to T.$ But this projection is the pullback of the map from $H$ to the terminal object, which is in $\bold{P}$ by assumption. Thus the pullback is also in $\bold{P}$, and we conclude that $$X \to \left[ X/H \right]$$ is an atlas.
\end{proof}

\chapter{Rigid and Formal Geometry} \label{geometry}

\section{Definitions and properties}
A wonderful exposition of formal schemes is the lecture notes of Bosch \cite{bosch2014lectures}, which we follow. The origins of the theory, and a more technical treatment, are in \cite[Chapter 1, Section 10]{dieudonne1971elements}, and the Stacks Project follows this approach \cite[\href{https://stacks.math.columbia.edu/tag/0AHY}{Tag 0AHY}]{stacks-project}.
\begin{defn}
We call a topological ring $A$ an adic ring if there exists an ideal $I$ such that the topology on $A$ is the same as the $I$-adic topology, i.e. the topology whose system of neighborhoods of the identity is given by $\{I^n\}_{n \geq 0}$. We call $I$ an ideal of definition for $A$, and write $(A,I)$ when we want to choose an ideal of definition for $A$.
\end{defn}
We assume here that all adic rings are complete and Hausdorff. A formal scheme should have a full structure sheaf's worth of functions, with only the special fiber visible in the underlying topological space. We formalize this idea as follows.
\begin{defn}
\begin{description}
\item[Local structure]
The formal spectrum of $A$, denoted $\Spf(A)$, is a locally ringed space whose set of points is the set of open prime ideals of $A$. Its topology is the one generated by sets of the form $$D(f) \defeq \{\mathfrak{p} \in \Spf(A) \text{ such that } f \not \in \mathfrak{p}\}.$$
The value of the structure sheaf on such an open set is given by $$\mathcal{O}_{\Spf(A)}(D(f)) = A\langle f \rangle \defeq \lim_n (A/I^n[f^{-1}]).$$ Any locally ringed space isomorphic to $\Spf(A)$ for some $A$ as above is called an affine formal scheme. 
\item[Global structure]
A formal scheme is a locally ringed space $X$ that has a covering by open neighborhoods which are isomorphic as locally ringed spaces to affine formal schemes.
\item[Morphisms]
A morphism of formal schemes is a morphism as locally ringed spaces.
\end{description}
\end{defn}
\begin{rem}
If $X$ is quasi-compact and quasi-separated (cf. \cite[\href{https://stacks.math.columbia.edu/tag/0AJA}{Tag 0AJA}]{stacks-project}, \cite[\href{https://stacks.math.columbia.edu/tag/0AJ7}{Tag 0AJ7}]{stacks-project}), the ideals of definition in each affine piece glue to a global ideal sheaf of definition $\mathcal{I}$ \cite[Proposition 3.32]{yasuda2009non}. This means that for each open $U \subseteq X$, $\mathcal{I}(U)$ forms an ideal of definition of $\mathcal{O}_X(U)$. Any formal schemes dealt with in this work have such a global ideal of definition.
\end{rem}

This definition accords with the idea that only the special fiber should be visible in the underlying set of points of a formal scheme: Since $A$ is adic, a prime ideal is open if and only if it contains a power of $I$, for $I$ an ideal of definition. But such ideals are in bijection with $\Spec(A/I)$. The open-ideal formalism thus allows us to dispense with a choice of ideal of definition.

Formal schemes have equivalent definitions as colimits of schemes over the reductions $A/I^k$ for all $k$. We freely use this description below.

Finally, we discuss some properties of morphisms of formal schemes.
\begin{defn}
A morphism $f \colon X \to Y$ of formal schemes is called adic if, under the canonical morphism $f^* \mathcal{O}_Y \to \mathcal{O}_X$, each ideal sheaf of definition of $f^* \mathcal{O}_Y$ generates an ideal sheaf of definition of $\mathcal{O}_X$.
\end{defn}

\section{The formal geometric context}
We now move to define the properties that will underpin formal stacks, namely smooth morphisms. We will not need the \'etale site in the body on this work, but we nevertheless describe it here because it is likely important for future work on Selmer stacks, and it is a natural topic in the study of stacks.
\begin{defn}
Let $f \colon X \to Y$ be an adic morphism of quasi-compact and quasi-separated formal schemes, and write $f_n \colon X_n \to Y_n$ for the reduction of $f$ modulo $\mathcal{I}^n$, where $\mathcal{I}$ is an ideal of definition for $Y$;
these are maps of schemes. We say that $f$ is an \'etale map if $f_n$ is etale for every $n$. 
\end{defn}

\begin{rem}
Any morphism we use will occur in the simpler case where $X$ and $Y$ above are each adic over an adic ring $(A,I)$, and $X_n$ and $Y_n$ are given by the reductions $X/I^n$ and $Y/I^n$.
\end{rem}

We also pause to note that the notion of \'etale can be extended past adic morphisms using lifting properties. For the more general definition along with a proof that it agrees with the definition given above for adic morphisms, see \cite{tarrio2009local}.
\begin{defn}
We say that an adic morphism $f \colon X \to Y$ of formal schemes is smooth if $f$ every reduction $f_n \colon X_n \to Y_n$ as above is smooth.
\end{defn}
Again, this definition can be extended past the adic case, and the differences between the two definitions are discussed in \cite{tarrio2009local}. For the proof of the next statement, see \cite[\href{https://stacks.math.columbia.edu/tag/0DE9}{Tag 0DE9}]{stacks-project}.
\begin{fact}
Fix a formal scheme $X$. The category of all formal schemes adic over $X$ with adic morphisms between them and \'etale covering families as covers forms a Grothendieck site.
\end{fact}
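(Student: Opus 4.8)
The claim is that, for a fixed formal scheme $X$, the category of formal schemes adic over $X$, with adic morphisms as arrows and \'etale covering families as coverings, is a Grothendieck site. The plan is to verify the axioms for a Grothendieck topology (or pretopology): that isomorphisms are \'etale covers, that \'etale coverings are stable under pullback along adic morphisms, and that a composite of \'etale coverings is an \'etale covering. First I would record that an isomorphism of formal schemes is adic and that each reduction $f_n$ is an isomorphism of schemes, hence \'etale; so a one-element family consisting of an isomorphism is a covering. This reduces everything to two stability statements, which I will handle by passing to the reductions $f_n \colon X_n \to Y_n$ modulo the $n$-th power of an ideal of definition and invoking the corresponding facts for the \'etale topology on schemes.

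The key point for \emph{pullback stability} is that if $f \colon X' \to X$ and $g \colon Y \to X$ are adic morphisms of formal schemes adic over a base, then the fibre product $X' \times_X Y$ exists in the category of formal schemes and is again adic over the base, and moreover $(X' \times_X Y)_n \cong X'_n \times_{X_n} Y_n$. Here I would cite the existence of fibre products of formal schemes (e.g. via the colimit-of-schemes description, which is exactly the setting flagged in the ``Remark'' preceding the statement: one works over an adic ring $(A,I)$ and takes reductions mod $I^n$). Given this compatibility with reductions, if $\{U_i \to X\}$ is an \'etale covering, then each $U_{i,n} \to X_n$ is \'etale and the family is jointly surjective; base-changing to $Y_n$ keeps each map \'etale (\'etale morphisms of schemes are stable under base change) and keeps the family jointly surjective; hence $\{U_i \times_X Y \to Y\}$ is an \'etale covering. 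The pullback of an adic morphism is adic, so this family lives in the right category.

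For \emph{composition stability}, suppose $\{U_i \to X\}$ is an \'etale covering and, for each $i$, $\{V_{ij} \to U_i\}$ is an \'etale covering. I would check that each composite $V_{ij} \to X$ is adic (composition of adic morphisms is adic — a one-line check on ideals of definition), and that its reductions $V_{ij,n} \to X_n$ are \'etale, being composites of \'etale morphisms of schemes; joint surjectivity of $\{V_{ij,n} \to X_n\}$ follows from joint surjectivity at each stage. Hence $\{V_{ij} \to X\}$ is an \'etale covering. I would also note, for the diagonal/refinement axiom if one uses the pretopology formulation, that the underlying topological spaces of a formal scheme and of its first reduction coincide, so surjectivity can be tested on reductions throughout.

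\textbf{Main obstacle.} The only genuine subtlety — the rest being bookkeeping that transfers verbatim from \cite[\href{https://stacks.math.columbia.edu/tag/0DE9}{Tag 0DE9}]{stacks-project} — is checking that fibre products in the category of \emph{adic} formal schemes over $X$ behave correctly: that they exist, remain in the category (adicness of the projections, the adic structure being inherited levelwise), and are computed by reducing modulo each power of an ideal of definition, so that \'etaleness and surjectivity can be verified on the reductions $X_n$. Once that compatibility is in hand, both stability axioms follow immediately from the corresponding statements for the \'etale topology on ordinary schemes, and the verification that isomorphisms are covers and that adic morphisms compose is routine. I would therefore spend the bulk of the write-up on the fibre-product compatibility and merely cite the scheme-level facts for the rest.
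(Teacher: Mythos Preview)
Your proposal is correct and aligns with the paper's treatment: the paper does not give an argument at all but simply cites \cite[\href{https://stacks.math.columbia.edu/tag/0DE9}{Tag 0DE9}]{stacks-project} for the proof, and your plan of reducing each axiom to the corresponding scheme-level statement via the reductions $f_n$ is exactly the content of that reference. Your identification of the fibre-product compatibility $(X' \times_X Y)_n \cong X'_n \times_{X_n} Y_n$ as the only nontrivial point is apt, and once that is in hand the rest is indeed bookkeeping.
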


This site satisfies all of the properties we know and love when it comes to the big \'etale topology for schemes.
\begin{prop} \label{subcanonical}
The \'etale topology is subcanonical, i.e. the Yoneda embedding sends formal schemes over $X$ to sheaves on the formal \'etale site.
\end{prop}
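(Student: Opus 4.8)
The claim is that the étale topology on the category of formal schemes adic over $X$ is subcanonical, i.e. that for any formal scheme $Z$ adic over $X$, the presheaf $\Hom_X(-,Z)$ is a sheaf for the étale topology. The plan is to reduce this to the corresponding fact for schemes via the description of formal schemes as colimits of their reductions, which was noted just before this proposition.

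First I would unwind what needs checking: given an étale covering family $\{U_i \to U\}$ of formal schemes adic over $X$, and a compatible family of morphisms $f_i \colon U_i \to Z$ agreeing on the fibered products $U_i \times_U U_j$, I must produce a unique $f \colon U \to Z$ restricting to the $f_i$. Since all morphisms here are adic and the formal schemes in question are quasi-compact and quasi-separated with a global ideal of definition $\mathcal{I}$ pulled back from $X$, I would pass to the $n$-th infinitesimal neighborhoods: write $U_n = (U, \mathcal{O}_U/\mathcal{I}^n)$, $Z_n$, $(U_i)_n$, and observe that an étale covering of formal schemes reduces modulo $\mathcal{I}^n$ to an étale covering of honest schemes (this is essentially the definition of an étale morphism of formal schemes given in the excerpt, together with the ``remarkable equivalence'' which identifies the small étale sites of $U$ and $U_n$). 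The compatible family $\{f_i\}$ reduces to a compatible family $\{(f_i)_n \colon (U_i)_n \to Z_n\}$ for each $n$.

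Next I would invoke the fact that the big étale site of schemes is subcanonical (the representable presheaf $\Hom(-,Z_n)$ is an étale sheaf — this is the scheme-theoretic statement, which the excerpt's geometric-context framework takes as one of the axioms and which is standard). This yields, for each $n$, a unique morphism of schemes $g_n \colon U_n \to Z_n$ restricting to the $(f_i)_n$. The uniqueness at each level guarantees that the $g_n$ are compatible with the transition maps $U_n \hookrightarrow U_{n+1}$ and $Z_n \hookrightarrow Z_{n+1}$: indeed $g_{n+1}|_{U_n}$ and $g_n$ are both morphisms $U_n \to Z_{n+1}$ (composing with $Z_n \hookrightarrow Z_{n+1}$) restricting to the same family on the cover, so they agree by uniqueness. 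Then $f \defeq \varinjlim_n g_n$ is the desired morphism of formal schemes $U \to Z$, using that $U = \varinjlim U_n$ and $Z = \varinjlim Z_n$ as colimits of schemes and that $\Hom$ of formal schemes is computed as $\varprojlim_n \varinjlim_m \Hom(U_m \text{-truncations}, Z_n)$ — here adicity of everything makes this bookkeeping clean. Uniqueness of $f$ follows from uniqueness of each $g_n$.

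The main obstacle, and the step requiring the most care, is the compatibility of the reduction functor $(-)_n$ with fibered products and with the notion of étale covering — one needs that $(U_i \times_U U_j)_n \simeq (U_i)_n \times_{U_n} (U_j)_n$ (true because fibered products of formal schemes are computed reduction-wise for adic morphisms) and that reduction sends étale covers to étale covers surjectively on points. Once those compatibilities are in hand the argument is a routine $\varprojlim$ of the scheme-theoretic sheaf condition, and one should also remark that the quasi-compactness and quasi-separatedness hypotheses (which the excerpt has assumed throughout for its formal schemes) are what make the global ideal of definition exist and the colimit description valid, so no pathology arises.
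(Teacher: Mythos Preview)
Your proof is correct and follows essentially the same strategy as the paper: both reduce to the subcanonicity of the \'etale topology on schemes via the description of formal schemes as filtered colimits of their reductions. The paper packages the argument more tersely --- it observes that each $h_{Y_n}$ is a sheaf on the scheme-theoretic \'etale site, takes the colimit, and then invokes an abstract fact about extending sheaves along a subsite whose objects exhaust the larger site under filtered colimits --- whereas you unwind the sheaf condition explicitly at each infinitesimal level and glue. Your version is more detailed and makes the compatibilities (fibered products commuting with reduction, \'etale covers reducing to \'etale covers) visible, which is arguably clearer; the paper's version is shorter but leaves the key verification as an exercise.
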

\begin{proof}
Let $Y = \varinjlim Y_i$ be a formal scheme over $X$. Now the \'etale topology for schemes is subcanonical (\cite[\href{https://stacks.math.columbia.edu/tag/03O3}{Tag 03O3}]{stacks-project}) and the colimit of sheaves is a sheaf, and thus $Y$ is a sheaf on the big \'etale site for schemes. We are now reduced to the following fact: suppose that $F$ is a presheaf on a site $\mathcal{C}$ and that every element of $\mathcal{C}$ can be written as the filtered colimit of elements in a subsite $\mathcal{D} \subseteq \mathcal{C}$. Then if $F$ is a sheaf on $\mathcal{C}$, it is a sheaf on $\mathcal{D}$. We leave this statement as an exercise for the reader.
\end{proof}

The key point is the following \cite{yu2018gromov}. If $A$ is an adic ring, we denote by $FSch_A$ the site of formal schemes locally finitely presented over $A$ equipped with the \'etale topology.
\begin{prop}
Fix an adic ring $A$. Then the triple \[(\mathcal{C}, \tau, \bold{P}) = (FSch_A, \text{\'etale coverings}, \text{smooth morphisms})\] is a geometric context. The same is true if we replace \'etale morphisms with coverings for the formal Zariski topology.
\end{prop}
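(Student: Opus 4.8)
The plan is to verify the four axioms in the definition of a geometric context for the triple $(\mathcal{C},\tau,\bold{P}) = (FSch_A,\ \text{\'etale coverings},\ \text{smooth morphisms})$, and then to note that the formal Zariski case is proved by the identical (indeed slightly easier) argument. Throughout I will use the standard fact that every morphism in $FSch_A$ is adic, since its objects are locally finitely presented over the fixed adic base $A$; hence for each ideal of definition $\mathcal{I}$ of $A$ the reduction functor $(-)_n\colon FSch_A\to Sch_{A/\mathcal{I}^n}$, $X\mapsto X_n = X\times_{\Spf A}\Spec(A/\mathcal{I}^n)$, is well defined, commutes with fiber products, carries \'etale (resp.\ Zariski) covering families to \'etale (resp.\ Zariski) covering families of schemes, and carries smooth morphisms to smooth morphisms. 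The first axiom, that every representable presheaf on $\mathcal{C}$ is a sheaf, is Proposition~\ref{subcanonical}; for the Zariski topology it follows a fortiori, as that topology is coarser than the \'etale one.

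For the second axiom I would check that smoothness in $FSch_A$ is stable under isomorphism, composition, and base change. Isomorphism is trivial; composition reduces, via $(g\circ f)_n = g_n\circ f_n$ together with the fact that composites of adic morphisms are adic, to the corresponding statement for morphisms of schemes. For base change, $FSch_A$ admits fiber products (glued locally from completed tensor products), and $(X\times_Y Z)_n = X_n\times_{Y_n} Z_n$, so stability of smoothness under base change in $FSch_A$ follows from the scheme-theoretic version applied levelwise. The third axiom holds because an \'etale morphism of $FSch_A$ has all reductions \'etale, hence smooth, hence is itself smooth; and in the Zariski case the covering morphisms are open immersions, whose reductions are open immersions, in particular smooth.

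The fourth axiom — that smoothness is local on the source for $\tau$ — is the only point that requires genuine input, and is where I expect the (still routine) work to concentrate. Given $f\colon X\to Y$ in $FSch_A$ and an \'etale covering $\{U_i\to X\}$ with each composite $U_i\to Y$ smooth, I would reduce modulo $\mathcal{I}^n$: the family $\{(U_i)_n\to X_n\}$ is an \'etale, hence smooth surjective, covering of schemes, and each $(U_i)_n\to Y_n$ is smooth, so $X_n\to Y_n$ is smooth because the property ``smooth'' for morphisms of schemes is fppf-local — a fortiori \'etale-local — on the source (descent of properties of morphisms, as in the Stacks Project). Since this holds for all $n$ and $f$ is adic, $f$ is smooth by definition. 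The Zariski case is identical, with ``\'etale-local on the source'' replaced by the elementary observation that smoothness may be tested Zariski-locally on the source. This completes the verification — the statement also appears in \cite{yu2018gromov} — and the only real subtlety is the bookkeeping that lets one move freely between objects of $FSch_A$ and their towers of reductions $\{X_n\}_{n\geq 1}$.
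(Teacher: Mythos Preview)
Your proposal is correct and follows essentially the same strategy as the paper: reduce each of the four axioms to the corresponding statement for schemes via the reduction functors $(-)_n$. The only notable difference is in axiom~(2), where the paper argues composition and pullback stability via local factorizations through affine space, while you argue levelwise using the scheme-theoretic closure properties of smoothness; your version is arguably cleaner given the paper's levelwise definition of smooth. For axiom~(4) your statement that \emph{smoothness} is \'etale-local on the source is exactly what is needed (the paper's proof writes ``\'etale'' there, which appears to be a slip).
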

\begin{proof}
\begin{description}
\item[(1)] This is just Proposition \ref{subcanonical}. 
\item[(2)] Closure under isomorphism is obvious. Closure under composition follows because if $X \to Y$, $Y \to Z$ factor through $\mathbb{A}_Y^n$ and  $\mathbb{A}_Z^m$, then we have the factorization $$X \to \mathbb{A}_Y^n \to Y \to \mathbb{A}_Z^m \to Z$$ for the composition. Regarding pullbacks, if $S \to Y$ is any map, then $S \times_Y X \to S$ factors via pullbacks of the factoring maps; pullbacks commute with factorization of maps.
\item[(3)] Each reduction $f_n \colon X_n \to Y_n$ is \'etale as a scheme map, and is thus smooth.
\item[(4)] The property of being \'etale is \'etale-local on the source for schemes, and this statement extends trivially to formal schemes. 
\end{description}
\end{proof}

\section{Rigid analytic geometry and the generic fiber construction: from $\mathbb{Z}_p$ to $\mathbb{Q}_p$ and back again} \label{compat}

We have defined cohomology and Selmer stacks as functors on a certain site of rigid varieties. However, the theorems that we will need on moduli of representations of profinite groups are written in the language of formal stacks. We thus show in this section how to transfer results from formal stacks to rigid analytic stacks. We refer the reader to \cite[Chapter 8]{fresnel2012rigid} for the theory of rigid geometry.

For the remainder of this section $R$ will be a discrete valuation ring of characteristic zero and $K$ will be its field of fractions; $Rig_K$ denotes the category of rigid analytic varieties over $K$. When we use the word affinoid, we mean strictly affinoid, i.e. a quotient of the Tate algebra $k\langle X_1, \ldots, X_r \rangle$. A rigid space is a locally $G$-ringed space $X$ that is locally isomorphic to spaces of the form $\Spm(A)$, where $A$ is a quotient of a Tate algebra and $\Spm(A)$ has the Tate topology. We will often use the following construction.
\begin{thm}{\cite[Proposition A.3.1]{conrad2010universal}} \label{univan}

There is an analytification functor 
\begin{align}
Sch_K \to Rig_K  \\ 
X \mapsto X^{an}
\end{align}
That satisfies the following property: Suppose that $X$ is separated. If $Z = \Spm(A)$ is affinoid then $X^{an}(\Spf(A)) = X(\Spec(A))$.
\end{thm}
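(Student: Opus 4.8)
The statement is the construction and universal property of rigid analytification, and the plan is to follow the cited argument of Conrad; let $\varpi$ be a uniformizer of $R$. First I would build $X^{an}$ for affine $X$ of finite type over $K$. The affine line analytifies to the admissible increasing union $(\mathbb{A}^1_K)^{an} = \bigcup_{m\ge 0} \Spm K\langle \varpi^{-m} t\rangle$ of closed discs of radius $|\varpi|^{-m}$, and likewise $(\mathbb{A}^n_K)^{an}$ is the corresponding union of closed polydiscs; for $X = \Spec\big(K[t_1,\dots,t_n]/I\big)$ one sets $X^{an}$ to be the closed analytic subspace of $(\mathbb{A}^n_K)^{an}$ cut out by $I$, i.e. on the $m$-th polydisc $\Spm(T_{m})$ one takes $\Spm(T_m/IT_m)$, and these glue. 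A basic open $\Spec(B_f)$ analytifies to the admissible open locus $\{|f| \ne 0\}$ inside $(\Spec B)^{an}$. One checks independence of the chosen closed immersion, since any $K$-algebra homomorphism between finitely generated algebras is automatically compatible with the polydisc exhaustions.

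Next I would globalize: for $X$ locally of finite type over $K$, pick an affine open cover $\{U_i\}$; the analytifications $U_i^{an}$ are built above, open immersions analytify to admissible open immersions, and --- in the separated case that we need --- the overlaps $U_i \cap U_j$ are again affine, hence covered by the previous step, so the $(U_i \cap U_j)^{an}$ sit as admissible opens in both $U_i^{an}$ and $U_j^{an}$. The cocycle condition holds because it holds at the level of schemes, so one glues to obtain a rigid space $X^{an}$ together with a canonical morphism $\iota_X \colon X^{an} \to X$ of locally $G$-ringed $K$-spaces; functoriality $f \mapsto f^{an}$ then drops out of the universal property (or is constructed on affines and glued).

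The heart of the matter is the universal property: composition with $\iota_X$ should give a bijection $\mathrm{Hom}_{Rig_K}(Z, X^{an}) \xrightarrow{\sim} \mathrm{Hom}_{LRS/K}(Z, X)$ for every rigid space $Z$ over $K$. Using quasi-compactness of affinoids one reduces to $Z = \Spm(A)$ and $X = \Spec(B)$ affine, where the assertion becomes: a $K$-algebra map $B \to A$ lifts uniquely to a morphism $\Spm(A) \to X^{an} = \bigcup_m \Spm(T_m/IT_m)$. The one genuine input is boundedness of affinoid algebras: the images in $A$ of the coordinates $t_i$ have finite sup-norm, hence lie in $\varpi^{-m} A^{\circ}$ for $m \gg 0$, so the map factors through the $m$-th polydisc, giving the (unique) lift. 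Finally, when $X$ is separated I would identify $\mathrm{Hom}_{LRS/K}(\Spm A, X)$ with $X(\Spec A)$: a morphism of locally ringed spaces out of $\Spm(A)$ carries the same data as one out of $\Spec(A)$, because $\mathcal{O}(\Spm A) = A = \mathcal{O}(\Spec A)$ and the maximal ideals of $A$ form a very dense subset of $|\Spec A|$ on which such a morphism is determined, while separatedness of $X$ guarantees that gluing the morphism over an affine cover of $X$ is consistent. This yields $X^{an}(\Spm A) = X(\Spec A)$ as claimed --- I note that the displayed ``$\Spf(A)$'' should read ``$\Spm(A)$''. The main obstacle is not any computation but pinning down this last identification inside the category of locally $G$-ringed $K$-spaces; everything else (polydisc exhaustions, admissible gluing, boundedness) is routine, and for the full verification I would defer to Conrad's Proposition A.3.1.
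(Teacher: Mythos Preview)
The paper does not give its own proof of this statement: it is recorded as a citation to Conrad and used as a black box, so there is nothing to compare your argument against in the paper itself. Your sketch is a faithful outline of Conrad's construction and universal property argument (polydisc exhaustion, gluing over an affine cover, boundedness of images of generators in an affinoid, and the identification of $\Spm$- and $\Spec$-valued points under separatedness), and you are right that the displayed ``$\Spf(A)$'' is a typo for ``$\Spm(A)$''.
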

Fix a rigid variety $Y.$ One consequence of the above theorem is that if $F$ is a contravariant functor on rigid spaces affinoid over $Y$ and $\{U_i = \Spm(A_i)\}$ is an admissible affinoid covering of $Y$ such that $F|_{U_i}$ is representable by a scheme $S_i$, then $F$ is representable as a rigid space and is given over $U_i$ by $S_i^{an}$.

\begin{defn}
A map of rigid analytic spaces $f \colon Y \to X$ is \'etale at $y \in Y$ if the associated map of local rings $$\mathcal{O}_{X,f(y)} \to \mathcal{O}_{Y,y}$$ is flat and unramified. We call $f$ \'etale if it is \'etale at all $y \in Y$. A map of rigid spaces is smooth if it satisfies the classical Jacobian criterion \cite[\href{https://stacks.math.columbia.edu/tag/01V9}{Tag 01V9}]{stacks-project}, locally in the Tate topology.
\end{defn}
\begin{defn}{\cite[Section 8.2]{fresnel2012rigid}}
The big \'etale site on $Rig_K$ is defined as follows. An object therein is a rigid space over $K$, morphisms are usual morphisms of rigid spaces, and a family of maps $g_i \colon Z_i \to Y$  is deemed a covering family if $g_i$ are \'etale and for any (equivalently, some) choice of admissible affinoid coverings $Z_i = \cup_{j}Z_{i,j}$, we have that $$Y  = \cup_{i,j} g_i(Z_{i,j})$$ and this union is an admissible covering for the $G$-topology on $Y$.
\end{defn}

Then we have the following fact, which follows identically as in the case of schemes.
\begin{fact}
The big \'etale site of rigid analytic varieties over $K$ with smooth morphisms as atlases forms a geometric context. If we replace \'etale covers with covers in the Tate Grothendieck topology, then the resulting category is a site.
\end{fact}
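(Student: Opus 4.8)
The final statement to prove is:

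\begin{quote}
\emph{The big \'etale site of rigid analytic varieties over $K$ with smooth morphisms as atlases forms a geometric context. If we replace \'etale covers with covers in the Tate Grothendieck topology, then the resulting category is a site.}
\end{quote}

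The plan is to verify, one by one, the four axioms in the definition of a geometric context for the triple $(\mathcal{C}, \tau, \bold{P}) = (Rig_K, \text{\'etale coverings}, \text{smooth morphisms})$, exactly paralleling the proof just given for the formal geometric context. First I would dispatch the subcanonicity axiom: every representable presheaf on $Rig_K$ is an \'etale sheaf. This is the rigid-analytic analogue of \cite[\href{https://stacks.math.columbia.edu/tag/03O3}{Tag 03O3}]{stacks-project} and is recorded in \cite[Section 8.2]{fresnel2012rigid}; I would simply cite descent for morphisms of rigid spaces along \'etale (indeed admissible) coverings, noting that the $G$-topology built into the definition of a rigid space already gives descent for the finer-than-\'etale admissible coverings, and the \'etale case follows from faithfully flat descent for the associated local rings. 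Second, I would check that $\bold{P}$ (smooth morphisms, in the sense of the Jacobian criterion locally on the Tate topology, as in our Definition above) is closed under isomorphism (clear), composition, and base change. Composition: if $X \to Y$ and $Y \to Z$ are smooth, smoothness being local on source and target for the Tate topology reduces to the affinoid case, where the Jacobian criterion composes; base change is handled the same way, since the Jacobian criterion is stable under the affinoid base change $\Spm(B) \times_{\Spm(A)} \Spm(A') = \Spm(B \widehat\otimes_A A')$ (flatness of completed tensor product over a Noetherian affinoid keeps the relevant Jacobian matrices of full rank). Third, every \'etale covering consists of \'etale maps, and \'etale maps are smooth (relative dimension zero, Jacobian of full rank) --- this is immediate from the definitions. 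Fourth, the ``$2$-out-of-$3$''-style axiom: if $f\colon X\to Y$ and there is a covering $\{U_i \to X\}$ with each composite $U_i \to Y$ smooth, then $f$ is smooth; this follows because smoothness is local on the source for the Tate topology (and a fortiori for \'etale covers), again reducing to the Jacobian criterion.

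For the second sentence --- replacing \'etale covers by admissible (Tate) coverings --- I would observe that this is strictly easier: the Tate topology on $Rig_K$ is by definition a Grothendieck topology (admissible coverings in the $G$-topology), so there is essentially nothing to prove beyond recalling that admissible coverings are closed under refinement, base change, and composition, which is part of the definition of a rigid space and is spelled out in \cite[Chapter 8]{fresnel2012rigid}. I would phrase this as: the Tate-topology version of $(\mathcal{C},\tau,\bold{P})$ satisfies the site axioms because $\tau$ is literally the $G$-topology, and subcanonicity again holds since rigid spaces are defined as sheaves for the $G$-topology.

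The one genuine subtlety --- and the step I expect to be the main obstacle to a fully rigorous write-up --- is the subcanonicity (first axiom) in the \'etale, as opposed to Tate, topology. Unlike the scheme case, \'etale descent for quasi-coherent sheaves on rigid spaces is delicate (this is flagged repeatedly in the paper, e.g. in Remark \ref{nonempty} and the references to \cite{conrad2006relative}); however, what is needed here is only effectivity of descent \emph{for morphisms into a fixed rigid space}, i.e. that the presheaf $\Hom(-, Y)$ is an \'etale sheaf, which is much weaker and does hold --- it follows from the fact that \'etale morphisms of rigid spaces are open and that one can glue morphisms along an admissible covering, combined with faithfully flat descent at the level of local rings. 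I would state this carefully, cite \cite[Section 8.2]{fresnel2012rigid} for the \'etale site of rigid spaces being well-defined and subcanonical, and emphasize that we do not need (and do not claim) effective \'etale descent for torsors or quasi-coherent sheaves --- indeed the avoidance of that is precisely why the stackifications elsewhere in the paper are taken in the Tate topology. Apart from this point, the proof is a routine transcription of the formal-geometry argument, so I would keep it to a short paragraph mirroring the preceding proposition's proof, with the four bullet-like checks done in prose.
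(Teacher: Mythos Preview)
The paper does not give a proof of this fact at all: it simply states that it ``follows identically as in the case of schemes'' and moves on. Your proposal is correct and does exactly what the paper gestures at, namely transcribing the four-axiom check from the preceding formal-scheme proposition into the rigid setting; your added caution about \'etale subcanonicity for rigid spaces (and the distinction between descent for morphisms versus descent for sheaves or torsors) is in fact more careful than anything the paper provides here.
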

\begin{rem}
Whenever we mention a rigid stack in this work we are referring to stacks in the Tate topology with smooth morphisms as atlases, but as we mention the possibility of using the \'etale topology in the introduction, we have discussed it here.
\end{rem}

We now recall the rigid generic fiber construction, introduced by Berthelot, which constructs a rigid analytic variety from a formal scheme. On affine pieces, this takes an admissible $R$-algebra $A$ to $A \otimes_R K$, and one extends this construction to all formal schemes by gluing. The key point is that two formal schemes which are related by a so-called formal blowing-up have the same associated rigid space. We remind the reader of the general theorem.
\begin{thm}{\cite[Theorem 3]{bosch2014lectures}}

Let $R$ be a complete valuation ring of height 1 with field of fraction $K$. Then the functor $$rig \colon FSch_R \to Rig_K$$ induces an equivalence between the category of all admissible formal $R$-schemes which are quasi-compact, localized by the class $S$ of admissible formal blowing-ups, and the category of all quasi-separated rigid $K$-spaces that are quasi-compact.
\end{thm}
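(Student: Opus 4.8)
The plan is to follow Raynaud's original argument, as reproduced in Bosch's lecture notes \cite{bosch2014lectures}, in three stages: well-definedness of the functor, full faithfulness after localization, and essential surjectivity. First I would pin down the functor $rig$ concretely. On an affine admissible formal scheme $\Spf(A)$ — where $A$ is an $R$-flat ring topologically of finite presentation, hence a quotient of some $R\langle X_1,\dots,X_n\rangle$ — one sets $\Spf(A)^{rig} = \Spm(A \otimes_R K)$, which is a strictly affinoid $K$-algebra. One glues over a finite open affine cover (finite by quasi-compactness), and quasi-separatedness makes the overlaps behave, so the result is a quasi-compact quasi-separated rigid space. The first substantive point is that $rig$ inverts admissible formal blowing-ups: if $X' \to X$ is the blow-up of an open coherent ideal $\mathcal{I}$, then $\mathcal{I}$ contains a power of an ideal of definition, which becomes invertible after $\otimes_R K$, so $\mathcal{I}$ generates the unit ideal on generic fibres and $(X')^{rig} \to X^{rig}$ is an isomorphism. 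Hence $rig$ factors through the localization by the class $S$.

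Next I would establish full faithfulness of the induced functor on the localized category. The content is the identity
\[
\Hom_{Rig_K}(X^{rig}, Y^{rig}) \;=\; \varinjlim_{X' \to X}\; \Hom_{FSch_R}(X', Y),
\]
where $X'$ ranges over admissible blow-ups of $X$. Surjectivity of the natural map from right to left — i.e.\ "spreading out" a morphism of rigid spaces to a morphism of formal models after an admissible blow-up — is where the Raynaud--Gruson flattening technique enters: one picks arbitrary formal models, forms the graph, and flattens the projection to $X$ by an admissible blow-up of the source until it becomes an isomorphism. Injectivity, namely that two formal morphisms inducing the same rigid morphism already agree after a further blow-up, follows from the scheme-theoretic density of the generic fibre in any $R$-flat formal model together with the rigidity of such models. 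I would import the relevant flatness statements as black boxes.

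Finally, essential surjectivity. For an affinoid $\Spm(A)$ with $A = T_n/\mathfrak{a}$ one produces the canonical model: let $A_0$ be the image of $R\langle X_1,\dots,X_n\rangle \to A$, which is $R$-flat and topologically of finite presentation, and verify $\Spf(A_0)^{rig} = \Spm(A)$ using Theorem \ref{univan}-type compatibility on points. For a general quasi-compact quasi-separated rigid space $Z$, choose a finite admissible affinoid cover $Z = \bigcup U_i$ with affinoid models $\mathfrak{U}_i$; the overlaps $U_i \cap U_j$ carry two a priori different models inherited from $\mathfrak{U}_i$ and $\mathfrak{U}_j$, and by full faithfulness one identifies them, and checks the cocycle condition, after finitely many admissible blow-ups — here quasi-separatedness guarantees the overlaps are quasi-compact so finitely many blow-ups suffice. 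Gluing yields a formal model $\mathfrak{Z}$ with $\mathfrak{Z}^{rig} \cong Z$. I expect the main obstacle to be exactly this last gluing step together with the flattening input it rests on: reconciling incompatible formal models over the pieces of a cover is the technical heart of Raynaud's theory, and it is there that the completeness and height-one hypotheses on $R$ are used essentially; the rest is bookkeeping with affinoid and admissible $R$-algebras.
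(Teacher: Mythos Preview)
The paper does not prove this theorem at all: it is quoted verbatim from Bosch's lecture notes \cite[Theorem 3]{bosch2014lectures} and used as a black-box input to the rigid-formal dictionary (e.g.\ in Proposition~\ref{compatiblewithrig} and Lemma~\ref{flatquotient}). There is therefore no ``paper's own proof'' to compare your proposal against.

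For what it is worth, your sketch is a faithful outline of the standard Raynaud argument as presented in Bosch, and you have correctly identified the two non-formal ingredients --- the flattening-by-blowup step for full faithfulness and the gluing of incompatible affine models for essential surjectivity. If you were actually writing this up rather than citing it, the place where more care is needed is exactly where you flag it: the gluing step requires not just full faithfulness on morphisms but also a compatibility statement for admissible blow-ups (that the system of blow-ups of a fixed model is cofiltered, so that finitely many local modifications can be dominated by a single one), and the height-one hypothesis enters through the noetherianness properties of $R\langle X_1,\dots,X_n\rangle$ used to control topologically finite presentation. But none of this is relevant to the present paper, which simply imports the result.
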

Since the rigid generic fiber respects \'etale and smooth maps, we have by left Kan extension an analytification functor from the 2-category of formal stacks over $\Spf(R)$ to the 2-category of rigid stacks over $K$. By definition of Kan extension, this functor is computed using the following 2-colimit formula: for $\mathfrak{X}$ a formal stack and $T$ a rigid affinoid, then the rigid generic fiber of $\mathfrak{X}$ is $$\mathfrak{X}^{rig}(T) \defeq \varinjlim_{Y^{rig} \to T} \mathfrak{X}(Y).$$ Here the colimit runs over all morphisms from the generic fibers of formal schemes $Y$ to the test affinoid $T$.

For us, the important claim is that the moduli problem of torsors is respected by the operation of taking rigid generic fibers. We adapt the discussion of \cite[Lemma 3.15(ii)]{chenevier2014p}, with a sprinkle of \cite[Lemma 3.2.4]{ardakovequivariant}. For the next proposition, it is important to note that if a profinite group $G$ acts continuously on the formal scheme $\mathcal{G}$, then functoriality furnishes a continuous action of $G$ on $\mathcal{G}^{rig}$ -- see \cite[Proposition 3.1.12]{ardakovequivariant} for a thorough explanation. Furthermore, every such action arises from a $G$-action on a formal model of $\mathcal{G}^{rig}$ (\cite[Lemma 3.2.4]{ardakovequivariant}.)

If $\mathcal{G}$ is an algebraic group over $R$ and $G$ acts continuously on $\mathcal{G}$, then we have an an obvious notion of continuous cohomology groupoids $H^1(G, \mathcal{G})$ on the site $FSch_R$. 
\begin{prop} \label{compatiblewithrig}
If the stack $H^1(G,\mathcal{G})$ on $FSch_R$ is representable by a formal stack then its rigid analytification $H^1(G,\mathcal{G})^{rig}$ represents the stackification of the pseudofunctor $$\Lambda \mapsto H^1(G,\mathcal{G}_K(\Lambda))$$ on affinoid $K$-algebras. 
\end{prop}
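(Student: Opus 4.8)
The plan is to verify directly that $H^1(G,\mathcal{G})^{rig}$ satisfies the universal property of the stackification of the pseudofunctor $\Lambda \mapsto H^1(G,\mathcal{G}_K(\Lambda))$, by checking the two defining properties of stackification recalled in Fact~\ref{stackification}. The main structural input is that the rigid generic fiber functor $rig$ is computed as a left Kan extension along $Y \mapsto Y^{rig}$, so that for an affinoid $K$-algebra $\Lambda$ with $T = \Spm(\Lambda)$ one has
$$H^1(G,\mathcal{G})^{rig}(T) = \varinjlim_{Y^{rig} \to T} H^1(G,\mathcal{G}(Y)),$$
the colimit running over formal schemes $Y$ over $R$ equipped with a morphism $Y^{rig} \to T$. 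First I would observe that when $Y = \Spf(A)$ is affine with $Y^{rig} = \Spm(A \otimes_R K)$, the continuous $G$-action on $\mathcal{G}(A)$ is compatible, under $- \otimes_R K$, with the continuous $G$-action on $\mathcal{G}_K(A \otimes_R K)$ used in the target pseudofunctor; this follows from the functoriality of the $G$-action on $\mathcal{G}^{rig}$ (cf.\ \cite[Proposition 3.1.12]{ardakovequivariant}) together with the fact that every such action descends to a formal model (\cite[Lemma 3.2.4]{ardakovequivariant}), so that cocycles and coboundaries for $\mathcal{G}(A)$ map to cocycles and coboundaries for $\mathcal{G}_K(A \otimes_R K)$.

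Next I would exhibit the comparison 1-morphism. Since $H^1(G,\mathcal{G})$ is by hypothesis representable by a formal stack, and since any morphism $\Spm(\Lambda') \to \Spm(\Lambda)$ of affinoids lifts, after a formal blow-up, to a morphism of admissible formal models by the equivalence in \cite[Theorem 3]{bosch2014lectures}, a $\Lambda$-point of the source pseudofunctor — i.e.\ a class in $H^1(G,\mathcal{G}_K(\Lambda))$ — arises, at least $\tau$-locally on $\Spm(\Lambda)$ for the Tate topology, from a cocycle valued in $\mathcal{G}(A)$ for some admissible $A$ with $A\otimes_R K = \Lambda'$ over an admissible affinoid subdomain $\Spm(\Lambda') \subseteq \Spm(\Lambda)$. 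This is the key ``spreading out'' step: a continuous $1$-cocycle $c \colon G \to \mathcal{G}_K(\Lambda)$ lands, by continuity and quasi-compactness of $G$, in a bounded (hence admissible) sub-$R$-algebra $A \subseteq \Lambda$ with $A[1/p] = \Lambda$, up to shrinking, after which $c$ is a cocycle for the formal-scheme-valued $\mathcal{G}(A)$. Running this through the Kan extension colimit produces a map from $\Lambda \mapsto H^1(G,\mathcal{G}_K(\Lambda))$ to $H^1(G,\mathcal{G})^{rig}$, and by the universal property of stackification a map $\widetilde{\mu}$ from the stackification of the former to $H^1(G,\mathcal{G})^{rig}$.

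Finally I would check that $\widetilde{\mu}$ is an isomorphism by verifying the two properties in Fact~\ref{stackification}: essential surjectivity up to $\tau$-cover follows from the same spreading-out argument run in reverse together with the colimit formula above (every section of $H^1(G,\mathcal{G})^{rig}(T)$ comes, locally, from $H^1(G,\mathcal{G}(Y))$ for some formal model $Y$, whose generic fiber cocycle lies in $H^1(G,\mathcal{G}_K(\Lambda'))$), and the $\Isom$-sheaf condition reduces to comparing $\Isom$-presheaves of cocycles over $R$-models with those over $K$, which again match after $\otimes_R K$ because coboundary elements of $\mathcal{G}(A)$ are the same data as coboundary elements of $\mathcal{G}_K(\Lambda)$ coming from bounded subalgebras. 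The main obstacle I expect is the spreading-out / boundedness step: making precise that a continuous cocycle $G \to \mathcal{G}_K(\Lambda)$ for the (possibly pro-algebraic) group $\mathcal{G}$ factors through an admissible formal model in a way compatible with the $G$-action, uniformly enough to glue in the Tate topology; this is where one must invoke quasi-compactness of $G$, the structure of $\mathcal{G}$ as an inverse limit of algebraic groups reducing the check to each finite level $\mathcal{G}_n$, and the descent of $G$-actions to formal models from \cite{ardakovequivariant}. Once that is in hand, the rest is a formal comparison of stackifications.
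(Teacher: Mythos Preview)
Your argument is correct and uses the same essential ingredients as the paper --- the Kan-extension description of the rigid generic fiber, and the spreading-out step via compactness of $G$ to push a continuous cocycle $G \to \mathcal{G}_K(\Lambda)$ into $\mathcal{G}(\mathcal{A})$ for some admissible formal model $\mathcal{A}$ of $\Lambda$. The difference is organizational: you verify the two defining properties of stackification for $H^1(G,\mathcal{G})^{rig}$ directly, whereas the paper short-circuits this by observing that \emph{stackification itself is a left Kan extension}, hence commutes with the rigid generic fiber functor. That one sentence reduces the problem to comparing the unstackified pseudofunctors, i.e.\ to showing that
\[
\varinjlim_{\mathcal{A}} H^1(G,\mathcal{G}(\mathcal{A})) \longrightarrow H^1(G,\mathcal{G}(\Lambda))
\]
is a bijection (colimit over formal models $\mathcal{A}$ of $\Lambda$). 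Surjectivity is exactly your compactness/boundedness argument; injectivity the paper handles in one line by restricting to torsion-free $\mathcal{A}$, where $\mathcal{A} \hookrightarrow \Lambda$ is injective, so $H^1(G,\mathcal{G}(\mathcal{A})) \to H^1(G,\mathcal{G}(\Lambda))$ is injective. Your $\Isom$-sheaf check amounts to the same thing but is packaged less efficiently. In short: same proof, but you can replace your explicit verification of the stackification conditions with the single observation that two left Kan extensions commute.
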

\begin{proof}
Stackification is also a left Kan extension, so it commutes with generic fibers and we may deal only with the unstackified cohomology pseudofunctor. Thus the universal property under question is that for any affinoid $K$-algebra $A$ we have a bijection $$\varinjlim H^1(G,\mathcal{G}(\mathcal{A})) \to H^1(G,\mathcal{G}(A)),$$ where the colimit runs over all $\mathcal{A} \in FSch_R$ with rigid generic fiber $A$.

First of all, there is always a continuous ring map from $\mathcal{A}$ to $A$ which induces the map above. To see surjectivity, consider a continuous cocycle $$c_A \colon G \to \mathcal{G}(A).$$ Now continuity and compactness of $G$ imply that the image of $c_A$ lies in a compact subset of $\mathcal{G}(A)$, i.e. in $\mathcal{G}(\mathcal{A}')$ for some model $\mathcal{A}'$ of $A.$

If $\mathcal{A}$ is torsion free then $H^1(G,\mathcal{G}(\mathcal{A})) \to H^1(G,\mathcal{G}(A))$ is injective, as the map $\mathcal{A} \to A$ is injective. Since every $\mathcal{A}'$ maps to a torsion free ring, then, the map from the colimit is injective.

\end{proof}

Last but not least, we need the following statement on the representability of quotients of rigid-analytic varieties.

\begin{lem}\label{flatquotient}
Let $T$ be a rigid-analytic space over a non-archimedean field $K$, and $H$ a rigid-analytic group that is flat and locally of finite presentation over $T$. Let $X$ be a rigid-analytic 0-stack over $T$ (cf \ref{zerostack}.) Assume that $X$, $T$, and $H$ have formal models over $R$. Then the quotient stack $[X/H]$ is representable by a rigid stack.
\end{lem}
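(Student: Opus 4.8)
The plan is to check directly that $[X/H]$ satisfies the definition of a geometric stack for the geometric context $(\mathrm{Rig}_K,\ \text{Tate},\ \text{smooth})$ — that is, that it is a rigid stack — by exhibiting the evident atlas $X\to[X/H]$ and then feeding it into the same ``geometricity transfers'' machinery used for Proposition~\ref{transfer} and Proposition~\ref{smoothquot}. First I would record that $[X/H]$ is a priori a stack in groupoids: by definition it is the stackification (Fact~\ref{stackification}) of the quotient prestack $Z\mapsto X(Z)/H(Z)$, and since stackification commutes with $2$-fibre products, every fibre product built from $[X/H]$ may be computed before stackifying and then stackified, which is what makes the local analysis below legitimate.

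The core of the argument is one local computation. Given a rigid space $Z$ and a morphism $Z\to[X/H]$, the second defining property of stackification refines $Z$ by a Tate covering over which the morphism lifts to $Z\to X$; for such a lift one has the standard identification $Z\times_{[X/H]}X\simeq H\times_T Z$, i.e.\ the groupoid presentation of $[X/H]$ is $(H\times_T X\rightrightarrows X)$. Here $H$, $T$, $Z$ are honest rigid spaces, so $H\times_T Z$ is a rigid space, and the atlas morphism $X\to[X/H]$ is representable (Tate-locally on the base) and in particular representable by geometric spaces. The key point — exactly the mechanism the paper already uses for $\mathcal{G}^{\phi=1}$ — is that $H\to T$ is in fact \emph{smooth}: it is flat and locally of finite presentation by hypothesis, and its fibres are rigid-analytic groups over non-archimedean fields of characteristic zero, hence geometrically reduced and smooth by Cartier's theorem \cite[Expos\'e VII, Theorem 3.3 (ii)]{grothendieck1962schemas}, so the morphism itself is smooth. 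Base change then shows $H\times_T Z\to Z$ is smooth, so $X\to[X/H]$ lies in $\bold{P}$; and it is an effective epimorphism because its \v{C}ech nerve is the bar resolution $X,\ H\times_T X,\ H\times_T H\times_T X,\dots$, which resolves $[X/H]$ by construction of the quotient stack. Since $X$ is a rigid-analytic $0$-stack, i.e.\ a geometric space in the sense of Definition~\ref{zerostack}, the rigid analogue of \cite[\href{https://stacks.math.columbia.edu/tag/05UL}{Tag 05UL}]{stacks-project} (the statement invoked inside the proof of Proposition~\ref{transfer}) then concludes that $[X/H]$ is a geometric stack; in particular its diagonal, which is Tate-locally built from $H\times_T X$ and the spaces $X,T,H$, is automatically representable by geometric spaces.

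An essentially equivalent route — and the one that explains the hypothesis that $X$, $T$, $H$ admit formal models over $R$ — is to form the quotient in the formal world first: after suitable admissible formal blow-ups one may choose compatible formal models $\mathfrak{X}\to\mathfrak{T}$, $\mathfrak{H}\to\mathfrak{T}$ together with a formal model of the action, with $\mathfrak{H}\to\mathfrak{T}$ adic, flat and of finite presentation (flattening, \cite{bosch1993formal}); then $[\mathfrak{X}/\mathfrak{H}]$ is a formal algebraic stack with flat finitely-presented atlas $\mathfrak{X}$, whose rigid generic fibre, computed by the Kan-extension formula of Section~\ref{compat} in the spirit of Proposition~\ref{compatiblewithrig}, is canonically $[X/H]$. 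I expect the main obstacle to be precisely the mismatch between the ``flat and locally of finite presentation'' hypothesis on $H$ and the requirement that a rigid stack carry a \emph{smooth} atlas: smoothness of the atlas is only available after passing to the characteristic-zero generic fibre, where Cartier applies, so one must be careful to invoke it on the rigid side and not on the formal models (whose special fibres have residue characteristic $p$). The secondary technical points are making the descent of the $H$-action and the group laws to a finitely-presented formal model rigorous via admissible blow-ups and flattening, and handling the groupoid presentation $(H\times_T X\rightrightarrows X)$ correctly at the level of $0$-stacks, so that ``geometricity transfers'' is applied once to produce the geometric-space inputs and then once more to pass from the geometric space $X$ to the $1$-stack $[X/H]$.
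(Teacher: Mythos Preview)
Your proposal is correct, and you actually sketch two routes. The paper takes your \emph{second} route, not your first: it works entirely through formal models. Concretely, it chooses formal models $H^f$, $T^f$, $X^f$, invokes \cite{bosch1993formal} to arrange $H^f$ flat over $T^f$, and then at each finite level $n$ applies \cite[\href{https://stacks.math.columbia.edu/tag/06FI}{Tag 06FI}]{stacks-project} to conclude that $[X_n/H_n]$ is an algebraic stack over $T_n$. Commuting colimits gives $[X^f/H^f]\simeq\varinjlim[X_n/H_n]$ as a formal stack, and commuting rigid generic fibre with the quotient gives $[X/H]$.

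The difference is instructive. The paper never argues that $H\to T$ is smooth; Tag~06FI handles flat, finitely presented group algebraic spaces directly, so flatness alone suffices once you are on the scheme side. This is why the formal-model hypothesis is genuinely used, and it sidesteps any need for a rigid-analytic Cartier theorem or a fibral smoothness criterion in the rigid category. Your direct approach, by contrast, extracts smoothness of $H\to T$ from flat $+$ lfp $+$ Cartier on the (characteristic-zero) fibres and then runs the geometric-context machinery of Propositions~\ref{transfer} and~\ref{smoothquot} intrinsically on the rigid side; this is cleaner and, if the rigid fibral criterion and rigid Cartier are taken as known, does not actually require the formal-model hypothesis at all. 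Both are valid; the paper's choice trades a slightly heavier detour for staying within results already on the shelf.
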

\begin{proof}
We take formal models for $H$, $T$, and $X$, denoted $H^f$, $T^f$, and $X^f$, respectively. By the main theorem of \cite{bosch1993formal}, $H^f$ may be taken to be flat over $T^f$. In particular, letting $I$ be the ideal sheaf of definition of $T^f$ and $H_n, T_n, X_n$ the reductions of our formal schemes modulo $I^n$, $H_n$ is flat over $T_n$ and it acts on the algebraic space $X_n$. By \cite[\href{https://stacks.math.columbia.edu/tag/06FI}{Tag 06FI}]{stacks-project}, $[X_n/H_n]$ is an algebraic stack over $T_n$.

Now we have an equivalence \[[\varinjlim X_n/ \varinjlim H_n] \simeq \varinjlim [X_n/H_n]\] because (2-)colimits commute. Thus $[X^f/H^f]$ is representable by a formal stack over $T^f$. The commuting of colimits also shows that \[[X^f/ \varinjlim H^f]^{rig}  \simeq [X/ H] , \] because rigid generic fibers are given by the colimit along formal models. We conclude that $[X/H]$ is representable by a rigid stack over $T$.
\end{proof}
\begin{rem}
All of the rigid stacks in this work have underlying formal models, owing to the fact that Wang-Erickson's representation stacks are given by him as formal stacks; furthermore, all constructions in Theorem \ref{representability} can be done using constructions on formal models (and often the rigid-analytic theorem we cite was proven by reduction to formal model.)

One approach which does not work is naively assuming that all rigid stacks with diagonal representable by rigid varieties have formal models, as we see here: Suppose, indeed, that the diagonal of $X$ representable by rigid spaces -- not by 0-stacks. We might think that we obtain a formal model of a rigid stack $X$ with presentation $U \to X$ by taking formal models of $U$ and $U \times_X U$, which yields a stack in groupoids over $R$ which is easily seen to have rigid generic fiber $X$. Unfortunately, smooth rigid spaces do not necessarily have smooth formal models \cite[p. 225]{bosch2014lectures}, so the resulting groupoid may not be a formal stack.
\end{rem}

\section{Strat-representability} \label{stratrep}
We now introduce the idea of strat-representability. This concept does not seem to have been named in the past, but it is encountered in work of To\"{e}n \cite[Corollary 3.3.4]{toen2010simplicial}. We use Rydh's definition of stratifications of stacks \cite{rydh2016approximation}; see \cite[Definition 5.2.1.1]{gaitsgory2019weil} for more discussions on stratifications of stacks.

For a rigid stack $X$, we let $|X|$ denote the set of morphisms $p \colon \Spm(k) \to X$, where $k$ is a finite extension of the base field of $X$, with the equivalence relation given by equating two morphisms $p,p'$ from $k,k'$ if there exists another finite extension $\Omega$ over the base field and a 2-commutative diagram
\[
\begin{tikzcd}
\Spm \Omega \arrow{r} \arrow{d} & \Spm k' \arrow{d}{p'} \\
\Spm k \arrow{r}{p}  & X
\end{tikzcd}
\]
See \cite[\href{https://stacks.math.columbia.edu/tag/04XE}{Tag 04XE}]{stacks-project} for the algebraic case, which easily generalizes to the formal case.
\begin{defn}
A stratification of an algebraic, formal, or rigid stack $X$ is a sequence of finitely presented closed substacks \[\emptyset = X_0 \hookrightarrow X_1 \hookrightarrow \cdots \hookrightarrow X_n\] such that $|X_n| = |X|$.
\end{defn}
The theorem we will need is \cite[Theorem 8.3]{rydh2016approximation}:
\begin{thm}
Let $X$ be a quasi-compact and quasi-separated algebraic stack and let $W \to X$ be a morphism of finite presentation. Then there is a stratification \[\emptyset = X_0 \hookrightarrow X_1 \hookrightarrow \cdots \hookrightarrow X_n\] of $X$ such that $W$ is flat over $X_i \setminus X_{i-1}$ for all $1 \leq i \leq n$.
\end{thm}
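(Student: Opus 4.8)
The plan is to reproduce the proof strategy of \cite{rydh2016approximation}: reduce to the case where $X$ is Noetherian, then run a Noetherian induction whose inductive step is an application of generic flatness, with all the stacky bookkeeping handled by smooth descent. First I would observe that the non-flat locus of $W \to X$ descends along a smooth presentation. Choose a smooth surjection $p \colon U \to X$ with $U$ a scheme, and form the groupoid presentation $(U, R = U \times_X U)$. Since flatness of a morphism at a point may be checked after a smooth (hence fppf) base change, the non-flat locus of the base-changed morphism $W_U \to U$ is exactly the preimage under $p$ of the non-flat locus of $W \to X$; in particular it is $R$-invariant. For a finitely presented morphism of schemes the non-flat locus is closed (openness of the flat locus), so it is a closed $R$-invariant subset of $U$, and therefore descends to a closed substack $Z \subseteq X$, which one checks is of finite presentation. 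Its open complement $V = X \setminus Z$ is the maximal open substack over which $W$ is flat.

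Assuming now that $X$ is Noetherian, I would run the induction. By generic flatness for schemes applied to $W_U \to U$, the flat locus in $U$ contains a dense open; since $p$ is an open surjection this forces the descended open $V$ to be dense in $X$. Replacing $X$ by the reduced structure on $Z = X \setminus V$, which is again a Noetherian quasi-compact quasi-separated algebraic stack, and iterating, produces a strictly descending chain of closed substacks; the chain terminates because $X$ is Noetherian. Re-indexing the complements gives the desired filtration $\emptyset = X_0 \hookrightarrow X_1 \hookrightarrow \cdots \hookrightarrow X_n = X$ with $W$ flat over each locally closed stratum $X_i \setminus X_{i-1}$.

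The main obstacle is the reduction to the Noetherian case, i.e. absolute Noetherian approximation for quasi-compact quasi-separated algebraic stacks: one must write $X$ as a cofiltered limit of Noetherian algebraic stacks $X_\lambda$ with affine transition morphisms, descend the finitely presented morphism $W \to X$ to some $W_\lambda \to X_\lambda$ of finite presentation, build the flattening stratification of $X_\lambda$ by the argument above, and pull it back along $X \to X_\lambda$, using that flatness and finite presentation are stable under base change. This approximation step is precisely the technical heart of \cite{rydh2016approximation}, and it is where essentially all the difficulty lies; by comparison the generic-flatness and smooth-descent steps are routine. A secondary point to verify is that each $X_i$ is genuinely of finite presentation over the base, which follows because it is cut out by a finitely presented closed immersion at each stage and finite presentation is stable under composition. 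Given all of this, I would simply cite \cite[Theorem 8.3]{rydh2016approximation} rather than carry out the approximation arguments in full, since the statement is used here only as a black box in the stratification arguments of Theorem \ref{representability}.
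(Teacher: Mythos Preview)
The paper does not prove this theorem at all: it is stated with the citation \cite[Theorem 8.3]{rydh2016approximation} immediately preceding it and used purely as a black box. Your final recommendation, to simply cite Rydh, is exactly what the paper does; your sketch of the argument (smooth descent of the non-flat locus, generic flatness plus Noetherian induction, and Noetherian approximation for the general case) is extra material the paper omits, but it is an accurate account of how Rydh proves the result.
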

Since the rigid generic fiber respects closed immersions and is point-preserving, a stratification of a formal stack induces a stratification of its generic fiber.

In the below definition, we allow any Grothendieck topologies on the site of schemes, formal schemes, or rigid varieties.
\begin{defn}
We say that a stack $F$ on the category of schemes (resp. formal schemes, rigid varieties) is $0$-strat-representable if $F$ is representable as an algebraic (resp. formal, rigid analytic) stack.

We say that a tower \[F_n \to F_{n-1} \to \cdots \to F_0\] is $n$-strat-representable if $F_0$ is $0$-strat-representable and for every affine scheme (resp. affine formal scheme, affinoid rigid analytic variety) $U$ and morphism $U \to F_0$ there is a finite increasing sequence \[H_{1} \hookrightarrow \cdots \hookrightarrow H_{l} = U\] of closed immersions such that the towers \[(H_{j} \setminus H_{j-1}) \times_{F_0} F_n \to \cdots \to (H_{j} \setminus H_{j-1}) \times_{F_0} F_{1}, \qquad 2 \leq j \leq l\] are $(n-1)$-strat-representable. If $n$ is obvious, we just say that the tower is strat-representable.

We will say that a single stack $F$ is strat-representable if it fits into a strat-representable tower. Finally, \[ \cdots \to F_n \to F_{n-1} \to \cdots \to F_0 \] is a pro-stack such that any finite truncation is strat-representable, we say that the pro-stack is pro-strat-representable.
\end{defn}
We will informally say that the locally closed subspaces $(H_{j} \setminus H_{j-1})$ are the ``pieces'' of the stratification.

A key example to keep in mind is the following. Let $\mathcal{C}$ be the category of algebraic, formal, or rigid stacks. 
\begin{exmp}
Let $X$ be a Noetherian stack in the category $\mathcal{C}$, and $\mathcal{F}$ a coherent sheaf on $X$. Then $\mathcal{F}$ can be viewed as the ``total space'' $\mathcal{T} = \mathcal{H}om(\mathcal{O}_X, \mathcal{F})$ on $\mathcal{C}$; this stack associates to any $f \colon T \to X$ the groupoid $\mathcal{H}om(\mathcal{O}_T, f^*\mathcal{F})$. We will show that \[\mathcal{T} \to X\] is strat-representable.

Let $p \colon U \to X$ be an atlas. There exists a stratification $S'_1 \hookrightarrow \cdots \hookrightarrow U$ of $U$ such that $p^*\mathcal{F}$ is locally free over $S'_i \setminus S'_{i-1}$ for every $i \geq 2$ \cite[\href{https://stacks.math.columbia.edu/tag/051S}{Tag 051S}]{stacks-project}. Setting $S_i = p(S'_i)$, $\mathcal{F}$ is free over $S_i \setminus S_{i-1}$ for every $i \geq 2$. Write $\mathcal{F}_i$ for this restriction.  Now $\mathcal{T}$ is represented by the sheaf $\Spec \left(Sym(\mathcal{F}_i^\vee) \right)$ over these complements. This follows because, for locally free sheaves, pullback commutes with dual. Such representability holds, in fact, if and only if $\mathcal{F}_i$ is locally free! (See \cite{nitsure2004representability}.) This is why we can't do better than strat-representability in general.
\end{exmp}

In fact, representability of total spaces of locally constant sheaves works perfectly in the rigid context. The following construction is a special case of relative analytification, cf. \cite[A.1]{conrad2010universal}.
\begin{exmp} \label{locallyfree}
Let $X$ be a rigid-analytic space, and $\mathcal{E}$ a locally constant sheaf of $\mathcal{O}_X$-modules. Then the total space functor is representable as a rigid-analytic space over $X$.

Let $\{U_i\}$ be an admissible affinoid covering of $X$ such that $\mathcal{E} \simeq \mathcal{O}_X^r$ on $U_i$. The gluing data for $\mathcal{E}$ extends to gluing data of $Sym(\mathcal{E}^\vee) \simeq Sym(\mathcal{O}_X^r)$, and thus to gluing data of $(\mathbb{A}^{r,an}) \times {U_i}$. This produces a rigid space $\mathcal{T}^{an}$ over $X$; it represents the total space functor because of the classical identity $Hom(Y, \mathbb{A}^{1,an}) = \mathcal{O}(Y)$.
\end{exmp}

\begin{rem}
Kobi Kremnitzer has suggested that the ``correct'' framework for strat-representability is in derived geometry. Indeed, if $X$ is a smooth scheme then any coherent sheaf on it has a finite resolution by locally free sheaves of finite type; in other words, it is dualizable. Then the total space functor is represented by the spectrum of the symmetric algebra of the dual, now interpreted in the derived sense.
\end{rem}

\addcontentsline{toc}{chapter}{Bibliography}
\renewcommand{\bibname}{References}

\bibliographystyle{plain}
\bibliography{RelativeMalcev}{}

\begin{thebibliography}{10}

\bibitem{abe2018theory}
Tomoyuki Abe and Daniel Caro.
\newblock Theory of weights in {$p$}-adic cohomology.
\newblock {\em Amer. J. Math.}, 140(4):879--975, 2018.

\bibitem{alper2013good}
Jarod Alper.
\newblock Good moduli spaces for {A}rtin stacks.
\newblock {\em Annales de l'Institut Fourier}, 63(6):2349--2402, 2013.

\bibitem{amazeenetale}
Gr{\'e}tar Amazeen.
\newblock \'{E}tale and \'{P}ro-\'{E}tale fundamental groups.
\newblock Master's Thesis.
  \url{https://www.math.cuhk.edu.hk/~lzhang/finalversion%20(1).pdf}.

\bibitem{ardakovequivariant}
Konstantin Ardakov.
\newblock Equivariant {D}-modules on rigid analytic spaces.
\newblock \url{http://people.maths.ox.ac.uk/ardakov/EqDcap.pdf}, 2017.

\bibitem{balakrishnan2011appendix}
Jennifer Balakrishnan, Kiran Kedlaya, and Minhyong Kim.
\newblock Appendix and erratum to ``{M}assey products for elliptic curves of
  rank 1''.
\newblock {\em Journal of the American Mathematical Society}, 24(1):281--291,
  2011.

\bibitem{balakrishnan2018quadratic}
Jennifer~S. Balakrishnan and Netan Dogra.
\newblock Quadratic {C}habauty and rational points, {I}: {$p$}-adic heights.
\newblock {\em Duke Math. J.}, 167(11):1981--2038, 2018.
\newblock With an appendix by J. Steffen M\"{u}ller.

\bibitem{bellovin2015padic}
Rebecca Bellovin.
\newblock {$p$}-adic {H}odge theory in rigid analytic families.
\newblock {\em Algebra Number Theory}, 9(2):371--433, 2015.

\bibitem{berger2008familles}
Laurent Berger and Pierre Colmez.
\newblock Familles de repr\'{e}sentations de de {R}ham et monodromie
  {$p$}-adique.
\newblock Number 319, pages 303--337. 2008.
\newblock Repr\'{e}sentations $p$-adiques de groupes $p$-adiques. I.
  Repr\'{e}sentations galoisiennes et $(\phi,\Gamma)$-modules.

\bibitem{berthelot2006cohomologie}
Pierre Berthelot.
\newblock {\em Cohomologie cristalline des cchemas de caract\'eristique p>0},
  volume 407.
\newblock Springer, 2006.

\bibitem{besser2002coleman}
Amnon Besser.
\newblock Coleman integration using the {Tannakian} formalism.
\newblock {\em Mathematische Annalen}, 322(1):19--48, 2002.

\bibitem{besser2012heidelberg}
Amnon Besser.
\newblock Heidelberg lectures on {C}oleman integration.
\newblock In {\em The arithmetic of fundamental groups---{PIA} 2010}, volume~2
  of {\em Contrib. Math. Comput. Sci.}, pages 3--52. Springer, Heidelberg,
  2012.

\bibitem{bhatt2013pro}
Bhargav Bhatt and Peter Scholze.
\newblock The pro-\'{e}tale topology for schemes.
\newblock {\em Ast\'{e}risque}, (369):99--201, 2015.

\bibitem{bloch2007functions}
Spencer Bloch and Kazuya Kato.
\newblock L-functions and {T}amagawa numbers of motives.
\newblock In {\em The Grothendieck Festschrift}, pages 333--400. Springer,
  2007.

\bibitem{bosch2014lectures}
Siegfried Bosch.
\newblock {\em Lectures on formal and rigid geometry}, volume 2105.
\newblock Springer, 2014.

\bibitem{bosch1993formal}
Siegfried Bosch and Werner L\"{u}tkebohmert.
\newblock Formal and rigid geometry. {II}. {F}lattening techniques.
\newblock {\em Math. Ann.}, 296(3):403--429, 1993.

\bibitem{brinon2009cmi}
Olivier Brinon and Brian Conrad.
\newblock {CMI} {S}ummer {S}chool notes on p-adic {H}odge theory (preliminary
  version).
\newblock \url{http://math.stanford.edu/~conrad/papers/notes.pdf}, 2009.

\bibitem{brown2014multiple}
Francis Brown.
\newblock {M}ultiple {M}odular {V}alues and the relative completion of the
  fundamental group of {$M_{1, 1}$}.
\newblock {\em arXiv preprint arXiv:1407.5167}, 2014.

\bibitem{brown2017class}
Francis Brown.
\newblock A class of non-holomorphic modular forms {II}: equivariant iterated
  eisenstein integrals.
\newblock {\em arXiv preprint arXiv:1708.03354}, 2017.

\bibitem{chen1977iterated}
Kuo-Tsai Chen.
\newblock Iterated path integrals.
\newblock {\em Bulletin of the American Mathematical Society}, 83(5):831--879,
  1977.

\bibitem{chenevier2014p}
Ga{\"e}tan Chenevier.
\newblock The $p$-adic analytic space of pseudocharacters of a profinite group,
  and pseudorepresentations over arbitrary rings.
\newblock {\em Automorphic forms and Galois representations}, 1:221--285, 2014.

\bibitem{christensen2020topology}
Atticus Christensen.
\newblock A {T}opology on {P}oints on {S}tacks.
\newblock {\em arXiv preprint arXiv:2005.10231}, 2020.

\bibitem{coleman1985effective}
Robert~F. Coleman.
\newblock Effective {C}habauty.
\newblock {\em Duke Mathematical Journal}, 52(3):765--770, 1985.

\bibitem{fontaine2000construction}
Pierre Colmez and Jean-Marc Fontaine.
\newblock Construction des repr\'{e}sentations {$p$}-adiques semi-stables.
\newblock {\em Invent. Math.}, 140(1):1--43, 2000.

\bibitem{conrad2006relative}
Brian Conrad.
\newblock Relative ampleness in rigid geometry.
\newblock {\em Ann. Inst. Fourier (Grenoble)}, 56(4):1049--1126, 2006.

\bibitem{conrad2010universal}
Brian Conrad.
\newblock Universal property of non-archimedean analytification.
\newblock \url{http://math.stanford.edu/~conrad/papers/univan.pdf}, 2010.

\bibitem{conrad2014reductive}
Brian Conrad.
\newblock Reductive group schemes.
\newblock {\em Autour des sch{\'e}mas en groupes}, 1:93--444, 2014.

\bibitem{crew1996differential}
Richard Crew.
\newblock The differential galois theory of regular singular $p$-adic
  differential equations.
\newblock {\em Mathematische Annalen}, 305(1):45--64, 1996.

\bibitem{dat2010p}
Jean-Fran\c{c}ois Dat, Sascha Orlik, and Michael Rapoport.
\newblock {\em Period domains over finite and {$p$}-adic fields}, volume 183 of
  {\em Cambridge Tracts in Mathematics}.
\newblock Cambridge University Press, Cambridge, 2010.

\bibitem{deligne1971theorie}
Pierre Deligne.
\newblock Th{\'e}orie de {H}odge, {II}.
\newblock {\em Publications Math{\'e}matiques de l'Institut des Hautes
  {\'E}tudes Scientifiques}, 40(1):5--57, 1971.

\bibitem{deligne1989groupe}
Pierre Deligne.
\newblock Le groupe fondamental de la droite projective moins trois points.
\newblock In {\em Galois groups over $\mathbb{Q}$}, pages 79--297. Springer,
  1989.

\bibitem{deligne1975real}
Pierre Deligne, Phillip Griffiths, John Morgan, and Dennis Sullivan.
\newblock Real homotopy theory of {K}{\"a}hler manifolds.
\newblock {\em Inventiones mathematicae}, 29(3):245--274, 1975.

\bibitem{deligne1982tannakian}
Pierre Deligne and James~S Milne.
\newblock Tannakian categories.
\newblock In {\em Hodge cycles, motives, and Shimura varieties}, pages
  101--228. Springer, 1982.

\bibitem{dieudonne1971elements}
Jean Dieudonn{\'e} and Alexandre Grothendieck.
\newblock {\em {\'E}l{\'e}ments de g{\'e}om{\'e}trie alg{\'e}brique}.
\newblock Springer, 1971.

\bibitem{dogra2019unlikely}
Netan Dogra.
\newblock Unlikely intersections and the {C}habauty--{K}im method over number
  fields.
\newblock {\em arXiv preprint arXiv:1903.05032}, 2019.

\bibitem{239543}
Brett Favre.
\newblock Infiniteness of the galois cohomology over a number field with
  coefficients in a finite galois module.
\newblock MathOverflow.
\newblock \url{https://mathoverflow.net/q/239543} (version: 2016-05-24).

\bibitem{fontaine1982construction}
Jean-Marc Fontaine and Guy Laffaille.
\newblock Construction de repr{\'e}sentations $ p $-adiques.
\newblock {\em Annales scientifiques de l'{\'E}cole Normale Sup{\'e}rieure},
  15(4):547--608, 1982.

\bibitem{freitag2013etale}
Eberhard Freitag and Reinhardt Kiehl.
\newblock {\em \'Etale cohomology and the {W}eil conjecture}, volume~13.
\newblock Springer Science \& Business Media, 2013.

\bibitem{fresnel2012rigid}
Jean Fresnel and Marius Van~der Put.
\newblock {\em Rigid analytic geometry and its applications}, volume 218.
\newblock Springer Science \& Business Media, 2012.

\bibitem{gaitsgory2019weil}
Dennis Gaitsgory and Jacob Lurie.
\newblock {\em Weil's conjecture for function fields. {V}ol. 1}, volume 199 of
  {\em Annals of Mathematics Studies}.
\newblock Princeton University Press, Princeton, NJ, 2019.

\bibitem{griffiths1968periods}
Phillip~A Griffiths.
\newblock Periods of integrals on algebraic manifolds, {I}.(construction and
  properties of the modular varieties).
\newblock {\em American Journal of Mathematics}, 90(2):568--626, 1968.

\bibitem{grothendieck1962schemas}
Alexander Grothendieck and Michel Demazure.
\newblock Schemas en groupes {I}.
\newblock {\em Lect. Notes Math}, 151, 1962.

\bibitem{grothendieck2006groupes}
Alexandre Grothendieck, Michel Raynaud, and Dock~Sang Rim.
\newblock {\em Groupes de Monodromie en Geometrie Algebrique: Seminaire de
  Geometrie Algebrique du Bois-Marie 1967-1969.(SGA 7 I)}, volume 288.
\newblock Springer, 2006.

\bibitem{hain1997infinitesimal}
Richard Hain.
\newblock Infinitesimal presentations of the {T}orelli groups.
\newblock {\em J. Amer. Math. Soc.}, 10(3):597--651, 1997.

\bibitem{hain2010remarks}
Richard Hain.
\newblock Remarks on non-abelian cohomology of proalgebraic groups.
\newblock {\em J. Algebraic Geom.}, 22(3):581--598, 2013.

\bibitem{hain2016hodge}
Richard Hain.
\newblock The {H}odge--de {R}ham theory of modular groups.
\newblock {\em Recent advances in Hodge theory}, 427:422--514, 2016.

\bibitem{hain2009relative}
Richard Hain and Makoto Matsumoto.
\newblock Relative pro-{$l$} completions of mapping class groups.
\newblock {\em J. Algebra}, 321(11):3335--3374, 2009.

\bibitem{hain2018universal}
Richard Hain and Makoto Matsumoto.
\newblock Universal mixed elliptic motives.
\newblock {\em Journal of the Institute of Mathematics of Jussieu}, pages
  1--104, 2018.

\bibitem{hain1987rham}
Richard~M Hain.
\newblock The de {R}ham homotopy theory of complex algebraic varieties {I}.
\newblock {\em K-theory}, 1(3):271--324, 1987.

\bibitem{hain1987geometry}
Richard~M. Hain.
\newblock The geometry of the mixed {H}odge structure on the fundamental group.
\newblock In {\em Algebraic geometry, {B}owdoin, 1985 ({B}runswick, {M}aine,
  1985)}, volume~46 of {\em Proc. Sympos. Pure Math.}, pages 247--282. Amer.
  Math. Soc., Providence, RI, 1987.

\bibitem{hain1996hodge}
Richard~M. Hain.
\newblock The {H}odge de {R}ham theory of relative {M}alcev completion.
\newblock {\em Ann. Sci. \'{E}cole Norm. Sup. (4)}, 31(1):47--92, 1998.

\bibitem{harari2012descent}
David Harari and Jakob Stix.
\newblock Descent obstruction and fundamental exact sequence.
\newblock In {\em The arithmetic of fundamental groups---{PIA} 2010}, volume~2
  of {\em Contrib. Math. Comput. Sci.}, pages 147--166. Springer, Heidelberg,
  2012.

\bibitem{hast2019functional}
Daniel~Rayor Hast.
\newblock Functional transcendence for the unipotent {A}lbanese map.
\newblock {\em arXiv preprint arXiv:1911.00587}, 2019.

\bibitem{hollander2008homotopy}
Sharon Hollander.
\newblock A homotopy theory for stacks.
\newblock {\em Israel Journal of Mathematics}, 163(1):93--124, 2008.

\bibitem{inassaridze2002higher}
Hvedri Inassaridze.
\newblock Higher non-abelian cohomology of groups.
\newblock {\em Glasgow Mathematical Journal}, 44(3):497--520, 2002.

\bibitem{jannsen1982struktur}
Uwe Jannsen and Kay Wingberg.
\newblock Die struktur der absoluten galoisgruppe zahlk{\"o}rper.
\newblock {\em Inventiones mathematicae}, 70(1):71--98, 1982.

\bibitem{katz2016uniform}
Eric Katz, Joseph Rabinoff, and David Zureick-Brown.
\newblock Uniform bounds for the number of rational points on curves of small
  {M}ordell-{W}eil rank.
\newblock {\em Duke Math. J.}, 165(16):3189--3240, 2016.

\bibitem{katz1973travaux}
Nicholas Katz.
\newblock Travaux de dwork.
\newblock In {\em S{\'e}minaire Bourbaki vol. 1971/72 Expos{\'e}s 400--417},
  pages 167--200. Springer, 1973.

\bibitem{katz1970nilpotent}
Nicholas~M Katz.
\newblock Nilpotent connections and the monodromy theorem: Applications of a
  result of {T}urrittin.
\newblock {\em Publications math{\'e}matiques de l'IHES}, 39:175--232, 1970.

\bibitem{katz1987calculation}
Nicholas~M Katz.
\newblock On the calculation of some differential galois groups.
\newblock {\em Inventiones mathematicae}, 87(1):13--61, 1987.

\bibitem{katz2010legendre}
Nicholas~M. Katz.
\newblock 2, 3, 5, {L}egendre: {$\pm$} trace ratios in families of elliptic
  curves.
\newblock {\em Experiment. Math.}, 19(3):267--277, 2010.

\bibitem{katz1968on}
Nicholas~M. Katz and Tadao Oda.
\newblock On the differentiation of de {R}ham cohomology classes with respect
  to parameters.
\newblock {\em J. Math. Kyoto Univ.}, 8:199--213, 1968.

\bibitem{kim2005motivic}
Minhyong Kim.
\newblock The motivic fundamental group of $\mathbb{P}^1 \setminus
  \{0,1,\infty\}$ and the theorem of {Siegel}.
\newblock {\em Inventiones mathematicae}, 161(3):629--656, 2005.

\bibitem{kim2005unipotent}
Minhyong Kim.
\newblock The unipotent {A}lbanese map and {S}elmer varieties for curves.
\newblock {\em Publ. Res. Inst. Math. Sci.}, 45(1):89--133, 2009.

\bibitem{kings2017rankin}
Guido Kings, David Loeffler, and Sarah~Livia Zerbes.
\newblock Rankin-{E}isenstein classes and explicit reciprocity laws.
\newblock {\em Camb. J. Math.}, 5(1):1--122, 2017.

\bibitem{kisin2008potentially}
Mark Kisin.
\newblock Potentially semi-stable deformation rings.
\newblock {\em Journal of the american mathematical society}, 21(2):513--546,
  2008.

\bibitem{kisin2009fontaine}
Mark Kisin.
\newblock The {F}ontaine-{M}azur conjecture for {${\rm GL}_2$}.
\newblock {\em J. Amer. Math. Soc.}, 22(3):641--690, 2009.

\bibitem{lawrence2018diophantine}
Brian Lawrence and Akshay Venkatesh.
\newblock Diophantine problems and $ p $-adic period mappings.
\newblock {\em arXiv preprint arXiv:1807.02721}, 2018.

\bibitem{mattuck1955abelian}
Arthur Mattuck.
\newblock Abelian varieties over {$p$}-adic ground fields.
\newblock {\em Ann. of Math. (2)}, 62:92--119, 1955.

\bibitem{mccallum2007method}
William McCallum and Bjorn Poonen.
\newblock The method of {C}habauty and {C}oleman.
\newblock In {\em Explicit methods in number theory}, volume~36 of {\em Panor.
  Synth\`eses}, pages 99--117. Soc. Math. France, Paris, 2012.

\bibitem{milne1986arithmetic}
J.~S. Milne.
\newblock {\em Arithmetic duality theorems}, volume~1 of {\em Perspectives in
  Mathematics}.
\newblock Academic Press, Inc., Boston, MA, 1986.

\bibitem{milne1998lectures}
James~S Milne.
\newblock Lectures on {\'e}tale cohomology.
\newblock {\em Available on-line at http://www. jmilne.
  org/math/CourseNotes/LEC. pdf}, 1998.

\bibitem{milne2007quotients}
James~S Milne.
\newblock Quotients of tannakian categories.
\newblock {\em Theory and applications of categories}, 18(21):654--664, 2007.

\bibitem{nitsure2004representability}
Nitin Nitsure.
\newblock Representability of {H}om implies flatness.
\newblock {\em Proc. Indian Acad. Sci. Math. Sci.}, 114(1):7--14, 2004.

\bibitem{olsson2007f}
Martin~C Olsson.
\newblock F-isocrystals and homotopy types.
\newblock {\em Journal of Pure and Applied Algebra}, 210(3):591--638, 2007.

\bibitem{olsson2011towards}
Martin~C Olsson.
\newblock {\em Towards non-abelian p-adic Hodge theory in the good reduction
  case}.
\newblock American Mathematical Soc., 2011.

\bibitem{patrikis2012variations}
Stefan Patrikis.
\newblock Variations on a theorem of {Tate}.
\newblock {\em arXiv preprint arXiv:1207.6724}, 2012.

\bibitem{porta2016higher}
Mauro Porta and Tony~Yue Yu.
\newblock Higher analytic stacks and {GAGA} theorems.
\newblock {\em Advances in Mathematics}, 302:351--409, 2016.

\bibitem{pottharst2013analytic}
Jonathan Pottharst.
\newblock Analytic families of finite-slope {S}elmer groups.
\newblock {\em Algebra Number Theory}, 7(7):1571--1612, 2013.

\bibitem{pridham2009weight}
Jonathan~P. Pridham.
\newblock Weight decompositions on {\'e}tale fundamental groups.
\newblock {\em American journal of mathematics}, 131(3):869--891, 2009.

\bibitem{pridham2012l}
Jonathan~P. Pridham.
\newblock On $\ell$-adic pro-algebraic and relative pro-$\ell$ fundamental
  groups.
\newblock In {\em The Arithmetic of Fundamental Groups}, pages 245--279.
  Springer, 2012.

\bibitem{rydh2016approximation}
David Rydh.
\newblock Approximation of sheaves on algebraic stacks.
\newblock {\em Int. Math. Res. Not. IMRN}, (3):717--737, 2016.

\bibitem{saito2009hilbert}
Takeshi Saito.
\newblock Hilbert modular forms and {$p$}-adic {H}odge theory.
\newblock {\em Compos. Math.}, 145(5):1081--1113, 2009.

\bibitem{sakugawa2017non}
Kenji Sakugawa.
\newblock On a non-abelian generalization of the {B}loch--{K}ato exponential
  map.
\newblock {\em Mathematical Journal of Okayama University}, 59(1):41--70, 2017.

\bibitem{ion2001galois}
Jean-Pierre Serre.
\newblock {\em Galois {Cohomology}}.
\newblock Springer {Monographs} in {Mathematics}. Springer Berlin Heidelberg,
  2001.

\bibitem{shah2018interpolating}
Shrenik Shah.
\newblock Interpolating {H}odge-{T}ate and de {R}ham periods.
\newblock {\em Res. Math. Sci.}, 5(2):Paper No. 17, 47, 2018.

\bibitem{stacks-project}
The {Stacks Project Authors}.
\newblock \textit{Stacks Project}.
\newblock \url{https://stacks.math.columbia.edu}, 2018.

\bibitem{szamuely2009galois}
Tam{\'a}s Szamuely.
\newblock {\em Galois groups and fundamental groups}, volume 117.
\newblock Cambridge {U}niversity {P}ress, 2009.

\bibitem{tarrio2009local}
Leovigildo~Alonso Tarr{\'\i}o, Ana~Jerem{\'\i}as L{\'o}pez, and Marta~P{\'e}rez
  Rodr{\'\i}guez.
\newblock Local structure theorems for smooth maps of formal schemes.
\newblock {\em Journal of Pure and Applied Algebra}, 213(7):1373--1398, 2009.

\bibitem{toen2010simplicial}
Bertrand To\"{e}n.
\newblock Simplicial presheaves and derived algebraic geometry.
\newblock In {\em Simplicial methods for operads and algebraic geometry}, Adv.
  Courses Math. CRM Barcelona, pages 119--186. Birkh\"{a}user/Springer Basel
  AG, Basel, 2010.

\bibitem{toen2008homotopical}
Bertrand To{\"e}n and Gabriele Vezzosi.
\newblock {\em Homotopical Algebraic Geometry II: Geometric Stacks and
  Applications: Geometric Stacks and Applications}, volume~2.
\newblock American Mathematical Soc., 2008.

\bibitem{ulirsch2017tropicalization}
Martin Ulirsch.
\newblock Tropicalization is a non-archimedean analytic stack quotient.
\newblock {\em Mathematical Research Letters}, 24(4):1205--1237, 2017.

\bibitem{wang2017algebraic}
Carl Wang-Erickson.
\newblock Algebraic families of galois representations and potentially
  semi-stable pseudodeformation rings.
\newblock {\em Mathematische Annalen}, pages 1--67, 2017.

\bibitem{wildeshaus2006realizations}
J{\"o}rg Wildeshaus.
\newblock {\em Realizations of polylogarithms}.
\newblock Springer, 2006.

\bibitem{yasuda2009non}
Takehiko Yasuda.
\newblock Non-adic formal schemes.
\newblock {\em Int. Math. Res. Not. IMRN}, (13):2417--2475, 2009.

\bibitem{yu2018gromov}
Tony~Yue Yu.
\newblock Gromov compactness in non-archimedean analytic geometry.
\newblock {\em Journal f{\"u}r die reine und angewandte Mathematik (Crelles
  Journal)}, 2018(741):179--210, 2018.

\end{thebibliography}

\end{document}